%% file: main.tex
\documentclass[letterpaper, 11pt]{article}

\usepackage[utf8]{inputenc}
\usepackage[T1]{fontenc}
\usepackage{lmodern}
\usepackage{microtype}
\usepackage[margin=1.2in]{geometry}

\usepackage{amsmath, amssymb, amsfonts, amsthm}
\usepackage{mathtools}                 
\usepackage{mathrsfs}                  
\usepackage{dsfont}                    
\usepackage{bm}                        
\usepackage{stmaryrd}                  
\usepackage{latexsym}                  
\usepackage{wasysym}                   
\usepackage{harmony}                   

\usepackage{graphicx}
\graphicspath{{Figures/}}
\usepackage[abs]{overpic}
\usepackage{float}
\usepackage{caption}
\usepackage{subcaption}
\usepackage{tikz-cd}
\usepackage{quiver}
\usetikzlibrary{shapes.geometric, arrows.meta, positioning}
\usepackage{placeins}
\usepackage{morefloats}
\newcommand{\diagcell}[2]{%
  \begin{tikzpicture}[baseline={(0,-1.5em)}]
    \draw[line width=0.4pt] (-0.5em,0.35em) -- (3.9em,-3.0em);
    \node[anchor=north east, inner sep=2.5pt] at (3.9em,0.35em) {#2};
    \node[anchor=south west, inner sep=2.5pt] at (-0.5em,-3.0em) {#1};
  \end{tikzpicture}}

\usepackage{array}
\usepackage{tabularray}
\usepackage{colortbl}
\usepackage{makecell}
\usepackage{enumitem}
\usepackage{mdframed}
\usepackage{etoolbox}
\usepackage{epigraph}
\usepackage[disable, size=tiny]{todonotes}      
\usepackage{comment}

\newcommand{\unneeded}[1]{}
\let\oldsection\section
\RenewDocumentCommand{\section}{s o m}{%
  \IfBooleanTF{#1}
    {\oldsection*{#3}}
    {\IfNoValueTF{#2}{\oldsection{#3}}{\oldsection[#2]{#3}}}%
  \addcontentsline{tdo}{todo}{\textbf{Section: #3}}%
}

\usepackage[backend=biber, maxnames=4, style=alphabetic, sorting=nyt]{biblatex}
\AtEveryBibitem{%
  \clearfield{urlyear}\clearfield{urlmonth}\clearfield{urlday}%
  \clearlist{urldate}\clearlist{note}%
  \ifboolexpr{ test {\ifentrytype{online}}
            or test {\ifentrytype{misc}}
            or test {\ifentrytype{software}} }
    {}{\clearfield{url}}%
}

\usepackage[pdfencoding=auto, psdextra, linktocpage=true]{hyperref}
\hypersetup{colorlinks, citecolor=teal, linkcolor=purple, urlcolor=teal}
\usepackage{zref-clever}
\zcsetup{cap, noabbrev}

\zcsetup{countertype={thm=theorem, thmintro=theorem, equation=equation}}

\AtBeginEnvironment{prop}{\zcsetup{countertype={thm=proposition}}}
\AtBeginEnvironment{lem}{\zcsetup{countertype={thm=lemma}}}
\AtBeginEnvironment{cor}{\zcsetup{countertype={thm=corollary}}}
\AtBeginEnvironment{conj}{\zcsetup{countertype={thm=conjecture}}}
\AtBeginEnvironment{question}{\zcsetup{countertype={thm=question}}}
\AtBeginEnvironment{defn}{\zcsetup{countertype={thm=definition}}}
\AtBeginEnvironment{ex}{\zcsetup{countertype={thm=example}}}
\AtBeginEnvironment{rem}{\zcsetup{countertype={thm=remark}}}
\AtBeginEnvironment{longrem}{\zcsetup{countertype={thm=remark}}}
\AtBeginEnvironment{propintro}{\zcsetup{countertype={thmintro=proposition}}}
\AtBeginEnvironment{corintro}{\zcsetup{countertype={thmintro=corollary}}}

\zcRefTypeSetup{thm}{Name-sg=Theorem, name-sg=theorem, Name-pl=Theorems, name-pl=theorems}
\zcRefTypeSetup{thmintro}{Name-sg=Theorem, name-sg=theorem, Name-pl=Theorems, name-pl=theorems}
\zcRefTypeSetup{prop}{Name-sg=Proposition, name-sg=proposition, Name-pl=Propositions, name-pl=propositions}
\zcRefTypeSetup{propintro}{Name-sg=Proposition, name-sg=proposition, Name-pl=Propositions, name-pl=propositions}
\zcRefTypeSetup{lem}{Name-sg=Lemma, name-sg=lemma, Name-pl=Lemmas, name-pl=lemmas}
\zcRefTypeSetup{cor}{Name-sg=Corollary, name-sg=corollary, Name-pl=Corollaries, name-pl=corollaries}
\zcRefTypeSetup{corintro}{Name-sg=Corollary, name-sg=corollary, Name-pl=Corollaries, name-pl=corollaries}
\zcRefTypeSetup{conj}{Name-sg=Conjecture, name-sg=conjecture, Name-pl=Conjectures, name-pl=conjectures}
\zcRefTypeSetup{question}{Name-sg=Question, name-sg=question, Name-pl=Questions, name-pl=questions}
\zcRefTypeSetup{defn}{Name-sg=Definition, name-sg=definition, Name-pl=Definitions, name-pl=definitions}
\zcRefTypeSetup{conv}{Name-sg=Convention, name-sg=convention, Name-pl=Conventions, name-pl=conventions}
\zcRefTypeSetup{ex}{Name-sg=Example, name-sg=example, Name-pl=Examples, name-pl=examples}
\zcRefTypeSetup{constr}{Name-sg=Construction, name-sg=construction, Name-pl=Constructions, name-pl=constructions}
\zcRefTypeSetup{rem}{Name-sg=Remark, name-sg=remark, Name-pl=Remarks, name-pl=remarks}
\zcRefTypeSetup{longrem}{Name-sg=Remark, name-sg=remark, Name-pl=Remarks, name-pl=remarks}
\zcRefTypeSetup{assuminner}{Name-sg={Standing Assumption}, name-sg={standing assumption}, Name-pl={Standing Assumptions}, name-pl={standing assumptions}}
\zcRefTypeSetup{appendixsec}{Name-sg=Appendix, name-sg=appendix, Name-pl=Appendices, name-pl=appendices}

\NewDocumentCommand{\zcEnvType}{m}{\AddToHook{env/#1/begin}{\zcsetup{reftype=#1}}}

\zcEnvType{thm}
\zcEnvType{thmintro}
\zcEnvType{prop}
\zcEnvType{propintro}
\zcEnvType{lem}
\zcEnvType{cor}
\zcEnvType{corintro}
\zcEnvType{conj}
\zcEnvType{question}
\zcEnvType{defn}
\zcEnvType{conv}
\zcEnvType{ex}
\zcEnvType{constr}
\zcEnvType{rem}
\zcEnvType{longrem}
\zcEnvType{assuminner}

\numberwithin{equation}{section}

\theoremstyle{plain}
\newtheorem{thm}{Theorem}[section]
\newtheorem*{thm*}{Theorem}
\newtheorem{prop}[thm]{Proposition}
\newtheorem{lem}[thm]{Lemma}
\newtheorem{cor}[thm]{Corollary}
\newtheorem{conj}[thm]{Conjecture}

\newtheorem{question}[thm]{Question}

\theoremstyle{definition}
\newtheorem{defn}[thm]{Definition}

\newtheorem{assuminner}[thm]{Standing Assumption}
\newenvironment{assum}
  {\begin{mdframed}[linewidth=1pt,skipabove=10pt,skipbelow=10pt,
                    innertopmargin=6pt,innerbottommargin=6pt]%
   \vspace{-\topsep}\begin{assuminner}}
  {\end{assuminner}\end{mdframed}}
  
\newtheorem{ex}[thm]{Example}
\newtheorem{constr}[thm]{Construction}
\newtheorem{rem}[thm]{Remark}
\newtheorem{longrem}[thm]{Remark}

\theoremstyle{plain}
\newtheorem{thmintro}{Theorem}

\newtheorem{propintro}[thmintro]{Proposition}
\newtheorem{corintro}[thmintro]{Corollary}

\newcommand{\qeddiamond}{\hfill$\Diamond$}
\AtEndEnvironment{ex}{\qeddiamond}
\AtEndEnvironment{constr}{\qeddiamond}
\AtEndEnvironment{longrem}{\qeddiamond}

\newcommand{\Z}{\mathbb{Z}}
\newcommand{\Q}{\mathbb{Q}}
\newcommand{\R}{\mathbb{R}}
\newcommand{\C}{\mathbb{C}}
\newcommand{\T}{\mathbb{T}}
\newcommand{\F}{\mathcal{F}}
\newcommand{\G}{\mathcal{G}}
\newcommand{\Zg}{\mathcal{Z}}

\DeclareMathOperator{\fs}{fs}
\DeclareMathOperator{\fineslope}{fs}   
\DeclareMathOperator{\slope}{s}

\DeclareMathOperator{\interior}{int}
\DeclareMathOperator{\guts}{guts}

\DeclareMathOperator{\denominator}{denominator}
\DeclareMathOperator{\Mod}{Mod}
\DeclareMathOperator{\Homeo}{Homeo}
\DeclareMathOperator{\twig}{tw}
\DeclareMathOperator{\rot}{rot}
\DeclareMathOperator{\mirror}{mirror}

\newcommand{\ssq}{{\mathord{\scalebox{0.45}{$\square$}}}}
\newcommand{\xsquigright}[1]{%
  \begin{tikzcd}[ampersand replacement=\&]
    {}\&{}
    \arrow["{#1}", squiggly, from=1-1, to=1-2]
  \end{tikzcd}}

\title{Ziggurats, taut foliations, and contact structures}

\author{Thomas Massoni\thanks{Department of Mathematics, Stanford University, Stanford, USA. Email: \href{mailto:tmassoni@stanford.edu.}{tmassoni@stanford.edu}} \and
Jonathan Zung\thanks{Department of Mathematics, Georgia Institute of Technology, Atlanta, USA. Email: \href{mailto:jzung3@gatech.edu.}{jzung3@gatech.edu}}}

\date{\today}

\begin{document}

\pagenumbering{gobble}
\listoftodos

\clearpage
\setcounter{page}{1}
\pagenumbering{arabic}
\maketitle

\begin{abstract}
We study the geography of taut foliations on compact $3$-manifolds with toroidal boundary components. The central object of our work is the set of boundary multislopes realized by foliations transverse to a fixed flow on such a manifold. We prove that these sets exhibit remarkable structural properties---rationality, rigidity, and convexity---which motivate the name \emph{ziggurats}. Our main tool, of independent interest, is a two-way correspondence between foliations and contact structures on $3$-manifolds with boundary: we generalize the Eliashberg--Thurston theorem, which produces pairs of positive and negative contact structures from foliations, and a construction of the first author, which builds foliations from such contact pairs. Using both directions of this correspondence, we bring contact-geometric methods to bear on the architecture of ziggurats.
\end{abstract}

\hfill

\begin{figure}[ht]
    \centering
    \includegraphics[width=0.7\linewidth]{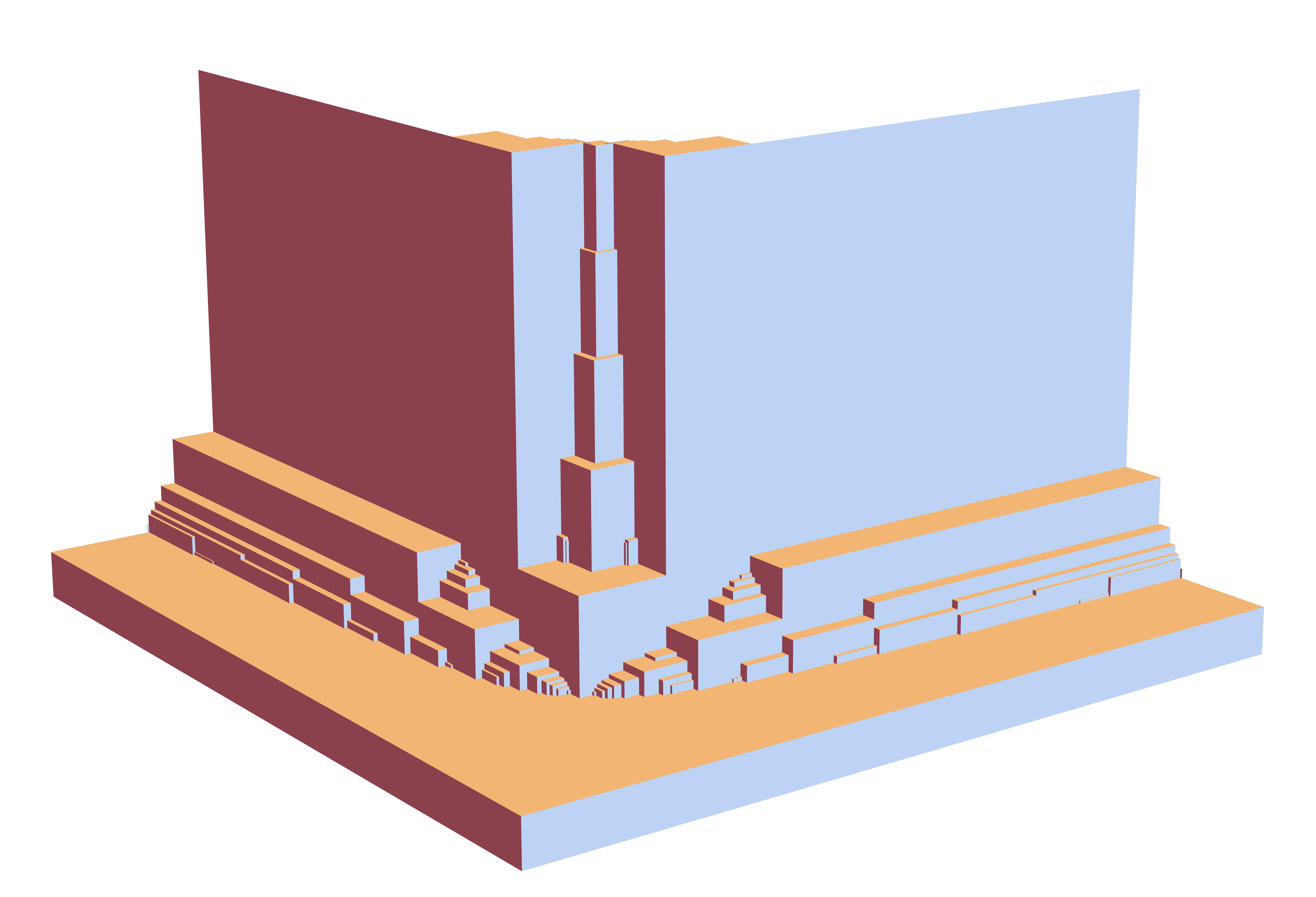}
\end{figure}

\hfill

\clearpage

{\small
\tableofcontents
}

\clearpage

\epigraph{In the middle of the precinct there was a tower of solid masonry, a furlong in length and breadth, upon which was raised a second tower, and on that a third, and so on up to eight. The ascent to the top is on the outside, by a path which winds round all the towers. When one is about half-way up, one finds a resting-place and seats, where persons are wont to sit some time on their way to the summit. On the topmost tower there is a spacious temple, and inside the temple stands a couch of unusual size, richly adorned, with a golden table by its side.}%
{---\textup{Herodotus}, \emph{Histories} I.181, trans.\ G.~Rawlinson}

    \section{Introduction}

        \subsection{Motivation}

A \textbf{foliation} $\F$ on a $3$-manifold $M$ is a decomposition of $M$ into surfaces locally modeled on $\R^2\times \R$. A foliation is \textbf{taut} if it admits a transverse loop through every point, or equivalently, if it admits a transverse \emph{nonwandering flow} (see \zcref{def:NW}). An outstanding problem in $3$-manifold theory is to determine which $3$-manifolds admit taut foliations.

\begin{question}\label{ques:1}
    Given a nonwandering flow $\Phi$ on $M$, does there exist a (necessarily taut) foliation transverse to $\Phi$?
\end{question}

A basic operation on $3$-manifolds is that of \emph{Dehn surgery} along knots or links. It allows one not only to create new manifolds from old ones, but also to study them \emph{in families}; concretely, one may fix a link in a given $3$-manifold and study all of its Dehn surgeries at the same time by varying the surgery slopes. This Dehn surgery perspective also permits us to focus on simpler flows on manifolds with boundary. For example, in the important cases where $\Phi$ is a periodic flow or a transitive pseudo-Anosov flow, any $(M,\Phi)$ arises from surgery along a collection of orbits of a \emph{suspension flow} (see~\cite{fried.TransitiveAnosovFlows,brunella.SurfacesSectionExpansive} for the pseudo-Anosov case). This motivates the following definition:

\begin{defn}
    Let $M$ be a compact oriented $3$-manifold with $n$ framed toroidal boundary components, and $\Phi$ be a nonwandering flow on $M$ tangent to $\partial M$. We define
    $$\Zg(\Phi) \coloneqq \big\{\bm{s} \mid \text{$\exists$ foliation $\F$ transverse to $\Phi$ with boundary multislope $\bm{s}$}\big\} \subset \R^n.$$
    We refer to $\Zg(\Phi)$ as the \textbf{(foliation) ziggurat} of $\Phi$, borrowing the evocative terminology coined by Calegari--Walker for related sets arising in circle dynamics (see \zcref{sec:circledynamics}).
\end{defn}

\begin{figure}[ht]
    \centering
        \begin{subfigure}{0.45\linewidth}
    \centering
    \includegraphics[width=\linewidth]{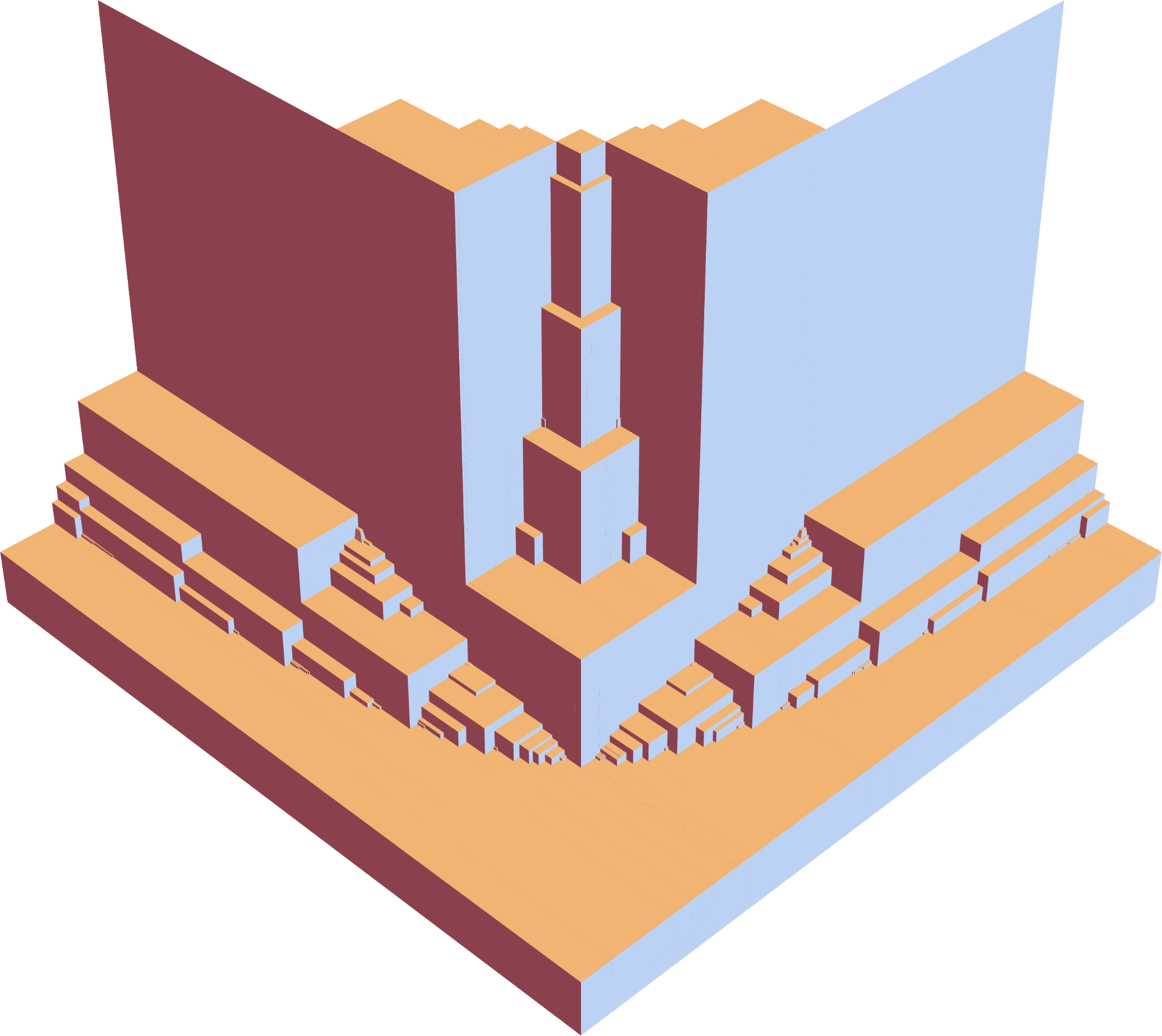}
    \caption{Front.}
    \end{subfigure}
    \hfill
    \begin{subfigure}{0.49\linewidth}
    \centering
    \includegraphics[width=\linewidth]{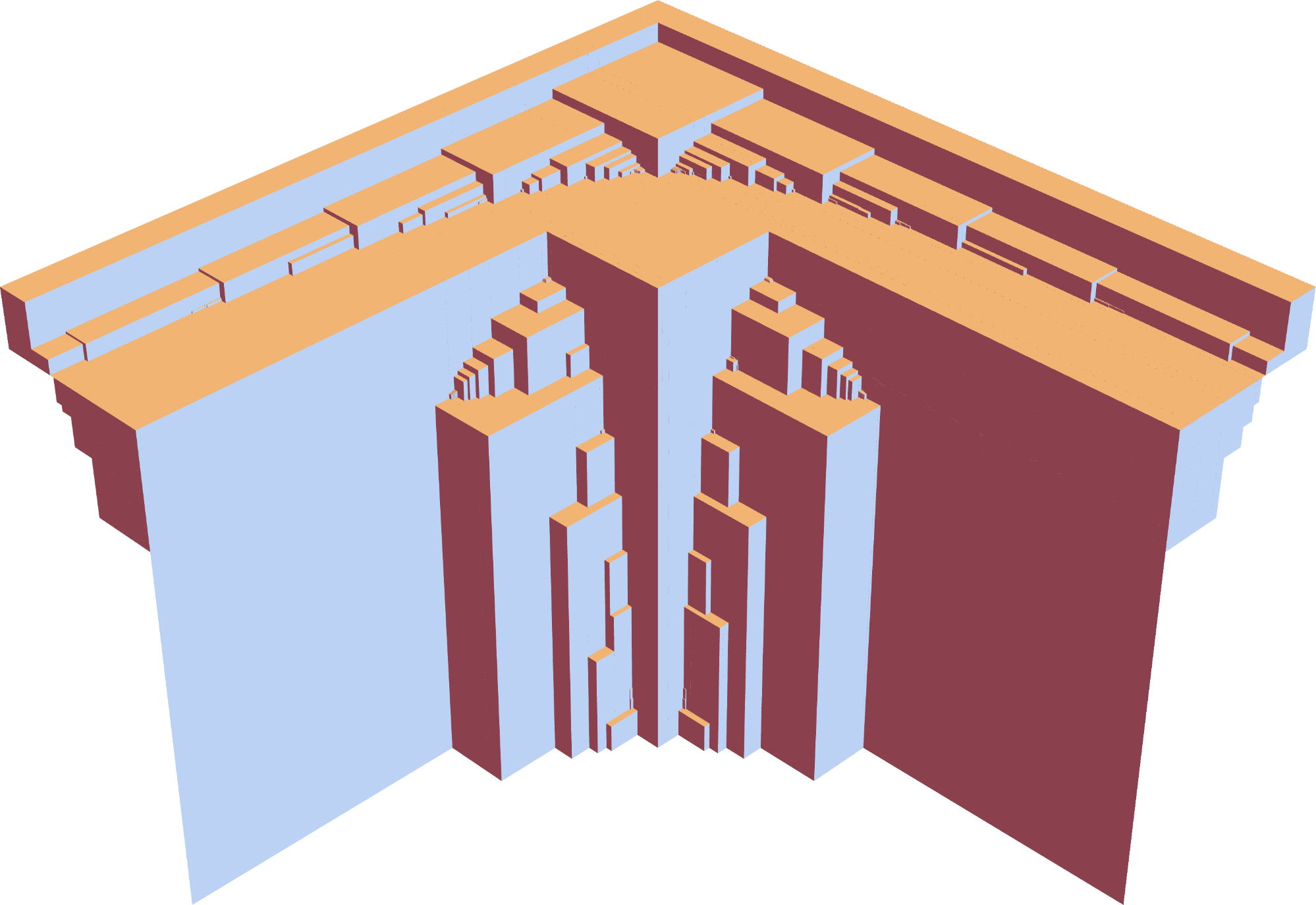}
    \caption{Back, upside down.}
    \end{subfigure}
    
    \caption{$\Zg(\Phi)$ for $\Phi$ a pseudo-Anosov flow on the \emph{magic manifold} $s776$. It is the complement of the minimally twisted $3$-chain link ($6^3_1$ in Rolfsen's table~\cite{Rolfsen}, L6a5 in Thistlethwaite's notation; see \href{https://katlas.org/wiki/L6a5}{\texttt{https://katlas.org/wiki/L6a5}}).}
    \label{fig:magicziggurat}
\end{figure}

A complete understanding of $\Zg(\Phi)$ would essentially answer \zcref{ques:1} on all Dehn fillings of $(M,\Phi)$. The goal of this article is to prove a variety of structural results for $\Zg(\Phi)$ which will explain the moniker ``ziggurat''. Before stating our results, let us survey some examples in which $\Zg(\Phi)$ is well understood. 

    \subsection{Ziggurats for periodic and pseudo-Anosov flows}

A special role in the theory is played by periodic and pseudo-Anosov flows. A flow $\Phi$ without fixed points is \textbf{periodic} if there exists $T>0$ such that $\Phi^T$ is the identity. A flow is \textbf{pseudo-Anosov} if it is a hyperbolic flow with finitely many singular orbits modeled on semi-branched covers of standard hyperbolic orbits. 

\begin{conj}[Calegari, Thurston \cite{calegari.PromotingEssentialLaminations, thurston.ThreemanifoldsFoliationsCircles}] \label{conj:almosttransverse}
    If $\F$ is a taut foliation on a closed atoroidal $3$-manifold, then there exists a nonwandering flow $\Phi$ which is almost\footnote{In the sense of Mosher~\cite{mosher.SurfacesBranchedSurfaces}.} transverse to $\F$ and is either periodic or pseudo-Anosov.
\end{conj}

This is a far-reaching generalization of Nielsen--Thurston theory for fibrations over $S^1$. The flow should record how the conformal structures of leaves of $\F$ change in the transverse direction. \zcref{conj:almosttransverse} is known in the Seifert fibered case by work of Thurston and Brittenham \cite{thurston.FoliationsThreeManifoldsWhich,brittenham.EssentialLaminationsSeifertfibered}. In the hyperbolic case, the theory of universal circles pioneered by Thurston in  \cite{thurston.ThreemanifoldsFoliationsCircles} and extensively developed by Calegari, Dunfield, Fenley, and others has proven \zcref{conj:almosttransverse} in many cases~\cite{calegari.dunfield.LaminationsGroupsHomeomorphisms,calegari.GeometryRcoveredFoliations,fenley.RegulatingFlowsTopology,calegari.PromotingEssentialLaminations}. In another line of work, Gabai--Mosher and Landry--Tsang have resolved \zcref{conj:almosttransverse} for finite depth foliations \cite{landry.tsang.EndperiodicMapsSplitting}. Upcoming work of Landry--Taylor also proves uniqueness and finiteness results for pseudo-Anosov flows (almost)-transverse to a given finite depth taut foliation.

In view of \zcref{conj:almosttransverse}, an ambitious yet principled route towards a classification of taut foliations on atoroidal $3$-manifolds is first to classify the periodic and pseudo-Anosov flows on $M$, and then to answer \zcref{ques:1} for each of them.

\medskip

\emph{Periodic flows} in dimension $3$ are completely understood via the classification of Seifert fibered spaces. The foliation ziggurat $\Zg(\Phi)$ is also completely understood thanks to work of Eisenbud--Hirsch--Neumann, Jankins--Neumann, Naimi, and others~\cite{eisenbud.hirsch.ea.TransverseFoliationsSeifert, JankinsNeumann1985, Naimi1994}. After cutting along vertical tori, the computation of $\Zg(\Phi)$ reduces to the following two theorems:

\begin{thm}[Milnor--Wood \cite{milnor.ExistenceConnectionCurvature,wood.BundlesTotallyDisconnected}]
    Let $M=\Sigma_g^\circ \times S^1$, where $\Sigma_g^\circ$ is a genus $g \geq 1$ surface with one boundary component, and $\Phi$ be the periodic flow tangent to the $S^1$-direction. Then $$\Zg(\Phi)=[-2g+1,2g-1].$$
\end{thm}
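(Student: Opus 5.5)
The plan is to translate the problem into circle dynamics. A foliation $\F$ on $M=\Sigma_g^\circ\times S^1$ transverse to the $S^1$-fibers is a suspension foliation; this is the classical Ehresmann/Haefliger correspondence. It therefore corresponds to a representation $\rho\colon \pi_1(\Sigma_g^\circ)=F_{2g}\to \Homeo^+(S^1)$, well defined up to conjugacy. Choose standard generators $a_1,b_1,\dots,a_g,b_g$ so that the boundary loop is $\prod_i [a_i,b_i]$. The holonomy around $\partial M$ is then $\rho(\prod[a_i,b_i])$, and the boundary slope is recorded by its rotation number. More precisely, since the boundary foliation of the torus is transverse to the $S^1$ factor, its slope is the translation number of a canonical lift to $\widetilde{\Homeo}^+(S^1)$, and this needs some care. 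Each $\rho(a_i),\rho(b_i)$ has lifts $\tilde A_i,\tilde B_i$, and the commutator $[\tilde A_i,\tilde B_i]$ is independent of the chosen lifts. Hence $\tilde\rho(\partial)=\prod_i[\tilde A_i,\tilde B_i]$ is canonical once the framing is fixed by the product structure. The slope is $\mathrm{rot}(\prod_i [\tilde A_i,\tilde B_i])\in\R$, with the sign convention fixed by the framing. So $\Zg(\Phi)$ is exactly the set of translation numbers of products of $g$ commutators in $\widetilde{\Homeo}^+(S^1)$.

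For the upper bound (the Milnor--Wood inequality) I would use the standard estimate: a single commutator in $\widetilde{\Homeo}^+(S^1)$ satisfies $|\mathrm{rot}([\tilde A,\tilde B])|<1$. In fact $\tilde A\tilde B\tilde A^{-1}\tilde B^{-1}(x)-x\in(-2,2)$ pointwise, and the sharper version comes from quasimorphism defect arguments. Combining this with the quasi-additivity of rotation number, $|\mathrm{rot}(fg)-\mathrm{rot}f-\mathrm{rot}g|\le 1$, gives only a crude bound. The sharp bound $|\mathrm{rot}(\prod_{i=1}^g[\tilde A_i,\tilde B_i])|\le 2g-1$ is the content of Wood's argument. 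I would follow Wood/Eisenbud--Hirsch--Neumann. A product of $g$ commutators $h$ has lift displacement satisfying $\min_x(h(x)-x)<2g-1$ and $\max_x(h(x)-x)>-(2g-1)$. I would prove this by induction on $g$. Write $h=[\tilde A,\tilde B]\,h'$ and use that every commutator has a point it moves by less than $1$ in each direction, together with the ordering trick of comparing $\tilde A\tilde B\tilde A^{-1}\tilde B^{-1}$ with translations. Since $\mathrm{rot}$ lies in $[\min(h(x)-x),\max(h(x)-x)]$, this gives $|\mathrm{rot}\,h|\le 2g-1$. The strict inequality on the displacement, combined with closedness, yields exactly $[-(2g-1),2g-1]$ rather than an open interval. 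This step is where I expect the main obstacle: getting the sharp constant $2g-1$ rather than $2g$, which needs the fine displacement estimates and not just the defect of rotation number.

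For realizability I would exhibit explicit representations. For $|s|\le 2g-1$ with $g=1$, I would take lifts of $\mathrm{PSL}_2(\R)$ hyperbolic elements, or explicit piecewise-linear homeomorphisms. Their commutator can be a lift of a hyperbolic or parabolic element with translation number anywhere in $[-1,1]$: interpolate continuously from the trivial commutator to one realizing $\pm 1$, which is the punctured-torus hyperbolic structure. Rotation number is continuous, so the whole interval is attained. For general $g$, I would concatenate $g$ one-holed tori. To realize $s=2g-1$, take the holonomy of a hyperbolic structure on $\Sigma_g^\circ$ with geodesic, or cusped, boundary; its Euler class computation gives rotation number $2g-1=-\chi$ up to sign. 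Then I would deform continuously to the trivial representation, passing to $\Homeo^+(S^1)$ so that the representation space is connected, to hit every intermediate value by continuity. Using orientation reversal gives the negative values. Finally, I would check that each representation suspends to a foliation transverse to $\Phi$ whose boundary multislope equals the computed rotation number, including irrational values, where the boundary foliation is a linear foliation rather than one with closed leaves.
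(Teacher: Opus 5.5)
The paper gives no proof of this statement; it quotes Milnor and Wood. Your outline is the classical argument behind that citation: transverse foliations $\leftrightarrow$ representations $F_{2g}\to\Homeo^+(S^1)$, slope $=$ translation number of the canonical lift $h=\prod_i[\tilde A_i,\tilde B_i]$, Wood's bound, and realization by a Fuchsian holonomy plus continuity.

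\emph{The upper bound has a genuine gap as written.} First, your claim $|\widetilde{\rot}([\tilde A,\tilde B])|<1$ is false: the case $g=1$ of the statement (and your own realization step) says $\pm1$ is attained. What holds is that a commutator moves some point by less than $1$ and some point by more than $-1$. Indeed, if $[\tilde A,\tilde B]\geq R_1$ pointwise, with $R_1$ the translation by $1$, then $\tilde A\tilde B\tilde A^{-1}\geq R_1\tilde B$, and comparing translation numbers gives $\widetilde{\rot}(\tilde B)\geq\widetilde{\rot}(\tilde B)+1$.

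Second, your concluding step ``since $\widetilde{\rot}(h)$ lies in $[\min_x(h(x)-x),\max_x(h(x)-x)]$, $|\widetilde{\rot}(h)|\leq 2g-1$'' does not follow. Knowing $\min_x(h(x)-x)<2g-1$ and $\widetilde{\rot}(h)\geq\min_x(h(x)-x)$ gives no upper bound on $\widetilde{\rot}(h)$. The missing idea is integrality. If $h(x_0)<x_0+k$ with $k\in\Z$, then, because $h$ is increasing and commutes with integer translations, $h^m(x_0)<x_0+mk$ for all $m\geq 1$, so $\widetilde{\rot}(h)\leq k$. This implication fails for non-integer $k$, which is exactly why the sharp constant is the integer $2g-1$.

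Third, the induction needs the pointwise bound $|c(x)-x|<2$ for each additional commutator $c$ (which you do state), not merely a point moved by less than $1$. Choose $x_0$ with $c_g(x_0)<x_0+1$, then apply $c_{g-1},\dots,c_1$, each moving points by less than $2$; this gives $h(x_0)<x_0+2g-1$. Finally, drop the remark that strictness ``combined with closedness'' yields a closed interval: the endpoints belong to $\Zg(\Phi)$ only because they are realized.

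On realizability, lifts of hyperbolic or parabolic elements have fixed points up to an integer translation, so their translation numbers are integers, not arbitrary values in $[-1,1]$. The intermediate values come entirely from your intermediate-value argument along a path in $\mathrm{Hom}(F_{2g},\mathrm{PSL}_2(\R))\cong\mathrm{PSL}_2(\R)^{2g}$. This space is already connected, so there is no need to pass to $\Homeo^+(S^1)$. The argument uses continuity of $\widetilde{\rot}$ and of the canonical lift of $\prod_i[A_i,B_i]$.

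The value $\pm(2g-1)$ for a Fuchsian holonomy with geodesic boundary is asserted rather than derived. One quick derivation: cut a closed hyperbolic surface of genus $2g$ along the separating geodesic $\prod_{i\leq g}[a_i,b_i]$. The closed Milnor--Wood equality then gives two commuting lifts whose integer translation numbers sum to $\pm(4g-2)$, and each has absolute value at most $2g-1$ by the corrected bound. Lastly, for irrational slopes the boundary foliation is in general only semi-conjugate to a linear one; this does not affect the slope computation.
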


\begin{rem}
    The rational points in $(-2g+1,2g-1)$ are attained by foliations with \emph{linear} holonomy at the boundary, so these extend to the corresponding Dehn filling.
\end{rem}

\begin{thm}[Jankins--Neumann~\cite{JankinsNeumann1985}, Naimi~\cite{Naimi1994}]
Suppose $M=(\text{Pair of pants})\times S^1$, with $\Phi$ the periodic flow in the $S^1$ direction. Then the Jankins--Neumann ziggurat $\Zg(\Phi)$ is the subset of $\R^3$ cut out by the following inequality (combined with its sibling obtained by inverting all three coordinates):
for $(r_1,r_2,r_3)\in \Zg(\Phi)$,
$$r_3 \leq \sup_{q\geq 1}  \frac{-p_1 - p_2 + 1}q,$$
where $p_1/q \leq r_1$ and $p_2/q \leq r_2$ are the best rational subapproximants of $r_1$ and $r_2$ with denominator $q$.
\end{thm}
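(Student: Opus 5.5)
Since $\Phi$ is the periodic flow on $P\times S^1$, with $P$ the pair of pants, foliations transverse to $\Phi$ are exactly suspensions of representations $\rho\colon \pi_1(P)=\langle a,b\rangle \to \Homeo^+(S^1)$. The boundary curves correspond to $a$, $b$, and $c=(ab)^{-1}$. With the given framing, the boundary slopes are the rotation numbers of the images of the three boundary elements, lifted to $\widetilde{\Homeo}^+(S^1)$ so that the three lifts multiply to the identity. So I would first establish this dictionary and reduce the claim to the following statement. The set of triples $(\rot\tilde A,\rot\tilde B,\rot\tilde C)$, with $\tilde A\tilde B\tilde C=\mathrm{id}$ in $\widetilde{\Homeo}^+(S^1)$, is cut out by the stated inequality and its sibling. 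Equivalently, for fixed $r_1,r_2$ the maximum of $\rot(\tilde A\tilde B)^{-1}$ is the stated supremum, and the minimum is given by the sibling via orientation reversal, which sends every $r_i$ to $-r_i$. Connectedness of the set of achievable $r_3$ for fixed $r_1,r_2$ then fills in the interval between the two bounds; it follows by deforming conjugates continuously.

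\textbf{Upper bound.} Write $R=\rot$ and use the Calegari--Walker framework. For fixed $r_1=\rot\tilde A$ and $r_2=\rot\tilde B$, the quantity $\max \rot(\tilde A\tilde B)$ can be computed combinatorially. I would show that if $\rot(\tilde A\tilde B)>p/q$ then some periodic-orbit argument applies. Take a point $x$ and look at the orbit of $x$ under the word $(\tilde A\tilde B)^q$. A displacement $>p$ forces, by counting how many times $\tilde A$ and $\tilde B$ individually can move points past lifts of a fixed finite set, that the combined translation is bounded by the best rational \emph{super}approximants of $r_1,r_2$ with denominator $q$. This is the key step: build a cyclic order on the $q$ iterates of $\tilde A$ and of $\tilde B$ in a single orbit segment (a "dynamical realization" of a positive word). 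Then show that the maximal translation of $(ab)^q$ is bounded by $\lfloor q r_1\rfloor$-type quantities plus one. Translating via $r_3=-\rot(\tilde A\tilde B)$ and replacing ceilings by the subapproximants $p_i/q\le r_i$ gives $r_3\le \sup_q (-p_1-p_2+1)/q$; the sup arises because the bound must hold for every $q$ simultaneously at optimum.

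\textbf{Realization.} For each $q$, I would construct explicit $A,B$, namely rigid rotations perturbed to have periodic orbits of period $q$ interleaved maximally (Jankins--Neumann's explicit constructions). These achieve $\rot(\tilde A\tilde B)$ arbitrarily close to the bound, and exactly equal to it when the rational bound is attained. Then I would check closedness of the set, so that the sup is realized.

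I expect the main obstacle to be the sharp upper bound: the combinatorial lemma that, in a cyclic arrangement, $\tilde A$ and $\tilde B$ with prescribed rotation numbers cannot compose to exceed the stated translation. I would try first to reduce to the case where $A$ and $B$ have periodic orbits of period $q$, by semicontinuity and the monotonicity of $\rot$. Then I would count the crossings of a fundamental domain along the orbit, as in Naimi's proof.
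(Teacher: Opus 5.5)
The paper gives no proof of this statement; it quotes Jankins--Neumann and Naimi. Your reduction to triples $(\widetilde{\rot}\tilde A,\widetilde{\rot}\tilde B,\widetilde{\rot}\tilde C)$ with $\tilde A\tilde B\tilde C=\mathrm{id}$ is the standard one. The first problem is in the last step of your upper bound. What you sketch bounds $\max\widetilde{\rot}(\tilde A\tilde B)$ by $\sup_q(\lfloor qr_1\rfloor+\lfloor qr_2\rfloor+1)/q$. Since $r_3=-\widetilde{\rot}(\tilde A\tilde B)$, that is a \emph{lower} bound on $r_3$. An upper bound on $r_3$ comes from $\min\widetilde{\rot}(\tilde A\tilde B)$, i.e.\ the same bound at $(-r_1,-r_2)$ after conjugating by a reflection of the circle. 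It reads
\[
r_3\le\sup_{q\ge1}\frac{1+\lfloor -qr_1\rfloor+\lfloor -qr_2\rfloor}{q}=\sup_{q\ge1}\frac{1-\lceil qr_1\rceil-\lceil qr_2\rceil}{q},
\]
so the $p_i/q$ must be the best rational \emph{super}approximants of $r_i$. Your step ``replacing ceilings by the subapproximants'' is not an identity, and it is exactly what produces the displayed formula. That formula is false as printed. At $(r_1,r_2,r_3)=(1/2,1/2,0)$, the $q=1$ terms give $0\le 1$, and for the sibling $0\le 3$, so both printed inequalities hold. Yet $\widetilde{\rot}\tilde A=1/2$ forces $\tilde A(x)>x$ for all $x$, since otherwise the intermediate value theorem gives a fixed point and rotation number $0$. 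The same holds for $\tilde B$, so $\tilde A\tilde B(x)-x\ge\delta>0$ everywhere and $r_3<0$. The true range there is $-3/2\le r_3\le -1/2$. No argument can reach the printed inequality, so you should aim at the corrected one.

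The second problem is the counting lemma itself, which cannot work one $q$ at a time. As you phrase it, a displacement $>p$ of $(\tilde A\tilde B)^q$ forces $p\le\lfloor qr_1\rfloor+\lfloor qr_2\rfloor+1$ (or the ceiling version). This is false. Take $\tilde A$ fixing $\Z$ and $\tilde B$ fixing $\tfrac12+\Z$, each pushing its complementary intervals almost all the way forward. Then $r_1=r_2=0$ and $\widetilde{\rot}(\tilde A\tilde B)=1$, so $(\tilde A\tilde B)^{10}$ moves every point by more than $9$, while the $q=10$ quantity is $1$. The correct bound is existential in $q$: for each pair there is \emph{some} $q$, depending on the dynamics, with $\widetilde{\rot}(\tilde A\tilde B)\le(\lfloor qr_1\rfloor+\lfloor qr_2\rfloor+1)/q$. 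That is why a supremum appears. Your remark that ``the bound must hold for every $q$ simultaneously'' would instead produce an infimum. Establishing this existential bound is the content of Naimi's proof, or of Calegari--Walker's combinatorics for positive words, and it is exactly what your sketch leaves out.
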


\begin{figure}[ht]
    \centering
    \includegraphics[width=0.5\linewidth]{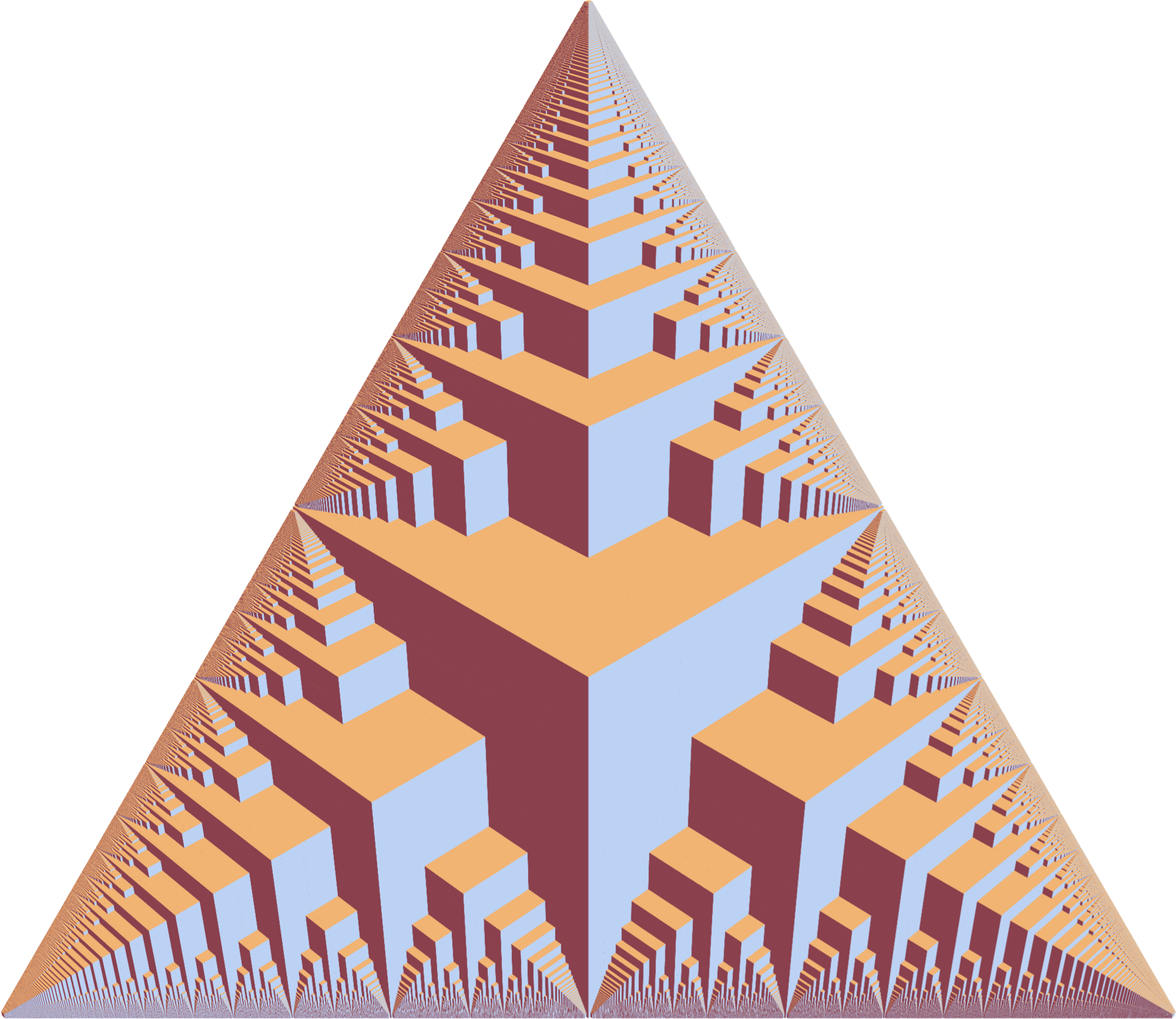}
    \caption{The Jankins--Neumann ziggurat, restricted to $(0,1)\times (0,1) \times (-1,0)$.}
    \label{fig:jankinsneumann}
\end{figure}

\medskip

\emph{Pseudo-Anosov flows}, on the other hand, remain beyond the reach of a complete classification. Despite this, much progress has been made towards the following conjecture:

\begin{conj}\label{conj:finiteness}
    On any closed $3$-manifold, there exist at most finitely many pseudo-Anosov flows up to orbit equivalence.
\end{conj}

Gabai--Li have recently announced a proof of \zcref{conj:finiteness} for hyperbolic $3$-manifolds, and Bowden--Colin have independently announced the Anosov case using different techniques. Still, only a little is known about $\Zg(\Phi)$ in the pseudo-Anosov case. The first existence result was proven by Roberts:

\begin{thm}[\cite{roberts.TautFoliationsPunctured}]
		Suppose $\Phi$ is a pseudo-Anosov suspension flow on a fibration $\Sigma^\circ_{g}\to M \to S^1$, where $\Sigma_g^\circ$ is a genus $g \geq 1$ surface with one boundary component. Then $\Zg(\Phi)$ contains an open interval around the slope of the fiber (the longitude).
\end{thm}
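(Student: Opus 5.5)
The plan is to follow Roberts' strategy. First build a branched surface transverse to $\Phi$ that is carried by the fibration, then split it near the boundary so that it fully carries laminations realizing a whole interval of boundary slopes. Finally, fill the complementary regions to obtain foliations.

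\emph{Step one: a transverse branched surface.} Fix the monodromy $h\colon \Sigma_g^\circ\to\Sigma_g^\circ$, which is pseudo-Anosov relative to the boundary. Let $\Phi$ be its suspension flow, made tangent to $\partial M$, where the boundary carries finitely many periodic orbits coming from the prongs of the invariant foliations meeting $\partial\Sigma$. The fibers $\Sigma\times\{t\}$ are transverse to $\Phi$ and meet $\partial M$ in longitudes, so slope $0$ lies in $\Zg(\Phi)$. To perturb the slope, choose a finite collection of disjoint properly embedded product disks: arcs $\alpha_i\subset\Sigma$ with endpoints on $\partial\Sigma$, lying between consecutive prongs. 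Form the branched surface $B$ obtained from the fiber $\Sigma\times\{0\}$ by adding the rectangles $\alpha_i\times[0,1]$, tilted so that they are transverse to $\Phi$. The tilt is made by following the flow and then sliding along $h(\alpha_i)$ versus $\alpha_i$, with the branching direction chosen consistently with the flow.

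\emph{Step two: prescribing the boundary slope.} Assign nonnegative weights to the sectors of $B$. A solution of the branch equations with weight $1$ on the fiber sector and weight $w_i$ on the $i$-th rectangle yields a measured lamination transverse to $\Phi$. Its boundary train track on $\partial M$ carries the curve of slope $\sum \pm w_i/(\text{number of prongs})$ relative to the longitude. The sign is determined by whether the rectangle is tilted positively or negatively.

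The main obstacle is to arrange that both signs are available and that the branch equations are solvable for small weights of either sign. This requires the arcs $\alpha_i$ and their images $h(\alpha_i)$ to be positioned so that $B$ has no sink disks and no half sink disks, in the Li sense, and remains transverse to the flow. I would first try to choose each $\alpha_i$ as a short arc spanning a single prong-gap near the boundary, where $h$ acts as a rotation followed by the fractional Dehn twist. Pseudo-Anosovity guarantees that $h(\alpha_i)$ is not isotopic to $\alpha_i$ rel boundary, and this allows the tilt in either direction.

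\emph{Step three: from laminations to foliations.} Given such a lamination $\lambda$ with boundary slope $s$ in a small interval $(-\epsilon,\epsilon)$, the complementary regions of $\lambda$ are product regions of the form (surface)$\times I$ transverse to $\Phi$, with cusps. I fill these regions by product foliations transverse to the flow lines, which extend $\lambda$ to a foliation $\F$ transverse to $\Phi$. The train track on $\partial M$ is then filled to a suspension foliation of the torus with slope $s$. Irrational slopes are obtained from weights with irrational ratios, and the foliation is taut because it is transverse to the nonwandering flow $\Phi$. Together, these steps show that $(-\epsilon,\epsilon)\subset\Zg(\Phi)$.
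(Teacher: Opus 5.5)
There is a genuine gap in Step two, and Step two carries all the content of the argument.

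Any solution of the branch equations of $B$ is a transverse measure, so the lamination $\lambda$ you obtain is \emph{measured}. It therefore represents a class $[\lambda]\in H_2(M,\partial M;\R)$, and the measured lamination $\partial\lambda$ on the torus represents $\partial[\lambda]\in H_1(\partial M;\R)$. The image of $\partial\colon H_2(M,\partial M;\R)\to H_1(\partial M;\R)$ is one-dimensional and contains the boundary of the fiber. Since $\lambda$ is cooriented by $\Phi$, the curves of $\partial\lambda$ are coherently oriented. Hence $\partial\lambda$, when nonempty, has slope exactly $0$.

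You can see the cancellation directly in $B$. A tilted rectangle is a sector whose interior misses the fiber, so it is the graph of a height function over a rectangle $R_i\subset\Sigma$, equal to $0$ along $\alpha_i$ and $1$ along the opposite side. Traverse its two arcs on $\partial M$ in the positive direction of $\partial\Sigma$: one climbs once around in the flow direction and the other descends. Both arcs carry the same weight $w_i$, since they bound the same sector, so their meridional contributions cancel.

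Consequently:
\begin{itemize}
\item the formula $\sum\pm w_i/(\cdots)$ is false;
\item weights are nonnegative, so ``small weights of either sign'' has no meaning;
\item irrational weight ratios do not produce irrational slopes;
\item Step two yields nothing beyond the fiber itself.
\end{itemize}
This is the same homological constraint that confines $\Zg(\Phi)$ to the locus of homological longitudes for annulus suspensions in the paper's appendix.

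The missing idea is that a $\Phi$-horizontal foliation with boundary slope $s\neq 0$ cannot contain a measured sublamination meeting $\partial M$. The laminations you need have holonomy, and they come from a carrying argument, not from branch equations. The boundary train track $\tau=B\cap\partial M$ has ascending and descending branches. Curves of slope $s$ near $0$ are carried by $\tau$ by using these branches with \emph{different} multiplicities, which is exactly what no measured lamination of $B$ can do.

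You then have to show that $B$ fully carries a lamination in $M$ whose boundary is this prescribed curve system, or an irrational lamination of slope $s$. Roberts establishes this directly for her branched surfaces. In the later works the paper cites, this is Li's theorem: a laminar branched surface (in particular one without sink disks or half sink disks), under mild further hypotheses, fully carries a lamination meeting $\partial M$ in curves of any slope realized by its boundary train track.

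So your mention of sink disks names the right tool, but it must replace the weight computation rather than support it. The arcs $\alpha_i$ must be chosen so that $\tau$ realizes slopes of both signs while $B$ stays transverse to $\Phi$ and free of sink disks; this is where the pseudo-Anosov behavior of $h$ near $\partial\Sigma$ actually enters. Once such laminations exist, your Step three goes through.
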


Further progress has been made by Li--Roberts \cite{li.roberts.TautFoliationsKnot} who extended \cite{roberts.TautFoliationsPunctured} to arbitrary (nontrivial) knot complements in $S^3$, Krishna \cite{krishna.TautFoliationsPositive}, Nie \cite{nie.PositiveBraidClosures}, and Zhao \cite{zhao.CoorientableTautFoliations} who studied the maximal interval of slopes contained in $\Zg(\Phi)$ for some families of knots, and Santoro \cite{santoro.LspacesTautFoliations} who extended the techniques to $2$-bridge links. All of these results rely on \cite{li.LaminarBranchedSurfaces} to produce taut foliations from a branched surface obtained from small modifications of a sutured hierarchy.

Using the technology of veering triangulations \cite{Z24}, we began a systematic experimental exploration of $\Zg(\Phi)$ for many different pseudo-Anosov flows. Our software is available online \cite{zung.JonathanzungTransversefol}. Several examples of (approximations to) $\Zg(\Phi)$ are shown in \zcref{fig:ziggurat_examples}. These figures display intricate staircasing phenomena, especially near topologically significant multislopes. These experiments were our first hint that features of the Jankins--Neumann ziggurat might reappear in the setting of hyperbolic manifolds. We will explain many of these phenomena in this article.

\begin{figure}[p!]
\centering
    \begin{subfigure}{0.45\linewidth}
    \centering
    \includegraphics[width=\linewidth]{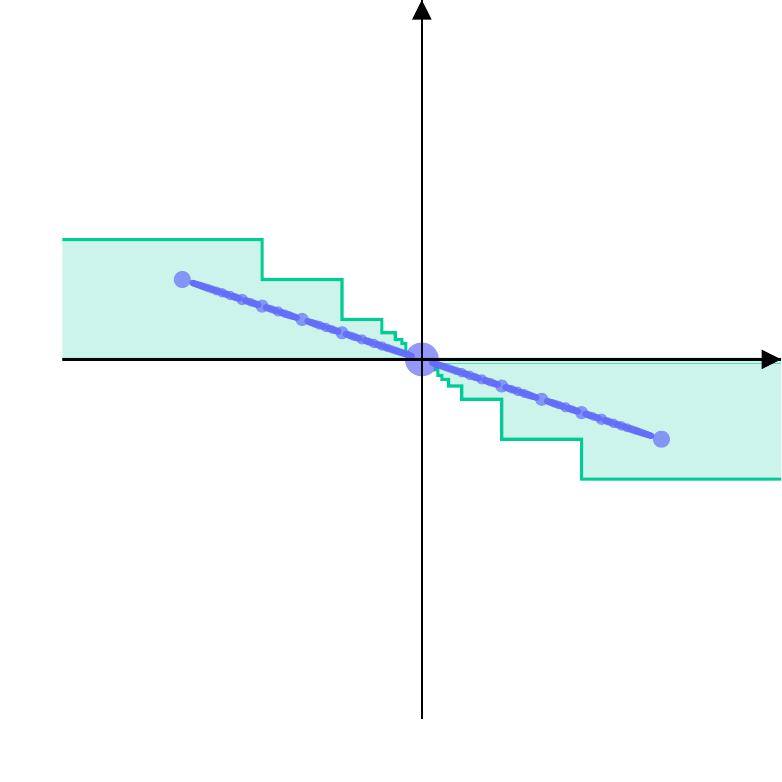}
    \caption{\texttt{eLMkbcddddedde\_2100}}
    \end{subfigure}
    \hfill
    \begin{subfigure}{0.45\linewidth}
    \centering
    \includegraphics[width=\linewidth]{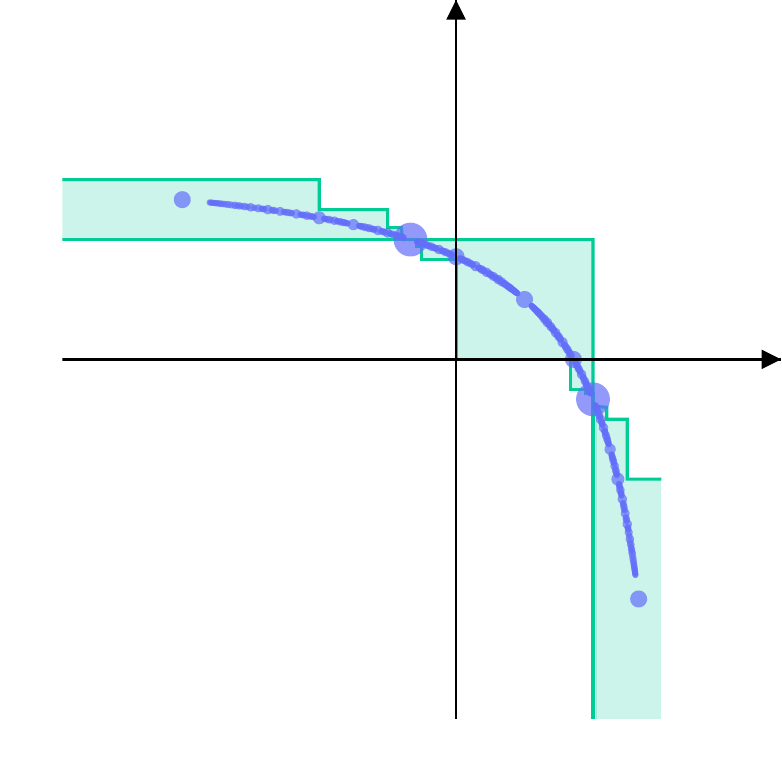}
    \caption{\texttt{fLLQcbeddeehhbghh\_01110}}
    \end{subfigure}
    
    \par\vspace{2em}
    
    \begin{subfigure}{0.45\linewidth}
    \centering
    \includegraphics[width=\linewidth]{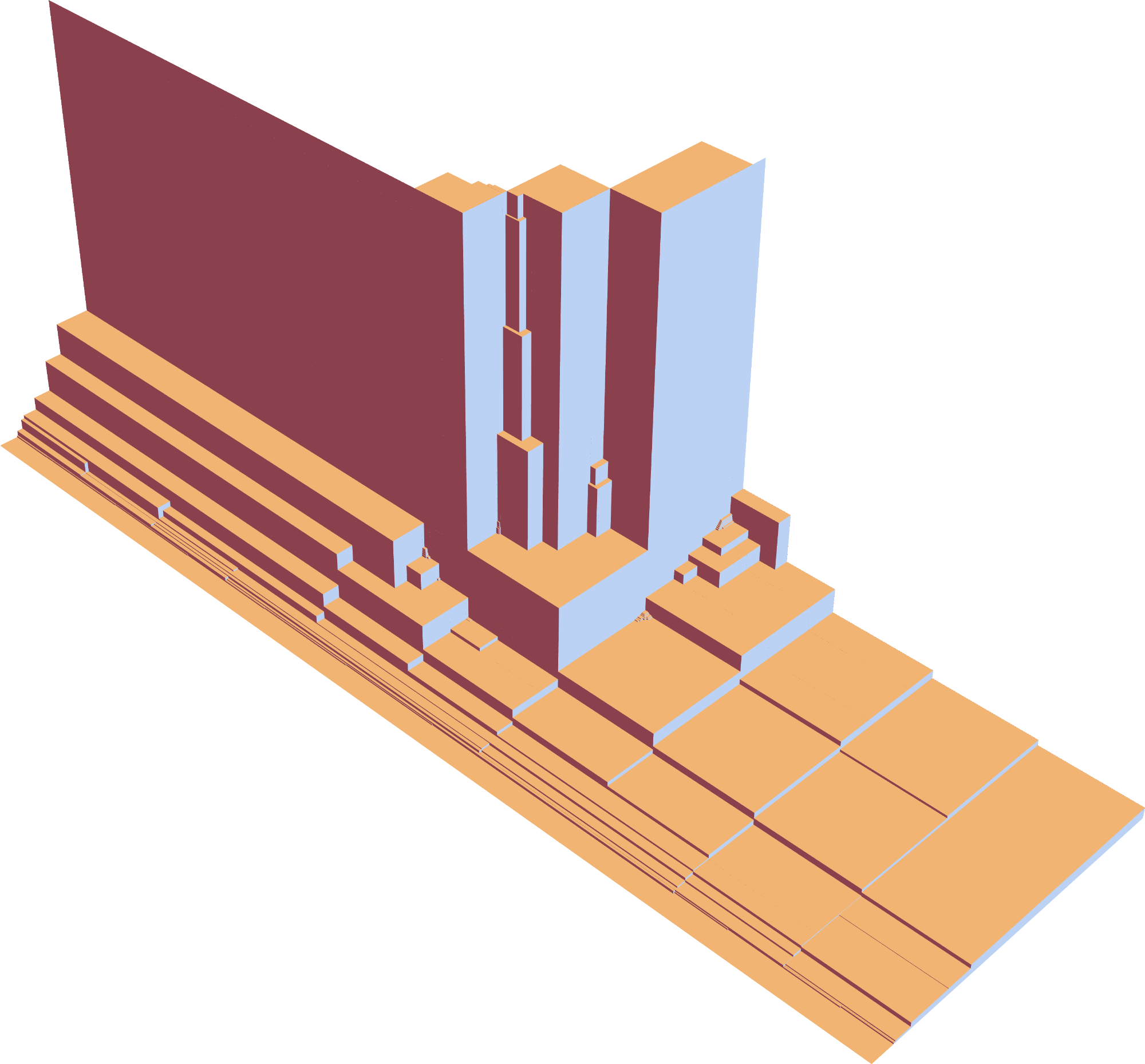}
    \caption{\texttt{ivLLQQccdgffhghhvvaaaaavv\_01111220}}
    \end{subfigure}
    \hfill
    \begin{subfigure}{0.5\linewidth}
    \centering
    \includegraphics[width=0.8\linewidth]{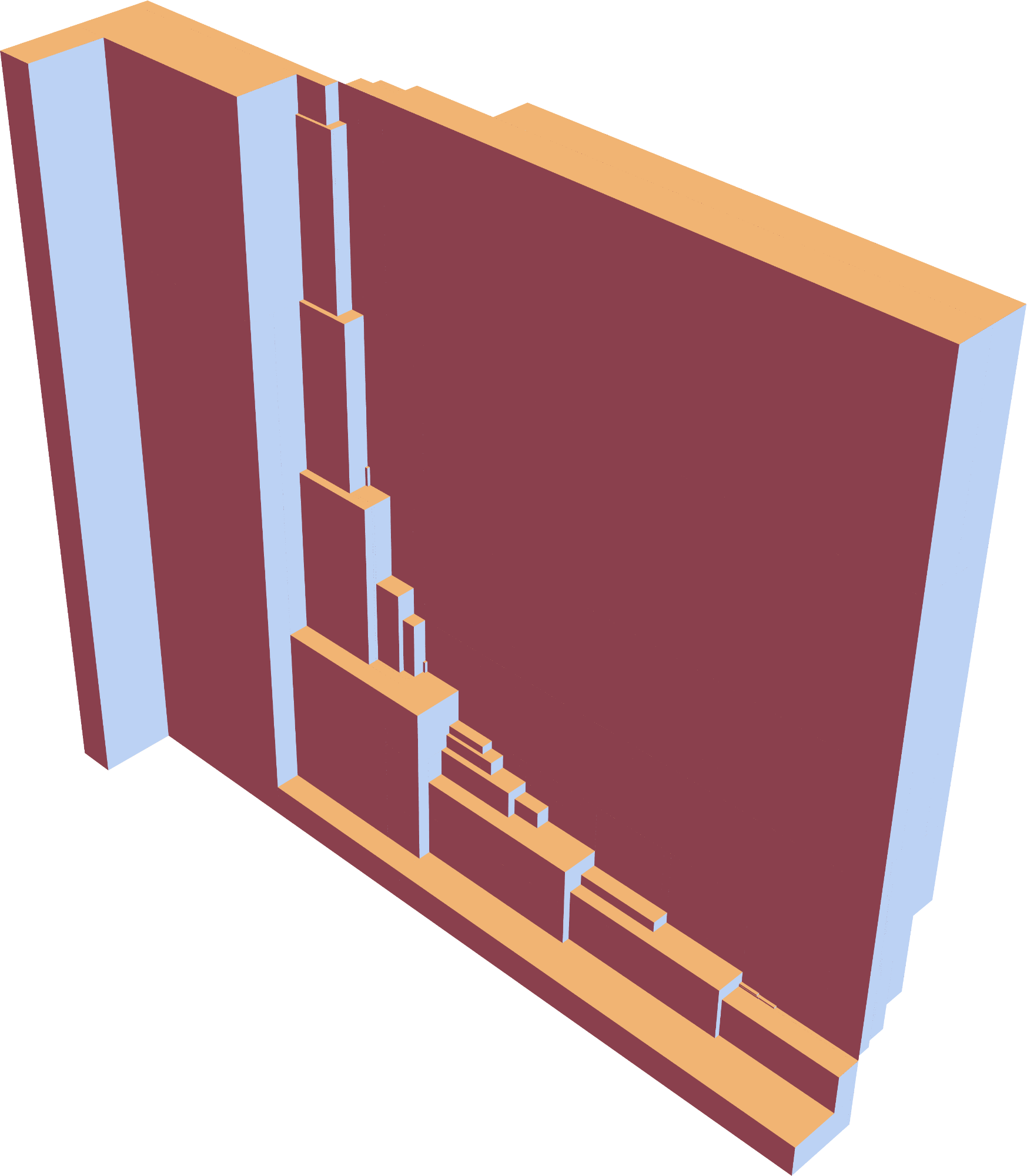}
    \caption{\texttt{jLLAvQQbcdeihhiihtsfxedxhdt\_201021201}}
    \end{subfigure}
    \caption{Examples of ziggurats for pseudo-Anosov flows on $3$-manifolds with $2$ or $3$ boundary components. The flows are specified by their veering isosigs as recorded in the veering census \cite{giannopoulos.schleimer.ea.CensusVeeringStructures}. In the 2D examples, we also show the boundary multislopes of \emph{fibrations} transverse to $\Phi$ with blue dots, with fibers of more negative Euler characteristic shown with smaller dots. This comparison highlights our philosophy that $\Zg(\Phi)$ is a \emph{non-abelian} generalization of a fibered face of the Thurston norm ball.
    }\label{fig:ziggurat_examples}
\end{figure}

        \subsection{Ziggurats in circle dynamics} \label{sec:circledynamics}

Let $\widetilde{\rot} : \widetilde{\Homeo^+}(S^1) \to \R$ be the (lifted) rotation number of a (lifted) circle homeomorphism. In \cite{calegari.walker.ZigguratsRotationNumbers}, Calegari--Walker studied the extremal rotation number problem: Given a word $w$ in the generators $a,b$, determine the set
$$\Zg(w) \coloneqq \left\{(r,s,t) \ \big\vert \  \exists a,b \in 
\widetilde{\Homeo^+}(S^1) \text{ with } \widetilde{\rot}(a)=r, \ \widetilde{\rot}(b)=s, \ \widetilde{\rot}(w)=t \right\} \subset \R^3.$$
When $w=ab$, $\Zg(w)$ is the Jankins--Neumann ziggurat up to a change of coordinates. They show that when $w$ is an arbitrary positive word (i.e., it does not contain $a^{-1}$ or $b^{-1}$), many of the structural properties of the Jankins--Neumann ziggurat persist. In particular, they show that $\Zg(w)$ is assembled from blocks with rational corners.

Except for the Jankins--Neumann ziggurat, $\Zg(w)$ does not appear to be a foliation ziggurat for any choice of $\Phi$.  Our methods are also quite different from \cite{calegari.walker.ZigguratsRotationNumbers}: Calegari--Walker rely on a combinatorial analysis of the interleaving pattern of the fixed point sets of $a$ and $b$, while we rely heavily on $3$-manifold technology including contact geometry. We think that there ought to be a more general framework which simultaneously generalizes \cite{calegari.walker.ZigguratsRotationNumbers} and the present work.

\begin{question}
    What is the broadest dynamical context in which the ziggurat phenomenon occurs?
\end{question}

        \subsection{Foliations and contact structures}

Eliashberg and Thurston showed in~\cite{ET} that a cooriented $C^2$-foliation without spherical leaves on a closed $3$-manifold may be approximated by positive and negative contact structures. Bowden and Kazez--Roberts generalized this to $C^0$-foliations \cite{B16, KR17}. More recently, the first author proved a converse, reconstructing a foliation from a suitable pair of contact structures \cite{M24}. With these tools in hand, we can pass back and forth between the foliated and contact worlds.

\begin{center}
    \begin{tikzcd}[column sep=8em]
    \textcolor{violet}{\F} \arrow[r, bend left=30, "\text{Eliashberg--Thurston~\cite{ET}}"{pos=0.5, anchor=mid, yshift=0.8em}] & (\textcolor{blue}{\xi_-}, \textcolor{red}{\xi_+}) \arrow[l, bend left=30, "\text{M.~\cite{M24}}"{pos=0.5, anchor=mid, yshift=-0.8em}]
    \end{tikzcd}
\end{center}

In this article, we refine both directions of this correspondence in the case when $M$ has boundary: we upgrade the forward direction to give control over $\partial \xi_-$ and $\partial \xi_+$, and we upgrade the reverse direction to give control over $\partial \F$.

We need the tools of both foliation theory and contact geometry to prove our strongest results. Roughly speaking, the condition for a $2$-plane field to be a contact structure is \emph{open}. Thus, the contact perspective offers the flexibility needed to prove that $\Zg(\Phi)$ is open near certain points. On the other hand, the condition for a $2$-plane field to be (tangent to) a foliation is \emph{closed}. Therefore, the foliation perspective is useful for proving that $\Zg(\Phi)$ is closed near certain points.

        \subsection{Notation}

For the reader's convenience, we include a short list of notation that will be used throughout the article.

\begin{itemize}

    \item $M$: compact, oriented, connected $3$-manifold, with nonempty boundary unless otherwise stated. The boundary components are framed tori and are numbered $\partial_1 M, \dots, \partial_n M$. 
    
    \item $\Phi$: smooth, nonsingular, nonwandering flow on $M$ tangent to $\partial M$ having boundary slope $+\infty$ on each component of $\partial M$. Note that $\Phi$ need not be a linear flow on $\partial M$, but must have a well-defined boundary slope.
    
    \item $\mathcal{Z}(\Phi)$: real multislopes realized by a (necessarily taut) $C^0$-foliation positively transverse to $\Phi$. In this article, multislopes are oriented.
    
    \item $\mathcal{Z}^\pm(\Phi)$: real multislopes realized by positive/negative contact structures positively transverse to $\Phi$.
    
    \item $\mathcal Z_\aleph(\Phi)$: the set of generalized multislopes realized by $C^0$-foliations positively transverse to $\Phi$, where $\aleph$ is treated as a wildcard that can assume any real number.

    \item $I \coloneqq [0,1]$.
        
    \item For multislopes $\bm r$ and $\bm s$, we say $\bm r < \bm s$ when $r_i < s_i$ for all $1\leq i \leq n$. Similarly, we write $\bm{r} \leq \bm{s}$ when $r_i \leq s_i$ for all $1\leq i \leq n$. We define the closed box $$[\bm{r}, \bm{t}]=\{ \bm{s}\in \R^n \mid r_i \leq s_i \leq t_i \text{ for all } 1\leq i \leq n\}.$$ We similarly define the open box $(\bm{r}, \bm{t})$.\footnote{This notation is a bit unfortunate since it conflicts with the one for pairs of multislopes. We apologize to the reader and hope that it will not create any confusion.}

    \item Given a foliation $\F$, lamination $\Lambda$, contact structure $\xi_\pm$, we denote the restricted foliations/laminations on $\partial M$ by $\partial \F, \partial \Lambda$, and $\partial \xi_\pm$, respectively.

    \item We call a foliation positively transverse to $\Phi$ a \textbf{$\Phi$-horizontal foliation}, or simply a \textbf{horizontal foliation} when $\Phi$ is clear from context. We use the same terminology for surfaces, laminations, and minimal sets, and also foliations on $\partial M$ positively transverse to $\Phi |_{\partial M}$.
\end{itemize}

\subsection{Orientation conventions}

The following assumptions will hold throughout the article.

\begin{itemize}
    \item All surfaces, laminations, foliations, and plane fields in $M$ are oriented and cooriented, and transverse to $\partial M$. The laminations and foliations have $C^1$ immersed leaves and continuous tangent plane fields.

    \item Pictures of $\partial M$ are drawn looking from outside the manifold. In this convention, adding twisting to a positive contact structure \emph{decreases} the slope of the boundary foliation for free. Adding twisting to a negative contact structure \emph{increases} the slope of the boundary foliation. See \zcref{fig:contactplanes}.
\end{itemize}

\begin{figure}[htbp]
    \centering
    \includegraphics[width=0.55\linewidth]{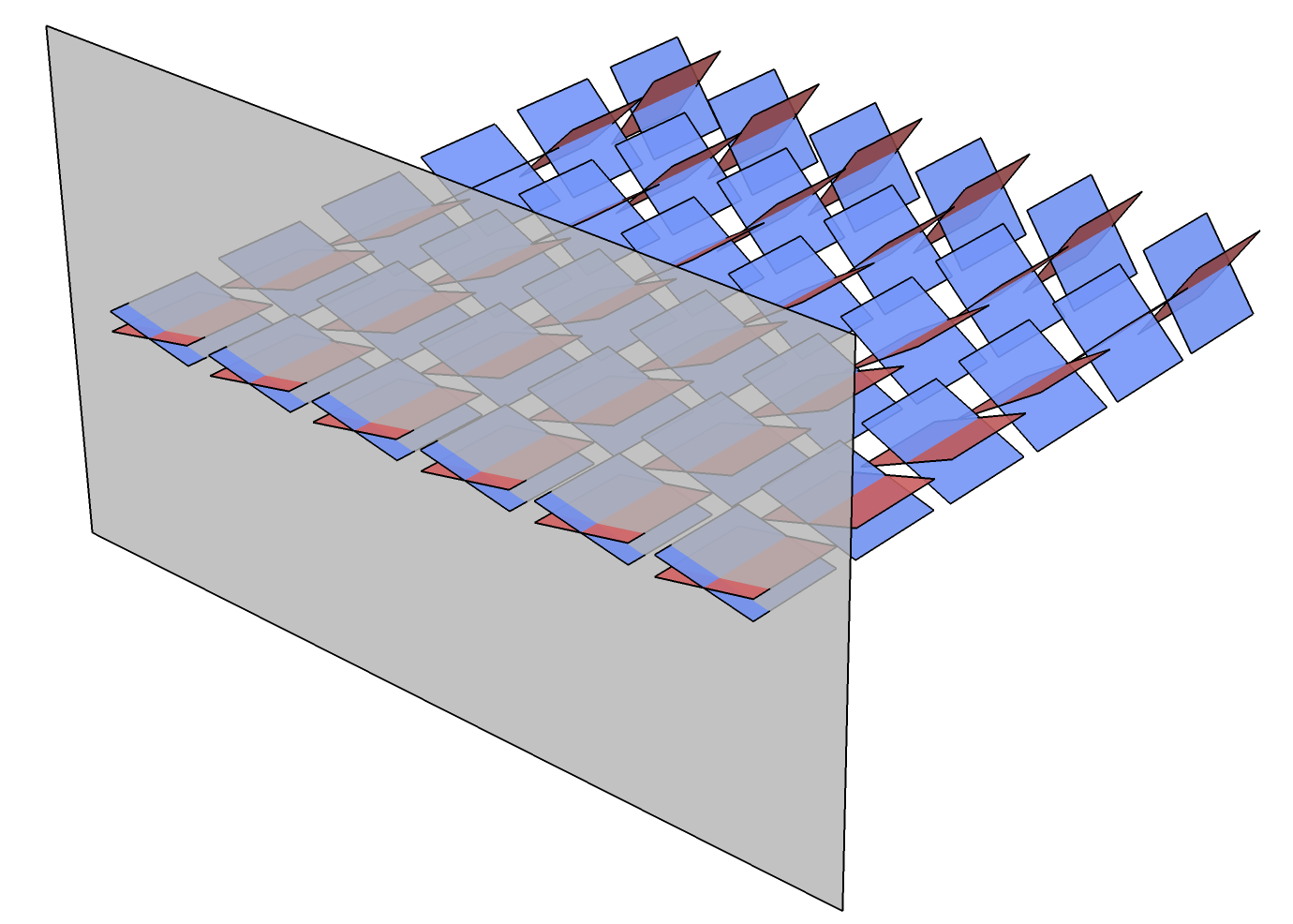}
    \caption{A positive contact structure (red) and a negative contact structure (blue), in a neighborhood of $\partial M$ (gray).}
    \label{fig:contactplanes}
\end{figure}

        \subsection{Results}

We establish two collections of results. First, we extend the approximation techniques of Eliashberg--Thurston and the construction of foliations from contact structures by the first author to the setting of manifolds with boundary. Second, we apply those results to the study of ziggurats. Our results relating foliations and contact structures might be of independent interest.

            \subsubsection{Foliations and contact structures on manifolds with boundary}

We assume $\partial M \neq \varnothing$. We say that the boundary foliation of a positive (resp.~negative) contact structure \textbf{dominates} another foliation $\G$ on $\partial M$ if it pointwise has higher (resp.~lower) slope than $\G$. See \zcref{fig:domination}.

\begin{figure}[htbp]
\centering
    \begin{subfigure}{0.45\linewidth}
    \centering
    \includegraphics[width=\linewidth]{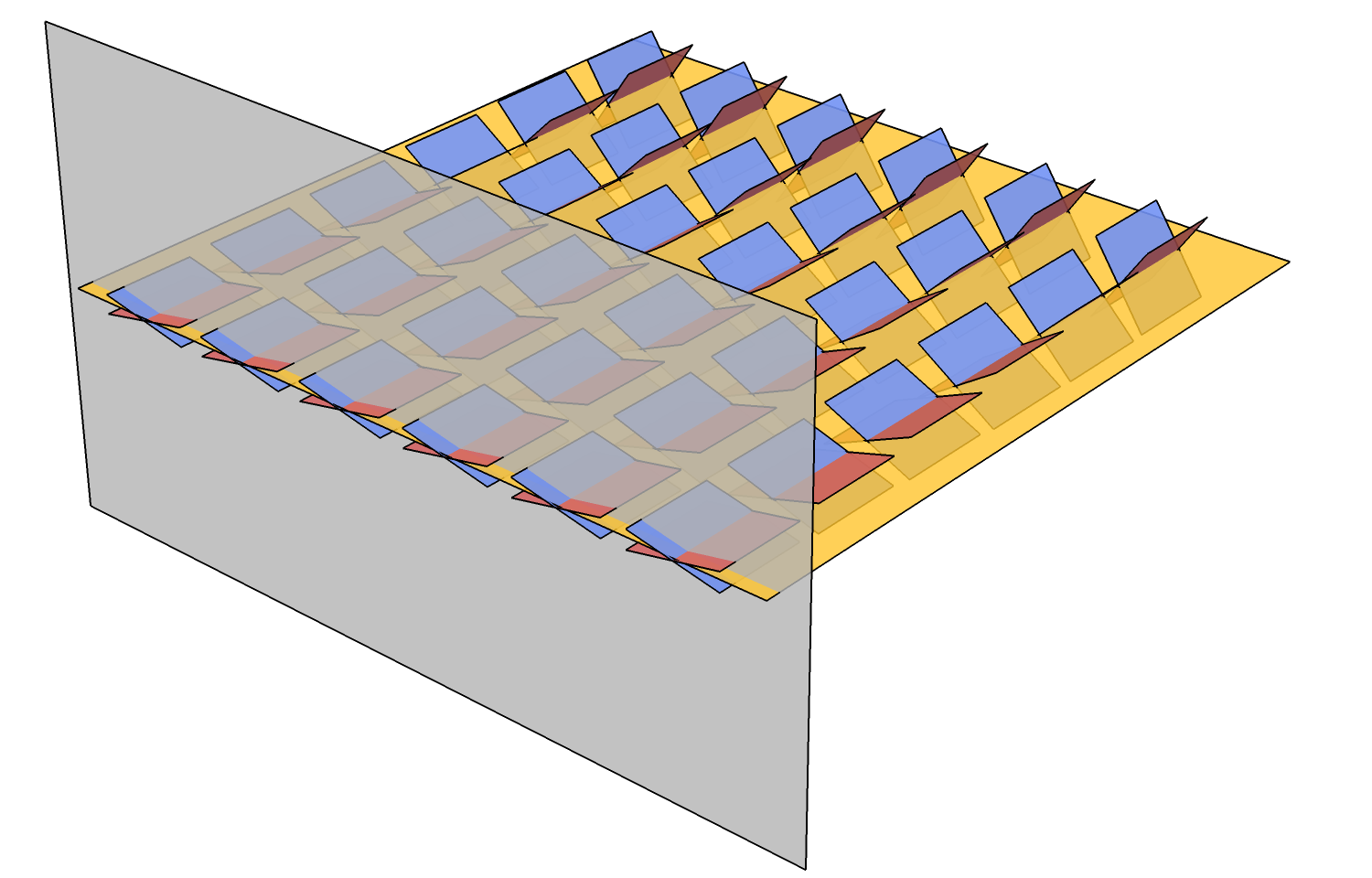}
    \caption{}
    \label{fig:dominating}
    \end{subfigure}%
    \hspace{1em}%
    \begin{subfigure}{0.45\linewidth}
    \centering
    \includegraphics[width=\linewidth]{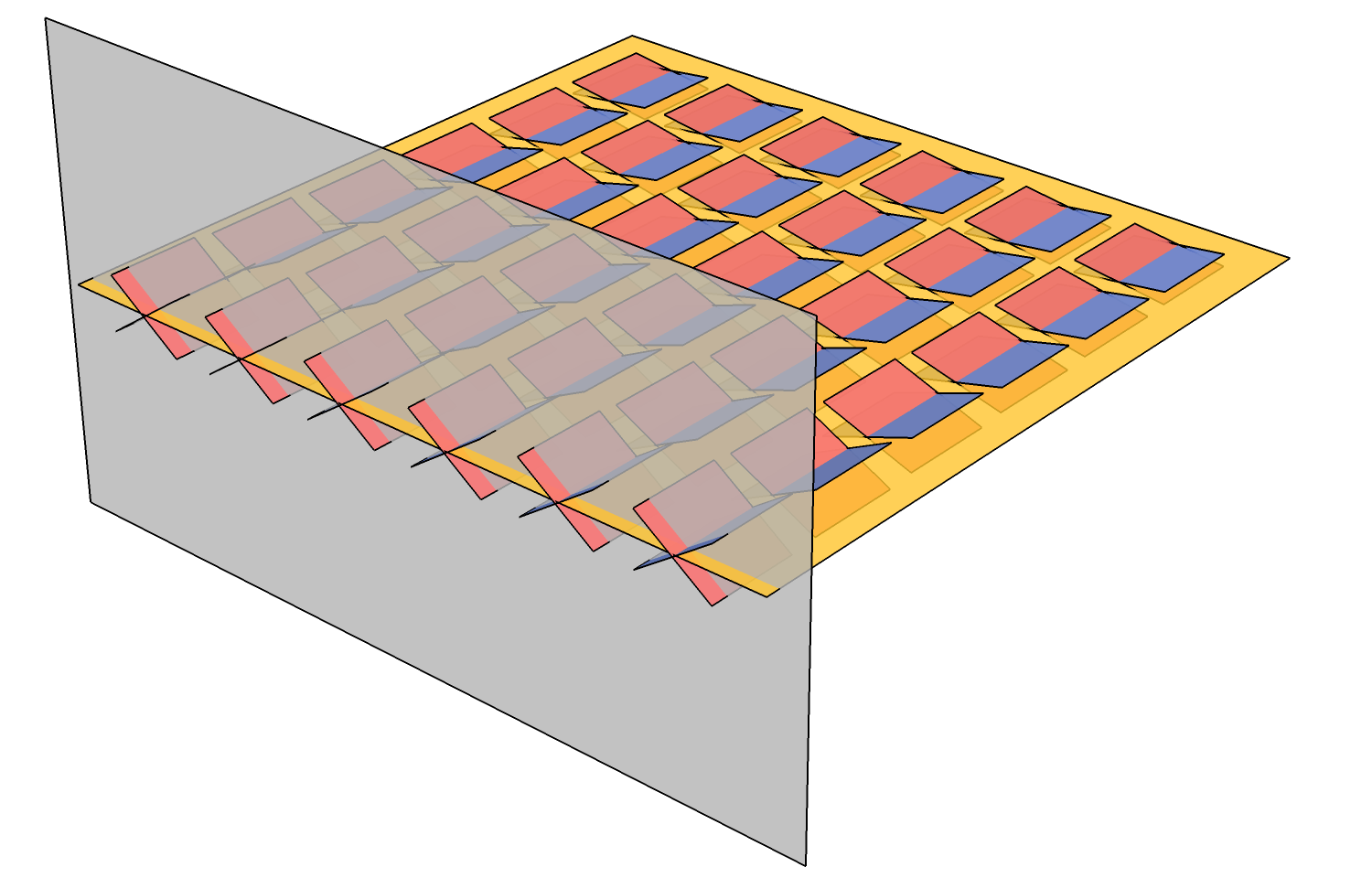}
    \caption{}
    \label{fig:nondominating}
    \end{subfigure}
    \caption{In \ref{fig:dominating}, the positive contact structure (red) and negative contact structure (blue) dominate a foliation (yellow) along $\partial M$ (gray). In \ref{fig:nondominating}, the contact structures do not dominate the foliation.}\label{fig:domination}
\end{figure}

\begin{thmintro}[Eliashberg--Thurston with boundary]\label{thmintro:ET}
    Let $\mathcal{F}$ be a $C^0$-foliation on $M$ transverse to $\partial M$.
    \begin{enumerate}
        \item (Coarse version) Assume that $(M,\F)$ is not homeomorphic to either of the following:
        \begin{itemize}
            \item $S^1 \times D^2$ with the standard foliation by disks,
            \item $\T^2\times I$ with a foliation semi-conjugate to an irrational foliation.
        \end{itemize}
        Then for every $\varepsilon > 0$, there exists a positive (resp.~negative) contact structure $\xi$  which is $\varepsilon$-close to $T\F$ in the $C^0$ topology, and whose boundary foliation dominates a foliation $\G = \G_\varepsilon$ on $\partial M$ which is monotone equivalent to $\partial \mathcal{F}$.
        \item (Fine version) If moreover $\mathcal{F}$ has neither compact planar leaves nor $\T^2$-type planar minimal sets (\zcref{def:pems}), then $\G$ can be chosen isotopic to $\partial \F$ along $\partial M$.
    \end{enumerate}
\end{thmintro}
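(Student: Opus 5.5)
The plan is to follow the standard Eliashberg--Thurston scheme in its $C^0$ form, as developed by Bowden and Kazez--Roberts, and to add a boundary-aware layer of bookkeeping on top of it. For the coarse version, we first double $M$ along $\partial M$, or equivalently work in a collar $\partial M\times[0,1)$ where $\F$ restricts to $\partial\F\times[0,1)$ after an isotopy. We then run the usual dichotomy on $\F$.

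\textbf{Case 1: $\F$ has holonomy.} If $\F$ has a leaf with nontrivial (contracting) holonomy, or after the standard perturbation of $C^0$-foliations (\cite{B16, KR17}) acquires one, we use the holonomy to build an initial contact region near a transverse loop. We then propagate twisting along all leaves using the Eliashberg--Thurston flow-box argument, as in \cite{ET}. Every leaf must reach the seed region, which is where the exceptional cases enter:
\begin{itemize}
    \item If $\F$ is the disk foliation of $S^1\times D^2$, no holonomy is available.
    \item If $\F$ is semi-conjugate to an irrational foliation of $\T^2\times I$, every holonomy is trivial up to semi-conjugacy and the twisting cannot be started.
\end{itemize}
In all other cases, holonomy either exists or can be created by a $C^0$-small perturbation in the interior, following \cite{KR17}. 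We use the Reeb-free and no-spherical-leaf arguments adapted to compact leaves that meet $\partial M$ (annuli, disks).

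\textbf{Boundary control.} Near $\partial M$, the twisting of a positive contact structure rotates the characteristic foliation in a fixed direction, so that its slope increases relative to the conventions of \zcref{fig:contactplanes}. We therefore arrange the boundary foliation $\partial\xi$ to be obtained from $\partial\F$ by pushing every leaf slightly in the positive direction. Where $\partial\F$ has Reeb annuli or closed leaves with one-sided holonomy, this push cannot be done leafwise; instead we take $\G$ to be a foliation obtained from $\partial\F$ by blowing up or collapsing such annuli. This is a monotone equivalence, and $\partial\xi$ strictly dominates $\G$. The negative case is symmetric.

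\textbf{Fine version.} Here we want $\G$ isotopic to $\partial\F$ itself. The only obstructions to making $\partial\xi$ dominate $\partial\F$ directly are closed leaves of $\partial\F$ whose holonomy cannot be made attracting from the required side by interior modification. Such a closed leaf bounds a compact planar leaf of $\F$, or lies in a $\T^2$-type planar minimal set, where no transverse loop through the leaf returns to supply twisting. Excluding these, every closed boundary leaf lies on a non-planar leaf or an exceptional set that carries nontrivial linking. We use this to insert a small attracting holonomy on the correct side, again by a Kazez--Roberts-type modification, and then domination holds without collapsing.

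\textbf{Main obstacle.} The hardest step is the local analysis near closed leaves of $\partial\F$: showing that one can always achieve domination after the monotone modification, and characterizing precisely when this fails (the planar cases). I would attempt it by reducing to a model neighborhood, namely an annulus leaf times an interval with prescribed boundary holonomy, and building the contact structure explicitly there before gluing.
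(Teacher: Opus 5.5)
Your outline follows the closed-case skeleton (create holonomy, then propagate contactness), but the step that is the real content of the theorem, control of $\partial\xi$, is asserted rather than constructed. ``Arrange $\partial\xi$ to be obtained from $\partial\F$ by pushing every leaf slightly in the positive direction'' is the conclusion, not a method. Doubling gives only tangential $C^0$-control at $\partial M$, and under that topology slope and type are not even continuous. A product collar $\partial\F\times[0,1)$ just relocates the problem: monotone rotation of $\xi$ along $\partial_r$ gives domination at $\partial M$ only if you already control the characteristic foliation on some inner torus. What is actually needed is to inject positive twisting into every leaf of $\partial\F$, on both sides of every point. That in turn requires every point to be leafwise connected to a place where the twisting can be absorbed. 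The paper does this as follows:
\begin{itemize}
\item drill annular fences at curves with infinitesimally attracting holonomy;
\item use $\partial_\ssq$-transitivity to build a $\partial$-full system of ribbons from $\partial M$ to the fences (and windows);
\item twist along these ribbons so that the new boundary foliation is mostly transverse to a copy of $\partial\F$, and upgrade this to genuine transversality by \zcref{lem:mostlytrans};
\item only then run Bowden's approximation on the drilled manifold and refill the fences (\zcref{prop:betterfence}, \zcref{prop:contapprox}).
\end{itemize}
Your account of the monotone equivalence is also misplaced. Reeb annuli and one-sided holonomy on $\partial\F$ cost nothing. The blowups and blowdowns of $\partial\F$ come from isolating compact leaves (\zcref{lem:finiteleaves}) and from inserting holonomy along boundary-parallel loops. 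The latter is unavoidable for compact planar leaves and $\T^2$-type planar minimal sets, so holonomy cannot always be created ``in the interior'' as you claim.

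The fine version has a further gap: its hypotheses do not exclude $\T^3$-type planar minimal sets, and there your step ``insert a small attracting holonomy on the correct side'' fails. Take the irrational linear foliation of $\T^3$ with a transverse link drilled out, which has meridional $\natural$ boundary. Every embedded loop in a leaf is a product of coherently oriented boundary circles. The fine version only tolerates $\natural/\sharp$ blowups of $\partial\F$ for $\xi_+$, so any holonomy you create is $\geq\mathrm{id}$ and never infinitesimally attracting. Hence no fence exists, and there is no ``linking'' to exploit. (Replacing the minimal set by one of rational slope, in the style of Kazez--Roberts, would be a different idea; refitting the guts is exactly the difficulty the paper avoids.) The paper handles these sets differently:
\begin{itemize}
\item it drills a transverse maypole through the minimal set;
\item it twists along alternately strongly ascending and descending ribbons attached to the maypole, so that the commutator trick (\zcref{lem:commutatortrick}, with simultaneous Dirichlet approximation of the slopes) gives strictly positive slope on the maypole torus (\zcref{prop:maypoletwist});
\item it uses that torus as a toroidal window during the approximation, and fills it at the end with a contact structure close to the horizontal disks (\zcref{lem:maypolefilling}).
\end{itemize}
Peripheral spiral minimal sets likewise need boundary windows (\zcref{lem:spiral}) rather than fences. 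The model computation you single out as the main obstacle, an annular leaf times an interval, addresses neither issue.
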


Here, we say that two foliations are \textbf{monotone equivalent} if they can be connected by a finite sequence (zigzag) of semi-conjugacies. We will prove a more precise version in the presence of compact planar leaves and $\T^2$-type planar minimal sets; see \zcref{thm:fineET} below. Note that this theorem does not involve any transverse flow, as opposed to its converse.

\begin{thmintro}[Converse to Eliashberg--Thurston with boundary]\label{thmintro:ETconverse}
    Let $(\xi_-, \xi_+)$ be a contact pair on $M$ positively transverse to $\Phi$. Furthermore, let $\mathcal{G}_\partial$ be a smooth foliation on $\partial M$ which is transverse to $\Phi\vert_{\partial M}$ and which is dominated by $\partial \xi_-$ and $\partial \xi_+$ along $\partial M$. 
    
    Then, there exists a $C^0$-foliation $\mathcal{F}$ on $M$ transverse to $\Phi$ (and in particular to $\partial M$) such that $\partial \mathcal{F}$ is semi-conjugate to $\mathcal{G}_\partial$.
\end{thmintro}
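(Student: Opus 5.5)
The plan is to adapt the first author's closed-manifold construction~\cite{M24}, which builds a foliation from a contact pair $(\xi_-,\xi_+)$ transverse to $\Phi$, so that it respects a prescribed boundary foliation. Recall how that construction works. Because $\xi_-$ and $\xi_+$ are both positively transverse to $\Phi$, the flow direction gives a convex family of plane fields between them. The foliation is then produced as a limit of the \emph{$\Phi$-flowed images} of the contact structures: one considers the sets of orbit segments of $\Phi$ that remain, respectively, ``positively'' and ``negatively'' tangent to $\xi_\pm$-Legendrian directions, and shows that the resulting family of surfaces converges to a lamination. The interpolation property then fills the complementary regions, giving a $C^0$-foliation transverse to $\Phi$. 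Equivalently, one looks at $\xi_+$ and $\xi_-$ through the flow: rotation under $\Phi$ pushes $\xi_+$ and $\xi_-$ toward each other, and the closed condition ``limit of both'' produces $T\F$.

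\textbf{Step 1: set up the boundary.} The domination hypothesis says that along $\partial M$ the characteristic foliations satisfy $\partial\xi_- < \G_\partial < \partial\xi_+$ in slope, pointwise. This is exactly the configuration one sees near a leaf of a foliation under Eliashberg--Thurston perturbation (\zcref{fig:domination}). I would first attach an exterior collar $\partial M\times[0,1]$ carrying the product foliation $\G_\partial\times[0,1]$, and extend $\Phi$ by a vertical flow. I would then perturb $\xi_\pm$ near $\partial M$ so that they remain contact, remain transverse to $\Phi$, and, in the collar, twist monotonically toward $\G_\partial\times[0,1]$ from opposite sides. Domination is precisely what makes this one-sided interpolation compatible with the signs of the contact conditions, by the convention illustrated in \zcref{fig:contactplanes}.

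\textbf{Step 2: run the construction with a barrier.} Next I would run the~\cite{M24} construction on the enlarged manifold, treating the surface $\partial M\times\{1\}$ (or the $\G_\partial$-foliated annuli) as invariant. The plane fields produced by the flowing procedure are trapped between $\xi_-$ and $\xi_+$. Along $\partial M$, whose leaves are $\Phi$-invariant and $2$-dimensional, this traps the boundary foliation between $\partial\xi_-$ and $\partial\xi_+$. A Poincaré--Bendixson / monotonicity argument on the torus should then show that the limiting boundary foliation is obtained from $\G_\partial$ by collapsing or blowing up leaves. Because the limit $\F$ is only $C^0$ and may blow up leaves, the conclusion is semi-conjugacy rather than isotopy.

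\textbf{Main obstacle.} I expect the hardest step to be controlling the limit lamination at the boundary. Leaves could accumulate nontrivially on $\partial M$, or the flowed-surface limit could fail to be transverse to $\partial M$. My first attempt would be a comparison argument: construct, inside the collar, a genuine foliation $\G_\partial\times[0,1]$ that is simultaneously a barrier for both the positive and negative flowing processes. Domination is used here again, so that the sweepouts cannot cross the barrier. The global limit then glues to the barrier foliation, and semi-conjugacy is read off by projecting leaves of $\partial\F$ along $\Phi$-orbits on the collar onto $\G_\partial$, which yields a monotone map.
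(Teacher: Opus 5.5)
There is a genuine gap, and it starts with the mechanism you attribute to \cite{M24}. The foliation there is not obtained by pushing $\xi_\pm$ forward along $\Phi$. It is tangent to the unstable plane field $\eta=\lim_{t\to+\infty}(\varphi^t_X)_*\xi_\pm$ of the \emph{axis} $X$ of the pair, the vector field spanning $\xi_-\cap\xi_+$ (defined by $\iota_X\mathrm{dvol}=\alpha_-\wedge\alpha_+$ for balanced contact forms). Because $X$ is tangent to both planes, the two contact conditions make $\xi_+$ and $\xi_-$ rotate in opposite directions about $X$ under its flow. \cite{M24} shows they have a common limit lying between them, hence transverse to $\Phi$. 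Pushforwards along $\Phi$, which is transverse to both planes, have no such property: for a hyperbolic flow they typically converge to the weak unstable bundle, which contains $\Phi$. Moreover, integrability of $\eta$ is the deep input \cite[Theorem C]{M24}. It needs tautness of the pair (here supplied by $\Phi$ being nonwandering) and only produces a \emph{branching} foliation. The genuine $C^0$-foliation comes from splitting it as in \cite{BI08}, and that splitting, not a blown-up limit lamination, is why the conclusion is ``semi-conjugate'' rather than ``isotopic''.

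Second, \cite{M24} is a closed-manifold result, and ``running it on the enlarged manifold with $\partial M\times\{1\}$ treated as invariant'' is not yet a mechanism. Nor can trapping $\partial\F$ in the cone between $\partial\xi_-$ and $\partial\xi_+$, followed by a Poincar\'e--Bendixson argument, give a semi-conjugacy to $\mathcal{G}_\partial$: that cone contains foliations of many slopes and types. The missing idea is to force the limit plane field to equal $\langle\partial_\tau\rangle\oplus T\mathcal{G}_\partial$ near $\partial M$. Work in collar coordinates $[-1,0]_\tau\times\partial M$ given by the flow of $X$, which is transverse to $\partial M$ by domination. Write $\alpha_\pm=\pm\alpha+f_\pm\beta$ with $\ker\alpha=T\mathcal{G}_\partial$, and replace $f_\pm$ by $\widehat f_\pm$ with $\partial_\tau\widehat f_\pm<0$ and $\widehat f_\pm=-\tau$ near $\tau=0$. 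Then both contact structures equal $\langle\partial_\tau\rangle\oplus T\mathcal{G}_\partial$ along $\partial M$, with opposite coorientations. So $\partial M$ lies in the tangency locus, and the axis is $2\tau\partial_\tau$ there. Now double $M$, gluing $\widehat\xi_\pm$ on $M$ to the reflected $\widehat\xi_\mp$ on $-M$. This gives a positive contact pair on a closed manifold, transverse to the nonwandering doubled flow, whose unstable plane field is $\langle\partial_\tau\rangle\oplus T\mathcal{G}_\partial$ on the whole collar. That plane field is uniquely integrable there. Hence the foliation produced by \cite{M24} and \cite{BI08} restricts on the collar to a blowup of $[-1,1]\times\mathcal{G}_\partial$, and restricting to $M$ proves the statement. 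Your Step 1 picture, with both contact structures converging to $\mathcal{G}_\partial\times[0,1]$ from opposite sides, is the right one. The ``barrier'' you were looking for is precisely this invariant tangency locus of the axis flow, which could also be used to adapt \cite{M24} directly to $M$ instead of doubling.
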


The reader should imagine that the foliation $\G_\partial$ in the theorem is ``sandwiched'' between $\xi_-$ and $\xi_+$ along $\partial M$, as in \zcref{fig:dominating}. Unfortunately, it is not possible to make $\partial \F$ \emph{isotopic} to $\G_\partial$ in general, but this result will be powerful enough for all our applications.

            \subsubsection{Applications to ziggurats}

In the rest of the introduction and also in \zcref{sec:mainresults}, we make the following standing assumptions unless otherwise stated.

\begin{assum}\label{assum:1} \leavevmode
\begin{itemize}[leftmargin=*]
    \item $M$ is a compact, connected, oriented $3$-manifold with toroidal boundary components. The boundary components of $M$ are framed. We also assume that $M \ncong S^1 \times D^2$ and $M \ncong \mathbb{T}^2 \times I$, so that the hypotheses of the coarse Eliashberg--Thurston theorem (item 1 of \zcref{thmintro:ET}) are satisfied. 
    
    \item $\Phi$ is a smooth, nonwandering flow on $M$ whose restriction to each boundary component of $\partial M$ induces a Reebless foliation of slope $+\infty$ (the \textbf{degeneracy slope}).
\end{itemize}
\end{assum}
Notice that changing the framings on the boundaries of $M$ without modifying the degeneracy slopes simply induces a translation in $\R^n$ by an integer vector.

\medskip

The master result that will allow us to apply techniques from contact geometry to the study of ziggurats is the following \emph{ziggurat intersection theorem}, which is a consequence of \zcref{thmintro:ET, thmintro:ETconverse}.

\begin{thmintro} \label{thmintro:contactzigg}
    The foliation ziggurat $\mathcal Z(\Phi)$ is exactly the intersection of the negative and positive contact ziggurats: 
    $$\mathcal{Z}^-(\Phi) \cap \mathcal{Z}^+(\Phi) = \mathcal{Z}(\Phi).$$
\end{thmintro}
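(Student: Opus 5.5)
The proof splits into the two inclusions. The inclusion $\mathcal Z(\Phi)\subseteq \mathcal Z^-(\Phi)\cap\mathcal Z^+(\Phi)$ should follow from \zcref{thmintro:ET}, and the reverse inclusion from \zcref{thmintro:ETconverse}. In both directions the real work is bookkeeping along $\partial M$: the relationship between the boundary foliations and the multislope $\bm s$ has to be made exact, using a collar of $\partial M$ adapted to $\Phi$.

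\emph{Step 1: $\mathcal Z(\Phi)\subseteq\mathcal Z^\pm(\Phi)$.}
\begin{itemize}
\item Let $\F$ be $\Phi$-horizontal with boundary multislope $\bm s$. By \zcref{assum:1}, $M\ncong S^1\times D^2$ and $M\ncong\T^2\times I$, so the coarse version of \zcref{thmintro:ET} applies.
\item For small $\varepsilon>0$ it gives $\xi_\pm$ that are $\varepsilon$-close to $T\F$ in $C^0$. Since $M$ is compact and positive transversality to $\Phi$ is a $C^0$-open condition, both $\xi_\pm$ are positively transverse to $\Phi$.
\item The boundary foliations $\partial\xi_\pm$ dominate a foliation $\G$ that is monotone equivalent to $\partial\F$. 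Semi-conjugacies preserve the slope (the rotation number of the return map), so $\G$ has slope $\bm s$.
\item It remains to see that $\partial\xi_\pm$ itself has slope $\bm s$. Domination only gives $\partial\xi_+$ slope $\ge \bm s$, and this is where I expect to have to say something. Because $\xi_+$ is $C^0$-close to $T\F$, its boundary slope lies within a small neighbourhood of $\bm s$.
\item I would then correct the slope in a thin collar $\partial M\times[0,1]$ built from a section transverse to $\Phi|_{\partial M}$. Adding twisting near the boundary decreases the slope of a positive structure, and it can be done while staying transverse to $\Phi$. This lands the slope exactly on $\bm s$; symmetrically for $\xi_-$.
\item If $\mathcal Z^\pm$ is defined through domination of a slope-$\bm s$ foliation, this correction step is unnecessary.
\end{itemize}

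\emph{Step 2: $\mathcal Z^-(\Phi)\cap\mathcal Z^+(\Phi)\subseteq\mathcal Z(\Phi)$.}
\begin{itemize}
\item Take $\xi_\pm$, each positively transverse to $\Phi$, with boundary multislope $\bm s$. To apply \zcref{thmintro:ETconverse} I need a smooth foliation $\G_\partial$ on $\partial M$, transverse to $\Phi|_{\partial M}$, that is \emph{strictly} dominated by both $\partial\xi_-$ and $\partial\xi_+$.
\item Having equal slopes is not enough, especially for rational $\bm s$, where $\partial\xi_\pm$ may have closed leaves of slope $\bm s$.
\item My plan is to push the boundary inward. In a $\Phi$-adapted collar, the slope of the characteristic foliation of $\xi_+$ on $\partial M\times\{t\}$ strictly increases as we move inward, while that of $\xi_-$ strictly decreases. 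This matches the outward-twisting convention of \zcref{fig:contactplanes}, after a preliminary modification putting each $\xi_\pm$ in a standard rotating normal form in the collar.
\item On $M'=M\setminus(\partial M\times[0,\delta))\cong M$, with the framing carried across, the foliation $\partial\xi_+$ then has slope $>\bm s$ and $\partial\xi_-$ has slope $<\bm s$.
\item I would take $\G_\partial$ to be a foliation of slope exactly $\bm s$ lying pointwise between them, for instance a suitably reparametrized suspension. It is automatically transverse to $\Phi$ because it is sandwiched between two $\Phi$-transverse line fields.
\item \zcref{thmintro:ETconverse} then yields a $\Phi$-horizontal $C^0$-foliation $\F$ whose boundary foliation $\partial\F$ is semi-conjugate to $\G_\partial$. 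Hence $\partial\F$ has multislope $\bm s$, and $\bm s\in\mathcal Z(\Phi)$.
\end{itemize}

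\emph{Main obstacle.} The delicate step is the collar normal form in Step 2. I need to modify $\xi_+$ and $\xi_-$ independently near $\partial M$, keeping both positively transverse to $\Phi$, so that their boundary foliations become strictly separated in slope, with pointwise (not merely averaged) domination of a single slope-$\bm s$ foliation. I would first put $\Phi$ into a product model near $\partial M$, which is possible since it induces a Reebless foliation of slope $+\infty$ there. In that model each $\xi_\pm$ can be isotoped to a $\T^2$-invariant rotating form transverse to the vertical direction. There, monotonicity of the slope in $t$ is explicit, and the pointwise sandwiching reduces to a one-variable comparison.
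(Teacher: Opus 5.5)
Your Step 1 matches the paper: coarse Eliashberg--Thurston, then twisting via \zcref{lem:trim_twist} down onto a slope-$\bm s$ foliation dominated by $\partial\xi_+$. You should drop the claim that $C^0$-closeness to $T\F$ forces $\slope(\partial\xi_+)$ to be near $\bm s$. It fails for $C^0$-foliations, since coarse slope is not continuous in the tangential topology, and you do not need it.

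The gap is in Step 2, at the assertion that after pushing inward $\partial\xi_+$ has slope $>\bm s$ and $\partial\xi_-$ has slope $<\bm s$. Pushing inward (i.e.\ trimming) raises the slope of $\partial\xi_+$ \emph{pointwise}. For rational $s_i$, however, the coarse slope can stay exactly $s_i$; this is the phase locking of \zcref{lem:phaselocking}. For instance, suppose $\partial_i\xi_+$ is tangent to $\partial_x+\epsilon\sin(2\pi y)\,\partial_y$: slope $0$, type $0^\dagger$, with one attracting and one repelling hyperbolic closed leaf. The characteristic foliations on nearby parallel tori are $C^1$-close, so they still have hyperbolic closed leaves of slope $0$, and $\slope_i$ remains $0$ after any small trim.

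Your proposed remedy cannot work either. An isotopy conjugates the boundary characteristic foliation, and this one is not conjugate to a linear foliation, so no isotopy makes $\xi_+$ $\T^2$-invariant near $\partial M$. Also, $\Phi$ cannot in general be made tangent to the parallel tori of a collar (think of the boundary orbits of a blown-up pseudo-Anosov flow). So the push-in should be done on $M$ itself via \zcref{lem:trim_twist}, not by passing to $M'$.

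What trimming does give, by \zcref{lem:phaselocking}, is a dichotomy in each coordinate:
\begin{itemize}
\item either $\slope_i(\xi_+)>s_i$ or $\fineslope_i(\xi_+)\in\{s_i^\sharp,s_i^\dagger\}$;
\item either $\slope_i(\xi_-)<s_i$ or $\fineslope_i(\xi_-)\in\{s_i^\flat,s_i^\dagger\}$;
\item for irrational $s_i$, equality cannot occur, so the slopes are strictly separated.
\end{itemize}
The missing ingredient is the fine-slope order $s^\flat\prec s^\dagger\prec s^\sharp$ together with the accordion trick (\zcref{lem:accordion}, \zcref{prop:fineslope_inequality}). Take $\G$ of slope $\bm s$ with type $\dagger$ in the rational coordinates. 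Then $\fineslope_i(\xi_-)\prec\fineslope_i(\G)\prec\fineslope_i(\xi_+)$ holds in every case of the dichotomy. You then horizontally isotope the boundary foliations, extending into $M$ by \zcref{lem:isotopy_extension_contact}, to get $\partial\xi_-\prec\G\prec\partial\xi_+$ pointwise.

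Even when the slopes are strictly separated, the two independent line fields $\partial\xi_\pm$ need not be pointwise ordered. So this isotopy step (via \zcref{lem:straightening2}) is needed there too; a ``suitably reparametrized suspension'' does not by itself produce the sandwich. Only after this does \zcref{thmintro:ETconverse} apply.
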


\begin{figure}[htpb]
    \centering
    \includegraphics[width=0.4\linewidth]{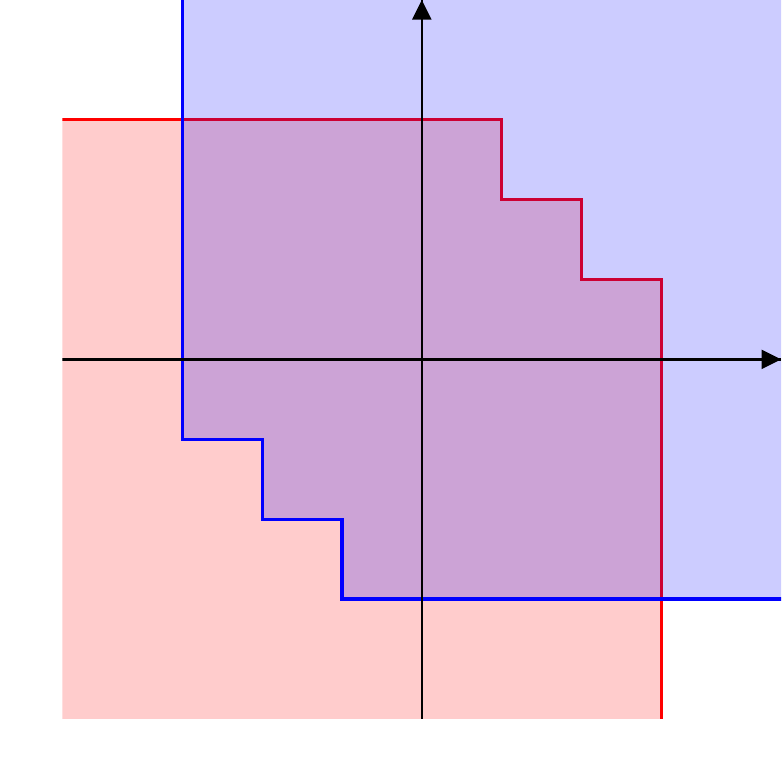}
    \caption{The intersection of $\Zg^-(\Phi)$ (blue) and $\Zg^+(\Phi)$ (red) is $\Zg(\Phi)$.}
    \label{fig:contactzigg}
\end{figure}

The next few results will be rather immediate applications of \zcref{thmintro:contactzigg}. First, we show that the ziggurat $\Zg(\Phi)$ satisfies a strong convexity property.

\begin{defn} \label{def:boxconv}
    A subset $A \subset \R^n$ is \textbf{box-convex} if for all $\bm{x}, \bm{y} \in A$ satisfying $\bm{x} \leq \bm{y}$, we have $[\bm{x}, \bm{y}] \subset A$.
\end{defn}

\begin{rem}
    Box-convexity is stable under taking intersections and under closure.
\end{rem}

\begin{defn}
    Let $S \subset \R^n$. The \textbf{box-closure of $S$}, denoted by $[S]$, is defined as
    $$[S] \coloneqq \left\{ \bm{s} \in \R^n \ \vert \ \exists \bm{s}^\pm \in S, \ \bm{s}^- \leq \bm{s} \leq \bm{s}^+\right\}.$$
    It is the smallest box-convex subset of $\R^n$ containing $S$.
\end{defn}

\begin{figure}[htbp]
\centering
    \begin{subfigure}{0.4\linewidth}
    \centering
    \includegraphics[width=\linewidth]{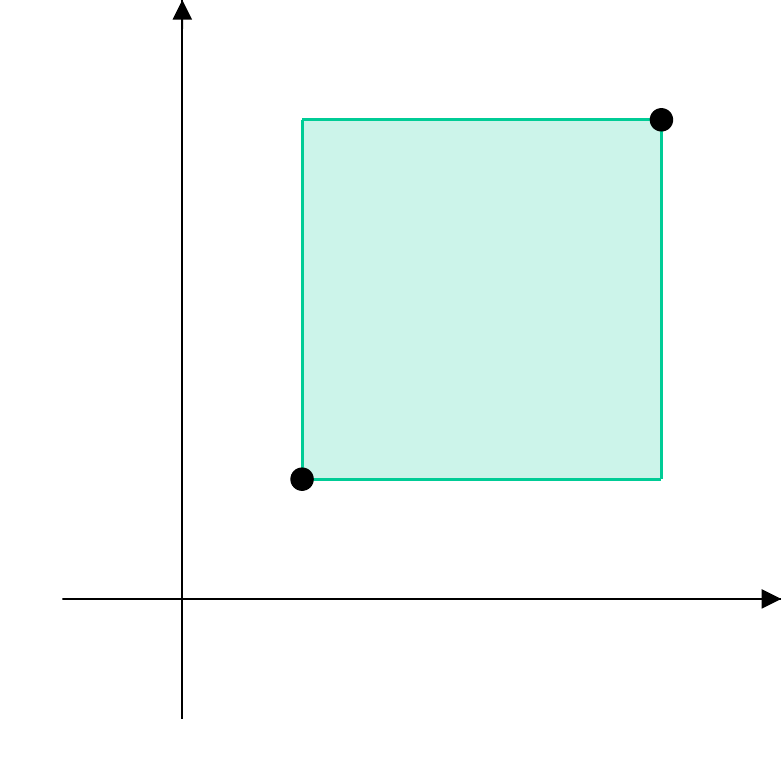}
    \caption{}
    \label{fig:nonbox}
    \end{subfigure}%
    \hspace{2em}%
    \begin{subfigure}{0.4\linewidth}
    \centering
    \includegraphics[width=\linewidth]{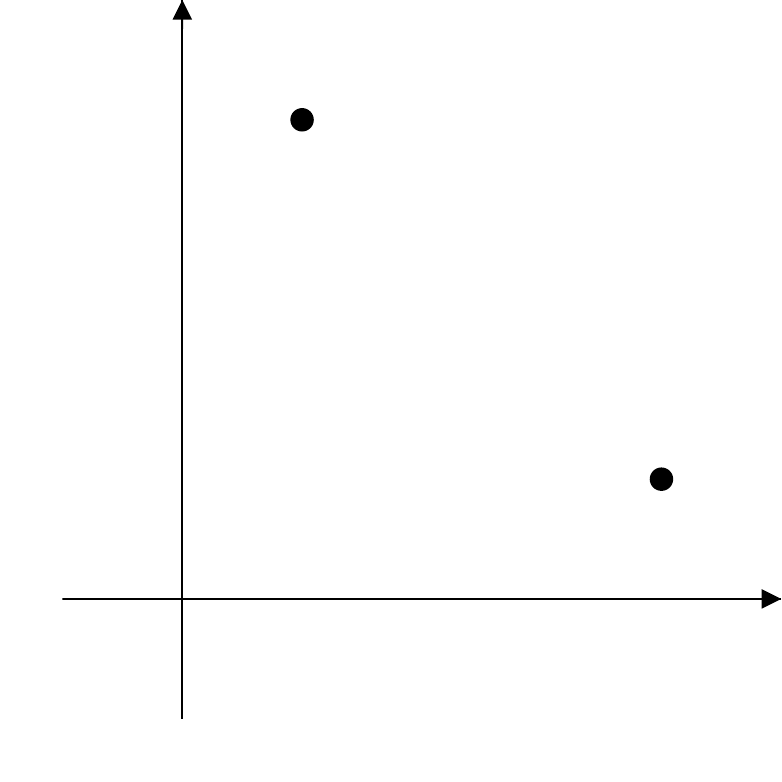}
    \caption{}
    \label{fig:emptybox}
    \end{subfigure}
    \caption{The box-closure of the pair of points on the left is a rectangle, while the box-closure of the pair on the right does not contain any other points.}\label{fig:boxconvex}
\end{figure}

\begin{thmintro}[Box-convexity] \label{thmintro:box-convexity}
    The set $\mathcal{Z}(\Phi)$ is box-convex.
\end{thmintro}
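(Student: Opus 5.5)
The plan is to deduce box-convexity from the ziggurat intersection theorem (\zcref{thmintro:contactzigg}) and the remark that intersections of box-convex sets are box-convex. Since $\Zg(\Phi)=\Zg^-(\Phi)\cap\Zg^+(\Phi)$, it suffices to prove a one-sided monotonicity for each contact ziggurat. For $\Zg^+(\Phi)$: if $\bm{s}\in\Zg^+(\Phi)$ and $\bm{s}'\leq\bm{s}$, then $\bm{s}'\in\Zg^+(\Phi)$. Symmetrically, $\Zg^-(\Phi)$ should be upward closed. Granting these, take $\bm x\le\bm y$ in $\Zg(\Phi)$ and $\bm s\in[\bm x,\bm y]$. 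Then $\bm s\le \bm y\in\Zg^+(\Phi)$, so $\bm s\in\Zg^+(\Phi)$. Also $\bm s\ge\bm x\in\Zg^-(\Phi)$, so $\bm s\in\Zg^-(\Phi)$. Hence $\bm s\in\Zg(\Phi)$.

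To prove downward closure of $\Zg^+(\Phi)$, I would use the orientation convention recorded in \zcref{fig:contactplanes}: adding twisting to a positive contact structure near the boundary decreases the boundary slope. Concretely, take a positive contact structure $\xi_+$ positively transverse to $\Phi$ with boundary multislope $\bm s$. Choose a collar $\partial_i M\times[0,\delta]$ of each boundary torus on which $\Phi$ is (close to) tangent to the tori. After a $C^0$-small perturbation near the boundary, I would put $\xi_+$ in a normal form there: $\xi_+=\ker(dt - \ldots)$ whose characteristic foliation on each torus $\partial_i M\times\{t\}$ is linear of slope $\sigma_i(t)$, transverse to the degeneracy direction of slope $+\infty$. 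Positivity means the slope rotates monotonically in $t$, clockwise as seen from outside. I would then modify only $\sigma_i$, making it decrease more steeply in the collar so that the slope at the boundary becomes $s_i'<s_i$. Since $s_i'$ is finite, the rotation never passes through the slope $+\infty$, so the planes remain positively transverse to $\Phi$. If $s_i'=s_i$ nothing is done, so coordinates can be treated independently. The negative case is the mirror argument.

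The main obstacle is the normal-form step. The boundary foliation $\partial\xi_+$ need not be linear, and $\Phi|_{\partial M}$ is only a Reebless foliation of slope $+\infty$, not a linear flow. So I must check that one can isotope $\xi_+$ near $\partial M$, keeping it transverse to $\Phi$, to a "rotating" collar model, and that the boundary multislope (a rotation-number-type invariant of $\partial\xi_+$) is preserved. The first thing I would try is to interpolate through contact structures on the collar that are transverse to $\Phi$. The point to use is that transversality to a vector field tangent to the tori only constrains the slope to avoid $+\infty$; that leaves an interval of allowed slopes, which is contractible, so the interpolation should be unobstructed. If this fails, the fallback is to realize slope decreases by gluing a thin $\T^2\times I$ layer carrying a positive rotative contact structure transverse to a flow parallel to $\Phi|_{\partial M}$. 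One then identifies the new $\partial M$ with the old one via the collar.
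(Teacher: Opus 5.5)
Your proposal is correct and follows the paper's proof: box-convexity is deduced from $\Zg(\Phi)=\Zg^-(\Phi)\cap\Zg^+(\Phi)$ (\zcref{thmintro:contactzigg}), together with downward closure of $\Zg^+(\Phi)$ and upward closure of $\Zg^-(\Phi)$, both obtained by adding twisting near $\partial M$ (\zcref{prop:contactboxconv}). The normal-form obstacle you raise is handled in the paper by \zcref{lem:trim_twist}. That lemma writes the contact form in a collar as $\lambda + r\mu$ and interpolates to $\lambda+(r+\tau(r)R)\mu$, keeping transversality to $\Phi$ by convexity, so $\partial\xi_+$ never needs to be linearized; one only needs a smooth horizontal foliation of slope $s_i'$ lying pointwise below $\partial_i\xi_+$, which \zcref{lem:straightening2} and isotopy extension provide.
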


When $n=1$, i.e., when $M$ has a unique boundary component, this result simply means that $\Zg(\Phi) \subset \R$ is an interval. When $n \geq 2$, more complicated shapes can appear.

\medskip

We next establish some properties of rational and irrational slopes.

\begin{thmintro}[Dehn filling] \label{thmintro:rationalfilling}
Suppose $\bm{s}\in \Q^n$. The multislope $\bm{s}$ is realized by a $\Phi$-horizontal foliation with linear boundary type if and only if one of the following holds:
    \begin{itemize}
        \item $\bm{s}\in \interior \Zg(\Phi)$, or
        \item $\Phi$ is a suspension flow of a fibration with compact planar fibers with multislope $\bm{s}$.
    \end{itemize} 
\end{thmintro}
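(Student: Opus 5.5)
We prove the two implications separately, using the ziggurat intersection theorem (\zcref{thmintro:contactzigg}) together with both directions of the foliation/contact correspondence.

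\emph{Interior points are realized with linear boundary.} Suppose $\bm s\in\Q^n\cap\interior\Zg(\Phi)$.
\begin{itemize}
\item Choose multislopes $\bm s^-<\bm s<\bm s^+$ in $\Zg(\Phi)$.
\item By \zcref{thmintro:contactzigg}, $\Zg(\Phi)=\Zg^-(\Phi)\cap\Zg^+(\Phi)$, so $\bm s^-,\bm s^+$ lie in both contact ziggurats.
\item I expect contact ziggurats to be well behaved under monotonicity: adding twisting near $\partial M$ lowers the slope of $\partial\xi_+$ and raises that of $\partial\xi_-$. Concretely, I will realize $\bm s^+$ by a positive contact structure $\xi_+$ and $\bm s^-$ by a negative one $\xi_-$, both positively transverse to $\Phi$.
\item Then modify them in a collar of $\partial M$, keeping transversality to $\Phi$, so that $\partial\xi_+$ has slope pointwise above $\bm s$ and $\partial\xi_-$ pointwise below $\bm s$. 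This should be possible because in a collar $\T^2\times I$ with $\Phi$ vertical one can interpolate the characteristic foliations.
\item Let $\G_\partial$ be the linear foliation of slope $\bm s$, which is transverse to the degeneracy direction $+\infty$. It is then dominated by $\partial\xi_\pm$, since domination is an open, pointwise slope comparison.
\item \zcref{thmintro:ETconverse} now gives a $\Phi$-horizontal $\F$ with $\partial\F$ semi-conjugate to $\G_\partial$.
\end{itemize}
Since $\bm s$ is rational, a foliation semi-conjugate to a linear rational foliation on $\T^2$ has closed leaves of slope $\bm s$ but possibly nonlinear holonomy. I would fix this by spinning, i.e.\ replacing a collar neighborhood by a product foliation with linear boundary through a standard collar modification. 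This collar modification is the step where the argument is least automatic.

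\emph{The fibration case is realized with linear boundary.} If $\Phi$ is the suspension of a fibration whose compact planar fibers have multislope $\bm s$, the fibration itself is a $\Phi$-horizontal foliation with linear boundary. This direction is immediate.

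\emph{Linear boundary forces one of the two cases.} Suppose $\F$ is $\Phi$-horizontal with linear rational boundary of slope $\bm s$, and $\bm s$ is not interior. The key is the coarse and fine Eliashberg--Thurston theorem (\zcref{thmintro:ET}) together with \zcref{thmintro:contactzigg}.
\begin{itemize}
\item Suppose first that $\F$ has no compact planar leaves and no $\T^2$-type planar minimal sets. Then the fine version gives $\xi_\pm$ whose boundaries dominate a foliation isotopic to $\partial\F$.
\item Because $\partial\F$ is linear with closed leaves and transverse to $\Phi$, its holonomy around the boundary is linear, hence the identity near the closed leaves. So a small perturbation of $\F$ near $\partial M$ realizes nearby slopes.
\item Equivalently, the openness of the contact condition lets us push the slope of $\xi_+$ down and that of $\xi_-$ up slightly while retaining domination. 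Applying \zcref{thmintro:ETconverse} then produces horizontal foliations of all multislopes in a small box around $\bm s$, contradicting $\bm s\notin\interior\Zg(\Phi)$.
\item Otherwise $\F$ has compact planar leaves or planar minimal sets. A compact horizontal planar leaf $L$ meeting $\partial M$ in curves of multislope $\bm s$ is transverse to the nonwandering flow $\Phi$. I would then argue that every flow line returns to $L$, so $L$ is a cross-section and $\Phi$ is a suspension with fiber $L$.
\item For a $\T^2$-type planar minimal set, I would reduce to the compact-leaf case via the precise form \zcref{thm:fineET}, or show that it too obstructs no interior-ness.
\end{itemize}

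The main obstacle is this last dichotomy: showing that an obstruction to openness can only come from a compact planar fiber. The nonwandering hypothesis must be used to upgrade a compact planar leaf to a global cross-section.
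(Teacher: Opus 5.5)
\textbf{There is a genuine gap in the direction $\bm s\in\interior\Zg(\Phi)\Rightarrow$ linear boundary.} \zcref{thmintro:ETconverse} only gives $\partial\F$ semi-conjugate to $\G_\partial$. In its proof, $\partial\F$ is a blowup of $\G_\partial$ that arises from resolving a branching foliation. For rational slopes this blowup can insert $\sharp$ and $\flat$ annuli, and the paper explicitly warns that $\partial\F$ cannot in general be made isotopic to $\G_\partial$.

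Your ``spinning'' collar modification is not a standard move, and you give no construction for it. The extra leaves bounding those annuli come from branches of the branching foliation that coincide near $\partial M$ but diverge inside $M$. So there is no product region to collapse, and nothing supported near $\partial M$ evidently removes the one-sided holonomy along the closed boundary leaves.

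The paper avoids this by never building the foliation on $M$ directly. It takes $\xi_\pm$ with $\slope(\xi_-)<\bm s<\slope(\xi_+)$ and fills them along $\bm s$ using \zcref{lem:fillingcontact}. This gives a contact pair on the closed manifold $M(\bm s)$ transverse to the blowdown $\Phi(\bm s)$. It then applies the closed-manifold converse of \cite{M24} there. Finally it blows the resulting foliation back up along the core orbits, where it is a product by meridian disks, so the boundary is automatically linear of slope $\bm s$. This is \zcref{thm:partialfilling} with $k=0$.

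\textbf{The converse direction can be repaired, but the mechanisms you propose are off.}
\begin{itemize}
\item In the unobstructed case, no adjustment of slopes is needed, and lowering the slope of $\xi_+$ goes the wrong way. Since a linear $\partial\F$ has no $\sharp$ or $\flat$ annuli, \zcref{lem:phaselocking} already makes $\slope(\xi_-)<\bm s<\slope(\xi_+)$ strict. \zcref{prop:boxconvex} then gives the box.
\item For a compact planar leaf $L$, nonwandering alone does not make a transverse compact planar surface a global cross-section; orbits can be trapped away from it. What works is the trivial holonomy forced by linear boundary. Blowing $\F$ down along $\bm s$ turns $L$ into a sphere leaf of a foliation on the closed manifold $M(\bm s)$. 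Reeb stability then makes that foliation the product foliation of $S^1\times S^2$, so $\Phi$ is a planar suspension. In the paper's route this is built in: the sphere fibration of $S^1\times S^2$ is exactly the case excluded from the closed Eliashberg--Thurston theorem.
\item $\T^2$-type planar minimal sets cannot occur here at all. Such a minimal set meets $\partial M$ in irrational laminations, which is impossible when $\bm s\in\Q^n$.
\end{itemize}
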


We also state a version of this theorem relating partial fillings of $M$ to slices of $\mathcal{Z}(\Phi)$; see \zcref{thm:partialfilling}.

\medskip

For $1 \leq k \leq n$, we define 
$$\bm{1}_k \coloneqq (\underset{k}{\underbrace{1, \dots, 1}}, \underset{n-k}{\underbrace{0, \dots, 0}}).$$
\begin{thmintro}[Instability of irrational slopes] \label{thmintro:irrationality}
    Let $\bm{s} = (s_1, \dots, s_k, s_{k+1}, \dots, s_n) \in \Zg(\Phi)$ such that $s_i \in \R \setminus \Q$ for $1 \leq i \leq k$. Then there exists $\varepsilon > 0$ such that 
    $$[\bm{s} - \varepsilon \bm{1}_k, \bm{s} + \varepsilon \bm{1}_k] \subset \Zg(\Phi).$$
\end{thmintro}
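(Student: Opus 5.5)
We prove the statement using the ziggurat intersection theorem (\zcref{thmintro:contactzigg}) together with the fine Eliashberg--Thurston theorem (item 2 of \zcref{thmintro:ET}) and box-convexity (\zcref{thmintro:box-convexity}). Let $\F$ be a $\Phi$-horizontal foliation realizing $\bm{s}$. We aim to produce a positive contact structure $\xi_+$ and a negative contact structure $\xi_-$, both positively transverse to $\Phi$, whose boundary multislopes are \emph{strictly} shifted by some $\varepsilon>0$ in the irrational coordinates.

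The first step is a local analysis on each boundary torus $\partial_i M$ with $i\le k$. Since $s_i$ is irrational, $\partial_i\F$ has no closed leaves and is semi-conjugate to a linear irrational foliation. In particular, no compact leaf of $\F$ meets $\partial_i M$. We then need to rule out the two obstructions in the fine version (compact planar leaves and $\T^2$-type planar minimal sets) near these components, or argue that \zcref{thm:fineET} still gives $\G$ isotopic to $\partial\F$ on the irrational components. Heuristically, compact planar leaves meet $\partial M$ in closed curves, so they only touch rational components. $\T^2$-type minimal sets interacting with an irrational boundary component could be the delicate case, and there I would invoke the refined version \zcref{thm:fineET}. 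The outcome of this step is a $\xi_+$, close to $T\F$ and hence transverse to $\Phi$, whose boundary foliation on $\partial_i M$ dominates a foliation of irrational slope $s_i$.

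Next, we use the openness of the contact condition. Near $\partial_i M$, the contact structure dominates an irrational foliation, and domination of an irrational (minimal) foliation by a nearby contact boundary foliation is an open condition. By adding a small amount of twisting in a collar of $\partial_i M$ (the convention that twisting a positive contact structure decreases slope), we can perturb $\xi_+$ so that its boundary slope on $\partial_i M$ becomes $s_i-\varepsilon$ or $s_i+\varepsilon$ as desired. Because the slope is irrational there is no rational ``stair'' to stop this motion, and rotation-number arguments for circle maps give strict monotonic dependence on the twist. On the rational or other components $j>k$ we leave $\xi_\pm$ untouched, so their slopes remain $s_j$. This yields $\bm{s}\pm\varepsilon\bm{1}_k\in\Zg^+(\Phi)$ and, symmetrically, the same for $\Zg^-(\Phi)$. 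In fact it yields the whole box $[\bm{s}-\varepsilon\bm{1}_k,\bm{s}+\varepsilon\bm{1}_k]$, since the perturbations on the different components are independent and can be chosen with any amount in $[-\varepsilon,\varepsilon]$.

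Finally, we apply \zcref{thmintro:contactzigg}: each point of the box lies in $\Zg^-(\Phi)\cap\Zg^+(\Phi)=\Zg(\Phi)$. Alternatively, it suffices to get the two corners and conclude by box-convexity. The main obstacle is the first step: guaranteeing that the approximating contact structures have boundary slope exactly $s_i$ on irrational components, rather than merely being monotone equivalent to it, while also preserving the exact slopes $s_j$ on the remaining components. I would first try to show that monotone equivalence preserves irrational slope, so the coarse version already suffices there. The fine control is then needed only on the untouched components, where, if necessary, one instead proves the full box by combining a box around $\bm{s}$ in the first $k$ coordinates with the given foliation in the others.
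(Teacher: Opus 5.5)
Your architecture (Eliashberg--Thurston, then the contact ziggurats, then \zcref{prop:boxconvex}) matches the paper's, but the decisive step is argued in the wrong direction. For $i\le k$ you need $\slope_i(\xi_+)>s_i$ and $\slope_i(\xi_-)<s_i$. Elsewhere the weak inequalities $\slope_j(\xi_-)\le s_j\le \slope_j(\xi_+)$ suffice. These strict inequalities are the only nontrivial content, because $\Zg^+(\Phi)$ is downward closed and $\Zg^-(\Phi)$ is upward closed (\zcref{prop:contactboxconv}). Pushing $\xi_+$ down or $\xi_-$ up only shows that $\Zg^+(\Phi)$ contains points below $\bm s$ and $\Zg^-(\Phi)$ contains points above it, which says nothing about the box. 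Your Step 2 moves slopes by twisting. By the convention you quote, twisting lowers $\partial\xi_+$ and raises $\partial\xi_-$, which is exactly the useless direction. So it cannot give $\bm s+\varepsilon\bm 1_k\in\Zg^+(\Phi)$ or $\bm s-\varepsilon\bm 1_k\in\Zg^-(\Phi)$. Your remark that rotation number is strictly monotone at irrational values is the right principle. However, it has to be applied to a strict \emph{increase} of the boundary foliation of $\xi_+$ (and a strict decrease for $\xi_-$), not to twisting. The openness remark plays no role.

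The correct fix, which is the paper's proof, also shows that the obstacle you name is illusory. \zcref{thm:coarseET} gives $\xi_\pm$ whose boundary foliations dominate some $\G$ monotone equivalent to $\partial\F$. Coarse slope is a monotone-equivalence invariant, so $\G$ has slope $s_j$ on every component. Hence the fine version and its planar obstructions are never needed. Domination gives $\slope_j(\xi_-)\le s_j\le\slope_j(\xi_+)$ for all $j$. For $i\le k$, \zcref{lem:phaselocking} upgrades these to strict inequalities, since equality under strict pointwise domination forces a rational slope. So these contact structures never have slope exactly $s_i$ on the irrational components, no perturbation is required, and you need not preserve $s_j$ exactly for $j>k$. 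Then \zcref{prop:boxconvex} applied to $\bm{\slope}(\xi_-)\le\bm{\slope}(\xi_+)$ contains the box for $\varepsilon\le\min_{i\le k}\min\{s_i-\slope_i(\xi_-),\,\slope_i(\xi_+)-s_i\}$. Alternatively, you can start from contact structures of exact slope $\bm s$, which \zcref{thmintro:contactzigg} gives you immediately since $\bm s\in\Zg(\Phi)$. In that case, replace twisting by trimming (\zcref{lem:trim_twist}) near $\partial_iM$ for $i\le k$, and invoke \zcref{lem:phaselocking} to get the strict shift.
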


\begin{longrem}
    With the further assumption that $\Zg(\Phi)$ is closed, the previous results already explain the staircase phenomenon visible in \zcref{fig:ziggurat_examples}. For simplicity, we restrict to the case $n=2$. Suppose that $\Zg(\Phi)$ is closed, and that its projection onto the $x$-axis is surjective. By box-convexity, $\Zg(\Phi)$ is of the form
    $$\big\{(x,y) \in \R^2 \ \vert\  f_-(x) \leq y \leq f_+(x)\big\}, $$
    where $f_\pm : \R \rightarrow \R\cup\{\pm \infty\}$ are two nonincreasing functions. Combined with the instability property of irrational multislopes, we also deduce that $f_\pm$ only take rational values (or $\pm \infty)$, and their discontinuity points are rational as well. Of course, these functions can still have complicated discontinuity sets without any additional assumptions. Below we will prove that $\Zg(\Phi)$ is closed when some natural assumptions are satisfied (\zcref{thmintro:closed}), and we precisely identify its closure otherwise (\zcref{corintro:closure}). We will also identify the points where corners may accumulate (\zcref{thmintro:slippery}).
\end{longrem}

\FloatBarrier
\begin{center}
\rule{0.3\textwidth}{0.4pt}
\end{center}

\medskip

A special role in the next few theorems is played by compact planar surfaces. We also need to consider $\textbf{$\T^2$-type planar minimal sets}$ (see \zcref{def:pems}), the model example of which is an irrational linear foliation on $\T^2\times I$. A recurring theme is that minimal sets of these kinds impose rigidity on the foliation, because of the small fundamental groups of their leaves.

\begin{defn}[Obstructed multislopes]
    A multislope $\bm{s}$ is $\textbf{spherical}$ if there is an embedded compact planar surface with nonempty boundary meeting each $\partial_i M$ in a (possibly empty) collection of parallel curves of (oriented) slope $s_i$. We define $\textbf{$\T^2$-type planar multislope}$ similarly. We call a multislope \textbf{obstructed} if it is a spherical or $\T^2$-type planar multislope.
\end{defn}

\begin{rem}
    Dehn filling along the rational coordinates of a spherical multislope always gives a reducible $3$-manifold.
\end{rem}

\begin{rem}
    Accumulation points of spherical multislopes must be spherical or $\T^2$-type planar multislopes; see \zcref{prop:spherical_limits}.
\end{rem}

\begin{thmintro}[Rigidity of spherical multislopes] \label{thmintro:rigidityspherical}
    Assume $\bm{s} = (s_1, \dots, s_n) \in \mathcal{Z}(\Phi)$ is a spherical multislope or a $\T^2$-type planar multislope. Then either 
    $$\left((s_1, +\infty) \times \dots \times (s_n, +\infty)\right) \cap \mathcal{Z}(\Phi) = \varnothing$$
    or 
    $$\left((-\infty, s_1) \times \dots \times (-\infty, s_n)\right) \cap \mathcal{Z}(\Phi) = \varnothing.$$
    If Dehn filling $(M,\Phi)$ along the rational components of $\bm{s}$ results in a suspension flow on $S^1\times S^2$ or $\T^2\times I$, then both of the quadrants above are empty.
\end{thmintro}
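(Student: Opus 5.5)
The plan is to argue by contradiction, using the ziggurat intersection theorem (\zcref{thmintro:contactzigg}) to pass to contact structures, where planar surfaces are controlled by Legendrian/convex surface theory. Suppose $\bm s \in \Zg(\Phi)$ is obstructed, witnessed by a compact planar surface $P$ (or a $\T^2$-type planar minimal set), and suppose both open quadrants meet $\Zg(\Phi)$, say at $\bm t^+ > \bm s$ and $\bm t^- < \bm s$. By \zcref{thmintro:contactzigg}, $\bm t^+ \in \Zg^-(\Phi)\cap\Zg^+(\Phi)$ and $\bm t^- \in \Zg^-(\Phi)\cap \Zg^+(\Phi)$.

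The key observation concerns the sign conventions. Adding twisting near the boundary decreases slopes for positive contact structures and increases them for negative ones. Starting from a positive contact structure $\xi_+$ with multislope $\bm t^+ > \bm s$, I would add twisting to bring its boundary slope down to exactly $\bm s$ on each component, passing through the rational coordinates of $\bm s$. Similarly, a negative contact structure $\xi_-$ realizing $\bm t^- < \bm s$ can be twisted up to $\bm s$. In fact, one can arrange that $\xi_+$ has a collar in which it rotates through the slope $s_i$ on $\partial_i M$.

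Next, I would make $P$ horizontal or convex relative to this collar. In the spherical case, $\partial P$ consists of curves of slope $s_i$, which can be taken to be Legendrian leaves inside the twisting collar. The twisting number of each boundary component relative to $P$ is then computed from the amount of rotation. Because $\xi_+$ overshoots past $s_i$, that is, it rotates strictly beyond the slope $s_i$ since it started at $t_i^+ > s_i$, we can push $\partial P$ into a region where the contact planes twist along it enough that the Thurston--Bennequin-type inequality $tb(\partial P) \le -\chi(P)$ for tight structures, or the resulting existence of an overtwisted disk via a Giroux-criterion argument on convex $P$ (a convex sphere-with-holes whose dividing set has a homotopically trivial component), is violated. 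The one-sided overshoot in the positive contact structure gives an inequality in one direction. The same argument for $\xi_-$ at $\bm t^-$ gives one in the other direction. Contact structures approximating taut foliations are tight, even universally tight, or at least weakly symplectically fillable after ET, so neither violation can occur. However, only one of the two is actually needed to exclude a quadrant. This is precisely why the conclusion is a dichotomy. One needs the foliation at $\bm s$ itself to decide which side: since $P$ sits as a leaf-like object of the foliation realizing $\bm s$ (via Novikov/Roussarie position), the holonomy of $\F$ along $\partial P$ is attracting on one side or repelling on the other. Which side it is determines whether $\bm s$ can be perturbed up or down. I would therefore run the argument as follows: place $P$ in $\F$ in normal form, read off the sign, and show the opposite quadrant is excluded by the contact obstruction applied to $\xi_+$ or to $\xi_-$.

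The main obstacle is making this sign argument rigorous. The coarse ET theorem only gives domination up to monotone equivalence, not isotopy, so one must carefully combine the compact planar leaf with the precise ET version (\zcref{thm:fineET}) and the rigidity coming from the small fundamental group of planar leaves. The $\T^2$-type planar case is handled by the same strategy, using limits of planar surfaces as in \zcref{prop:spherical_limits}. Finally, if filling yields a suspension on $S^1\times S^2$ or $\T^2\times I$, the filled manifold supports no tight twisting beyond the fiber in either direction. The argument then applies symmetrically to both $\xi_+$ and $\xi_-$, so both quadrants are empty.
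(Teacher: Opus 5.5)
There is a genuine gap, and it sits exactly where you look for tightness, which also makes your explanation of the dichotomy wrong. After you twist $\xi_+$ past $\bm s$ and fill, it no longer approximates any taut foliation, and nothing forces it to be tight. In fact, by \zcref{lem:transverseS2}, every contact structure transverse to the filled flow on $M(\bm s)$ is overtwisted once there is a transverse sphere. That is perfectly consistent with one quadrant meeting $\Zg(\Phi)$.

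In this setting tightness comes only from a \emph{pair}: a contact pair transverse to a nonwandering flow is taut, hence universally tight (\zcref{lem:universallytight, cor:universallytight}). \zcref{lem:fillingcontact} fills positive structures only from slopes above $\bm s$ and negative ones only from slopes below. So you need both a point $\bm t>\bm s$, giving $\xi_+$ via $\Zg(\Phi)\subset\Zg^+(\Phi)$, and a point $\bm r<\bm s$, giving $\xi_-$ via $\Zg(\Phi)\subset\Zg^-(\Phi)$. Together they produce a taut pair on $M(\bm s)$, which contradicts the transverse sphere. This is the paper's entire argument for spherical $\bm s$.

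It is also why the conclusion is a dichotomy: the statement is literally the negation of ``there exist $\bm r<\bm s<\bm t$ in $\Zg(\Phi)$''. Your claim that one structure suffices to exclude a quadrant has it backwards. If that were so, your argument would empty both quadrants at every obstructed multislope. The step that picks the side from the holonomy of $\F$ along $\partial P$ is unjustified, since $P$ need not be a leaf and one foliation at $\bm s$ cannot control all foliations with slopes in a quadrant. It is also unnecessary, as is the fine Eliashberg--Thurston theorem.

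Finally, the obstruction you propose on $P$ itself does not fire. The natural Legendrian realization of $\partial P$ (leaves of slope $s_i$ in a torus of the twisting collar with linear characteristic foliation) has twisting $0$ relative to $P$. The inequality $0\le-\chi(P)$ holds as soon as $P$ has two boundary components, so overshooting alone violates nothing. What works is capping $P$ to a sphere in $M(\bm s)$ and using its transversality to the flow. That makes all singularities of the characteristic foliation positive and forces a closed Legendrian characteristic bounding a disk.

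For the suspension clause the problem is sharper. There you may only assume one quadrant is nonempty, so you have a single contact structure and no source of tightness at all; ``no tight twisting beyond the fiber'' cannot conclude. The needed input is \zcref{prop:contactS2suspension}: a nonwandering suspension flow on $S^1\times S^2$ admits no transverse contact structure, tight or not. It is proved by tracking the dividing sets of the convex fibers through their bifurcations and building a transverse separating torus, which contradicts nonwandering. Combined with \zcref{lem:fillingcontact}, this gives $(\bm s,\bm{+\infty})\cap\Zg^+(\Phi)=\varnothing$ and $(\bm{-\infty},\bm s)\cap\Zg^-(\Phi)=\varnothing$, and \zcref{thmintro:contactzigg} then empties both quadrants of $\Zg(\Phi)$. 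Your reduction of the $\T^2$-type cases to the spherical ones matches the paper: approximate $\bm s$ by spherical (respectively spherical-suspension) multislopes via \zcref{lem:spherical_limits2}, and use that $\bm r<\bm s<\bm t$ is an open condition.
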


\begin{figure}[htbp]
    \centering
    \includegraphics[width=0.4\linewidth]{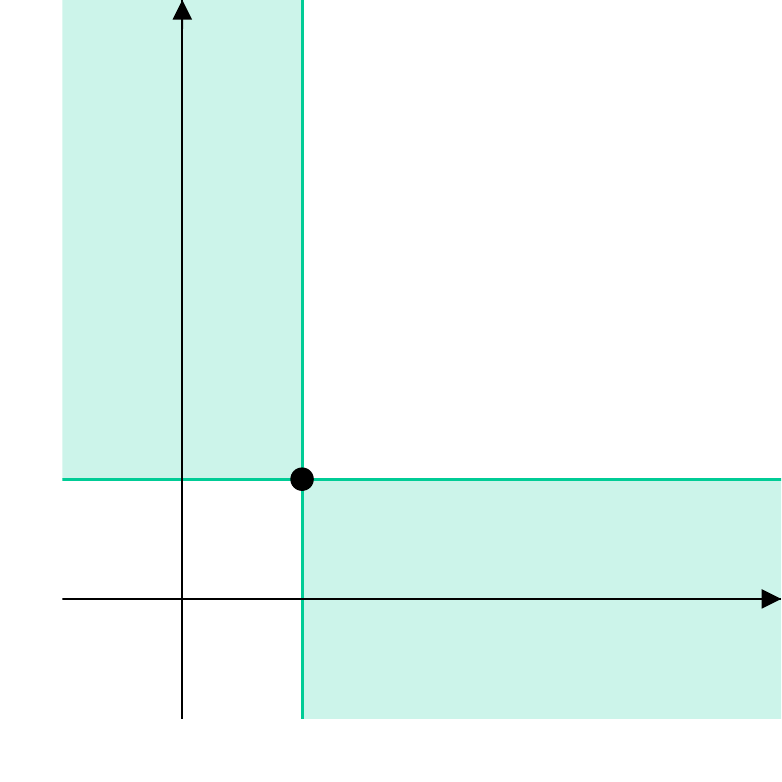}
    \caption{If the marked point corresponds to an $S^1\times S^2$ Dehn filling, then $\Zg(\Phi)$ is ``pinched'' there: it must be a subset of the green region.}
    \label{fig:sphericalrigidity}
\end{figure}

One can check this theorem in \zcref{fig:ziggurat_examples} (a) and (b); the largest blue dots correspond to spherical multislopes, and the ziggurat is ``pinched'' there. In \zcref{fig:magicziggurat}, the ziggurat is ``pinched'' along a hyperbola which corresponds to a family of spherical and $\T^2$-type planar multislopes. Indeed, one can fill a component of L6a5 to get a Hopf link, whose complement is $\T^2\times I$. We do not know whether the natural converse to \zcref{thmintro:rigidityspherical} holds: there might be points in $\Zg(\Phi)$ whose positive and negative quadrants are empty, but which are neither spherical nor $\T^2$-type planar multislopes.

\FloatBarrier

\begin{center}
\rule{0.3\textwidth}{0.4pt}
\end{center}

\medskip

In favorable situations, the ziggurat $\Zg(\Phi)$ is a closed subset of $\R^n$. We say that $\Phi$ is \textbf{branched surface finite} (or \textbf{BSF} for short) if there is a finite collection of $\Phi$-horizontal branched surfaces which carry all $\Phi$-horizontal foliations. Fully punctured pseudo-Anosov flows without perfect fits are BSF (\zcref{thm:pABSF}). We will prove our results for an even larger class of flows, the \textbf{locally branched surface finite} (\textbf{LBSF}) flows (\zcref{def:condLBSF}), which includes periodic flows as well.

\begin{thmintro} \label{thmintro:closed}
    If $\Phi$ is LBSF and does not admit any transverse compact planar surface with a boundary component of degeneracy slope, then $\Zg(\Phi)$ is a closed subset of $\R^n$.
\end{thmintro}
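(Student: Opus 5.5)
We argue by compactness. Take a sequence $\bm{s}^{(k)} \in \Zg(\Phi)$ converging to $\bm{s} \in \R^n$, realized by $\Phi$-horizontal $C^0$-foliations $\F_k$. We aim to extract a limiting $\Phi$-horizontal object with boundary multislope $\bm{s}$, and then upgrade it to a genuine foliation.

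\emph{Step 1: carrying by finitely many branched surfaces.} By the LBSF hypothesis (\zcref{def:condLBSF}), near the limit multislope $\bm{s}$ there is a finite collection of $\Phi$-horizontal branched surfaces carrying all the $\F_k$, at least for $k$ large. Passing to a subsequence, all $\F_k$ are carried by a single branched surface $B$. A foliation carried by $B$ determines a transverse measure-like object on $B$ only for its minimal sets. So we encode each $\F_k$ by the lamination it induces on the branch locus, i.e.\ the holonomy data on a compact fibered neighborhood $N(B)$ cut by the interstitial regions. Boundary slopes are determined by the train tracks $B\cap\partial_i M$ together with the rotation numbers of the induced return maps around each boundary torus. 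The condition that $B$ is transverse to $\Phi$, and hence that every branch sector makes definite angle with $\Phi$, gives uniform control.

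\emph{Step 2: take a limit lamination.} Horizontal laminations carried by $B$ form a compact space in the Hausdorff/geometric topology, since leaves are uniformly transverse to $\Phi$ and so have uniformly bounded slopes. We extract a limit $\Phi$-horizontal lamination $\Lambda$ carried by $B$. Boundary rotation numbers are continuous under this convergence, as rotation number is continuous on circle homeomorphisms and on the boundary torus foliations transverse to $\Phi|_{\partial M}$. Hence $\Lambda\cap\partial M$ has multislope $\bm{s}$. Alternatively, and perhaps more cleanly, we pass to the contact side: by \zcref{thmintro:ET} each $\F_k$ yields contact pairs dominating foliations monotone equivalent to $\partial\F_k$. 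Then we use the ziggurat intersection theorem \zcref{thmintro:contactzigg}, reducing the problem to showing that $\bm{s}$ lies in both $\Zg^-(\Phi)$ and $\Zg^+(\Phi)$. On balance I prefer to stay foliated, since closedness is the foliation-side property.

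\emph{Step 3: extend $\Lambda$ to a foliation.} The complementary regions of $\Lambda$ in $N(B)$ and the interstitial regions of $M\setminus N(B)$ are $I$-bundles or regions whose boundary is horizontal. These are filled in as in the limit of the $\F_k$: the combinatorics of how $\F_k$ fills each complementary region is finite data once $B$ is fixed, so after a further subsequence it is constant. We fill by the same pattern, using that $\Phi$ is transverse to give product structures. The boundary multislope remains $\bm{s}$, since filling gaps does not change the rotation number on each boundary torus.

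\emph{Main obstacle.} The danger is degeneration. The limit may have a component whose leaves escape to the boundary with degeneracy slope $+\infty$, or the limit lamination may contain a compact leaf on which holonomy data collapses, so that the filled object is only a branched-surface limit and not a foliation transverse to $\Phi$. Compact planar leaves are exactly where \zcref{thmintro:rigidityspherical} shows rigidity and non-closedness phenomena can occur. The hypothesis excludes transverse compact planar surfaces with a boundary component of degeneracy slope. I would show that any degeneration produces precisely such a surface, namely a leaf of $\Lambda$ whose boundary slopes blow up along a subsequence, forced to be planar by the usual Euler characteristic count in $B$. That would contradict the assumption. Making this step rigorous is where I expect most of the work.
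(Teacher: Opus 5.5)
There is a genuine gap, and it is in the step you flagged yourself. The fix you propose does not work.

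Steps 1--2 are essentially the paper's compactness argument: LBSF, a single carrying branched surface $B$, and a limit along cofinal splitting sequences. However, the conclusion of Step 2 is false as stated. Boundary slope is not continuous under this convergence, because the limit boundary foliation can acquire Reeb annuli. Then there is no global transversal, no return map and no rotation number (compare \zcref{fig:approaching_aleph}). What one actually gets is \zcref{lem:limits}: $\F_{\lim}$ has generalized multislope $\bm{s}$, except that some entries may be replaced by $\aleph$.

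Your plan to exclude this, by showing that such a degeneration produces a transverse compact planar surface with a degeneracy boundary component, has no mechanism behind it.
\begin{itemize}
\item The slopes of your $\F_k$ do not blow up; they converge to $\bm{s}\in\R^n$.
\item The degeneracy-slope closed leaves bounding a Reeb annulus of $\partial_k\F_{\lim}$ lie on leaves of $\F_{\lim}$ that may be noncompact or of positive genus. An Euler characteristic count has nothing to act on.
\end{itemize}
The hypothesis of the theorem does not forbid $\aleph$ limits, and the paper never tries to forbid them.

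Instead the paper resolves the $\aleph$ limits, and this is exactly where the contact side you set aside is needed.
\begin{itemize}
\item By \zcref{thmintro:alephclosure}, $\overline{\Zg(\Phi)}\subset\Zg_\aleph(\Phi)$, so it suffices to show $\Zg_\aleph(\Phi)\subset\Zg(\Phi)$.
\item The hypothesis excludes compact planar leaves with a degeneracy boundary. It also excludes degeneracy $\T^2$-type planar minimal sets, since these are limits of transverse compact planar surfaces carried near them (\zcref{lem:spherical_limits2}).
\item Any remaining compact planar leaves or $\T^2$-type minimal sets have no degeneracy boundary. Blowups make them stable without changing the generalized multislope.
\item The fine Eliashberg--Thurston theorem then applies with no sacrificial $\flat/\sharp$ modifications. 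It gives $\xi_\pm$ with $\partial\xi_-\prec\partial\F_{\lim}\prec\partial\xi_+$.
\item By \zcref{lem:resolution_of_aleph}, $\xi_\pm$ have slope $\pm\infty$ on the $\aleph$ components. Twisting brings these slopes to $\pm N$.
\item \zcref{prop:boxconvex} then gives $\{(s_1,\dots,s_k)\}\times\R^{n-k}\subset\Zg(\Phi)$, so $\bm{s}\in\Zg(\Phi)$. This is \zcref{prop:easyalephresolution}.
\end{itemize}
The hypothesis is used precisely to avoid latching. Without it, the sacrificial modification in \zcref{thm:fineET} may be forced onto a nondegeneracy curve. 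Then only a one-sided neighbourhood of $s_i$ is recovered, and that is how closedness fails in general.
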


For example, this theorem applies to periodic flows. In general, however, $\Zg(\Phi)$ may fail to be closed at some exceptional points where the boundary foliations develop Reeb annuli; see \zcref{fig:approaching_aleph} for an example.\footnote{See also the painting \emph{Current} by Bridget Riley,\\ \href{https://bridget-riley.publications.britishart.yale.edu/catalogue/10/}{\texttt{https://bridget-riley.publications.britishart.yale.edu/catalogue/10/}}.} We call these boundary foliations \textbf{$\aleph$~type}. See \zcref{defn:type} for the precise definition. We say that $\partial \F$ is \textbf{admissible} if each component is either a suspension foliation or $\aleph$ type.

\begin{figure}[htbp]
    \centering
    \includegraphics[width=0.9\linewidth]{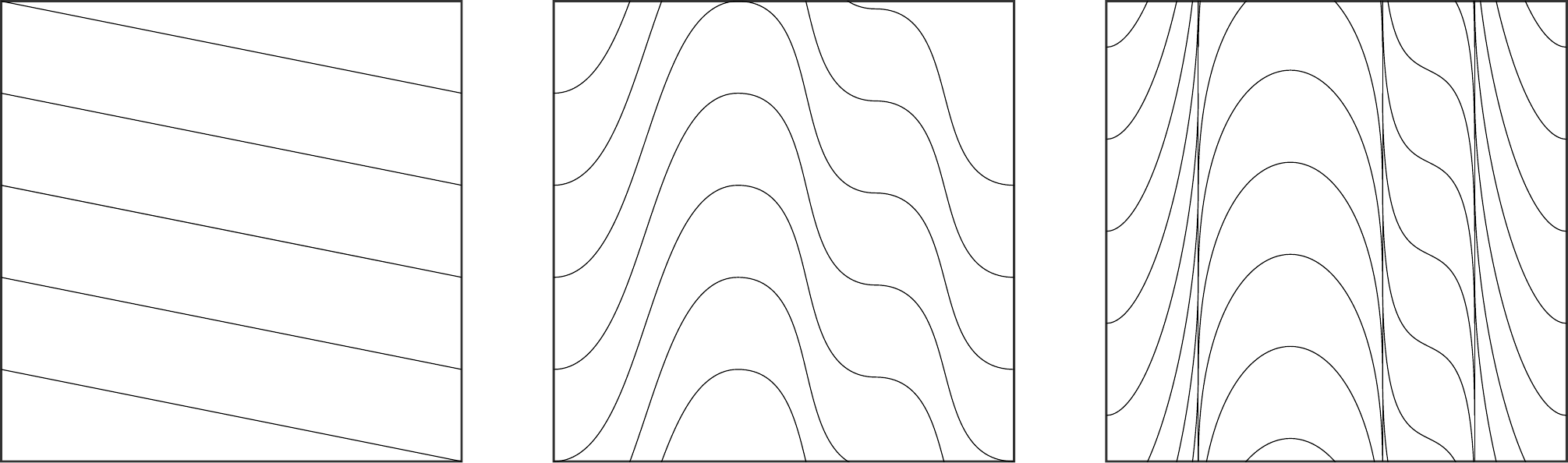}
    \caption{A sequence of foliations of $\T^2$ all of the same slope approaching an $\aleph$ type foliation.}
    \label{fig:approaching_aleph}
\end{figure}

\begin{rem}
    The appearance of $\aleph$ type boundaries should be no surprise for those familiar with the notion of almost-transversality in the pseudo-Anosov theory of foliations. Foliations with $\aleph$ type boundary components can be filled along any non-degeneracy slope to yield almost-transverse foliations in the sense of Mosher. See \zcref{prop:alephfilling} below.
\end{rem}

\begin{defn}
    Let $\F$ be a $\Phi$-horizontal foliation with admissible boundary. The \textbf{generalized boundary multislope} of $\F$ is a tuple $(s_1,\dots,s_n)$, where $s_i\in \R \cup \{\pm \infty\}$ if $\partial_i \F$ is a suspension foliation, and $s_i=\aleph$ if $\partial_i \F$ is of $\aleph$ type.
\end{defn}

\begin{defn} \label{def:alephcomp}
    The \textbf{$\aleph$-completion} of $\Zg(\Phi)$, denoted by $\mathcal{Z}_\aleph(\Phi)$, is the subset of $\R^n$ defined as follows. A multislope $\bm{s} = (s_1, \dots, s_n) \in \R^n$ is in $\mathcal{Z}_\aleph(\Phi)$ if and only if there is a generalized multislope $(\widetilde s_1,\dots,\widetilde s_n)$ realized by a $\Phi$-horizontal foliation such that each index $i$ satisfies either $\widetilde{s}_i = s_i$ or $\widetilde{s}_i=\aleph$; in other words, $\aleph$ may be treated as a wildcard.
\end{defn}

This definition may seem strange at first sight but is justified by the next sequence of results: we will show that $\Zg_\aleph(\Phi)$ is exactly the closure of $\Zg(\Phi)$, under some reasonable assumptions on $\Phi$.

\begin{thmintro}[$\aleph$-completion] \label{thmintro:alephclosure}
    If $\Phi$ is LBSF, then $\mathcal{Z}_\aleph(\Phi)$ is a closed subset of $\R^n$.
\end{thmintro}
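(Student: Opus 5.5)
We will show that $\Zg_\aleph(\Phi)$ contains its limit points. Let $\bm{s}^{(k)} \in \Zg_\aleph(\Phi)$ converge to $\bm{s}\in\R^n$. For each $k$, choose a $\Phi$-horizontal foliation $\F_k$ with admissible boundary whose generalized multislope agrees with $\bm{s}^{(k)}$ away from its $\aleph$ entries. The goal is to produce a $\Phi$-horizontal foliation $\F_\infty$ with admissible boundary whose generalized multislope is, at each index $i$, either $s_i$ or $\aleph$.

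\textbf{Step 1: reduce to a single branched surface.} Since $\Phi$ is LBSF (\zcref{def:condLBSF}), near the limit point a finite collection of $\Phi$-horizontal branched surfaces carries all relevant $\Phi$-horizontal foliations. After passing to a subsequence, we may therefore assume that all $\F_k$ are fully carried by one branched surface $B$. We may further assume that, for each $i$, the entry $\widetilde s^{(k)}_i$ is either always $\aleph$ or always real. Indices of the second kind have $\widetilde s^{(k)}_i = s^{(k)}_i \to s_i$. For indices of the first kind, the wildcard already places no constraint on $s_i$.

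\textbf{Step 2: take a limit.} Since $B$ is $\Phi$-horizontal, carried foliations have tangent planes uniformly transverse to $\Phi$. Their holonomy along $\Phi$-orbits is controlled, so the family is equicontinuous in the appropriate sense. An Arzel\`a--Ascoli / Haefliger-type compactness argument then gives a subsequence converging to a $\Phi$-horizontal lamination $\Lambda$ carried by $B$. We then fill in the complementary regions, which are $I$-bundles over the branch locus guts, to obtain a foliation $\F_\infty$; this is possible because $B$ carries a foliation. A closedness property on $\partial M$ is also needed: limits of horizontal foliations on a torus transverse to $\Phi|_{\partial M}$ are either suspensions or develop Reeb annuli. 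The degeneracy slope $+\infty$ is where Reeb annuli appear in the limit, as in \zcref{fig:approaching_aleph}. Consequently each $\partial_i\F_\infty$ is either a suspension foliation or of $\aleph$ type.

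\textbf{Step 3: identify the slopes.} For a suspension boundary component of the limit, the slope, measured via the rotation number of the first-return map along $\Phi|_{\partial_i M}$, depends continuously on the boundary foliation under $C^0$ convergence of transverse foliations. Hence it equals $\lim s^{(k)}_i = s_i$, using that $\bm{s}\in\R^n$ is finite. For an $\aleph$-type component no constraint is needed. This is also the reason the wildcard definition must include $\aleph$ entries whose approximants had real slopes: a sequence of suspension boundaries can degenerate to $\aleph$ type. It follows that $\bm{s}\in\Zg_\aleph(\Phi)$.

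\textbf{Main obstacle.} The hardest part is Step 2: making the limit of the $\F_k$ an honest $\Phi$-horizontal foliation, rather than a lamination or a degenerate plane field, while keeping its boundary admissible. The difficulty is ruling out boundary limits that are neither suspensions nor of the form in \zcref{defn:type}, for example Reeb components with non-degeneracy slope. My first attempt would use transversality to $\Phi|_{\partial M}$, which has slope $+\infty$ and is Reebless. This should force any Reeb annuli of the limit to have core of the degeneracy slope, which matches the $\aleph$ type.
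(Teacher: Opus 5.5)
There is a genuine gap at the end of your Step~2: you assert that each $\partial_i\F_\infty$ is either a suspension or of $\aleph$ type, but you do not prove it, and the mechanism in your ``main obstacle'' paragraph cannot prove it. That every Reeb annulus of a $\Phi$-horizontal boundary foliation has core of the degeneracy slope is automatic. $\Phi$ crosses both boundary leaves of a Reeb annulus in the same direction, so $\Phi|_{\partial M}$ has a closed orbit inside it. This therefore holds for \emph{every} horizontal foliation with a Reeb annulus, admissible or not. By \zcref{defn:type} (see also \zcref{sec:beyondaleph}), $\aleph$ type additionally forbids $+\infty^\sharp$ and $-\infty^\flat$ spiraling; the alternatives $\aleph^\sharp,\aleph^\flat,\aleph^\dagger$ are inadmissible. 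A limit with Reeb annuli plus a $+\infty^\sharp$ annulus is horizontal but neither a suspension nor admissible, and nothing in your argument excludes it. Likewise, for an index that is $\aleph$ for every $\F_k$, the claim that ``the wildcard places no constraint'' presupposes that the limit is still $\aleph$ there. If it became a suspension of some slope $t\neq s_i$, you would not obtain $\bm{s}\in\Zg_\aleph(\Phi)$.

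The missing idea is semicontinuity via monotone curves.
\begin{itemize}
\item A finite-slope suspension boundary admits no negatively monotone curve of slope $+\infty$ and no positively monotone curve of slope $-\infty$. By the definition of $\aleph$ type, neither does an $\aleph$-type boundary.
\item Existence of a monotone curve of a given rational slope is an open condition, so these non-existence properties pass to the limit.
\item By \zcref{lem:transversholo}, a $+\infty^\sharp$ or $-\infty^\flat$ annulus in the limit would produce exactly such a curve, so the limit is admissible.
\item Conversely, a Reebless limit admits positively monotone curves of finite rational slope, and these would persist to $\partial_i\F_k$ for large $k$. That is impossible when $\partial_i\F_k$ has Reeb annuli of degeneracy slope. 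This is item~4 of \zcref{lem:limits}, which you need for the always-$\aleph$ indices.
\end{itemize}

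The paper takes the limit in the compact space $\mathcal{O}_B$ of cofinal splitting sequences. There such a curve is certified by a finite prefix, which gives the openness (\zcref{lem:openproperties}). This choice also repairs your compactness step. A geometric Arzel\`a--Ascoli limit of uniformly Lipschitz leaves can merge leaves, since holonomy homeomorphisms can converge to non-injective maps, and this gives a branching object rather than a lamination. On that route you would still have to show that the gaps are products, and that the boundary foliations converge in a topology where monotone curves persist. A limit of cofinal splitting sequences is by construction a lamination carried by $B$ whose complementary regions can be filled by products.
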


\begin{rem}
    For \zcref{thmintro:closed, thmintro:alephclosure}, it is not necessary to assume that $\Phi$ is nonwandering.
\end{rem}

By definition, we always have $\Zg(\Phi) \subset \Zg_\aleph(\Phi)$, and under the hypothesis of the previous theorem, we further obtain 
\begin{align} \label{eq:inczig}
    \overline{\Zg(\Phi)} \subset \Zg_\aleph(\Phi).
\end{align}
To establish the converse inclusion, we would need to show that any $\Phi$-horizontal foliation realizing a multislope in $\Zg_\aleph(\Phi)$ can be \textbf{resolved}, i.e., modified so that the $\aleph$ components are replaced with arbitrary real slopes and the non-$\aleph$ boundary components suffer arbitrarily small changes in slope. We accomplish this using \zcref{thmintro:contactzigg} (and hence contact structures) in an essential way. We now identify some potential obstructions to the application of \zcref{thmintro:contactzigg}, and show that in their absence, the converse inclusion to~\eqref{eq:inczig} holds. Perhaps unsurprisingly, these obstructions take the form of certain compact planar surfaces transverse to $\Phi$ and with some degeneracy boundary components. 

\begin{defn}\label{defn:boundarysignature}
Let $\Sigma \subset M$ be a compact planar surface with $\partial \Sigma \subset \partial M$. Define
\begin{align*}
    b_{nd}(\Sigma) &\coloneqq \text{the number of nondegeneracy boundary components of } \Sigma, \\
    b_{-\infty}(\Sigma) &\coloneqq \text{the number of $-\infty$ degeneracy boundary components of } \Sigma, \\
    b_{+\infty}(\Sigma) &\coloneqq \text{the number of $+\infty$ degeneracy boundary components of } \Sigma.
\end{align*}
We also define the \textbf{degeneracy signature} of $\Sigma$,
$$\bm{b}(\Sigma) \coloneqq \big(b_{nd}(\Sigma), b_{-\infty}(\Sigma), b_{+\infty}(\Sigma)\big).$$
\end{defn}

\begin{defn}\label{def:alephunobstructed}
    We say that $\Phi$ is \textbf{$\aleph$-obstructed} if it has a transverse compact planar surface with any of the following degeneracy signatures:
    \begin{itemize}
        \item $(1, 1, 0)$ or $(1,0,1)$ (degenerate cylinders),
        \item $(0, 1, m)$ or $(0, m, 1)$ for $m \geq 1$.
    \end{itemize}
    Otherwise, we say that $\Phi$ is \textbf{$\aleph$-unobstructed}.
\end{defn}

For our main flows of interest, fully punctured pseudo-Anosov flows without perfect fits, these obstructions automatically vanish. See \zcref{thm:pABSF} and \zcref{lem:fptpafwpf_unobstructed}.

\begin{thmintro}[Resolving $\aleph$ components] \label{thmintro:alephremove}
    Assume that $\Phi$ is $\aleph$-unobstructed. If a generalized multislope $\bm{s} = (s_1, \dots, s_k, \aleph, \dots, \aleph)$, $s_i \in \R$, is realized by a foliation transverse to $\Phi$, then there exist intervals $I_i \subset \R$ with $s_i \in \partial I_i$, $1 \leq i \leq k$, such that
    $$I_1 \times \dots \times I_k \times {\R}^{n-k} \subset \Zg(\Phi).$$
\end{thmintro}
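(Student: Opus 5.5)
The plan is to go through the ziggurat intersection theorem (\zcref{thmintro:contactzigg}). Let $\F$ be a $\Phi$-horizontal foliation whose generalized multislope is $(s_1,\dots,s_k,\aleph,\dots,\aleph)$. It suffices to produce a positive contact structure and a negative contact structure, both positively transverse to $\Phi$, whose boundary multislopes can be prescribed freely in the following ranges. On the first $k$ components they should be slightly perturbed from $s_i$ on the appropriate side. On the $\aleph$ components they should take \emph{arbitrary} real values. Intersecting the two contact ziggurats, which is box-convex by \zcref{thmintro:box-convexity}, then gives a product region $I_1\times\dots\times I_k\times\R^{n-k}\subset\Zg(\Phi)$. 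Each $I_i$ is the interval between the positive and negative perturbations and has $s_i$ as an endpoint.

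\textbf{Step 1 (Eliashberg--Thurston with boundary).} Apply \zcref{thmintro:ET} to $\F$ to obtain $\xi_\pm$ that are $C^0$-close to $T\F$, hence positively transverse to $\Phi$, with $\partial\xi_\pm$ dominating a foliation monotone equivalent to $\partial\F$.
\begin{itemize}
\item On a suspension component of slope $s_i$, domination means the slope of $\partial\xi_+$ can be taken slightly above $s_i$ and that of $\partial\xi_-$ slightly below. By adding a small amount of extra twisting in a collar, the first $k$ coordinates of the two contact multislopes can be moved through a one-sided interval ending at $s_i$.
\item The choice of sides (whether $I_i$ lies above or below $s_i$) is dictated by which contact structure must be twisted. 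This is why the statement only asserts $s_i\in\partial I_i$.
\end{itemize}
I expect the fine version of \zcref{thmintro:ET} to fail in general, and only the coarse one to be available. In that case Step 1 needs the $\aleph$-unobstructed hypothesis to rule out compact planar leaves meeting the boundary in degenerate ways; this is essentially the content of the signatures $(1,1,0)$ and $(1,0,1)$.

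\textbf{Step 2 (the $\aleph$ components).} This is the main obstacle. An $\aleph$-type boundary foliation contains Reeb annuli with degeneracy-slope leaves. Near such a component, the dominating contact boundary foliations have slopes approaching $\pm\infty$ in both directions along the Reeb annuli. In a collar $\T^2\times I$, I would therefore insert a layer of rotation to any desired slope:
\begin{itemize}
\item $\xi_+$ is made to rotate so that its slope decreases, passing through $+\infty\equiv-\infty$;
\item $\xi_-$ is made to rotate so that its slope increases.
\end{itemize}
Transversality to $\Phi$ must be kept throughout. Since $\Phi|_{\partial M}$ has slope $+\infty$, crossing the degeneracy slope is exactly where this is delicate. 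The Reeb annuli of the $\aleph$ foliation provide the local model in which $\xi_+$ and $\xi_-$ can each pass through the degeneracy direction on their correct sides.

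What must be excluded is a transverse planar surface that would force twisting to be bounded. The idea is that an Euler-characteristic or Thurston–Bennequin-type inequality applies to a compact planar $\Phi$-transverse surface with degenerate boundary: such a surface would force its boundary slopes, counted with signs, to satisfy a constraint. The signatures $(0,1,m)$ and $(0,m,1)$ are exactly the configurations in which this constraint would prevent unbounded twisting. Under the $\aleph$-unobstructed hypothesis no such surface exists, so arbitrary twisting is allowed and all of $\R$ is attained on each $\aleph$ coordinate, independently across components.

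\textbf{Step 3.} Combining the two steps, $\Zg^+(\Phi)$ contains the box of multislopes that are close to $s_i$ on the $\xi_+$ side in the first $k$ coordinates and arbitrary in the rest, and $\Zg^-(\Phi)$ contains the analogous box on the $\xi_-$ side. Their intersection contains the claimed product, and \zcref{thmintro:contactzigg} identifies that intersection with a subset of $\Zg(\Phi)$.
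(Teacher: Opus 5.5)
Your outer architecture matches the paper's: contact approximations on both sides, twisting, and then \zcref{thmintro:contactzigg} with \zcref{prop:boxconvex}. But the step that resolves the $\aleph$ coordinates fails.

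You cannot rotate $\partial\xi_+$ ``through $+\infty\equiv-\infty$''. The degeneracy direction is the direction of $\Phi|_{\partial M}$, so $\Phi$-horizontal boundary foliations have slopes in the linearly ordered set $[-\infty,+\infty]$. Twisting (\zcref{lem:trim_twist}) can only \emph{lower} the slope of $\xi_+$ and \emph{raise} that of $\xi_-$ inside this set; it never wraps through the flow direction. So to reach every real value on an $\aleph$ coordinate, you must first produce $\xi_+$ with slope $+\infty$ (or arbitrarily large) there, and $\xi_-$ with slope $-\infty$. The paper gets this from the instability of $\aleph$ foliations, \zcref{lem:resolution_of_aleph}: any $\Phi$-horizontal foliation lying pointwise above an $\aleph$-type foliation is Reebless of slope $+\infty$, and symmetrically below (see \zcref{prop:easyalephresolution} for the unobstructed case). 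Nothing in your Step 2 plays this role, and no Euler-characteristic or Thurston--Bennequin bound on twisting is involved.

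For that lemma to apply, $\partial\xi_+$ must dominate a foliation that is still of $\aleph$ type on those tori. The coarse theorem cannot guarantee this, because it only dominates something monotone equivalent to $\partial\F$. For instance, the blowups at compact genus $0$ leaves in \zcref{lem:finiteleaves} may insert $\flat$ annuli at $-\infty$ degeneracy curves, which destroys the $\aleph$ type. A warning sign: if your Step~1 worked as stated, it would give $\{(s_1,\dots,s_k)\}\times\R^{n-k}\subset\Zg(\Phi)$ in every case, which is more than the theorem asserts.

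The paper instead uses the finest version, \zcref{thm:fineET}. Its only uncontrolled boundary changes are small sacrificial $\flat$ (for $\xi_+$) or $\sharp$ (for $\xi_-$) modifications at one chosen boundary point of each unstable compact planar leaf and each $\T^2$-type planar minimal set. The $\aleph$-unobstructed hypothesis does not exclude planar leaves with degeneracy boundary. Rather, it guarantees that an admissible site for these modifications exists. For $\xi_+$ the site must never be a $-\infty$ degeneracy curve, since that would create $-\infty^\flat$ spiraling. Mitosis (\zcref{constr:mitosis}) protects the boundary slopes when a leaf has two nondegenerate or $+\infty$ boundary curves.

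When the site is forced onto the nondegenerate curve (the $\flat$/$\sharp$-latching and $\T^2_\pm$-latching cases), $\slope_i(\xi_+)$ ends up slightly \emph{below} $s_i$ (resp.\ $\slope_i(\xi_-)$ slightly above). Combining with the other contact structure, after ribbon twisting, then gives only $(s_i-\varepsilon,s_i)$ or $(s_i,s_i+\varepsilon)$. That is the real reason the statement only asserts $s_i\in\partial I_i$, not the question of which structure gets twisted; see \zcref{prop:compactplanarlatching}, \zcref{prop:T2latching}, and \zcref{thm:resolvealeph}.
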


Notice that the coordinates with entry $\aleph$ have been replaced with ${\R}$. We will prove a more precise statement below, see \zcref{thm:resolvealeph}. Combining \zcref{thmintro:alephclosure} and \zcref{thmintro:alephremove}, we obtain:

\begin{corintro} \label{corintro:closure}
    If $\Phi$ is LBSF and $\aleph$-unobstructed, then
    \begin{align}
        \overline{\mathcal{Z}(\Phi)} = \mathcal{Z}_\aleph(\Phi).
    \end{align}
\end{corintro}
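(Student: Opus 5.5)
We prove the two inclusions separately. The inclusion $\overline{\Zg(\Phi)} \subset \Zg_\aleph(\Phi)$ is exactly \eqref{eq:inczig}. It follows from \zcref{thmintro:alephclosure}: since $\Phi$ is LBSF, $\Zg_\aleph(\Phi)$ is closed. By definition it contains $\Zg(\Phi)$, because a foliation with real boundary multislope realizes a generalized multislope with no $\aleph$ entries. Hence it contains the closure.

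For the reverse inclusion $\Zg_\aleph(\Phi) \subset \overline{\Zg(\Phi)}$, let $\bm s \in \Zg_\aleph(\Phi)$. There is then a $\Phi$-horizontal foliation $\F$ realizing a generalized multislope $\widetilde{\bm s}$ with $\widetilde s_i \in \{s_i, \aleph\}$ for each $i$. After reordering the boundary components (which permutes coordinates and preserves all hypotheses), we may assume $\widetilde{\bm s} = (s_1,\dots,s_k,\aleph,\dots,\aleph)$. Since $\Phi$ is $\aleph$-unobstructed, \zcref{thmintro:alephremove} supplies intervals $I_i$ with $s_i \in \partial I_i$ and
$$I_1 \times \dots \times I_k \times \R^{n-k} \subset \Zg(\Phi).$$

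The point $\bm s = (s_1,\dots,s_k,s_{k+1},\dots,s_n)$ lies in the closure of this product. Its first $k$ coordinates are endpoints of the $I_i$, so they lie in $\overline{I_i}$, and its last coordinates lie in $\R$. To make this rigorous, choose $t_i^{(m)} \in I_i$ converging to $s_i$. Then the points $(t_1^{(m)},\dots,t_k^{(m)}, s_{k+1},\dots,s_n)$ lie in $\Zg(\Phi)$ and converge to $\bm s$, so $\bm s \in \overline{\Zg(\Phi)}$.

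The only point needing care is that each $I_i$ is nondegenerate, so that points of $I_i$ really approach the endpoint $s_i$. A degenerate interval $I_i = \{s_i\}$ also works, since $s_i$ is then itself a point of $I_i$. So in every case $s_i \in \overline{I_i}$, which is all the argument uses. The edge cases are also harmless: when $k=0$ the point $\bm s$ lies in $\R^n \subset \Zg(\Phi)$, and when $k=n$ \zcref{thmintro:alephremove} still applies with no $\R$ factors.

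All the substance lies in \zcref{thmintro:alephclosure} and \zcref{thmintro:alephremove}. The corollary is a formal combination of the two, and we expect no real obstacle beyond this bookkeeping.
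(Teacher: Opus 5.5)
Your proposal is correct and is essentially the paper's argument. The inclusion $\overline{\Zg(\Phi)} \subset \Zg_\aleph(\Phi)$ comes from closedness of $\Zg_\aleph(\Phi)$ under LBSF, and the reverse inclusion comes from resolving the $\aleph$ components; the paper's body version first makes compact leaves isolated via the blowdown lemma before invoking the precise resolution theorem, but that step is already built into the introductory statement you cite, and your observation that only $s_i \in \overline{I_i}$ is needed (so the degenerate intervals arising at nonlatching components are harmless) is exactly right.
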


Specializing to the case $n=1$, \zcref{thmintro:box-convexity,thmintro:irrationality,thmintro:alephremove} readily imply: 

\begin{corintro} \label{corintro:closedinterval}
    Assume $M$ has a single boundary component. If $\Phi$ is LBSF and $\aleph$-unobstructed, then $\Zg(\Phi) \subset \R$ is a (possibly empty) \emph{closed} interval with rational endpoints (possibly lying at infinity).
\end{corintro}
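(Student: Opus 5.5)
With $n=1$, the statement combines box-convexity, closedness, and rationality of the endpoints. I would prove each in turn.

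First, by \zcref{thmintro:box-convexity}, $\Zg(\Phi)\subset\R$ is box-convex. For $n=1$ this means exactly that $\Zg(\Phi)$ is an interval, possibly empty, degenerate, or unbounded. Write its endpoints as $a\le b$ in $\R\cup\{\pm\infty\}$.

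Second, I would show closedness. Since $\Phi$ is LBSF and $\aleph$-unobstructed, \zcref{corintro:closure} gives $\overline{\Zg(\Phi)}=\Zg_\aleph(\Phi)$, so it suffices to show $\Zg_\aleph(\Phi)\subset\Zg(\Phi)$. Take $s\in\Zg_\aleph(\Phi)$ and a realizing $\Phi$-horizontal foliation $\F$ with generalized boundary slope $\tilde s\in\{s,\aleph\}$.
\begin{itemize}
\item If $\tilde s=s$, then $s\in\Zg(\Phi)$ by definition.
\item If $\tilde s=\aleph$, then \zcref{thmintro:alephremove} with $k=0$ gives $\R^{1}\subset\Zg(\Phi)$, so $\Zg(\Phi)=\R$ and in particular $s\in\Zg(\Phi)$.
\end{itemize}
Hence $\Zg(\Phi)$ is closed, so $\Zg(\Phi)=[a,b]$, with the convention that infinite endpoints are excluded. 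The point needing care here is that \zcref{thmintro:alephremove} is genuinely applicable when $k=0$: the product of intervals is empty and the conclusion reads $\R^n\subset\Zg(\Phi)$. I expect the statement covers this case. If it does not, I would instead use the fact from \zcref{thmintro:alephremove} that the $\aleph$ coordinate is replaced by all of $\R$.

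Third, rationality of finite endpoints. Suppose $b\in\R$ is irrational. Since $\Zg(\Phi)$ is closed, $b\in\Zg(\Phi)$. \zcref{thmintro:irrationality} with $k=n=1$ then gives $\varepsilon>0$ with $[b-\varepsilon,b+\varepsilon]\subset\Zg(\Phi)$. This contradicts $b$ being the supremum, so $b$ is rational. The same argument applies to $a$.

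This uses only the stated theorems, so there is no serious obstacle. The only delicate step is the degenerate $k=0$ case of \zcref{thmintro:alephremove} used in the closedness argument.
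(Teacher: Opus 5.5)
Your proposal is correct and follows the paper's own argument: box-convexity makes $\Zg(\Phi)$ an interval, $\overline{\Zg(\Phi)}\subset\Zg_\aleph(\Phi)$ (closedness of the $\aleph$-completion under LBSF) together with resolution of the $\aleph$ component (which for $n=1$ forces $\Zg(\Phi)=\R$) gives closedness, and instability of irrational slopes makes the finite endpoints rational. The $k=0$ case you flag is indeed covered: it is \zcref{thm:resolvealeph} with $k_\aleph=n=1$ and all other blocks empty, which is exactly how the paper applies it here.
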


\begin{center}
\rule{0.3\textwidth}{0.4pt}
\end{center}

\medskip

When $\Zg(\Phi)$ is closed, we can prove much finer structural properties of $\Zg(\Phi)$. We show that away from spherical multislopes, $\Zg(\Phi)$ is locally modeled on a finite union of boxes with rational corners.

\begin{defn}
    A \textbf{standard quadrant at $\bm{0} \in \R^n$} is a subset of the form 
    $$C =I_1 \times \dots \times I_n,$$
    where $I_i \subset \R$ is one of $(-\infty, 0]$, $\{0\}$, or $[0,+\infty)$. A \textbf{standard multiquadrant at $\bm{0}$} is a union of standard quadrants at $\bm{0}$.
\end{defn}

While standard quadrants are box-convex, a standard multiquadrant may not be. However, the box-closure of a multiquadrant is still a multiquadrant.

\begin{figure}[htbp]
\centering
    \begin{subfigure}{0.45\linewidth}
    \centering
    \includegraphics[width=\linewidth]{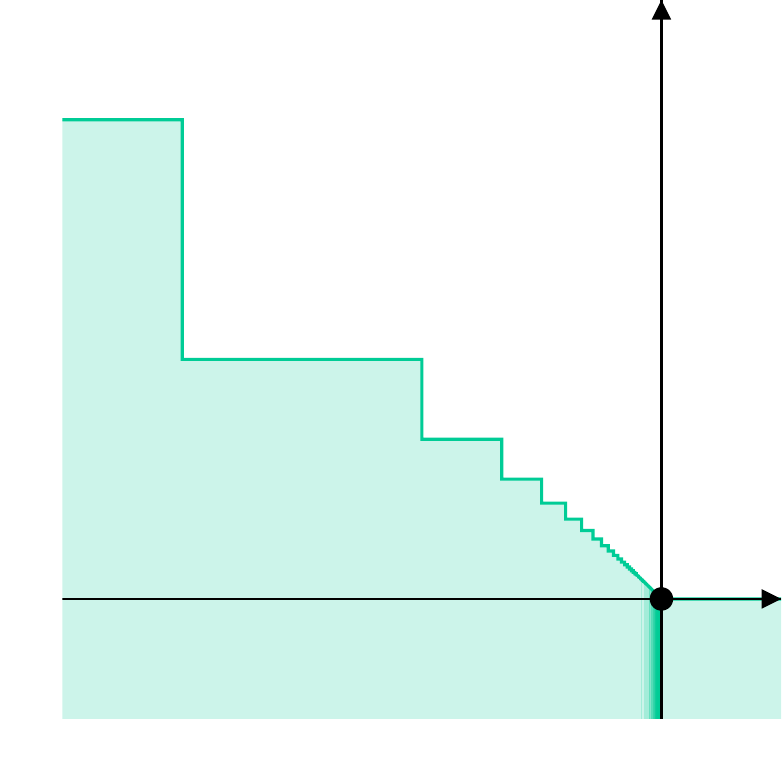}
    \caption{A closed box-convex subset of $\R^2$ with a single slippery point at the origin.}
    \end{subfigure}
    \hfill
    \begin{subfigure}{0.45\linewidth}
    \centering
    \includegraphics[width=\linewidth]{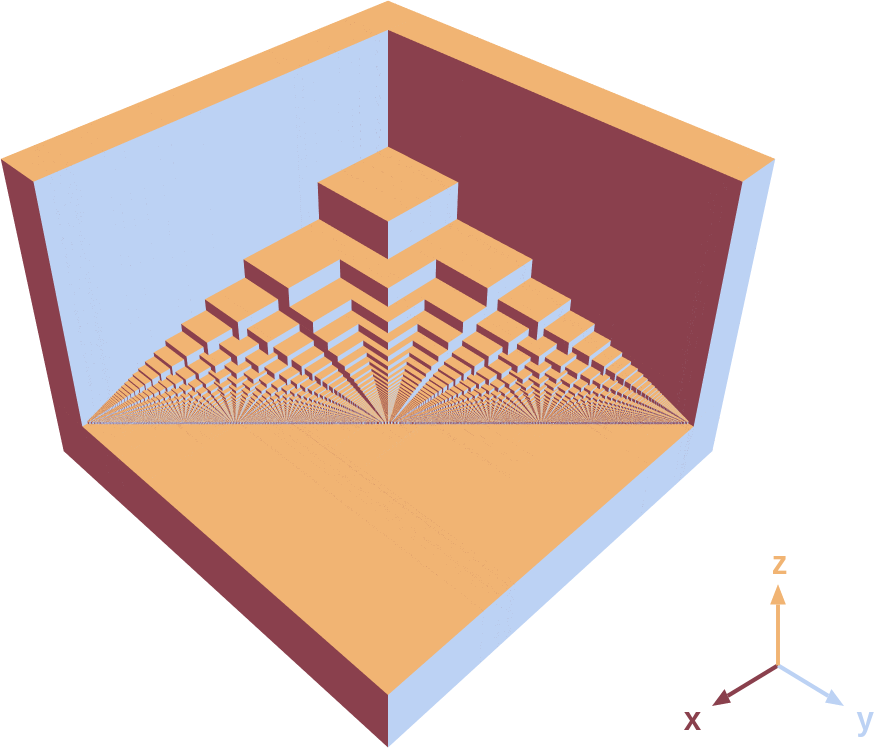}
    \caption{A closed box-convex subset of $\R^3$ with a line segment of slippery points.}
    \end{subfigure}
    \caption{Slippery points}
    \label{fig:slipperyexamplesintro}
\end{figure}

\begin{defn} \label{def:terraced}
    Let $A \subset \R^n$ be a closed box-convex set. A point $\bm{x} \in A$ is \textbf{terraced} if $A$ is locally modeled on a standard multiquadrant near $\bm{x}$, i.e., there exist a (box-convex) multiquadrant $Q \subset \R^n$ at $\bm{0}$ and a neighborhood $U$ of $\bm{x}$ such that 
    $$A \cap U = \left( \bm{x} + Q\right) \cap U.$$
    Otherwise, $\bm{x}$ is called \textbf{slippery}.
\end{defn}

The term ``slippery'' was introduced in~\cite{calegari.walker.ZigguratsRotationNumbers}, and is justified by \zcref{prop:slippery_corners} below: slippery points are accumulation points of extremal corners, i.e., accumulations of stairs getting smaller and smaller; a clumsy pedestrian could easily slip and fall there. If $A$ is a closed box-convex set in $\R^n$, then for any compact set $K \subset \R^n$ such that $A \cap K$ consists of terraced points of $A$, $A\cap K=B\cap K$ where $B$ is a finite union of closed boxes; these boxes are the ``terraces''.

For suitable nonwandering flows, slippery points may only appear at special multislopes that we will shortly describe. This is already suggested in our examples above: the slippery points in \zcref{fig:ziggurat_examples} (a) and (b) lie at spherical multislopes, and there is a prominent $1$-parameter family of slippery points along a hyperbola in \zcref{fig:magicziggurat}. Since we wish to consider sequences of $\Phi$-horizontal foliations, we also need to be careful about potential $\aleph$ components appearing in the limit, and we shall impose some unobstructedness assumptions to avoid additional difficulties.

\begin{defn} \label{def:slipperyunobstructed}
    For $\bm{s} \in \R^n$, we say that $\bm{s}$ is \textbf{strongly $\aleph$-unobstructed} if there is no degeneracy (compact or $\T^2$-type) planar minimal set whose nondegeneracy boundary slopes match the coordinates of $\bm{s}$.
\end{defn}

\begin{thmintro}[Slippery points are spherical] \label{thmintro:slippery}
     Suppose $\Phi$ is LBSF and $\bm{s}$ is a strongly $\aleph$-unobstructed slippery point of $\overline{\Zg(\Phi)}$. Then at least one of the following happens:
        \begin{itemize}
            \item Some of the $s_i$'s are rational and the corresponding Dehn filling yields a flow with a transverse sphere. In particular, the filling has an $S^1\times S^2$ summand.
            \item Two of the $s_i$'s are irrational and Dehn filling some subset of the rational ones yields a flow with a transverse $\T^2$-type planar lamination. In particular, the filling has a $\mathbb{T}^2 \times I$ summand and is a limit of fillings with $S^1\times S^2$ summands.
        \end{itemize}
\end{thmintro}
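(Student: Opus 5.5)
We argue by contraposition: assuming $\bm{s}$ is strongly $\aleph$-unobstructed and neither of the two listed alternatives holds, we show that $\bm{s}$ is terraced. The first step is to reduce to a finite combinatorial problem near $\bm{s}$. Since $\Phi$ is LBSF, there is a neighborhood $U$ of $\bm{s}$ and finitely many $\Phi$-horizontal branched surfaces $B_1,\dots,B_N$ carrying every $\Phi$-horizontal foliation whose generalized multislope has all non-$\aleph$ coordinates in $U$. By \zcref{corintro:closure} (whose $\aleph$-unobstructedness hypothesis we verify near $\bm{s}$ using strong $\aleph$-unobstructedness), points of $\overline{\Zg(\Phi)}\cap U$ are realized by foliations carried by some $B_j$, possibly with $\aleph$ components. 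Hence $\overline{\Zg(\Phi)}\cap U$ is the union over $j$ of the sets $Z_j$ of multislopes realized by laminations or foliations carried by $B_j$. It therefore suffices to show that each $Z_j$ is locally a standard multiquadrant at $\bm{s}$. Box-convexity of the union (\zcref{thmintro:box-convexity}, which passes to the closure) then lets us replace the union by its box-closure, which is again a multiquadrant.

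Next, fix $j$ and analyze $Z_j$ through a sequence of realized multislopes $\bm{s}^{(m)}\to\bm{s}$. We take a foliation $\F_m$ carried by $B_j$ realizing $\bm{s}^{(m)}$ and pass to a limit lamination $\Lambda$ carried by $B_j$, which realizes $\bm{s}$ on the boundary. The coordinates split into three groups:
\begin{itemize}
\item irrational coordinates, which are locally free in both directions by \zcref{thmintro:irrationality};
\item rational coordinates where the limit boundary foliation has holonomy which is nontrivial on at least one side, so that the slope can be perturbed in that direction;
\item rational coordinates where $\Lambda$ contains a compact leaf meeting that boundary torus.
\end{itemize}
Let $\Lambda_0$ be the sublamination consisting of compact leaves and $\T^2$-type minimal sets meeting the boundary. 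If $\Lambda_0$ contains no planar pieces, we will use \zcref{thmintro:contactzigg}: \zcref{thmintro:ET} gives a contact pair $(\xi_-,\xi_+)$ dominating $\partial\Lambda$ near $\bm{s}$ (the fine version applies because there are no planar compact leaves), and openness of the contact condition gives open cones of $\Zg^\pm(\Phi)$ at $\bm{s}$. Each realized direction in $Z_j$ is then determined by a sign choice per coordinate, namely whether the twisting of $\xi_\pm$ near that torus can be increased or decreased. This yields a quadrant $\bm{s}+Q_j$ with $Z_j\cap U=(\bm{s}+Q_j)\cap U$. The reverse inclusion comes from \zcref{thmintro:ETconverse} applied to sandwiched boundary foliations, which gives foliations at every point of the quadrant.

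Finally, we handle the case where $\Lambda_0$ contains a compact planar leaf or a $\T^2$-type planar minimal set. Strong $\aleph$-unobstructedness rules out degeneracy planar pieces, so every boundary component of such a piece has slope $s_i$. A compact planar leaf then caps off after Dehn filling the corresponding rational coordinates into a transverse sphere, which is the first alternative. A $\T^2$-type planar minimal set meets at least two tori with irrational slopes, and capping the rational ones gives the second alternative; the approximation by $S^1\times S^2$ fillings follows from \zcref{prop:spherical_limits}. Thus if neither alternative occurs, no such piece exists and the previous paragraph shows $\bm{s}$ is terraced.

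The main obstacle is the middle step: showing that the local shape of each $Z_j$ is exactly a quadrant and not some curved region. This requires uniform control as $\bm{s}^{(m)}\to\bm{s}$, ensuring that the contact structures from \zcref{thmintro:ET} can be chosen with domination margins bounded below in terms of the holonomy of $\Lambda$. I would first try to get this from the finiteness of branch sectors in $B_j$, which makes the perturbations come from finitely many linear weight systems.
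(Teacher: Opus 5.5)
There is a genuine gap in your middle step, which is where all the content lies.

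\emph{What the step can and cannot show.} Applying \zcref{thmintro:ET} to one limit lamination $\Lambda$ and then \zcref{prop:boxconvex} can only show that certain boxes $[\bm{\slope}(\xi_-),\bm{\slope}(\xi_+)]$ lie in $\Zg(\Phi)$. It never shows that a quadrant at $\bm{s}$ is locally \emph{empty}, and nothing in your setup determines the ``sign choice per coordinate''. The limit also depends on the sequence $\bm{s}^{(m)}$ you picked, so a single limit cannot govern all quadrants.

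\emph{The $Z_j$ decomposition.} Splitting into the sets $Z_j$ makes things worse. The foliations produced by \zcref{thmintro:ETconverse} (i.e.\ by \zcref{prop:boxconvex}) lie in $\Zg(\Phi)$ but need not be carried by $B_j$. So you cannot conclude $Z_j\cap U=(\bm{s}+Q_j)\cap U$, and an individual $Z_j$ has no reason to be box-convex or locally a multiquadrant.

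\emph{Your proposed repair.} Uniform domination margins from ``finitely many linear weight systems'' do not work either: foliations carried by a branched surface are not parametrized by linear weights (only measured laminations are).

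\emph{Minor.} \zcref{corintro:closure} requires $\Phi$ to be globally $\aleph$-unobstructed, which does not follow from strong $\aleph$-unobstructedness of $\bm{s}$. You only need $\overline{\Zg(\Phi)}\subset\Zg_\aleph(\Phi)$, i.e.\ \zcref{lem:limits}, which holds under LBSF alone.

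\emph{The missing idea} is the fine-type control in \zcref{lem:limits}. If the realized multislopes approach $\bm{s}$ strictly from above (resp.\ below) in coordinate $i$, the limit foliation $\F^\infty$ has $\sharp/\natural/\aleph$ (resp.\ $\flat/\natural/\aleph$) type on $\partial_i M$.

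When neither alternative holds, your last paragraph together with strong $\aleph$-unobstructedness shows $\F^\infty$ has no planar obstructions. So \zcref{thm:fineET} gives $\partial\xi_-\prec\partial\F^\infty\prec\partial\xi_+$ up to isotopy. Then \zcref{lem:phaselocking} and \zcref{lem:resolution_of_aleph} force $\slope_i(\xi_+)>s_i$ whenever $\partial_i\F^\infty$ has no $\flat$ annulus. On $\aleph$ components one gets $\slope_i(\xi_+)=+\infty$, which can then be twisted down to any finite value. The same holds symmetrically for $\xi_-$. Thus the strict room appears precisely in the directions from which $\Zg(\Phi)$ accumulates, and no uniformity is needed.

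The paper runs this by contradiction via \zcref{prop:slippery_corners}:
\begin{itemize}
\item It takes upper-extremal corners $\bm{s}^j\to\bm{s}$, strictly increasing in the first $k$ coordinates and strictly decreasing in the next $\ell\geq 1$.
\item It gets $\xi_+$ strictly above $s_i$ in the decreasing coordinates, and $\xi_-$ of slope $\bm{s}$.
\item The resulting box contains a point of $C^+_k(\bm{s}^j)\cap\Zg(\Phi)$ other than $\bm{s}^j$, contradicting extremality.
\end{itemize}
The same ingredients, applied quadrant by quadrant, would also give your direct terracedness argument: accumulation from the relative interior of a quadrant forces local fullness. But this works only once the fine types enter.
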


\FloatBarrier
\begin{center}
\rule{0.3\textwidth}{0.4pt}
\end{center}

\medskip

In all of the examples we produced (mostly for $n\leq 3$ and for pseudo-Anosov flows), we observed that the ziggurats are connected subsets of $\R^n$. We expect this phenomenon to be more general:

\begin{conj}\label{conj:connectedness}
    For any $(M,\Phi)$ as in \zcref{assum:1} with $M$ atoroidal, the spaces $\Zg(\Phi)$ and $\Zg_\aleph(\Phi)$ are connected.\footnote{One could conjecture the stronger statement that $\Zg(\Phi)$ and $\Zg_\aleph(\Phi)$ are contractible. This should be thought of as an analogue of the fact that whenever $\Phi$ is a pseudo-Anosov flow, the set of homology classes of $\Phi$-horizontal surfaces is a cone over a convex subset of a face of the Thurston norm ball.}
\end{conj}

Using the technology of veering triangulations along with \zcref{thmintro:alephremove}, we make partial progress on this conjecture:

\begin{thmintro} \label{thmintro:boundedconnected}
    Suppose $\Phi$ is a fully punctured pseudo-Anosov flow without perfect fits. If $\Zg(\Phi)$ has a \emph{compact} connected component, then $\Zg(\Phi)$ is connected; in particular, it is compact.
\end{thmintro}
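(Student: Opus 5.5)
The plan is to show that any non-compact piece of $\Zg(\Phi)$ is forced to reach every compact component, so that a compact component cannot be separated from the rest. The tools are the closedness and $\aleph$-completion results together with \zcref{thmintro:alephremove}.

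First I record what the hypotheses give. A fully punctured pseudo-Anosov flow without perfect fits is BSF, hence LBSF, by \zcref{thm:pABSF}. It is $\aleph$-unobstructed by \zcref{lem:fptpafwpf_unobstructed}. It admits no transverse compact planar surface at all: fully punctured means every boundary torus comes from drilling a singular or puncture orbit, and a transverse planar surface would contradict the Euler characteristic count from the Thurston norm and fibered face picture. Hence \zcref{thmintro:closed} applies, $\Zg(\Phi)$ is closed, and by \zcref{corintro:closure} we get $\Zg(\Phi)=\Zg_\aleph(\Phi)$. In particular no foliation transverse to $\Phi$ has an $\aleph$ boundary component, since otherwise \zcref{thmintro:alephremove} would give a product $I_1\times\dots\times I_k\times\R^{n-k}$ inside $\Zg(\Phi)$.

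The core step uses veering triangulations. Let $K$ be a compact component and $K'$ any other component. I would use the veering triangulation associated to $\Phi$ (no perfect fits, fully punctured) to produce horizontal branched surfaces carrying all horizontal foliations, as in the proof of BSF. From the finite list of such branched surfaces I would build a path of transverse laminations or foliations interpolating between one realizing a point of $K$ and one realizing a point of $K'$. The concrete idea is to split and fill complementary regions of the stable or unstable branched surface of the veering triangulation, with weights varying continuously. Along such a path the boundary multislope varies continuously, except where a boundary component degenerates into a Reeb annulus, i.e.\ becomes $\aleph$ type. By \zcref{thmintro:alephremove}, passing through an $\aleph$ point would force $\Zg(\Phi)$ to contain an unbounded set $I_1\times\dots\times\R^{n-k}$ connected to that point. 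This is the key dichotomy. Either the path stays in $\R^n$ and connects $K$ to $K'$, which is a contradiction. Or the component containing the start of the path is unbounded.

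To finish I apply this with the starting point in $K$. Since $K$ is compact, it cannot contain $\aleph$ resolutions, so any deformation starting from a foliation realizing a point of $K$ stays in $K$. I would then show that every component can be reached from a fixed base point, for example the multislope of a suspension-like foliation near the stable lamination of $\Phi$, which is realized in each component's closure via the veering combinatorics. This gives $K'=K$, and hence $\Zg(\Phi)=K$ is connected and compact.

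The main obstacle is the connecting-path construction. Showing that the veering branched surfaces give a connected parameter space of carried foliations, and that boundary slopes vary continuously except at $\aleph$ degenerations, requires real control of the branched-surface combinatorics. I would first try to show that the set of weight systems/fillings of the veering branched surface is a connected (convex-cone-like) space. From there I would argue that the induced boundary-multislope map is continuous into $\Zg_\aleph(\Phi)$ with $\aleph$ treated as escape to infinity.
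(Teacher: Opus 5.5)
Your central dichotomy matches the paper's proof: follow a continuous path of $\Phi$-horizontal foliations out of the compact component $K$, note that boundary slopes move continuously until a boundary degenerates, and use \zcref{thmintro:alephremove} to attach an unbounded connected set to $K$. The gap is the step you yourself call the main obstacle, and the mechanism you propose for it would not work. Weight systems on a branched surface parametrize measured laminations, not arbitrary horizontal foliations, so a convex cone of weights cannot join two arbitrary foliations $\F_0,\F_1$ realizing points of $K$ and $K'$. Moreover, the stable and unstable branched surfaces of the veering triangulation carry laminations made of flow lines, so they are not $\Phi$-horizontal at all. The same objection applies to your base point ``near the stable lamination'', which is unnecessary anyway. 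The missing idea is simpler. By \zcref{thm:pABSF}, the $2$-skeleton $B$ of the veering triangulation is a single $\Phi$-horizontal branched surface, without triple points, that carries every $\Phi$-horizontal foliation. A foliation carried by $B$ is determined by one gluing homeomorphism $h_e\in\Homeo(I)$ per branch edge $e$. Since $\Homeo(I)$ is connected, any two horizontal foliations are joined by a path $\F_t$ that is continuous in the $C^0_\mathrm{Fol}$ topology (\zcref{lem:continuousfol}).

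The degeneration step also needs to be set up carefully. Along an arbitrary path the boundary can acquire Reeb annuli that are not of $\aleph$ type, or slopes can run off to $\pm\infty$, so slopes are not simply continuous ``except at $\aleph$ points''.
\begin{itemize}
\item Take the first time $\tau$ at which $\partial\F_t$ fails to have a finite multislope; the set of good times is open, so $\tau$ exists.
\item The slopes on $[0,\tau)$ lie in $K$, hence are bounded. Only this boundedness lets \zcref{lem:slconv} conclude that $\partial\F_\tau$ has no $\pm\infty$ component and that its degenerate components are of $\aleph$ type.
\item \zcref{thmintro:alephremove} then gives an unbounded connected $R\subset\Zg(\Phi)$ whose closure contains an accumulation point of the slopes on $[0,\tau)$.
\item That point lies in $K$ because $K$ is closed, so $R\subset K$, contradicting boundedness.
\end{itemize}

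Finally, drop your preliminary claims; they are wrong and not needed. Such flows can have transverse compact planar surfaces, e.g.\ the suspension of a pseudo-Anosov map of the four-punctured sphere with the punctures at its four $1$-prongs. The Poincar\'e--Hopf count of \zcref{lem:fptpafwpf_unobstructed} only excludes the $\aleph$-obstructed signatures, so \zcref{thmintro:closed} cannot be invoked this way. Even if $\Zg(\Phi)$ were closed, ``no horizontal foliation has an $\aleph$ component'' would not follow. An unbounded product inside $\Zg(\Phi)$ is contradictory only once boundedness is known, which is the conclusion you are proving, and your own later argument relies on $\aleph$ degenerations actually occurring.
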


Under these hypotheses, \zcref{thmintro:alephremove} implies that a compact set is a connected component of $\Zg(\Phi)$ if and only if it is a connected component of $\overline{\Zg(\Phi)}$. Thus, \zcref{thmintro:boundedconnected} can be equivalently formulated as follows: if $\overline{\Zg(\Phi)}$ has a bounded connected component, then $\Zg(\Phi) = \overline{\Zg(\Phi)}$ is connected.

\begin{longrem}
In a similar vein, Mann's results in \cite{mann.SpacesSurfaceGroup} imply that the fibers of the map 
$$\bm \slope: \big\{\textrm{$\Phi$-horizontal foliations}\big\} \longrightarrow \Zg(\Phi)$$
may be disconnected. She distinguished connected components by the slopes of the induced foliations on various essential tori. We expect that similar techniques may yield examples where $\Zg(\Phi)$ is disconnected, which motivates the extra condition that $M$ is atoroidal in \zcref{conj:connectedness}. In an earlier paper, Bowden instead used approximating contact structures to distinguish connected components \cite[Theorem B]{bowden.ContactStructuresDeformationsa}. There are two obstacles to overcome in adapting this strategy to produce a counterexample to \zcref{conj:connectedness}. First, for the approximating contact structures to yield well-defined invariants in our setting, one would need a version of Vogel's uniqueness theorem for $C^0$-foliations~\cite{V16}. Second, we do not know many examples of pseudo-Anosov flows with multiple transverse contact structures (see for example the uniqueness theorem in the fibered case of \cite{honda.kazez.ea.TightContactStructuresa}).
\end{longrem}

\begin{center}
\rule{0.3\textwidth}{0.4pt}
\end{center}

\medskip

We end this section with some results and remarks on the regularity of the objects under consideration. We now assume that $\Phi$ is a \emph{topological} flow on $M$ without fixed points, whose boundaries are Reebless and have slope $+\infty$, in the chosen framing of $M$. We define the \emph{topological ziggurat}
$$\Zg^\mathrm{top}(\Phi) \subset \R^n$$
as the collection of multislopes realized by \emph{topological} foliations on $M$ which are \emph{topologically (positively) transverse} to $\Phi$. Furthermore, assume that there exists a smooth model $\Phi_\mathrm{sm}$ of $\Phi$, i.e., a \emph{smooth} flow on $M$ which is topologically equivalent to $\Phi$, via an equivalence isotopic to the identity.\footnote{For us, a topological equivalence is a homeomorphism $h : M \rightarrow N$ sending \emph{oriented} orbits of a flow $\Phi$ to \emph{oriented} orbits of a flow $\Psi$.}

\begin{propintro} \label{propintro:smoothing}
    We have the equality
    $$\Zg(\Phi_\mathrm{sm}) = \Zg^\mathrm{top}(\Phi).$$
\end{propintro}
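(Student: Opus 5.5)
The inclusion $\Zg(\Phi_\mathrm{sm}) \subset \Zg^\mathrm{top}(\Phi)$ should be formal. Let $h$ be the topological equivalence from $\Phi_\mathrm{sm}$ to $\Phi$, isotopic to the identity. A $C^0$-foliation $\F$ positively transverse to $\Phi_\mathrm{sm}$ has $C^1$ leaves and a continuous tangent plane field transverse to the smooth vector field. Such an $\F$ is in particular a topological foliation that is topologically positively transverse to $\Phi_\mathrm{sm}$: near every point there is a flow box in which leaves are graphs over a disk, crossed once and in the positive direction by each orbit segment. Pushing forward by $h$ gives a topological foliation $h_*\F$ that is topologically positively transverse to $\Phi$, since $h$ preserves oriented orbits. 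Because $h$ is isotopic to the identity, it acts trivially on the framings of $\partial_i M$, so the boundary multislope (an isotopy invariant of the restriction to each torus) is unchanged.

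For the reverse inclusion, start with a topological foliation $\F$ topologically transverse to $\Phi$ and replace it by $h^{-1}_*\F$. This reduces the problem to the following claim: a topological foliation topologically transverse to the \emph{smooth} flow $\Phi_\mathrm{sm}$ can be isotoped, along the flow lines of $\Phi_\mathrm{sm}$, to a $C^0$-foliation (with $C^1$ leaves and continuous tangent planes) that is positively transverse to $\Phi_\mathrm{sm}$ in the differentiable sense. An isotopy moving points only along orbits preserves topological transversality. It also restricts to an isotopy of each boundary torus, since $\Phi_\mathrm{sm}$ is tangent to $\partial M$, so the multislope is again preserved.

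To prove the claim, I would adapt Calegari's leafwise smoothing argument for codimension-one topological foliations, using the flow as the preferred transverse direction. First, I would cover $M$ by finitely many smooth flow boxes $D^2\times I$ of $\Phi_\mathrm{sm}$, chosen compatibly with $\partial M$, in which orbits are the vertical segments. By topological transversality, the plaques of $\F$ in each box are graphs of continuous functions $D^2 \to I$, strictly ordered and varying continuously. Second, I would smooth these graph functions box by box, inductively over a handle or skeleton decomposition: vertices, then edges, then $2$-cells. At each stage I would replace the family of graphs by a $C^1$ family that agrees with the already-smoothed part on the overlap, moves each point only vertically by a small amount, keeps the plaques disjoint and ordered, and has uniformly bounded slopes, so that the tangent planes stay transverse to the vertical direction. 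One natural tool is convolution of the graph functions in the $D^2$-direction followed by an order-preserving reparametrization of the transverse coordinate. A gradient bound is automatic in a compact flow box once the graphs are $C^1$, and that bound gives genuine transversality to $\Phi_\mathrm{sm}$.

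The main obstacle is this second step: keeping the smoothed plaques disjoint and consistent across overlapping charts. Naive averaging of graph functions from different charts need not commute with the holonomy identifications, and plaques of distinct leaves could cross. I would first try to follow Calegari's scheme closely, which handles exactly this by smoothing along an increasing sequence of skeleta and extending via the leafwise holonomy pseudogroup. The only new requirement is to verify that every move in his construction can be taken vertical in our flow boxes. If that fails, a fallback is to pass to a carrying branched surface transverse to $\Phi_\mathrm{sm}$, smooth that branched surface, and reconstruct the foliation from it; near the boundary the same argument applies in collar flow boxes.
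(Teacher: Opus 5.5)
Your reduction is the paper's. The inclusion $\Zg(\Phi_\mathrm{sm})\subset\Zg^\mathrm{top}(\Phi)$ comes from transporting along the equivalence; it is isotopic to the identity, so framings and multislopes are unchanged. The reverse inclusion is reduced to smoothing a topological foliation that is topologically transverse to the smooth flow $\Phi_\mathrm{sm}$, by a small isotopy along its orbits (\zcref{lem:smoothingfol}). The paper also works with finitely many flow boxes handled one at a time, so your plan is right. What is missing is the step you yourself call the main obstacle, compatibility of the smoothed plaques across overlapping charts. You leave it unresolved, deferring either to Calegari's skeleton-by-skeleton scheme (with the unchecked requirement that every move be vertical) or to a branched-surface fallback.

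No such machinery is needed, because the paper never averages between charts. Work in a single flow box $[-1,1]^3$ with $\Phi_\mathrm{sm}=\partial_z$, where the plaques are (partial) graphs of a continuous $f(x,y,z)$ that is strictly increasing in $z$.
\begin{itemize}
\item Take a smooth approximation $\tilde f$ with $\partial_z\tilde f>0$.
\item Interpolate between $f$ and $\tilde f$ with a suitable cutoff to get $\bar f$. It should equal $\tilde f$ on a slightly smaller box, equal $f$ near the boundary of the box, and remain strictly increasing in $z$. This is the standard smoothing of increasing functions in families, for which the paper cites \cite[Appendix A.1]{BM25}.
\end{itemize}
The new plaques are disjoint graphs, and the change is a slide along vertical segments, i.e.\ along orbits. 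Nothing moves near the boundary of the box, so the new foliation glues to the untouched one outside without any holonomy bookkeeping. It is still topologically transverse to $\Phi_\mathrm{sm}$, hence graphical in every other box. The interpolation also preserves leafwise smoothness wherever earlier boxes already achieved it. Processing boxes whose shrunken cores cover $M$, as in \cite[Remark 3.3]{B16}, yields $C^1$ leaves with continuous tangent planes. As you observed, graphicality of $C^1$ plaques then gives genuine transversality.
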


Here, we do not need to assume that $\Phi$ is nonwandering. This result justifies two of our conventions:
\begin{itemize}
    \item First, we may restrict our attention to \emph{smooth} flows, since our flows of interest (pseudo-Anosov and periodic flows) have smooth models,
    \item Second, we may also restrict to $C^0$-foliations which have a well-defined plane field, and which are genuinely transverse to a (smooth) flow. This is crucial in order to define contact approximations.
\end{itemize}

Moreover, since $\Zg^\mathrm{top}$ is clearly invariant under topological equivalence, we immediately get:

\begin{corintro} \label{corintro:topequiv}
    Let $M$ and $N$ be two manifolds satisfying the first item of \zcref{assum:1}. Suppose $\Phi$ and $\Psi$ are two smooth flows on $M$ and $N$, respectively, whose restrictions to each boundary component are Reebless with slope $+\infty$. If $\Phi$ and $\Psi$ are topologically equivalent via a topological equivalence $h : M \rightarrow N$, then 
    $$\Zg(\Psi) = h_*\big(\Zg(\Phi)\big),$$
    where $h_*$ is the induced map on the framings and boundary multislopes of $M$ and $N$. 
\end{corintro}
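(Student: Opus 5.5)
The plan is to transport everything through $h$ and then apply \zcref{propintro:smoothing} on both sides.

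First, the topological ziggurat is invariant under topological equivalence. Let $\F$ be a topological foliation on $M$ that is topologically positively transverse to $\Phi$, with boundary multislope $\bm{s}$. Then $h(\F)$ is a topological foliation on $N$. Since $h$ sends oriented orbits of $\Phi$ to oriented orbits of $\Psi$, the foliation $h(\F)$ is topologically positively transverse to $\Psi$; this is a purely topological, local condition and is preserved by homeomorphisms respecting orbit orientations. Its boundary multislope is $h_*(\bm{s})$, where $h_*$ is the map $H_1(\partial M)\to H_1(\partial N)$, read off in the chosen framings.

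We also need $h_*$ to be a well-defined map between multislope spaces. The degeneracy slope $+\infty$ on each component is the slope of the flow on the boundary, so $h$ sends degeneracy slopes to degeneracy slopes. In particular it preserves the matching of boundary components and their orientations, and $h_*$ is a projective map in each coordinate fixing $+\infty$, i.e.\ an integer translation composed with a possible sign consideration. Real slopes therefore go to real slopes, and applying the same argument to $h^{-1}$ gives
$$\Zg^{\mathrm{top}}(\Psi)=h_*\big(\Zg^{\mathrm{top}}(\Phi)\big).$$

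Next, apply \zcref{propintro:smoothing} with $\Phi_{\mathrm{sm}}=\Phi$ itself. This is allowed because $\Phi$ is already smooth, and the identity is a topological equivalence isotopic to the identity. It gives $\Zg(\Phi)=\Zg^{\mathrm{top}}(\Phi)$, and likewise $\Zg(\Psi)=\Zg^{\mathrm{top}}(\Psi)$. Combining these equalities with the previous display yields the corollary.

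The main point needing care is that orientation conventions are respected. The foliations are oriented and cooriented, and the multislopes are oriented. Because $h$ preserves orbit orientations, coorientations (given by the flow direction) are preserved. Leaf orientations are then determined by the ambient orientation, provided $h$ is orientation preserving; if $h$ reverses orientation, we absorb this into the definition of $h_*$ on framings. With that convention fixed, no further difficulty is expected.
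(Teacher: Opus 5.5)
Your proposal is correct and follows essentially the same route as the paper, which deduces the corollary immediately from the evident invariance of $\Zg^{\mathrm{top}}$ under topological equivalence together with \zcref{propintro:smoothing}, applied to $\Phi$ and $\Psi$ as their own smooth models so that $\Zg(\Phi)=\Zg^{\mathrm{top}}(\Phi)$ and $\Zg(\Psi)=\Zg^{\mathrm{top}}(\Psi)$. Your extra bookkeeping on $h_*$ goes beyond what the paper spells out. This covers the preservation of the degeneracy slope $+\infty$, and the fact that $h_*$ becomes an order-reversing reflection-plus-translation on slopes when $h$ reverses orientation. It is consistent with the paper's convention that $h_*$ is simply the induced map on framings and multislopes.
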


We will prove a more general result on upgrading topological foliations to $C^0$-foliations, relative to a (topologically) transverse flow, see \zcref{sec:leafwisesmoothing}.

        \subsection{Further directions}

\paragraph{Rigidity of extremal foliations.} Given a circle bundle over a surface attaining the extremal value of the Euler number permitted by the Milnor--Wood inequality, Matsumoto proved that there is a \emph{unique} horizontal foliation up to monotone equivalence~\cite{matsumoto.RemarksFoliatedS1}. Does any similar rigidity property hold for foliations achieving extremal points of $\Zg(\Phi)$?

\paragraph{Multiple pseudo-Anosov flows.} Our results focus on foliations transverse to a \emph{single} flow. In view of \zcref{conj:almosttransverse} (for hyperbolic 3-manifolds), it is expected that one must consider \emph{all} pseudo-Anosov flows simultaneously to complete the picture and survey the full geography of taut foliations on a given $3$-manifold.

\begin{conj}\label{conj:multiplepA}
    Suppose $\interior M$ is hyperbolic. Then the union
    $$\bigcup_i \overline{\Zg(\Phi_i)}$$
    over all pseudo-Anosov flows on $M$ is connected and locally box-convex.
\end{conj}

Note that rationality and the sphericity of slippery points would follow from the finiteness conjecture for pseudo-Anosov flows combined with the corresponding theorem for the individual ziggurats $\Zg(\Phi_i)$. As a first step towards this conjecture, it would be interesting to understand how the ziggurats for pseudo-Anosov flows coming from adjacent fibered faces of the Thurston norm ball fit together.

\begin{figure}[htp]
    \centering
    \includegraphics[width=0.8\linewidth]{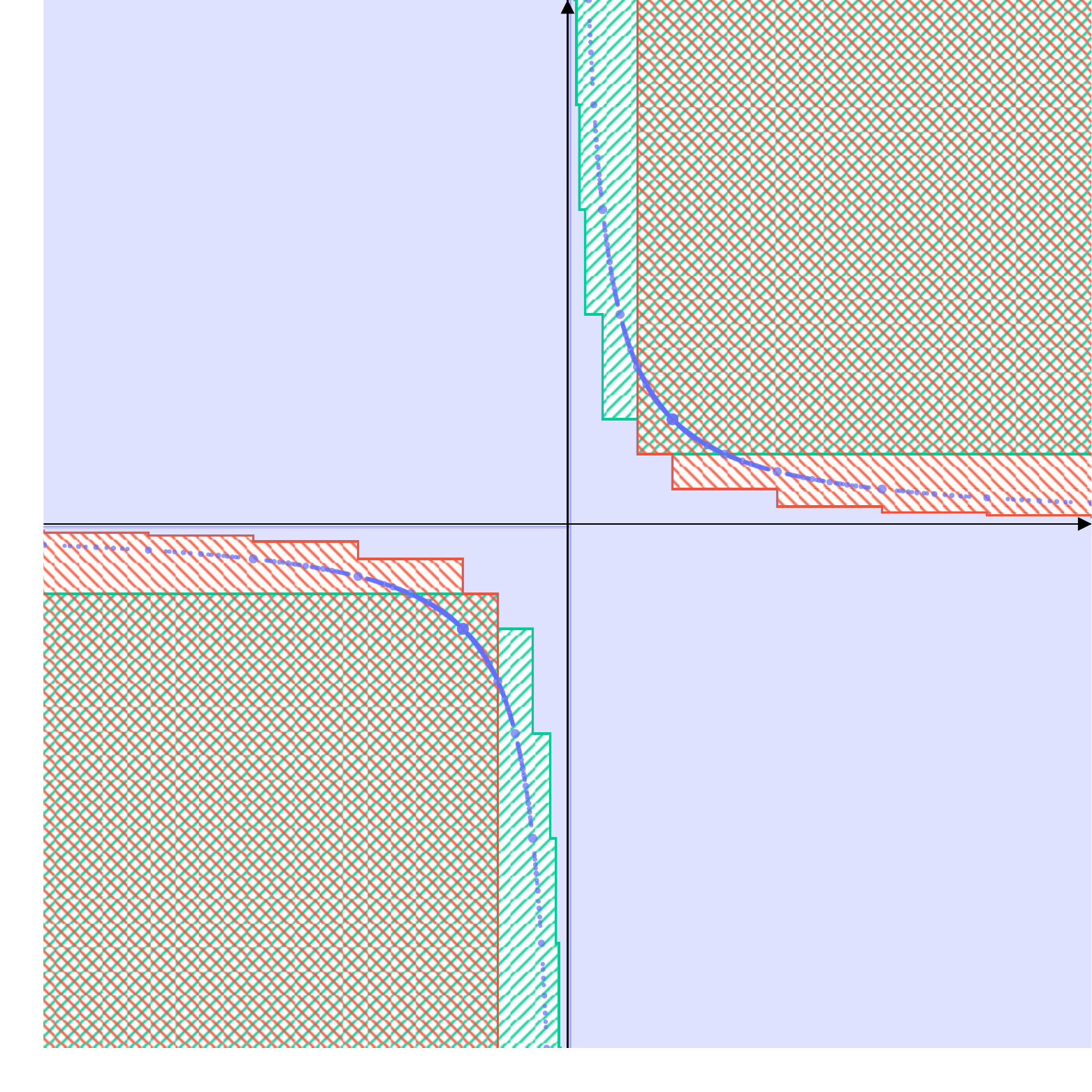}
    \caption{The landscape of surgeries on the complement of the $2$-component link L6a2. The locus of fillings with positive first Betti number is a hyperbola whose rational points are shown with blue dots. The $L$-space region, as computed in \cite{santoro.zhou.LspaceSurgeriesTwobridge}, is shown in solid blue. We found two pseudo-Anosov flows $\Phi_1$ and $\Phi_2$ on this $3$-manifold; $\Zg(\Phi_1)$ and $\Zg(\Phi_2)$ are shown in green and red hatching respectively. They overlap and the union of their interiors precisely fills out the complement of the $L$-space region. \zcref{conj:multiplepA} and \zcref{conj:HFziggurat} both appear to be correct in this example. The $L$-space region is closed, and the interiors of $\Zg(\Phi_1)$ and $\Zg(\Phi_2)$ give taut foliations by \zcref{thmintro:rationalfilling}. Thus, the $L$-space conjecture also holds for surgeries on this link. Note that unlike our conventions in the previous figures, we cannot choose the framing of $\partial M$ so that the degeneracy slopes of both $\Phi_1$ and $\Phi_2$ are $+\infty$. Both $\Zg(\Phi_1)$ and $\Zg(\Phi_2)$ have slippery points, but they have moved to $(0,\infty)$ and $(\infty,0)$ in the coordinates of this figure. We thank Hugo Zhou and Diego Santoro for sharing their work and helping us to generate this figure.}
    \label{fig:lspace_ziggurat}
\end{figure}

\paragraph{Heegaard Floer homology.} There are many tantalizing conjectures relating Heegaard Floer homology and taut foliations, chief among which is the \emph{$L$-space conjecture}~\cite{BoyerGordonWatson2013, Juhasz2015}. It predicts that the existence of a taut coorientable foliation on an irreducible rational homology $3$-sphere is equivalent to the nonvanishing of a certain flavor of Heegaard Floer homology. A step toward this conjecture would be to show that the space of non-$L$-space surgeries on a given link shares the same structural properties as the foliation ziggurats we study in this article. Let $\mathfrak s$ be a relative $spin^\C$ structure on $M$. Then $\mathfrak s$ induces a $spin^\C$ structure on every Dehn filling of $M$ up to an ambiguity of a multiple of the Poincar\'e dual of the Dehn surgery core. Let $\Zg_{HF}(M,\mathfrak{s})$ be the set of (rational) multislopes such that the corresponding Dehn filling is a non-$L$-space in at least one of the induced $spin^\C$ structures.

\begin{thm}[{\cite{kronheimer.mrowka.MonopolesContactStructures, ozsvath.szabo.HolomorphicDisksGenus}}+\zcref{thmintro:rationalfilling}]
If $\mathfrak{s}$ is the $spin^\C$ structure compatible with $\Phi^\perp$, then
$$\Q^n \cap \interior \Zg(\Phi) \subset \Zg_{\textit{HF}}(M,\mathfrak{s}).$$
\end{thm}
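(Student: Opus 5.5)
The plan is to take a rational multislope $\bm s \in \Q^n \cap \interior \Zg(\Phi)$ and first use \zcref{thmintro:rationalfilling} to obtain a $\Phi$-horizontal foliation $\F$ on $M$ whose boundary foliations have linear type with slopes $\bm s$. Since every $s_i$ is rational and the boundary holonomy is linear, we can perform Dehn filling along $\bm s$. Concretely, glue a solid torus to each $\partial_i M$ so that the meridian has slope $s_i$; this slope is not the degeneracy slope $+\infty$ because $s_i\in\Q$. We extend $\F$ across the filling solid tori by meridian disks (the foliation has a linear circle-bundle structure near each boundary torus), and we extend $\Phi$ by the core circles, which are transverse to those disks. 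The result is a closed manifold $M(\bm s)$ with a foliation $\widehat\F$ transverse to a flow $\widehat\Phi$. Because $\Phi$ is nonwandering on $M$ and the cores are periodic orbits, $\widehat\Phi$ is nonwandering, so $\widehat\F$ is taut. Its tangent plane field is homotopic to $\widehat\Phi^\perp$, and this homotopy class is what determines the $spin^\C$ structure induced by $\mathfrak s$ on $M(\bm s)$, up to the stated ambiguity along the surgery cores.

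Next we invoke the known nonvanishing results. We approximate $\widehat\F$ by a positive contact structure; the theorems of Eliashberg--Thurston, Bowden and Kazez--Roberts cover the $C^0$ case, and the closed case applies here. Tautness makes the approximating contact structure weakly symplectically semi-fillable, via the standard symplectization of $M(\bm s)\times I$ by a closed $2$-form dominating $\widehat\F$. The theorem of Ozsváth--Szabó (Kronheimer--Mrowka in the monopole setting) then shows that the contact invariant, and hence $\widehat{HF}$ in the $spin^\C$ structure of $\xi \simeq T\widehat\F$, is nonzero, and moreover has rank exceeding that of an $L$-space. This is where we need $M(\bm s)$ to be a rational homology sphere. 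If $b_1>0$, we argue separately: such a manifold is a non-$L$-space by definition, or the statement is vacuous.

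The main obstacle is the possible presence of spherical leaves. Eliashberg--Thurston requires $\widehat\F \neq$ the product foliation of $S^1\times S^2$, which would arise when $\bm s$ is a spherical multislope giving compact planar fibers. By \zcref{thmintro:rigidityspherical}, such spherical points cannot lie in $\interior\Zg(\Phi)$ unless the orthant conditions hold. The cleaner route is that the second alternative of \zcref{thmintro:rationalfilling} is excluded, or harmless: $S^1\times S^2$ has $b_1=1$ and so is not an $L$-space. I would treat this case by hand. The remaining work is bookkeeping: checking that the filling $spin^\C$ structure is among those induced by $\mathfrak s$, which holds because $\widehat\Phi^\perp$ restricts to $\Phi^\perp$ on $M$.
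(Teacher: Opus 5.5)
Your proposal is correct and is essentially the paper's argument, which is simply \zcref{thmintro:rationalfilling} combined with the cited results. That theorem gives a linear-boundary foliation which fills to a taut foliation on $M(\bm s)$ transverse to the blowdown of $\Phi$. The spherical case is harmless: by \zcref{thmintro:rigidityspherical} spherical multislopes are never interior, and in any case $S^1\times S^2$ is not an $L$-space. The $C^0$ Eliashberg--Thurston theorem together with Ozsv\'ath--Szab\'o/Kronheimer--Mrowka then gives a non-$L$-space in the $spin^\C$ structure of $\widehat\Phi^\perp$, which is induced by $\mathfrak s$.

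One correction to your justification. Nonvanishing of the contact invariant does not by itself rule out $L$-spaces: the standard tight $S^3$ has nonzero contact invariant and is an $L$-space. The cited theorem instead uses the two-sided weak filling $M(\bm s)\times I$ of $(-M(\bm s),\xi_-)\sqcup(M(\bm s),\xi_+)$ coming from the taut foliation. It is this filling that forces $HF_{\mathrm{red}}\neq 0$ in that $spin^\C$ structure.
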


\begin{conj}\label{conj:HFziggurat}
$\overline{\Zg_{\textit{HF}}(M,\mathfrak{s})}$ has the same structural properties as foliation ziggurats: it locally satisfies \zcref{thmintro:box-convexity} (box-convexity) and \zcref{thmintro:rigidityspherical} (rigidity of spherical multislopes), and satisfies \zcref{thmintro:irrationality} (irrational instability), \zcref{thmintro:slippery} (slippery points are spherical), and \zcref{conj:connectedness} (connectedness).
\end{conj}

These results are known to hold when $M$ has a single boundary component; in this case, the set of $L$-space surgeries is known to be an interval with rational endpoints. We do not know how to use the rational link surgery formula to say anything general about the case of multiple boundary components. However, in the case of two-bridge links, Santoro and Zhou have recently been able to completely compute the $L$-space surgery region, affirming \zcref{conj:HFziggurat} in this case \cite{santoro.zhou.LspaceSurgeriesTwobridge}. \zcref{fig:lspace_ziggurat} shows the specific example of the case of surgeries on the two-bridge link L6a2. 

        \subsection{Organization of the article}

The article is divided into three main parts and three appendices.
\begin{itemize}
    \item In \zcref{sec:foundations}, we collect the preliminaries on foliations, contact structures, and flows needed in the rest of the article. While most of the results in \zcref{sec:foundations} are not fundamentally new, we recommend reading \zcref{sec:T2foliations} about foliations on $\T^2$ for our notation and conventions.
    
    \item In \zcref{sec:ETandback}, we prove our version of the Eliashberg--Thurston theorem and its converse for $3$-manifolds with boundary (\zcref{thmintro:ET} and \zcref{thmintro:ETconverse}). This is the technical heart of the article and can be used as a black box for the next part.

    \item In \zcref{sec:mainresults}, we give applications to the structure of the ziggurat $\Zg(\Phi)$ and prove 
    \zcref[range]{thmintro:contactzigg,thmintro:boundedconnected}. Most of these applications are rather direct consequences of \zcref{sec:ETandback}, but this third part can be read without a detailed understanding of the proofs in \zcref{sec:ETandback}.

    \item In \zcref{sec:mcf}, we prove a technical result on mean curvature flow on the $2$-torus used in \zcref{sec:T2foliations}. In \zcref{sec:zigannulusdisk}, we study ziggurats in the special cases of disk and annulus suspensions, which are excluded from our main theorems. In \zcref{sec:beyondaleph}, we discuss a more general framework for describing ``inadmissible'' boundary foliations.
\end{itemize}

\FloatBarrier

        \subsection{Acknowledgments}

We thank Ian Agol, John Baldwin, Jonathan Bowden, Danny Calegari, Vincent Colin, Nathan Dunfield, Yasha Eliashberg, Ying Hu, Siddhi Krishna, Beibei Liu, Tao Li, Qingfeng Lyu, Rachel Roberts, Diego Santoro, Saul Schleimer, and Chi Cheuk Tsang for many helpful conversations and for their interest in this project.

Part of this work was completed while the authors were in residence at the Simons Laufer Mathematical Sciences Institute in Berkeley, California, during the Spring 2026 program \emph{Topological and Geometric Structures in Low Dimensions}; we thank the institute and the organizers for their hospitality and for providing a stimulating environment. The program was supported by the National Science Foundation under Grant No.~DMS-1928930.

T.~Massoni is supported by a Stanford Science Fellowship. J.~Zung was supported in part by a postdoctoral fellowship at MIT under Simons Foundation Award \#994330, \textit{Simons Collaboration on New Structures in Low-Dimensional Topology}. He would especially like to thank Tom Mrowka for all of his support at MIT.

\clearpage
\part{Foundations}\label{sec:foundations}

In this first part, we set the stage for our main results. We collect various definitions and prove a number of technical results that will be needed later. While most of this material is standard, some statements do not seem to appear in the literature in the form or generality required here, and particular care is needed to handle $C^0$-foliations.

    \section{\texorpdfstring{$C^0$}{C0}-foliations and laminations}

A \textbf{foliation} is a decomposition of an $n$-manifold into $k$-dimensional submanifolds (\textbf{leaves}) locally topologically modeled on $\R^k\times \R^{n-k}$. We are exclusively concerned with the cases $(n,k)=(3,2)$ (codimension 1 foliations on $M$) and $(n,k)=(2,1)$ (codimension 1 foliations on $\partial M$). In this article, a \textbf{$C^0$-foliation} is a foliation with $C^1$ leaves a continuous tangent plane field. Every foliation is topologically isotopic to a $C^0$ foliation by~\cite[Theorem 3.4]{C01}. In general, the foliations we consider on manifolds with boundary are transverse to the boundary.

There are two natural topologies on the space of foliations.
Two foliations are \textbf{$\varepsilon$ tangentially close} if their tangent plane fields are $\varepsilon$-close. We call the induced topology the \textbf{tangential topology}. Two foliations $\F_1$ and $\F_2$ are \textbf{$\varepsilon$-$C^0$-close} if for every point $p\in M$ and every 1-Lipschitz pointed $C^1$ immersion $f_1:(B_{1/\varepsilon},0)\to (M,p)$ with image contained in a leaf of $\F_1$, there is a pointed $C^1$ immersion $f_2:(B_{1/\varepsilon},0)\to (M,p)$ with image contained in a leaf of $\F_2$ which is $\varepsilon$-$C^1$ close to $f_1$. We call the induced topology the \textbf{$C^0$ topology}.

The tangential topology is coarser than the $C^0$ topology, because there can be different foliations tangent to the same continuous plane field.

\begin{defn} \label{def:isotopy}
    Let $\F$ be a $C^0$-foliation. A \textbf{$C^0$-isotopy} of $\F$ is a topological isotopy $h_t : M \rightarrow M$ such that $\F_t = h_t(\F)$ is a continuous path of $C^0$-foliations. In particular, the plane fields $T \F_t$ exist and vary continuously with $t$. This $C^0$-isotopy is \textbf{$\varepsilon$-small}, if $d\big(h_t(x),x\big)<\varepsilon$ for all $x\in M$ and furthermore $T\F_t$ is $\varepsilon$-close to $T\F$.
\end{defn}

\begin{defn}
A foliation $\F$ is \textbf{taut} if for every point $p\in M$, there exists a closed loop positively transverse to $\F$ and passing through $p$.
\end{defn}

Whenever $\Phi$ is a flow and $\F$ is a foliation, we say that $\F$ is \textbf{ $\Phi$-horizontal} if $\F$ is transverse to $\Phi$. We say that an isotopy of foliations is horizontal if it is an isotopy through $\Phi$-horizontal foliations.

\begin{lem}[Isotopy extension]\label{lem:isotopy_extension} 
Let $\F$ be a $\Phi$-horizontal foliation on $M$. Let $(h_t)_{t\in [0,1]}$ be a $C^0$-isotopy of $\partial \F$ through $\Phi$-horizontal foliations on $\partial M$ which is smooth in the $t$ and leafwise directions. Then $h_t$ extends to a $C^0$ isotopy $\overline h_t$ of $\F$ through $\Phi$-horizontal foliations on $M$. Moreover, if $h_t$ is $\varepsilon$-small, then $\overline h_t$ is $O(\varepsilon)$-small, where the implicit constants may depend on $\Phi$ and $\F$ only.
\end{lem}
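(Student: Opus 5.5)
We extend the isotopy by pushing it along the flow into a collar of $\partial M$. Since $\Phi$ is tangent to $\partial M$ and nonsingular, we first choose a collar $C \cong \partial M \times [0,\delta]$ whose $[0,\delta]$-coordinate is transverse to $\Phi$, and we arrange that $\F$ is ``product-like'' there. Concretely, because $\F$ is transverse to $\partial M$ and to $\Phi$, the plane field $T\F$ is spanned near $\partial M$ by $T(\partial\F)$ and a vector field $\nu$ that is tangent to $\F$ and transverse to $\partial M$. Flowing along $\nu$ identifies $C$ with $\partial M\times[0,\delta]$ in such a way that the slices $\partial M\times\{r\}$ are transverse to $\F$, and the restriction $\F|_{\partial M\times\{r\}}$ is obtained from $\partial\F$ by a homeomorphism that varies continuously in $r$. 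By continuity, $\Phi$ stays transverse to these slice foliations for $r$ small.

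Next, we define the extension by interpolation. Pick a smooth cutoff $\rho:[0,\delta]\to[0,1]$ with $\rho\equiv 1$ near $0$ and $\rho\equiv 0$ near $\delta$. On $C$, set
\[
\overline h_t(x,r)=\big(h_{\rho(r)t}(x),r\big)
\]
in the $\nu$-coordinates, and set $\overline h_t=\mathrm{id}$ outside $C$. Since the isotopy is smooth in the $t$ and leafwise directions, the image foliation $\overline h_t(\F)$ has leaves that are $C^1$ and a continuous tangent plane field. At a point $(x,r)$, its tangent plane is spanned by the pushed leaf direction $dh_{\rho(r)t}(T\partial\F)$ together with $\nu+\rho'(r)\,t\,\partial_s h_s$. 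The $\partial_s h_s$ term here is where smoothness in $t$ is used. Away from $C$ the foliation is unchanged, so $\overline h_t$ is a $C^0$-isotopy.

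The main obstacle is keeping the foliation $\Phi$-horizontal in the interior of the collar. The leaf direction itself is fine: each $h_s(\partial\F)$ is $\Phi$-horizontal by hypothesis, so $dh_s(T\partial\F)$ is transverse to $\Phi|_{\partial M}$. The trouble is the extra term $\rho'\,t\,\partial_s h_s$, which tilts the plane and could in principle make it contain $\Phi$. We handle it in one of two ways.

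1. Choose $\nu$ so that $\{\nu,\Phi\}$ together with the leaf direction of $\partial\F$ forms a frame. Transversality to $\Phi$ then means the plane does not contain $\Phi$. It therefore suffices to keep the $\Phi$-component of the tilt controlled, since a bounded tilt in directions other than $\Phi$ preserves transversality.

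2. Alternatively, reparametrize the isotopy so that $\partial_s h_s$ is always tangent to the leaves of $h_s(\partial\F)$ plus a component along $\Phi|_{\partial M}$. Tilting toward $\Phi$ with a positive coefficient preserves positive transversality as long as the sign is consistent.

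We expect the cleanest route to combine these: absorb the $\Phi$-component by flowing along $\Phi$, i.e. compose with a small time reparametrization of $\Phi$, which preserves horizontality. The remaining tilt is then leafwise and therefore harmless.

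Finally, for the estimate: if $h_t$ is $\varepsilon$-small, then $\overline h_t$ moves points by at most $\varepsilon$ times the Lipschitz constant of the $\nu$-collar identification. Its plane field differs from $T\F$ by the perturbation $dh_s-\mathrm{id}$ plus the tilt $\rho'\,t\,\partial_s h_s$. Here $\rho'$ is bounded by $1/\delta$, and $\delta$ depends only on $\Phi$ and $\F$. If smallness of $\partial_s h_s$ is not directly part of the hypothesis, we first reparametrize $t$ so that $\|\partial_s h_s\|=O(\varepsilon)$, which is possible up to a constant factor. This gives an $O(\varepsilon)$-small isotopy with constants depending only on $\Phi$ and $\F$.
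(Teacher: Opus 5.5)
The collar $\partial M\times[0,\delta]$ in which $\F=\partial\F\times[0,\delta]$, with the slice-wise extension $\overline h_t(x,r)=(h_{\rho(r)t}(x),r)$, is the same construction as in the paper. The gap is exactly at the step you flag as the main obstacle, and none of your remedies closes it.

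Write $v_\Phi=u+c\,\partial_r$, with $u$ tangent to the slice and $c=dr(v_\Phi)$. The plane of $\overline h_t(\F)$ is spanned by $T\big(h_{\rho t}(\partial\F)\big)$ and $\partial_r+w$, where $w=\rho'(r)\,t\,\partial_sh_s$. It contains $v_\Phi$ if and only if $u-c\,w$ is tangent to $h_{\rho t}(\partial\F)$. So if $c$ vanished, the tilt would be harmless however large it is. What must be controlled is the \emph{product} $c\,w$, not the $\Phi$-component of $w$ by itself.

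Since $v_\Phi$ is $K$-Lipschitz and tangent to $\partial M$, you have $|c|\le Kr$. Your cutoff has $|\rho'|\sim1/\delta$ where $r\sim\delta$. Hence $|c\,w|\sim K\sup\|\partial_sh_s\|$, which does not shrink as $\delta\to0$.

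This genuinely fails. Near $\partial M$ take $v_\Phi=\partial_y-Kr\,\partial_r$, let $\F$ be the foliation by $\{y=\mathrm{const}\}$, and let $h_s(x,y)=(x,y+sD)$. This $h_s$ preserves $\partial\F$, so it is an admissible isotopy. The leaves of $\overline h_1(\F)$ are $\{y-D\rho(r)=\mathrm{const}\}$, with conormal $dy-D\rho'(r)\,dr$, which evaluates on $v_\Phi$ to $1+DK\,r\rho'(r)$. If $\rho$ drops from $1$ to $0$ inside $[0,\delta]$ with $|\rho'|\le C/\delta$, then $m=\sup_r r|\rho'(r)|$ satisfies $1=\int|\rho'|\,dr\le m\,(1+\log(C/m))$. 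So $m$ is bounded below independently of $\delta$, and horizontality fails once $DK$ is large.

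Your option 2 needs sign information, on $c$ and on the $\Phi$-component of $\partial_sh_s$, that you do not have. Sliding along $\Phi$ runs into the same term: $p\mapsto\varphi^\Phi_{T(p)}(p)$ with $T=\rho(r)\,\tau$ is injective on orbits only if $\Phi\cdot T>-1$, and $\Phi\cdot T$ contains $\tau\,c\,\rho'(r)$.

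The missing idea, which is the heart of the paper's proof, is to exploit $|c|\le Kr$ with a scale-invariant cutoff. The paper reparametrizes $r\mapsto f(r)$ with $f'(r)<\delta/(Kr)$ and $f(\delta/K)>1/2$; this is possible since $\int dr/r$ diverges. In the new coordinate the normal component of $v_\Phi$ is $<\delta$, and $v_\Phi$ is $\delta$-close to its boundary value on $\{f\le1/2\}$. The paper then uses a cutoff $g$ with $|g'|<10$, independent of $\delta$. In your coordinates this amounts to letting $\rho$ drop over $[\delta e^{-2/\eta},\delta]$ with $r|\rho'(r)|\le\eta$. Then $|c\,w|\le K\eta\sup\|\partial_sh_s\|$, and $u$ is $O(K\delta)$-close to the boundary value of $v_\Phi$. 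So, once $\eta$ and $\delta$ are small, horizontality follows from the uniform transversality of $v_\Phi|_{\partial M}$ to the foliations $h_s(\partial\F)$, $s\in[0,1]$, which holds by compactness. These parameters depend on $h_t$, not only on $\Phi$ and $\F$ as you claim.

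Separately, your reduction for the quantitative clause is not valid. $\varepsilon$-smallness bounds displacements and plane fields, but not the length of the paths $t\mapsto h_t(x)$, and reparametrizing $t$ does not change that length. So you cannot arrange $\|\partial_sh_s\|=O(\varepsilon)$. The paper's proof does not treat this clause in detail either.
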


\begin{proof}
    Let $v_{\Phi}$ be a smooth vector field generating $\Phi$. In fact, for this argument we will only use that $v_{\Phi}$ is Lipschitz.
    
    Choose a tubular neighborhood of $\partial M$ parametrized as $\mathbb{T}^2_{x,y} \times [0,1]_r$. Our first preparation step is to modify the choice of this tubular neighborhood so that $\F$ and $v_{\Phi}$ are nearly invariant in the $r$. Invariance of $\mathcal F$ in the $r$ direction can be achieved by a small isotopy and restricting to a small tubular neighborhood if necessary. To obtain near-invariance of $\Phi$ in the $r$ direction, let $\delta>0$ (to be determined later) and apply a reparametrization $r\mapsto f(r)$, where $f : [0,1] \rightarrow [0,1]$ is a diffeomorphism satisfying 
    $$f'(r) < \frac \delta {Kr}$$
    and
    $$f(\delta/K) > \frac{1}{2},$$
    where $K$ is a Lipschitz constant for $v_\Phi$. This guarantees that 
    $$|v_{\Phi}(x,y,r) - v_{\Phi}(x,y,0)| <\delta$$
    for $r\in [0,1/2]$. 
    
    Choose a smooth cutoff function $g : [0,1]_r \to [0,1]$ satisfying that
    \begin{itemize}
        \item $g(0)=1$ for $r \leq  1/3$,
        \item $g(r)=0$ for $r \geq 1/2$,
        \item $g'(r) < 10$.
    \end{itemize}
    
    Define the isotopy 
    $$\overline h_t(x,y,r) \coloneqq \big(h_{tg(r)}(x,y),r\big).$$
    On $\partial M$, $\overline h_t$ agrees with $h_t$, and $\overline h_t = \mathrm{id}$ for $r>\delta$. When $\delta$ is small enough, $\overline h_t(\partial \F)$ is $\Phi$-horizontal for all $0 \leq t \leq 1$. 
\end{proof}

We will also need a version of this result that we will apply to contact structures. The same proof works when $\F$ is replaced by a smooth plane field:

\begin{lem}[Isotopy extension for plane fields]\label{lem:isotopy_extension_contact} 
Let $\xi$ be a smooth $\Phi$-horizontal 2-plane field on $M$. Let $(h_t)_{t\in [0,1]}$ be a smooth isotopy of $\partial \F$ through $\Phi$-horizontal foliations on $\partial M$. Then $h_t$ extends to a smooth isotopy $\overline h_t$ of $\xi$ through $\Phi$-horizontal 2-plane fields on $M$. Moreover, if $h_t$ is $\varepsilon$-small, then $\overline h_t$ is $O(\varepsilon)$-small, where the implicit constants may depend on $\Phi$ and $\xi$ only.
\end{lem}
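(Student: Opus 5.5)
We follow the proof of \zcref{lem:isotopy_extension} essentially verbatim, with $\xi$ in place of $T\F$. We read the statement as follows: $h_t$ is a smooth isotopy of $\partial M$ moving the characteristic foliation $\partial\xi = \xi\cap T\partial M$ through $\Phi$-horizontal foliations. We want a smooth isotopy $\overline h_t$ of $M$, supported near $\partial M$, such that $\overline h_t|_{\partial M} = h_t$ and $(\overline h_t)_*\xi$ stays $\Phi$-horizontal. Smoothness of the output is automatic: $h_t$ is smooth in all variables, so $(t,x,y,r)\mapsto h_{tg(r)}(x,y)$ is smooth for a smooth cutoff $g$.

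\textbf{Collar normalization.} Choose a collar $\T^2_{x,y}\times[0,1]_r$ of $\partial M$. Since $\xi$ is transverse to $\partial M$, after a small smooth isotopy of $\xi$ (or a choice of collar) we may assume $\xi$ contains $\partial_r$ and is $r$-invariant near $r=0$: $\xi = \partial\xi \oplus \langle\partial_r\rangle$. If we only want to move $\xi$ by an isotopy, we instead use that $\xi$ is $C^1$-close to this product form on a thin collar. This closeness is controlled by the smoothness of $\xi$, playing the role of the Lipschitz constant.

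Next, reparametrize $r$ as in \zcref{lem:isotopy_extension}, using the Lipschitz constant $K$ of $v_\Phi$, so that $|v_\Phi(x,y,r)-v_\Phi(x,y,0)|<\delta$ for $r\le 1/2$. Fix the cutoff $g$ with $g\equiv1$ on $[0,1/3]$, $g\equiv0$ on $[1/2,1]$, and $|g'|<10$. Set
$$\overline h_t(x,y,r) \coloneqq \big(h_{tg(r)}(x,y), r\big).$$

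\textbf{Transversality check.} This is the step that needs care, because $\overline h_t$ shears in the $r$ direction. We have
$$d\overline h_t(\partial_r) = \partial_r + t g'(r)\,\partial_s h_s\big|_{s=tg(r)}.$$
So $(\overline h_t)_*\xi$ is spanned by $h_{tg(r)*}(\partial\xi)$, which is $\Phi|_{\partial M}$-horizontal by hypothesis, and by a vector $\partial_r + O(\|\partial_t h\|)$.

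By compactness, $h_s(\partial\xi)$ makes an angle at least some $\theta>0$ with $v_\Phi|_{\partial M}$, uniformly in $s$. After the reparametrization, $v_\Phi$ is $\delta$-close to its boundary value, which is tangent to $\T^2$. Transversality then amounts to the plane $\langle h_*\partial\xi,\ \partial_r + w\rangle$ not containing $v_\Phi$. Project out $\partial_r$: the plane meets $T\T^2$ only in the line $h_*\partial\xi$, up to an $O(\delta)$ error from the tilt $w$. Hence transversality holds once $\delta$ is small relative to $\theta$.

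The one subtlety is that the reparametrization stretches the collar. This makes $\|\partial_r\|$ small relative to the shear term $g'\,\partial_s h$, and the same issue is already present in \zcref{lem:isotopy_extension}. Concretely, we choose $f$ so that in the final coordinates the $r$-derivative of the coordinate change is large compared to $10\sup\|\partial_s h_s\|/\theta$. This remains compatible with $f'(r)<\delta/(Kr)$ near $r=0$ once $\delta$ is fixed first. I expect this balancing of constants to be the main obstacle.

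\textbf{Size estimate.} Finally, $d(\overline h_t(p),p)\le \sup_s d(h_s,\mathrm{id}) \le \varepsilon$. The plane field $(\overline h_t)_*\xi$ differs from $\xi$ by at most $C\big(\|dh_s-\mathrm{id}\| + \|g'\|\,\|\partial_s h\|\big) = O(\varepsilon)$, where $C$ depends only on $\xi$ and on the collar and cutoff choices dictated by $\Phi$ and $\xi$. This gives the $O(\varepsilon)$-smallness claimed.
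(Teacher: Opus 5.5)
Your proposal is essentially the paper's proof. The paper only says that the argument of \zcref{lem:isotopy_extension} (collar, reparametrization $r\mapsto f(r)$, cutoff $g$, and $\overline h_t(x,y,r)=(h_{tg(r)}(x,y),r)$) carries over verbatim with $T\F$ replaced by $\xi$, which is what you do, and your transversality computation is the right one. Two small corrections:

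\begin{enumerate}
\item The ``subtlety'' you flag at the end is not an obstacle. In the stretched coordinate, the normal component $\eta$ of $v_\Phi$ satisfies $|\eta|\le Krf'(r)<\delta$. So $v_\Phi\in(\overline h_t)_*\xi$ would force $u-\eta w\in\langle h_*\partial\xi\rangle$ with $|\eta w|<10\,\delta\sup\|\partial_s h_s\|$. The only balancing needed is therefore to take $\delta$ small compared to $\theta/(1+\sup\|\partial_s h_s\|)$; no further condition on $f$ is required.

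\item You cannot make $\xi$ $r$-invariant with $\partial_r\in\xi$ when $\xi$ is contact, because a nonvanishing Legendrian vector field is never a contact vector field. You must use your fallback: choose the collar so that $\partial_r\in\xi$, and use $\xi=\langle\partial_r\rangle\oplus\ell_r$ with $\ell_r$ within $O(\delta)$ of $\partial\xi$ on $\mathrm{supp}\,g$. This is also what makes $(\overline h_t)_*\xi$ a genuine isotopy of the original $\xi$, which is what \zcref{lem:fillingcontact} needs.
\end{enumerate}
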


A \textbf{semiconjugacy} from a foliation $\F_0$ on $M$ to a foliation $\F_1$ on $M$ is a continuous, surjective map $h:M\to M$ which restricts to an orientation preserving immersion from leaves of $\F_0$ to leaves of $\F_1$ and restricts to a non-decreasing map on the local leaf spaces of $\F_0$ and $\F_1$. An elementary kind of semiconjugacy is a \textbf{Denjoy blowdown}: if there is an inclusion $\iota:(\lambda\times [0,1], \mathcal H) \hookrightarrow (M,\F_0)$, where $(\lambda\times[0,1],\mathcal H)$ is a foliated $I$ bundle over a (possibly noncompact) surface $\lambda$, then there is a semiconjugacy from $\F_0$ to a new foliation $\F_1$ which collapses the image of $\iota$ to a leaf homeomorphic to $\lambda$. We say that $\F_0$ is a \textbf{Denjoy blowup} of $\F_1$ along $\lambda$. For any choice of $\F_1$, $\lambda$ a leaf of $\F_1$, and $\mathcal H$ a foliated $I$-bundle over $\lambda$, one can construct a Denjoy blowup $\F_0$ such that the semiconjugacy $h:\F_0\to \F_1$ is $\varepsilon$-$C^0$ close to the identity and $\F_0$ and $\F_1$ are $\varepsilon$-$C^0$-close; one simply needs to split open along the leaf $\lambda$ to width much less than $\varepsilon$ and insert $\mathcal H$. We call such a Denjoy blowup a \textbf{Denjoy blowup of size $\varepsilon$}.

\begin{defn} \label{def:semiequi}
Two foliations $\F_0$ and $\F_1$ are \textbf{monotone equivalent} if there exists a third foliation $\mathcal{G}$ and semi-conjugacies $h_0 : \mathcal{G} \twoheadrightarrow \F_0$ and $h_1 : \mathcal{G} \twoheadrightarrow \F_1$.
\end{defn}

One may check that if there are two semi-conjugacies $\mathcal{G}_0 \twoheadrightarrow \F$ and $\mathcal{G}_1 \twoheadrightarrow \F$, then there exists a foliation $\mathcal{H}$ and semi-conjugacies $\mathcal{H} \twoheadrightarrow \mathcal{G}_0$ and $\mathcal{H} \twoheadrightarrow \mathcal{G}_1$ so that the following diagram commutes:
$$\begin{tikzcd}[column sep=scriptsize]
	& {\mathcal{H}} \\
	{\mathcal{G}_0} && {\mathcal{G}_1} \\
	& {\mathcal{F}}
	\arrow[two heads, from=1-2, to=2-1]
	\arrow[two heads, from=1-2, to=2-3]
	\arrow[two heads, from=2-1, to=3-2]
	\arrow[two heads, from=2-3, to=3-2]
\end{tikzcd}$$
In particular, there is a semi-conjugacy $\mathcal{H} \twoheadrightarrow \F$. This justifies that we don't need to consider zig-zags of semi-conjugacies in \zcref{def:semiequi}.

\medskip

Compact leaves---especially genus $0$ ones---will play a special role on our main theorems. In the presence of a transverse flow, we will be able to arrange that those are in finite number after blowdowns. 

\begin{lem} \label{lem:isolatedcompactleaves}
    Let $\Phi$ be a smooth flow tangent to $\partial M$, and let $\F$ be a $\Phi$-horizontal foliation. If $\F$ is not a fibration, then there exists a $\Phi$-horizontal $\F'$ obtained from $\F$ by blowdowns only, all of whose compact leaves are isolated.
\end{lem}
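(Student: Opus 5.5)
The plan is to use Haefliger's theorem to control the set of compact leaves, and then collapse the non-isolated ones by Denjoy blowdowns. By Haefliger, the union $\mathcal{C}$ of compact leaves of $\F$ is a closed subset of $M$; here leaves are transverse to $\partial M$, so they are compact surfaces with boundary. Since $\F$ is $\Phi$-horizontal, every compact leaf is a compact surface transverse to $\Phi$. Flowing along $\Phi$ from it, we can compare it with nearby compact leaves.

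The first key step is local. Consider a compact leaf $L$ that is a limit of other compact leaves. The holonomy of $\F$ along $L$, measured by first return of $\Phi$, lets us parametrize a neighborhood of $L$ as $L \times (-\delta,\delta)$, with $\Phi$ vertical and compact leaves nearby being graphs. Reeb stability in this transverse-flow setting shows the following: any compact leaf close to $L$ is isotopic to $L$ through horizontal surfaces, and the region between two such leaves $L_1, L_2$ is a product $L\times[0,1]$. Each component of $\mathcal{C}\cap(L\times(-\delta,\delta))$ that is not a single leaf therefore sits inside a foliated $I$-bundle over $L$. On each of these, the foliation is transverse to the vertical flow, so it is a suspension foliation, i.e.\ a foliated $I$-bundle.

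The second step is global. Define an equivalence relation on $\mathcal{C}$ by declaring two compact leaves equivalent if they cobound a product region foliated as a foliated $I$-bundle whose boundary leaves are compact. Take the maximal such regions. Because $\mathcal{C}$ is closed, these maximal regions are closed. There are two cases.

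\begin{itemize}
\item If some maximal region is all of $M$, i.e.\ the complement is empty, then gluing the product regions cyclically shows that $M$ fibers over $S^1$ with fibers the compact leaves, so $\F$ is a fibration. This is excluded by hypothesis.
\item Otherwise, each maximal region $\lambda\times[0,1]$ is a proper foliated $I$-bundle. We Denjoy-blow it down to a single leaf $\lambda$.
\end{itemize}

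Performing the blowdown compatibly with $\Phi$ keeps the result $\Phi$-horizontal. Concretely, we collapse along the flow segments, which are the $I$-fibers, and then smooth the result as in the Denjoy blowup discussion, using $\Phi$-transversality.

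After all blowdowns, I would check that every remaining compact leaf is isolated. By the local step, a non-isolated compact leaf would be a limit of compact leaves lying in a common product region. Such a leaf would have been collapsed, unless the product regions accumulate from infinitely many distinct classes. That is ruled out by compactness together with the closedness of $\mathcal{C}$.

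The main obstacle is the finiteness and accumulation argument. We must show that there are only finitely many maximal product regions up to limits, and that blowing them down does not create new accumulation of compact leaves. The concern is that regions accumulating on one another would need to merge in the limit. I would handle this by taking the relation to be the closure of "cobounding a product region" and using Haefliger closedness to show that equivalence classes are closed intervals in the local leaf space transverse along $\Phi$.
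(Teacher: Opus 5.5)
Your local step is sound: by the holonomy argument, every compact leaf meeting a small enough $\Phi$-flow box $L\times(-\delta,\delta)$ is a graph over $L$. The global step, however, has two genuine problems.

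\emph{First problem: collapsing along flow segments.} Collapsing a maximal region ``along the flow segments, which are the $I$-fibers'' presupposes that $\Phi$ restricted to that region is a product flow. That is, every orbit entering through the bottom leaf must leave through the top one. This is only guaranteed in a thin flow box around a compact leaf. Your relation, though, identifies any two compact leaves that cobound a foliated $I$-bundle, however thick, including two compact leaves that are already isolated. A thick foliated $I$-bundle whose holonomy $\rho$ moves some level $t$ contains a positive closed transversal, in the class $g^{\pm1}$ of any $g$ with $\rho(g)(t)\neq t$. A smooth $\Phi$ positively transverse to $\F$ may have this transversal as a closed orbit. Then no $I$-fibration by flow segments exists, and nothing makes the blown-down foliation $\Phi$-horizontal.

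\emph{Second problem: the fibration dichotomy.} The claim ``maximal region $=M\Rightarrow\F$ is a fibration'' is false, because your product regions are foliated $I$-bundles, which may contain noncompact leaves. For example, on a mapping torus of $\Sigma$, suspend a representation $\pi_1\Sigma\to\Homeo^+([0,1])$ whose global fixed-point set is a Cantor set $K\ni 0,1$. The compact leaves $\Sigma\times K$ are pairwise equivalent and their product regions fill $M$, yet $\F$ is not a fibration and compact leaves do accumulate. Your argument declares this case impossible, and your procedure cannot treat it, since $M$ cannot be blown down.

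The missing idea, which is how the paper proceeds, is to collapse only thin pieces.
\begin{itemize}
\item Following Step 1 of~\cite[Section 8]{B16}, cover the compact leaves by finitely many foliated $I$-bundles $N_k=\Sigma_k\times[0,c_k]$. Their horizontal boundary leaves are compact, and they meet only along horizontal boundaries.
\item Choose the $N_k$ arbitrarily thin: subdivide along compact leaves, and simply omit any thick gap containing no compact leaf. Then $\Phi|_{N_k}$ is a product flow.
\item Since $\F$ is not a fibration, it has a noncompact leaf, so the $N_k$ can be chosen not to cover $M$.
\item Repeatedly blow down, along $\Phi$-orbits, some $N_\ell$ with $c_\ell>0$ having a horizontal boundary leaf on $\partial(\bigcup_k N_k)$. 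This choice guarantees that its two boundary leaves are distinct. Continue until all $c_k=0$, at which point each $N_k$ contains exactly one compact leaf.
\end{itemize}
The finiteness you flag as the main obstacle is immediate once the equivalence is defined \emph{locally}, as chains of compact leaves lying in such flow boxes. By your local step, the classes are then open in the compact set $\mathcal{C}$, so there are finitely many.
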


\begin{proof}
    As in Step 1 of~\cite[Section 8]{B16}, there is a finite collection of (smooth) embeddings $N_k = \Sigma_k \times [0, c_k] \hookrightarrow M$, where $c_k \geq 0$ and $\Sigma_k$ is a compact surface (possibly with boundary) so that each $N_k$ is a foliated $I$-bundle, $\partial_hN_k \coloneqq \Sigma_k \times \{0, c_k\}$ are leaves of $\F$, and every compact leaf of $\F$ is contained in some $N_k$. Furthermore, any two $N_k$'s only intersect along their horizontal boundaries $\partial_h N_k$. We can assume that these $N_k$'s are arbitrarily thin, after suitable subdivisions. Thus, we may assume that the restriction of $\Phi$ to each $N_k$ is a product flow. Moreover, since $\F$ is not a fibration, we can assume that the $N_k$ do not cover $M$.

    Now we blow down the $N_k$ one by one. If there is at least one $\ell$ such that $c_\ell > 0$, we may find such an $\ell$ so that either $\Sigma_\ell \times \{0\}$ or $\Sigma_\ell \times \{c_\ell\}$ lies on $\partial (\bigcup_k N_k)$. In particular, $\Sigma_\ell \times \{0\}$ is disjoint from $\Sigma_\ell \times \{c_\ell\}$. Since $\Phi$ is a smooth product flow on $N_\ell$, we may blow down $N_\ell$ by a semiconjugacy preserving flowlines of $\Phi$. Repeat this procedure until $c_k=0$ for all $k$. Then each $N_k$ contains exactly one compact leaf, so the compact leaves are finite in number.
\end{proof}

\begin{defn}\label{defn:ribbon}
Given a foliation $\F$ on $M$, a \textbf{ribbon} is a smoothly embedded strip $R =  [-T, T] \times [0,\varepsilon] \subset M$ transverse to $\F$ such that the arcs $\{\pm T\} \times [0,\varepsilon]$ lie on $\partial M$, $(-T,T)\times [0,\varepsilon]$ lies in $\interior M$, and the induced foliation $\F|_R$ is topologically isotopic rel.~boundary to the standard foliation by lines $[0,T]\times \{y\}$, $y\in [0,\varepsilon]$. We use parameters $T$ and $\varepsilon$ for the width and height to emphasize that ribbons are usually short and long.
\end{defn}

\begin{constr}\label{constr:ribbontwist}
    Given a ribbon $R$ and an orientation preserving homeomorphism $h:[0,\varepsilon]\to [0,\varepsilon]$, a \textbf{twisting modification} of $\F$ along $R$ is the operation of cutting along $R$ and regluing the interval's worth of leaves intersecting $R$ via the homeomorphism $h$. This operation can be performed so that the resulting foliation if a $C^0$-foliation.
\end{constr}

    \section{Foliations of \texorpdfstring{$\T^2$}{T2}}\label{sec:T2foliations}

Foliations on $\T^2$ naturally arise in the boundary of foliations on 3-manifolds. In this section we establish terminology for describing foliations on $\T^2$ as well as some basic properties. While the results may appear technical, all of the rationality, stability, and flexibility results we prove for $\Zg(\Phi)$ have their roots in phenomena on $\T^2$.

        \subsection{Slope and type}

When a leaf spirals to the right (resp left) onto a closed curve, we say that the closed curve has \textbf{$\sharp$ (resp $\flat$) type spiraling}. A \textbf{sharp ($\sharp$) annulus} is a $C^0$-foliated $S^1\times [0,1]$ with two compact leaves along the boundary, and whose remaining leaves are noncompact and spiral onto the compact leaves toward the right; see \zcref{fig:sharpannulus}. Similarly, a \textbf{flat ($\flat$) annulus} is a similar $C^0$-foliated annulus, except that the noncompact leaves spiral onto the compact leaves toward the left; see \zcref{fig:flatannulus}. Finally, a \textbf{Reeb annulus} is one for which leaves spiral to the left onto $S^1\times \{0\}$ and to the right along $S^1\times \{1\}$ (or vice versa). A \textbf{$\sharp$, $\flat$, or Reeb lamination} is a sublamination of the corresponding foliation with at least one noncompact leaf. Whenever $\lambda$ is a closed leaf a foliation of $\T^2$, one may blow up $\lambda$ and insert a $\sharp$ or $\flat$ annulus. We call this operation a $\sharp$ or $\flat$ blowup.

\begin{defn}[Coarse slope]
Let $\G$ be an oriented foliation on $\T^2$ and $\ell$ is a leaf of $\G$ not lying in the interior of a Reeb annulus, we define its $\textbf{(coarse) slope}$, $\slope(\ell)$ which is an element of $(\R^2\setminus \{0\})/\R_{>0} \cong S^1$, where $\R_{>0}$ acts on $\R^2\setminus \{0\}$ by scaling. It is defined as 
$$\slope(\ell) \coloneqq \left[ \lim_{t\to +\infty} \frac{\varphi_t(x)-x}{t} \right],$$ 
where $\varphi_t$ denotes a flow directing the universal cover $\widetilde{\G}$ on $\R^2$ and $x\in \ell$; up to positive scalars, the limit does not depend on these choices. We are usually concerned with slopes having non-negative $x$-coordinate; we call these the \textbf{admissible} slopes. The admissible slopes can be identified with $\R \cup \{\pm \infty\}$ via $(1,x) \mapsto x$.

If $\G$ has no Reeb annuli, all of its leaves have the same (coarse) slope which we denote by $\slope(\G)$. If $\G$ has Reeb annuli, $\slope(\G)$ is not well-defined.
\end{defn}

\begin{prop}\label{prop:poincare}
    Every Reebless $C^0$-foliation $\G$ on $\T^2$ of slope $s$ is monotone equivalent to a linear foliation $\G_s$ of slope $s$. When $s$ is irrational, the monotone equivalence can be taken to be a blowup along countably many leaves. When $s$ is rational, the monotone equivalence can be taken to be a blowup along countably many leaves, each inserting a $\sharp$ or $\flat$ annulus, followed by a blowdown of countably many $\natural$ annuli. The invariant measure on $\G_s$ pulls back to an invariant transverse measure on $\G$ whose support is the union of all minimal sets of $\G$.
\end{prop}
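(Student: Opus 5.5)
The plan is to reduce to classical Poincaré--Denjoy theory for circle homeomorphisms via a transverse closed curve, and then to transfer the invariant measure.

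\textbf{Step 1: a global transversal.} Since $\G$ is Reebless, it has no Reeb annuli. So every leaf has the same coarse slope $s$, and I would pick a simple closed curve $c \subset \T^2$ transverse to $\G$ that meets every leaf. For $C^0$-foliations one can take a smooth curve whose homology class has positive pairing with the slope direction, approximate it, and push it to be transverse using the continuous tangent line field. Reeblessness rules out the obstruction: a closed curve that cannot be made transverse would bound a Reeb component. Cutting along $c$, the foliation is a suspension of a circle homeomorphism $f \colon c \to c$, namely the first return map. Its rotation number, after the standard change of coordinates identifying the slope with the rotation number, is determined by $s$.

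\textbf{Step 2: irrational $s$.} By Poincaré, $f$ is semi-conjugate to the rigid rotation $R_\alpha$ via a monotone degree one map $h$. The map $h$ collapses the countably many wandering intervals of $f$ (the gaps of the unique minimal set, or there are none when $f$ is minimal). Suspending $h$ gives a semiconjugacy from $\G$ to the linear foliation $\G_s$. Equivalently, $\G$ is obtained from $\G_s$ by Denjoy blowup along the countably many leaves through the collapsed intervals. The Lebesgue measure on $c$ is $R_\alpha$-invariant and pulls back under $h$ to the unique $f$-invariant measure, whose support is the minimal set. This gives the statement about transverse measures.

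\textbf{Step 3: rational $s$.} Here $f$ has periodic points. The closed leaves form a closed set $C \subset \T^2$, and they are exactly the minimal sets. Each complementary annulus of $C$ is foliated with both boundary circles compact leaves and interior leaves spiraling. Since there are no Reeb annuli, each complementary annulus is a $\sharp$ or $\flat$ annulus; there are at most countably many. I would then proceed in two moves.
\begin{itemize}
\item Compare with $\G_s$ blown up along countably many closed leaves, inserting a $\sharp$ or $\flat$ annulus matching each complementary annulus of $C$ in cyclic order.
\item Observe that $\G$ and this model both contain the closed set $C$ of compact leaves with the same gap structure. Collapsing the foliated $I$-bundles formed by closed leaves (the $\natural$ annuli, i.e.\ product annuli of compact leaves) in both gives a common foliation.
\end{itemize}
This produces the zigzag claimed in the statement. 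The invariant measure on $\G_s$ pulls back to a measure supported on the compact leaves, which are precisely the minimal sets.

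\textbf{Main obstacle.} The hardest step is the rational case, specifically arranging the blowup and blowdown so that they are genuine semiconjugacies when the set of closed leaves is a Cantor-like family with infinitely many gaps. The first thing to try is to build the monotone map on the transversal $c$ by a devil's-staircase construction: order the countably many gaps, assign them disjoint sub-intervals with summable lengths, and extend monotonically. Then check that this map commutes with the return maps and suspend it.
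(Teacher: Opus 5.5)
Your overall strategy is the paper's own: its proof is a one-line appeal to a closed transversal and the Poincar\'e--Denjoy classification of circle homeomorphisms. Your irrational case is exactly that argument spelled out. One local point in Step~1: a smooth curve with positive homological pairing cannot in general be made transverse by a small push. Either cite the classical result (Hector--Hirsch), or use that the lifted leaf space is $\R$ as in \zcref{lem:monotone_existence} and then apply \zcref{lem:transversalizing}.

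\textbf{The gap is in the rational case.} You assert that $\G$ and your blown-up model $\mathcal H'$ of $\G_s$ have ``the same gap structure'', so that collapsing $\natural$ annuli in both gives a common foliation. All the content lies in that assertion, and you leave it open. To prove it, you would have to show that two suspension foliations without $\natural$ annuli and with order-isomorphic, type-matched families of gaps are conjugate. That means building the conjugacy equivariantly on a transversal and checking continuity where gaps accumulate, which is exactly your ``main obstacle''. Even granting it, there are two further problems:
\begin{itemize}
\item You obtain the chain $\G_s \leftarrow \mathcal H' \to \mathcal K \leftarrow \G$. This has an extra $\natural$-blowup on the $\G$ side, so it is not the span claimed in the statement.
\item The measure assertion becomes non-canonical. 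The pushforward measure on $\mathcal K$ has atoms at the collapsed leaves, and you would have to spread these by hand over the $\natural$ annuli of $\G$.
\end{itemize}

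\textbf{The difficulty disappears if you blow up $\G$ instead of $\G_s$.}
\begin{enumerate}
\item Let $\mathcal H$ be obtained from $\G$ by thickening into a $\natural$ annulus each closed leaf that bounds gap annuli on both sides (for instance the unique closed leaf when there is only one). Then $\mathcal H\to\G$ is a blowdown of countably many $\natural$ annuli.
\item On the transversal, the closures of the gaps of $\mathcal H$ are now pairwise disjoint closed intervals. Collapsing each one is a monotone quotient $\phi$ of the circle onto a circle, and the return map $f_{\mathcal H}$ automatically descends to a homeomorphism $\bar f$ with $\phi\circ f_{\mathcal H}=\bar f\circ\phi$.
\item Every point of the quotient is the image either of a periodic point of rotation number $p/q$ or of a gap preserved by $f_{\mathcal H}^q$. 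Hence $\bar f^q=\mathrm{id}$ with rotation number $p/q$, so $\bar f$ is conjugate to $R_{p/q}$; this is the finite-order case of the classification the paper cites.
\item Suspending gives $\mathcal H\to\G_s$, which collapses exactly the $\sharp/\flat$ annuli.
\end{enumerate}
This is precisely the stated span, with equivariance for free and no devil's-staircase matching. For the measure, pull back Lebesgue measure along $\mathcal H\to\G_s$ (it charges no gap) and push it forward to $\G$. The result is an invariant measure whose support is exactly the union of the closed leaves, with atoms on the thickened ones.
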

This follows from the existence of a closed transversal (see \cite[Ch 1, Sec 4.3]{hector.hirsch.IntroductionGeometryFoliations}) and the classification of circle homeomorphisms by Poincar\'e and Denjoy.
\begin{cor}
The coarse slope is a complete invariant of Reebless foliations of $\T^2$, up to monotone-equivalence. 
\end{cor}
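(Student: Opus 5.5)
The corollary says two things: coarse slope is invariant under monotone equivalence, and two Reebless foliations with the same coarse slope are monotone equivalent. I will prove them in that order.

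\emph{Completeness.} Let $\G$ and $\G'$ be Reebless with $\slope(\G)=\slope(\G')=s$. By \zcref{prop:poincare}, each is monotone equivalent to the linear foliation $\G_s$. I then compose: the zigzag $\G \leftrightsquigarrow \G_s \leftrightsquigarrow \G'$ is a monotone equivalence. Here I use the remark after \zcref{def:semiequi}. Given semi-conjugacies $\mathcal{K}_0\twoheadrightarrow \G_s$ and $\mathcal{K}_1 \twoheadrightarrow \G_s$, there is a common $\mathcal{H}$ dominating both, so zigzags collapse to a single roof $\G \twoheadleftarrow \mathcal{H}' \twoheadrightarrow \G'$. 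This is a formal consequence of the diagram in that remark, applied iteratively to the roofs.

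\emph{Invariance.} It suffices to show that a single semi-conjugacy $h:\G_0\twoheadrightarrow\G_1$ between Reebless foliations of $\T^2$ preserves coarse slope. First I check that $h$ is homotopic to the identity: in our setting the semi-conjugacies considered (blowups, blowdowns, and those of \zcref{prop:poincare}) are maps $M\to M$ close to or isotopic to the identity. More generally, I would assume semi-conjugacies are homotopic to the identity, which is implicit in comparing slopes with respect to a fixed framing.

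Next I lift $h$ to $\widetilde h:\R^2\to\R^2$ with $\widetilde h(x)-x$ bounded by some constant $C$. Take a leaf $\ell$ of $\G_0$ with lift $\widetilde\ell$, parametrized by $\varphi_t(x)$. Its image $\widetilde h(\widetilde\ell)$ lies inside a lifted leaf of $\widetilde{\G}_1$, and $\widetilde h\circ\varphi_t(x)$ stays within distance $C$ of $\varphi_t(x)$.

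The point needing care is that the definition of slope uses the parametrization by a flow, while $h$ may collapse arcs or move points nonuniformly along leaves. I only need the direction of $\varphi_t(x)-x$ as $t\to+\infty$. Since the leaf goes to infinity in $\R^2$ (Reebless leaves in the plane are properly embedded lines), $|\varphi_t(x)-x|\to\infty$. The image path stays at bounded distance $C$, and $h$ preserves leaf orientation, so the image moves forward along the $\G_1$-leaf. Hence the asymptotic direction of the $\G_1$-leaf through $\widetilde h(x)$ equals $[\lim (\varphi_t(x)-x)/|\varphi_t(x)-x|]$.

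For this to be valid I must check two things. The image must travel to infinity along the $\G_1$-leaf, which follows from the bounded-distance estimate. And the direction limit must exist, which holds because the slope is well-defined for Reebless foliations. Therefore $\slope(\G_0)=\slope(\G_1)$, and by the collapsing of zigzags above, monotone-equivalent Reebless foliations have the same coarse slope.
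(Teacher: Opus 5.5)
Your argument is correct and is essentially the paper's route. The paper states the corollary without proof as an immediate consequence of \zcref{prop:poincare} and the transitivity remark after \zcref{def:semiequi}, leaving invariance implicit; your bounded-displacement argument for semi-conjugacies homotopic to the identity supplies that missing half. You are right that the homotopy assumption is needed, since the literal definition of semi-conjugacy admits conjugation by a linear map in $SL_2(\Z)$, which changes the slope.

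One small repair: the apex $\mathcal{H}$ of a roof $\G \twoheadleftarrow \mathcal{H} \twoheadrightarrow \G'$ is not known a priori to be Reebless, so your lemma for $\mathcal{H}\twoheadrightarrow\G$ does not apply as stated. Instead, run the same estimate from the target side: it shows that the direction of $\varphi_t(x)-x$ along any leaf of $\widetilde{\mathcal{H}}$ converges to both $\slope(\G)$ and $\slope(\G')$, so these are equal.
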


\begin{figure}[ht]
\centering
    \begin{subfigure}{0.45\linewidth}
    \centering
    \includegraphics[width=\linewidth]{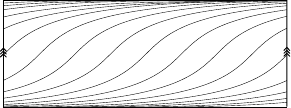}
    \caption{Standard $\sharp$ annulus.}
    \label{fig:sharpannulus}
    \end{subfigure}
    \hspace{0.05\linewidth}
    \begin{subfigure}{0.45\linewidth}
    \centering
    \includegraphics[width=\linewidth]{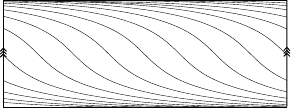}
    \caption{Standard $\flat$ annulus.}
    \label{fig:flatannulus}
    \end{subfigure}
    \caption{Foliations on an annulus.}
\end{figure}

\begin{figure}[ht]
    \centering
    \includegraphics[width=0.5\linewidth]{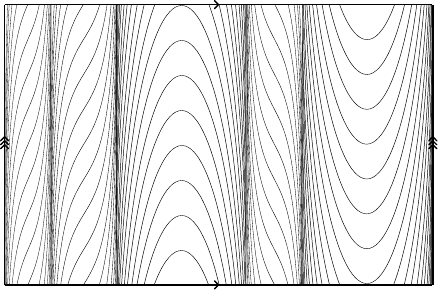}
    \caption{An $\aleph$ foliation of $\T^2$. If any of the sharp annuli were changed to flat annuli or vice versa, the foliation would fail to be $\aleph$ type.}
    \label{fig:aleph}
\end{figure}

\begin{defn}[Type]\label{defn:type}
Let $\G$ be a foliation on $\T^2$. We define the \textbf{type} of $\G$ as follows:
\begin{enumerate}
    \item If $\G$ has no Reeb annuli,
        \begin{enumerate}
            \item If $\G$ has no $\sharp$ nor $\flat$ annuli, we say that it is of type $\natural$. Otherwise,
            \item If $\G$ has only $\sharp$ but no $\flat$ annuli, we say that it is of type $\sharp$,
            \item If $\G$ has only $\flat$ annuli but no $\sharp$ annuli, we say that it is of type $\flat$,
            \item Otherwise, $\G$ has both $\sharp$ and $\flat$ annuli, and we say that it is of type $\dagger$.
        \end{enumerate}
    \item If $\G$ has Reeb annuli,
        \begin{enumerate}
            \item If all leaves which spiral toward the right have the same slope, and all leaves which spiral toward the left have the same slope, then we say that it is of type $\aleph$.
            \item If the previous condition fails, we say that $\G$ is not of an admissible type.
        \end{enumerate}
\end{enumerate}
Besides the last case, we call these types \textbf{admissible}.
\end{defn}

Note that irrational foliations are necessarily of type $\natural$, and that $\natural$ foliations with rational slopes are exactly the foliations by closed curves. See \zcref{fig:aleph} for the typical shape of a foliation of type $\aleph$. We are forced to consider $\aleph$ type foliations because they may occur as limits of other admissible foliations; see \zcref{lem:slconv}.

\begin{defn}[Fine slope]
    A \textbf{(finite) fine slope} is a pair $(s,\heartsuit)$ (usually written $s^\heartsuit$), where $s\in \R$ and $\heartsuit \in \{\sharp,\flat,\natural,\dagger\}$. We say that $s$ is the underlying \textbf{coarse slope} of $s^\heartsuit$. If $\F$ is a foliation on $M$, its \textbf{fine (boundary) multislope} is the tuple $$\bm{\fineslope}(\F):=(\fineslope(\partial_1\F),\dots,\fineslope(\partial_n\F)).$$
\end{defn}

In \zcref{sec:beyondaleph}, we will also consider possibly infinite coarse and fine slopes, but some of our results will not immediately work for those, so we will mostly restrict to Reebless foliations with finite slope, or of $\aleph$ type.

\begin{defn}[Generalized slope]
    A \textbf{generalized} (coarse or fine) slope is either a (coarse or fine) slope or $\aleph$.
\end{defn}

    \subsection{Comparing slopes}\label{sec:comparing_slopes}

The goal of this section is to connect the geometry of foliations on $\T^2$ to their fine slopes.

\begin{defn}
    If $\G_1$ and $\G_2$ are two foliations on $\T^2$, then say that $\G_1$ has \textbf{lower slope} than $\G_2$ at a point $x\in \T^2$ if positively oriented tangent vectors in $T_x\G_1$ and $T_x\G_2$ combine to form a positively oriented basis of $T_x\R^2$. We say that $\G_1$ has \textbf{pointwise lower slope} than $\G_2$ if $\G_1$ has greater slope than $\G_2$ at every point in $\T^2$. In this case, we write $\G_1 \prec \G_2$. Note that $\prec$ is a partial order when restricted to $\Phi$-horizontal foliations.
\end{defn}

\begin{defn}[Partial order on fine slopes]
    We define a partial relation $\prec$ on (finite) fine slopes generated by the following inequalities:
    \begin{equation}\label{eq:slope_ineq1}
        s^\heartsuit \prec t^\diamondsuit
    \end{equation}
    whenever $s < t$ and $\heartsuit, \diamondsuit \in \{\natural, \flat, \sharp, \dagger\}$, and
    \begin{equation}\label{eq:slope_ineq2}
        s^\flat \prec s^\dagger \prec s^\sharp, \qquad \text{and} \qquad s^\dagger \prec s^\dagger.
    \end{equation}
\end{defn}

\begin{prop} \label{prop:fineslope_inequality}
Suppose $\G_1$ and $\G_2$ are $\Phi$-horizontal, Reebless foliations with finite slopes. Then
$$\fineslope(\G_1) \prec \fineslope(\G_2)$$
if and only if there exist $\G_1'$ and $\G_2'$ horizontally isotopic to $\G_0$ and $\G_1$ respectively through horizontal foliations such that $\G_1'$ has pointwise lower slope than $\G_2'$.
\end{prop}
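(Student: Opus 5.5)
The plan is to reduce everything to circle dynamics via first-return maps. Since $\Phi|_{\partial M}$ is a Reebless foliation of slope $+\infty$, I would fix a closed curve $\tau\subset\T^2$ that is a leaf of $\Phi|_{\partial M}$ (or a nearby closed curve carried by it). Every $\Phi$-horizontal foliation is transverse to $\tau$ and has finite slope. Hence a horizontal foliation $\G$ has a first-return map along its leaves from $\tau$ to itself. Fix the framing to lift this to $F_\G\in\widetilde{\Homeo^+}(S^1)$, commuting with $x\mapsto x+1$.

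\textbf{Dictionary.} I would record three facts.
\begin{enumerate}
\item The coarse slope of $\G$ is a monotone function of $\widetilde{\rot}(F_\G)$.
\item For rational slope $p/q$ in lowest terms, the fine type is read off from the sign of $F_\G^q(x)-x-p$. Type $\natural$ means it vanishes identically. Type $\sharp$ means it is $\ge 0$ and not identically zero, so leaves spiral right. Type $\flat$ means it is $\le 0$ and not identically zero. Type $\dagger$ means it takes both signs.
\item $\G_1$ has pointwise lower slope than $\G_2$ if and only if, after straightening along $\Phi$, $F_{\G_1}(x)<F_{\G_2}(x)$ for all $x$.
\end{enumerate}
Next I would check that conjugating $F_\G$ by a lift of a homeomorphism of $\tau$ isotopic to the identity is realized by a horizontal isotopy of $\G$. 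This is done by pushing along $\Phi$ in a collar of $\tau$, in the spirit of \zcref{lem:isotopy_extension}. Conversely, a horizontal isotopy changes $F_\G$ by such a conjugacy, up to sliding $\tau$ along leaves, which again acts by conjugacy.

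\textbf{The ``if'' direction.} Suppose $\G_1'\prec\G_2'$ pointwise, so $F_1<F_2$ strictly. Monotonicity of $\widetilde{\rot}$ gives $s_1\le s_2$; if $s_1<s_2$ we are done. Otherwise $s_1=s_2=p/q$ and $F_1^q<F_2^q$ strictly everywhere.
\begin{itemize}
\item If $F_1^q-p-\mathrm{id}\ge 0$ everywhere, then $F_2^q-p-\mathrm{id}>0$ everywhere. This contradicts $\widetilde{\rot}(F_2)=p/q$, which forces a periodic point. So $F_1$ has a region where $F_1^q<x+p$, i.e.\ type $\flat$ or $\dagger$.
\item Symmetrically, $F_2$ has type $\sharp$ or $\dagger$.
\end{itemize}
The remaining pairs $(\flat,\dagger)$, $(\flat,\sharp)$, $(\dagger,\dagger)$, $(\dagger,\sharp)$ are exactly those generated by \eqref{eq:slope_ineq2} and transitivity.

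\textbf{The ``only if'' direction, different coarse slopes.} If $s_1<s_2$, pick $t$ strictly between the two rotation numbers. I would use the standard fact that $\widetilde{\rot}(F)<t$ implies $F$ is conjugate to a map $<x+t$ everywhere, and symmetrically for $>t$. The conjugacy is built from $\varphi(x)=\sup_{k\ge0}(F^k(x)-kt)$. This function is finite because $\widetilde{\rot}(F)<t$, is monotone and equivariant, and satisfies $\varphi\circ F\le \varphi+t$. One then perturbs $\varphi$ to a homeomorphism with strict inequality. The result is conjugates with $F_1'<x+t<F_2'$.

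\textbf{The ``only if'' direction, equal rational slope (main obstacle).} Here $s=p/q$, $F_1$ has a $\flat$ annulus and $F_2$ a $\sharp$ annulus. These correspond to $q$-periodic families of intervals $J_1$ and $J_2$ on $\tau$ where $F_i^q-p-\mathrm{id}$ has the sign favourable to the desired comparison. The steps I would try first:
\begin{enumerate}
\item Conjugate each $F_i$ so that its favourable interval $J_i$ and its $F_i$-translates become the same intervals, and these cover all of $\tau$ except arbitrarily small neighbourhoods of $q$-periodic points. A conjugacy can stretch any interval to nearly full length, and I would do this compatibly with the combinatorics of the rotation.
\item Arrange in addition that both maps coincide with the rigid rotation combinatorics on the small complementary neighbourhoods.
\item On the big intervals, $F_1$ is pushed below and $F_2$ above the $q$-periodic reference map. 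On the small neighbourhoods, a further local conjugation makes $F_1<F_2$ there as well, using that these neighbourhoods are separated from the unfavourable regions of each map.
\end{enumerate}
The $(\dagger,\dagger)$ case should be the same, using one $\flat$ annulus of $F_1$ and one $\sharp$ annulus of $F_2$. The delicate point is making the inequality strict at the endpoints of the annuli, i.e.\ at the common closed leaves, where both maps have $F^q=\mathrm{id}+p$. I expect to resolve this by offsetting the two families of closed leaves slightly so they do not coincide. This is exactly where a $\flat$ region for $\G_1$ and a $\sharp$ region for $\G_2$ are needed to absorb the offset.
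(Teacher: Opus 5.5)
Your route is genuinely different from the paper's, and its combinatorial core is sound.

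\textbf{How the paper argues.} The paper never passes to return maps.
For the forward direction, it straightens $\Phi$, $\G_1$, $\G_2$ simultaneously by leafwise mean curvature flow (\zcref{lem:straightening, lem:straightening2}), so distinct coarse slopes become transverse automatically. For equal rational slopes it runs the accordion trick (\zcref{lem:accordion}) in these straightened coordinates.
The reverse direction is \zcref{lem:phaselocking}. It uses the intersection pairing of invariant measures on the minimal sets, a cusp argument in the guts at irrational slopes, and the absence of positive closed transversals in a $\sharp$ annulus at rational slopes.

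\textbf{What your route buys.} Your conjugacy $\varphi=\sup_{k\geq 0}(F^k-kt)$ is already a homeomorphism, and using an intermediate $t'$ gives strictness without any perturbation. It replaces mean curvature flow by elementary circle dynamics when the slopes differ. Your periodic-point argument is a cleaner substitute for the measure argument at rational slopes. In the equal-slope case, your final offset version is exactly the accordion trick written for return maps: each map's thin bad region sits inside the other map's favourable intervals, where that map has been conjugated to exceed the rigid rotation by more than the width of the bad region. The offset is essential, not a final adjustment. With coinciding favourable intervals as in your step~1, $F_1$ may lie above and $F_2$ below the rotation on the common complementary intervals, and $F_1=F_2$ at common endpoints.

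Two points need repair.

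\textbf{Equal irrational slopes.} In the ``if'' direction you pass from $s_1=s_2$ directly to $s_1=s_2=p/q$. Equal irrational slopes must be excluded separately, since $s^\natural\not\prec s^\natural$ for $s\notin\Q$. This follows from strict monotonicity of $\widetilde{\rot}$ at irrational values. Indeed, $F_1<F_2$ gives $F_2\geq R_\epsilon\circ F_1$ for some $\epsilon>0$, and $\widetilde{\rot}(R_\epsilon\circ F_1)>\widetilde{\rot}(F_1)$ whenever the latter is irrational. This is the circle-dynamics counterpart of the paper's cusp argument.

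\textbf{The realization step (the more serious gap).} Your whole ``only if'' direction rests on two claims for which you give no argument:
\begin{enumerate}
\item the backward half of your dictionary item~3, i.e.\ that $F_1'<F_2'$ on $\tau$ can be upgraded by horizontal isotopies to pointwise lower slope;
\item that conjugating $F_\G$ is realized by a horizontal isotopy.
\end{enumerate}
This is where the geometry lives.
\begin{itemize}
\item $\Phi|_{\T^2}$ is only Reebless of slope $+\infty$, so nearby orbits may spiral onto $\tau$ and there is no product collar to push along. Moving each point along its orbit by a time $T(p)$ is a homeomorphism only if $T$ varies slowly along orbits.
\item More importantly, an inequality between return maps on $\tau$ says nothing about $\T^2\setminus\tau$. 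There the leaves of $\G_1'$ and $\G_2'$ may still be tangent, or cross with the wrong sign.
\end{itemize}
You need an actual construction. One option: on the universal cover of $\T^2$ cut along $\tau$, the orbits of $\Phi$ and the leaves of $\G_1'$ give product coordinates $(u,y)\in[0,1]\times\R$. The deck map is $(u,y)\mapsto(\pi(u),y+1)$, where $\pi$ is a homeomorphism of $[0,1]$ recording the spiraling of $\Phi$. You would then have to slide $\G_2'$ along the vertical lines to a family of strictly increasing graphs, keeping equivariance and continuity of the tangent line field across $\tau$.
Alternatively, apply \zcref{lem:straightening} first to make $\Phi$ nearly vertical and the foliations graphical. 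After that, realizing a prescribed ordered pair of return maps by pointwise ordered horizontal foliations reduces to explicit graphical constructions. This is precisely the role mean curvature flow plays in the paper.
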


Throughout the rest of this section, we prove various results leading up to the proof of the forward direction of \zcref{prop:fineslope_inequality}. We defer the reverse direction to \zcref{cor:fineslope_inequality_converse} where it is proven as a consequence of \zcref{lem:phaselocking}.

\medskip

First, we may use the Mean Curvature Flow on $\T^2$ (see \zcref{sec:mcf}) to simultaneously straighten $C^0$-foliations on $\T^2$.

\begin{lem}[Straightening]\label{lem:straightening} 
    Let $\G_1,\dots,\G_k$ be Reebless $C^0$-foliations of $\T^2$, possibly with infinite slopes. Then there are $C^0$-isotopies $h_1^t,\dots,h_k^t$, $0\leq t \leq T$, of $\G_1,\dots,\G_k$ respectively such that the isotoped foliations $h_i^T (\G_i)$ are tangentially $C^0$-close to linear foliations. In particular, if $\G_i$ and $\G_j$ have different slopes, then $h_i^T(\G_i)$ is transverse to $h_j^T(\G_j)$. Moreover, if any pair of the foliations start out transverse, they remain transverse during the isotopy, and if any foliation is smooth, then the corresponding isotopy is smooth as well.
\end{lem}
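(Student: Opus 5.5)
The plan is to evolve every leaf of every $\G_i$ at once by curve shortening flow, i.e.\ the one-dimensional mean curvature flow, on the flat torus $\T^2$. This is a single flow acting on curves, independent of which foliation they belong to, and we take the analytic input from \zcref{sec:mcf}. Concretely, lift each leaf of $\G_i$ to $\R^2$. Because $\G_i$ is Reebless, every lifted leaf is a properly embedded line lying within bounded distance of a line of slope $\slope(\G_i)$; this is \zcref{prop:poincare}, via the monotone equivalence with the linear foliation. In particular each lift is a graph over that direction, in the sense that it lies in a strip of bounded width. We then run curve shortening flow on each such lifted leaf, using the flat metric. Since the flow commutes with deck translations, it descends to an evolution of the whole leaf space of each $\G_i$ on $\T^2$.

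The key steps are as follows.

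First, short-time existence and smoothing. Each leaf is only $C^1$, or merely Lipschitz graphical with uniform bounds, so we invoke the existence theory for curve shortening of entire graphs (Ecker--Huisken type interior estimates). For $t>0$ this gives a smooth evolution with curvature bounded by $C/\sqrt t$, and the bound is uniform over all leaves because the oscillation is uniformly bounded.

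Second, the evolution remains a foliation. Disjoint leaves stay disjoint by the avoidance principle for curve shortening flow, and the family of leaves stays continuous in the transverse parameter. So the time-$t$ images form a $C^0$-foliation $h_i^t(\G_i)$. The isotopy $h_i^t$ is defined by following normal trajectories, reparametrized so that it is a topological isotopy with continuous tangent fields.

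Third, convergence to linear. Each lifted leaf has a bounded height function relative to the asymptotic line. Bounded-oscillation graphical curve shortening flow decays to a straight line: the oscillation is nonincreasing by the maximum principle, and the gradient decays by the interior estimates. Therefore for $T$ large the tangent lines are uniformly close to the slope direction, i.e.\ tangentially $C^0$-close to the linear foliation $\G_{s_i}$. Infinite slopes are handled identically after rotating coordinates. Distinct slopes then immediately give transversality at time $T$.

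Fourth, preservation of transversality and smoothness. If $\G_i\pitchfork\G_j$ initially, the angle between intersecting leaves obeys a maximum principle: two curve shortening solutions cannot become tangent. If they touched tangentially at a first time, the strong maximum principle, applied to the difference of local graphs, forces a contradiction. Alternatively one can use Angenent's intersection-number monotonicity, since tangency would drop the count of local intersections. If $\G_i$ is smooth, parabolic regularity up to $t=0$ gives a smooth isotopy.

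The main obstacle is making the first two steps rigorous with only $C^0$ tangent data. We need uniform estimates across infinitely many leaves, including leaves that spiral in $\sharp$ or $\flat$ annuli, and these are nearly parallel to closed leaves. The isotopy must also be continuous at $t=0$. For the latter I would first try approximating $\G_i$ by leaves' smooth graphical approximants and using the uniform oscillation and interior gradient bounds to pass to the limit. These are precisely the technical statements deferred to \zcref{sec:mcf}.
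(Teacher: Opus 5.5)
The gap is in your Steps 1 and 3. You treat each lifted leaf as an entire (Lipschitz) graph, but lying in a strip of bounded width is not the same as being a graph. Leaves of a Reebless foliation need not be graphs over any direction in the flat metric. For example, push a linear foliation by a diffeomorphism isotopic to the identity that creates an S-shaped overhang; the leaves through the fold turn by more than $\pi$. For such leaves the Ecker--Huisken theory of entire graphs says nothing. So you have no justification for long-time existence, nor for a curvature bound $|\kappa|\le C/\sqrt t$ uniform over all leaves. That quantitative decay is exactly what Step 3 needs to upgrade the strip bound to tangential convergence. (The strip bound itself is correct, and it is preserved by comparison with static straight lines.) Your closing idea of approximating by smooth graphical leaves does not help, because the obstruction is global folding, not low regularity. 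If graphicality were available, your strip-plus-decay argument would be a legitimate shortcut; as written, the key estimate is missing.

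The paper needs neither graphs nor decay rates. Long-time existence comes from Huisken's distance comparison principle. By compactness and Reeblessness the lifted leaves are uniformly quasigeodesic, which excludes grim-reaper blow-ups; shrinking solitons are excluded because no leaf is contractible. Convergence is then purely qualitative:
\begin{itemize}
\item For foliations isotopic to rational linear ones, Grayson's theorem sends each closed leaf to a geodesic.
\item For general $\G$ with rational $r<\slope(\G)<s$, one changes the holonomy along a smooth transversal via \zcref{lem:approxrotation2}; this is where the preparatory small isotopy enters. This produces foliations $\G_r,\G_s$ by closed curves having no positive, resp.\ negative, crossings with $\G$.
\item The avoidance principle preserves these crossing relations. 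Once $\G_r,\G_s$ are nearly linear, every tangent line of $\G$ is trapped between slopes $r-\varepsilon$ and $s+\varepsilon$.
\end{itemize}

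A smaller point in Step 4: it is false that two curve-shortening solutions can never become tangent, since two crossings can merge and annihilate. What you need is that lifted leaves of two transverse foliations cross at most once, with a fixed sign. A first tangency is then of one of two kinds. If it is a one-sided touching, the strong maximum principle rules it out. If it is an isolated sign-changing degenerate zero, Angenent's zero-counting theorem rules it out, because the local sign change persists past the tangency time.
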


\begin{proof}
    One can find such isotopies using simultaneous mean curvature flow on the leaves of $\G_1,\dots,\G_k$. The same strategy is used in \cite[Lemma 4.1]{Z24}. By \zcref{prop:mean_curvature_flow}, the foliations converge in the tangential sense to linear foliations (after some preparatory $C^0$-isotopies which are arbitrarily small in the nonsmooth case). The avoidance principle implies that mean curvature flow on a pair of curves may eliminate intersections, but never introduces new inessential intersections between them as desired. If $\G_i$ is smooth, then $h_j^t$ is smooth thanks to the smooth dependence of mean curvature flow on initial data. Restricting $h_j^t$ to $t \in [0,T]$ for $T > 0$ large enough gives the desired isotopies.
\end{proof}

\begin{lem} \label{lem:straightening2}
    Let $\Phi$ be a smooth flow on $\T^2$ and let $\G_1$ and $\G_2$ be two $\Phi$-horizontal, Reebless foliations of $\T^2$. If $\slope(\G_1),\slope(\G_2)$ are distinct real numbers, then $\G_1$ and $\G_2$ can be put in transverse position by horizontal $C^0$-isotopies.
\end{lem}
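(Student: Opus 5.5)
The strategy is to reduce to the case of a linear flow, apply the straightening lemma (\zcref{lem:straightening}) there, and transport the result back. The key difficulty is that \zcref{lem:straightening} produces isotopies that need not stay $\Phi$-horizontal. So the plan is to build a single auxiliary structure encoding horizontality, then straighten everything simultaneously.

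First, handle the flow. Horizontality of $\G_1$ means $\Phi|_{\T^2}$ is a foliation $\mathcal{V}$ (the flow lines) transverse to $\G_1$. Since $\G_1$ has a well-defined slope, $\mathcal{V}$ is a Reebless foliation. Its slope differs from $\slope(\G_1)$ and $\slope(\G_2)$: otherwise a transverse pair of foliations of equal slope would give a closed leaf of one transverse to the other in a torus with no Reeb components, forcing a contradiction via the transverse invariant measure of \zcref{prop:poincare}. In the paper's conventions $\slope(\mathcal V)=+\infty$ while both $\slope(\G_i)$ are finite.

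Apply \zcref{lem:straightening} to the triple $\mathcal{V},\G_1,\G_2$ simultaneously. This gives isotopies $h^t_0,h^t_1,h^t_2$ such that $h^T_0(\mathcal V)$, $h^T_1(\G_1)$, $h^T_2(\G_2)$ are tangentially close to linear foliations of pairwise distinct slopes, hence pairwise transverse. Moreover, since $\mathcal V$ starts transverse to each $\G_i$, it stays transverse throughout the flow. Because $\mathcal V$ is smooth (the flow is smooth), $h_0^t$ is a smooth isotopy.

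Now conjugate back. Let $\psi_t := (h_0^t)^{-1}$, a smooth isotopy with $\psi_t(h_0^t(\mathcal V))=\mathcal V$. The isotopies $t\mapsto \psi_t\circ h_i^t$ act on $\G_i$ for $i=1,2$. At each time $t$, the foliation $\psi_t h_i^t(\G_i)$ is transverse to $\psi_t h_0^t(\mathcal V)=\mathcal V$, since diffeomorphisms preserve transversality. Hence these are horizontal $C^0$-isotopies. At time $T$, the images of $h^T_1(\G_1)$ and $h^T_2(\G_2)$ are transverse, so applying the common diffeomorphism $\psi_T$ keeps them transverse. Orientations are respected because all maps are isotopic to the identity, so positive transversality to $\Phi$ is preserved by continuity.

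The main obstacle is the input to \zcref{lem:straightening}: we need that the MCF isotopies preserve transversality between each $\G_i$ and $\mathcal V$ for the \emph{whole} time interval. We also need that the preparatory small $C^0$-isotopies in the nonsmooth case do not destroy transversality. The latter is fine because transversality is open and the preparations can be taken arbitrarily small. I would rely on the avoidance principle as asserted in the lemma's ``moreover'' clause. A secondary point to check is that the $\psi_t\circ h_i^t$ are $C^0$-isotopies, i.e., that tangent fields vary continuously; this holds because $\psi_t$ is smooth in $t$.
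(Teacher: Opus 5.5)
Your proposal is correct and is essentially the paper's own proof: apply \zcref{lem:straightening} simultaneously to $\Phi|_{\T^2}$, $\G_1$, $\G_2$. Its ``moreover'' clause keeps each $h_i^t(\G_i)$ transverse to $h_0^t(\Phi)$ and makes $h_0^t$ smooth, and composing with $(h_0^t)^{-1}$ then gives horizontal isotopies ending in transverse position.

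One side argument is wrong, though unused: two transverse foliations cannot share a slope is false in general, since transverse foliations can have the same rational slope (cf.\ \zcref{lem:phaselocking} and \zcref{lem:accordion}). It is not needed, because transversality to $\Phi$ comes from the ``moreover'' clause. Likewise, the Reeblessness of $\Phi|_{\T^2}$ needed to invoke \zcref{lem:straightening} is best taken from the standing assumptions, as the paper implicitly does.
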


\begin{proof}
    Apply \zcref{lem:straightening} to $\G_1$, $\G_2$, and $\Phi$ to obtain three isotopies $h_1^t$, $h_2^t$, and $h_\Phi^t$ bringing $\G_1$, $\G_2$ into transverse position. Then, \zcref{lem:straightening} guarantees that $h_i^t(\G_i)$ remains transverse to $h_\Phi^t(\Phi)$ for all $t$. Because $\Phi$ is smooth, the isotopy $h_\Phi^t$ is smooth as well. Therefore, ${(h_\Phi^t)}^{-1}h_1^t$ and ${(h_\Phi^t)}^{-1}h_2^t$ are isotopies of $\G_1$ and $\G_2$, respectively, through $\Phi$-horizontal foliations which bring $\G_1$ and $\G_2$ into transverse position.
\end{proof}

\begin{lem}[Accordion trick] \label{lem:accordion}
    Let $\G_1$ and $\G_2$ be two $\Phi$-horizontal, Reebless foliations of $\T^2$ with the same slope $s \in \Q$. Suppose that $\G_1$ has at least one $\sharp$ annulus and $\G_2$ has at least one $\flat$ annulus. Then $\G_1$ and $\G_2$ can be $C^0$-isotoped through $\Phi$-horizontal foliations so that $\G_1$ is positively transverse to $\G_2$. 
\end{lem}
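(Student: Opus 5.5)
The plan is to reduce to a local model near one compact leaf of each foliation, and then use the spiraling of $\G_1$ and $\G_2$ onto their respective annuli to produce transversality. Throughout, $\Phi$ is transverse to both foliations and has slope different from $s$.

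\textbf{Step 1: straightening.} Apply \zcref{lem:straightening} to $\G_1$, $\G_2$ and $\Phi$ simultaneously, and conjugate by the inverse of the (smooth) isotopy of $\Phi$, exactly as in the proof of \zcref{lem:straightening2}. After this, $\Phi$ is nearly linear, and both $\G_1$ and $\G_2$ are tangentially $C^0$-close to the linear foliation of slope $s$. All later modifications will be horizontal isotopies supported in thin regions. Since the slope is rational, $\G_1$ and $\G_2$ each contain closed leaves of slope $s$.

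\textbf{Step 2: normal form on the annulus.} Cut $\T^2$ along a closed curve $c$ of slope $s$, transverse to nothing in particular. Choose a closed transversal $\tau$ lying along the direction of $\Phi$; we may take $\tau$ to be a closed orbit-like curve of slope different from $s$, pushed to be positively transverse to both foliations. The first-return maps $f_1, f_2 \colon \tau \to \tau$ of $\G_1$ and $\G_2$ are then circle homeomorphisms with rotation number determined by $s$ and with periodic points. After passing to the appropriate iterate, or equivalently working on the cover in which the closed leaves become fixed points, we compare lifts $\tilde f_1$ and $\tilde f_2$ that both have fixed points.

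We have the following correspondence:
\begin{itemize}
    \item Positive transversality of $\G_1$ to $\G_2$ (in the required orientation) corresponds to $\tilde f_1(x) > \tilde f_2(x)$ for all $x$, or the reverse inequality depending on conventions.
    \item A $\sharp$ annulus of $\G_1$ corresponds to an interval on which $\tilde f_1 > \mathrm{id}$.
    \item A $\flat$ annulus of $\G_2$ corresponds to an interval on which $\tilde f_2 < \mathrm{id}$.
\end{itemize}

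\textbf{Step 3: the accordion.} Horizontal isotopy of $\G_i$ conjugates $\tilde f_i$ by a homeomorphism $g_i$ of $\tau$ isotopic to the identity. Such an isotopy is realized by pushing leaves along the flow direction, which preserves $\Phi$-horizontality; \zcref{lem:isotopy_extension}-style cutoffs handle the details. The goal is therefore to find conjugates satisfying $g\tilde f_1 g^{-1} > h\tilde f_2 h^{-1}$ everywhere.

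Fix a $\sharp$ interval $J_1$ for $f_1$ and a $\flat$ interval $J_2$ for $f_2$. Choose conjugations that squeeze the complement of $J_1$, that is, all other annuli and fixed leaves of $\G_1$, into a tiny arc $A_1$. Do the same for $\G_2$, squeezing its complement of $J_2$ into a tiny arc $A_2$. Place $A_1$ and $A_2$ so that they are disjoint and well separated.

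This is the accordion compression: the behavior of $\G_1$ away from $J_1$ is folded into a narrow strip.
\begin{itemize}
    \item On most of the circle, $\tilde f_1$ is then a strict translation-like push to the right, and $\tilde f_2$ a strict push to the left, so $\tilde f_1 > \mathrm{id} > \tilde f_2$ there.
    \item Near $A_1$, $\tilde f_1$ may fall to or below the identity, but $\tilde f_2 < \mathrm{id}$ there. Taking $\tilde f_2$ to be displaced by a definite amount below the identity on $A_1$, while $A_1$ and its $\tilde f_1$-deviation are made arbitrarily small, gives $\tilde f_1 > \tilde f_2$ on $A_1$ as well.
    \item The arc $A_2$ is handled symmetrically.
\end{itemize}

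\textbf{Step 4: realization and main obstacle.} Finally, realize the conjugated return maps by $C^0$-foliations on the annulus $\T^2 \setminus \tau$, with leaves interpolating monotonically, so that pointwise comparison of return maps yields pointwise transversality of tangent lines. This can be done with both foliations nearly linear and interpolated in the same fashion.

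I expect the main obstacle to be this realization step. Strict inequality of return maps must be turned into genuine positive transversality everywhere, including at the closed leaves. It must also be checked that the isotopies remain $C^0$ and $\Phi$-horizontal throughout. To handle this, I would build both foliations as suspensions over a common product structure $\tau \times [0,1]$ given by $\Phi$, with leaves that are graphs of monotone interpolations between the identity and the return map. Transversality then follows from the strict difference of the graph slopes. Keeping everything horizontal is automatic because leaves are graphs over the $[0,1]$ factor, which is transverse to the flow lines.
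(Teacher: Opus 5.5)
Your proposal follows the paper's proof. Both straighten $\Phi$, $\G_1$, $\G_2$, then accordion-compress the complement of the $\sharp$ region of $\G_1$ and of the $\flat$ region of $\G_2$ so that the two can be placed transversally; your return-map bookkeeping (fix disjoint tiny arcs $A_1,A_2$ first, then use the freedom to conjugate on the long intervals so that the conjugated $\tilde f_2$ displaces $A_1$ by more than its length and the conjugated $\tilde f_1$ displaces $A_2$ by more than its length) is a correct way to make the paper's figure precise. Two details need repair, though neither needs a new idea: for $s=p/q$ with $q\ge 2$ the ``iterate or cover'' shortcut fails as stated (comparing $f_1^q$ with $f_2^q$ is not the transversality condition, and a squeeze in the $q$-fold cover descends to $\T^2$ only if it is deck-equivariant), so either take $\tau$ meeting each closed leaf once or squeeze the whole $q$-cycle of $\sharp$/$\flat$ intervals so that $\tilde f_1 > R_{p/q} > \tilde f_2$ away from the tiny arcs; and in Step~4 an interpolation that is strictly monotone in the transverse parameter leaves corners along $\tau$, while one that is stationary near $\tau$ makes the two foliations tangent there, which is cured by adding opposite small tilts $\pm\eta$ to the two families (horizontality then comes from the bounded tilt of the leaves relative to the nearly vertical $\Phi$, since $\Phi$ need not be tangent to any product structure).
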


\begin{proof}
    First apply the straightening lemma to bring $\Phi$, $\G_1$, and $\G_2$ tangentially $C^0$ close to linear foliations. In particular, $\Phi$ is nearly vertical and $\G_1$ and $\G_2$ are graphical. Compress the complement of the sharp region in $\G_1$ (like the bellows of an accordion) so the sharp region nearly envelopes nearly all of $\T^2$, and similarly compress the complement of the flat region in $\G_2$. Then $\G_1$ and $\G_2$ can be put in transverse position as in \zcref{fig:accordion_trick}.
\end{proof}

\begin{figure}[H]
    \centering
    \includegraphics[width=0.6\textwidth]{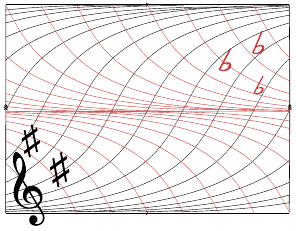}
    \caption{The accordion trick. $\G_1$ is shown in black and $\G_2$ is shown in red.}
    \label{fig:accordion_trick}
\end{figure}

\begin{proof}[Proof of forward direction of \zcref{prop:fineslope_inequality}]
    If $\fs(\G_1), \fs(\G_2)$ match \zcref{eq:slope_ineq1}, then we can put $\G_1$ and $\G_2$ in transverse position by \zcref{lem:straightening2}.
    If $\fs(\G_1), \fs(\G_2)$ match any of the inequalities in \zcref{eq:slope_ineq2}, $\G_1$ has a $\flat$ annulus and $\G_2$ has a $\sharp$ annulus so we can use the accordion trick (\zcref{lem:accordion}) to put $\G_1$ and $\G_2$ in transverse position.
\end{proof}

        \subsection{Making foliations transverse}

If $\G$ is a $C^0$-foliation on $\T^2$ and $\gamma$ is an arc contained in a leaf of $\G$, then it might not be possible to perturb this arc to an arc transverse to $\G$. However, this becomes possible after an arbitrarily small $C^0$-isotopy of $\G$, essentially by making it smooth near $\gamma$ using the methods from~\cite{B16}. We will extend this to suitable closed curves.

\begin{defn}\label{def:monotonecurve}
Let $\G$ be a $C^0$-foliation on $\T^2$, and $\gamma : S^1 \rightarrow \T^2$ be a piecewise $C^1$ closed embedded curve on $\T^2$. We say that $\gamma$ is \textbf{positively (resp.~negatively) monotone} with respect to $\G$ if $\gamma$ decomposes into finitely many $C^1$ subarcs, each of which is either
\begin{itemize}
    \item Contained in a leaf of $\G$, or
    \item Positively (resp.~negatively) transverse to $\G$
\end{itemize}
and $\gamma$ is not entirely contained in a leaf of $\G$.
\end{defn}

\begin{lem}\label{lem:monotone_existence}
    If $\G$ is a suspension foliation of slope $s$, then for every rational slope $p/q>s$ there $\G$ admits an embedded positively monotone curve of slope $p/q$.
\end{lem}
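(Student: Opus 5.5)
The plan is to reduce to a model picture using a transverse circle and a return map, and then build the curve by hand. Since $\G$ is a suspension foliation, it is transverse to a flow. Consequently $\G$ admits a closed transversal $\tau$ meeting every leaf; after a homeomorphism of $\T^2$ isotopic to the identity we may regard $\tau$ as a curve of slope $+\infty$, i.e.\ a vertical circle (the same framing as in \zcref{prop:poincare}). The first-return map $f : \tau \to \tau$ is then a circle homeomorphism, with lift $F : \R \to \R$ satisfying $\widetilde{\rot}(F) = s$, up to the sign convention relating rotation number to slope. In these coordinates a leaf starting at height $y$ on $\tau$ reaches height $F(y)$ after one trip around the horizontal direction. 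Since $p/q$ is rational with $q \ge 1$, the target curve has homology class $(q,p)$.

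The curve $\gamma$ will be a concatenation of leaf arcs and short transverse arcs. Start at $y_0 \in \tau$, follow the leaf once around to reach $F(y_0)$ on $\tau$, and then move along $\tau$ upward (positively transverse to $\G$) by some amount $\delta_1 \ge 0$. Repeat this $q$ times. The resulting path closes up in homology class $(q,p)$ exactly when
$$F_\delta^{q}(y_0) = y_0 + p, \qquad F_\delta \coloneqq (\text{translation by } \delta_j)\circ F .$$
Because $p/q > s = \lim_n F^n(y)/n$, we have $F^q(y_0) < y_0 + p$ for a suitable $y_0$. I choose any $y_0$ with $F^q(y_0) - y_0 - p < 0$; this is possible since otherwise $\rot(F) \ge p/q$. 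The total vertical deficit $d \coloneqq y_0 + p - F^q(y_0) > 0$ is then split among the $q$ transverse jumps, which gives a closed piecewise $C^1$ curve. It is not contained in a leaf because $d > 0$ forces at least one nontrivial transverse piece. It is positively monotone in the sense of \zcref{def:monotonecurve}.

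The main obstacle is \emph{embeddedness}: the $q$ horizontal leaf passages and the vertical jumps on $\tau$ must not cross one another. I would handle this by choosing the jumps with respect to the cyclic order on $\tau$. Let $z_j$ denote the points of $\tau$ visited after the jumps. The dynamics of $F_\delta$ has rotation number exactly $p/q$ and a periodic orbit, so its orbit $z_0,\dots,z_{q-1}$ is cyclically ordered like a rigid rotation by $p/q$ (Poincaré's classification of periodic orbits of circle homeomorphisms). Monotonicity of $F$ then shows that the leaf arcs from distinct $z_j$ are disjoint, since distinct leaves are disjoint. It also shows that each vertical jump $[F(z_j), z_{j+1}]$ on $\tau$ contains no other $z_k$ and no other landing point $F(z_k)$.

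To arrange this, I would take a single constant jump $\delta$ and use continuity and monotonicity of $\delta \mapsto \widetilde{\rot}(T_\delta \circ F)$. I choose the smallest $\delta > 0$ for which the rotation number equals $p/q$. At that value $T_\delta\circ F$ has a periodic orbit, and by minimality of $\delta$ each jump interval avoids the other orbit points. If it did not, a smaller $\delta$ would already produce the crossing, and hence a periodic orbit, by an intermediate-value argument. The final step is to smooth the corners slightly, if needed, within the class of piecewise $C^1$ curves, and to undo the initial homeomorphism; slopes are invariant under isotopic homeomorphisms.
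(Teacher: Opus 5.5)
Your first construction (with varying jumps) does produce a positively monotone closed curve in the class $(q,p)$. The embeddedness step, however, is false, and the problem is not only its justification. Curves of the shape you build, with exactly $q$ leaf passages from $\tau$ to $\tau$ and all transverse motion stacked on the single circle $\tau$, are in general never embedded.

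Minimality of $\delta_*$ gives no bound on $\delta_*$ in terms of the gaps between orbit points. For $\delta<\delta_*$ the map $T_\delta\circ F$ has rotation number $<p/q$, so it has no $(q,p)$-periodic orbit at all, and your intermediate-value argument has nothing to compare with. Quantitatively, $z_q-z_0=p$ gives total jump length $p-\sum_j\big(F(z_j)-z_j\big)$. Since $F(y)-y<s+1$ for every $y$, this exceeds $p-q(s+1)$. On the other hand, pairwise disjoint closed jump intervals on $\tau$ must have total length $<1$.

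For the horizontal linear foliation ($F=\mathrm{id}$, $s=0$) the total jump is exactly $p\geq 1$ for every $p/q>0$. For instance, for $p/q=1$ you get $\delta_*=1$, and your curve is a horizontal circle followed by all of $\tau$, which is a wedge of two circles.

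Separately, a homeomorphism isotopic to the identity cannot change the homology class of $\tau$, so taking $\tau$ vertical needs its own justification. One option is an $SL_2(\Z)$ change of coordinates, with $p/q>s$ rephrased as positivity of $p-qs$.

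The missing idea is that transverse pieces must be placed at many different positions along the leaves, not all on one transversal. The paper does this as follows:
\begin{itemize}
\item It takes a positively oriented path in the leaf space $L\cong\R$ of $\widetilde\G$ from a leaf to its $(q,p)$-translate, which lies above it precisely because $p/q>s$.
\item It realizes this path by short transversals placed anywhere, joined by leaf segments.
\item It pushes the result down to $\T^2$ and resolves self-intersections.
\end{itemize}

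You can repair your argument in the same spirit. Keep your monotone curve and spread each jump over several short transverse arcs near $\tau$, located at different points along the leaves. Arrange, by small changes of these arcs and of the jump endpoints, that all self-intersections are transverse double points. Then resolve each double point compatibly with the orientation.

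The result is a single embedded curve, for the following reasons:
\begin{itemize}
\item Two leaf strands never cross, so every double point has a transverse strand. Hence every component of the resolution contains a transverse piece and is positively monotone.
\item Each component's lift projects to a nonconstant nondecreasing path in $L$. So each component is essential, and its class translates $L$ upward.
\item Disjoint such curves are parallel copies of one primitive class.
\item Primitivity of $(q,p)$ then forces a single component.
\end{itemize}
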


\begin{proof}
    Let $\widetilde{\G}$ be the lift of $\G$ to the universal cover $\R^2$. The leaf space $L$ of $\widetilde{\G}$ is homeomorphic to $\R$. Therefore, there is a positively oriented path $\widetilde{\gamma}$ from a point $x\in L$ to its $+(q,p)$ translate in $L$. Every sufficiently short arc in $L$ descends to a transversal to $\widetilde \G$, so $\gamma$ may be divided into subarcs which lift to transversals $\widetilde{\gamma_1},\dots\widetilde{\gamma_n}$ to $\widetilde \G$. Connecting these subarcs with paths in leaves of $\widetilde \G$ and then pushing them down to $\G$ yields a monotone curve of the desired slope $(p,q)$. Resolving self-intersections as necessary, we can guarantee that $\gamma$ is embedded.
\end{proof}

\begin{lem}\label{lem:transversalizing}
    Let $\G$ be a $C^0$-foliation on $\T^2$ and $\gamma$ a positively (resp.~negatively) monotone with respect to $\G$. Then there are arbitrarily small $C^0$-isotopies $\mathcal{G}_t$ of $\G$ and $\gamma_t$ of $\gamma$ such that $\gamma_1$ is a $C^1$ curve positively (resp.~negatively) transverse to $\G_1$.
\end{lem}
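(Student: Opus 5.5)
The plan is to handle the subarcs of $\gamma$ one at a time, since they come in only two kinds. Decompose $\gamma$ into finitely many $C^1$ subarcs $\gamma^1,\dots,\gamma^m$ as in \zcref{def:monotonecurve}. Each subarc is either \emph{tangential} (contained in a leaf) or \emph{transverse} (positively transverse, say; the negative case is symmetric). Transversality is an open condition. So any sufficiently small $C^0$-isotopy of $\G$, and any $C^1$-small perturbation of the transverse subarcs away from their endpoints, will keep those subarcs positively transverse. The whole task is therefore to push each tangential subarc slightly off its leaf in the positive direction. At the same time we must smooth the corners at the breakpoints, so that the resulting curve is $C^1$ and positively transverse everywhere.

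First I would make $\G$ smooth near each tangential subarc. Take a small foliated chart $U_j \cong [a,b]\times(-\delta,\delta)$ around each tangential subarc $\gamma^j$, in which $\G$ is given by graphs $y = f_c(x)$ with $f_c$ continuous in $c$ and $C^1$ in $x$. Following the local smoothing method of \cite{B16}, I would perform an arbitrarily small $C^0$-isotopy of $\G$ supported in $U_j$ after which the leaves in a smaller neighbourhood of $\gamma^j$ are horizontal lines $y=\text{const}$. Concretely, I would flatten the holonomy along $[a,b]$ by an isotopy that straightens the plaques. I would choose these charts disjoint and away from the interiors of the transverse subarcs. I would also drag the pieces of $\gamma$ inside the charts along with the isotopy, which produces $\gamma_t$. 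Because the isotopy is $C^0$-small, the transverse subarcs remain transverse, apart from small collars near the breakpoints that are handled in the next step.

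Now, in local coordinates near a tangential subarc, that subarc is a horizontal segment. Near each of its endpoints, the adjacent transverse subarcs enter and leave as curves crossing the horizontal lines positively, that is, upward. I would replace the horizontal segment, together with short collars of the adjacent transverse arcs, by a $C^1$ arc whose $y$-coordinate is strictly increasing with arbitrarily small total rise. For example, the graph of $x \mapsto \eta\,\phi(x)$ with $\phi' > 0$ and $\eta$ small, glued monotonically to the incoming and outgoing transverse arcs. Since the leaves are exactly horizontal there, strictly increasing $y$ means positive transversality. The same local picture handles two tangential subarcs meeting at a corner, and the case where a tangential subarc abuts another tangential subarc in the same leaf, which can simply be merged. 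The modification is $C^0$-small, so $\gamma_1$ remains embedded provided $\eta$ and the chart sizes are small compared with the distance between non-adjacent pieces of $\gamma$. Monotonicity of $\gamma$ is used exactly here: every transverse arc crosses in the positive direction, so the total vertical displacement is consistently increasing and no local obstruction arises. The hypothesis that $\gamma$ is not entirely contained in a leaf guarantees that at least one transverse subarc exists. Without it, a closed leaf would be the only option, and it cannot be tilted.

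The main obstacle is the first step: straightening a $C^0$-foliation near a leaf arc by an arbitrarily small $C^0$-isotopy while keeping it a $C^0$-foliation with continuous tangent plane field. The difficulty is that the tangent field near the arc is only continuous, so naive perturbations of the arc need not be transverse. I would first try to conjugate by the local chart map straightening the plaques, interpolated with the identity through a cutoff in the transverse direction. If that interpolation destroys the foliation property, I would instead keep the plaque-straightening only on a thinner region and absorb the mismatch into a small shear along the leaf direction, as in \cite{B16}.
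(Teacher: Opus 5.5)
Your argument is essentially the paper's: make $\G$ smooth near each tangential subarc by an arbitrarily small $C^0$-isotopy via \cite[Lemma 3.1]{B16}, then replace the leaf subarc by a short positively transverse arc in the smoothed region, joining a point of the incoming transverse arc (below the leaf) to a point of the outgoing one (above it). The one minor difference is that you drag $\gamma$ along the isotopy, which would leave the dragged collars merely $C^0$; the paper instead leaves the transverse subarcs untouched and arranges that the smoothing isotopy preserves the leaf $L_0$ containing the tangential subarc, so points of the incoming and outgoing transverse arcs near the breakpoints stay below and above $L_0$, respectively.
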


\begin{proof}
    For simplicity, let us assume that $\gamma$ is contained in a leaf $L_0$ of $\G$ for $t \in [0,1/2] \subset S^1 = \R \slash \Z$, and $\gamma$ is positively transverse to $\G$ on $(1/2, 1)$. The general case can be treated similarly and is left to the reader.

    First, we may find $C^1$ coordinates $(x,y) \in [-1,1]^2$ near $\Gamma=\gamma([0,1/2]) \subset \T^2$ in which $\G$ is graphical, and $L_0$ corresponds to $[-1,1] \times \{0\}$. Then by assumption, for $\delta > 0$ small enough, the point $\gamma(-\delta)$ lies below $L_0$ and the point $\gamma(1/2+ \delta)$ lies above $L_0$. We then perform a very small $C^0$-isotopy of $\G$ near a slightly larger neighborhood of $\Gamma$ to make $\G$ smooth there using~\cite[Lemma 3.1]{B16}; we denote the resulting foliation by $\G'$. We may further arrange that $L_0$ is preserved by this operation, so that the relative positions of $L_0$ with $\gamma(-\delta)$ and $\gamma(1/2 + \delta)$ are preserved. We may now connect $\gamma(-\delta)$ and $\gamma(1/2 + \delta)$ with an arc positively transverse to $\G'$. This way, we obtain a curve $\gamma'$ positively transverse to $\G'$, which can be made arbitrarily $C^0$-close to $\gamma$. See \zcref{fig:smoothing_monotone}.
\end{proof}

\begin{figure}[ht]
    \centering
    \def\svgwidth{0.7\textwidth}
    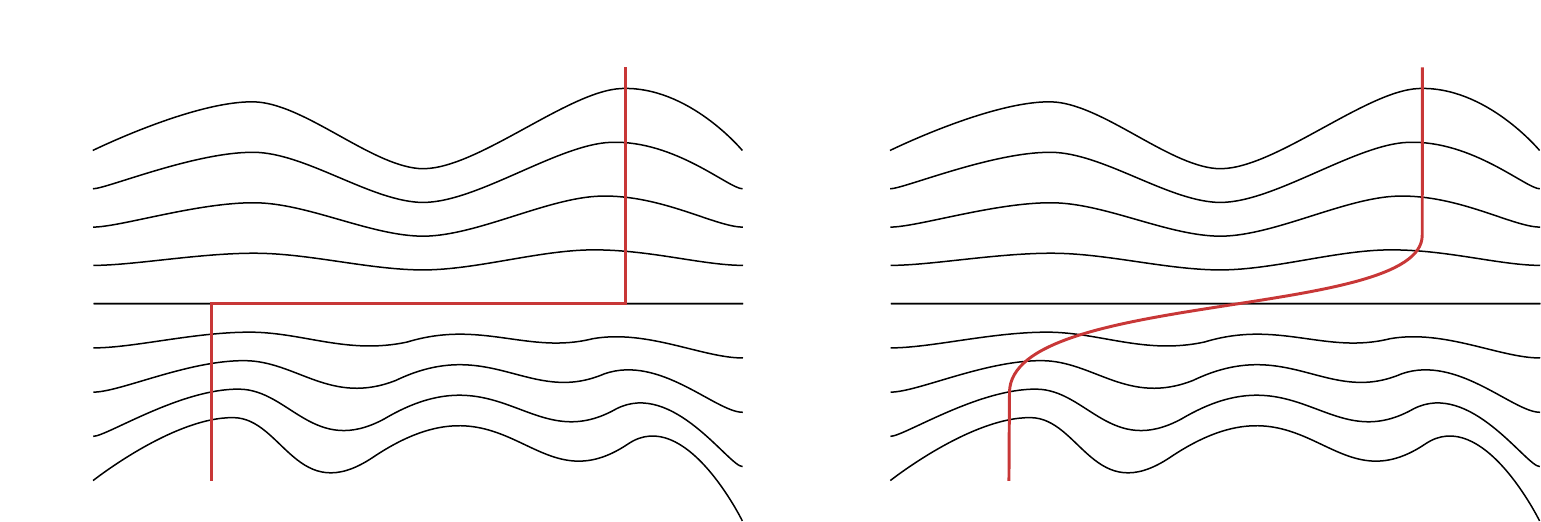
    \caption{Straightening a monotone curve.}
    \label{fig:smoothing_monotone}
\end{figure}

We now extend this result and consider two foliations on $\G$ which are everywhere tangent or positively transverse, such that the positively transverse region is sufficiently large. First, we make precise what it means for the positively transverse region to be sufficiently large.

\begin{defn} \label{def:mostlytrans}
    Let $\G$, $\G'$ be two $C^0$-foliations on $\mathbb{T}^2$. We say that $\G$ and $\G'$ are \textbf{mostly-transverse} if there exist open sets $W \Subset V \subset \mathbb{T}^2$ such that
    \begin{itemize}
        \item On a neighborhood of $\T^2 \setminus V$, the leaves of $\G$ and $\G'$ coincide,
        \item On a neighborhood of $\overline{V} \setminus W$, $\G$ and $\G'$ are smooth,
        \item On $V \setminus W$, $T\G$ and $T\G'$ coincide, and on $W$, $T\G$ is positively transverse to $T\G'$,
        \item For every $p \in \mathbb{T}^2$, the leaf of $\G$ passing through $p$ intersects $W$ on both sides of $p$.
    \end{itemize}
\end{defn}

\begin{figure}[ht]
    \centering
    \def\svgwidth{0.5\textwidth}
    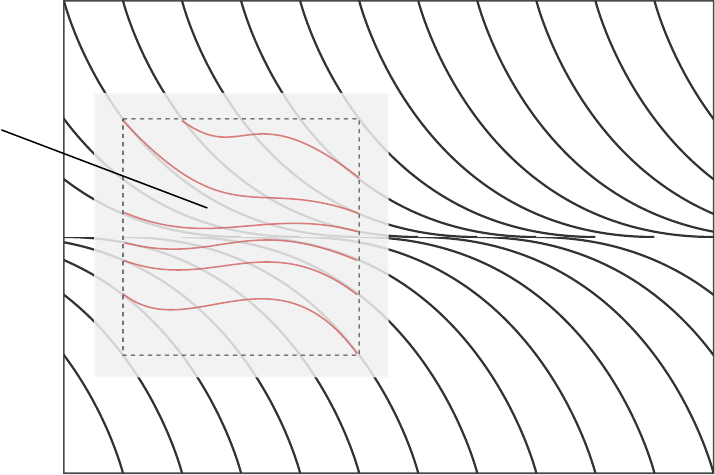
    \caption{Two mostly-transverse foliations. $\G'$ is drawn above in red when it differs from $\G$.}
    \label{fig:transversalizing2}
\end{figure}

We now show that two mostly-transverse foliations may be slightly perturbed to be made transverse.

\begin{lem}[Making boundary foliations transverse] \label{lem:mostlytrans}
    Let $\G$, $\G'$ be two mostly-transverse $C^0$-foliations on $\mathbb{T}^2$. Then, for every $\varepsilon>0$, there exist $C^0$-foliations $\widetilde{\G}$ and $\widetilde{\G}'$ such that
    \begin{itemize}
        \item $\widetilde{\G}$ and $\widetilde{\G}'$ are isotopic, via an $\varepsilon$-$C^{0}$-small isotopy, to $\G$ and $\G'$, respectively,
        \item $\widetilde{\G}$ is positively transverse to $\widetilde{\G}'$ everywhere.
    \end{itemize}
\end{lem}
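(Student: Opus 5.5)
The plan is to leave $\G$ unchanged outside a small neighborhood of $\overline W$ and to isotope $\G'$ by sliding its leaves along the leaves of $\G$. This creates a small positive angle everywhere while keeping the existing transversality on $W$. The last item of \zcref{def:mostlytrans} is what makes this possible: every leaf of $\G$, followed in either direction from any point, eventually enters $W$, where $\G'$ already crosses it positively.

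\textbf{Step 1: uniform constants.} By compactness of $\T^2$ and continuity of the leaves, choose a compact $K \subset W$ and $L>0$ such that every $\G$-leaf arc of length $L$ centered at any point meets $K$ on both sides. On $K$, $T\G$ and $T\G'$ make an angle at least some $\theta>0$.

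\textbf{Step 2: a shearing map.} Fix a smooth flow $\psi_t$ transverse to $\G$. It exists after a small smoothing, or one can use a direction field that is nearly normal. Define a homeomorphism $h$ which displaces each point slightly along $\psi$, by an amount $\delta\cdot f$. Here $f$ is a function increasing along each $\G$-leaf outside $K$ and decreasing (mildly) inside $K$, so that $f$ is the potential of a closed-up ``accordion''.

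Concretely, view the region $\T^2\setminus K$ as a union of $\G$-leaf segments of length less than $L$ with endpoints in $K$. On each segment, let $f$ increase by $\delta$ from one end to the other. Inside $K$, let $f$ drop back by $\delta$. In a neighborhood of $\overline V \setminus W$ the foliations are smooth, so $f$ can be chosen smooth there. In the complement of $V$, the leaves of $\G$ and $\G'$ coincide; there we only need $f$ to be monotone along leaves, which can be built from $C^0$ leafwise data using transverse continuity.

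Set $\widetilde{\G}'=h(\G')$ and $\widetilde{\G}=\G$. Where $T\G=T\G'$, the image of $\G'$ tilts by an angle of order $\delta/L$ in the positive direction. On $K$, the tilt of order $\delta$ (with slope change at most $O(\delta)$) does not destroy the angle $\theta$ once $\delta\ll\theta$. The map $h$ is $O(\delta)$-$C^0$ close to the identity, so the isotopy is $\varepsilon$-small for $\delta$ small.

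\textbf{Main obstacle.} The hardest step is regularity. In the complement of $V$ the foliations are only $C^0$, so the tilt produced by $h$ must be measured through the derivative of $f$ along a $C^1$ leaf. Making $f$ leafwise $C^1$ with derivative bounded below, uniformly across leaves, requires a transversely continuous choice.

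I would first try a different construction: a flow box decomposition of $\T^2$ into finitely many $\G$-foliated rectangles, each with horizontal sides in leaves. Define $f$ rectangle by rectangle as a function of leafwise arclength only, which is continuous transversally because leaves vary $C^1$-continuously. Then match the pieces across the vertical sides using a partition of unity in the transverse direction.

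If this approach is too delicate, fall back on \cite[Lemma 3.1]{B16}. That result lets us first $C^0$-isotope $\G$ (and $\G'$ identically, since their leaves agree there) to be smooth near a compact subset of $\T^2\setminus \overline V$, at arbitrarily small cost. The whole construction then takes place in the smooth category, and the result is transported back by the small isotopy.
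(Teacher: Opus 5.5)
\textbf{The shearing estimate in Step 2 fails in the $C^0$ region.} The difficulty is not the leafwise regularity of $f$.

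Straighten $\psi$ locally to $\partial_y$ and write $T\G$ as a slope field $\sigma(x,y)$. Outside $W$, a leaf $y=g(x)$ of $\G=\G'$ is sent by $h$ to the curve $y=g(x)+\delta f(x,g(x))$. That curve has slope $\sigma(x,g)+\delta\,Xf$, whereas $\G$ at the image point has slope $\sigma(x,g+\delta f)$. So you need
\[
\delta\,Xf>\sigma(x,g+\delta f)-\sigma(x,g).
\]
Your claim that the image ``tilts by an angle of order $\delta/L$'' ignores the right-hand side, which comes from $\psi$ not preserving $\G$.
\begin{itemize}
\item Where $\G$ is smooth, this term is $O(\delta|f|)$, of the same order $\delta$ as the gain. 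You must then choose $\psi$ adapted to $\G$, or require that $f$ times the transverse density of $\psi$ (rather than $f$ itself) increase along leaves.
\item Where $\G$ is only $C^0$, the term is controlled only by the transverse modulus of continuity of $T\G$. It can be of order $\sqrt{\delta|f|}\gg\delta$ at suitable points, and then no choice of $\delta$, and no $f$ however regular along leaves, fixes the sign.
\end{itemize}
Your flow-box and partition-of-unity scheme does not address this. Separately, ``increase by $\delta$ on each segment'' is not continuous as stated, since the segments jump. A finite sum of local functions that are nondecreasing along leaves off $W$ avoids this.

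\textbf{The fallback does not supply the missing smoothing step.} You really must first make $\G$ and $\G'$ smooth on a neighbourhood of $\T^2\setminus W$, and the appeal to \cite{B16} does not achieve this.
\begin{itemize}
\item The parallel transport of $\G$ between the ends of a thickened leaf segment is unchanged by any isotopy supported inside it. So smoothing near $\T^2\setminus\overline V$ can only push the non-smoothness further along the leaves.
\item It cannot be pushed into $V\setminus W$, since that recreates the problem. It must land in $W$, where $\G\neq\G'$. There it cannot be done ``identically'' for both foliations, and positive transversality has to survive.
\item Nor can you ``transport back'' by a topological isotopy, which does not preserve tangential transversality. You should simply keep the isotoped pair.
\end{itemize}

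This is exactly the paper's Step 1:
\begin{itemize}
\item Cover $\T^2\setminus V$ by finitely many thin boxes around leaf segments, each with one vertical side in $W$.
\item On the sub-boxes where $\G$ and $\G'$ coincide, replace both by the same smooth graphical foliation.
\item Restore the original side-to-side parallel transport of each of $\G$ and $\G'$ by a graphical correction in the strip $(1-\delta,1]\times[-1,1]\subset W$. This does not break transversality, provided the smoothing is small compared with $\delta$ times the angle between $\G$ and $\G'$ there.
\end{itemize}
Each such step only enlarges the smooth region, so finitely many steps suffice. After that, your shearing is essentially the paper's Step 2, which pushes $\G$ rather than $\G'$ along $\varphi_Z^{tf}$ with $X\cdot f>0$ off $W$, up to the first-order bookkeeping above.
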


\begin{proof}
    We proceed in two steps.
    \begin{itemize}[leftmargin=*]
        \item \textit{Step 1: pushing the nonsmoothness to the transverse region.} The goal is to modify $\G$ and $\G'$ by arbitrarily small $C^0$-isotopies so that the resulting foliations $\widehat{\G}$ and $\widehat{\G}'$ satisfy the same assumptions as $\G$ and $\G'$, and are furthermore smooth on a neighborhood of $\T^2 \setminus W$. 

        Let us introduce some useful terminology. A \emph{standard box} adapted to $\G$ and $\G'$ is a subset $B \subset \T^2$ with $C^1$ coordinates $B \cong [-1,1]^2_{x,y}$ such that the following properties are satisfied:
            \begin{enumerate}
                \item[(a)] On $B$, $\G$ and $\G'$ are graphical, i.e., transverse to $\partial_y$,
                \item[(b)] The intervals $[-1,1] \times \{\pm1/2\}$ are contained in leaves of $\G$,
                \item[(c)] The interval $\{-1\} \times [-1,1]$ is contained in $V$, and the interval $\{+1\} \times [-1,1]$ is contained in $W$,
                \item[(d)] All the leaves of $\G'$ intersecting $[-1,1] \times [-1/2, 1/2]$ remain in $\mathrm{int}B$,
                \item[(e)] There are finitely many closed, pairwise disjoint intervals $I_1, \dots, I_k \subset [-1,1]$ such that on each $I_j \times [-1,1]$, $\G$ and $\G'$ coincide and they are smooth near $\partial I_j \times [-1,1]$. Moreover, the complement of $\bigcup_j I_j \times [-1,1]$ in $B$ is contained in $V$.
            \end{enumerate}
        For $B$ such a standard box, we will write $B' \coloneqq [-1,1] \times [-1/2, 1/2]$. The last item is useful to ``isolate'' regions where $\G$ and $\G'$ differ, and will allow us to first modify them in regions where they coincide.

        By hypothesis, every point $p \in \T^2 \setminus V$ is contained in a set $B'$ as above. Indeed, let us consider the leaf of $\G$ passing through $p$, which intersects $W$ on both sides of $p$. In particular, there is a segment $\ell \cong [-1,1]$ in that leaf passing through $p$ such that $\partial \ell \subset W$. We may now trim $\ell$ so that it only intersects $W$ near its boundary, and subdivide $\ell$ into finitely many intervals which are either entirely contained in $V$, or contained in the region where $\G$ and $\G'$ coincide, and $\G$ and $\G'$ are smooth near the boundaries of the latter intervals. We may then thicken $\ell$ and consider a closed tubular neighborhood $B$ bounded by two leaves of $\G$ (which are $C^1$), and find a much smaller tubular neighborhood $B' \subset B$, also bounded by leaves of $\G$, such that item (d) is also satisfied. Choosing these neighborhoods small enough guarantees that condition (e) is satisfied as well.
        
        By compactness, we can cover $\T^2 \setminus V$ with finitely many sets of the form $B'$. Let us now consider one such pair of boxes $B' \subset B$. There is a $\delta > 0$ such that $(1-\delta, 1] \times [-1,1] \subset W$. We then modify $\G$ (and $\G'$) in each set $I_j \times [-1/2,1/2]$, by replacing it with \emph{smooth} graphical foliation which is moreover arbitrarily $C^0$-close to $\G$. This modification is not an isotopy of $\G$ or $\G'$ a priori, since it modifies the parallel transport maps $\{-1\} \times [-1,1] \rightarrow \{1\} \times [-1,1]$ induced by following the leaves. However, this can be corrected on the set $(1-\delta, 1] \times [-1,1]$ by modifying $\G$ and $\G'$ there in a graphical way, so that the parallel transport maps match those of $\G$ and $\G'$. Moreover, this can be achieved to that $\G$ and $\G'$ remain transverse in $(1-\delta, 1] \times [-1,1]$, provided that the previous modifications are small enough. Since the overall modification is graphical, the new foliations are $C^0$-isotopic to the original ones, via arbitrarily small $C^0$-isotopies. 

        Note that this modification also affects $\G$ and $\G'$ in the standard boxes in the finite cover which intersect $B$. Importantly, our modification only \emph{enlarges} the subset of $\T^2 \setminus W$ on which $\G$ and $\G'$ are smooth. Moreover, for sufficiently small modifications, the conditions (a)--(e) are still satisfied for those nearby boxes, after the necessary adjustments. Therefore, after finitely many such modifications, the resulting foliations are smooth on a neighborhood of $\T^2 \setminus W$.
        
        \item \textit{Step 2: straightening the smooth part.} By the previous step, we may assume that $\G$ and $\G'$ are smooth on a neighborhood of $\T^2 \setminus W$. Therefore, there exists a smooth nonvanishing vector field $X$ such that the (oriented) leaves of $\G$ are tangent to $X$ on $\T^2 \setminus W$. By assumption, the flow line of $X$ passing through a point $p \in \T^2 \setminus V$ reaches a point in $W$ in positive and negative time. Therefore, we may construct a smooth function $f : \T^2 \rightarrow \R$ satisfying $X \cdot f > 0$ on $\T^2 \setminus W$. It suffices to construct functions $f_i$'s such that $X \cdot f_i \geq 0$ and $X \cdot f_i > 0$ near a given flow line of $X$ in $\T^2 \setminus W$, and define $f$ as a suitable finite sum of those.

        We then consider a smooth vector field $Z$ positively transverse to $X$, and we define a smooth isotopy of $\T^2$ by
        $$\phi_t \coloneqq \varphi_Z^{tf(x)}(x).$$
        By construction, for any $t> 0$ small enough, $\phi_t(\G)$ is positively transverse to $\G'$ on $\T^2 \setminus W$. For $t$ even small, $\phi_t(\G)$ remains positively transverse to $\G'$ on $W$ as well. \qedhere
    \end{itemize}
\end{proof}

    \subsection{(In)stability of slopes} \label{sec:instability_of_slopes}

Later in this article, we will perturb foliations on $3$-manifolds to contact structures. We will need to analyze how the induced foliations on $\partial M$ change under these perturbations. In this section, we give criteria to determine when the boundary slope of a foliation is stable or unstable.

\paragraph{Instability results.}

We show some elementary instability phenomena for $C^0$-foliations on $\T^2$ under tangential perturbations.

\begin{lem}[Instability of $\natural$ or $\sharp$-type foliations] \label{lem:phaselocking}
    Suppose $\G_1$ and $\G_2$ are two Reebless $C^0$-foliations of $\T^2$ with finite slopes, and $\G_1$ has pointwise greater slope than $\G_2$. Then 
    $$\slope(\G_1) \geq \slope(\G_2).$$
    If equality holds, then $\slope(\G_i)\in \Q$ and $\G_2$ has a $\flat$ annulus (i.e., $\G_2$ has $\flat$ or $\dagger$ type), and $\G_1$ has a $\sharp$ annulus (i.e., $\G_1$ has $\sharp$ or $\dagger$ type).
\end{lem}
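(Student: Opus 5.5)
The plan is to pass to the universal cover and compare leaves of $\widetilde{\G}_1$ and $\widetilde{\G}_2$ as graphs, using the fact that pointwise greater slope means leaves of $\G_1$ cross leaves of $\G_2$ in one direction only.

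\emph{Setup.} Lift both foliations to $\R^2$. Since $\G_1$ and $\G_2$ are Reebless with finite slopes, leaves have asymptotic directions $(1,s_i)$ with $s_i=\slope(\G_i)$. Pointwise transversality with $\G_1$ of greater slope says that each leaf $\ell_1$ of $\widetilde\G_1$ crosses each leaf $\ell_2$ of $\widetilde\G_2$ at most once, and always from below to above when moving forward along $\ell_1$. A cleaner formulation: the leaves of $\widetilde\G_2$ are properly embedded lines, and each separates $\R^2$. A forward ray of $\ell_1$, once it is above $\ell_2$, stays above it.

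\emph{Inequality.} Suppose $s_1<s_2$. Take any leaf $\ell_2$ of $\widetilde\G_2$ and start $\ell_1$ on it. The forward ray of $\ell_1$ lies weakly above $\ell_2$. However, with asymptotic direction $(1,s_1)$ against $\ell_2$'s $(1,s_2)$, the ray eventually falls below $\ell_2$. Here both are quasi-linear by the Poincaré classification (\zcref{prop:poincare}): leaves stay within bounded distance of lines of their slope, since semiconjugacy to a linear foliation moves points a bounded amount on the cover. This is a contradiction, so $s_1\ge s_2$.

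\emph{Equality case.} Suppose $s_1=s_2=s$.

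First, if $s$ is irrational, both foliations are semiconjugate to the linear foliation $\G_s$ via maps at bounded distance from the identity. I would then use the rotation number of holonomy on a common closed transversal $\tau$ (e.g.\ one transverse to both, built via \zcref{lem:straightening}; if that fails, argue with the cover directly). The first-return maps $f_1,f_2$ of the two foliations on $\tau$ satisfy $f_1> f_2$ pointwise on lifts, because the $\G_1$ leaf, having greater slope, lands strictly higher; this needs a strict-comparison argument along the leaf. Strict pointwise inequality of circle homeomorphism lifts with equal rotation number forces $f_1$ and $f_2$ to share a rational rotation number, by the standard fact that if $F_1>F_2$ and $\rot F_1=\rot F_2$ then the common value is rational. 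Indeed, for irrational rotation numbers both are semiconjugate to the rotation, and strict inequality on the minimal set gives a strict increase. This rules out irrational $s$. The compactness needed to get a uniform gap $F_1\ge F_2+\delta$ is where I would be careful.

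Second, for rational $s=p/q$, write $F_1>F_2$ with $\rot=p/q$ after normalization, and consider $G_i=F_i^q-p$. If $G_2$ had no point with $G_2(x)<x$ nearby in the right way (i.e.\ no $\flat$ annulus), then closed leaves of $\G_2$ would only be approached from the $\sharp$ side or absent. I would check that this forces $G_2\ge \mathrm{id}$ everywhere. Combined with $G_1>G_2$, this gives $G_1>\mathrm{id}$ everywhere, contradicting $\rot G_1=0$. Hence $G_2$ has a point with $G_2<\mathrm{id}$, meaning a $\flat$ annulus (or $\dagger$). Symmetrically, $G_1\le \mathrm{id}$ everywhere would give $G_2<\mathrm{id}$, contradicting the existence of fixed points, so $G_1$ has a region with $G_1>\mathrm{id}$, which is a $\sharp$ annulus.

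The main obstacle is matching the sign conventions: identifying "$G>\mathrm{id}$ on a complementary interval of the fixed set" with $\sharp$ versus $\flat$ spiraling under the outside-looking orientation. The transversal setup in the $C^0$ category may also need the smoothing of \zcref{lem:transversalizing}.
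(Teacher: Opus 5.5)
Your proposal is correct, but it takes a different route from the paper. The paper never chooses a transversal. It works with the minimal sets $\Lambda_1,\Lambda_2$ and their full-support invariant transverse measures (\zcref{prop:poincare}), and computes the slope difference as an intersection pairing of these measures. Positivity of all crossings then gives $s_1\ge s_2$, with equality if and only if $\Lambda_1\cap\Lambda_2=\varnothing$. In the irrational case, leaves of $\Lambda_1$ would lie in the ideal-bigon guts of $\Lambda_2$ and be forced into cusps, so the angle between the foliations could not be bounded below, contradicting transversality on a compact torus. In the rational case, each closed leaf of $\G_1$ is a positive closed transversal inside a complementary annulus of $\Lambda_2$, so that annulus cannot be $\sharp$; the same reasoning with roles swapped handles $\G_1$.

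You instead reduce to one-dimensional dynamics. After \zcref{lem:straightening}, which keeps the pair transverse and preserves slopes, types and the sign of the comparison, a vertical circle is a common transversal. Your graph comparison then gives return lifts with $F_1-F_2\ge\delta>0$. The equality case follows from strict monotonicity of rotation number at irrational values, and from comparing fixed points of $F_i^q-p$.

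Your route buys a standard dynamical core and makes the $\sharp/\flat$ identification explicit as the sign of $G_i-\mathrm{id}$. Your sign worry is settled by the paper's own conventions: the proof of \zcref{lem:transversholo} shows that $\flat$ spiraling means the return map pushes leaves down, i.e.\ $G<\mathrm{id}$. The price is that you need a common closed transversal, which here costs the mean-curvature-flow straightening machinery, including its claim that transverse pairs stay transverse. The paper's measure-theoretic argument is intrinsic and needs no preliminary isotopy.

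For the irrational step, cite the classical fact that $t\mapsto\widetilde{\rot}(R_t\circ F)$ is strictly increasing wherever its value is irrational. Your one-line semiconjugacy sketch needs an extra argument when the minimal set is a Cantor set, since the gain from $F_1>F_2$ can be absorbed by gaps.
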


\begin{proof}
    The non-strict inequality is immediate from the definition of $\slope$. For the strict inequality, let $\Lambda_1$ and $\Lambda_2$ be the limit sets of $\G_1$ and $\G_2$. They both admit measures of full support by \zcref{prop:poincare}. The difference in slope between $\Lambda_1$ and $\Lambda_2$ may be computed by an intersection pairing between their invariant measures. Since $\Lambda_1$ and $\Lambda_2$ have only intersections of positive sign, $\slope(\Lambda_1)=\slope(\Lambda_2)$ if and only if $\Lambda_1$ and $\Lambda_2$ are disjoint.

    If $\slope(\Lambda_1)=\slope(\Lambda_2)\in \R\setminus \Q$, then each leaf of $\Lambda_1$ lies in a gut of $\Lambda_2$. The guts of $\Lambda_2$ are ideal bigons, so leaves of $\Lambda_1$ must be asymptotic into the cusps of these bigons. But then there is no lower bound to the angle between leaves of $\Lambda_1$ and $\Lambda_2$ so $\G_1$ and $\G_2$ must have a tangency, contradicting our initial assumption.
    
    If $\slope(\Lambda_1)=\slope(\Lambda_2)\in \Q$, then each leaf of $\Lambda_1$ lies in a component of $\T^2\setminus \Lambda_2$ which is either a sharp or flat annulus. A sharp annulus does not admit a positive closed transversal, so $\G_2$ must have at least one $\flat$ annulus. Similarly, $\G_1$ must have at least one $\sharp$ annulus.
\end{proof}

\begin{cor}\label{cor:fineslope_inequality_converse}
    The reverse direction of \zcref{prop:fineslope_inequality} holds.
\end{cor}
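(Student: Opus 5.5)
The reverse direction states the following. Suppose $\G_1'$ and $\G_2'$ are horizontally isotopic to $\G_1$ and $\G_2$, and $\G_1'$ has pointwise lower slope than $\G_2'$. Then $\fineslope(\G_1)\prec\fineslope(\G_2)$. The plan is to reduce directly to \zcref{lem:phaselocking}, applied with the roles arranged so that $\G_2'$ is the foliation of pointwise greater slope.

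First, I will check that fine slope is invariant under horizontal isotopy. An isotopy of $\T^2$ isotopic to the identity preserves the coarse slope, since the coarse slope is computed in the universal cover and an isotopy lifts to a map at bounded distance from the identity. The isotopy also carries $\sharp$, $\flat$ and Reeb annuli to annuli of the same kind, because the spiraling direction is determined by the orientation of $\T^2$ and of the leaves, and both are preserved. So $\fineslope(\G_i')=\fineslope(\G_i)$, and $\G_1',\G_2'$ are still Reebless with finite slopes.

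Next, I apply \zcref{lem:phaselocking} to the pair $(\G_2',\G_1')$, where $\G_2'$ has pointwise greater slope. It gives $\slope(\G_2)\geq \slope(\G_1)$. If the inequality is strict, then \eqref{eq:slope_ineq1} gives $\fineslope(\G_1)\prec\fineslope(\G_2)$ for any decorations $\heartsuit,\diamondsuit$.

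In the case of equality, the lemma gives the following. The common slope $s$ is rational. The foliation of lower slope, $\G_1'$, has a $\flat$ annulus, so it has type $\flat$ or $\dagger$. The foliation of greater slope, $\G_2'$, has a $\sharp$ annulus, so it has type $\sharp$ or $\dagger$. This leaves four cases:
\begin{itemize}
\item $s^\flat\prec s^\sharp$,
\item $s^\flat\prec s^\dagger$,
\item $s^\dagger\prec s^\sharp$,
\item $s^\dagger\prec s^\dagger$.
\end{itemize}
Each is a relation either listed in \eqref{eq:slope_ineq2} or obtained by transitivity, e.g.\ $s^\flat\prec s^\dagger\prec s^\sharp$. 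Here I read $\prec$ as the transitive relation generated by those inequalities, which is how "generated" is meant in its definition.

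The only step needing care is the bookkeeping of orientations. I must make sure that "pointwise lower slope" in the proposition matches the hypothesis "$\G_1$ has pointwise greater slope than $\G_2$" in \zcref{lem:phaselocking} after swapping the indices. I must also check that the lemma's conclusion, where the lower foliation carries the $\flat$ annulus and the upper one the $\sharp$ annulus, lines up with the ordering $\flat\prec\dagger\prec\sharp$. The definition of "pointwise lower slope" in the text has a typo ("greater"), so I would fix the convention using the positively oriented basis criterion and then run the argument above.
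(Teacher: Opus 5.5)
Your proposal is correct and follows the paper's route. The paper gives no separate argument: it presents the corollary as an immediate consequence of \zcref{lem:phaselocking}, applied with the foliation of pointwise greater slope as the first argument. Your extra steps — invariance of the fine slope under isotopy, the orientation bookkeeping, and the check of the four equal-slope cases against the transitively generated order $\prec$ — are exactly the routine details the paper leaves implicit.
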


\begin{lem}[Instability of annuli without $\flat$ spiraling] \label{lem:instability_sharpspiraling}
Let $\G$ be a foliation of an annulus $S^1 \times [0,1]$ which is positively transverse to $S^1 \times \{0\}$ and $S^1 \times \{1\}$. Suppose $\G$ has no closed leaf of $\flat$-type (resp $\sharp$-type) spiraling. If $\G'$ is another foliation of the annulus which is similarly transverse to the boundary and has pointwise higher (resp.~lower) slope than $\G$, then $\G'$ has no closed leaf of degree $+1$ (resp.~$-1$) in the annulus. See \zcref{fig:instability}.
\end{lem}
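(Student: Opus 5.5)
The idea is to turn a hypothetical closed leaf of $\G'$ into a closed transversal to $\G$, and then use the structure of foliated annuli to show that such a transversal forces a closed leaf of $\G$ with $\flat$ spiraling. I treat the case where $\G$ has no $\flat$-type spiraling and $\G'$ has higher slope. The other case follows by reflecting the annulus, which exchanges $\sharp$ and $\flat$ and reverses the slope comparison.

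\emph{Step 1: the transversal.} Suppose for contradiction that $\G'$ has a closed leaf $c$ of degree $+1$, oriented along its leaf. Since $\G'$ has pointwise higher slope than $\G$, at every point of $c$ the positive tangent vector of $c$ and that of $\G$ form a basis of a fixed orientation. So $c$ is a $C^1$ closed curve transverse to $\G$, crossing it everywhere in the same sense, which I call the negative sense. Since $\G$ is positively transverse to $S^1\times\{0\}$ and $S^1\times\{1\}$, the boundary circles, oriented as the degree $+1$ generator, cross $\G$ in the opposite (positive) sense. I expect these sign conventions to need care, and will fix them with the same bookkeeping as in \zcref{lem:phaselocking}.

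\emph{Step 2: a subannulus with inconsistent transversals.} After a small perturbation, $c$ is embedded and separates the annulus. Let $A$ be the closed subannulus between $S^1\times\{0\}$ and $c$. Both boundary components of $A$ are closed transversals to $\G|_A$ in the same homotopy class but of opposite sign. Equivalently, viewed from inside $A$, leaves of $\G$ enter $A$ across both boundary circles, or leave $A$ across both. Assume they enter (otherwise reverse the leaf orientation, which swaps the roles of the two ends). Then every leaf entering $A$ never exits. Since $A$ is compact and $\G$ has no singularities, Poincar\'e--Bendixson for foliations of the annulus shows that the forward end of each such leaf accumulates on a closed leaf of $\G$ in $\interior A$. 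A closed leaf of an annulus foliation transverse to $\partial A$ is essential, hence isotopic to the core.

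\emph{Step 3: identifying the spiraling.} Take the closed leaf $\lambda$ on which the leaves entering through $S^1\times\{0\}$ accumulate, chosen to be the first one reached from that side, so the region between $S^1\times\{0\}$ and $\lambda$ contains no closed leaves. Leaves there enter through $S^1\times\{0\}$ and spiral forward onto $\lambda$. The direction of this spiraling, left or right, is determined by the sign with which $S^1\times\{0\}$ crosses $\G$. The compatibility of the positive transversal $S^1\times\{0\}$ with this half-collar should force the spiraling to be $\flat$, by the same fact used in \zcref{lem:phaselocking}: a $\sharp$ annulus admits no positive closed transversal. The statement should be applied to the half-collar between $S^1\times\{0\}$ and $\lambda$. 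This gives a closed leaf of $\G$ with $\flat$ spiraling, contradicting the hypothesis.

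The main obstacle is the orientation bookkeeping in Steps 1 and 3. One has to check carefully, using the convention that $\partial M$ is drawn from outside, that \emph{higher slope} really makes $c$ cross $\G$ in the sense opposite to the boundary circles. One must also check that the resulting spiraling onto $\lambda$ is $\flat$ rather than $\sharp$. My plan is to verify both signs in the linear model, where $\G$ is given by lines of slope $s$, and transport them by isotopy. The degenerate situation in which $c$ is itself tangent to a leaf of $\G$ cannot occur, because the slopes differ pointwise.
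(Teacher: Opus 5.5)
Your Steps 1--2 coincide with the paper's proof. The closed leaf $c$ of $\G'$ and $S^1\times\{0\}$ cobound an annulus $A$ that $\G$ enters through both boundary components; the paper asserts the sign in Step 1 just as you do.

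The gap is in Step 3. The spiraling type of the closed leaf $\lambda$ first reached from $S^1\times\{0\}$ is \emph{not} determined by the sign with which $S^1\times\{0\}$ crosses $\G$. It flips with the orientation of $\lambda$, and nothing constrains that orientation. For example, on $[0,\tfrac12]_u\times S^1_v$, take the foliations tangent to $(\tfrac12-u)\,\partial_u\pm\partial_v$. Both cross $\{u=0\}$ with the same sign, namely the sign of the $\partial_u$-component. Yet their leaves spiral in forward time onto $\{u=\tfrac12\}$ with opposite types, because reversing the closed leaf's orientation, while keeping the side and time-direction of approach, reverses the left/right drift. So a half-collar between a positive closed transversal and a closed leaf can carry either spiraling. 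It is not a $\sharp$ annulus, and the fact that a $\sharp$ annulus has no positive closed transversal does not transfer to it. Nor can the linear model settle the sign, since linear foliations have no spiraling closed leaves. Concretely, suppose $\lambda$ and the closed leaf $\lambda'$ of $A$ nearest to $c$ are oppositely oriented. Then the two outer spirals have the \emph{same} type, possibly $\sharp$, and the $\flat$ spiraling sits in the interior of $A$, where your argument never looks.

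The missing ingredient is to use both ends of $A$ together with the orientations of the closed leaves. Leaves entering through $c$ accumulate in forward time on $\lambda'$ from the side facing $c$. Leaves entering through $S^1\times\{0\}$ accumulate on $\lambda$ from the side facing $S^1\times\{0\}$.
\begin{itemize}
\item If $\lambda$ and $\lambda'$ are oriented alike (including the case $\lambda=\lambda'$), these two spirals drift in opposite directions relative to the common orientation, so one of them is $\flat$.
\item If they are oppositely oriented, note that the closed leaves of each orientation form disjoint closed sets. So some pair of oppositely oriented closed leaves $\mu,\nu$ cobounds an annulus containing no closed leaf. A Poincar\'e--Bendixson trapping argument shows that each interior leaf runs from one of $\mu,\nu$ to the other. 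This annulus is therefore a Reeb annulus, which has $\flat$ spiraling on one boundary leaf.
\end{itemize}
This dichotomy is what the paper compresses into ``$\G$ has both $\sharp$ type and $\flat$ type spiraling.''
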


\begin{figure}[ht]
    \centering
    \def\svgwidth{0.4\textwidth}
    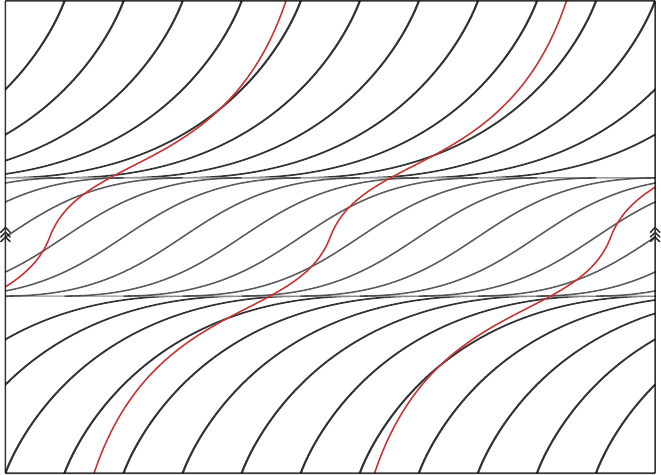
    \caption{$\G$ shown in black, and a few leaves of $\G'$ shown in red.}
    \label{fig:instability}
\end{figure}

\begin{proof}
    We consider the case where $\G$ has no closed leaf of $\flat$-type spiraling; the other case is the mirror image. Suppose for the sake of contradiction that $\G'$ admits a closed leaf $L$ with degree +1. Then $L$, together with $S^1\times \{0\}$, bounds an annulus into which $\G$ enters. Therefore, $\G$ has both $\sharp$ type and $\flat$ type spiraling.
\end{proof}

\begin{lem}[Resolution of $\aleph$] \label{lem:resolution_of_aleph}
    Let $\G$ be a $\Phi$-horizontal $C^0$-foliation of $\aleph$ type, or more generally a foliation with a Reeb annulus and no $+\infty^\sharp$-spiraling (resp. no~$-\infty^\flat$-spiraling) curves. Then if $\G'$ is another such foliation which has pointwise higher slope (resp.~pointwise lower slope), then $\G'$ is Reebless and has slope $+\infty$ (resp.~$-\infty$).
\end{lem}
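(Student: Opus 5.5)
The plan is to localize everything to the closed leaves of $\G$ of degeneracy slope, and then to read off the behaviour of $\G'$ from how it must cross those leaves. I treat the case ``no $+\infty^\sharp$-spiraling, $\G'$ pointwise higher''; the other case is the mirror image, obtained by reversing the orientation of $\T^2$, which swaps $\sharp\leftrightarrow\flat$ and $+\infty\leftrightarrow-\infty$.

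\emph{Step 1 (geometry of $\G$).} Apply \zcref{lem:straightening2} and \zcref{lem:straightening} to $\Phi$ and $\G'$, and transport the isotopies to $\G$ as in the proof of \zcref{lem:straightening2}. This lets me assume $\Phi$ is nearly vertical and $\G'$ is nearly linear, without destroying transversality to $\Phi$ or the relation ``$\G'$ has pointwise higher slope than $\G$''. Since $\G$ is $\Phi$-horizontal and $\Phi$ has slope $+\infty$, every closed leaf of $\G$ bounding a Reeb annulus $A$ must have degeneracy slope $\pm\infty$. Indeed, its interior leaves have both ends spiraling in a half-plane's worth of directions, which is compatible with transversality to a vertical flow only if the boundary circles are vertical. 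Choose such a Reeb annulus $A=S^1\times[0,1]$ with boundary leaves $c_0,c_1$. By hypothesis, the $+\infty$-oriented boundary leaf has no $\sharp$ spiraling onto it, so it is approached with $\flat$ spiraling. Because $\G'$ has pointwise higher slope, every leaf of $\G'$ crosses $c_0$ and $c_1$ transversally and in a fixed direction.

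\emph{Step 2 ($\G'$ is Reebless).} Suppose $\G'$ had a Reeb annulus. By Step 1 applied to $\G'$, its boundary circles $c'$ are closed curves of slope $\pm\infty$, and they are transverse to $\G$ in the direction forced by pointwise comparison. I would cut $\T^2$ along $c'$ and along a $c_i$ to get annuli transverse to $\G$ at the boundary. Then \zcref{lem:instability_sharpspiraling} applies: $\G$ has no $\sharp$-type (equivalently no $+\infty^\sharp$) closed leaf on the relevant side, so $\G'$ can have no closed leaf of degree $+1$ there. This should contradict the presence of $c'$, after checking that $c'$ has degree $+1$ with the orientation conventions of \zcref{fig:contactplanes}.

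\emph{Step 3 (slope is $+\infty$).} Lift to $\R^2$, where $c_0,c_1$ become vertical lines and $A$ a vertical strip. $\G'$ is Reebless by Step 2, so it has a coarse slope $s'$. Assume $s'$ is finite. Then $\G'$ has an invariant transverse measure (\zcref{prop:poincare}), and a leaf of $\G'$ enters the strip $\widetilde A$ through $\widetilde c_0$ and must leave through $\widetilde c_1$ in bounded height. Inside $A$, the leaves of $\G$ become asymptotically vertical near the $\flat$-spiraling boundary, and $\G'$ must stay above them in slope. Following a leaf of $\G$ that spirals onto $c_1$, the $\G'$-leaf is forced to turn upward without bound before it can exit. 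This is the same ``trapping'' mechanism as in the proof of \zcref{lem:phaselocking}, now applied in the strip. It shows that the $\G'$-leaves cannot cross $A$, contradicting finiteness of $s'$. Hence $s'=\pm\infty$, and $\Phi$-horizontality together with the crossing direction from Step 1 singles out $+\infty$.

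The main obstacle is Step 3, specifically making precise that the absence of $+\infty^\sharp$-spiraling (as opposed to $\flat$) is what forbids $\G'$ from crossing the Reeb annulus. Orientation bookkeeping is delicate there, and the non-smoothness of $\G$ prevents a direct angle argument. I would first try to reduce this to \zcref{lem:instability_sharpspiraling} by building an annulus bounded by $c_0$ and a translate of a $\G'$-leaf segment, closed up by a monotone arc as in \zcref{lem:monotone_existence}. If that fails, I would fall back on the intersection-pairing argument from \zcref{lem:phaselocking}.
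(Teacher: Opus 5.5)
Your proposal has a genuine gap, and it is in Step 2, which carries the real content of the lemma: ruling out closed leaves of $\G'$ of slope $-\infty$.

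\textbf{Step 2 does not work as written.}
\begin{itemize}
\item \emph{Wrong cutting curves.} Cutting along $c'$ and a leaf $c_i$ of $\G$ produces annuli whose boundary is tangent to $\G$ on one side and to $\G'$ on the other. So \zcref{lem:instability_sharpspiraling}, which needs both foliations transverse to both boundary circles, does not apply.
\item \emph{The missing idea: cut along orbits of $\Phi$.} The two boundary leaves of a Reeb annulus of $\G$ are oppositely oriented. Hence $\Phi$ crosses both inward (or both outward), and Poincar\'e--Bendixson gives a closed orbit of $\Phi$ inside, of slope $+\infty$. This also properly justifies your Step 1 claim that these leaves have slope $\pm\infty$. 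Cutting $\T^2$ along one such orbit per Reeb annulus gives annuli transverse to both $\G$ and $\G'$. They alternate between pieces whose closed $\G$-leaves have slope $-\infty$ and pieces whose closed leaves have slope $+\infty$. This is what the paper does.
\item \emph{Wrong hypothesis fed to the lemma.} For pointwise higher slope, \zcref{lem:instability_sharpspiraling} requires the absence of $\flat$ spiraling. It must be applied on the $-\infty$ pieces, where the needed input is that the $-\infty$ leaves carry only $\sharp$ spiraling; this is part of the $\aleph$ condition. ``No $\sharp$-type, equivalently no $+\infty^\sharp$'' is not the relevant condition: the $-\infty$ boundary leaf of every Reeb annulus has $\sharp$ spiraling from the Reeb side.
\item \emph{Your chosen hypothesis is too weak here.} ``No $+\infty^\sharp$'' alone cannot exclude a $-\infty$ closed leaf of $\G'$ inside a $-\infty^\flat$ annulus of $\G$. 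Such an annulus carries closed positive transversals of slope $-\infty$ (\zcref{lem:transversholo}), and one of these could be a leaf of $\G'$.
\end{itemize}

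\textbf{Step 3 is not the obstacle.} Pointwise higher slope means the leaves of $\G'$ are positive transversals of $\G$, exactly like the orbits of $\Phi$. So $\G'$ enters (or exits) each Reeb annulus of $\G$ through both boundary leaves. In particular, no leaf of $\G'$ can enter through $\widetilde c_0$ and leave through $\widetilde c_1$. A Reebless foliation of finite slope has leaves crossing every lift of a closed curve of slope $\pm\infty$, so every finite slope is excluded immediately, with no curvature or trapping estimate. What remains is excluding slope $-\infty$ and Reeb annuli of $\G'$; a Reeb annulus would have a $-\infty$ closed boundary leaf. Both reduce to the missing Step 2. The paper obtains the $+\infty$ closed leaves differently, by gluing up the $+\infty$ pieces and applying \zcref{lem:phaselocking}. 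Your crossing argument is a fine substitute once Step 2 is repaired.

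\textbf{Step 1 is circular.} \zcref{lem:straightening} requires Reebless input, and Reeblessness of $\G'$ is what you are proving. The step is also unnecessary.
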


\begin{figure}[ht]
    \centering
    \def\svgwidth{0.7\textwidth}
    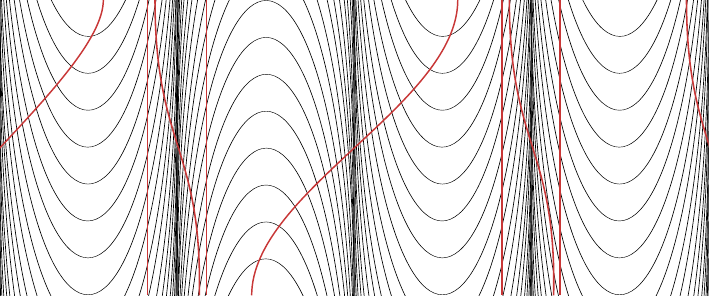
    \caption{$\G$ shown in black, and a few leaves of $\G'$ shown in red.}
    \label{fig:aleph_resolution}
\end{figure}

\begin{proof}
    We treat only the case of pointwise higher slope since the other case is similar. $\Phi$ must have a closed orbit in each Reeb annulus of $\G$; make a choice of one such orbit for each Reeb annulus and cut $\T^2$ along these closed curves. The resulting annuli $A_1,\dots,A_{2k}$ alternate between two types: $A_{2i+1}$ has only $\sharp$-type spiraling and has closed leaves of slope $-\infty$, while $A_{2i}$ has closed leaves of slope $+\infty$. Inside $A_{2i+1}$, $\G'$ does not have a closed leaf of slope $-\infty$ by \zcref{lem:instability_sharpspiraling}. Inside $A_{2i}$, $\G'$ has only closed leaves of slope $+\infty$; one way to see this is to glue up $A_{2i}$ to a torus and apply \zcref{lem:phaselocking} to the foliations induced by $\G$ and $\G'$ on the resulting torus. Therefore, $\G'$ is Reebless and has a closed curve of slope $+\infty$.
\end{proof}

\paragraph{Stability results.}

Unlike our instability results, the following stability results in the tangential topology will always require a preparatory $C^0$-small isotopy. This is due to the lack of unique integrability for the tangent line field of a $C^0$-foliation. The key role of this isotopy is to guarantee the existence of suitable smooth transversals.

\begin{lem}\label{lem:monotone_open}
    For any $s\in \Q$, the property of admitting a monotone curve of slope $s\in \Q$ is open in the $C^0_\mathrm{Fol}$ topology on foliations of $\T^2$. The property of admitting a smooth transversal of slope $s\in \Q$ is open in the tangential topology on foliations of $\T^2$
\end{lem}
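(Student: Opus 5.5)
The statement has two parts, and I will treat them separately. Throughout, fix $s = p/q$ in lowest terms.

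\emph{Transversals, tangential topology.} Let $\gamma$ be a smooth closed curve of slope $s$ that is positively transverse to $\G$. Since $\gamma$ is compact and $T\G$ is continuous, the angle between $\dot\gamma$ and $T\G$ is bounded below by some $\theta>0$ along $\gamma$. Any foliation $\G'$ whose tangent line field is $\theta/2$-close to $T\G$ is therefore still positively transverse to $\gamma$. So the same curve $\gamma$ works for every foliation in a tangential neighborhood of $\G$, and the property is open. Only continuity of the line field is used, so the lack of unique integrability causes no trouble.

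\emph{Monotone curves, $C^0_\mathrm{Fol}$ topology.} Suppose $\gamma$ is positively monotone with respect to $\G$, of slope $s$. The plan is to reduce to the transversal case without changing $\G$ itself.

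First, by \zcref{lem:transversalizing}, there is an arbitrarily small $C^0$-isotopy $\G_1 = h(\G)$ carrying $\gamma$ to a $C^1$ curve $\gamma_1$ positively transverse to $\G_1$. Pulling back, $h^{-1}(\gamma_1)$ is a closed curve of slope $s$, topologically transverse to $\G$, and it is a genuine transversal in a topological chart sense.

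Next, I claim this curve can be rewritten as a monotone curve for $\G$. Cover it by finitely many foliation charts of $\G$. Within each chart, replace each piece of the curve by a staircase: an arc running along a plaque, followed by a short arc genuinely $C^1$-transverse to $\G$. Such short transverse arcs exist at every point, because $T\G$ is continuous. This gives a monotone curve $\gamma'$ for $\G$ of slope $s$ with a definite "upward" displacement across each chart. An alternative route to $\gamma'$ is to lift to the leaf space of $\widetilde\G$, as in \zcref{lem:monotone_existence}.

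Now let $\G'$ be $\varepsilon$-$C^0_\mathrm{Fol}$-close to $\G$. By the definition of the topology, plaques of $\G$ of length at most $1/\varepsilon$ are $\varepsilon$-$C^1$-approximated by leaf arcs of $\G'$. Replace each tangential subarc of $\gamma'$ by the nearby $\G'$-leaf arc starting at its initial point. Its endpoint misses the original endpoint by $O(\varepsilon)$. The short transverse subarcs of $\gamma'$ make a definite angle with $T\G$, hence also with $T\G'$ once $\varepsilon$ is small (tangential closeness is implied by $C^0$ closeness). I therefore absorb each $O(\varepsilon)$ endpoint error into the adjacent transverse subarc, by slightly lengthening, shortening, or tilting it while keeping it positively transverse. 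This requires choosing the transverse arcs before $\varepsilon$, with length much larger than the errors.

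Finally, the curve closes up with the same homology class, so it still has slope $s$. Embeddedness is preserved for $\varepsilon$ small, since the perturbation is $C^0$-small, or else we resolve crossings as in \zcref{lem:monotone_existence}.

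\emph{Main obstacle.} The delicate point is the closing-up bookkeeping: the endpoint errors accumulate along the finitely many tangential segments. I would handle this by processing the segments in cyclic order and letting the final transverse arc absorb all the accumulated error. This works provided at least one transverse arc has length bounded below independently of $\varepsilon$, which holds because $\gamma'$ is not contained in a leaf.
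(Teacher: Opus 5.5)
Your proof is correct and is essentially the paper's argument. The transversal case is identical. For monotone curves, the paper likewise keeps the transverse subarcs, which stay transverse under the tangential closeness implied by $C^0_\mathrm{Fol}$-closeness, and follows nearby $\G'$-leaf arcs between them. The paper absorbs the endpoint mismatch by extending each transverse subarc slightly so the $\G'$-leaf arc lands in its interior; you tilt the arc instead.

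The detour through \zcref{lem:transversalizing} and the staircase is unnecessary: the original $\gamma$ already has fixed $C^1$ transverse subarcs of positive length at a definite angle to $T\G$. Your ``main obstacle'' also never arises, because each $\G'$-leaf arc starts at the unmoved endpoint of a transverse arc, so errors do not accumulate.
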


\begin{proof}
    Suppose $\G$ is a foliation with a monotone curve $\gamma$. Decompose $\gamma$ into transverse subarcs $\gamma^\perp_1,\dots,\gamma^\perp_k$ and subarcs $\gamma^\parallel_1,\dots,\gamma^\parallel_k$ contained in leaves of $\G$. Extend each $\gamma_i^\perp$ slightly $\gamma_i^\parallel$ hits the interior of both $\gamma_i^\perp$ and $\gamma_{i+1}^\perp$. See \zcref{fig:perturbing_monotone}. If $\G'$ is sufficiently close to $\G$ in the $C^0_\mathrm{Fol}$ topology, then $\gamma_i^\perp$ remains a transversal. Furthermore, $\G'$ also has a leaf connecting a point near the top of $\gamma_i^\perp$ to a point near the bottom of $\gamma_{i+1}^\perp$. Concatenating these curves together gives a new monotone curve for $\G'$ in the same homotopy class as $\gamma$.

    The corresponding statement in the tangential topology is easier: a transversal $\gamma$ to $\G$ remains transverse to all tangentially close foliations $\G'$.
\end{proof}

\begin{figure}[ht]
    \centering
    \def\svgwidth{0.4\textwidth}
    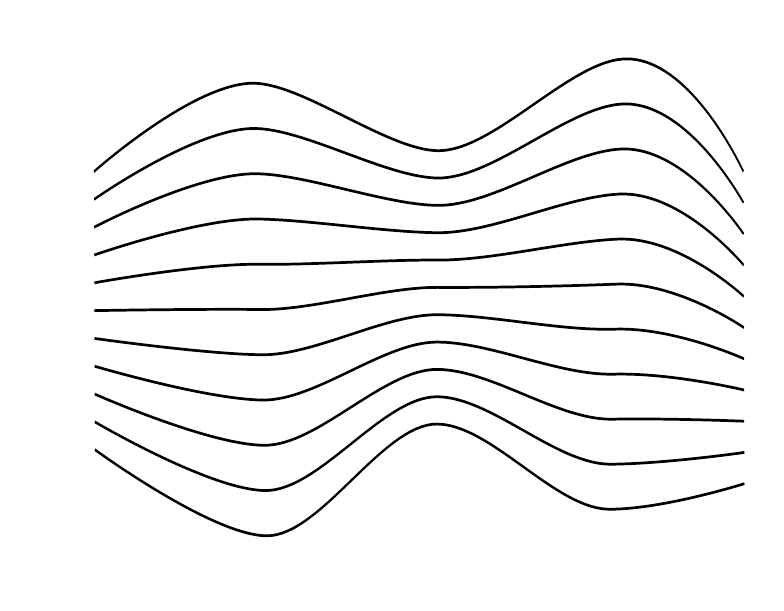
    \caption{Tangent and transverse subarcs of $\gamma$.}
    \label{fig:perturbing_monotone}
\end{figure}

\begin{lem} \label{lem:transversholo}
    Let $\gamma$ be a closed leaf of a foliation $\G$ on $\T^2$ with $\flat$ (resp. $\sharp$) spiraling on at least one side. Then $\G$ admits a positively (resp. negatively) monotone curve of slope $s$. After an arbitrarily $C^0$-small isotopy, $\G$ admits a positively (resp. negatively) transverse curve of slope $s$.
\end{lem}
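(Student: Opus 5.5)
Here $s$ denotes the slope of the closed leaf $\gamma$; it is rational, since $\gamma$ is a closed curve. The plan has two steps. First, build a positively monotone curve of slope $s$ by hand near $\gamma$. Then invoke \zcref{lem:transversalizing} to make it transverse after an arbitrarily $C^0$-small isotopy. I treat the $\flat$ case; the $\sharp$ case is the mirror image, reversing the transverse orientation throughout.

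\emph{Constructing the monotone curve.} Take a short arc $\tau$ that is positively transverse to $\G$, crosses $\gamma$ at a point $p$, and extends into the side of $\gamma$ on which the spiraling is $\flat$. Consider the holonomy (first-return) map $h$ of $\G$ along $\gamma$, defined on the one-sided germ of $\tau$ on that side. The $\flat$ spiraling means that nearby leaves on this side accumulate onto $\gamma$ toward the left. In terms of $h$, I expect this to say that, with the orientation conventions fixed as in \zcref{sec:comparing_slopes}, following a leaf once around parallel to $\gamma$ moves it monotonically \emph{toward} $\gamma$ in the positive transverse direction. Equivalently, $h(x)$ lies strictly on the positive side of $x$ along $\tau$ for every $x\neq p$ close to $p$ on that side.

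Pick such an $x$. Follow the leaf through $x$ once around, parallel to $\gamma$, until it first returns to $\tau$ at $h(x)$. Then close up along the subarc of $\tau$ from $h(x)$ back to $x$. The crucial point is that this closing subarc is traversed positively; this is exactly where the $\flat$ (rather than $\sharp$) condition is used. The resulting closed curve has one leafwise subarc and one positively transverse subarc, so it is positively monotone in the sense of \zcref{def:monotonecurve}. It is embedded because the leaf segment does not meet $\tau$ in its interior, by the choice of first return. It is isotopic to $\gamma$, so it has slope $s$. Finally, it is not contained in a leaf, since $h(x)\neq x$.

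\emph{Main obstacle.} The hard part is verifying the sign: with the paper's conventions for $\sharp/\flat$ and for positive transversality, $\flat$ spiraling must give closing arcs that are positive, not negative. I would check this on the model $\flat$ annulus of \zcref{fig:flatannulus}. There I would draw the holonomy explicitly and confirm that the return point lies on the positive side of the starting point. This matches the usage in \zcref{lem:phaselocking}, where a $\flat$ annulus is the one admitting a positive closed transversal. A continuity argument then transfers the conclusion from the model to a collar of $\gamma$ in $\G$. With the monotone curve in hand, \zcref{lem:transversalizing} produces an arbitrarily $C^0$-small isotopy of $\G$, together with an isotopy of the curve, after which the curve is $C^1$ and positively transverse. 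Its slope is still $s$, because the isotopy of the curve is small.
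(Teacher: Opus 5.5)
Your construction is the paper's. You take one circuit of a leaf near $\gamma$ on the $\flat$ side and close it up with a short positively transverse arc. This gives a positively monotone curve of slope $s$, to which you apply \zcref{lem:transversalizing}. The paper first smooths $\G$ near the transverse arc to get a $C^1$ curve; your piecewise-$C^1$ curve, closed along $\tau$, already fits \zcref{def:monotonecurve}, so that step is optional.

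\textbf{The sign error.} The proposal gets the sign wrong at exactly the point you call the crux. You assert that $h(x)$ lies strictly on the positive side of $x$ along $\tau$. You then assert that the subarc of $\tau$ from $h(x)$ back to $x$ is traversed positively. These two claims are incompatible. If $h(x)$ comes after $x$ in the orientation of $\tau$, returning to $x$ runs against $\tau$, and your curve is \emph{negatively} monotone, which is the $\sharp$ conclusion. The model check you propose would show the return point on the negative side, not the positive side.

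\textbf{The correct fact.} $\flat$ spiraling means that the forward first-return map strictly \emph{decreases} the transverse coordinate: $h(x)<x$ for $x\neq p$ near $p$ on the spiraling side, whichever side that is.
\begin{itemize}
\item If the $\flat$ side is the positive side of $\gamma$, the leaf moves toward $\gamma$. This is the case the paper writes out: the leaf through $(0,\delta')$ returns at $(0,\eta)$ with $0<\eta<\delta'$.
\item If the $\flat$ side is the negative side, the leaf moves \emph{away} from $\gamma$.
\end{itemize}
So your phrase ``toward $\gamma$ in the positive transverse direction'' is wrong in both cases. In both, though, the closing arc from $h(x)$ up to $x$ follows the orientation of $\tau$, so the curve is positively monotone. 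This is consistent with $\flat$ annuli admitting positive closed transversals (\zcref{lem:phaselocking}) and with $\flat$ modifications having $h\le\mathrm{id}$.

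\textbf{No continuity argument is needed.} The successive crossings of a spiraling leaf with $\tau$ converge monotonically to $p$. A closed leaf through a point between two of them would separate the collar and block the spiral. Hence $h-\mathrm{id}$ has no zeros on that one-sided interval, and its sign can be read off directly from the definition.

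With the sign corrected, the rest of your argument goes through as in the paper: embeddedness by first return, slope $s$ by isotopy within a collar, nontriviality from $h(x)\neq x$, and the final application of \zcref{lem:transversalizing}.
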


\begin{proof}
    Assume for instance that $\gamma$ has $\flat$ spiraling on its positive side. We consider $C^1$ coordinates $(x,y) \in S^1 \times [0, \delta)$ near $\gamma \cong S^1 \times \{0\}$, for some $\delta > 0$, in which $\G$ is graphical. In particular, it is transverse to the arc $c \coloneqq \{0\} \times [0, \delta)$. After an arbitrarily $C^0$-small isotopy of $\G$ near $c$, we may assume that $\G$ is smooth in a neighborhood $[-\varepsilon, \varepsilon] \times [0, \delta)$ of $c$ for some $\varepsilon > 0$, using~\cite[Section 3]{B16}. The leaf of $\G$ passing through a point $(0, \delta')$ for $0 < \delta' <\delta$ sufficiently small stays in $S^1 \times [0, \delta)$ and comes back to $c$ at a point $(0, \eta)$ for $0 < \eta < \delta'$ by our holonomy assumption. Therefore, it is easy to construct a $C^1$ closed curve $\gamma'$ which is positively monotone to $\G$ near $\gamma$: follow such a leaf from $\{\varepsilon\} \times [0, \delta)$ to $\{-\varepsilon\} \times [0, \delta)$ and close it up by adding a smooth arc positively transverse to $\G$ on $[-\varepsilon, \varepsilon] \times [0, \delta)$. We may arrange that the resulting $C^1$ curve is arbitrarily $C^0$-close to $\gamma$. We then apply \zcref{lem:transversalizing} to slightly modify $\G$ and $\gamma'$ to find the desired transversal; see \zcref{fig:transversalizing}.
\end{proof}

\begin{figure}[ht]
    \centering
    \begin{subfigure}{0.45\linewidth}
    \centering
    \def\svgwidth{\textwidth}
    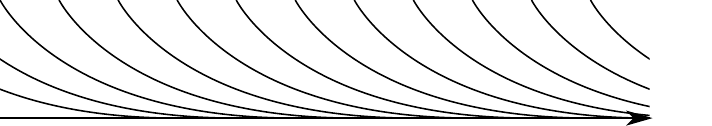
    \caption{}
    \label{fig:mostlytransversecurve}
    \end{subfigure}
    \hfill
    \begin{subfigure}{0.45\linewidth}
    \centering
    \def\svgwidth{\textwidth}
    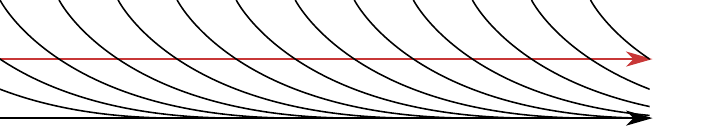
    \caption{}
    \end{subfigure}
    \caption{$\gamma'$ before and after the application of \zcref{lem:transversalizing}.}
    \label{fig:transversalizing}
\end{figure}

\begin{cor}\label{cor:annulus_transversals}
    Up to an arbitrarily $C^0$-small isotopy, every $\sharp$ or $\flat$ annulus admits a closed transversal.
\end{cor}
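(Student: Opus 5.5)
The plan is to deduce this directly from \zcref{lem:transversholo}. Let $A$ be a $\sharp$ or $\flat$ annulus in a foliation $\G$ of $\T^2$. Its two boundary components $\gamma_0 = S^1\times\{0\}$ and $\gamma_1 = S^1\times\{1\}$ are closed leaves, and the interior leaves spiral onto both of them in the same direction. In the $\flat$ case, each boundary leaf therefore has $\flat$ spiraling on the side facing into $A$. So $\gamma_0$ satisfies the hypothesis of \zcref{lem:transversholo}, and in the $\sharp$ case the mirror version of that lemma applies.

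The key point is that the construction in the proof of \zcref{lem:transversholo} is local to the side carrying the spiraling. It uses $C^1$ coordinates $S^1\times[0,\delta)$ on the spiraling side of $\gamma_0$. It follows a leaf that starts at a point $(0,\delta')$ on the arc $c=\{0\}\times[0,\delta)$ and returns to $(0,\eta)$ with $0<\eta<\delta'$. It then closes this up with a short transverse arc, after a $C^0$-small smoothing of $\G$ near $c$ in the sense of \cite[Section 3]{B16}. Since $A$ is the spiraling side, we may take $\delta$ small enough that $S^1\times(0,\delta)\subset \interior A$. The resulting monotone curve $\gamma'$ then lies in $\interior A$ and is homotopic to the core of $A$.

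Next, apply \zcref{lem:transversalizing} to $\gamma'$. This yields an arbitrarily $C^0$-small isotopy of $\G$ and of $\gamma'$ after which $\gamma'$ becomes a $C^1$ curve transverse to the foliation. This curve is positively transverse in the $\flat$ case and negatively transverse in the $\sharp$ case.

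The only step needing care is keeping everything inside $A$. The transversalizing modifications are supported near $\gamma'$ and the arc $c$, so we can choose them to stay in $\interior A$ by shrinking supports. The isotopy then preserves $A$ as a $\sharp$ or $\flat$ annulus, and the transversal is a closed transversal contained in the annulus, as claimed. Reversing the orientation of the resulting curve if needed, this gives the corollary. I expect no real obstacle beyond this bookkeeping of supports.
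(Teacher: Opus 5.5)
Your proposal is correct and takes the same approach as the paper, which states this corollary as an immediate consequence of \zcref{lem:transversholo} applied to a boundary leaf of the annulus; that lemma already covers both the $\flat$ and $\sharp$ cases, so no separate mirror argument is needed. Your bookkeeping is also sound: the spiraling side faces into $A$, and the monotone curve and smoothing supports can be kept in $\interior A$. This is what the paper implicitly uses later, when it takes the closed transversal to lie inside $A$.
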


\begin{cor}[Stability of $\sharp$ and $\flat$ annuli]
Let $\G$ be a foliation with a $\sharp$ (resp $\flat$) annulus. Then there is a $C^0$-small isotopy of $\G$ to a new foliation $\G'$ whose coarse slope cannot be decreased (resp.~increased) by tangentially $C^0$-small perturbations.
\end{cor}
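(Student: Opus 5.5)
The plan is to treat the $\sharp$ case; the $\flat$ case is its mirror image. First I apply \zcref{cor:annulus_transversals}, or more precisely \zcref{lem:transversholo}, to a boundary leaf $\gamma$ of the $\sharp$ annulus. This leaf has $\sharp$ spiraling on at least one side. After an arbitrarily $C^0$-small isotopy, this produces $\G'$ together with a smooth closed curve $\gamma'$ that is negatively transverse to $\G'$ and has the rational slope $s = \slope(\gamma)$. Here $s$ is also the coarse slope of $\G$, because a closed leaf has the same slope as every leaf of a Reebless foliation. The isotopy is small and preserves $\sharp$/$\flat$ annuli up to isotopy, so $\slope(\G') = s$.

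Next I show that $\gamma'$ is robust. By the second part of \zcref{lem:monotone_open}, admitting a smooth transversal of slope $s$ is an open condition in the tangential topology. Concretely, there is $\delta>0$ such that every foliation $\G''$ whose tangent line field is $\delta$-close to $T\G'$ is still negatively transverse to the fixed curve $\gamma'$. This is just compactness of $\gamma'$ together with the fact that the angle between $T\gamma'$ and $T\G'$ is bounded below.

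It remains to show that a foliation $\G''$ of $\T^2$ admitting a closed curve of slope $s$ that is negatively transverse to it has $\slope(\G'') \ge s$. I argue by contradiction, assuming $\slope(\G'') < s$. I lift to the universal cover, where the leaf space of $\widetilde{\G''}$ is $\R$ (for small perturbations $\G''$ stays Reebless, since Reeb annuli would require a tangency with the nearby direction of $\Phi$; alternatively I restrict to Reebless $\G''$). Going once around $\gamma'$ moves monotonically in the negative direction of the leaf space, so the deck translation by $(q,p)$ acts on the leaf space by strictly decreasing every point. On the other hand, if $\slope(\G'') < p/q$, then a leaf's asymptotic direction lies below the line of slope $p/q$, and the $(q,p)$ translate of a leaf lies strictly on its positive side. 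These two conclusions contradict each other. Equivalently, I can compare with a linear foliation of slope $s$ via \zcref{prop:poincare}, or compute the intersection pairing of $\gamma'$ with the transverse invariant measure of $\G''$: all intersections have one sign, which forces $\slope(\G'') \ge s$.

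The main obstacle is getting the sign conventions right, that is, checking that negative transversality indeed obstructs a decrease rather than an increase under the paper's orientation conventions. I would pin this down by checking the model $\sharp$ annulus of \zcref{fig:sharpannulus}, which matches \zcref{lem:phaselocking}: there, $\sharp$ annuli prevent $\G_1$ from having equal slope from above.
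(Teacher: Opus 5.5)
Your proposal is correct and follows the paper's proof. You apply \zcref{lem:transversholo} (hence \zcref{lem:transversalizing}) to a boundary leaf of the $\sharp$ annulus to get, after a $C^0$-small isotopy, a closed transversal of slope $\slope(\G)$; transversality to this fixed curve survives tangentially small perturbations, which bounds the slope below, and your sign chain ($\sharp$ spiraling gives a negatively transverse curve, hence slope $\geq \slope(\G)$) is consistent with \zcref{lem:phaselocking}. The only slip is the parenthetical claim that perturbations stay Reebless because Reeb annuli would force tangencies with $\Phi$: that is false, since $\aleph$ type foliations are $\Phi$-horizontal with Reeb annuli. Your fallback of restricting to Reebless $\G''$, where coarse slope is defined, is enough.
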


\begin{proof}
    Call the $\sharp$ annulus $A$. Apply \zcref{lem:transversalizing} to a component of $\partial A$ to obtain a $C^0$-close foliation $\G'$ with a closed transversal $\gamma \subset A$. Any tangentially $C^0$ small perturbation of $\G'$ remains positively transverse to $\gamma$, and therefore has slope at least $\slope(\G)$.
\end{proof}

\begin{cor}[Stability of $\dagger$ foliations]
Let $\G$ be a $\dagger$-type foliation of $\T^2$. Then there is a $C^0$-small isotopy of $\G$ to a new foliation $\G'$ which whose coarse slope is stable to tangentially $C^0$-small perturbations.
\end{cor}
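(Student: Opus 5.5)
The plan is to combine the two preceding stability mechanisms, one for each sign of perturbation. A $\dagger$-type foliation $\G$ of slope $s$ has, by \zcref{defn:type}, both a $\sharp$ annulus $A_\sharp$ and a $\flat$ annulus $A_\flat$, and $s \in \Q$ since irrational foliations are of type $\natural$. I would therefore produce closed transversals of slope $s$ of both signs, and arrange both by a single $C^0$-small isotopy.

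First, I apply \zcref{lem:transversholo} to a boundary component of $A_\flat$. Its closed leaves have $\flat$ spiraling on the side facing into $A_\flat$. This gives an arbitrarily $C^0$-small isotopy after which $\G$ has a closed curve $\gamma_+ \subset A_\flat$ of slope $s$ positively transverse to it. The isotopy from \zcref{lem:transversalizing} is supported near the monotone curve built in the proof of \zcref{lem:transversholo}, which can be taken inside a small collar of $\partial A_\flat$. Second, I apply the same lemma to a boundary component of $A_\sharp$, obtaining a curve $\gamma_-$ of slope $s$ negatively transverse to the modified foliation. Since $A_\sharp$ and $A_\flat$ have disjoint interiors, I choose the collars used in the two steps to be disjoint. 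Then the second isotopy does not disturb the foliation near $\gamma_+$, so $\gamma_+$ stays positively transverse. Call the result $\G'$.

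Now let $\G''$ be tangentially $C^0$-close to $\G'$. Transversality to a fixed smooth closed curve is an open condition in the tangential topology, as in the second half of \zcref{lem:monotone_open}. Hence $\gamma_+$ remains positively transverse to $\G''$ and $\gamma_-$ remains negatively transverse to it. A positive closed transversal of slope $s$ forces $\slope(\G'') \ge s$, exactly as in the proof of the stability corollary for $\sharp$/$\flat$ annuli. The negative one forces $\slope(\G'') \le s$ by the mirror argument. So $\slope(\G'') = s$.

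One point needs care: $\G''$ might have Reeb annuli, so that its slope is undefined. I would rule this out using the two transversals. A Reeb annulus would contain closed leaves of some slope $\sigma$, where $\sigma$ is the slope of the boundary of that annulus. A closed leaf must be disjoint from the closed transversals $\gamma_\pm$, which forces $\sigma = s$. A closed leaf of slope $s$ lying in the annulus between $\gamma_+$ and $\gamma_-$ must then be compatible with both orientations of crossing, and I expect an argument analogous to \zcref{lem:instability_sharpspiraling} to give a contradiction. If that step is delicate, I would simply interpret ``coarse slope is stable'' as holding among Reebless perturbations.

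The main obstacle is the bookkeeping that makes the two isotopies independent and keeps the total isotopy small. The disjoint supports described above handle this.
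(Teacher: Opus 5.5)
This is the intended argument. The paper states the corollary without proof, as the conjunction of the $\sharp$ and $\flat$ cases of the preceding stability corollary. Your construction is exactly that: closed transversals $\gamma_+\subset A_\flat$ and $\gamma_-\subset A_\sharp$ of slope $s$, obtained by two small isotopies with disjoint supports, followed by openness of transversality in the tangential topology. (Disjoint supports are not even needed; it suffices to take the second isotopy small enough that $\gamma_+$ stays transverse.)

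Two minor corrections.

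First, your signs are reversed. Under the paper's conventions (compare \zcref{lem:monotone_existence}), a positive closed transversal of slope $s$ bounds the slope from above, and a negative one bounds it from below. The same slip appears in the paper's proof of the preceding corollary. It is harmless here, since you use both bounds.

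Second, $\gamma_\pm$ alone cannot rule out Reeb annuli. Take a torus made of two Reeb annuli whose leaves cross the two cores in opposite directions; with the cores oriented in parallel, it carries closed transversals of slope $s$ of both signs. So either keep the Reebless reading, as the paper implicitly does, or additionally produce, via \zcref{lem:monotone_existence} and \zcref{lem:transversalizing}, a smooth closed transversal $c$ of a different slope. The boundary leaves of a Reeb annulus are antiparallel, so they must be disjoint from every closed transversal. They cannot be disjoint from both $c$ and $\gamma_+$, which excludes Reeb annuli in any tangentially close perturbation.
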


Let $\G$ be a $C^0$-foliation on $\T^2$ which is horizontal (i.e., graphical) with respect to coordinates $\T^2 \cong S^1_x \times S^1_y$ and which has positive (resp.~negative) finite slope as defined above. We say that that $\G$ is \textbf{stably positive (resp.~negative)} (with respect to these coordinates) if any $C^0$-foliation $\G'$ with $T\G'$ sufficiently close to $T\G$ also has positive (resp.~negative) slope. While slope is continuous in the $C^0$ topology, it is not continuous in the tangential topology. Because of the lack of unique integrability of $T\G$, a foliation with positive slope might not be stably positive, as $T\G$ itself might be tangent to a foliation with \emph{negative} slope! However, we have:

\begin{lem}[Stability of positive slope] \label{lem:stablypositive}
    Let $\G$ be a foliation on $\T^2$ which is horizontal with respect to coordinates $\T^2 \cong S^1_x \times S^1_y$ and has $\slope(\G)>0$. Then $\G$ is isotopic, via an arbitrarily small $C^0$-isotopy, to a stably positive $C^0$-foliation.
\end{lem}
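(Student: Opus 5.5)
The plan is to produce, after a small $C^0$-isotopy, a \emph{smooth closed transversal} to $\G$ of some positive rational slope $p/q$ with $0<p/q<\slope(\G)$, oriented so that $\G$ crosses it in the sign that forces slope at least $p/q$. Once such a transversal exists, we are done by the tangential openness statement in \zcref{lem:monotone_open}. Any $\G'$ with $T\G'$ close to $T\G$ remains transverse to it with the same sign. The comparison argument of \zcref{lem:phaselocking} (slopes are monotone under a fixed-sign crossing with a closed curve) then gives $\slope(\G')\ge p/q>0$.

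First, I would pick rational $p/q$ with $0<p/q<\slope(\G)$. This is possible since $\slope(\G)>0$; if $\slope(\G)=+\infty$, any positive rational works. Next, I would apply \zcref{lem:monotone_existence}, in its mirrored form for slopes below $\slope(\G)$. It gives an embedded monotone curve $\gamma$ of slope $p/q$ for $\G$, negatively monotone with the sign convention in which the foliation crosses it from below, so that leaves of $\G$ cross $\gamma$ consistently. That lemma is stated for suspension foliations; here $\G$ is horizontal in the $S^1_x\times S^1_y$ coordinates, hence transverse to the vertical circles, which is exactly the suspension hypothesis. Then I would apply \zcref{lem:transversalizing}: an arbitrarily small $C^0$-isotopy takes $\G$ to $\G_1$ and $\gamma$ to a $C^1$ curve $\gamma_1$ strictly transverse to $\G_1$ with the same sign. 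Smoothing $\gamma_1$ slightly keeps transversality.

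For stable positivity, let $\G'$ have $T\G'$ close enough to $T\G_1$. Then $\gamma_1$ is transverse to $\G'$ with the same coorientation. If $\slope(\G')<p/q$, a leaf of $\G'$ lifted to $\R^2$ would have to cross a lift of $\gamma_1$ in the wrong direction. Equivalently, the algebraic intersection of $\G'$'s asymptotic direction with the class $(q,p)$ would have the wrong sign, contradicting the fact that all crossings have one sign. Hence $\slope(\G')\ge p/q>0$. Closeness of $T\G'$ to $T\G_1$ also keeps $\G'$ horizontal, so its slope is finite or $+\infty$, and this case is harmless.

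The main obstacle is the sign bookkeeping in this last step. With the paper's conventions, the transversal must sit \emph{below} $\G$ in slope rather than above, so \zcref{lem:monotone_existence} has to be invoked in its mirror form. I would check this on a linear foliation of slope $s>0$ and a line of slope $p/q<s$, which are transverse with a fixed sign. A minor subtlety is that slope $+\infty$, where $\G$ has vertical closed leaves, would break horizontality. Strict horizontality excludes this, so the transversal argument applies uniformly.
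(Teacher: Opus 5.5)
Your proof is correct, but it takes a different route from the paper's.

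\textbf{The paper's route.} It smooths $\G$ near the vertical circle $\{x=0\}$ and builds an auxiliary horizontal foliation $\G'$. This $\G'$ agrees with $\G$ outside a thin vertical band, and its holonomy across the band is shifted by a small rotation of angle $-\varepsilon$. Then $\G$ is mostly-transverse to $\G'$, and continuity of the translation number gives $\slope(\G')>0$. The paper invokes \zcref{lem:mostlytrans} to make the pair genuinely transverse. It concludes with \zcref{lem:phaselocking}: any foliation tangentially close to $\widetilde\G$ stays positively transverse to $\widetilde\G'$, so its slope is at least $\slope(\widetilde\G')>0$.

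\textbf{Your route.} You replace the comparison foliation by a single smooth closed transversal of rational slope $p/q\in(0,\slope(\G))$. You obtain it from the mirrored \zcref{lem:monotone_existence} followed by \zcref{lem:transversalizing}. You then replace \zcref{lem:phaselocking} by the crossing-sign argument in the universal cover, which you carry out directly, so your citation of \zcref{lem:phaselocking} there is unnecessary. This is the same mechanism the paper uses in its corollary on stability of $\sharp$ and $\flat$ annuli.

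\textbf{Comparison.} Your route is lighter. It avoids the box-by-box construction in \zcref{lem:mostlytrans} and the continuity of the translation number. It also gives an explicit uniform lower bound $p/q$, which can be taken as close to $\slope(\G)$ as you like.

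Its only extra input is the mirrored form of \zcref{lem:monotone_existence}, which follows from the same leaf-space argument. Translation by $(q,p)$ acts without fixed points on the leaf space $L\cong\R$, since a fixed point would be a closed leaf of slope $p/q$. So it moves every point in the same direction, and that direction flips when $p/q$ crosses $\slope(\G)$.

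The paper's certificate is a transverse foliation rather than a curve. That fits the mostly-transverse and domination framework used elsewhere in the article, but for this lemma either certificate suffices.
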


\begin{proof}
    Our strategy is to modify $\G$ in order to make it positively transverse to another graphical foliation $\G'$ which also satisfies $\slope(\G') > 0$. That way, any foliation sufficiently tangentially close to $\G$ will also be positively transverse to $\G'$, and will have slope at least $\slope(\G') > 0$.
    
    First, smooth $\G$ in a $\delta$-neighborhood of the transversal $\{x=0\}$ by an arbitrarily small $C^0$-isotopy, for some $\delta > 0$, using~\cite[Section 3]{B16}. Let 
    $$V \coloneqq (-\delta, \delta) \times S^1, \qquad W \coloneqq (-\delta/2, \delta/2)\times S^1.$$ 
    Let $\varepsilon > 0$ small enough, and $\G'$ be another horizontal foliation such that 
    \begin{itemize}
        \item $\G'$ agrees with $\G$ outside of $W$,
        \item The holonomy of $\G'$ from $\{-\delta\}\times S^1$ to $\{\delta\}\times S^1$ is (a rotation of angle) $-\varepsilon$, and
        \item $\G$ is positively transverse to $\G'$ in $W$.
    \end{itemize}
    For $\varepsilon$ sufficiently small, we have $\slope(\G') > 0$. Moreover, $\G$ is positively mostly-transverse to $\G'$, with $V$ and $W$ as in \zcref{def:mostlytrans}. By \zcref{lem:mostlytrans}, we may apply an arbitrarily small $C^0$-isotopy to $\G$ and $\G'$ to obtain foliations $\widetilde \G$ and $\widetilde \G'$ such that $\widetilde \G$ is positively transverse to $\widetilde \G'$, and $\slope(\widetilde \G')> 0$. Hence, $\widetilde \G$ has stably positive slope.
\end{proof}

        \subsection{Limits of foliations}

The following lemma will justifies the definition of $\aleph$ type foliations and will be useful in \zcref{sec:connectedness}. The related convergence result that we prove in \zcref{sec:alephclosure} below will rely on different techniques. We will also state a generalization of this result for \emph{extended fine slopes} in \zcref{sec:beyondaleph}.

\begin{lem} \label{lem:slconv}
    Suppose we have a sequence of $\Phi$-horizontal $C^0$-foliations $\G_k$, $k \geq 0$, on $\T^2$, converging to a $\Phi$-horizontal foliation $\G$ in the $C^0_\mathrm{Fol}$ topology. Furthermore, assume that each $\G_k$ is Reebless with slope $s_k \in \R$, and that $(s_k)_k$ converges to some slope $s_\infty \in \R$.
    Then $\G$ is admissible, and either
    \begin{itemize}
        \item $\G$ is of $\aleph$ type, or
        \item $\G$ has no Reeb annuli and has slope $s = s_\infty$.
    \end{itemize}

    Moreover, in the second case, if $(s_k)_k$ is strictly increasing (resp.~decreasing), then $\fs(\G) \in \{s^\natural, s^\flat\}$ (resp.~$\fs(\G) \in \{s^\natural, s^\sharp\}$).
\end{lem}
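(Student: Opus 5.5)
We argue by analyzing the structure of $\G$ along the closed orbits of $\Phi$ and a transversal, using the fact that $C^0_\mathrm{Fol}$-convergence passes closed leaves and monotone curves back and forth between $\G$ and $\G_k$ (\zcref{lem:monotone_open}).

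\emph{Case 1: $\G$ has no Reeb annulus.} Then $\slope(\G)=s$ is defined, and $\G$ is monotone equivalent to a linear foliation by \zcref{prop:poincare}. Continuity of coarse slope in the $C^0_\mathrm{Fol}$ topology (rotation numbers of holonomy maps along a closed transversal vary continuously under $C^0$ convergence of the first-return map) gives $s=s_\infty$.

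For the refinement, suppose $(s_k)$ is strictly increasing, $s\in\Q$, and $\G$ has a $\sharp$ annulus. By \zcref{lem:transversholo} there is a negatively monotone curve of slope $s$ for $\G$. By \zcref{lem:monotone_open} the same holds for $\G_k$ when $k$ is large. A negatively monotone curve of slope $s$ forces $s_k\ge s$: in the universal cover, leaves cross its lift in only one direction, and so cannot drift below it. This contradicts $s_k<s_\infty=s$. Hence $\G$ has no $\sharp$ annulus, so $\fs(\G)\in\{s^\natural,s^\flat\}$. The decreasing case is the mirror image.

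\emph{Case 2: $\G$ has a Reeb annulus.} We must show $\G$ is of $\aleph$ type. Each Reeb annulus contains a closed orbit of $\Phi$, hence of slope $+\infty$ up to sign, as in the proof of \zcref{lem:resolution_of_aleph}. Cutting along one such orbit in each Reeb annulus yields annuli $A_j$ in which the boundary leaves are transverse and the spiraling is of a single type. Consider a closed leaf $c$ of $\G$ that spirals toward the right, with slope $t$.
\begin{itemize}
\item If $t$ is a finite slope different from $\pm\infty$, then $c$ together with its spiraling side provides a monotone curve. By \zcref{lem:monotone_open}, this curve persists in $\G_k$ and forces $s_k$ to stay on one side of $t$.
\item The Reeb structure near $c$ also yields a monotone curve of the opposite sign with slope slightly perturbed, which forces $s_k$ to stay on the other side.
\end{itemize}
Together these pin $s_\infty$ but produce a contradiction unless the leaves are degenerate, i.e.\ of slope $\pm\infty$, which is the degeneracy slope of $\Phi$. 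Concretely, the rightward-spiraling closed leaves must all have a common slope, namely the one determined by horizontality, and likewise the leftward ones. This is exactly the $\aleph$ condition. Admissibility follows in both cases.

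\emph{Main obstacle.} The hardest step is Case 2: showing rigorously that every closed leaf in a Reeb region has the degenerate slope and that the two spiraling directions are uniform. I would try first to exploit that $\G$ is $\Phi$-horizontal, so any closed leaf bounding a Reeb annulus is homotopic to a closed orbit of $\Phi$. This should reduce the claim to showing that no Reeb annulus can have boundary leaves of nondegenerate slope. I would prove that by contradiction using \zcref{lem:instability_sharpspiraling} together with the limiting $\G_k$, which are Reebless with finite slopes $s_k\to s_\infty\in\R$.
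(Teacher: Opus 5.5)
Your Case~1 is fine. The refinement is exactly the paper's argument. For $\slope(\G)=s_\infty$ you use continuity of rotation numbers; the paper instead transfers to $\G$ the non-existence of positively monotone curves of rational slope $<s_\infty$ and of negatively monotone curves of slope $>s_\infty$, then concludes with \zcref{lem:monotone_existence}. Your route is a workable substitute.

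The gap is in Case~2. Your bullet about a $\sharp$-spiraling closed leaf of finite slope $t$ is vacuous. Once $\G$ has a Reeb annulus, its boundary leaves are parallel to a closed orbit of $\Phi$ lying inside it. Every other closed leaf is disjoint from these, hence parallel to them. So every closed leaf of $\G$ has oriented slope $+\infty$ or $-\infty$, and the ``pinning from both sides'' never applies. The real content of the $\aleph$ condition is about \emph{orientation}: there is no $\sharp$-spiraling closed leaf of slope $+\infty$ and no $\flat$-spiraling closed leaf of slope $-\infty$. This is how the condition is used in \zcref{lem:resolution_of_aleph}. Contrary to your claim, this is \emph{not} determined by horizontality. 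The paper explicitly considers $\Phi$-horizontal foliations with Reeb annuli that have a $+\infty^\sharp$ or $-\infty^\flat$ spiraling leaf: these are the classes $\aleph^\sharp,\aleph^\flat,\aleph^\dagger$ in \zcref{sec:beyondaleph}, and the non-$\aleph$ foliations mentioned in \zcref{sec:connectedness}. What excludes them here is the hypothesis that the slopes $s_k$ are finite, and your Case~2 never uses it.

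The missing step needs only tools you already used in Case~1. You observed that a negatively monotone curve of slope $s$ forces $s_k\geq s$. Apply this with $s=+\infty$, and its mirror with $s=-\infty$. Then no $\G_k$ admits a negatively monotone curve of slope $+\infty$ or a positively monotone curve of slope $-\infty$. By \zcref{lem:monotone_open}, these non-existence properties are closed in the $C^0_\mathrm{Fol}$ topology, so they also hold for $\G$. By \zcref{lem:transversholo}, a $+\infty^\sharp$ (resp.\ $-\infty^\flat$) spiraling closed leaf of $\G$ would produce exactly such a curve. Hence $\G$ is of $\aleph$ type, which also gives admissibility. Neither \zcref{lem:instability_sharpspiraling} nor any analysis of finite-slope closed leaves is needed.
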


\begin{proof}
    Let  $s \in [-\infty, s_\infty) \cap (\Q \cup \infty)$. By \zcref{lem:monotone_open}, the property of not admitting a positive monotone curve of slope $s$ is closed in the $C^0_\mathrm{Fol}$ topology. This property is satisfied for $\G_k$, $k$ sufficiently large by \zcref{lem:monotone_existence}. Therefore, it is also satisfied by $\G$. Similarly, for $s\in (s_\infty, \infty]\cap (\Q \cup \infty)$, $\G$ does not admit a negatively monotone curve of slope $s$.

    If $\G$ is a suspension foliation, then this non-existence of theparticular monotone transversals above combined with \zcref{lem:monotone_existence} implies that $\slope(\G)=s_\infty$. If on the other hand $\G$ has a Reeb annulus, then it is of $\aleph$ type since $\G$ does not admit a negatively monotone curve of slope $+\infty$ or a positively monotone curve of slope $-\infty$. If $s_k$ is strictly increasing, then none of the $\G_k$ admit a negatively monotone curve of slope $s_\infty$. Hence neither does $\G$, and it follows from \zcref{lem:transversholo} that $\G$ must have fine slope $s^\natural$ or $s^\flat$.
\end{proof}

    \section{Contact structures} \label{sec:contact}

In this section, we collect elementary facts about contact structures that will be useful later. We refer to~\cite{Etnyre2003, Geiges2008, massot.TopologicalMethods3dimensional} for excellent introductions to contact geometry.

A three-dimensional \textbf{positive (resp.~negative) contact structure}, usually denoted $\xi$, is a cooriented $2$-plane field on an oriented 3-manifold $M$ which is ``completely nonintegrable''. Concretely, it may be expressed as $\ker \alpha $ for some smooth 1-form $\alpha$ satisfying $\alpha \wedge d\alpha > 0$ (resp.~$\alpha \wedge d\alpha < 0$), where the sign involves the orientation on $M$. If $\partial M \neq \varnothing$, we will typically require that $\xi$ is transverse to $\partial M$. A contact structure $\xi$ on $M$ is \textbf{overtwisted} if there exists an embedded disk in $\interior M$ which is transverse to $\xi$, except along its boundary and its center. Equivalently, there is a standard model for overtwisted disks, see~\cite[Example 2.1.6]{Geiges2008}. A contact structure that is not overtwisted is called \textbf{tight}.

Importantly, the contact condition is \emph{open} in the $C^1$-topology, and contact structures are \emph{stable}, in the sense of Gray stability: if $(\xi_t)_{t\in[0,1]}$ is a (smooth) path of contact structures which is constant near $\partial M$, then there exists an isotopy $(\phi_t)_{t\in [0,1]}$ of $M$ supported away from $\partial M$ satisfying $\phi_t^*\xi_t = \xi_0$ for all $t \in [0,1]$. However, the situation is more complicated when we allow modifications at the boundary.

By Darboux theorem, every (positive) contact structure is locally of the form $\ker\big(dz+xdy \big)$. If $T \cong \T^2$ is a boundary component of $M$ and $\xi \pitchfork T$, we may also find a normal form of $\xi$ near $T$:

\begin{lem}
   There exists a neighborhood $U$ of $T$ and coordinates $U \cong \T^2_{x,y} \times (-1,0]_r$ such that $\xi$ is defined by a $1$-form $\alpha_\partial$ of the form
   \begin{align} \label{eq:alphabdry}
    \alpha_\partial = \lambda + r \mu
\end{align}
where $\lambda$ and $\mu$ are $1$-forms on $\T^2$ satisfying $\mu \wedge \lambda > 0$.
\end{lem}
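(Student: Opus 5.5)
This is a relative Darboux/Moser-type normal form near a transverse boundary torus. The plan has three steps: choose a collar in which the contact vector field is normal to $T$, write $\alpha$ in those coordinates and expand in $r$, and then remove the higher-order terms in $r$ by a Moser argument.

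\emph{Collar and contact vector field.} Since $\xi \pitchfork T$, the characteristic line field $\xi \cap TT$ is a smooth nonsingular line field on $T$. Choose a smooth vector field $Y$ along $T$, pointing into $M$ and contained in $\xi$, for instance by taking it transverse to $\xi\cap TT$ inside $\xi$. Flowing along a contact vector field extending $Y$, or more simply using a collar $T\times(-1,0]_r$ with $\partial_r$ tangent to $\xi$, gives coordinates in which $\alpha(\partial_r)=0$. The inward direction is then the negative $r$ direction, consistent with $U\cong \T^2\times(-1,0]$.

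\emph{Expansion.} With $\alpha(\partial_r)=0$ we can write $\alpha = \lambda_r$, a family of $1$-forms on $\T^2$ depending on $r$. Here $\lambda_0=\lambda$ is nonvanishing because $\xi$ is transverse to $T$. The contact condition reads
\[
\alpha\wedge d\alpha = \lambda_r\wedge (dr\wedge \dot\lambda_r + d_T\lambda_r),
\]
and only $dr\wedge \lambda_r\wedge\dot\lambda_r$ survives, since $\lambda_r\wedge d_T\lambda_r$ is a $3$-form on $\T^2$ and hence vanishes. So positivity is the pointwise condition $\dot\lambda_r\wedge\lambda_r>0$, with signs fixed by orientation conventions. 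Setting $\mu\coloneqq\dot\lambda_0$ gives $\mu\wedge\lambda>0$. Consequently the linear model $\alpha_\partial=\lambda+r\mu$ is itself contact near $r=0$: its contact condition is exactly $\mu\wedge\lambda>0$, because $\tfrac{d}{dr}(\lambda+r\mu)\wedge(\lambda+r\mu)=\mu\wedge\lambda$.

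\emph{Moser step (main obstacle).} Now $\alpha$ and $\alpha_\partial$ are contact forms on a collar that agree to first order along $T$: they coincide at $r=0$ and have equal $r$-derivatives there. The linear interpolation $\alpha_t=(1-t)\alpha_\partial+t\alpha$ satisfies $\alpha_t\wedge d\alpha_t>0$ for $|r|$ small, because the leading-order terms agree. Gray's theorem relative to $T$ then applies, with one point that must be checked. The Gray vector field $X_t\in\xi_t$ solves $\iota_{X_t}d\alpha_t|_{\xi_t}=-\dot\alpha_t|_{\xi_t}$, and since $\dot\alpha_t=\alpha-\alpha_\partial$ vanishes along $T$, $X_t$ vanishes on $T$. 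Its flow therefore preserves $T$ and is defined on a smaller collar. Pulling back yields a diffeomorphism $\psi$ fixing $T$ with $\psi^*\ker\alpha_\partial=\xi$; reparametrizing $r$ gives the coordinates on some $U$ (rescaled so that $r\in(-1,0]$). The main difficulty is ensuring that the flow does not leave the collar; this is handled by the vanishing of $X_t$ on $T$ together with compactness.

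Alternatively, one could bypass Moser by noting that only $\ker\alpha$ matters. It then suffices to show that every contact form $\lambda_r$ with $\dot\lambda_r\wedge\lambda_r>0$ is diffeomorphic, through maps of the form $(p,r)\mapsto(\phi_r(p),g(p,r))$, to the linear one. I would use the Gray argument above, as it is cleanest.
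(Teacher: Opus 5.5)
Your proposal is correct and follows the paper's own route. You build a collar from the flow of a vector field tangent to $\xi$ (so $\alpha(\partial_r)=0$), expand $\lambda_r=\lambda_0+r\mu+r^2\delta_r$ to get the contact condition $\mu\wedge\lambda_0>0$ near $r=0$, and finish with Gray stability relative to $T$, which is one of the two options the paper names. Your observation that the Gray field $X_t$ vanishes on $T$, so that its flow fixes $T$ and, being $O(|r|)$ in the normal direction, stays in the collar, supplies exactly the detail the paper leaves as a sketch.

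The only slip is terminological. The collar should come from a Legendrian extension of $Y$ (a vector field tangent to $\xi$), not from a ``contact vector field'', which usually means an infinitesimal contactomorphism and is generally not tangent to $\xi$. Your ``more simply'' alternative already does the right thing.
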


\begin{proof}[Sketch of proof]
    We may first find a smooth vector field $X$ transverse to $T$ and tangent to $\xi$, and use its flow to define coordinates on a neighborhood $U\cong \T^2_{x,y} \times (-1,0]_r$ so that $\xi$ is tangent to $\partial_r$ in this neighborhood. Therefore, it is defined by a contact form $\alpha$ of the form $\alpha = \lambda_r$, where $(\lambda_r)_{r \in (-1,0]}$ is a family of $1$-forms on $\T^2$ satisfying $\partial_r \lambda_r \wedge \lambda_r > 0$ (this is simply the contact condition). We may write 
    $$\lambda_r = \lambda_0 + r \underset{\eqqcolon \mu}{\underbrace{\partial_r\vert_{r=0}\lambda_r}} + r^2 \delta_r,$$
    for some family of $1$-forms $\delta_r$. For $r$ close to $0$, the contact condition is equivalent to $\mu \wedge \lambda_0 > 0$. We may now find new coordinates near $T$ in which $\xi$ is of the desired form, either by using a suitable reparametrization of the flow of $X$, or by using a version of Gray stability relative to $T$.
\end{proof}

We now use this normal form to modify the slope of $\xi$ near $T$.

\begin{lem}[Trim/twist]\label{lem:trim_twist}
    Let $\xi$ and $T$ be as above.
    \begin{enumerate}
        \item (Trim) There exists a positive contact structure $\xi'$ which is $C^\infty$-close to $\xi$ and such that $\partial \xi \prec \partial \xi'$ along $T$.
        \item (Twist) Suppose $\xi$ is $\Phi$-horizontal for some smooth nonsingular flow $\Phi$ on $M$. For any smooth $\Phi$-horizontal foliation $\G$ on $\partial M$ satisfying $\G \prec \partial \xi$, there is a $\Phi$-horizontal positive contact structure $\xi'$ with $\partial \xi' = \G$.
    \end{enumerate}
    In both cases, we can arrange that $\xi'$ coincides with $\xi$ away from an arbitrarily small neighborhood of $T$. 
\end{lem}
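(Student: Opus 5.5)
We work throughout in the normal form of the preceding lemma. On a collar $U \cong \T^2_{x,y}\times(-1,0]_r$ the structure is $\xi=\ker(\lambda+r\mu)$ with $\mu\wedge\lambda>0$, and $T=\{r=0\}$. The boundary foliation is $\partial\xi=\ker\lambda$ on $\T^2$. Our strategy is to replace $\lambda+r\mu$ near $r=0$ by a family $\lambda_r$ of $1$-forms on $\T^2$, and to keep track of only two things: the contact condition $\partial_r\lambda_r\wedge\lambda_r>0$, and the transversality to $\Phi$.

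\textbf{Trim.} We reparametrize the collar. For small $\delta>0$, choose a diffeomorphism $\phi$ of $(-1,0]$ with $\phi(r)=r-\delta$ near $r=0$ and $\phi=\mathrm{id}$ for $r\le -1/2$. Set $\xi'=\ker(\lambda+\phi(r)\mu)$. Then
\[
\partial_r(\lambda+\phi\mu)\wedge(\lambda+\phi\mu)=\phi'\,\mu\wedge\lambda>0,
\]
so $\xi'$ is contact, and it is $C^\infty$-close to $\xi$ as $\delta\to0$. Its boundary trace is $\ker(\lambda-\delta\mu)$. Since $\mu\wedge\lambda>0$, the line field $\ker(\lambda-\delta\mu)$ is rotated by a definite small angle in a fixed direction relative to $\ker\lambda$. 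With the paper's orientation convention (positive twisting decreases slope as we move inward, i.e.\ slope increases as $r$ increases toward $\partial M$), this gives $\partial\xi\prec\partial\xi'$. If the sign comes out wrong, we use $\phi(r)=r+\delta$ on a slightly enlarged collar instead. Equivalently, $\xi'$ is what one sees by extending $\xi$ a little past $T$ and retracting. The only real check is this orientation bookkeeping.

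\textbf{Twist.} Write $\G=\ker\beta$ on $\T^2$, with $\beta$ chosen so that $\G\prec\partial\xi$ translates into $\lambda\wedge\beta>0$ (or the opposite sign, matching the contact sign). We first choose a path of nonvanishing $1$-forms $\beta_s$, $s\in[0,1]$, from $\beta_0=\beta$ to $\beta_1=\lambda$ that rotates monotonically in the direction required by contactness, i.e.\ $\partial_s\beta_s\wedge\beta_s>0$. Since $\ker\beta$ and $\ker\lambda$ are pointwise transverse with the correct orientation, the pointwise linear rotation $\beta_s=(1-s)\beta+s\lambda$ works: we have $\partial_s\beta_s\wedge\beta_s=(\lambda-\beta)\wedge((1-s)\beta+s\lambda)=\lambda\wedge\beta$, which has the right sign. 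Every $\ker\beta_s$ lies between $\G$ and $\partial\xi$, and both of these are $\Phi$-horizontal. In the plane $T_p\T^2$, the set of lines positively transverse to the direction of $\Phi$ is an interval, so every $\ker\beta_s$ is transverse to $\Phi|_T$.

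Next, on a thin collar $\T^2\times[-\epsilon,0]$, we put $\lambda_r=\beta_{\sigma(r)}$, where $\sigma$ is decreasing with $\sigma(0)=0$ and $\sigma(-\epsilon)\approx1$. This gives a contact structure (with the orientation sign fixed as in the trim case) whose trace on $T$ is $\G$. At the inner end we interpolate to $\lambda+(r+\epsilon)\mu$ using a cutoff; near $r=-\epsilon$ both forms are monotone rotations in the same direction, so a convex combination of $\partial_r$-derivatives keeps the contact condition. We then glue to the original $\xi$, which we reparametrize as in the trim step so that it matches.

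\textbf{Horizontality in the interior of the collar.} This is the main obstacle, because $\Phi$ is not $r$-invariant and has a component along $\partial_r$. To handle it, we use the construction of \zcref{lem:isotopy_extension}: choose the collar so that $v_\Phi$ is $\delta$-close to its boundary value on $[-\epsilon,0]$. By construction the planes $\xi'$ contain $\partial_r$ and meet each $\T^2\times\{r\}$ in $\ker\lambda_r$, which is transverse to $\Phi|_T$ with a definite angle by compactness. Hence $\xi'$ is transverse to the nearby vector $v_\Phi$. Every modification is supported in the collar, so $\xi'=\xi$ away from an arbitrarily small neighborhood of $T$.
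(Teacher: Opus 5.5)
Your trim step is the paper's argument: the paper uses $\lambda+(r-\varepsilon\tau(r))\mu$ with a cutoff $\tau\equiv1$ near $r=0$ and $\tau=0$ for $r\le-1/2$, which is your $\phi$. Drop the hedge, though, and correct the parenthetical. In the paper's convention the slope of $\ker(\lambda+r\mu)$ \emph{decreases} as $r$ increases toward $\partial M$: twisting outward lowers the slope (e.g.\ $\lambda=dy$, $\mu=dx$ gives slope $-r$). That is exactly why the trace $\ker(\lambda-\delta\mu)$, taken from depth $\delta$, satisfies $\partial\xi\prec\partial\xi'$. With the convention as you stated it, your conclusion would not follow, and your fallback $\phi(r)=r+\delta$ gives the wrong inequality.

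For the twist, your idea matches the paper's: insert a monotone rotation, inside the pencil spanned by $\lambda$ and $\mu$, from $\ker\lambda$ to $\G$. However, two steps do not go through as written.

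\emph{The cutoff gluing at $r=-\epsilon$.} For $\lambda_r=\chi A_r+(1-\chi)B_r$ one gets $\partial_r\lambda_r\wedge\lambda_r=\chi'\,A_r\wedge B_r+\bigl(\chi\partial_rA_r+(1-\chi)\partial_rB_r\bigr)\wedge\lambda_r$. The first term is not covered by your convexity remark. Your family $\beta_{\sigma(r)}$ sweeps from $\ker\lambda$ to $\G$ over length $\epsilon$, while the pencil coefficient of $\lambda+(r+\epsilon)\mu$ moves at unit speed. So the two families cross near $r=-\epsilon$ at very different speeds, $A_r\wedge B_r$ changes sign there, and $\chi'A_r\wedge B_r$ does not shrink as the cutoff is sharpened. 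Contactness can therefore fail.

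\emph{Horizontality outside $[-\epsilon,0]$.} You only check horizontality on $[-\epsilon,0]$. On the support of the reparametrization $\phi$, which ``as in the trim step'' reaches $r=-1/2$, the plane at $(p,r)$ is $\ker(\lambda+\phi(r)\mu)$. This is the original plane from height $\phi(r)\neq r$. Horizontality of $\xi$ only makes it transverse to $v_\Phi$ at height $\phi(r)$, not at $r$. Since $\phi(r)$ need not be small, the plane need not be close to $\ker\lambda$, so comparison with $\Phi|_T$ does not help either.

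\emph{The paper's fix.} The paper avoids both issues with one formula. Rescale so that $\beta=\lambda+R\mu$ with $R>0$ a function, and set $\xi'=\ker\bigl(\lambda+(r+\tau(r)R)\mu\bigr)$. The coefficient $c=r+\tau R$ satisfies $\partial_rc=1+\tau'R>0$, so contactness is immediate. Now shrink $U$ so that $\lambda$ and $\beta$ are positive on $v_\Phi$ throughout $U$. The map $c\mapsto(\lambda+c\mu)(v_\Phi)$ is affine, and it is positive at $c=r$ (horizontality of $\xi$ at that same point) and at $c=R$. Hence it is positive on $[r,R]$, which contains $r+\tau(r)R$.

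Your construction can be repaired the same way. Prescribe the scalar coefficient $c(r,p)$, strictly increasing in $r$, instead of gluing forms with a cutoff, and verify horizontality by this convexity.
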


\begin{proof}
Let $U \cong \T^2 \times (-1,0]$ be a neighborhood of $T$ with coordinate in which $\xi$ is defined by a $1$-form $\alpha_\partial$ is in~\eqref{eq:alphabdry}.

For the first item, we choose a smooth cutoff function $\tau : (-1,0] \rightarrow (-1,0]$ such that $\tau' \geq 0$, $\tau \equiv 1$ near $r=0$ and $\tau(r) = 0$ for $r \leq -1/2$. Then for $\varepsilon > 0$ small enough, the $1$-form 
$$\lambda + \big(r - \varepsilon \tau(r)\big) \mu$$
is a contact form whose contact structure $\xi'$ coincides with $\xi$ for $r \leq -1/2$ and satisfies $\partial \xi \prec \partial \xi'$ along $T$.

For the second item, we consider a $1$-form $\beta$ on $\T^2$ defining $\G$. After rescaling, we might decompose it as $\beta = \lambda + R \mu$ for some $R > 0$, using the condition $\G \prec \partial \xi$. After possibly shrinking the neighborhood $U$, we further assume that both $\lambda$ and $\beta$ evaluate positively along the flow lines of $\Phi$ in $U$. We now consider the $1$-form 
$$\lambda + \big(r + \tau(r) R\big) \mu,$$
where $\tau$ is as before. It is immediate to check that this $1$-form defines a contact structure $\xi'$ transverse to $\Phi$ (by convexity) and with $\partial \xi' =\G$.
\end{proof}

\begin{rem}
    Notice that the first modification can be obtained by ``trimming'' $M$ along $T$, i.e., removing a small tubular neighborhood of $T$, while the second item may be obtained by ``extending'' $M$ along $T$ by attaching a suitable tubular neighborhood along it and twisting $\xi$ along the extension of $\partial_r$.
\end{rem}

For the purpose of studying foliations, we will consider \emph{pairs} of contact structures with opposite signs. The relationship between foliations and contact pairs was originally studied in~\cite{CF11} and the theory was further developed by the first author in~\cite{M24}, in the setting of \emph{closed} manifolds.

A \textbf{positive contact pair} on $M$ is a pair of (cooriented) contact structures $(\xi_-, \xi_+)$ such that $\xi_-$ and $\xi_+$ have no negative tangencies, or equivalently, if there is a smooth vector field positively transverse to both contact structures. When $\partial M \neq \varnothing$, we say that $(\xi_-, \xi_+)$ is \textbf{adapted to the boundary} if $\partial \xi_+$ is positively transverse to $\partial \xi_-$ (see \zcref{fig:contactplanes}). 

A contact pair $(\xi_-,\xi_+)$ is \textbf{taut} (\emph{``fortement tendue''} in~\cite{CF11}) if for every point $p\in M$, there exists a closed loop positively transverse to both $\xi_-$ and $\xi_+$ and passing through $p$. See~\cite{M24} for other characterizations of tautness, which also hold for manifolds with boundary. Moreover, we have:

\begin{lem} \label{lem:universallytight}
    If $(\xi_-, \xi_+)$ is a taut contact pair on a \textbf{closed} $3$-manifold, then $\xi_-$ and $\xi_+$ are both universally tight.
\end{lem}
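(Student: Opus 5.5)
The plan is to reduce to the known statement that a contact structure admitting a closed transverse \emph{Reeb-type} object, or more precisely a positively transverse closed loop through every point, is universally tight. The route we intend to take goes through Colin's or Eliashberg--Thurston's argument: from a taut contact pair we will build a taut foliation nearby, and then invoke the classical fact that contact structures $C^0$-close to a taut foliation are universally tight.

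First, we apply the main construction of~\cite{M24} on closed manifolds. A taut contact pair $(\xi_-,\xi_+)$ on a closed $M$ gives rise to a $C^0$-foliation $\F$ that is positively transverse to any vector field $V$ transverse to both $\xi_\pm$. Moreover, $\F$ can be placed ``between'' $\xi_-$ and $\xi_+$, in the sense that there are linear interpolations $\xi_-\to T\F\to\xi_+$ through plane fields transverse to $V$. Tautness of the pair gives closed loops transverse to both $\xi_\pm$ through every point. We will choose $V$ tangent to these loops, so that $\F$ is taut.

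Second, we pass to the universal cover $\widetilde M$. The lifted pair $(\widetilde\xi_-,\widetilde\xi_+)$ is still a contact pair, dominated on both sides of the lifted taut foliation $\widetilde\F$. We will then argue tightness directly on $\widetilde M$ via the standard Eliashberg--Thurston/Colin argument. An overtwisted disk $D$ for $\widetilde\xi_+$ may be assumed in general position with respect to $\widetilde\F$. Its Legendrian boundary has zero twisting relative to $D$, which forces the characteristic foliation of $\widetilde\F$ on $D$ to carry a center or a vanishing cycle. By Novikov's theorem this contradicts tautness of $\F$, since a taut foliation has no Reeb components and its leaves are $\pi_1$-injective. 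The key step uses the positivity of the pair: the Legendrian boundary of $D$ winds monotonically relative to $\widetilde\F$, so the index count of singularities gives the contradiction. The case of $\xi_-$ is the mirror image.

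The main obstacle is making the first step rigorous in the $C^0$ setting, where $\F$ is only $C^0$ and the comparison between $\xi_\pm$ and $\F$ is only a sandwiching. Alternatively, we would bypass the foliation entirely. Tautness of the pair should yield a closed $2$-form $\omega$ on $M$ with $\omega|_{\xi_\pm}>0$, obtained by the Sullivan-type duality characterizations in~\cite{M24}. Then each $\xi_\pm$ is weakly symplectically semi-fillable, since $[-1,1]\times M$ with $\omega+\varepsilon d(t\alpha_\pm)$ gives a weak filling. Weak fillability of the lift to every finite cover follows at once, but to handle the universal cover we would cite the result that a contact structure dominated by a closed $2$-form, i.e.\ one with $\omega|_\xi>0$ and $\omega$ closed, is universally tight. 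This holds because the pullback of $\omega$ remains positive on the lifted contact structure, and a Giroux-style argument on overtwisted disks applies: integrating the exact pullback of $\omega$ over an overtwisted disk yields a contradiction with Stokes. We expect this second route to be the cleanest, with the construction of $\omega$ from tautness being the step requiring care.
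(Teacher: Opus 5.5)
\paragraph{Gap: the passage to the universal cover.} The start of your second route is the standard argument. Tautness gives a volume-preserving flow transverse to both $\xi_\pm$, hence a closed $2$-form $\omega$ with $\omega|_{\xi_+}>0$ and $\omega|_{\xi_-}>0$. The form $\omega+\varepsilon\,d(t\alpha_+)$ on $[-1,1]\times M$ is then symplectic and weakly fills $(M,\xi_+)\sqcup(-M,\xi_-)$, so tightness follows from the Gromov--Eliashberg holomorphic-disc theorem. What matters is that the \emph{same} closed form dominates \emph{both} structures: at $t=-1$ the boundary is $-M$, and it is weakly convex only because $\omega$ is positive on $\xi_-$.

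The step that fails is your last one. The result you propose to cite is false: it says a contact structure $\xi$ with a closed $\omega$ satisfying $\omega|_\xi>0$ is universally tight. But for any positive contact form $\alpha$, the exact form $d\alpha$ is positive on $\ker\alpha$, including when $\ker\alpha$ is overtwisted. For the same reason your Stokes argument cannot work: it uses only exactness and positivity on $\widetilde\xi_+$, and $d\widetilde\alpha_+$ already supplies both. Concretely, an overtwisted disk $D$ is tangent to $\widetilde\xi_+$ only along $\partial D$ and at its center. So neither $\int_D\widetilde\omega$ nor $\int_{\partial D}\beta$ has any sign, since nothing constrains a primitive $\beta$ along Legendrian curves. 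In addition, exactness of $\widetilde\omega$ would require $H^2(\widetilde M;\R)=0$, which you have not shown.

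\paragraph{How to close it.} You need a genuine argument for the noncompact cover. One option:
\begin{enumerate}
\item The pair lifts to a taut pair on every finite cover $\widehat M$, so $\widehat\xi_\pm$ are tight by the filling argument above.
\item An overtwisted disk in $\widetilde M$ is compact, so it meets only finitely many of its deck translates.
\item Residual finiteness of $\pi_1(M)$ (Hempel together with geometrization) then gives a finite cover in which the disk embeds, which is a contradiction.
\end{enumerate}
Alternatively, run the holomorphic-disc argument directly on $[-1,1]\times\widetilde M$, using bounded geometry of the lifted data.

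\paragraph{Your first route.} This is not a proof as written. The foliation from \cite{M24} is only homotopic to $\xi_\pm$ through transverse plane fields, not $C^0$-close to them; you would first have to push $\xi_\pm$ forward along the axis flow toward the unstable plane field. Even for contact structures $C^0$-close to a taut foliation, tightness does not come from a Novikov-type index count on an overtwisted disk. It comes from exactly the symplectic filling argument above, so this route at best reduces to the second one. For reference, the paper does not prove the lemma itself; it defers to \cite{CH20} and \cite{M24}.
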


A proof is sketched in~\cite[Section 5]{CH20}, see also~\cite[Proposition 3.2]{M24}.

    \section{Flows}

In this article, a \textbf{flow} is an $\R$-action on $M$ without stationary points whose orbits are $C^1$ and tangent to a nowhere vanishing $C^0$ vector field. In particular, flows are tangent to $\partial M$.

    \subsection{Blowup and blowdown}

\begin{defn}\label{def:flowblowdown}
    Let $\Phi$ be a smooth flow on a closed $3$-manifold $M$. A \textbf{blowup} of $(M, \Phi)$ along a closed orbit $\gamma$ of $\Phi$ is a pair $(M^\circ,\Phi^\circ)$, where $M^\circ$ is a compact $3$-manifold with torus boundary and $\Phi^\circ$ is a flow on $M^\circ$ tangent to the boundary, together with a map $\pi:M^\circ \to M$ such that 
        \begin{enumerate}
        \item There exist smooth local coordinates $(r, \theta, \phi)$ around $\partial M^\circ$ (where $r$ parametrizes the normal direction, $\phi$ parametrizes the longitudinal direction, $\theta$ parametrizes the meridional direction) and cylindrical coordinates $(r,\theta,\phi)$ in a neighborhood of $\gamma$ (where $\gamma$ lies along $\{r=0\}$), such that $\pi$ has the form 
        $$\pi(r,\theta,\phi) = (f(r),\theta,\phi)$$
        for some smooth function $f:[0,1]\to [0,1]$. In particular, $\pi$ sends $\partial M^\circ$ to $\gamma$, collapsing leaves of a linear foliation on $\partial M$ to points.
        \item $\pi$ restricts to a smooth orbit equivalence between $(\interior M^\circ, \Phi^\circ\vert_{\interior M^\circ})$ and $(M, \Phi\vert_{M\setminus \gamma})$.
        \end{enumerate}
    We say that $(M, \Phi)$ is a \textbf{blowdown} of $(M^\circ, \Phi^\circ)$ along the slope $\phi=const$.
\end{defn}

This definition readily extends to finite collections of orbits of $\Phi$.

\begin{rem}
    We emphasize that the blowup is generally \emph{not} unique. The blowdown is unique up to topological conjugacy, but may have different smooth models. This nonuniqueness does not affect our results; they hold for any choice of blowup and blowdown.
\end{rem}

$\Phi$-horizontal foliations on $\Phi$ survive blowup along any closed orbit one can simply put the foliation in standard form in the local coordinates above by $\Phi$-horizontal isotopy. On the other hand, a foliation may be blown down along slope $\bm{s}$ only if it has linear boundary foliations, i.e., fine boundary multislope $\bm{s}^\natural$. 

Finally, a contact structure can be blown down provided that its boundary multislope exceeds $\bm{s}$:

\begin{lem}[Filling contact structures]\label{lem:fillingcontact}
    Suppose $\Phi$ admits a transverse positive (resp.~negative) contact structure $\xi_+$ (resp.~$\xi_-$) of slope $\bm{s}$. For any rational multislope $\bm{s'} < \bm{s}$ (resp.~$\bm{s}'> \bm{s}$) and any blowdown $\Phi(\bm{s'})$ on the Dehn filling $M(\bm{s'})$ admits a transverse positive (resp.~negative) contact structure.

    Conversely, if $\Phi(\bm{s'})$ admits a transverse positive (resp.~negative) contact structure, then for some $\bm{s} < \bm{s'}$ (resp.~$\bm{s} > \bm{s'}$) $\Phi$ admits a transverse positive contact structure of slope $\bm{s}$.
\end{lem}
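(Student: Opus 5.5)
The plan is to work locally near each boundary torus, using the normal form \eqref{eq:alphabdry} and the trim/twist lemma (\zcref{lem:trim_twist}), and then to glue in a standard model contact structure on a solid torus. We treat the positive case; the negative case is the mirror image with all inequalities reversed.

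\emph{Forward direction.} Let $\xi_+$ be $\Phi$-horizontal with boundary multislope $\bm{s}$ and let $\bm{s'}<\bm{s}$ be rational. The first step is to replace $\xi_+$ near each $\partial_i M$ by a contact structure whose boundary foliation is the linear foliation of slope $s'_i$. By the straightening lemma (\zcref{lem:straightening2}), after a $\Phi$-horizontal isotopy of the linear foliation $\G_i$ of slope $s'_i$, we can arrange $\G_i\prec\partial_i\xi_+$; this uses $s'_i<s_i$. The isotopy extension lemma (\zcref{lem:isotopy_extension_contact}) then lets us isotope $\Phi$ so that it stays transverse. Next we apply the twist part of \zcref{lem:trim_twist} to obtain $\xi'_+$ with $\partial\xi'_+=\G_i$ linear of slope $s'_i$. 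Since $\Phi$ has slope $+\infty$ on the boundary, we may put it in the local coordinates of \zcref{def:flowblowdown} near $\partial_i M$, so that $\Phi$ is vertical, i.e.\ parallel to $\gamma$. The blowdown collapses the leaves of the linear foliation of slope $s'_i$, which are meridians. It remains to extend $\xi'_+$ over a solid torus neighborhood of the core orbit so that it is transverse to the core direction. For this we use the standard model $\ker(d\phi+ f(r)\,d\theta)$ on $D^2\times S^1$, with $f(0)=0$, $f$ growing like $r^2$ near $0$ and $f'>0$. It is positive contact and transverse to $\partial_\phi$, and the slope of its boundary characteristic foliation on $\{r=r_0\}$ is the meridional slope modified by a small twist. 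We match this model to the collar of $\xi'_+$, viewed as $\lambda+r\mu$ with linear $\lambda$, via an interpolation through forms of the type $d\phi+g(r)d\theta$ in the collar coordinates.

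\emph{Main obstacle.} The delicate point is sign and coordinate bookkeeping. We must check that the slope of the model on small tori approaches the meridian slope $s'_i$ from the correct side, i.e.\ $\partial(\text{model})$ dominates $\G_i$ in the sense required. If it does not, we go back and choose $\G_i$ slightly smaller than $s'_i$ (which is still ${}<s_i$) and connect the two with the twist lemma, so that the monotone rotation in the $r$ direction, which is forced by the contact condition, goes the right way. Because of the strict inequality $\bm{s'}<\bm{s}$ we have a full interval of slopes to work with, so this is exactly where strictness is used. Transversality to $\Phi$ is automatic, since all forms involved are positive on $\partial_\phi$, by convexity as in the proof of \zcref{lem:trim_twist}.

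\emph{Converse.} Given $\xi_+$ on $M(\bm{s'})$ transverse to $\Phi(\bm{s'})$, we first isotope it near each core orbit $\gamma$ (which is transverse to $\xi_+$) to the standard neighborhood model above, using the contact neighborhood theorem for transverse knots; the isotopy can be kept transverse to the flow because both are near-standard on a thin tube. On a small tube of radius $r_0$, the characteristic foliation is linear with slope slightly below the meridian slope $s'_i$ in the framing, say $s_i<s'_i$. Removing the tube and pulling back by the blowup map $\pi$, which is an orbit equivalence on the interior, gives a $\Phi$-horizontal positive contact structure on $M$ with boundary slope $\bm{s}<\bm{s'}$. Here the tube complement is identified with $M$ by pushing the boundary along the radial direction, which is compatible with \zcref{def:flowblowdown}.
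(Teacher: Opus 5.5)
Your skeleton (normal form near $\partial M$, twisting via \zcref{lem:trim_twist}, gluing the standard model $\ker(d\phi+r^2\,d\theta)$, and in the converse a standard tube around the transverse core) matches the paper's. However, the forward gluing step fails as written, and the sign you leave open in your ``main obstacle'' is resolved the wrong way.

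In the model, every torus $\{r=r_0\}$ carries a linear characteristic foliation whose slope is strictly on the \emph{trimming} side of the meridian slope $s'_i$, i.e.\ strictly above $s'_i$. It tends to $s'_i$ only as $r_0\to 0$. Moving toward the core adds twisting, which lowers the slope; removing a tube is a trim, which raises it. In your own ansatz $d\phi+g(r)\,d\theta$ this is just the monotonicity of $g$ forced by the contact condition. Since $g=r^2>0$ near the core, $g$ can never come back to $0$ at the gluing torus.

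Hence, if you first twist to $\partial_i\xi'_+=\mathcal{G}_i$ linear of slope exactly $s'_i$, with $\xi'_+$ genuinely contact up to the boundary, no horizontal extension over the solid torus exists. Each meridian would be a closed Legendrian leaf whose contact framing equals the disk framing. The filled solid torus would then be overtwisted, whereas horizontal contact structures on $S^1\times D^2$ are tight; this is precisely the obstruction in \zcref{lem:nec_hyp}. Your fallback, taking $\mathcal{G}_i$ slightly \emph{below} $s'_i$, moves in the wrong direction and fails by the same monotonicity.

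The missing point is that $s'_i$ must never be attained at a contact boundary. You should keep $\partial_i\xi_+$ strictly above the linear foliation of slope $s'_i$ and let the slope reach $s'_i$ only in the limit at the core. This, and not a choice of $\mathcal{G}_i$ below $s'_i$, is where $s'_i<s_i$ is used. The paper does exactly this in the blowup coordinates of \zcref{def:flowblowdown}. It isotopes $\xi_+$ so that $\partial_i\xi_+$ dominates the linear foliation of slope $s'_i$ and the radial lines are Legendrian. It then adds twisting so that near $\partial M$ the structure becomes $\ker(d\phi+f(r)^2\,d\theta)$, which is degenerate at $r=0$. This pushes forward under $\pi(r,\theta,\phi)=(f(r),\theta,\phi)$ to the smooth standard model.

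The same sign error affects your converse. The boundary of a thin standard tube has slope slightly \emph{above} $s'_i$, so one obtains a $\Phi$-horizontal structure on $M$ with $\bm{s}>\bm{s}'$. This is what the paper's proof produces (blow up to a $\natural$ boundary of slope $\bm{s}'$, then trim and apply \zcref{lem:phaselocking}), and it is what \zcref{thm:partialfilling} actually uses; the ``$\bm{s}<\bm{s}'$'' in the statement is a typo. The weaker inequality you state is still true, but only after correcting the sign and then twisting down. With that correction, your direct tube-removal is a valid and slightly shorter version of the paper's converse.
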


\begin{proof} Recall the local model of blowdown from \zcref{def:flowblowdown}: there exists a tubular neighborhood $N$ of $\partial_i M$ smoothly parametrized as $\mathbb{T}^2_{\theta,\phi} \times [0,1]_r$ where $\partial_i M$ lies along $r=0$, the slope $s_i$ corresponds to the line $\phi=0$, and the blowdown $\pi:(M,\Phi) \to (M(\bm{s}), \Phi(\bm{s}))$ has the form $$\pi(r,\theta,\phi) = (f(r),\theta,\phi)$$ for some smooth function $f:[0,1]\to [0,1]$. 

Apply \zcref{lem:isotopy_extension_contact} and \zcref{prop:fineslope_inequality} to modify $\xi_+$ so that $\partial_i \xi_+$ has slope $>s_i$ pointwise. By a smooth $\Phi$-horizontal isotopy not changing $\partial \xi_+$, we may arrange that the lines $\theta,\phi=const$ are Legendrians in a neighborhood of $\partial M$. As in \zcref{lem:trim_twist}, add twisting to $\xi_+$ in a neighborhood of $\partial M$ so that for $r_0$ sufficiently small, $\xi_+|_{r=r_0}$ is a linear foliation of slope $f(r)^2$. Then the pushforward of $\xi_+$ along $\pi$ is a $\Phi$-horizontal contact structure on the blowdown.

Conversely, if $\Phi(\bm{s'})$ admits a transverse positive contact structure, we can put it in the standard form $\xi_+=d\phi + r^2 d\theta$ near each blown down orbit. Blowing up, we obtain a $\Phi$-horizontal contact structure of fine multislope $\bm{s'}^\natural$. Trimming $\xi_+$ (\zcref{lem:trim_twist}) and using the instability of $\natural$ foliations, we can increase the boundary slope of $\xi_+$ from $\bm{s'}$ to $\bm{s}$ for some $\bm{s}>\bm{s'}$.
\end{proof}

    \subsection{Nonwandering flows}

Ultimately, we are interested in pseudo-Anosov flows and periodic flows. However, we prove many of our results for the larger class of nonwandering flows. For some results, we will impose additional conditions that are automatically satisfied for suitable pseudo-Anosov and periodic flows.

\begin{defn} \label{def:NW}
    Let $\Phi = (\varphi_t)$ be a (topological) flow on $M$. We say that $\Phi$ is a \textbf{nonwandering} flow if for every nonempty open set $U \subset M$ and every $T > 0$, there exists $t \geq T$ such that $\varphi_t(U) \cap U \neq \varnothing$.
\end{defn}

The following classes of flows are nonwandering:
\begin{itemize}
    \item Topologically transitive flows,
    \item Volume preserving flows (by the Poincar\'{e} recurrence theorem),
    \item Flows whose orbits are all periodic.
\end{itemize}

Importantly, nonwandering flows are ``witnesses'' of tautness. Indeed, any (everywhere) taut foliation or taut contact pair admits a transverse \emph{volume preserving} flow, which is in particular nonwandering. Conversely, we have:

\begin{lem}
Let $\Phi = (\varphi_t)$ be a nonwandering flow on $M$ tangent to a continuous nonvanishing vector field $X$.
\begin{enumerate}
    \item Any $C^0$-foliation transverse to $\Phi$ is taut,
    \item Any contact pair (positively) transverse to $\Phi$ is taut.
\end{enumerate}
\end{lem}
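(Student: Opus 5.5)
The strategy is to use nonwandering recurrence to build, through every point $p$, a closed loop that is made of a long piece of the orbit of $p$ together with a short closing arc. The orbit piece is positively transverse to the foliation $\F$ (resp.\ to both $\xi_-$ and $\xi_+$) because $\Phi$ is. The closing arc is kept short, and transversality is an open condition in a small ball, so it can be chosen transverse as well. The same argument handles both items, since both only need a local transversal through a neighbourhood of $p$ in the direction of $X$.

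\textbf{Step 1: a local flow box.} Fix $p\in M$. If $p\in\partial M$, first push $p$ slightly into the interior along a short transverse arc tangent to $X$, so assume $p \in \interior M$. Since $\F$ (resp.\ the pair) is transverse to $X$, choose a small chart $B\cong [-1,1]^2\times[-1,1]_z$ around $p$ with these properties:
\begin{itemize}
\item $T\F$ (resp.\ $\xi_\pm$) is uniformly transverse to $\partial_z$;
\item $X$ makes a small angle with $\partial_z$;
\item every short arc whose tangent stays in a fixed cone around $\partial_z$ is positively transverse to $T\F$ (resp.\ to both $\xi_-$ and $\xi_+$).
\end{itemize}
This uses only continuity of the plane fields and of $X$. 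Let $U\subset B$ be a small open neighbourhood of $p$ such that for each $q\in U$ the forward flow segment $\varphi_{[0,\tau]}(q)$ exits through the top face of $B$, and the backward segment exits through the bottom face. Here $\tau$ is a small constant.

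\textbf{Step 2: return and closing up.} By the nonwandering property applied to a much smaller neighbourhood $U'\ni p$ and some $T>2\tau$, there exist $q\in U'$ and $t\ge T$ with $\varphi_t(q)\in U'$. Consider the orbit segment $\varphi_{[0,t]}(q)$, which is positively transverse to $\F$ (resp.\ $\xi_\pm$). Connect $\varphi_t(q)$ back to $q$ inside $B$ as follows:
\begin{itemize}
\item flow forward from $\varphi_t(q)$ until the height $z$ is slightly above $p$;
\item go "upward" in the cone direction along a monotone arc that wraps around through the top of $B$.
\end{itemize}
This naive closing fails because the $z$-coordinate must come back down. I would avoid the problem by instead concatenating differently. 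Start at $q$ just \emph{below} $p$ in the $z$-coordinate, and follow the orbit up through $p$ (adjusting the choice using the flow box: replace $q$ by $\varphi_{-s}(q)$ so that the orbit of $q$ passes near $p$ going upward). When the orbit returns into $U'$ at $\varphi_t(q)$, it enters from below. Then join $\varphi_t(q)$ to the point of the original orbit lying at a slightly higher $z$-level by a straight segment in the chart. Because both endpoints lie in $U'$, which is tiny, and the target is chosen at height difference comparable to $\operatorname{diam}(U')$ but with larger vertical than horizontal displacement, this segment lies in the transverse cone. Concretely, take the target $\varphi_{\sigma}(q)$ with $\sigma$ fixed, and shrink $U'$ so that its diameter is much smaller than $\sigma$. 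The resulting loop (orbit from $\varphi_\sigma(q)$ to $\varphi_t(q)$, then the cone segment back to $\varphi_\sigma(q)$) is piecewise positively transverse. Smoothing its corners inside the cone gives a closed transversal through a point within $\operatorname{diam}(U')$ of $p$.

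\textbf{Step 3: passing through $p$ itself, and the main obstacle.} The loop built in Step 2 passes near $p$, but not necessarily through it. Inside $B$ I would replace a vertical piece of the loop by an arc that passes through $p$ and stays in the cone. This is possible because the loop crosses the whole height of the box near $p$, and near-vertical arcs can be shifted horizontally by a small amount while staying in the cone. For the contact-pair case the same argument works verbatim, since the cone is chosen transverse to both planes simultaneously. I expect the main delicacy to be Step 2: arranging the closing segment to go \emph{upward}, i.e.\ bounding its horizontal displacement by a fixed fraction of its vertical displacement. This is exactly why the target is taken a definite flow-time $\sigma$ ahead while the return neighbourhood is shrunk. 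Boundary points need a separate check, namely that the constructed arcs can be kept in $M$; since $\Phi$ is tangent to $\partial M$, working in a half-chart handles this.
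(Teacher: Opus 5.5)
Your argument is correct and is essentially the paper's. Both use the same two ingredients: near $p$ the transverse cone contains a uniform neighbourhood of $X$, so short arcs whose vertical displacement dominates their horizontal displacement are cone-positive, and a long nonwandering return orbit is closed up with such arcs. The paper merges your Steps~2--3 by noting at the outset that $p$ can be joined by cone-positive paths to every point of $\varphi_{c}(U)$, and from every point of $\varphi_{-c}(U)$. It then takes an orbit segment from $\varphi_{c}(U)$ to $\varphi_{-c}(U)$, so the loop passes through $p$ directly. Also drop your opening remark about pushing a boundary point inward along $X$, since $X$ is tangent to $\partial M$; the half-chart version you mention at the end is what works.
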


\begin{proof} 
The proof is the same in both cases. Let $C$ be a continuous cone field on $M$ which each have non-empty interior. In the case of a foliation, $C$ will be the positive open half space cut out by $T\F$, and in the case of a contact pair, $C$ will be the positive open cone cut out by the two contact structures. Given a point $p \in M$, we would like to construct a $C$-positive curve (i.e., a curve whose derivative lies in $C$) which passes through $p$.

Reparametrize the flow to move at unit speed. Let $U$ be an $\varepsilon$ ball around $p$. Let $\delta$ be small enough that a $\delta$-cone around the line field tangent to $\Phi$ is contained in $C$. For $\varepsilon$ small enough, every point in $\varphi_{-10\delta\varepsilon}(U)$ is connected to $p$ by a $C$-positive path and $p$ is connected to every point in $\varphi_{10\delta\varepsilon}(U)$ by a $C$-positive path. By the nonwandering hypothesis, there exists a flowline of $\Phi$ from $\varphi_{10\delta\varepsilon}(U)$ to $\varphi_{-10\delta\varepsilon}(U)$. Concatenating these three $C$-positive paths (and smoothing the corners) gives a $C$-positive curve passing through $p$.
\end{proof}

    \subsection{Periodic flows}

A flow $\Phi$ is \textbf{periodic} if it has no fixed points, and there exists $T>0$ such that $\Phi^T$ is the identity.\footnote{By~\cite{Eps72}, a smooth nonsingular flow on a compact $3$-manifold, all of whose orbits are periodic, is a periodic flow, and hence is diffeomorphic to a Seifert fibration.} Periodic flows are linear on $\partial M$, so they satisfy $\Zg_\aleph(\Phi)=\Zg(\Phi)$. Periodic flows do not in general satisfy condition BSF, but they satisfy the following weaker condition:

\begin{defn}\label{def:condLBSF}
     A flow $\Phi$ is \textbf{locally branched surface finite} (\textbf{LBSF}) if for every $p\in \R^n$, there is a neighborhood $U$ of $p$ and a finite collection of $\Phi$-horizontal branched surfaces $\Sigma_1,\dots, \Sigma_m$ such that, up to the action of flow preserving homeomorphisms of $(M,\Phi)$ rel boundary, every $\Phi$-horizontal foliation with generalized boundary multislope in $U$ is carried by some $\Sigma_i$. Here, a generalized boundary multislope is in $K$ if some substitution of its $\aleph$ components with real numbers lies in $K$.
\end{defn}

\begin{rem}
    The simplest example of an LBSF but not BSF flow is a translation flow on $\T^2\times I$. In this example, $\Zg_\aleph(\Phi)=\Zg(\Phi)$ is noncompact and slopes $\pm\infty$ are never achievable, but simple adaptations of the techniques of \zcref{sec:alephclosure} show that the set of generalized boundary multislopes of foliations carried by a branched surface is a compact subset of $[-\infty, \infty]^n$.
\end{rem}

\begin{lem}
    Periodic flows satisfy condition LBSF.
\end{lem}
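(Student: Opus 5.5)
We must show that periodic flows are LBSF. The plan is to exploit the Seifert fibered structure: a periodic flow $\Phi$ on $M$ is a Seifert fibration whose fibers are the orbits. By~\cite{Eps72} and the classification of Seifert fibered spaces, we may choose a finite collection of vertical tori and annuli cutting $M$ into pieces of the form $P\times S^1$, where $P$ is a pair of pants or a solid torus neighborhood of an exceptional fiber. Any $\Phi$-horizontal foliation restricts to each piece as a foliation transverse to the circle fibers. Such a foliation is a suspension: it is determined, up to flow-preserving homeomorphism rel boundary, by a representation $\pi_1(P)\to \Homeo^+(S^1)$, the holonomy along the fibers. The boundary slopes are governed by the rotation numbers of the peripheral elements.

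The key step is to produce, for a compact neighborhood $U$ of a given multislope $\bm p\in\R^n$, finitely many horizontal branched surfaces. First, I would argue that the relevant rotation numbers live in a compact range. The slopes on $\partial M$ lie in the compact set $\overline U$. The slopes on the interior cutting tori are also bounded, by Milnor--Wood/Jankins--Neumann type inequalities applied piece by piece; more precisely, by the Eisenbud--Hirsch--Neumann description of $\Zg(\Phi)$. Consequently, the Euler-class data of every holonomy representation has a uniformly bounded denominator-free ``integer part''.

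Next, I would build a branched surface for each combinatorial type. Up to semiconjugacy (which can be realized by flow-preserving homeomorphisms combined with Denjoy blowups, both of which preserve being carried), a representation of a free group into $\Homeo^+(S^1)$ with bounded lifted rotation numbers is described by finitely many combinatorial cyclic orders. These are the interleaving patterns of finitely many marked points in the style of Calegari--Walker. For each pattern, I would take a horizontal section $P\times\{\text{pt}\}$ with finitely many copies stacked along the fiber. I would then branch them along the vertical cutting arcs according to the pattern, obtaining a horizontal branched surface. Finally, these pieces would be glued across the cutting tori, using finitely many train-track patterns on each torus determined by the bounded slope range.

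The main obstacle I expect is proving that the set of needed combinatorial patterns is finite. An arbitrary circle action can have unboundedly many periodic points, so I would try to show that a single branched surface with $k$ sheets carries all actions whose lifted rotation numbers lie in a fixed compact window. The approach is to use the fact that any such action is semiconjugate, via a monotone map, to one where the generators are ``almost'' rotations plus controlled twisting. This relies on boundedness of the bounded Euler class and the fact that the translation quasimorphism is bounded on the relevant words. Near $\pm\infty$ this bound fails, which is why only the local version (LBSF), and not BSF, can be expected.
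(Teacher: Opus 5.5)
There is a genuine gap, and it is exactly the step you flag as the main obstacle. The finiteness of the family of branched surfaces is the entire content of the lemma, and your proposal never establishes it. You try to encode the holonomy representations combinatorially (stacked copies of sections, Calegari--Walker-style interleaving patterns, bounded Euler class) and then hope that bounded rotation numbers leave only finitely many patterns. Nothing in the sketch controls the complexity of periodic-point configurations, and it is unclear what ``almost rotations plus controlled twisting'' would buy.

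The missing idea is that the branched surface need not record the dynamics at all:
\begin{enumerate}
\item Cover the base orbifold $\mathcal O$ by finitely many disks $D_1,\dots,D_m$ all of whose multiple intersections are disks.
\item Over each $D_i$, even one containing a cone point, any $\Phi$-horizontal foliation restricts to the product foliation of the solid torus by disks. So you can pick one leaf-disk $\widetilde D_i$ over each $D_i$.
\item Since $\bigcup_i \widetilde D_i$ meets every orbit, $\mathcal F$ is carried by the branched surface obtained by blowing air into these $m$ sheets.
\item The intrinsic topology of that branched surface is determined by the cyclic order of the sheets over each multiple intersection. This gives finitely many possibilities, independent of $\mathcal F$ and of any slopes.
\item The only remaining infinite ambiguity is the embedding in $M$, which is determined up to $\Mod(M,\Phi)$. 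Each horizontal branched surface carries only a bounded set of multislopes, because a train track on $\T^2$ transverse to the fibers carries a bounded set of slopes. Moreover, $\Mod(M,\Phi)/\Mod_\partial(M,\Phi)$ acts on $\R^n$ by integer translations, so finitely many translates suffice over any bounded $U$.
\end{enumerate}
That last point, not a failure of rotation-number bounds near $\pm\infty$, is why one gets LBSF rather than BSF.

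Separately, your claim that the slopes on the interior cutting tori are bounded by Jankins--Neumann/Eisenbud--Hirsch--Neumann inequalities is false when $\mathcal O$ has genus. Let $a,b$ be curves in the base with $a\cdot b=1$. Consider the vertical Dehn twist $(x,\theta)\mapsto(x,\theta+f(x))$, where $f$ is supported in an annulus around $b$ and winds once across it. This map preserves $\Phi$ and is the identity near $\partial M$. Its $k$-th power shifts the slope on the vertical torus over $a$ by $k$ while fixing every boundary slope, so no inequality in the boundary slopes alone can bound that interior slope. Boundedness holds only after normalizing by such flow-preserving homeomorphisms rel boundary. The LBSF definition permits this, but your argument never invokes it.
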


\begin{proof}
    Let $\mathcal O$ be the base orbifold of $\Phi$. Choose a finite collection of disks $D_1,\dots,D_n$ covering $\mathcal O$ so that all $k$-fold intersections of the disks are also disks. Given a transverse taut foliation $\mathcal F$, its restriction to the solid torus over $\Phi$ is a trivial foliation of $S^1\times D_i$ (this works even when $D_i$ crosses an orbifold singularity.) Choose a lift $\widetilde D_i$ of $D_i$ to a leaf of $\mathcal F$. Now $\cup_i{\widetilde D_i}$ intersects every flow line. Therefore, $\mathcal F$ is carried by a branched surface $B$ whose $I$-fibered neighborhood is obtained by blowing air into each of the $D_i$'s. The intrinsic topology of the branched surface is determined by the ordering of the $D_i$'s over each $k$-fold intersection, which is a finite amount of data. The embedding of the branched surface into $M$ is determined up to the action of the $\Mod(M,\Phi)$, the mapping class group of $(M,\Phi)$. Say that a $\Phi$-horizontal branched surface $B$ \textbf{covers} $A\subset \Zg(\Phi)$ if $A$ is the set of multislopes of foliations carried by $B$. Thus far, we have constructed a finite collection of branched surfaces whose $\Mod(M,\Phi)$ translates cover $\Zg(\Phi)$.
    
    Any train track on $\T^2$ transverse to $\partial_y$ carries only slopes in a bounded subset of $\R^n$. Therefore, any $\Phi$-horizontal branched surface covers a bounded subset of $\Zg(\Phi)\subset \R^n$. Let $\Mod_\partial(M,\Phi)$ be the subgroup of $\Mod(M,\Phi)$ preserving $(\partial M, \Phi)$. Since $\Mod(\T^2, \partial_y)$ acts faithfully on slopes in $\T^2$ by translation, the quotient $$\Mod(M,\Phi)/\Mod_\partial(M,\Phi)$$ acts faithfully by translations on $\R^n$. Therefore, up to the action of $\Mod(M,\Phi)/\Mod_\partial(M,\Phi)$, finitely many branched surfaces cover any bounded subset of $\Zg(\Phi)$.   
\end{proof}

\begin{rem}
    When $H^1(\mathcal O)$ is nontrivial, the space of transverse foliations up to isotopy is not compact in any reasonable sense. This is because one can perform high powers of a Dehn twist along a vertical torus or annulus to approach a foliation containing the vertical torus or annulus as a leaf.
\end{rem}

\begin{rem}
    One can also give an analytic proof of closedness of $\mathcal Z(\Phi)$ when $\Phi$ is periodic. There is a map from isotopy classes of transverse foliations to $\mathrm{Hom}\big(\pi_1(\mathcal O),\Homeo^+(S^1)\big)$, whose kernel is $\mathrm{Hom}\big(\pi_1(\mathcal O), \mathbb Z\big)$. Every action of the base orbifold on $Homeo^+(S^1)$ can be upgraded to a Lipschitz action, where the Lipschitz constant is bounded by a constant depending only on $\mathcal O$. The space of $k$-Lipschitz actions on the circle is compact, hence one can take limits of transverse foliations up to the action of $H^1(\mathcal O)$. The group $H^1(\mathcal O)$ acts discretely on boundary slopes of transverse foliations, so one may conclude that $\mathcal Z(\Phi)$ is closed.
\end{rem}

    \subsection{Pseudo-Anosov flows}

A pseudo-Anosov flow is a flow on a 3-manifold locally modeled on the suspension flow of a pseudo-Anosov surface homeomorphism. In this article, we consider pseudo-Anosov flows on 3-manifolds with torus boundary. We always assume that $\Phi$ is locally modeled on the \textbf{boundary blowup} near $\partial M$: $\Phi|_{\partial M}$ has $2k\geq 2$ closed orbits alternating between attracting and repelling, the attracting closed orbits bound unstable half-leaves, and the repelling closed orbits bound stable half-leaves. See \zcref{fig:pablowup}.

\begin{figure}[ht]
    \centering
    \includegraphics[width=0.3\linewidth]{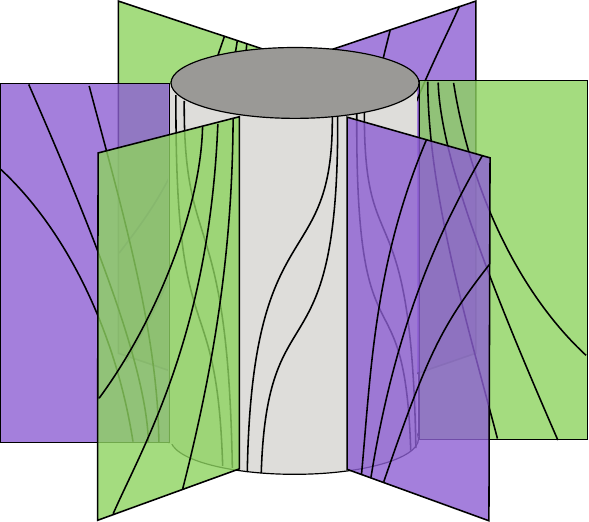}
    \caption{A pseudo-Anosov flow near $\partial M$. The unstable leaves half leaves are drawn in purple and the stable half leaves are drawn in green.}
    \label{fig:pablowup}
\end{figure}

We refer the reader to \cite{barthelme.mann.PseudoAnosovFlowsPlane} for general background on pseudo-Anosov flows. In this article, we will not need precise definitions, as we will only use some special properties of these flows. Among pseudo-Anosov flows, the easiest to study are the \textbf{fully punctured pseudo-Anosov flows without perfect fits} because they are adapted for the combinatorial machinery of veering triangulations. In particular, they satisfy condition BSF and are $\aleph$-unobstructed.

In theory, it is sufficient to study $\Zg(\Phi)$ for such flows because every transitive pseudo-Anosov flow can be obtained from some fully punctured pseudo-Anosov flow without perfect fits by Dehn filling along some non-degeneracy slopes. In fact, any transitive pseudo-Anosov flow can be drilled along closed orbits to obtain a suspension flow by work of
Fried \cite{fried.TransitiveAnosovFlows} and Brunella \cite{brunella.SurfacesSectionExpansive}.

\begin{thm}[\cite{Z24}]\label{thm:pABSF}
    If $\Phi$ is a fully punctured pseudo-Anosov flow without perfect fits, then $\Phi$ is BSF; in fact, there is a single branched surface transverse to $\Phi$ which carries all $\Phi$-horizontal foliations.
\end{thm}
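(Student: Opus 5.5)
The plan is to take the single branched surface to be the horizontal branched surface $B$ coming from the veering triangulation associated to $\Phi$, and then show that every $\Phi$-horizontal foliation can be horizontally isotoped into a fibered neighborhood of $B$.

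First I would recall the structure needed. A fully punctured pseudo-Anosov flow without perfect fits has an associated veering triangulation $\tau$ of $\interior M$ (Agol--Gu\'eritaud, Schleimer--Segerman). Its tetrahedra are dual to the maximal rectangles in the orbit space $\mathcal O\cong\R^2$ of the lifted flow $\widetilde\Phi$. Each face of $\tau$ can be realized as a disk transverse to $\Phi$, cooriented by the flow. Smoothing the $2$-skeleton along edges using the taut structure, so that each edge has two faces below and two above meeting with compatible coorientation, gives a $\Phi$-horizontal branched surface $B$. Its complementary regions are flow boxes over the tetrahedra. Truncating near the cusps makes $B$ meet $\partial M$ in a train track transverse to $\Phi|_{\partial M}$.

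Next I would fix a $\Phi$-horizontal foliation $\F$ and pass to the universal cover. Each leaf $\widetilde L$ of $\widetilde\F$ meets each orbit of $\widetilde\Phi$ at most once, since a transverse foliation admits no closed transversals bounding disks there. Hence $\widetilde L$ projects homeomorphically onto an open subset $U_L\subset\mathcal O$. I would then analyze how the sets $U_L$ sit relative to the maximal rectangles $R_\Delta$ defining the tetrahedra. The claim to aim for is that, for each tetrahedron $\Delta$ and each orbit segment through the flow box of $\Delta$, the leaves of $\F$ crossing that segment can be pushed along the flow to cross the top and bottom faces of the box in a product manner. Equivalently, after an equivariant isotopy along flow lines, $\F$ restricted to each flow box is a product foliation by disks parallel to the faces.

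To obtain such an isotopy, I would build it tetrahedron by tetrahedron in the layered order given by the taut structure: first choose the faces as transversals lying in a thin neighborhood of leaves, then use the isotopy extension \zcref{lem:isotopy_extension} near $\partial M$. Once $\F$ is a product in each flow box, collapsing the boxes along $\Phi$ exhibits $\F$ as carried by $B$, and the construction does not depend on $\F$.

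I expect the main obstacle to be the product claim, that is, ruling out a leaf whose projection $U_L$ cuts a rectangle $R_\Delta$ in a bad pattern. This is where the absence of perfect fits and full puncturedness must be used. My first approach would be to show that the frontier of each $U_L$ in $\mathcal O$ consists of curves that cannot cross a maximal rectangle without containing one of its singular corner orbits. Since those orbits run into $\partial M$, where $\F$ is transverse, this would force each $U_L$ to be a union of full rectangle pieces.
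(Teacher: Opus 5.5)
Your choice of $B$ is the one the paper relies on. The statement is quoted from \cite{Z24} without proof, and the proof of \zcref{lem:continuousfol} only records that the carrying branched surface is the $2$-skeleton of the veering triangulation. Your reformulation of carrying is also fine: after a horizontal isotopy, $\F$ should be a product in the flow box of every truncated tetrahedron. But the proposal stops where the content of the theorem begins, and the lemma you propose for the crux is false.

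For foliations transverse to pseudo-Anosov flows, Fenley showed that the frontier of $U_L$ consists of (half-)leaves of the stable and unstable foliations of the orbit space, along which $\widetilde L$ escapes to $\pm\infty$ in the flow direction. Such a leaf crosses maximal rectangles as product slices without containing any of their singular points. Frontiers can also reach the singular orbits, i.e.\ the boundary. If $\partial_i\F$ is of $\aleph$ type, a leaf of $\partial_i\F$ inside a Reeb annulus lifts to a curve crossing only a bounded interval of orbits of the lifted boundary flow, whose endpoints are lifts of closed orbits of $\Phi|_{\partial_i M}$. So transversality of $\F$ to $\partial M$ excludes nothing, and the theorem is claimed for these foliations too.

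More fundamentally, carrying is not a property of the sets $U_L$. Every interior orbit passes through a bi-infinite sequence of tetrahedra, so their projections overlap and cover each point of $\mathcal O$ infinitely often. Moreover, isotopies along flow lines, which are the ones you propose, do not change $U_L$ at all. What must be constructed is a $\pi_1$-equivariant choice, for every leaf and every point of $U_L$, of \emph{which} tetrahedron along that orbit the leaf crosses. This choice must be monotone with respect to the order of leaves along each orbit, with the chosen tetrahedra running off to $\pm\infty$ along the flow as one approaches a frontier leaf. This is where full puncturedness and the absence of perfect fits have to enter, and the proposal gives no mechanism for it.

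The isotopy step has independent problems. The taut structure gives no global layered order. Veering triangulations need not be layered, downstairs every closed orbit of $\Phi$ gives a directed cycle of tetrahedra, and in $\widetilde M$ the induced order has infinite descending chains, so there is no tetrahedron-by-tetrahedron induction. Choosing the faces inside thin neighborhoods of leaves presupposes that every leaf meeting the flow-thickening of a face crosses it as a full section, which is exactly the claim to be proved. Finally, \zcref{lem:isotopy_extension} only propagates isotopies inward from $\partial M$, which does not address this.

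A smaller point: edges of a veering triangulation have arbitrary degree, so it is not true that each edge has two faces below and two above. At an edge $e$, the two top faces of the tetrahedron below $e$ form one smooth sheet and the two bottom faces of the tetrahedron above $e$ form another. The remaining faces of the two fans merge tangentially in between, which is why carried foliations are encoded by one $h_e\in\Homeo(I)$ per edge.
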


\begin{lem}\label{lem:fptpafwpf_unobstructed}
    If $\Phi$ is a fully punctured pseudo-Anosov flow without perfect fits, then $\Phi$ is $\aleph$-unobstructed.
\end{lem}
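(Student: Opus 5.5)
The plan is to argue by contradiction. Suppose $\Phi$ admits a transverse compact planar surface $\Sigma$ whose degeneracy signature is one of those listed in \zcref{def:alephunobstructed}. Then every boundary component of $\Sigma$ is a closed curve in $\partial M$ transverse to $\Phi|_{\partial M}$, at some slope. For a fully punctured pseudo-Anosov flow, $\Phi|_{\partial M}$ has the boundary-blowup form described above: it has $2k\geq 2$ closed orbits of degeneracy slope $+\infty$, alternating between attracting and repelling. A curve transverse to such a flow must cross every closed orbit in the same direction. It therefore has finite slope in the framing, or else it is parallel to the orbits. Being parallel is impossible for a closed transversal, so the degeneracy boundary components have to be read as curves whose slope approaches $\pm\infty$ in the appropriate oriented sense. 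I will first make this precise: a transverse degeneracy boundary curve must meet each of the $2k$ prongs and wind around in a definite direction.

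The key step is to use the structure of the stable and unstable foliations restricted to $\Sigma$. The surface $\Sigma$ is transverse to $\Phi$, so the singular stable foliation $\mathcal{W}^s$ induces a singular foliation $\mathcal{W}^s\cap\Sigma$ on $\Sigma$. The only singularities are prongs with negative index, since $\Phi$ is fully punctured and so all singular orbits have been drilled into $\partial M$. Along each boundary component of $\Sigma$ we read off a contribution to the Euler characteristic, in the style of Poincaré--Hopf. A nondegeneracy boundary component crosses the $2k$ half-leaves in a way that contributes $-k$ (a \emph{boundary prong} count). A degeneracy component instead runs along the blown-up annulus, and its contribution depends on whether it is $+\infty$ or $-\infty$, that is, on whether it spirals with or against the half-leaves. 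I would then show that $\chi(\Sigma)=2-b$, with $b$ the number of boundary components, is incompatible with these local indices for each of the signatures $(1,1,0)$, $(1,0,1)$, $(0,1,m)$, $(0,m,1)$. The interior singularities only push the total further negative, so at most one or two boundary components cannot compensate for a planar surface.

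To get these index computations I would alternatively use the veering triangulation / single carrying branched surface of \zcref{thm:pABSF}. The absence of perfect fits guarantees that every $\Phi$-horizontal surface is carried by the veering branched surface $B$. A carried planar surface meeting $\partial M$ in degeneracy slopes must then correspond to a nonnegative weight system on $B$ whose boundary train tracks meet the ladderpoles in a prescribed way. One then checks combinatorially that the branch equations force an Euler characteristic $\le -1$, or force the weights to vanish. This is the same style of argument that \cite{Z24} uses to show that horizontal surfaces have $\chi<0$ away from fibers.

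The main obstacle is the degenerate cylinders $(1,1,0)$ and $(1,0,1)$, where $\chi=0$ and the index count is tight. For these I would use no perfect fits directly. A transverse annulus from a nondegeneracy curve to a degeneracy curve lifts to a strip in the orbit space of $\Phi$. This strip would produce a pair of stable and unstable half-leaves that make a perfect fit, namely the boundary half-leaves associated with the degeneracy component, and that contradicts the hypothesis. I would try this argument first, and fall back on the weight-system computation if the orbit-space picture is hard to make rigorous.
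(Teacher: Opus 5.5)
Your overall mechanism, Poincar\'e--Hopf for the line fields induced on the capped sphere $\Sigma^\bullet$ by the stable and unstable foliations, is the right one. However, your index bookkeeping is wrong, and as a result the one case that actually needs the no-perfect-fits hypothesis is left unproved.

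First, degeneracy curves are genuine closed transversals. You claim parallel closed transversals are impossible, but the core of each annulus of $\partial M$ between an attracting and a repelling closed orbit of $\Phi|_{\partial M}$ is transverse to the flow. These cores, oriented by the side from which $\Phi$ crosses them (which alternates from one annulus to the next), are exactly the $+\infty$ and $-\infty$ degeneracy boundary curves. Along such a curve, $\partial M$ is tangent to both the stable and the unstable foliation, since the curve lies away from all prong curves. So the curve is a closed leaf of the induced foliations on $\Sigma$ and caps off to a singularity of index $+1$, independently of its sign.

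A nondegeneracy curve meeting the degeneracy slope $q\ge 1$ times crosses $kq$ stable prongs and caps off to a $kq$-prong of index $1-kq/2\le 1/2$. Since $\Phi$ is fully punctured, there are no interior singularities at all. Poincar\'e--Hopf therefore gives the exact identity
\[
2=\#\{\text{degeneracy components}\}+\sum_{\text{nondegeneracy components}}\Bigl(1-\frac{k_jq_j}{2}\Bigr).
\]

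Consequently the degenerate cylinders $(1,1,0)$ and $(1,0,1)$ are not the tight case you describe. Their total index is at most $1+\tfrac12<2$, so they are excluded immediately and no orbit-space argument is needed; the strip-to-perfect-fit step you sketch for them is not justified anyway.

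When all boundary components are degeneracy, the total index equals the number $b$ of boundary components, so $b=2$. This excludes $(0,1,m)$ and $(0,m,1)$ for $m\ge 2$. Note that these are excluded because $\chi(\Sigma)=1-m$ is \emph{more} negative than the forced value $0$. So neither your heuristic that ``singularities only push the total further negative'' nor a branch-equation bound $\chi\le -1$ can rule them out.

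The annulus of signature $(0,1,1)$, however, is perfectly consistent with Poincar\'e--Hopf, and this is exactly where the hypothesis must enter. A transverse annulus between a $+\infty$ and a $-\infty$ degeneracy curve is a parallelism between two closed orbits of $\Phi$ on $\partial M$, which forces a perfect fit. Your plan asserts that the index count disposes of $(0,1,1)$ and spends the no-perfect-fits hypothesis on the cylinders instead, so this case is missing.
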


\begin{proof}
    Suppose $\Sigma$ is a $\Phi$-horizontal compact planar surface. The stable and unstable foliations of $\Phi$ restrict to foliations $\F^s$ and $\F^u$ on $\Sigma$. Upon capping off the boundary components of $\Sigma$ to obtain a closed surface $\Sigma^\bullet \cong S^2$, $T\F^s$ and $T\F^u$ become singular line fields, with each degeneracy boundary contributing an index $+1$ singularity and each $k$-prong singularity, $k\geq 1$, contributing index $1-k/2$ singularity. The Poincar\'e--Hopf index theorem implies that the total index is $\chi(\Sigma^\bullet)=2$.
    
    If $\Sigma$ is a degenerate cylinder then the total index would be at most $1/2$, which violates Poincar\'e--Hopf. If all components of $\partial \Sigma$ are degeneracy, then by Poincar\'e--Hopf $\Sigma$ must have exactly two boundary components. Thus, $\Sigma$ is a parallelism between two orbits, and $\Phi$ has a perfect fit.
\end{proof}

\begin{prop}[$\aleph$ filling] \label{prop:alephfilling}
Let $\F$ be a $\Phi$-horizontal foliation such that $\partial_i \F$ is $\aleph$ type along some boundary component $\partial_i M$. Then $\F$ is monotone equivalent to a $\Phi$-horizontal foliation $\F'$ which has the same generalized fine boundary multislope as $\F$ and which extends to a foliation $\F'(s)$ on the Dehn filling of $\partial_i M$ along any slope $s\in \Q$. Moreover, $\F(s)$ is transverse to an extension $\Phi(s)$ of $\Phi$ to the Dehn filling. Every leaf of $\F'(s)$ is a leaf of $\F'$; hence, if $\F'$ is taut then so is $\F'(s)$.
\end{prop}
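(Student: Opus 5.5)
Our plan is to work locally near $\partial_i M$, in a collar $N \cong \T^2 \times [0,1]$ where $\Phi$ is close to a product flow. We reduce to a standard model for $\partial_i\F$ and then fill by hand.

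\emph{Step 1: normalize $\partial_i \F$.} Since $\partial_i\F$ is of $\aleph$ type, its Reeb annuli contain closed orbits of $\Phi|_{\partial_i M}$ of slope $+\infty$. Outside the Reeb annuli, the leaves spiraling to the right all share one slope and the leaves spiraling to the left all share another; as in the proof of \zcref{lem:resolution_of_aleph}, these are the degeneracy slopes $\pm\infty$. Cutting along one chosen closed orbit in each Reeb annulus decomposes $\partial_i M$ into annuli $A_1,\dots,A_{2k}$ whose compact leaves are closed curves of degeneracy slope.

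We then apply Denjoy blowups and blowdowns along leaves of $\F$ meeting $\partial_i M$. These are chosen to respect $\Phi$, using the product structure of $\Phi$ on thin foliated $I$-bundles as in the proof of \zcref{lem:isolatedcompactleaves}. The goal is to replace $\partial_i\F$ by a standard $\aleph$ model: finitely many Reeb annuli, each bounded by closed leaves parallel to the vertical, and $\sharp/\flat$ annuli in between. This keeps the generalized fine slope $\aleph$ and produces $\F'$ monotone equivalent to $\F$. Afterwards we use \zcref{lem:isotopy_extension} with a horizontal isotopy to make $\partial_i\F'$ and $\Phi|_{\partial_i M}$ coincide with fixed smooth models near $\partial_i M$.

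\emph{Step 2: fill.} Given $s\in\Q$ with $s\neq$ degeneracy slope, glue a solid torus $V$ with meridian slope $s$. The key observation is that every leaf of $\partial_i\F'$ either is a closed vertical curve or is a line asymptotic at both ends to vertical closed leaves. In particular, every leaf meets the meridian curve $s$ in a bounded arc or in a single essential closed curve. We foliate $V$ so that each closed vertical leaf of $\partial_i\F'$ bounds nothing new: extend the leaves of $\F'$ meeting $\partial_i M$ into $V$ as half-disk-like pieces, by modeling $V$ as a neighborhood of a core circle on which $\F'(s)$ restricts to a foliation whose leaves are the \emph{extensions} of leaves of $\F'$.

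Concretely, we intend to realize $V$ as a quotient of the collar $N$ that collapses each vertical closed leaf of the Reeb-boundary to a point of the core. This mirrors \zcref{def:flowblowdown}, but blows down along the slope given by the $\Phi$-orbits of the boundary rather than along leaves. The flow $\Phi$ then descends to a flow $\Phi(s)$ that is transverse to the image foliation away from the core. Near the core, $\Phi(s)$ is almost transverse in Mosher's sense, i.e.\ it admits a smooth model transverse after a local blowup. Since the quotient map only identifies points within leaves, every leaf of $\F'(s)$ is the image of a leaf of $\F'$. Tautness then follows because closed transversals to $\F'$ push forward to closed transversals to $\F'(s)$.

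\emph{Main obstacle.} The hardest step is making the quotient in Step 2 compatible with an arbitrary filling slope $s$ while keeping genuine (not merely almost) transversality to some extension $\Phi(s)$ and keeping a $C^0$-foliation structure at the core. My first attempt will be to use the $\aleph$ structure in the following way. On each annulus $A_j$, the leaves spiral onto closed orbits of $\Phi$. After the normalization of Step 1, one can then choose $\Phi(s)$ inside $V$ as a flow that spirals around the core, following the Reeb pattern. In suitable local coordinates near each Reeb annulus, the foliation extends across $V$ by meridian-type twisting. Transversality is then checked piece by piece in the alternating annuli $A_{2j}$ and $A_{2j+1}$, and the fact that $s$ is a nondegeneracy slope guarantees that these local pieces glue consistently.
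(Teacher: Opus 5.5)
Your Step 2 has a genuine gap, and it is the heart of the statement.

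\textbf{The quotient fills along the wrong slope.} The concrete quotient you propose collapses the vertical closed leaves, i.e.\ circles of the slope of the $\Phi$-orbits on $\partial_i M$, to points of a core circle. That is a blowdown along the degeneracy slope $+\infty$: the collapsed circles become meridians. So you obtain the filling $M(\infty)$ whichever $s$ you started with, never $M(s)$ for $s\in\Q$. Conversely, collapsing circles of slope $s$ to points is compatible with the foliation only when $\partial_i\F'$ is linear of slope $s$, which an $\aleph$-type foliation never is.

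Your fallback (a flow ``spiraling around the core'', ``meridian-type twisting'') is not a construction. By your own account it gives only almost-transversality near the core, whereas the statement requires genuine transversality to an extension $\Phi(s)$. Capping leaves with new ``half-disk-like pieces'' would also break the property that leaves of $\F'(s)$ are leaves of $\F'$.

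\textbf{The missing idea: collapse onto a $2$-dimensional spine.} Do not collapse $\partial_i M$ onto a circle; collapse it onto a $2$-dimensional spine of the filling solid torus. The paper folds $\partial_i M$ like origami:
\begin{itemize}
\item a mountain fold along a closed transversal in each upward-opening Reeb annulus, and a valley fold along one in each downward-opening Reeb annulus;
\item this cuts $\partial_i M$ into $2k$ annular panels $B_1,\dots,B_{2k}$;
\item it then collapses along $s=p/q$ onto a $kq$-pronged star-graph bundle over $S^1$. A meridian disk is the mapping cylinder of the map folding its boundary circle onto a star.
\end{itemize}
The foliation descends to $M(s)$ provided the panels identified by the folds carry mirror-image foliations. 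Leaves of $\F'(s)$ are then leaves of $\F'$ glued along their boundaries. The flow $\Phi(s)$ is obtained by interpolating, through flows transverse to $\F'$, between $\Phi|_{B_{2i+1}}$ and the mirror of $\Phi|_{B_{2i}}$.

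\textbf{What Step 1 must produce.} This dictates the normal form, and your ``standard $\aleph$ model'' is not it. Between consecutive Reeb annuli, $\partial_i\F$ may contain infinitely many $\sharp$ (resp.\ $\flat$) annuli interleaved with $\natural$ regions. You cannot blow these down to a finite model, since blowdowns of $\partial\F$ must come from collapsing foliated $I$-bundles of $\F$.

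The paper instead \emph{blows up}, so that each $\sharp/\natural$ region becomes a Cantor $\sharp$ annulus and each $\flat/\natural$ region a Cantor $\flat$ annulus. In these, the closed leaves form a Cantor set times $S^1$ and all gaps are $\sharp$ (resp.\ $\flat$) annuli. Such annuli are unique up to isotopy, which is exactly what makes adjacent panels mirror images.

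\textbf{The compact planar obstruction.} Performing these blowups means blowing up leaves $\lambda_j$ of $\F$ with prescribed $\sharp$ or $\flat$ holonomy along their boundary circles in these regions, without changing the generalized fine slopes elsewhere. Perfectness of $\mathrm{Homeo}_+(I)$ handles this unless $\lambda_j$ is a compact planar leaf, whose boundary holonomies must multiply to the identity. In that case the paper uses the maximal fibered neighborhood of $\lambda_j$ to find boundary components with weakly attracting (resp.\ repelling) holonomy. There $\sharp$ (resp.\ $\flat$) annuli can be inserted harmlessly. Your proposal does not address this obstruction.
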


\begin{proof}
    Around $\partial_i \F$ we see a period 4 repeating pattern of annuli: $A_1,A_2,\dots$ where $A_{4m}$ is downward-opening Reeb, $A_{4m+1}$ has only $\sharp$ or $\natural$ annuli, $A_{4m+2}$ is upward-opening Reeb, and $A_{4m+3}$ has only $\flat /\natural$ annuli. 

\begin{figure}[ht]
    \centering
    \def\svgwidth{0.6\textwidth}
    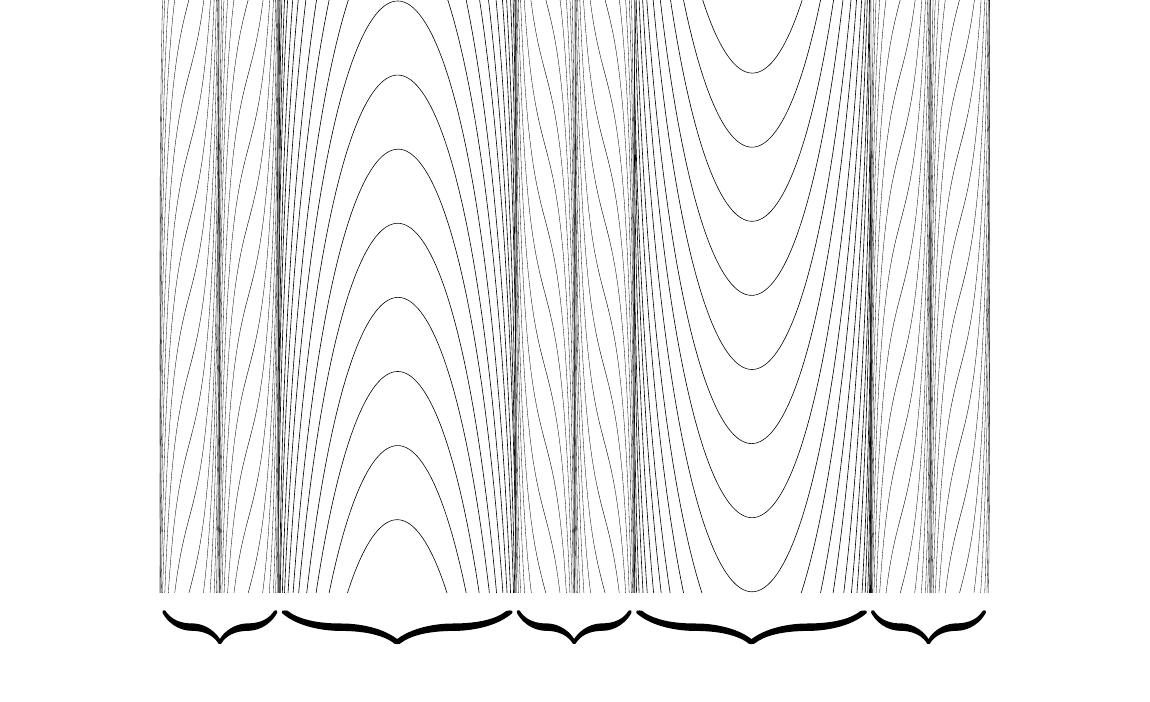
    \caption{$\aleph$ pattern.}
    \label{fig:alephfilling}
\end{figure}

    We say that a foliated annulus is a \textbf{Cantor $\sharp$ (resp. $\flat$) annulus} if its closed leaves constitute the product of a Cantor set with $S^1$, and each complementary region is a $\sharp$ (resp. $\flat$) annulus. Since Cantor sets are unique up to homeomorphisms of the interval, there is a unique Cantor $\sharp$ (resp $\flat$) annulus up to topological isotopy.
    
    We will construct $\F'$ by performing a blowup on $\F$ so that each $A_{4m+1}$ is a Cantor $\sharp$ annulus while each $A_{4m+3}$ is a Cantor $\flat$ annulus. To do this, perform a trivial blowup on the boundaries of any $\sharp$ or $\flat$ annuli in $A_{4m+1}$ or $A_{4m+3}$ to ensure that all these annuli are isolated. Then choose a countable collection of leaves $\{\lambda_j\}_{j\geq 1}$ whose boundaries are dense in each $\natural$ annulus in $A_{4m+1}$ or $A_{4m+3}$. Blow up each $\lambda_j$ to insert a $\sharp$ annulus along any boundary component in $A_{4m+1}$ and a $\flat$ annulus along any boundary component in $A_{4m+3}$, and the generalized boundary type of $\F$ does not change. This is always possible by perfectness of $\Homeo_+(I)$ unless $\lambda_j$ is a compact planar leaf. In the compact planar case, consider the maximal fibered neighborhood $\lambda_j \times [0,1]$ of $\lambda_j$. (The maximal fibered neighborhood cannot be a fibration over $S^1$ since one of the boundaries of $\lambda_j$ lies on an $\aleph$ type boundary.) Then $\lambda_j \times \{0\}$ must have nontrivial holonomy above. On some boundary component it has weakly attracting holonomy (i.e., non-strictly decreasing, and not equal to the identity in any neighborhood of 0.) and therefore on some other component $\lambda_j \times \{0\}$ has weakly repelling holonomy above. Therefore, $\lambda_j$ has some boundary component where a $\sharp$ annulus can be added without changing the generalized fine boundary multislope of $\F$, and similarly some boundary component where a $\flat$ annulus can be added. Thus, the desired blowup is possible using a representation $\rho:\pi_1(\lambda_i)\to \Homeo_+(I)$ with ascending holonomy near the $\sharp$-permissible boundary components, descending holonomy near the $\flat$-permissible boundary components, and trivial holonomy around the rest.

\begin{figure}[ht]
    \centering
    \begin{subfigure}{0.6\linewidth}
        \centering
        \def\svgwidth{\linewidth}
        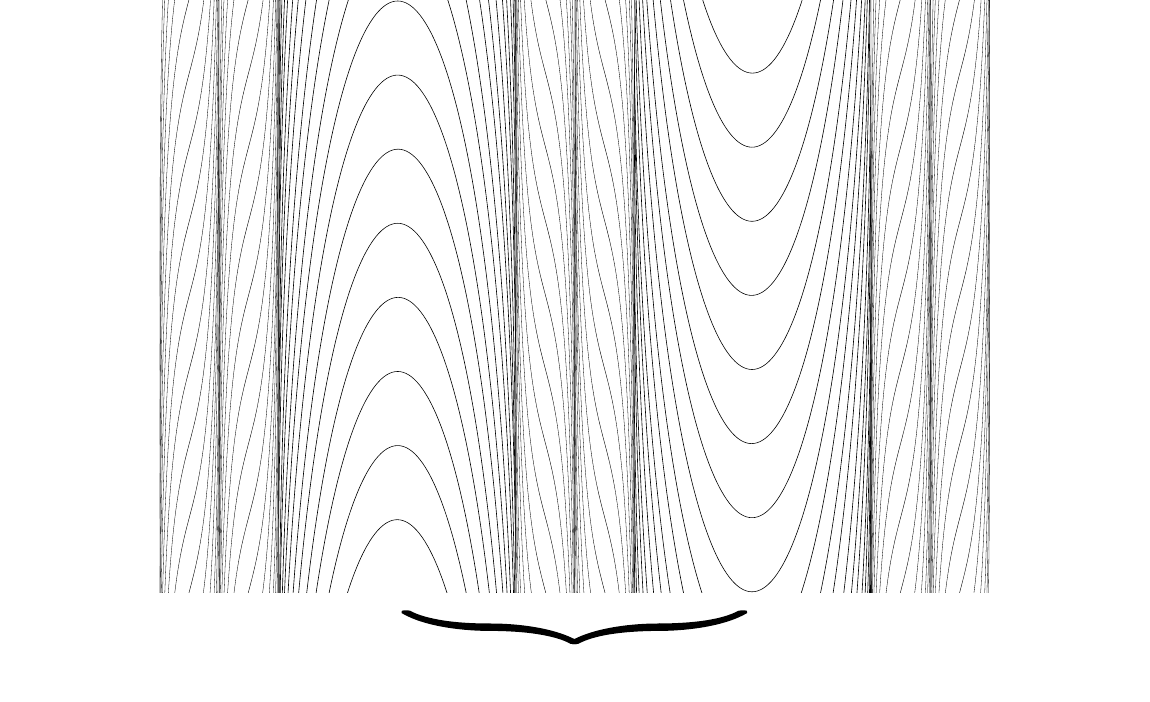
        \caption{Folding instructions...}
        \label{fig:aleph_filling_origami}
    \end{subfigure}
    \begin{subfigure}{0.39\linewidth}
        \centering
        \def\svgwidth{0.8\linewidth}
        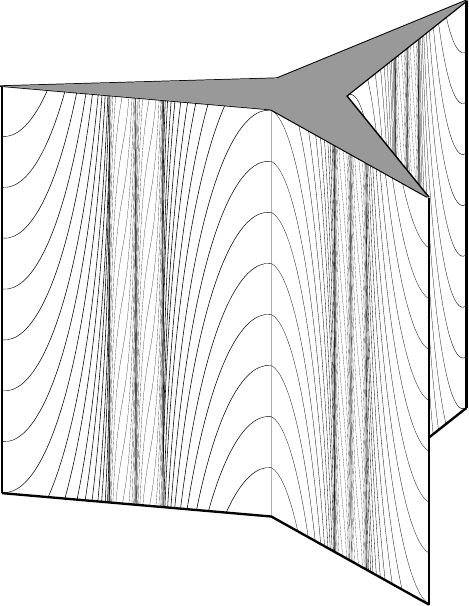
        \caption{...Result.}
        \label{fig:fortune_teller}
    \end{subfigure}
    \caption{Folding the origami pattern. Unlike our usual convention, it is better to imagine that we are looking at $\partial M$ from inside $M$.}
    \label{fig:origami}
\end{figure}

    We are ready to perform the Dehn filling.
    Imagine that $\partial_i M$ is a sheet of origami paper. Perform a mountain fold along a closed transversal in each of the $k$ upward Reeb annuli and a valley fold along a closed transversal in each of the $k$ downward Reeb annuli. (There several topologically distinct options that work just as well; for example, one could instead perform valley folds inside the upward Reeb annuli and mountain folds inside the downward Reeb annuli.)
    Now the paper annulus may be collapsed along slope $s_i=p_i/q_i$ onto a 2-complex which is a $kq$-pointed star graph bundle over $S^1$ as in \zcref{fig:origami}. Denote the annular panels of the origami $B_1,\dots,B_{2k}$ as in \zcref{fig:aleph_filling_origami}. As arranged above, the restriction of $\F'$ to $B_{2i}$ is the mirror of the foliation on $B_{2i+1}$: one has a Cantor $\sharp$ annulus between two half $\sharp$ annuli, and the other has a Cantor $\flat$ annulus between two half $\flat$ annuli. Therefore, the foliations extend across the 2-complex. 

    We might not be able to arrange that $\Phi|_{B_{2i}}$ matches exactly with the mirror of $\Phi|_{B_{2i+1}}$, so $\Phi$ does not immediately extend over the 2-complex. However, we can interpolate between $\Phi|_{B_{2i+1}}$ and $\mirror(\Phi|_{B_{2i}})$ with flows transverse to $\F'$. This interpolation defines an extension $\Phi(s)$ of $\Phi$ across the Dehn filling. 
\end{proof}

\begin{rem}
If we assume $\Phi$ is nonwandering, $\Phi(s)$ might not strictly satisfy the definition of nonwandering: it might be wandering in a neighborhood of the Dehn filling core. However, every leaf of $\F'(s)$ extends away frow the Dehn filling core and touches a nonwandering set of $\Phi$, so this Dehn filling does not destroy tautness. Also note that $\Phi(s)$ is in the same homotopy class of vector fields as the standard blowdown described in \zcref{def:flowblowdown}.
\end{rem}

\begin{rem}
    If $\Phi$ can be arranged so that $\Phi|_{B_{2i}}$ matches with the mirror of $\Phi|_{B_{2i+1}}$, then $\Phi$ extends immediately across the 2-complex. In this case, the Dehn filled foliation is transverse to a \emph{dynamic blowdown} of $\Phi$ in the sense of Mosher, see \cite[Section 1]{mosher.SurfacesBranchedSurfaces}.
\end{rem}

    \section{Leafwise smoothing of foliations} \label{sec:leafwisesmoothing}

In~\cite{C01}, Calegari showed that every cooriented \emph{topological} foliation (or lamination) on a compact $3$-manifold can be topologically isotoped to a $C^0$-foliation, whose leaves are smoothly immersed. In the presence of a transverse smooth flow, the proof simplifies and it is possible to make the resulting foliation genuinely transverse to the flow:

\begin{lem} \label{lem:smoothingfol}
    Let $\Phi$ be a smooth nonsingular flow on $M$, and let $\F$ be a topological foliation on $M$ which is topologically transverse to $\Phi$. Then $\F$ is topologically isotopic to a $C^0$-foliation with smooth leaves, which is genuinely transverse to $\Phi$. Moreover, the isotopy can be made arbitrarily small, and along the flow lines of $\Phi$.
\end{lem}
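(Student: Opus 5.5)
The plan is to use the flow $\Phi$ to build a local product structure in which every leaf of $\F$ is a graph. The graphs will then be smoothed one plaque at a time, by a convolution-type procedure that is monotone in the transverse coordinate, so that the order of the leaves is preserved.

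\emph{Flow boxes.} Cover $M$ by finitely many smooth flow boxes $B_j \cong D_j \times [-1,1]$. Here $D_j$ is a small smooth disk transverse to $\Phi$ (a half-disk near $\partial M$) and the second coordinate is flow time. Topological transversality means each leaf meets each flow segment $\{x\}\times[-1,1]$ in at most one point. After shrinking the boxes, the plaques of $\F$ in $B_j$ are therefore graphs of continuous functions $u:D_j\to(-1,1)$. These graphs are pairwise disjoint, so the family is totally ordered; it is also continuous in the leaf-space parameter. The plaque family in $B_j$ is thus a continuous, strictly monotone family $\{u_t\}$. Moving points along flow lines changes each $u_t$ but preserves the graph structure. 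Hence \emph{any} homeomorphism of $M$ preserving each flow line and moving points only a short time keeps the foliation topologically transverse. This is how the isotopy will be kept ``along the flow lines of $\Phi$.''

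\emph{Inductive smoothing.} Work box by box with a nested shrinking of the cover, as in Calegari's argument. The aim is that on $B_1,\dots,B_{j-1}$ the plaques are already $C^1$ graphs with continuously varying derivatives. On $B_j$, replace $u_t$ by a mollification $\tilde u_t = \rho_\varepsilon * u_t$, blended with $u_t$ by a cutoff that equals $1$ on a shrunken box and is supported inside $B_j$. Mollification with a positive kernel preserves strict monotonicity in $t$, since $u_s<u_t$ implies $\rho*u_s<\rho*u_t$. The convex combination $\tilde u_t=(1-\chi)u_t+\chi\,\rho_\varepsilon*u_t$ is also monotone in $t$. So the new graphs are disjoint and fill the same region, and the map $(x,u_t(x))\mapsto(x,\tilde u_t(x))$ is a homeomorphism moving points only along flow segments. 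It is $C^0$-small when $\varepsilon$ is small, and it is the time-one map of the straight-line isotopy in the flow coordinate. The new plaques are smooth on the inner box, and their derivatives depend continuously on $t$ because $\rho_\varepsilon*u_t$ does. Where the box overlaps previously smoothed regions, the plaques are already $C^1$. The convex combination then stays $C^1$ with derivatives close to those of $u_t$, provided $\varepsilon$ is small relative to the moduli of continuity there. This keeps earlier smoothness intact. Near $\partial M$, use half-disks and reflect before mollifying, so that $\partial M$ is preserved and the plaques remain transverse to it.

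\emph{Transversality and the main obstacle.} The result is a $C^0$-foliation whose leaves are smooth graphs over the transversals $D_j$ in each box. A graph over a disk transverse to $\Phi$ is automatically transverse to $\Phi$, giving genuine transversality. The step I expect to be the main obstacle is the compatibility of successive smoothings on overlaps. Mollifying in box $B_j$ uses the coordinates of $D_j$, and it must not destroy the $C^1$ regularity or the continuity of tangent planes achieved in earlier boxes. Two points need care here. First, the flow-box coordinates should be chosen smooth and mutually related by smooth maps: the transition is smooth in the base and a smooth time shift in the fibre. Then a $C^1$ graph in one box is a $C^1$ graph in another, and the blended function's $C^1$ norm is controlled by $\|\chi\|_{C^1}\,\|u_t-\rho_\varepsilon*u_t\|_{C^0}$ plus derivative terms. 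Second, $\varepsilon_j$ must be chosen small in terms of all the previous steps. With finitely many boxes, this gives an arbitrarily small total isotopy along flow lines.
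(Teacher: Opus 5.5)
Your strategy matches the paper's. You use flow boxes in which the plaques are graphs over a transversal, glue in a monotone local smoothing with a cutoff, and proceed box by box as in Bowden's Remark~3.3, with all motion along flow segments. Mollifying only in the base directions, rather than jointly as the paper does, is harmless. Your treatment of overlaps is also fine: a blending weight that is constant along plaques preserves whatever leafwise regularity they already have.

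\textbf{The gap.} The problem is the step you treat as automatic: that $\tilde u_t=(1-\chi)u_t+\chi\,\rho_\varepsilon*u_t$ is monotone in $t$ and that the new graphs ``fill the same region''. For the modification to be supported in $B_j$ and glue to $\F$ outside, the weight must vanish near the top and bottom of the box, not only near $\partial D_j$. (Near the top and bottom the plaques are only partial graphs anyway, so you should work in a foliated chart $\{u_a\le z\le u_b\}$.) A convex combination of two increasing families is increasing only when the weight is the same for all plaques. With a weight $\lambda_t(x)$ that varies from plaque to plaque, as it must here, you get
\[
\tilde u_t-\tilde u_s=(1-\lambda_t)(u_t-u_s)+\lambda_t\,\rho_\varepsilon*(u_t-u_s)+(\lambda_t-\lambda_s)(\rho_\varepsilon*u_s-u_s).
\]
The last term has no sign, and nothing forces the first two to dominate it. For a cutoff in the leaf parameter, the last term is of order $|\partial_t\lambda|\,|t-s|\,\|\rho_\varepsilon*u-u\|$. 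The first two can be $o(|t-s|)$, because holonomy between different base points need not be Lipschitz. If instead the weight depends on $x$ alone, monotonicity holds, but the extreme plaques $u_a,u_b$ move, so the new foliation no longer matches $\F$ above and below the chart.

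\textbf{A fix that keeps the rest of your argument.}
\begin{itemize}
\item Take the weight $\lambda_t(x)=\chi(x)\psi(t)$, with $\psi$ a cutoff in the leaf parameter. It is constant on plaques, so your regularity bookkeeping is unchanged.
\item Replace $\rho_\varepsilon*u_t$ by $v_t=\rho_\varepsilon*u_{\tau(t)}$. Here $\tau$ is increasing, with $\tau(t)=t+\eta$ on the lower layer where $\psi$ increases and $\tau(t)=t-\eta$ on the upper layer where $\psi$ decreases. The transition of $\tau$ happens inside $\{\psi=1\}$.
\item Choose $\varepsilon$ after $\eta$ so that $\sup_t\|\rho_\varepsilon*u_t-u_t\|<\min_{t,x}\big(u_{t+\eta}(x)-u_t(x)\big)$. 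Then $v_t>u_t$ wherever $\psi$ increases and $v_t<u_t$ wherever it decreases.
\end{itemize}
Now check the cross term, writing the identity above with $v$ in place of $\rho_\varepsilon*u$.
\begin{itemize}
\item If $s<t$ and $\psi(t)>\psi(s)$, then $s$ lies in the lower layer, so the cross term $(\lambda_t-\lambda_s)(v_s-u_s)$ is nonnegative.
\item If $\psi(t)<\psi(s)$, then $t$ lies in the upper layer. Use the symmetric identity $\tilde u_t-\tilde u_s=(1-\lambda_s)(u_t-u_s)+\lambda_s(v_t-v_s)+(\lambda_t-\lambda_s)(v_t-u_t)$, whose cross term is again nonnegative.
\end{itemize}
Hence the blended family is strictly increasing. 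It equals $u_t$ near the boundary of the chart and is leafwise smooth on the inner chart. The same argument with weight $\theta\lambda_t$, $\theta\in[0,1]$, gives the isotopy along flow lines.

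The paper's own proof compresses exactly this point into ``a suitable smooth cutoff''. So this is a detail you need to supply, not a divergence in approach.
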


\begin{proof}
    First, it is easy to smooth $\F$ \emph{locally}. Indeed, consider a flow box for $\Phi$ near a point $p \in M$, i.e., smooth coordinates $(x,y,z) \in [-1,1]^3$ near $p \cong (0,0,0)$ in which $\Phi$ is tangent to $\partial_z$. Then by assumption, the leaves of $\F$ can be realized as the (partial) graphs of a family of continuous maps $(x,y,z) \mapsto f_z(x,y)$, which satisfy that $z \mapsto f_z(x,y)$ is increasing for every $(x,y)$. One can easily smooth this family in a smaller box, so that the new foliation is smooth and genuinely transverse to $\Phi$. More precisely, we may first approximate $f$ with a smooth $\widetilde{f} : (x,y,z) \mapsto \widetilde{f}(x,y,z)$ so that $\partial_z \widetilde{f} > 0$.\footnote{Smoothing continuous increasing function is an easy task, and it can be done in families, see for instance~\cite[Appendix A.1]{BM25}.} Then, we may in interpolate between $f$ and $\widetilde{f}$ with a suitable smooth cutoff so that the resulting map $\overline{f}$ is smooth in a slightly smaller flow box and coincides with $f$ near the boundary of the flow box. 
    
    To obtain the desired approximation \emph{globally}, we may proceed as outlined in~\cite[Remark 3.3]{B16}: cover $M$ with finitely many flow boxes, and apply the previous local smoothing one box at a time. Notice that this operation preserves leafwise smoothness, but not transversal smoothness. Moreover, the foliation remains graphical in each box, so that the resulting $C^0$-foliation is isotopic to the original one via a small continuous isotopy along the flow lines of $\Phi$.
\end{proof}

\begin{proof}[Proof of \zcref{propintro:smoothing}]
  Let $\Phi$ be a topological flow on $M$, $\Phi_\mathrm{sm}$ a smooth model related by a topological equivalence $h : M \rightarrow M$ isotopic to the identity, and let $\F$ be a topological foliation, topologically transverse to $\Phi$, and realizing a boundary multislope $\bm{s} \in \Zg^\mathrm{top}(\Phi)$. Transporting $\F$ along $h$ yields a topological foliation $\F'$ which is topologically transverse to the smooth flow $\Phi_\mathrm{sm}$, and realizes the same boundary multislope $\bm{s}$. We then apply \zcref{lem:smoothingfol} to $\F'$ to obtain a $C^0$-foliation transverse to $\Phi_\mathrm{sm}$ realizing $\bm{s}$, hence $\bm{s} \in \Zg(\Phi_\mathrm{sm})$. The inclusion $\Zg(\Phi) \subset \Zg^\mathrm{top}(\Phi)$ is immediate by applying $h^{-1}$.
\end{proof}

    \section{Smoothing foliations without holonomy}

In this section we prove a few results about smoothing foliations without holonomy, to be later used in the analysis of planar minimal sets in \zcref{sec:t3typeminimalsets}.

\begin{thm}\label{thm:smoothing_measured_foliations}
    Let $M$ be a compact 3-manifold and $\F$ a taut, transversely oriented $C^{1,0}$ foliation of $M$ transverse to $\partial M$ and having only trivial holonomy. Then $M$ fibers over $S^1$, and
$\mathcal F$ is $O(\varepsilon)$-$C^0$-close to a smooth fibering of $M$.
\end{thm}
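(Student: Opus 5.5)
The plan is to follow the classical Tischler/Imanishi route, in the $C^{1,0}$ setting and relative to the boundary. First, since $\F$ has trivial holonomy, every leaf has a neighborhood foliated as a product, so $\F$ carries a transverse invariant measure $\mu$. Sacksteder's theorem is not needed: a holonomy-free foliation admits a transverse invariant measure of full support built from the leaf-space structure (Imanishi). Because $\F$ is taut, no leaf bounds a region in a way that would force a compact-leaf obstruction. More concretely, tautness gives closed transversals through every point. This lets us choose $\mu$ so that it is nonatomic, has full support, and is positive on every closed transversal. The measure defines a closed ``$1$-form'' class, meaning a cohomology class $[\mu]\in H^1(M;\R)$, obtained by integrating $\mu$ along transverse paths; homotopy invariance follows from holonomy triviality.

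Next I approximate $\mu$ by a genuinely closed smooth $1$-form. Cover $M$ by finitely many foliated charts $U_i\cong D^2\times I$, chosen transverse to $\partial M$ where relevant. In each chart, $\mu$ is a measure on $I$, and its distribution function $F_i$ is continuous and strictly increasing, by full support and nonatomicity. The leaves are the level sets of $F_i$. On overlaps, $F_i-F_j$ is locally constant. So $\{dF_i\}$ is a ``closed 1-form with continuous primitives''. Smoothing each $F_i$ by convolution along a transverse flow, then patching with a partition of unity, gives a smooth closed form $\omega$ with $\ker\omega$ tangentially $\varepsilon$-close to $T\F$ and $\omega$ positive on a fixed transverse vector field. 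This follows the way \zcref{lem:smoothingfol} smooths increasing families. The patching is legitimate because the differences are constants, so $d(\sum\rho_i F_i)=\sum\rho_i dF_i+\sum F_i d\rho_i$, and the error term is controlled by writing it as $\sum\rho_i\,d(F_i-F_j)=0$ locally. I expect this step to be the main obstacle. The issue is obtaining $\ker\omega$ nonsingular and transverse to a fixed transverse vector field uniformly, since the $F_i$ are only $C^0$ in the transverse direction. I would first reparametrize the transverse coordinate by $\mu$ itself, so that the $F_i$ become affine in that coordinate, and then smooth only in the leafwise directions, where $\F$ is $C^1$.

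Finally, perturb $\omega$ within its small neighborhood in closed forms to a form $\omega'$ with rational periods; nonsingularity is open. After scaling, $\omega'$ has integer periods, and integrating gives a submersion $M\to S^1$ whose fibers are transverse to $\partial M$. Transversality to $\partial M$ holds because $\ker\omega'$ is $C^0$-close to $T\F$, which is transverse to $\partial M$. This is Tischler's argument. Thus $M$ fibers over $S^1$, with fibration plane field tangentially $O(\varepsilon)$-close to $T\F$. To upgrade to $O(\varepsilon)$-$C^0$-closeness of the foliations themselves, I use that both $\F$ and the fibration are transverse to the same smooth flow. Their leaves are graphs over the same charts with $\varepsilon$-close slopes. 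Uniqueness of integral surfaces is not needed: the leaves of $\F$ on balls of radius $1/\varepsilon$ are $C^1$-approximated by fiber pieces through the same base point, by integrating the slope difference along leafwise paths of bounded length. Since we only require $C^1$-closeness on balls of radius $1/\varepsilon$, the constant depends only on $\F$.
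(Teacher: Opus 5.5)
Your overall route (invariant transverse measure, then a smooth closed $1$-form, then Tischler) is sound when $\F$ carries a full-support invariant measure. The gap is that trivial holonomy does not guarantee such a measure, and that is precisely the case the paper's proof is built around.

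\textbf{The first step fails when $\F$ has an exceptional minimal set.} Trivial holonomy allows this. For example, split open a leaf of a totally irrational linear foliation of $\T^3$ by planes, inserting a product region, and drill a transverse circle if you want boundary. The result is taut and all its leaves are (punctured) planes, so all holonomy is trivial. But it is a nontrivial Denjoy blowup of a minimal measured foliation, with an exceptional minimal set $\Lambda$.

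In that situation no invariant transverse measure has full support. Each complementary region of $\Lambda$ is a product $\lambda\times(0,1)$. A compact transverse arc crosses it infinitely often, along subarcs identified by holonomy, so holonomy invariance plus finiteness forces $\mu$ to vanish there. Tautness does not help: it only gives positivity on closed transversals, which is automatic because every closed transversal meets $\Lambda$.

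Consequences for your construction:
\begin{itemize}
\item Your $F_i$ are constant on whole gap intervals, so the leaves are not their level sets.
\item After mollification, $\omega$ degenerates inside any gap wider than the smoothing scale.
\item So nothing produces a nonsingular $\omega$ with $\ker\omega$ close to $T\F$. As written, your argument covers only the cases where $\F$ is minimal or a fibration.
\end{itemize}

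The paper handles this case as follows. It passes to the measured minimal set $\Lambda$ and carries it by an $\varepsilon$-flat branched surface $B$. It thickens the measured $B$ into a smooth branching lamination on its $I$-fibered neighborhood $K$. It then shaves $\partial_h K$ so that the exposed, trivially foliated vertical annuli can be glued, using trivial holonomy, to the product foliation of $\F$ on the guts. Only then does it approximate the resulting measured foliation by a smooth closed $1$-form. The branched surface also replaces your delicate mollification step: the measure becomes finitely many sector weights, so smoothness is automatic.

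\textbf{Your upgrade from tangential to leafwise $C^0$-closeness is not valid as written.} ``Integrating the slope difference along leafwise paths'' compares slopes at points on two different leaves. That requires a Lipschitz (Gr\"onwall-type) bound on $T\F$ in the transverse direction. A continuous plane field has no such bound and need not even be uniquely integrable; the paper notes explicitly that tangential closeness does not imply $C^0$-closeness. In the paper this closeness comes from the $\varepsilon$-flatness of $B$, which controls leaves combinatorially. In your full-support setting you would instead have to argue through the holonomy invariance of $\mu$-heights between leaves.

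\textbf{Your patching identity is also off.} Write $F_i-F_j=c_{ij}$, locally constant on overlaps. On $U_j$ you get $\sum_i F_i\,d\rho_i=\sum_i c_{ij}\,d\rho_i$, which is not zero. It is a well-defined smooth closed form representing $-[\mu]$. The clean formulation is $\mu=\eta+dG$, with $\eta$ smooth and closed and $G=\sum_i\rho_iF_i$ a global continuous function; then mollify $G$.
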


This is proven in \cite[Theorem 8.11]{KR17} in the case where $M$ is closed. Their proof works verbatim when $M$ has boundary, but we will give a brief sketch below for the benefit of the reader. We use the same technology to prove \zcref{lem:smoothing_measured_laminations}, a smoothing lemma for measured sublaminations.

\begin{defn}
    A \textbf{smooth branching lamination} $\Lambda$ on a 3-manifold $M$ is a (branching) foliation on a compact codimension 0 submanifold $K\subset M$ such that $\partial K$ is tangent to leaves of $\Lambda$ except near branch loci modeled on \zcref{fig:branching_foliation}, and $\Lambda$ is a smooth foliation away from these branching points.
\end{defn}

Any branched surface in $M$ with a transverse measure $\mu$ gives a smooth branching lamination by thickening each sector $\lambda$ into a trivially foliated product $\lambda \times [0,\mu(\lambda)]$.

\begin{defn}
    The \textbf{interstices} of a smooth branching lamination $\Lambda$ supported on $K\subset M$ are the intersections of branching leaves with $\interior K$ (or more precisely, the leafwise metric closures thereof since we want the interstices to contain the branching locus.) The \textbf{guts} of $\Lambda$ are the closures of the components of $M\setminus K$.
\end{defn}

\begin{defn}
    Let $\Lambda$ be a smooth branching lamination supported on $K\subset M$ and let $\phi$ be a flow transverse to $\Lambda$. A \textbf{Denjoy blowup of size $\varepsilon$} of $\Lambda$ along $\phi$ is a lamination $\Lambda'$ supported on $K$ obtained by blowing air into the interstices of $\Lambda$ such that the semiconjugacy $\Lambda \to \Lambda'$ is $\varepsilon$-$C^0$ small.
\end{defn}

\begin{figure}[ht]
    \centering
    \def\svgwidth{0.3\textwidth}
    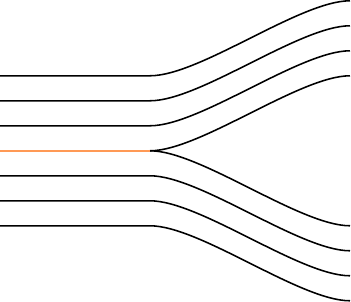
    \caption{A smooth branching lamination. One of the interstices is shown in orange.}
    \label{fig:branching_foliation}
\end{figure}

\begin{proof}[Proof sketch of \zcref{thm:smoothing_measured_foliations}]
$\F$ must be a trivial Denjoy blowup of a measured foliation. Let $\Lambda$ be the exceptional measured minimal set for $\F$. Choose a branched surface $B$ carrying $\Lambda$. Up to splitting $B$ further, we may assume that $B$ is $\varepsilon$ flat, i.e., it has an $I$-fibered neighborhood of $B$ has a $\varepsilon$-flat flowbox decomposition in the sense of \cite{KR17}. In short, the condition of $\varepsilon$-flatness is that every flow box is much thinner in the transverse direction than in the tangential direction. This condition guarantees that any lamination $\F'$ carried by $B$ is, up to isotopy, $\varepsilon$ tangentially $C^0$ close to $\F$.

Our branched surface $B$ is measured, so it gives rise to a smooth branching lamination without holonomy supported on our $\varepsilon$-flat $I$-fibered neighborhood $K$ of $B$. For each branch sector $\lambda$ on $\partial K$, shave a thin neighborhood $\lambda \times [0,\delta]$ away from $K$ as in \zcref{fig:branching_foliation_shaved}. The newly exposed vertical annuli are trivially foliated. Since the original foliation had no holonomy, we can glue this up to the old foliation on the guts of $\Lambda$. Now we have obtained a measured foliation $\F'$ which is $\varepsilon$ tangentially $C^0$ close to $\F$ near $B$ and equal to $\F$ away from $B$. Finally, we can approximate the transverse measure of $\F'$ with a smooth 1-form to obtain a smooth foliation.
\begin{figure}
    \centering
    \def\svgwidth{0.6\textwidth}
    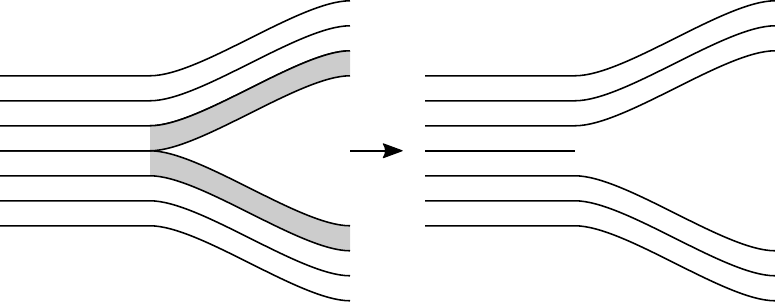
    \caption{Shave from the horizontal boundary of $K$.}
    \label{fig:branching_foliation_shaved}
\end{figure}
\end{proof}

\begin{lem} \label{lem:smoothing_measured_laminations}
    Suppose $\Lambda$ is a measured sublamination of a foliation $\F$. For any $\varepsilon > 0$, there a smooth branching lamination $\Lambda'$ supported on a neighborhood of $\Lambda$, and a semiconjugacy $\Lambda' \to \Lambda$ which is $\varepsilon$-close to the identity.
\end{lem}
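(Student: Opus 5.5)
The approach follows the proof sketch of \zcref{thm:smoothing_measured_foliations}, applied now only to a sublamination. I would not try to smooth the ambient foliation $\F$.

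First, I choose a branched surface $B$ carrying $\Lambda$ whose $I$-fibered neighborhood $N(B)$ lies in an arbitrarily small neighborhood of $\Lambda$. Concretely, I cover $\Lambda$ by finitely many thin flow boxes for $\F$ and collapse their vertical intervals, as in the standard construction of branched surfaces carrying laminations (compare \cite{KR17}). After further splitting, I arrange that $N(B)$ is $\varepsilon$-flat: each flow box is much thinner transversally than tangentially. This ensures two things. First, the carrying map $\Lambda \to B$ along the $I$-fibers moves points by less than $\varepsilon$. Second, everything carried by $B$ is tangentially $\varepsilon$-close to $T\F$.

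Next, I push the transverse measure $\mu$ of $\Lambda$ forward to $B$. This assigns to each sector $\lambda$ of $B$ a weight $\mu(\lambda)\ge 0$ satisfying the switch conditions, and I discard any sectors of weight zero. I then thicken each remaining sector into a trivially foliated product $\lambda\times[0,\mu(\lambda)]$, rescaled so that it fits inside $N(B)$ with total thickness at most $\varepsilon$. The switch conditions let these products glue along the branch loci into a smooth branching lamination $\Lambda'$ supported on a compact $K\subset N(B)$, as in \zcref{fig:branching_foliation}.

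Finally, I build the semiconjugacy $\Lambda'\to\Lambda$. Since $\Lambda$ is measured, a measure-preserving map along the $I$-fibers sends the leaves of $\Lambda'$ in each product $\lambda\times[0,\mu(\lambda)]$ onto the leaves of $\Lambda$ in the corresponding portion of $N(B)$, matched by transverse measure. This map collapses complementary gaps of $\Lambda$, or the preimages of atoms, in the expected monotone way. Equivalently, $\Lambda$ is a Denjoy blowup of the measured lamination realized by $\Lambda'$. The map is continuous, surjective, and leafwise an immersion. It is $\varepsilon$-close to the identity because it moves points only along the $I$-fibers of an $\varepsilon$-thin neighborhood.

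The main obstacle is the compatibility of this fiberwise map at the branch loci. Where sectors merge, the measures add, and the fiberwise measure coordinates of adjacent products must agree. I would handle this by choosing the fiber parametrization of $N(B)$ so that the cumulative measure along each $I$-fiber serves as the coordinate. That coordinate is continuous across switches precisely by the switch conditions, so the map is well defined. Atoms of $\mu$, which come from compact leaves, are handled by letting the map collapse the corresponding intervals of $\Lambda'$ to a single leaf.
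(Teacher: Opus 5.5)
Your construction of $\Lambda'$ is the paper's argument with more detail, and that part is fine: an $\varepsilon$-flat branched surface $B$ carrying $\Lambda$, the pushed-forward weights thickened sector by sector and glued along the branch locus, and $\varepsilon$-flatness giving $\varepsilon$-closeness. The gap is in your last paragraph, which conflates two maps going in opposite directions.

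Take an exceptional $\Lambda$, as in \zcref{prop:maypoletwist}, where the lemma is applied. Then $\Lambda$ meets every $I$-fiber of $N(B)$ in a Cantor set, so gaps inside $N(B)$ cannot be avoided, while the transversals of $\Lambda'$ are arcs. A continuous leaf-preserving map sends a connected transversal to a connected subset of a Cantor leaf space, i.e.\ to a single point. Hence there is no continuous monotone surjection $\Lambda'\to\Lambda$. Concretely, your rule sends the leaf of $\Lambda'$ at measure coordinate $c$ to the leaf of $\Lambda$ at measure coordinate $c$. That rule jumps from the lower to the upper boundary leaf of each gap.

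What the cumulative-measure coordinate does give is the continuous, monotone, surjective map in the opposite direction, $\Lambda\to\Lambda'$. It sends $x\in\Lambda$ to the $\mu$-measure of the part of its $I$-fiber below $x$, and it collapses each gap to a single leaf. This is exactly your (correct) remark that $\Lambda$ is a Denjoy blowup of $\Lambda'$. It is also the direction the paper actually uses later, as the blowdown $\pi\colon(M,\F)\twoheadrightarrow(M,\F_{branch})$ in the proof of \zcref{prop:maypoletwist}.

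Atoms force the other direction. For a compact leaf $L$ of mass $m$, the natural map is the collapse $L\times[0,m]\to L$, i.e.\ $\Lambda'\to\Lambda$, while the map $\Lambda\to\Lambda'$ is not surjective there. So ``collapsing the gaps of $\Lambda$'' and ``collapsing the intervals of $\Lambda'$ over atoms'' cannot be done by one map. For a measured lamination with both an exceptional and a compact component, you only get a monotone equivalence (a zigzag of semiconjugacies).

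The repair is to state the conclusion as a semiconjugacy $\Lambda\to\Lambda'$ when $\mu$ has no atoms, which covers the application, and as a monotone equivalence in general. The paper's statement carries the same arrow $\Lambda'\to\Lambda$, and its short proof never constructs the map, so the problem is partly inherited from the statement. Still, as written, your assertion that the map $\Lambda'\to\Lambda$ is continuous and surjective is false in the main case of interest.
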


\begin{proof}
    Choose an $\varepsilon$-flat branched surface $B$ carrying $\Lambda$. It is measured, so there is a smooth branching lamination $\Lambda'$ supported on the $\varepsilon$-flat $I$-fibered neighborhood of $B$. The condition of $\varepsilon$-flatness implies that the semiconjugacy $\Lambda'\to \Lambda$ is $\varepsilon$-close to the identity.
\end{proof}

    \section{Typology of minimal sets} \label{sec:minimalsets}

In this section, we analyze and classify all the possible types of minimal sets for a cooriented $C^0$-foliation on $M$ transverse to $\partial M$. To that extent, it is convenient to classify the topological types of \emph{laminations} on $M$ which are transverse to $\partial M$.

Each boundary component of a leaf of a lamination transverse to $\partial M$ is either open (homeomorphic to $\R$) or closed (homeomorphic to $S^1$). Each open boundary component may lie in a boundary component of $\partial M$ with irrational slope, or it may spiral on both sides towards two closed curves bounding an annulus (or a single curve bounding a ``degenerate annulus'') on a boundary component of $M$, in the fashion of a $\sharp$, $\flat$, or Reeb annulus.

We first classify the possible topological types for a single leaf of such a lamination:

\begin{lem} \label{lem:surftype}
    Let $\lambda$ be a connected surface (i.e., $2$-manifold), possibly non-compact and possibly with boundary. Then at least one of the following holds:
    \begin{enumerate}
        \item $\pi_1(\lambda)$ remains nontrivial after capping off all the closed boundary components of $\lambda$.
        \item $\lambda$ is a compact genus 0 surface.
        \item $\lambda$ becomes homeomorphic to $\R^2$ after filling the closed boundary components with disks and deleting the open boundary components. In that case, $\pi_1(\lambda)$ is freely generated by its closed boundary components.
    \end{enumerate}
\end{lem}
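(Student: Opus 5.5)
We argue by cases on the topology of $\widehat\lambda$, the surface obtained from $\lambda$ by capping every closed boundary component with a disk. If $\pi_1(\widehat\lambda)\neq 1$, item 1 holds and there is nothing to prove. So assume from now on that $\widehat\lambda$ is simply connected, and use the classification of simply connected surfaces, possibly with boundary and possibly noncompact.

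\emph{Compact case.} Suppose $\widehat\lambda$ is compact. A compact simply connected surface is either $S^2$ or a closed disk.
\begin{itemize}
\item If $\widehat\lambda\cong S^2$, then $\lambda$ is $S^2$ minus finitely many open disks. This is a compact genus $0$ surface, so item 2 holds.
\item If $\widehat\lambda\cong D^2$, then $\lambda$ has exactly one boundary component not coming from a capping disk. That component is closed, since $\widehat\lambda$ is compact. Every closed boundary component was capped, so this is impossible unless $\lambda$ has no open boundary components. In that case the boundary circle of $D^2$ is itself a closed boundary component of $\lambda$ that we capped, so $\widehat\lambda$ would be closed, a contradiction. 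The only remaining reading is that $\lambda$ is a compact planar surface, which is again item 2.
\end{itemize}

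\emph{Noncompact case.} Suppose $\widehat\lambda$ is noncompact. Then $\widehat\lambda$ has no closed boundary components, since all of them were capped, so $\partial\widehat\lambda$ is a disjoint union of properly embedded lines. Set $S=\widehat\lambda\setminus\partial\widehat\lambda$.
\begin{itemize}
\item \emph{$S\cong\R^2$.} Removing boundary lines does not change the homotopy type (use a collar), so $S$ is simply connected. It is also noncompact and without boundary, hence homeomorphic to $\R^2$ by the classification of simply connected surfaces (Riemann uniformization, or Kerékjártó–Richards).
\item \emph{Boundary components are disjoint.} Each capping disk lies in $\widehat\lambda$, and by collaring we may take the disks disjoint from the open boundary lines. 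The disks are also pairwise disjoint.
\item \emph{$\pi_1(\lambda)$ is free on the closed boundary circles.} Let $\lambda'=\lambda\setminus(\text{open boundary components})$, which deformation retracts onto the corresponding subsurface of $\lambda$ and so has the same $\pi_1$. Then $\lambda'$ is $\R^2$ minus a discrete, closed, possibly infinite family of disjoint open disks. Such a space is homotopy equivalent to a wedge of circles, one for each disk. Concretely, one can exhaust by compact planar subsurfaces and take the direct limit. The circles are the boundary curves of the disks, so $\pi_1(\lambda)$ is freely generated by the closed boundary components, giving item 3.
\end{itemize}

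The main care needed is the classification of noncompact simply connected surfaces with boundary, and verifying that the family of capping disks is locally finite. Local finiteness holds because the boundary components of a surface form a closed, locally finite collection.
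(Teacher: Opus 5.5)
Your argument is correct and follows essentially the paper's route. Assuming item 1 fails, the classification of simply connected surfaces forces either $\widehat\lambda\cong S^2$ (item 2) or an interior homeomorphic to $\R^2$ (item 3), and you go slightly further than the paper by justifying the free-generation clause through a compact exhaustion and direct limit; the one thing to tidy is your $D^2$ subcase, which is simply vacuous (a compact $\widehat\lambda$ has compact boundary, and every compact boundary component was capped), so drop the closing sentence about a ``remaining reading''.
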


\begin{proof}
    Let us assume that 1 does not hold, i.e., the abstract surface $\lambda^\bullet$ obtained from $\lambda$ by capping off all of its closed boundary components has trivial fundamental group. If $\lambda^\bullet$ has no boundary, then by the classification of simply connected surfaces, it is either homeomorphic to $S^2$ or to $\R^2$. In the first case, compactness implies that $\lambda$ has finitely many boundary components and is obtained from $S^2$ by removing finitely many closed disks, i.e., $\lambda$ is a compact genus $0$ surface. In the second case, $\lambda$ is of the type of case 3. If $\lambda^\bullet$ still has noncompact boundary components, then the result of removing these boundaries is not homeomorphic to $S^2$ since it is noncompact, so it is homeomorphic to $\R^2$ as in case 3.
\end{proof}

We now describe some special cases of laminations by planar surfaces on $M$. These will be pathological cases to consider, especially for the finer version of the Eliashberg--Thurston theorem with boundary in the next section.

\begin{defn}\label{def:pems}
    A \textbf{model $\T^3$-type planar lamination} is any lamination which can be constructed starting from an irrational linear foliation on $\T^3$, possibly splitting open some leaves, and then possibly removing a tubular neighborhood of a transverse link to obtain a lamination on a manifold with boundary. Similarly define a \textbf{model $\T^2$-type planar lamination}.

    If $\Lambda$ is a lamination with a neighborhood which is lamination-preserving, boundary preserving homeomorphic to a neighborhood of some model planar lamination, then we say that $\Lambda$ is a \textbf{$\mathbb{T}^3$-type} or \textbf{$\mathbb{T}^2$-type} planar lamination respectively.
\end{defn}

\begin{figure}[ht]
    \centering
    \def\svgwidth{0.35\textwidth}
    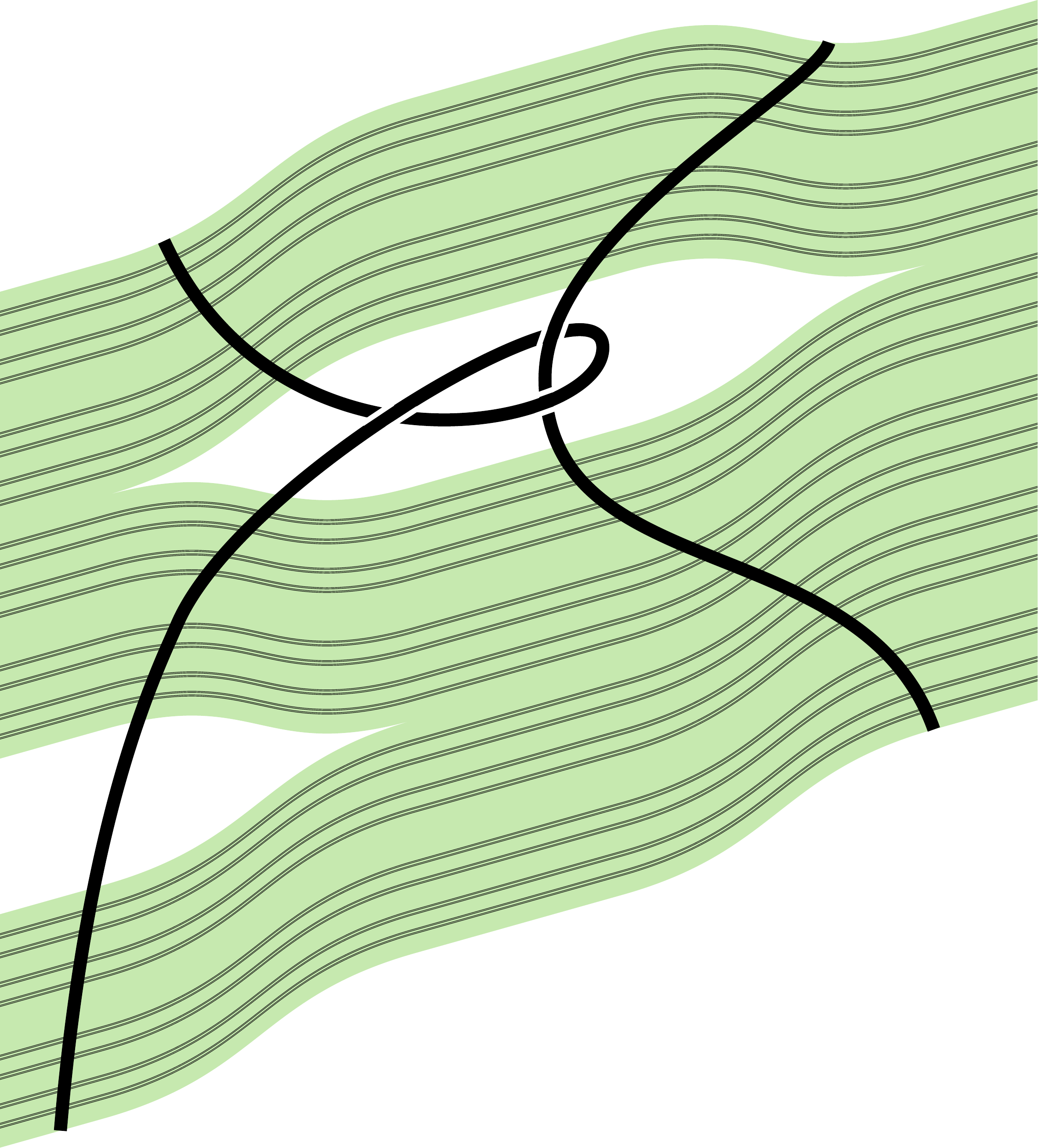
    \caption{Sketch of a model planar lamination. The transverse link may have nontrivial topology inside the guts.}
    \label{fig:t2planarminimalset}
\end{figure}

Notice that the transverse link can have complicated topology in the guts of the lamination; see \zcref{fig:t2planarminimalset} for a sketch. We now classify laminations by simply connected leaves.

\begin{lem} \label{lem:planarclassification}
    Suppose that $M$ admits a minimal lamination $\Lambda$ with simply connected leaves. Then either $\Lambda$ is a homologically trivial sphere, or $M$ is homeomorphic to a connected sum
    $$M \cong M'\#K_1\#\dots \#K_m,$$
    where the $K_i$'s are compact 3-manifolds, $M'$ is one of $S^1\times S^2$, $S^1 \times D^2$, $\T^3$, or $\T^2 \times I$, and $\Lambda$ is contained in $M'$ and is semi-conjugate to a linear foliation on $M'$. In particular, $\Lambda$ is measured.
\end{lem}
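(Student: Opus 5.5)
The plan is to reduce to the classical theory of Novikov/Rosenberg/Imanishi-type arguments for foliations with simply connected leaves. First I would note that a minimal lamination all of whose leaves are simply connected must have trivial holonomy along every leaf, since holonomy is a representation of $\pi_1$ of the leaf. The first split is into two cases. If some leaf is compact, then by \zcref{lem:surftype} it is a sphere (a closed simply connected surface) or a disk (compact with one boundary circle on $\partial M$), and minimality forces $\Lambda$ to equal that single compact leaf. I would then distinguish on homology. A homologically trivial sphere gives the first alternative. A nonseparating sphere gives $M' = S^1\times S^2$ as a summand: take a neighborhood of the sphere union an arc closing it up, and note $\Lambda$ is a fiber of the obvious linear foliation. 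A properly embedded nonseparating disk similarly yields an $S^1\times D^2$ summand, taking a neighborhood of the disk union a transverse arc. A separating disk should be excluded or absorbed: it cuts off a ball or gives a trivial summand.

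In the case where no leaf is compact, trivial holonomy plus minimality makes $\Lambda$ an exceptional or full minimal set without holonomy. By a Sacksteder/Imanishi-type argument, it then carries a transverse invariant measure of full support, making $\Lambda$ measured. Given the measure, I would take a branched surface $B$ carrying $\Lambda$ as in \zcref{lem:smoothing_measured_laminations}. Its smooth branching lamination $\Lambda'$ on the $I$-fibered neighborhood $K$ has trivial holonomy. The transverse measure then defines a closed $1$-form class on $N(B)$, and its periods give a homomorphism $\pi_1(N(B)) \to \R$. Because the leaves are planes or half-planes (case 3 of \zcref{lem:surftype}, possibly with open boundary on irrational boundary tori of $M$), the rank of the period group is $3$ if no leaf meets $\partial M$ and $2$ otherwise. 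I would argue this by a Rosenberg-style count: the leaves are the fibers of the developing map $\widetilde{N} \to \R$, and their stabilizers are trivial.

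The key step, and the main obstacle, is showing that $N(B)$, after capping off trivial guts, is homeomorphic to $\T^3$ (resp.\ $\T^2\times I$) minus a transverse link. The guts must be shown to be balls or solid tori that meet the lamination only in the product/ideal-polygon pattern. My approach would be to use Rosenberg's theorem, which says a closed $3$-manifold foliated by planes is $\T^3$, together with its boundary analogue. I would fill each gut region by extending the foliation across it. If filling is impossible, the gut region contains the topology of the summands $K_i$, and I would cut along spheres in the guts, obtained as boundaries of neighborhoods of $N(B)$ union trivially-filled guts, to split $M$ as $M'\# K_1\#\dots\# K_m$. To handle the guts I would first try Novikov-type arguments: a compressing disk for $\partial N(B)$ inside a gut, by the Reeb stability-free setting, either bounds a disk in a leaf or detects a sphere summand. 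The final semi-conjugacy to a linear foliation on $M'$ comes from integrating the measure: the map to $\R^3/\Gamma$ (resp.\ $\R^2\times I$) given by the periods collapses complementary intervals. This also reconfirms that $\Lambda$ is measured.
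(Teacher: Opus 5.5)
Your compact-leaf case is fine and agrees with the paper: a nonseparating sphere splits off $S^1\times S^2$, and a disk leaf with essential boundary splits off $S^1\times D^2$. The noncompact case, however, has a genuine gap. It sits exactly at what you call the main obstacle, and it also undermines the steps before it.

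You obtain the transverse measure from a ``Sacksteder/Imanishi-type argument'' applied to $\Lambda$ in $M$. Those arguments are for \emph{foliations} without holonomy: the lifted foliation has leaf space $\R$, $\pi_1$ acts freely on it, and H\"older applies. Here $\Lambda$ is only a lamination, and its complementary regions can carry arbitrary topology (the $K_i$). So $\pi_1(M)$ contains free products with the $\pi_1(K_i)$, and there is no free action on a line to feed into H\"older. Your branched surface, period homomorphism and developing map all rest on this unproven measure. Even granting the measure, connected fibers of the developing map with trivial stabilizers would only show that the period map is injective, i.e.\ that $\pi_1$ is free abelian. That gives neither the rank count ($3$ versus $2$) nor the homeomorphism type of $M'$.

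The missing idea is an elementary observation that makes the decomposition immediate.
\begin{enumerate}
\item Since $\partial\Lambda$ has no closed leaves, its complementary regions in the boundary tori are products, so the guts of $\Lambda$ can be thinned off $\partial M$.
\item Because the leaves are planes, the horizontal boundaries $R^\pm$ of each gut are disks. Each vertical annulus joins one circle of $R^+$ to one circle of $R^-$, so every boundary component of a gut is a $2$-sphere. In particular the solid-torus guts you allow for cannot occur, and there are no compressing disks for $\partial N(B)$ to handle by Novikov-type arguments.
\item Cutting out the guts and gluing in balls is then literally the connected-sum decomposition $M\cong M'\#K_1\#\dots\#K_m$. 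The new lamination on $M'$ has product guts $D^2\times I$, so it blows down to a minimal foliation $\F'$ without holonomy.
\item Only at this point does H\"older apply. $\F'$ is $\R$-covered, $\pi_1(M')$ acts freely on its leaf space, and hence $\pi_1(M')$ is free abelian and preserves a transverse measure of full support.
\item To identify $M'$, no Rosenberg-type theorem or extension of the foliation across guts is needed. Smooth $\F'$ and approximate it by a fibration $\Sigma\to M'\to S^1$; $\pi_1$-injectivity of the fiber forces $\Sigma$ to be a sphere, disk, torus or annulus. This yields the four possibilities for $M'$ and the semi-conjugacy to a linear foliation.
\end{enumerate}
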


\begin{proof}
Gabai proved this result in the case where $M$ is closed, see~\cite{G91} and the exposition in \cite[Lemma 7.21]{C07}. We follow his argument in the case where $M$ has boundary. If $\Lambda$ is a sphere, then the result is immediate: either the sphere is homologically trivial or it cuts an $S^1\times S^2$ summand. If $\partial \Lambda$ contains any closed leaf in $\partial M$, that leaf bounds a disk. Therefore, we are in the second case $S^1\times D^2$. 

Now consider the case that all leaves of $\Lambda$ are planes. Since $\partial M$ is a collection of tori and $\partial \Lambda$ has no compact leaves, the guts of $\partial\Lambda$ are all products. Therefore, we can collapse these products to be very thin so that the boundary of the guts do not intersect $\partial M$. Then every component of the guts of $\Lambda$ has $R^+$ and $R^-$ homeomorphic to disks. Cut out these guts and glue in balls to get a lamination $\Lambda'$ on a 3-manifold $M'$ whose guts are homeomorphic to the product sutured manifold $D^2\times I$. Blow down the guts to get a minimal foliation $\F'$. Since $\F'$ has no holonomy, it is $\R$-covered. Therefore, $\pi_1(M')$ acts on $\R$ without fixed points. By H\"older's classification of such actions (see the exposition in \cite[Theorem 2.90]{C07}), $\pi_1(M')$ is free abelian and preserves an invariant transverse measure on $\F'$. Since $\mathcal F'$ is minimal, the invariant transverse measure has full support. By \zcref{thm:smoothing_measured_foliations}, $\mathcal F'$ is topologically isotopic to a smooth measured foliation, which may in turn be approximated by a fibration $\Sigma \to M' \to S^1$. Fundamental groups of fibers inject into $\pi_1(M)$, so the fiber $\Sigma$ must have abelian fundamental group. Therefore, $\Sigma$ is a sphere, torus, annulus, or a disk. These give the four possibilities appearing in the lemma statement.
\end{proof}

\begin{cor}[Simply connected minimal sets] \label{lem:simplyconnleaves}
    Let $\F$ be a cooriented $C^0$-foliation on $M$ transverse to $\partial M$ (if nonempty). If $\Lambda$ is a minimal set of $\F$ consisting of simply connected leaves, then one of the cases in the following table holds:

\begin{center}
\begin{tblr}{
  colspec = {|Q[c,m]|Q[l,m,wd=6cm]|Q[l,m,wd=6.5cm]|},
  row{1} = {bg=gray!20},
  column{1} = {bg=gray!20},
  vlines,
  hlines,
}
  & \centering $\Lambda$ is a compact leaf& \centering $\Lambda$ is not a compact leaf \\
  $\partial M = \varnothing$ & $\Lambda \cong S^2$, and $\F$ is conjugate to the standard foliation by spheres on $S^1 \times S^2$. & $\Lambda$ is a $\T^3$-type planar minimal set, and $\F$ is semi-conjugate to an irrational foliation on $\T^3$.\\
  $\partial M \neq \varnothing$ & $\Lambda \cong D^2$, and $\F$ is conjugate to the standard foliation by disks on $S^1 \times D^2$. & $\Lambda$ is a $\T^2$-type planar minimal set, and $\F$ is semi-conjugate to an irrational foliation on $\T^2 \times I$.\\
\end{tblr}
\end{center}
\end{cor}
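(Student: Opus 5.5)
We will deduce the corollary from \zcref{lem:planarclassification}, treating separately whether $\Lambda$ is a single compact leaf. In all cases the leaves of $\Lambda$ are simply connected, so the lemma applies to the minimal lamination $\Lambda$ on $M$. The work is to pin down the ambient foliation $\F$, and not merely $\Lambda$, which comes from the standing fact that $\F$ is a foliation of all of $M$, cooriented and transverse to $\partial M$.

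\emph{Compact leaf case.} Suppose $\Lambda$ is a single compact leaf. If $\partial M=\varnothing$, it is a sphere $S$. Because $\F$ is cooriented and transverse, $S$ is not homologically trivial: a closed transversal issuing from a compact leaf can be built along the coorientation, and a separating sphere would contradict the coorientation by Reeb stability. We therefore expect to invoke Reeb stability instead. A compact leaf with trivial $\pi_1$ has trivial holonomy, so it has a product neighborhood $S^2\times I$ foliated by spheres. The set of spherical leaves is then open, and it is closed by Reeb stability again, so every leaf is a sphere. Hence $\F$ is a fibration over $S^1$ with fiber $S^2$, cooriented, and so it is conjugate to the standard foliation on $S^1\times S^2$.

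If $\partial M\neq\varnothing$, the leaf is a disk $D$, since a compact simply connected surface with nonempty boundary is a disk; transversality to $\partial M$ forces nonempty boundary or a sphere. A sphere in the interior is handled as above, giving a contradiction with $\partial M\neq \varnothing$ by connectedness. For the disk, we run the same Reeb stability argument relative to the boundary: trivial holonomy on $D$ gives a product neighborhood, and the disk leaves form an open and closed set. This shows $M\cong S^1\times D^2$ with the standard foliation.

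\emph{Noncompact case.} Now $\Lambda$ has planar leaves and is not a single compact leaf. By \zcref{lem:planarclassification}, $\Lambda$ lies in a summand $M'\in\{\T^3,\T^2\times I\}$, according to whether $\partial M$ is empty, and is semi-conjugate to an irrational linear foliation there. It is therefore a $\T^3$- or $\T^2$-type planar minimal set in the sense of \zcref{def:pems}. Note that $S^1\times S^2$ and $S^1\times D^2$ have only compact minimal sets. To get the statement for $\F$, we argue as follows. Every leaf of $\F$ meets or accumulates on $\Lambda$, and the complementary guts of $\Lambda$ are product $I$-bundles over planes. Hence $\F$ restricted to each gut is a product foliation, because planar leaves carry no holonomy, and the extra summands $K_i$ must be balls. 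Blowing down these products exhibits $\F$ as a Denjoy blowup of the irrational foliation, so $M=M'$.

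The main obstacle is this last step: ruling out nontrivial $K_i$ and any nonproduct behavior of $\F$ in the guts, including pieces of $\partial M$ inside the guts. We first try the guts analysis from the proof of \zcref{lem:planarclassification}, where the guts have disk $R^\pm$. Because $\F$ foliates the guts, a Novikov/Reeb-type argument shows that any gut containing a non-ball summand would force a compact leaf or a vanishing cycle inconsistent with the transverse cooriented structure.
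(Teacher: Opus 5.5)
The overall plan matches the paper's: apply \zcref{lem:planarclassification}, then use Reeb stability to control $\F$ away from $\Lambda$. Your compact-leaf case is fine. Global Reeb stability, taken relative to $\partial M$ in the disk case, gives the two fibrations and also disposes of the ``homologically trivial sphere'' alternative. Your aside that a closed transversal can always be built through a compact leaf is false (think of the torus leaf of a Reeb component), but you do not use it.

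The gap is in the noncompact case. You assert that the guts of $\Lambda$ are product $I$-bundles and that every leaf of $\F$ meets or accumulates on $\Lambda$; these are precisely what must be proved. \zcref{lem:planarclassification} only tells you that each gut $G$, after thinning its interstitial part, has horizontal boundary $R_\pm$ consisting of disks. Its interior may a priori contain a punctured summand $K_i$, other minimal sets of $\F$, or even components of $\partial M$. So deducing that the $K_i$ are balls from the product structure is circular. For the same reason you cannot yet say that $M'$ is $\T^3$ or $\T^2\times I$ ``according to whether $\partial M$ is empty''. Your fallback does not close this either. Trivial holonomy of the planar leaves only yields a product collar along $R_\pm$. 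And producing a compact leaf or a vanishing cycle inside $G$ contradicts nothing: extra compact leaves and Reeb components are perfectly compatible with a cooriented foliation.

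The missing step, which is the content of the paper's short proof, is to run your compact-case Reeb stability argument inside each gut. Since $\partial R_-$ is null-homotopic in a planar leaf of $\Lambda$, $\F$ has trivial holonomy along it. So near $R_-$ the leaves of $\F|_G$ are disks parallel to $R_-$, with boundary circles on the transverse annulus $\partial_v G$. The set of such disk leaves is open in $G$. It is closed by the same limiting argument as in global Reeb stability, so $\F|_G$ is the product foliation of $D^2\times I$. This single fact gives everything you need:
\begin{itemize}
\item every $K_i$ is a ball;
\item every leaf of $\F$ not in $\Lambda$ lies in a product region between leaves of $\Lambda$, so collapsing these regions shows $\F$ is semi-conjugate to the linear foliation on $M=M'$;
\item no component of $\partial M$ can sit inside a gut, so $\partial M\neq\varnothing$ forces $M'=\T^2\times I$.
\end{itemize}
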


\begin{proof}
    Apply \zcref{lem:planarclassification} to $\Lambda$. The leaves on the horizontal boundary of $\Lambda$ are simply connected, so the Reeb stability theorem implies that the complement of $\Lambda$ is trivially foliated. Therefore, the $K_i$'s in the conclusion of \zcref{lem:planarclassification} are trivial and $\F$ is semi-conjugate to a linear foliation on $S^1\times S^2$, $S^1\times D^2$, $\T^3$, or $T^2\times I$. These are the four cases in the table above.
\end{proof}

We can now establish the main result of this section.

\begin{prop} \label{prop:lamclass}
    Suppose $\Lambda$ is a minimal lamination on $M$ transverse to $\partial M$. Then at least one of the following holds:
    \begin{enumerate}
        \item $\Lambda$ has a leaf $\lambda$ such that $\pi_1(\lambda)$ is not normally generated by boundary parallel curves.
        \item $\Lambda$ is a compact planar surface.
        \item $\Lambda$ has a leaf with a noncompact boundary component inside a Reeb, $\sharp$, or $\flat$ annulus on $\partial M$.
        \item $\Lambda$ is a $\mathbb{T}^3$-type planar lamination.
        \item $\Lambda$ is a $\mathbb{T}^2$-type planar lamination.
    \end{enumerate}
\end{prop}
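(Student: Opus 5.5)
The plan is to prove the proposition by contraposition: assume that alternatives 1 and 3 both fail, and show that one of 2, 4, 5 must then hold. The failure of 3 means every noncompact boundary component of a leaf of $\Lambda$ lies on a component of $\partial M$ where $\partial\Lambda$ is an irrational (Denjoy-type) lamination. By minimality and \zcref{prop:poincare}, such a boundary component carries no spiraling onto closed curves. The failure of 1 means that for every leaf $\lambda$, $\pi_1(\lambda)$ is normally generated by boundary-parallel curves.

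The first step is to identify the boundary-parallel curves. A closed curve in $\lambda$ parallel to a boundary component can only be homotopic to a closed boundary component of $\lambda$, since noncompact boundary components are lines and contribute no loops. So capping off all closed boundary components of $\lambda$ yields a surface $\lambda^\bullet$ with trivial $\pi_1$. \zcref{lem:surftype} then says that $\lambda$ is either a compact genus $0$ surface, or becomes $\R^2$ after capping closed boundary components and deleting open ones.

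The next step is to separate the compact case from the noncompact case. If some leaf is a compact planar surface, minimality forces $\Lambda$ to be that single compact leaf, which is alternative 2. Otherwise every leaf is "planar" in the sense of case 3 of \zcref{lem:surftype}.

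The main step handles this noncompact case. I would cap off. On each component $T$ of $\partial M$ meeting $\Lambda$ in closed curves, those curves are parallel and finite in number (a minimal set with a compact boundary curve and noncompact leaves would have to spiral, which alternative 3 excludes; and a compact leaf is ruled out). Attach $2$-handles along these curves to form $M^\bullet$, which fills $T$ to a solid torus, and extend $\Lambda$ by the meridian disks. The resulting lamination $\Lambda^\bullet$ is transverse to the remaining boundary, which consists only of tori carrying irrational boundary laminations. It has simply connected leaves after removing open boundary lines, and it remains minimal.

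Now apply \zcref{lem:planarclassification}. Since $\Lambda^\bullet$ is not a compact leaf, it must sit in an $M'$ summand equal to $\T^3$ or $\T^2\times I$ and be semi-conjugate to a linear foliation there. A small tubular neighborhood of $\Lambda^\bullet$ in $M^\bullet$ is therefore modeled on a model $\T^3$- or $\T^2$-type lamination. Removing the added handles corresponds exactly to deleting a neighborhood of a transverse link, namely the cores of the filling solid tori, which are transverse to the meridian-disk leaves. So $\Lambda$ is $\T^3$- or $\T^2$-type as in \zcref{def:pems}, according to whether $\partial M'$ is empty.

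The main obstacle is this capping-and-undoing step. One must check that the filled lamination is still a minimal lamination to which \zcref{lem:planarclassification} applies, and that the irrational boundary components are correctly recorded in the model. In particular, $\T^2$-type should arise precisely when some boundary torus carries an irrational sublamination, while $\T^3$-type arises when all boundary meetings are closed curves that have been capped. I would handle this using the product structure of the guts on irrational boundary tori, exactly as in the proof of \zcref{lem:planarclassification}.
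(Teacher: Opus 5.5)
Your route is the paper's: cap off the closed boundary components and use \zcref{lem:surftype} to sort leaves into $\pi_1(\lambda^\bullet)\neq 1$ (alternative 1), $\lambda^\bullet\cong S^2$ (alternative 2), or planar. Then, with spiraling excluded, you fill the boundary tori on which $\partial\Lambda$ consists of circles, apply \zcref{lem:planarclassification}, and undo the filling by drilling the cores as a transverse link.

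There is one false step. You claim that on a torus $T$ meeting $\Lambda$ in closed curves there are only finitely many such curves, because otherwise noncompact leaves would have to spiral. This fails in exactly the case you are treating. The model $\T^3$-type lamination, an irrational linear foliation of $\T^3$ minus a tubular neighborhood of a transverse circle, meets the new boundary torus in a foliation by meridian circles, with no spiraling at all. So your finiteness claim contradicts the very conclusion you want in alternative 4.

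The general reason is as follows. If $\Lambda$ is minimal and not a single compact leaf, every leaf is accumulated by other plaques along its whole length, including near a boundary circle $c\subset T$. Since spiraling is excluded, the boundary components of those nearby plaques are closed curves parallel to $c$. Hence there are infinitely many of them, forming a Cantor family or a full foliation by circles. Consequently you cannot literally ``attach $2$-handles along these curves''; note also that $2$-handles alone would not fill $T$ to a solid torus.

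The repair is what your last paragraph already presupposes, and what the paper does: Dehn fill along the common slope. Write $T\cong S^1\times S^1$ so that the closed curves of $\partial\Lambda\cap T$ are $S^1\times\{c\}$ for $c$ in a closed set $C\subset S^1$. Glue in $D^2\times S^1$ with meridians along this slope, and extend $\Lambda$ by the disks $D^2\times\{c\}$, $c\in C$. The extended lamination is still minimal with simply connected leaves. No boundary torus with closed curves remains, so the $S^1\times D^2$ and $S^1\times S^2$ cases of \zcref{lem:planarclassification} are excluded. The cores of the glued solid tori form a link transverse to the new leaves. With this correction the rest of your argument goes through unchanged.
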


\begin{proof}
    Given a leaf $\lambda$ of $\Lambda$, we denote by $\lambda^\bullet$ the abstract surface obtained by capping off the closed boundary components of $\lambda$ by disks. If $\pi_1(\lambda^\bullet)\neq 1$ for some leaf $\lambda$ of $\Lambda$, then option 1 holds. If $\lambda^\bullet \cong S^2$, then option 2 holds. Otherwise, every leaf $\lambda$ of $\Lambda$ satisfies $\interior \lambda^\bullet \cong \R^2$ by \zcref{lem:surftype}. If no leaf of $\Lambda$ satisfies option 3, then every component of $\partial M$ is either a lamination on $\T^2$ by circles, or a lamination on $\T^2$ without circles. Dehn fill the boundary components of $\partial M$ of the former type to obtain a new lamination $\Lambda^\bullet$ on a new 3-manifold $M^\bullet$. Now, $\Lambda^\bullet$ is a lamination with all leaves homeomorphic to $\R^2$. By \zcref{lem:planarclassification}, $\Lambda$ satisfies either option 4 or option 5.
\end{proof}

We introduce some terminology for the various minimal sets that we will encounter.

\begin{defn} \label{def:minimalsets}
    A minimal lamination $\Lambda$ transverse to $\partial M$ is \textbf{nonperipheral} if it is as in item 1 of \zcref{prop:lamclass}. It is \textbf{peripheral} otherwise. If it is as in item 3, we call it a \textbf{spiral} minimal set. These definitions extend to minimal sets of foliations.
\end{defn}

\clearpage
\part{From foliations to contact structures and back} \label{sec:ETandback}

In~\cite{ET}, Eliashberg and Thurston proved that any $C^2$-foliation different from the fibration by spheres on $S^1 \times S^2$ on a \textbf{closed} manifold can be approximated by positive and negative contact structures. This result has been extended to $C^0$-foliations by Bowden~\cite{B16} and independently Kazez--Roberts~\cite{KR17}. In~\cite{M24}, the first author proved a converse result on the construction of $C^0$-foliations from suitable contact pairs, still in the setting of closed manifolds.

The goal of this part is to extend these theorems to manifolds with toroidal boundary. In the case of the Eliashberg--Thurston approximation theorem, we will carefully keep track of how the boundary foliation is modified; this is perhaps the most difficult result of this article. The converse theorem for manifolds with boundary will be reduced to the closed manifold case by an elementary reflection trick.

    \section{Eliashberg--Thurston with boundary}

As before, we assume that $M$ is a connected, compact, oriented $3$-manifold with nonempty boundary $\partial M \neq \varnothing$.

Let $\F$ be a cooriented $C^0$-foliation on $M$ which is transverse to $\partial M$. Our goal is to approximate $\F$ by positive and negative contact structures $\xi_\pm$ with precise control over the characteristic foliations of $\xi_\pm$ along $\partial M$. In particular, if $\partial M$ is framed and $\F$ realized a multislope $\bm s$ along $\partial M$, we would like to construct contact approximations realizing the same multislope $\bm s$ along $\partial M$. We refer to this version of the Eliashberg--Thurston theorem with boundary as the \textbf{coarse} version. It applies to any foliation $\F$ as above which is not homeomorphic to the standard foliation by disks on $S^1 \times D^2$, or to a blowup of an irrational foliation on $\T^2 \times I$.

This coarse version does not allow one to control the \emph{boundary type} of the contact approximations along $\partial M$. For some applications, we will need a more precise result which will allow us to \emph{improve} the boundary type of the foliation, provided that certain obstructions (compact genus $0$ surfaces and $\T^2$-type planar minimal sets, see below) do not appear. We will refer to this version as the \textbf{fine} version of the Eliashberg--Thurston theorem with boundary. In particular, this will allow us to remove certain $\flat$, $\sharp$, $\natural$, and Reeb regions appearing in $\partial \F$ in the unobstructed setting.

\begin{rem}
    After some minor adjustments near $\partial M$, one can \emph{double} the foliation $\F$ on the doubled manifold $\overline{M} = M \cup_{\partial M} (-M)$, and apply the standard version of Eliashberg--Thurston theorem (for $C^0$-foliations) to the closed manifold $\overline{M}$, provided that $\F$ is not homeomorphic to the standard foliation by disks on $S^1 \times D^2$ (which would yield the standard foliation by spheres on $S^1 \times S^2$). This way, one obtains positive and negative contact approximations to $\F$ on $M$. However, this strategy provides no control over characteristic foliations of the contact structures along $\partial M$, beyond $C^0$-closeness. This is even more an issue when the original foliation is only $C^0$, as the boundary slope may be modified in a dramatic way.
\end{rem}

We now give a rough overview of the proof strategy for the coarse and fine versions. While the fine version essentially follows the same steps, it will require a much more careful analysis of various key ingredients; in particular, $\T^3$-type planar minimal sets will be handled separately using new techniques.

\begin{itemize}[leftmargin=*]
    \item \textbf{Step 1.} First, we make compact leaves isolated by performing suitable blowdowns and blowups. The new boundary foliation is monotone-equivalent to the original one. In the absence of compact genus $0$ leaves, we can arrange that the new boundary foliation is obtained by blowdowns and \emph{trivial} blowups, which is relevant to the fine version. This step is very similar to Step 1 from~\cite[Section 8]{B16}.
    
    \item \textbf{Step 2.} Then, we perform blowups near the (finitely many) minimal sets to create suitable holonomy there, as in Step 2 of~\cite[Section 8]{B16}. While the boundary foliation is only modified by some monotone-equivalence, more care is needed to prevent nontrivial blowups for the fine version, in the absence of compact genus $0$ leaves. Besides, $\T^3$-type planar minimal sets will be treated separately using a rather hands on strategy.

    \item \textbf{Step 3.} We proceed as in~\cite[Lemma 4.1]{B16} to create \emph{nice annular fences} near holonomy curves. For the fine version we also create ``toroidal holes'' by drilling out neighborhoods of some transverse closed curves in order to handle the $\T^3$-type planar minimal sets. We further consider suitable ``boundary windows'' along $\partial M$ (including on the newly created toroidal boundary components). We then remove neighborhoods of the annular fences to obtain a certain type of manifold with boundary and concave corners that we call \emph{manifolds with (annular boundary) fences and (boundary) windows}. Crucially, we will arrange that every point in the resulting manifold can be connected to a boundary fence or window by a smooth curve tangent to the foliation.

    \item \textbf{Step 4.} Adapting techniques developed in~\cite{B16}, we modify the foliation along \emph{ribbons} connecting fences and windows to the ``genuine'' boundary components and improve the boundary foliation along the latter, at the expense of also modifying the foliation along the boundary fences and windows. The reader should think of the fences and windows as ``drains'' which absorb the modifications at the genuine boundary along the ribbons.

    \item \textbf{Step 5.} We can now apply a suitable version of the main result of~\cite{B16} to approximate the foliation and obtain a contact structure which dominates the boundary foliation obtained from Step 3 along the genuine boundary components (after possibly making it slightly worse inside of the windows).

    \item \textbf{Step 6.} Finally, we extend the approximating contact structures by filling the annular fences as in~\cite[Lemma 5.6]{B16}. We also fill the toroidal boundary components corresponding to the $\T^3$-type planar minimal sets using the results in \zcref{sec:tori} below.
\end{itemize}

\begin{longrem}
    One key aspect of~\cite{B16}, which goes back to~\cite{ET}, is to approximate the foliations by \emph{confoliations} and propagate the contactness to the entire manifold. In this language, our strategy can be phrased as follows: first find holonomy \emph{away} from $\partial M$ and produce bubbles of contactness there. Then propagate contactness towards $\partial M$ which will improve the slope of $\partial \F$ as in \zcref{fig:propagate}.

        \begin{figure}[ht]
    \centering
        \begin{subfigure}{0.45\linewidth}
        \centering
        \includegraphics[width=\linewidth]{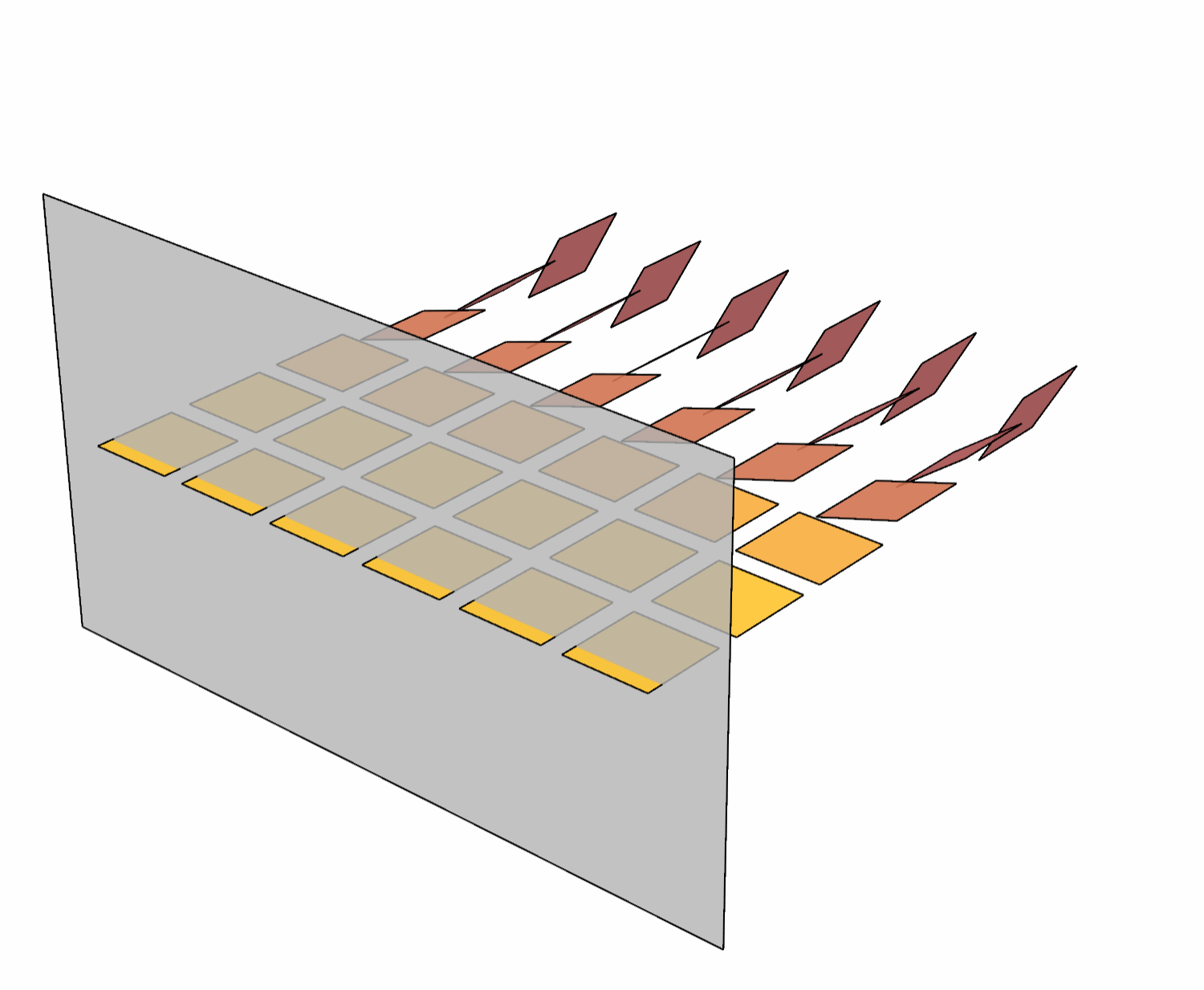}
        \caption{}
        \label{fig:propagate1}
        \end{subfigure}
        \hspace{0.05\linewidth}
        \begin{subfigure}{0.45\linewidth}
        \centering
        \includegraphics[width=\linewidth]{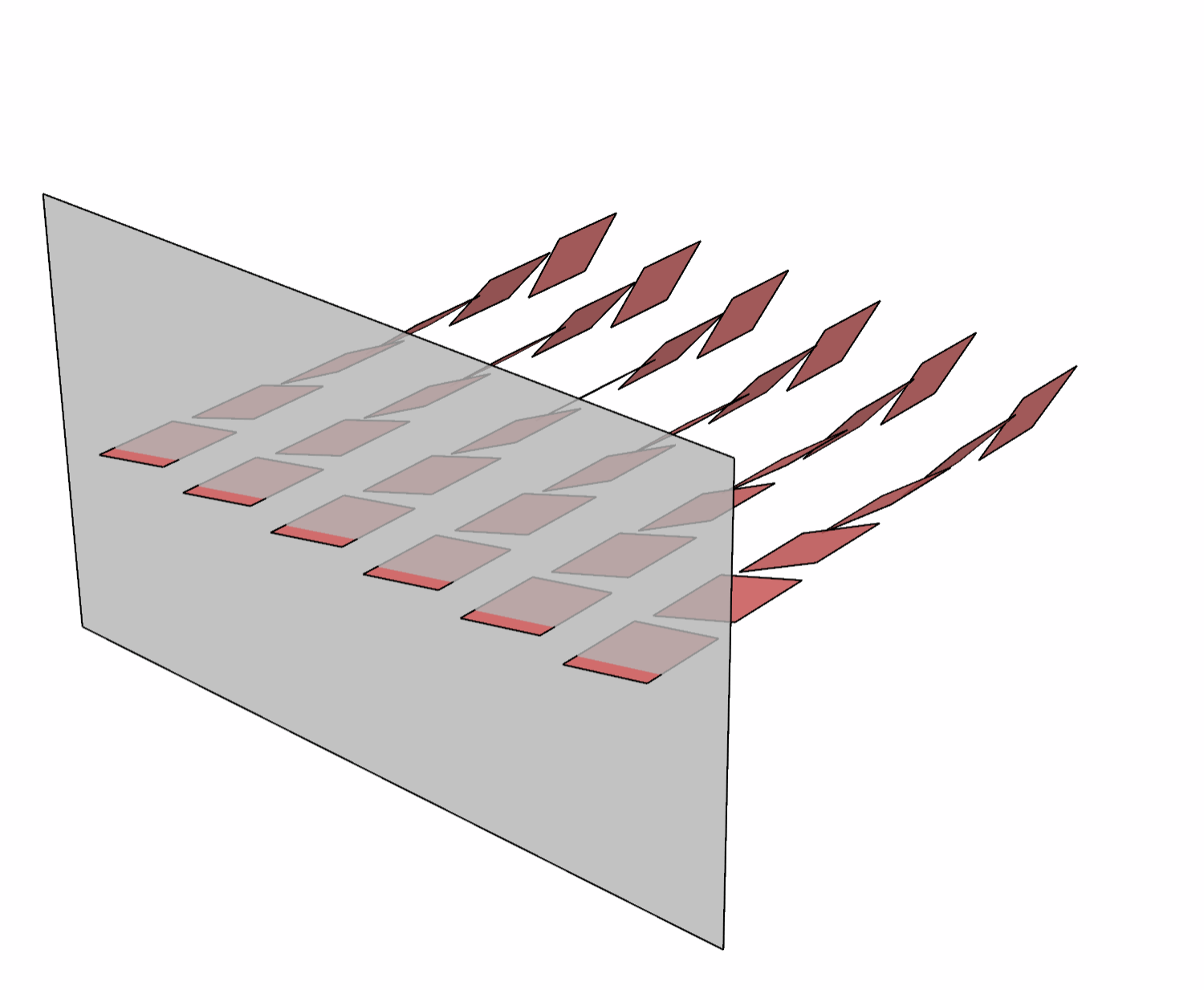}
        \caption{}
        \label{fig:propagate2}
        \end{subfigure}
        \caption{\ref{fig:propagate1} shows a smooth confoliation $\F$ which is a foliation near $\partial M$. The contact region is shown in red tones and the foliated region is shown in yellow. Propagating contactness towards $\partial M$ as in \ref{fig:propagate2} improves the slope of $\partial \F$.}\label{fig:propagate}
    \end{figure}

However, the notion of $C^0$-confoliation from~\cite{B16} is rather ad hoc and quite subtle to manipulate. The strategy outlined in the main text above has the advantage of completely bypassing the use of $C^0$-confoliations, except in Step 5 which closely follows~\cite{B16}. 
\end{longrem}

        \subsection{Manifolds with fences and windows} \label{sec:fencesandwindows}

In this section, we develop the main technical tools involved in the proof of both the coarse and the fine versions of Eliashberg--Thurston with boundary. One main result is a rephrasing (and slight extension) of the key technical steps from~\cite{B16}. We also use similar techniques to improve boundary foliations in suitable settings. To this end, we will drill parts of the manifold (annular boundary fences) where the foliation has a certain type of holonomy, and consider special regions along the boundary (windows) where the foliation is standard. We then use variations of \emph{smoothing ribbons} from~\cite{B16} to ``twist'' the foliation along the ribbons and transport modifications of the genuine boundary components to those drilled regions and to the windows. This will allow us to make the boundary foliation ``better'' than the original one, except inside the windows where it will become slightly ``worse''. These modification will take place on what we call \emph{manifolds with fences and windows}.

\begin{defn} \label{def:manfence}
    A $3$-manifold with annular boundary fences and boundary windows, or \textbf{manifold with fences and windows} for simplicity, is a manifold with boundary and (concave) corners $M_\ssq$ of the form $\overline{M \setminus N}$, where $M$ is a usual $3$-manifold with boundary (and without corners), and $N$ is a disjoint union of codimension-$0$ submanifolds with boundary and corners, disjoint from $\partial M$, and diffeomorphic to $S^1 \times I \times I$. It also comes with the data of \textbf{windows} which are subsets of $\partial M_\ssq$ two types: 
    \begin{itemize}
        \item A \textbf{square window} is a smoothly embedded squares $I \times I$ inside a component of $\partial M_\ssq$ which is not a fence,
        \item A \textbf{toroidal window} is a an entire boundary component of $\partial M_\ssq$ which is not a fence.
    \end{itemize}
    Moreover, we require that the windows are pairwise disjoint.
\end{defn}

When $M$ has other boundary components than the fences (which will typically be the case for us), we denote by $\partial_\ssq M_\ssq$ the union of the boundary fences and toroidal windows, and by $\partial_* M_\ssq$ the union of the genuine boundary components, or \textbf{facades}, i.e., the ones coming from $M$ in \zcref{def:manfence} which are not toroidal windows. By definition,
$$\partial M_\ssq = \partial_\ssq M_\ssq \sqcup \partial_* M_\ssq.$$
We further denote by $\partial^\mathrm{an}_\ssq M_\ssq$ the union of the boundary fences, and by $\partial^\mathrm{tor}_\ssq M_\ssq$ the union of the toroidal windows, so that $\partial_\ssq M_\ssq = \partial^\mathrm{an}_\ssq M_\ssq \sqcup \partial^\mathrm{tor}_\ssq M_\ssq$.

We will typically assume that $\partial_* M_\ssq$ is a disjoint union of tori. We denote by $\mathcal{A}_1, \dots, \mathcal{A}_k \subset \partial M_\ssq$ the annular fences of $M_\ssq$, and by $\mathcal{W}_1, \dots, \mathcal{W}_\ell \subset \partial_*M_\ssq$ the boundary windows. We will also consider \textbf{standard neighborhoods (and coordinates)} of those fences and windows:

\begin{defn}
    Let $M_\ssq$ be a manifold with fences and windows.
    \begin{itemize}
        \item A standard neighborhood of the annular fence $\mathcal{A} \subset \partial_\ssq M_\ssq$ is a neighborhood $N(\mathcal{A})$ of the form $S^1 \times [-1,2]^2 \setminus (S^1 \times (0,1)^2)$ with smooth coordinates $(x, y, z)$. It only intersects $\partial M$ along $\mathcal{A}$.
        \item A standard neighborhood of the boundary square window $\mathcal{W} \subset \partial_* M_\ssq$ is a neighborhood $N(\mathcal{W})$ of the form $[-1,2] \times I \times [-1,2]$ with coordinates $(x,y,z)$, in which $\mathcal{W}$ corresponds to $[0,1] \times \{1\} \times [0,1]$. It only intersect $\partial M$ along $[-1,2] \times \{1\} \times [-1,2]$ and is contained in the component of $\partial_*M$ containing $\mathcal{W}$ there.
        \item A standard neighborhood of the boundary toroidal window $\mathcal{W} \subset \partial_* M_\ssq$ is a neighborhood $N(\mathcal{W})$ of the form $\mathcal{W} \times [0,1]_y \cong \T_{x,z}^2 \times [0,1]_y$ in which $\mathcal{W}$ corresponds to $\mathcal{W} \times \{1\}$ 
    \end{itemize}
\end{defn}

For a choice of pairwise disjoint standard neighborhoods and coordinates around the annular fences and windows of $M_\ssq$, we write 
\begin{align}
N_\ssq^\mathrm{an} \coloneqq \bigcup_{i=1}^k N(\mathcal{A}_i), \qquad
N_\ssq^\mathrm{win} \coloneqq \bigcup_{j=1}^\ell N(\mathcal{W}_j), \qquad
N_\ssq \coloneqq N_\ssq^\mathrm{an} \cup N_\ssq^\mathrm{win}.
\end{align}
We also write
\begin{align}
    N_\partial(\mathcal{W}_j) \coloneqq N(\mathcal{W}_j) \cap \partial_* M_\ssq, \qquad N^\mathrm{win}_\partial \coloneqq \bigcup_{j=1}^\ell N_\partial^\mathrm{win} = N^\mathrm{win}_\ssq \cap \partial_* M_\ssq.
\end{align}

\begin{defn} \label{def:adapted}
    Let $\F_\ssq$ be a cooriented $C^0$-foliation on a manifold with fences and windows $M_\ssq$. We say that $\F_\ssq$ is \textbf{adapted} to the boundary of $M_\ssq$ if it is transverse to $\partial M_\ssq$, and there exist pairwise disjoint standard neighborhoods and coordinates of the fences and windows such that
    \begin{itemize}
        \item In $N(\mathcal{A}_i)$, $\F_\ssq$ is tangent to $\partial_y$, positively transverse to $\partial_z$ (i.e., horizontal), and it is
        \begin{itemize}
            \item Positively transverse to $\partial_x$ in $S^1 \times I \times [1,2)$
            \item Negatively transverse to $\partial_x$ in $S^1 \times I \times (-1,0]$,
        \end{itemize}
        
        \item If $\mathcal{W}_j$ is a square window, then in $N(\mathcal{W}_j)$, $\F_\ssq$ is the standard horizontal foliation by $(x,y)$-planes. In particular, it is smooth there. If $\mathcal{W}_j$ is a toroidal window, we only require that $\mathcal{F}_\ssq$ is tangent to $\partial_y$ in $N(\mathcal{W}_j)$.
    \end{itemize}
\end{defn}

Using techniques from~\cite[Sections 3--4]{B16}, it is furthermore possible to apply an arbitrarily $C^0$-small isotopy of $\F_\ssq$ near the annular boundary fences to make them \emph{very nice}, as well as \emph{smooth} in a neighborhood of 
\begin{align}
    N^h(\mathcal{A}_i) \coloneqq S^1 \times [0,1] \times \big( [-1,0] \cup [1,2]\big).
\end{align}
We also write
\begin{align} \label{eq:horizontalnbd}
    N^h_\ssq \coloneqq \bigcup_{i=1}^k N^h(\mathcal{A}_i).
\end{align}

We will need one last definition: 

\begin{defn} \label{defn:fencetransitive}
    $\F_\ssq$ is called \textbf{$\partial_\ssq$-transitive} if every point in $M_\ssq$ can be joined to a boundary fence or the interior of a window by a smooth curve tangent to $\F_\ssq$.
\end{defn}

            \subsubsection{Improving \texorpdfstring{$\partial_\ssq$}{partialsq}-transitive foliations}

The next technical result allows us to improve the boundary foliation along the genuine boundary components on manifold with fences and windows in a controlled way, provided that the foliation is $\partial_\ssq$-transitive. More precisely, we first allow the boundary foliation to become slightly worse inside of the windows, and then improve it everywhere. The modification is modeled as follows:

\begin{defn}
     Let $\G$ denote the standard horizontal foliation on the neighborhood $N(\mathcal{W}) = [-1,2]^2_{x,z}$ of the square window $\mathcal{W} = [0,1]^2$, i.e., $\G$ is tangent to $\partial_x$. Let $\star \in \{ \sharp, \flat\}$. A \textbf{$\star$~modification of $\G$ in $N(\mathcal{W})$} is a smooth foliation $\G^\star$ on $N(\mathcal{W})$ satisfying
     \begin{itemize}
        \item $T\G^\star$ is either tangent or transverse to $T \G$; it is positively transverse if $\star = \sharp$ and negatively transverse if $\star = \flat$,
         \item $\G^\star$ coincides with $\G$ near $\partial N(\mathcal{W})$,
         \item $\G^\star$ is transverse to $\G$ in a neighborhood of $\mathcal{W}$.
     \end{itemize}
\end{defn}

Such a modification can be obtained by considering a modification $\partial_x + f(x,z) \partial_z$ of $\partial_x$, for some function $f$ which vanishes near $\partial N(\mathcal{W})$, is nowhere nonnegative (resp.~nonpositive), and is positive (resp.~negative) near $\mathcal{W}$. This function can be chosen arbitrarily $C^\infty$ small, so the resulting modification $\G^\star$ can be made arbitrarily $C^\infty$ close to $\G$.

\begin{prop} \label{prop:betterfence}
    Let $M_\ssq$ be a manifold with fences and windows, and let $\F_\ssq$ be a $C^0$-foliation on $M_\ssq$ adapted to $\partial M_\ssq$ and $\partial_\ssq$-transitive. Furthermore, let $\partial^\flat \F_\ssq$ (resp.~$\partial^\sharp \F_\ssq$) be an arbitrarily small $\flat$ (resp.~$\sharp$) window modification of $\partial \F_\ssq$.
    
    Then $\F_\ssq$ can be $C^0$-approximated by $C^0$-foliations on $M_\ssq$, still adapted to $\partial M_\ssq$, which are positively (resp.~negatively) transverse to foliations isotopic to $\partial^\flat \F_\ssq$ (resp.~$\partial^\sharp \F_\ssq$) along the facades in $\partial_* M_\ssq$.
\end{prop}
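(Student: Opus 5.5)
We treat the case of $\partial^\flat \F_\ssq$; the $\sharp$ case is the mirror image. The plan has four steps: first worsen the boundary foliation inside the windows, then push it forward everywhere by twisting along ribbons, then let the windows and fences absorb the twisting, and finally make the boundary foliations genuinely transverse.

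\textbf{Step 1 (worsen inside the windows).} Replace $\F_\ssq$ near each square window by the foliation whose trace on $N_\partial(\mathcal{W}_j)$ is the given $\flat$ modification $\partial^\flat\F_\ssq$. This is possible because in $N(\mathcal{W}_j)$ the foliation is the standard product $(x,y)$-foliation. We extend the modification $y$-invariantly and cut it off in the $y$-direction with the isotopy extension of \zcref{lem:isotopy_extension}. After this step, $\partial\F_\ssq$ and $\partial^\flat\F_\ssq$ coincide outside the windows and are transverse in a neighborhood of each $\mathcal{W}_j$. Toroidal windows are handled the same way, using their $\partial_y$-invariant structure.

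\textbf{Step 2 (choose ribbons).} The goal is to turn the remaining tangency region on the facades, $\partial_*M_\ssq \setminus \bigcup_j \mathcal{W}_j$, into a transverse region. By $\partial_\ssq$-transitivity and compactness of $\partial_* M_\ssq$, we cover the facades by finitely many ribbons in the sense of \zcref{defn:ribbon}. Each ribbon starts from a short transverse arc on a facade, follows a smooth leafwise path, and ends on a fence or in the interior of a window. Near the facade, each ribbon is thin and its leaf arcs are transverse to $\partial_* M_\ssq$. Using the smoothing techniques of \cite[Sections 3--4]{B16}, we may first make $\F_\ssq$ smooth along the cores of these ribbons by an arbitrarily $C^0$-small isotopy.

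\textbf{Step 3 (twist along ribbons).} Perform a twisting modification (\zcref{constr:ribbontwist}) along each ribbon. The regluing homeomorphisms are chosen $C^0$-close to the identity and monotone in the direction that makes the facade trace rotate positively. Near the facade end of each ribbon, this tilts the leaves so that the new boundary foliation is positively transverse to $\partial\F_\ssq$, hence to $\partial^\flat\F_\ssq$, at the ribbon's foot. The opposite effect is dumped at the other end of the ribbon:
\begin{itemize}
\item at an annular fence, it is absorbed by the very nice holonomy on $N^h_\ssq$, where it only changes the fence holonomy and keeps the adapted form;
\item in a window, it worsens the boundary foliation by a $\flat$-type perturbation. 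This perturbation can be chosen smaller than the slack from Step 1, so the result is still positively transverse to the modified window foliation.
\end{itemize}
Overlapping ribbons are handled by processing them one at a time with decreasing twist sizes, using openness of transversality.

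\textbf{Step 4 (make transversality global).} Points of the facades not covered by the interior of a ribbon foot or a window are only tangent to $\partial^\flat\F_\ssq$. We arrange the ribbon feet and the windows so that the two boundary foliations are mostly-transverse in the sense of \zcref{def:mostlytrans}. We then apply \zcref{lem:mostlytrans} to obtain genuine transversality after $C^0$-small isotopies, and extend these isotopies into $M_\ssq$ by \zcref{lem:isotopy_extension}. Every operation in Steps 1--4 is $C^0$-small, so the resulting foliations $C^0$-approximate $\F_\ssq$.

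\textbf{Expected main obstacle.} The hardest step is the bookkeeping in Step 3. We must guarantee that every leaf of $\partial\F_\ssq$ on a facade meets a transverse region on both sides, which is the hypothesis needed for \zcref{lem:mostlytrans}. At the same time, the twisting must remain $C^0$-small and compatible with the adapted structure near fences and windows. I would first try to choose the ribbon feet so that they contain a closed transversal-type net on each facade torus, so that the last condition of \zcref{def:mostlytrans} holds.
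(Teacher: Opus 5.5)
Your architecture is the paper's: a $\partial$-full family of ribbons from the facades to fences or windows, and a product twist along each ribbon. (The paper replaces the foliation on $N(R_m)\cong\sigma_m\times\mathsf{R}_\delta$ by $\sigma_m\times\G_0$, which is a smoothed version of \zcref{constr:ribbontwist}.) Fences and windows then absorb the opposite tilt at the far ends, and \zcref{lem:mostlytrans} finishes.

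However, Step~1 is a genuine error and must be deleted. You modify $\F_\ssq$ itself so that its trace on the square windows becomes $\partial^\flat\F_\ssq$. This uses up exactly the slack that lets the windows act as drains. In the paper, $\partial^\flat\F_\ssq$ is only the \emph{comparison} foliation, and $\F_\ssq$ stays standard near the windows. Since $\partial\F_\ssq$ is already positively transverse to $\partial^\flat\F_\ssq$ near each $\mathcal{W}_j$, this slack does two jobs:
\begin{itemize}
\item it absorbs the small negative tilt created where a ribbon ends in a window;
\item it places the windows inside the strict-transversality set $W$ of \zcref{def:mostlytrans}.
\end{itemize}
After your Step~1, the trace at a window where a ribbon ends is obtained from $\partial^\flat\F_\ssq$ by a further negative tilt. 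It is therefore \emph{negatively} transverse to $\partial^\flat\F_\ssq$ there, so your claim in Step~3 that it is ``still positively transverse to the modified window foliation'' is false. At a window where no ribbon ends, the trace is merely tangent to $\partial^\flat\F_\ssq$. In both cases the hypotheses of \zcref{lem:mostlytrans} fail. Your own sentence saying that after Step~1 the trace and $\partial^\flat\F_\ssq$ ``are transverse in a neighborhood of each $\mathcal{W}_j$'' actually describes the \emph{unmodified} situation, and that is the one to keep. Toroidal windows are not facades, so nothing should be done there either.

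Second, what you call the main obstacle is exactly \zcref{lem:fullribbon}, whose ribbons are pairwise disjoint with feet away from $N^{\mathrm{win}}_\partial$. Fullness comes from a pointwise choice plus compactness. For $p$ in a facade, $\partial_\ssq$-transitivity gives ribbon feet through points of the boundary leaf containing $p$, on both sides of $p$. The same feet still work for every point of a small flow box around $p$, and finitely many flow boxes cover the facades. Disjointness is then obtained by resolving intersections as in \cite[Section~7]{B16}. This disjointness is needed. After you twist along one ribbon, any ribbon crossing it is no longer a ribbon for the new foliation: its horizontal edges stop being leafwise and its product neighborhood is destroyed. So twisting overlapping ribbons ``one at a time'' is not justified as stated, and no closed-transversal net is required.
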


Note that we do not impose any control along the toroidal boundary windows beyond the $C^0$-control. The main tool in the proof is an adaptation of the \emph{smoothing ribbons} from~\cite{B16}.

\paragraph{$\partial$-ribbons.} We fix $M_\ssq$ and $\F_\ssq$ as in the statement of \zcref{prop:betterfence}. We also consider an auxiliary smooth vector field $Z$ transverse to $\F_\ssq$, tangent to $\partial_* M_\ssq$ and which coincides with $\partial_z$ in the standard neighborhoods of the fences.

\begin{defn}[$\partial$-ribbons]
    A \textbf{system of $\bm \partial$-ribbons} adapted to $Z$ is a finite collection of pairwise disjoint smoothly embedded strips $R_m = \sigma_m \times [-1, 1] \subset M_\ssq$ such that
    \begin{itemize}
        \item The arcs $\sigma_m \times \{\pm1\}$ are tangent to the foliation $\F_\ssq$, and each ribbon is transverse to $\F_\ssq$ and tangent to $Z$,
        
        \item For the initial point $p_m$ of $\sigma_m$, the interval $p_j \times [-1,1]$ lies on $\partial_* M_\ssq$ away from $N_\partial^\mathrm{win}$, and for the end point $q_m$, the interval $q_m \times [-1,1]$ lies either in the interior of a vertical side of an annular fence, or in the interior of a window. Moreover, $R_m$ intersects $\partial M_\ssq$ along these intervals only, and transversally. 
        
        We further require that the ribbons are tangent to $\partial_y$ inside of $N_\ssq$.
    \end{itemize}

    This system is \textbf{$\bm \partial$-full} if for each point $p \in \partial_* M_\ssq$, the leaf of $\partial \F_\ssq$ passing through $p$ intersects interiors of initial intervals of $\partial$-ribbons, or interiors of windows, on both sides of $p$.
\end{defn}

If $R_m = \sigma_m \times [-1,1]$ is such a $\partial$-ribbon, we denote by $\partial^h R_m = \sigma_j \times \{-1, 1\}$ its horizontal boundary and by $\partial^v R_m = \{p_m, q_m\} \times [-1,1]$ its vertical boundary.

The $\partial_\ssq$-transitivity property allows us to connect points on $\partial_* M_\ssq$ to the fences and windows along the leaves of $\F_\ssq$. As a consequence, we have:

\begin{lem} \label{lem:fullribbon}
    If $\F_\ssq$ is $\partial_\ssq$-transitive, then there exists a $\partial$-full system of $\partial$-ribbons for $\F_\ssq$ adapted to $Z$.
\end{lem}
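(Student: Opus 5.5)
The plan is to build ribbons one boundary point at a time, using $\partial_\ssq$-transitivity, and then use compactness of $\partial_* M_\ssq$ to keep finitely many.

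\emph{Step 1: one ribbon per point.} Fix $p \in \partial_* M_\ssq \setminus N^\mathrm{win}_\partial$. By $\partial_\ssq$-transitivity there is a smooth curve $\sigma$ tangent to $\F_\ssq$ from $p$ to a point $q$ on a boundary fence or in the interior of a window. We adjust $\sigma$ in three ways.
\begin{itemize}
\item Near $p$, we make it leave $\partial_* M_\ssq$ transversally, pointing inward. This is possible because $\F_\ssq$ is transverse to $\partial M_\ssq$.
\item Near $q$, inside the standard neighborhood, we make it tangent to $\partial_y$. This is possible because $\F_\ssq$ is tangent to $\partial_y$ in $N(\mathcal{A}_i)$ and $N(\mathcal{W}_j)$. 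We arrange that it hits a vertical side of the fence, i.e.\ a side in $S^1\times\{0,1\}\times I$, or the interior of a window.
\item We trim $\sigma$ so that it meets $\partial M_\ssq$ only at its endpoints.
\end{itemize}
Then we thicken $\sigma$ by flowing along the transverse field $Z$. This gives a strip $\sigma\times[-1,1]$ tangent to $Z$. Since $Z$ is tangent to $\partial_* M_\ssq$ and equals $\partial_z$ near the fences, the vertical sides of the strip lie in $\partial M_\ssq$. The leaf through each point of $\sigma\times\{\pm1\}$ is nearby, so we then replace the horizontal edges by arcs tangent to $\F_\ssq$: take the leafwise holonomy translates of $\sigma$ along $Z$-flow lines. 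For a thin enough strip these stay in a small neighborhood of $\sigma$, because holonomy along a compact leafwise arc is defined on a neighborhood. The endpoint conditions are preserved by the standard coordinates.

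One subtlety is that $\F_\ssq$ is only $C^0$, so a "smooth curve tangent to $\F_\ssq$" may only be $C^1$. The plan is to first apply an arbitrarily small $C^0$-isotopy, as in \cite{B16}, making $\F_\ssq$ smooth near $\sigma$. This keeps adaptedness, since the isotopy is supported away from the standard neighborhoods, or is trivial there because the foliation is already smooth there.

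\emph{Step 2: fullness by compactness.} For each $p \in \partial_* M_\ssq$, consider the leaf $\ell_p$ of $\partial\F_\ssq$ through $p$.
\begin{itemize}
\item If $\ell_p$ meets the interior of a window on both sides of $p$, we are done at $p$.
\item Otherwise, for each side of $p$ that fails, pick a point $p'$ on that side of $\ell_p$ outside $N^\mathrm{win}_\partial$, and build a ribbon at $p'$ as in Step 1.
\end{itemize}
The initial interval of such a ribbon is a transverse arc on $\partial_*M_\ssq$ containing $p'$ in its interior. Leaves of $\partial\F_\ssq$ near $\ell_p$ cross that transverse arc on the same side, by continuity of leafwise paths. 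So the property "leaf meets the interiors of the chosen initial intervals or windows on both sides" is open in $p$. Compactness of $\partial_*M_\ssq$ then gives finitely many ribbons that make the system $\partial$-full.

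\emph{Step 3: disjointness (main obstacle).} The remaining problem is making the finitely many ribbons pairwise disjoint. Ribbons can be shrunk in the $[-1,1]$ direction without losing fullness, provided the initial intervals still cover the needed transversals. The plan is:
\begin{itemize}
\item First perturb the core arcs $\sigma_m$ within their leaves (or push them slightly along $Z$) so that they are pairwise disjoint. Arcs are one-dimensional in a three-manifold, so generic leafwise arcs miss each other after small perturbation. Near the standard neighborhoods, choose distinct $x$- or $z$-coordinates for the arcs.
\item Then take the strips thin enough to be disjoint.
\end{itemize}
The delicate point is doing this while keeping the endpoints in the allowed regions and tangent to $\partial_y$. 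This seems manageable because those conditions are open, and only finitely many arcs are involved.
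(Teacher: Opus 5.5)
Your Steps~1 and~2 are essentially the paper's first step. You join facade points to fences and windows by leafwise curves, thicken along $Z$, and use compactness to obtain a finite, not necessarily disjoint, $\partial$-full system. The gap is in Step~3.

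Disjoint cores plus thin strips do give disjoint ribbons, but thinning undoes what Step~2 achieved.
\begin{itemize}
\item A strip tangent to $Z$ and transverse to $\F_\ssq$, with leafwise horizontal edges, is exactly the union of the $Z$-segments joining two holonomy translates of its core. So the only way to thin it is to shrink its initial interval on $\partial_* M_\ssq$.
\item $\partial$-fullness is precisely a condition on the sizes of these initial intervals.
\item Those widths were fixed before compactness selected the finite family. The thinning needed for disjointness is dictated by how close the selected ribbons pass to one another, which can be arbitrarily small compared to those widths.
\item If you add ribbons to restore fullness, you create new intersections, and the argument regresses.
\end{itemize}
Nor can the intersections be removed by a small perturbation. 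Two such strips meeting transversally do so along arcs of $Z$-orbits, since both tangent planes contain $Z$. When the core of one ribbon pierces the other, this intersection is stable. Your proviso ``provided the initial intervals still cover the needed transversals'' is therefore exactly the unproved point, and nothing in your argument guarantees it.

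The missing idea is the cut-and-paste resolution of \cite[Section~7]{B16} (after \cite{V16}), which is what the paper invokes.
\begin{itemize}
\item Put the finitely many ribbons in general position, so that they meet along $Z$-arcs.
\item Subdivide ribbons in the $Z$-direction so that the pieces to be exchanged have matching heights.
\item At each such arc, cut the two ribbons and reglue them crosswise, compatibly with their orientation from $\partial_* M_\ssq$ towards the fences and windows.
\item Discard any closed components.
\end{itemize}
Each resulting ribbon still starts on one of the original initial intervals and still ends on a vertical side of a fence or inside a window. The system of initial intervals is therefore kept, up to the bookkeeping of the subdivision, rather than being shrunk by an amount dictated by the other ribbons. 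This is why $\partial$-fullness survives the operation.
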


\begin{proof}
    The proof closely follows the strategy from~\cite[Section 7]{B16}, which is itself based on Vogel's article~\cite{V16}; we only highlight the main ideas and refer the reader to these articles for further details.

    The first step is to construct a finite collection of (not necessarily disjoint) $\partial$-ribbons satisfying the listed properties using a compactness argument, by joining points on the facade components of $\partial_* M_\ssq$ to the fences and windows along smooth curves tangent to $\F_\ssq$, and using the flow of $Z$ to create ribbons passing through those curves. Then, these ribbons can be made pairwise disjoint by making them transverse and resolving the intersections as explained in~\cite[Section 7]{B16}. During this last operation, the $\partial$-fullness property is preserved.
\end{proof}

\begin{figure}
    \centering
    \def\svgwidth{0.95\textwidth}
    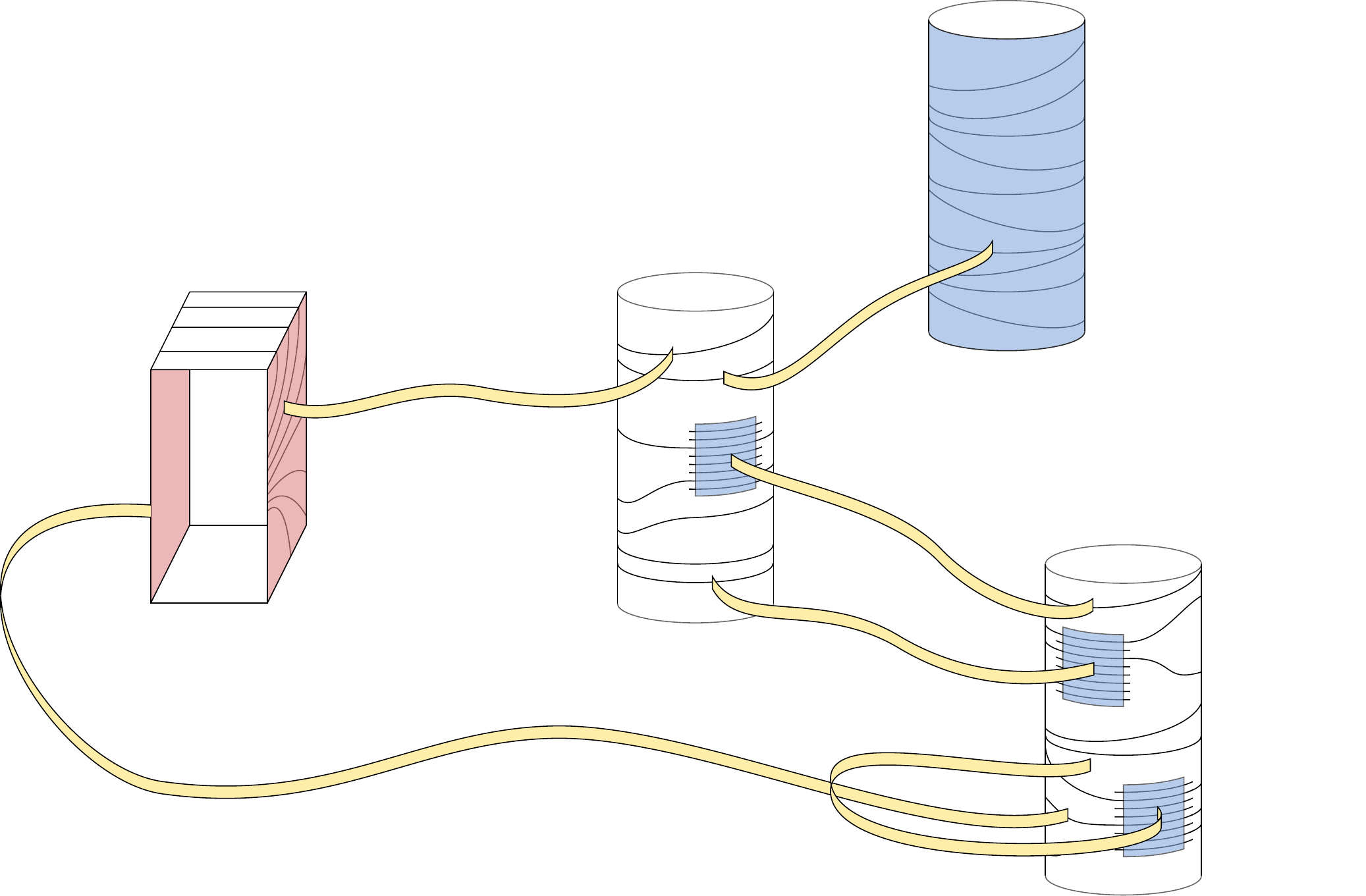
    \caption{Ribbons (yellow), fences (red), windows (blue).}
    \label{fig:fencesandwindows}
\end{figure}

\paragraph{Proof of \zcref{prop:betterfence}.}

Let $M_\ssq$ and $\F_\ssq$ as in the statement of the proposition, and let $\mathcal{R} = \{R_m\}$ be a $\partial$-full system of ribbons provided by \zcref{lem:fullribbon}. 

Let $\partial^\flat \F_\ssq$ be a small $\flat$ window modification of $\partial \F_\ssq$, so that $\partial \F_\ssq$ is positively transverse to $\partial^\flat \F_\ssq$ inside of the square windows. 

We can proceed as in Step 3 of~\cite{B16} and smoothen $\F_\ssq$ in small pairwise disjoint neighborhoods of the ribbons, via an arbitrarily $C^0$-small isotopy of $\F_\ssq$. The resulting $C^0$-foliation, denoted by $\widetilde{\F}_\ssq$, is still adapted to the fences and windows. Along the facades of $\partial_* M_\ssq$, these modifications happen away from $N_\partial^\mathrm{win}$ (since the foliation is already smooth in $N_\ssq^\mathrm{win}$ so the foliation can be preserved there), so we may perform them on $\partial^\flat \F_\ssq$ as well and we denote the resulting boundary foliation by $\partial^\flat \widetilde{\F}_\ssq$.

Let us consider a $\partial$-ribbon $R_m \in \mathcal{R}$, together with a small (closed) neighborhood of the form $N(R_m) = [-\delta, \delta] \times \sigma_j \times [-1-\delta, 1+\delta]$ for some small $\delta > 0$, such that $\widetilde{\F}_\ssq$ is a product foliation in $N(R_m)$. More precisely, we write $N(R_m) =  [-\delta, \delta]_x \times \sigma_j \times [-1-\delta, 1+\delta]_z \cong \sigma_j \times \mathsf{R}_\delta$ so that $\widetilde{\F}_\ssq$ is tangent to $[-\delta, \delta]_x \times \sigma_j  \times \{z\}$ for $z \in [-1-\delta, 1+\delta]$. On the rectangle $\mathsf{R}_\delta = [-\delta, \delta] \times [-1-\delta, 1+\delta]$, we consider a smooth foliation $\G_0$ satisfying:
\begin{itemize}
    \item $T\G_0$ is either tangent or positively transverse to the standard horizontal foliation,
    \item Near $\partial \mathsf{R}_\delta$, $\G_0$ coincides with the standard horizontal foliation,
    \item In $\mathsf{R}_{\delta/2} \subset \mathsf{R}_\delta$, $\G_0$ is positively transverse to the standard horizontal foliation.
\end{itemize}
Furthermore, we may arrange that $\G_0$ is arbitrarily $C^0$-close to the standard horizontal foliation. We may now modify $\widetilde{\F}_\ssq$ by replacing it with $\sigma_j \times \G_0$ inside of $N(R_m)$. Performing this operation along all the $\partial$-ribbons, we obtain a new $C^0$-foliation $\widetilde{\F}^+_\ssq$ which is arbitrarily $C^0$-close to $\F_\ssq$, and such that along $\partial_* M_\ssq$, $\partial \widetilde{\F}^+_\ssq$ is mostly (positively) transverse to $\partial^\flat \widetilde{\F}_\ssq$. Moreover, choosing these modifications small enough ensures that $\widetilde{\F}^+_\ssq$ is still adapted to the boundary, after smooth change of coordinates in the neighborhoods of windows.

\begin{figure}[ht]
    \centering
    \def\svgwidth{0.6\textwidth}
    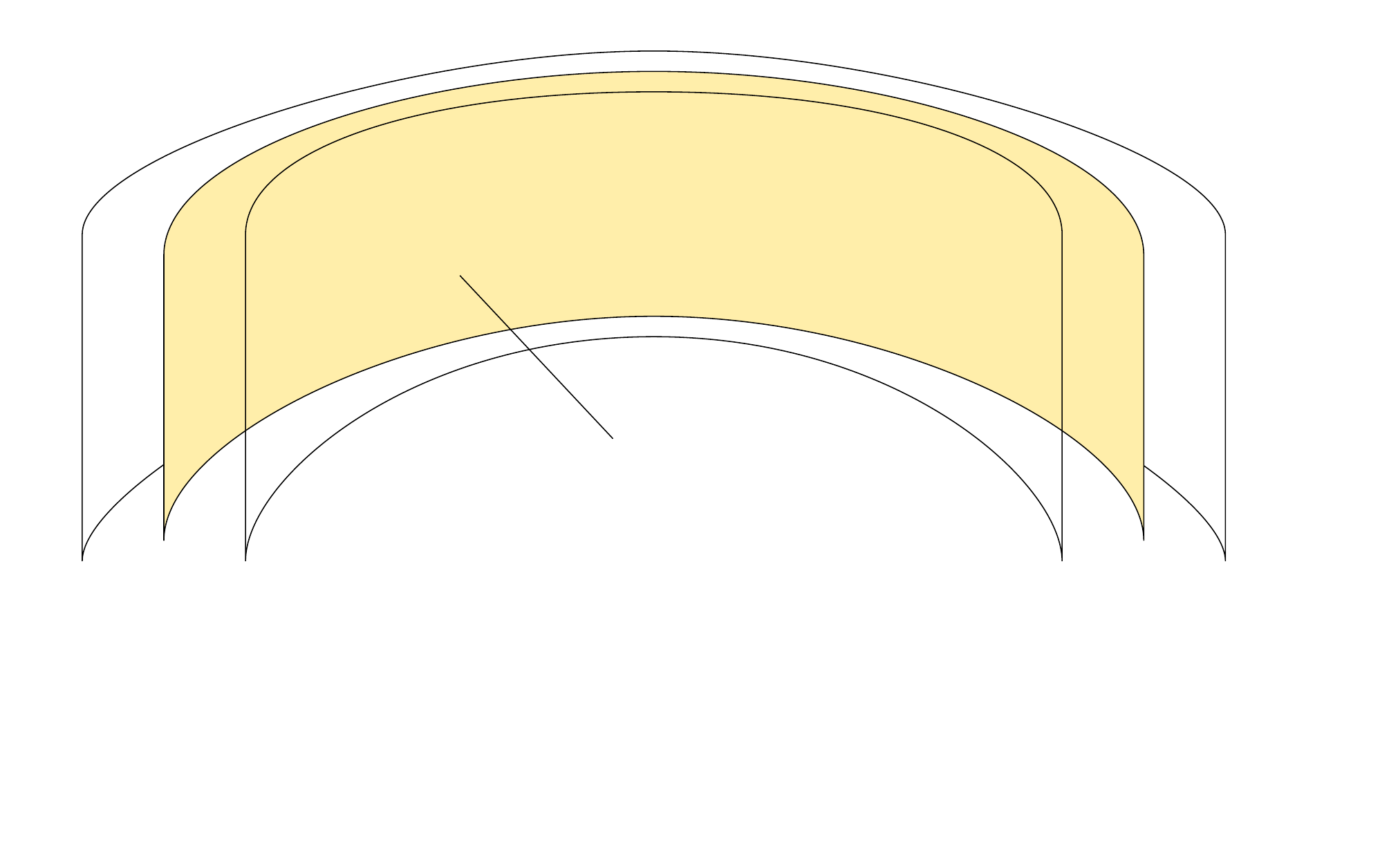
    \caption{Twisting along a thickened ribbon.}
    \label{fig:twisting_ribbon}
\end{figure}

Finally, we apply \zcref{lem:mostlytrans} and modify $\widetilde{\F}^+_\ssq$ near $\partial_* M_\ssq$ to make it positively transverse to a foliation which is isotopic to $\partial \widetilde{\F}_\ssq$, and which is itself isotopic to $\partial \F_\ssq$, via $C^0$-small isotopies. While the boundary foliation is slightly modified in the standard neighborhood of the windows and might not be smooth anymore, it can easily be corrected by a $C^0$-small isotopy preserving the transversality condition, and one easily constructs new coordinates near the windows in which the resulting foliation is horizontal. The resulting $C^0$-foliation satisfies all the desired requirements and the proposition is proved in the ``positive'' case. The ``negative'' case is obtained similarly. \qed

            \subsubsection{Approximating \texorpdfstring{$\partial_\ssq$}{partialsq}-transitive foliations}

We restate one of the main results in~\cite{B16} on the approximation of \emph{transitive} foliations in our current setting. The main difference is that in~\cite{B16}, the author does not drill out the annular fences and does not consider boundary components nor windows. However, the overall strategy works \emph{mutatis mutandis}.

\begin{prop}[\cite{B16}] \label{prop:contapprox}
    Let $M_\ssq$ be a manifold with fences and toroidal windows,\footnote{We don't technically need square windows for that statement, as we do not impose any control over the boundary foliation beyond a tangential $C^0$-control.} and let $\F_\ssq$ be a $C^0$-foliation on $M_\ssq$ adapted to the boundary and $\partial_\ssq$-transitive. Then $\F_\ssq$ is $C^0$-approximated by positive and negative contact structures.
\end{prop}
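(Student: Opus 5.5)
We follow the structure of Bowden's approximation theorem and reduce to it. The statement to prove is \zcref{prop:contapprox}: a $\partial_\ssq$-transitive, boundary-adapted $C^0$-foliation on a manifold with fences and toroidal windows is $C^0$-approximated by positive and negative contact structures. We treat the positive case; the negative case is the mirror image, obtained by reversing the orientation of $M_\ssq$ and exchanging the roles of $\flat$ and $\sharp$.

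\textbf{Step 1: contact germ near fences and windows.} Near each annular fence $\mathcal{A}_i$, the foliation is in the very nice form of \zcref{def:adapted}. After the $C^0$-small isotopy mentioned there, it is also smooth on $N^h(\mathcal{A}_i)$. On the slabs $S^1\times I\times[1,2)$ and $S^1\times I\times(-1,0]$ the foliation is positively, respectively negatively, transverse to $\partial_x$. This is exactly the setting of \cite[Section 4]{B16}: one tilts the plane field in the $\partial_y$ direction, rotating it by a small angle as $y$ increases, to obtain a smooth plane field that is a positive contact structure on a neighborhood of the vertical fence sides and agrees with $T\F_\ssq$ away from $N^\mathrm{an}_\ssq$.

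Along toroidal windows there is more room. The foliation is tangent to $\partial_y$ in $N(\mathcal{W})\cong\T^2\times[0,1]_y$. Since we impose no boundary control there beyond $C^0$, we can replace the foliation near $\mathcal{W}\times\{1\}$ by a plane field that twists slightly in $y$. By the trim/twist normal form of \zcref{lem:trim_twist}, this plane field is contact in a thin collar of the window. The upshot of Step 1 is a $C^0$-close smooth confoliation-type plane field that is contact on an open neighborhood $U$ of $\partial_\ssq M_\ssq$ and equals $T\F_\ssq$ elsewhere.

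\textbf{Step 2: propagation along smoothing ribbons.} Using $\partial_\ssq$-transitivity exactly as in \zcref{lem:fullribbon}, but with the roles reversed, we join every point of $M_\ssq$ to $U$ by a smooth leafwise curve. By compactness we obtain a finite system of pairwise disjoint ribbons, each running from a point of the interior to a fence side or a toroidal window. Disjointness is arranged as in \cite[Section 7]{B16} and \cite{V16}. Their interiors cover $M_\ssq$ in the leafwise sense: each leaf passes through some ribbon on both sides of every point.

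We then smooth $\F_\ssq$ near the ribbons by a $C^0$-small isotopy, as in Step 3 of \cite{B16}. Along each ribbon we apply Bowden's contact propagation, which is the $C^0$-version of Eliashberg--Thurston propagation along ribbons. The contact region spreads from $U$ along the ribbon, with the twisting absorbed at the fence or window end; this is why these ends serve as drains. Performing the construction ribbon by ribbon produces a plane field that is contact on a neighborhood of the union of all ribbons and $U$, and that is still a $C^0$-foliation elsewhere.

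\textbf{Step 3: making it contact everywhere.} Because every leaf segment meets a ribbon neighborhood on both sides, the remaining foliated region is a union of flow boxes whose leafwise ends lie in the contact region. Bowden's final step then applies verbatim in each such box: a $C^0$-small graphical perturbation interpolates between the contact region on both ends. We must check that no part of this construction touches the facades $\partial_*M_\ssq$ in a way that breaks transversality to $\partial M_\ssq$; since all modifications are $C^0$-small, transversality is preserved.

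\textbf{Main obstacle.} We expect the main difficulty to be Step 2, namely checking that Bowden's ribbon propagation works when the ribbons terminate on concave corners and on fences rather than on closed holonomy neighborhoods. The first thing to try is to show that the fence coordinates of \zcref{def:adapted} reproduce precisely Bowden's ``nice annular fence'' local model, so that his estimates apply with no changes.
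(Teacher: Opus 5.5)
Your Steps 1--2 are in the right spirit: fences and windows act as drains for the ribbon twisting, and $\partial_\ssq$-transitivity is what lets the ribbons reach them. Step 3, however, is where the real content of Bowden's theorem lies, and as written it does not work.

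In the $C^0$ setting, the tangent plane field of the foliation inside a flow box is only continuous and need not be uniquely integrable. There is therefore no local ``graphical perturbation interpolating between the contact region on both ends'' that makes it contact. Instead, one has to \emph{replace} the plane field on each $3$-cell by a contact structure extending a contact germ defined near the boundary of that cell. Bowden's filling of a cell by a contact structure $C^0$-close to the horizontal plane field requires the characteristic foliation of the germ along the vertical boundary of the cell to have ascending holonomy (compare the positive-slope hypothesis in \zcref{lem:maypolefilling}). Nothing in your construction arranges this.

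Relatedly, your covering condition ``each leaf passes through some ribbon on both sides of every point'' is the $\partial$-fullness of \zcref{lem:fullribbon}. That makes sense for $1$-dimensional leaves on $\partial_* M_\ssq$, but not for $2$-dimensional leaves. It also gives neither a decomposition of the complement of the ribbons into cells with contact boundary, nor any control on the holonomy around such cells.

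The missing idea is the cellular structure the paper imports from Step 3 of \cite[Section 8]{B16}:
\begin{enumerate}
\item Take a fine triangulation in general position with respect to $\F_\ssq$ and the fence/window coordinates, covering $\partial N_\ssq$ and the toroidal windows, and convert it into a polyhedral decomposition via \cite[Lemma 6.3]{B16}.
\item Smooth $\F_\ssq$ near the $1$-skeleton.
\item Choose a full collection of smoothing ribbons in the sense of \cite[Definition 7.1]{B16} \emph{relative to this decomposition}, ending in the interiors of vertical sides of fences or of windows. This is exactly where $\partial_\ssq$-transitivity is used: it replaces Bowden's freedom to let ribbons cross the regions $N(A'_i)$, which are now removed.
\end{enumerate}

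The ribbons then play a double role that your proposal does not mention. Starting from a contact structure near $N^h_\ssq$ tangent to $\partial_y$, twisting along them both creates contactness and corrects the holonomy around every polyhedron. Only after that are the polyhedra filled. The result lives on a neighborhood of $(M_\ssq\setminus N_\ssq)\cup N^h_\ssq$ and is extended over $N^{\mathrm{an}}_\ssq$ by a small twist along $\partial_y$. The paper also adds a collar along $\partial^{\mathrm{tor}}_\ssq M_\ssq$, so that the modifications absorbed at the windows happen outside $M_\ssq$.

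With this set-up, the concave corners you flag as the main obstacle are harmless. Once $N(\mathcal{A}_i)$ is identified with $\overline{N(A_i)\setminus N(A'_i)}$, all of Bowden's modifications already take place away from the removed regions.
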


\begin{proof}
    We closely follow Step 3 from~\cite[Section 8]{B16} and explain how the strategy needs to be modified in our setup. The main difference is that in~\cite{B16}, the author considers annular fences $A_i$ which are embedded annuli in the manifold, and considers a slightly smaller annulus $A'_i \subset A_i$ as well as standard neighborhoods $N(A'_i) \subset N(A_i)$. In our setting, the reader shall imagine that the neighborhood $N(A'_i)$ has been removed from the manifold, and the neighborhood of the corresponding boundary fence $N(\mathcal{A}_i)$ shall be identified with $\overline{N(A_i) \setminus N(A'_i)}$. Recall that $\F_\ssq$ is assumed to be tangent to $\partial_y$ in $N(\mathcal{A}_i)$, so it is essentially a product foliation there, except that the two induced foliations on the vertical sides of the fence might differ. Recall that in the chosen standard neighborhoods of the windows, the foliation is the standard foliation by horizontal planes.

    The vertical parts of the annular boundary fences may be made very nice as in~\cite[Section 4]{B16}, via some $C^0$-small isotopies.

    We may extend $M_\ssq$ slightly along  $\partial^\mathrm{tor}_\ssq M_\ssq$, by adding a small collar and extending $\F_\ssq$ by a product there (after some arbitrarily small $C^0$-isotopy near $\partial^\mathrm{tor}_\ssq M_\ssq$ to make it tangent to a smooth vector field transverse to $\partial^\mathrm{tor}_\ssq M_\ssq$). We denote the extended manifold by $M^+_\ssq$, and we treat the new boundaries as toroidal windows.
    
    Proceeding as in the first paragraph of Step 3 of~\cite[Section 8]{B16}, we consider a sufficiently fine triangulation of a complement of a small neighborhood of $\partial M^+_\ssq$ which covers $\partial N_\ssq$ and $\partial^\mathrm{tor}_\ssq M_\ssq$, in general position with respect to $\F_\ssq$ and with the chosen coordinates in the neighborhood of the fences and windows, as in the first paragraph of Step 3 of~\cite[Section 8]{B16}. This triangulation can then be modified into a polyhedral decomposition as in~\cite[Lemma 6.3]{B16}. 

    We then smoothen $\F_\ssq$ near the $1$-skeleton exactly as in the second paragraph of Step 3 of~\cite[Section 8]{B16}.

    Now, we can choose a full collection of smoothing ribbons as in the third paragraph of Step 3 of~\cite[Section 8]{B16}, with the only difference that the ribbons in~\cite{B16} are technically allowed to cross the regions $N(A'_i)$ which are missing in our setup, and we also consider ribbons ending in the interior of the windows. However, the $\partial_\ssq$-transitivity condition ensures that we can still connect every point in $M_\ssq$ to the interior of a vertical annulus in $\partial N_\ssq^\mathrm{an}$, or to the interior of a window as before, and the construction of a full collection of ribbons as in~\cite[Definition 7.1]{B16} goes through. We may further arrange that these ribbons are tangent to $\partial_y$ in the standard neighborhoods of the annular fences.

    The rest of Step 3 of~\cite[Section 8]{B16} goes through essentially verbatim, as all the modifications happen away from $N(A'_i)$, so they can be performed on $M^+_\ssq$ in our setup. We may start the contact approximation by adding contactness in neighborhoods of $N^h_\ssq$ defined by \zcref{eq:horizontalnbd}, so that the contact structure there is tangent to $\partial_y$. We then apply twisting along the ribbons to create contactness, ``correct'' the holonomy around the polyhedra, and fill them with an appropriate contact structures. Notice that some modification also happens along $\partial^\mathrm{tor}_\ssq M^+_\ssq$, but this is not relevant for our construction as we only want a contact approximation of $\F_\ssq$ on $M_\ssq$ without fine control at the boundary.
    
    We obtain a (positive) contact structure $\xi$ defined on a neighborhood of
    $$\left(M_\ssq \setminus N_\ssq \right) \cup N^h_\ssq \subset M^+_\ssq,$$
    which is moreover tangent to $\partial_y$ in $N^\mathrm{an}_\ssq$, and which is arbitrarily $C^0$-close to $T \F_\ssq$. This contact structure may be extended to $M_\ssq$ by applying a very small twisting along $\partial_y$. This concludes the proof.
\end{proof}

        \subsection{Coarse version}

In this section, we adapt the strategy of Bowden~\cite{B16} to the case of manifolds with boundary and prove:

\begin{thm}[Coarse Eliashberg--Thurston with boundary] \label{thm:coarseET}
    Let $\mathcal{F}$ be a cooriented $C^0$-foliation on $M$ transverse to $\partial M$. Assume that $\F$ is neither homeomorphic to the standard foliation by disks on $S^1 \times D^2$, nor to a foliation on $\T^2 \times I$ semi-conjugate to an irrational foliation. 
    
    Then for every $\varepsilon > 0$, there exists a positive (resp.~negative) contact structure which is $\varepsilon$-$C^0$-close to $T \F$ and which dominates a $1$-dimensional $C^0$-foliation $\G$ on $\partial M$ which is monotone equivalent to $\partial \mathcal{F}$.
\end{thm}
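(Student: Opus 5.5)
The plan is to follow the six-step outline at the start of this section, keeping only the coarse control at the boundary. By monotone equivalence, I may freely replace $\F$ by blowups and blowdowns, provided the resulting boundary foliation changes only by semi-conjugacies.

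\textbf{Step 1: finitely many compact leaves.} Following Step 1 of~\cite[Section 8]{B16} and the proof of \zcref{lem:isolatedcompactleaves}, I blow down the foliated $I$-bundles containing compact leaves, so that only finitely many compact leaves remain. The induced change on $\partial M$ is a semi-conjugacy. If $\F$ is a fibration, I instead work directly with one fiber.

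\textbf{Step 2: holonomy near each minimal set.} I then classify the minimal sets using \zcref{prop:lamclass}.
\begin{itemize}
\item For a nonperipheral minimal set, there is a leaf with a loop that is not normally generated by boundary curves. Following Step 2 of~\cite[Section 8]{B16}, a Denjoy blowup along that leaf produces a holonomy curve with attracting/contracting holonomy, possibly after perturbing within the monotone equivalence class.
\item For a compact planar leaf, I use the holonomy-creating blowup of \zcref{prop:alephfilling}. This inserts contracting holonomy around some boundary curves at the cost of adding $\sharp$ or $\flat$ annuli on $\partial M$, which is a monotone equivalence. The excluded case is exactly when no such nontrivial holonomy is possible: the disk foliation of $S^1\times D^2$.
\item For spiral minimal sets (item 3), the boundary already has holonomy in the $\sharp$, $\flat$ or Reeb annulus. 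I treat the corresponding boundary region as a window-like drain.
\item $\T^3$- and $\T^2$-type planar laminations are measured by \zcref{lem:planarclassification}. I would drill a transverse curve and declare its torus boundary a toroidal window. The contact structure must later be filled in there, which I expect works in the coarse setting via the doubling trick of the earlier remark. The excluded irrational $\T^2\times I$ case is precisely where this fails.
\end{itemize}

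\textbf{Step 3: fences and windows.} Around each holonomy curve I build nice annular fences as in~\cite[Lemma 4.1]{B16} and drill them out, obtaining a manifold with fences and windows $M_\ssq$ with an adapted foliation $\F_\ssq$ (\zcref{def:adapted}). Every leaf accumulates on some minimal set, and every minimal set now carries a fence or a window. So every point connects along a leaf to a fence or window, and $\F_\ssq$ is $\partial_\ssq$-transitive. I would place square windows on $\partial M$ in regions where the boundary foliation can be made standard.

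\textbf{Step 4: improving the boundary.} I apply \zcref{prop:betterfence} to replace $\F_\ssq$ by a $C^0$-close adapted foliation that is positively transverse along the facades to a foliation isotopic to a $\flat$ window modification of $\partial\F_\ssq$. Since window modifications only add $\flat$ or $\sharp$ twisting, that target foliation $\G$ is monotone equivalent to $\partial\F$.

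\textbf{Step 5: contact approximation.} I apply \zcref{prop:contapprox} to this improved foliation. Strict transversality is open, so a sufficiently $C^0$-close contact structure still dominates $\G$ along the facades.

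\textbf{Step 6: filling in.} I fill the fences as in~\cite[Lemma 5.6]{B16} and fill the toroidal windows with contact structures $C^0$-close to the linear models. The negative case is the mirror image.

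\textbf{Main obstacle.} I expect the hardest point to be the planar cases in Steps 2 and 6: creating holonomy for compact planar leaves without destroying the monotone equivalence class at the boundary, and filling toroidal windows around $\T^2$/$\T^3$-type laminations. For these I would first try the doubling trick, combined with the instability lemmas (\zcref{lem:phaselocking}), to show the required boundary domination still holds.
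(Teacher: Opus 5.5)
There is a genuine gap in your treatment of the $\T^2$- and $\T^3$-type planar minimal sets (Steps 2 and 6). Drilling a transverse curve and declaring the new torus $\partial P$ a toroidal window gives you no control there. \zcref{prop:contapprox} only provides $C^0$-closeness, and the drilled foliation induces meridian circles (slope $0$) on $\partial P$. Yet the solid torus $P$ can be refilled by a positive contact structure $C^0$-close to the disk foliation only if the slope along $\partial P$ is positive, as in \zcref{lem:maypolefilling}. If the slope is $\le 0$, continuity of slopes on concentric tori produces a closed Legendrian meridian, which bounds an overtwisted disk; but horizontal contact structures on the solid torus are tight (this is the argument of \zcref{lem:nec_hyp}). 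Forcing the right sign is exactly the content of the maypole twisting in \zcref{prop:maypoletwist}, which uses the commutator trick with two families of ribbons plus Dirichlet approximation. The paper has this only for $\T^3$-type sets; for $\T^2$-type sets the fine version sacrifices a boundary modification instead. The doubling remark does not fill this hole: it gives an approximation on the double with no control near $\partial P$, and you give no mechanism for gluing it to the contact structure built on $M_\ssq$. For the same reason, your explanation of why the irrational $\T^2\times I$ case is excluded is not the right one.

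The missing observation is that in the coarse version none of this is needed. Since $\G$ only has to be monotone equivalent to $\partial\F$, arbitrary blowups along $\partial M$ are permitted, and the paper applies \zcref{lem:enoughholo} uniformly. By \zcref{lem:simplyconnleaves}, outside the two excluded foliations every minimal set has a non-simply-connected leaf $L_0$; for a peripheral minimal set this is a leaf with a closed boundary curve on $\partial M$. A Denjoy blowup then creates infinitesimally attracting holonomy around a nontrivial curve $\gamma\subset L_0$, even a boundary-parallel one, at the cost only of blowups of $\partial\F$. Thus every minimal set (spiral, compact planar, $\T^2$- or $\T^3$-type alike) carries an annular fence, $M_\ssq$ needs no windows and no maypoles, and \zcref{prop:betterfence}, \zcref{prop:contapprox} and \cite[Lemma 5.6]{B16} conclude. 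The excluded foliations are exactly those without such a leaf, and \zcref{lem:nec_hyp} shows they are genuine obstructions.

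Two smaller points. In Step 4, window modifications do not preserve monotone equivalence in general: a $\flat$ window on a linear irrational boundary foliation strictly lowers the slope, by \zcref{lem:mostlytrans} and \zcref{lem:phaselocking}. For windows inside $\sharp$, $\flat$ or Reeb annuli the conclusion does hold, but only via \zcref{lem:spiral}. In Step 1, blowing down a thick foliated $I$-bundle is not a $C^0$-small change. \zcref{lem:finiteleaves}, following \cite{B16}, instead refoliates thin $I$-bundles without interior compact leaves, and needs no separate treatment of fibrations.
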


In this theorem, the contact approximations might not dominate $\partial \F$ itself, but they dominate a foliation $\G$ on $\partial M$ obtained from $\partial \F$ by small Denjoy blowups and blowdowns, and $C^0$-small isotopies. Notice that $\G$ depends on $\varepsilon$ a priori. Nevertheless, this readily implies:

\begin{cor}[Boundary multislope]
    Under the hypothesis of \zcref{thm:coarseET}, if $\F$ has a well-defined boundary multislope $\bm{s}$, then it is approximated by positive and negative contact structures realizing the same boundary multislope $\bm{s}$.
\end{cor}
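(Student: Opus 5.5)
The corollary follows from \zcref{thm:coarseET} once we know that monotone equivalence of foliations on $\T^2$ preserves the coarse slope, and that domination of a boundary foliation forces equality of slopes up to an arbitrarily small trimming or twisting. Fix $\varepsilon>0$ and apply \zcref{thm:coarseET} to obtain a positive contact structure $\xi_+$, $\varepsilon$-$C^0$-close to $T\F$. Its boundary foliation dominates a $C^0$-foliation $\G$ on $\partial M$, and $\G$ is monotone equivalent to $\partial\F$.

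First, component by component, $\G$ has slope $s_i$ on $\partial_i M$. Since $\F$ has a well-defined multislope, each $\partial_i\F$ is Reebless with slope $s_i$. A semiconjugacy on $\T^2$ cannot create or destroy Reeb annuli, because it is monotone on local leaf spaces. It also preserves the asymptotic direction of leaves in the universal cover, since it is homotopic to the identity and the leaves are immersed onto leaves. Hence every foliation in the zigzag, and in particular $\G$, is Reebless with coarse slope $s_i$; this is also consistent with the corollary after \zcref{prop:poincare}.

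Second, we control the slope of $\partial\xi_+$ itself. Domination means $\G$ has pointwise lower slope than $\partial\xi_+$. The main obstacle is that $\partial\xi_+$ is only a smooth line field and may fail to be Reebless. If it is Reebless, \zcref{lem:phaselocking} gives $\slope(\partial_i\xi_+)\ge s_i$, possibly with strict inequality. In either case I would correct this with the twist part of \zcref{lem:trim_twist}: after smoothing, $\G$ is a foliation with $\G\prec\partial\xi_+$, so the twist produces a positive contact structure $\xi_+'$ with $\partial\xi_+'$ exactly $\G$, or a smooth linear foliation of slope $s_i$ sandwiched below. This changes $\xi_+$ only in an arbitrarily thin collar near $\partial M$.

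To stay $\varepsilon$-close to $T\F$ in that collar, I would first use \zcref{lem:straightening} to replace $\G$ by a nearby foliation and interpolate linearly in the normal form \eqref{eq:alphabdry}. The twist then only rotates planes between $\partial\xi_+$ and $\G$, and both are tangentially close to $\partial\F$. If $\G$ is merely $C^0$, I would instead twist onto a smooth foliation $\G'$ with $\G\preceq\G'\prec\partial\xi_+$ of slope $s_i$. Such a $\G'$ is obtained by a small graphical perturbation, and \zcref{lem:phaselocking} pins its slope to exactly $s_i$. The negative contact structure is handled symmetrically, with inequalities reversed.
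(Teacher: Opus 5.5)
Your outline (the coarse slope is invariant under monotone equivalence, \zcref{lem:phaselocking} gives $\slope(\partial_i\xi_+)\ge s_i$, then twist down with \zcref{lem:trim_twist}) is what the paper's one-line ``readily implies'' amounts to, and your first step is fine. There is, however, a genuine gap in how you produce the twisting target. You need a \emph{smooth} foliation $\G'$ of $\partial_i M$ with $\G'\prec\partial\xi_+$ and slope \emph{exactly} $s_i$, and none of your three options supplies one.
\begin{itemize}
\item Smoothing $\G$ by a small isotopy fails in general. $\G$ may have an exceptional minimal set (already $\partial_i\F$ may be a Denjoy foliation), and by Denjoy such a foliation is not isotopic to any $C^2$ foliation.
\item A linear foliation of slope $s_i$ has no reason to lie below the nonlinear $\partial\xi_+$. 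The isotopy of \zcref{lem:straightening} is not small: applied to $\G$ alone it destroys the domination, and applied to $\partial\xi_+$ as well it destroys the approximation of $T\F$.
\item \zcref{lem:phaselocking} does not pin the slope of a $\G'$ with $\G\preceq\G'\prec\partial\xi_+$. It only gives $s_i\le\slope(\G')\le\slope(\partial\xi_+)$, and for $s_i\notin\Q$ the same lemma forces $\slope(\partial\xi_+)>s_i$, so the window is nondegenerate.
\end{itemize}
In fact, if $s_i\notin\Q$ and $\G$ has an exceptional minimal set, \emph{no} smooth $\G'\succeq\G$ has slope $s_i$. Conjugate the return map of $\G'$ on a common closed transversal to a rotation $R_\alpha$ (Denjoy). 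The return map $\tilde h$ of $\G$ then satisfies $\tilde h\le R_\alpha$ and has the same rotation number. Hence $\int (R_\alpha(x)-\tilde h(x))\,d\mu=0$ for the $\tilde h$-invariant measure $\mu$, so $\tilde h=R_\alpha$ on the Cantor set $\operatorname{supp}\mu$. That set would then be a proper closed $R_\alpha$-invariant set, which is impossible.

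The fix is to let $\G'$ cross $\G$ and use an intermediate value argument.
\begin{itemize}
\item Take a smooth line field $\ell$ strictly below $T\G$ and tangentially close to it.
\item Let $L_\lambda$, $\lambda\in[0,1]$, interpolate in angle from $L_0=\partial\xi_+$ to $L_1=\ell$. These are smooth and uniquely integrable, and strictly below $\partial\xi_+$ for $\lambda>0$.
\item They stay Reebless by the intersection-sign argument behind \zcref{lem:phaselocking}, unless $\slope(\ell)=\slope(\partial\xi_+)=s_i$, in which case there is nothing to prove.
\item Their slopes vary continuously in $\lambda$, by openness of admitting a transversal of a given rational slope (\zcref{lem:monotone_open} together with \zcref{lem:monotone_existence}).
\item By \zcref{lem:phaselocking}, $\slope(L_0)\ge s_i\ge\slope(L_1)$, so some $L_{\lambda_0}$ has slope exactly $s_i$.
\end{itemize}
If $\lambda_0=0$ you are done; otherwise twist $\xi_+$ onto $L_{\lambda_0}$ in a thin collar using \zcref{lem:trim_twist}. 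The whole family lies in the thin cone between $\ell$ and $\partial\xi_+$, both tangentially close to $\partial\F$. So the twist rotates planes by only a small angle, and the new contact structure stays close to $T\F$.
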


As mentioned before, this coarse version \emph{does not} allow us to control the precise \emph{boundary types} of the contact approximations, only their multislopes. However, this is enough for most of our applications.

\medskip

We note that the hypotheses on $\F$ in \zcref{thm:coarseET} are necessary for the purpose of approximating foliations with contact structures:

\begin{lem}\label{lem:nec_hyp}
    \begin{enumerate}
        \item The standard foliation by disks on $S^1 \times D^2$ cannot be $C^0$-approximated by contact structures with boundary slope $0$ for the standard boundary framing.
        
        \item An irrational foliation $\F$ on $\T^2 \times I$ cannot be $C^0$-approximated by contact structures realizing the same boundary slopes as $\F$. 
    \end{enumerate}
\end{lem}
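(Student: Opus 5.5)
Both items come from a single obstruction: a tight contact structure near a convex or pre-Lagrangian torus, and in particular one that is $C^0$-close to a foliation, cannot have the twisting that the prescribed boundary slope would require. I would prove each item by contradiction, using Legendrian/transverse curves on the boundary and the twisting inequality.

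For item 1, suppose $\xi$ is a positive (or negative) contact structure on $S^1\times D^2$ that is $C^0$-close to the disk foliation and has boundary characteristic foliation $\partial\xi$ of slope $0$, i.e.\ the meridional slope. First, by \zcref{prop:poincare}, after a small isotopy of $\partial\xi$ through foliations $C^0$-close to it, $\partial\xi$ is monotone equivalent to the linear foliation by meridians. Then, using \zcref{lem:monotone_existence}, \zcref{lem:transversholo} and \zcref{lem:instability_sharpspiraling}, I would produce either a closed Legendrian leaf of meridional slope or a limit cycle near a meridian. In the first case the Legendrian meridian bounds a disk $D$ which is $C^0$-close to a leaf of the disk foliation, hence transverse to the Reeb-type direction. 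Its twisting number relative to $D$ is $0$, and $D$ is then an overtwisted disk. In the Reebless non-closed case I would perturb $\partial T$ (trim, \zcref{lem:trim_twist}) to reach exactly slope $0$ with a closed leaf. The contradiction is that $\xi$ $C^0$-close to a foliation transverse to the core is tight near a disk with the core as a transversal: its characteristic foliation on $D$ has a single elliptic singularity and no closed orbits. An overtwisted disk would force $\partial D$ to be a closed leaf of the characteristic foliation of $D$, whereas closeness to the disk foliation forces $D$'s characteristic foliation to be nearly radial. This radiality step is the delicate point. I would instead use Legendrian unknot Thurston--Bennequin: $\mathrm{tb}(\partial D)=0$ violates $\mathrm{tb}\le -1$ for the unknot, once we know $\xi$ is tight on a neighborhood of $D$. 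Tightness there follows because $\xi$ near $D$ is a small perturbation of a foliated ball, which is tight by Gray stability plus the standard model.

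For item 2, on $\T^2\times I$ with irrational foliation of slope $s$ on both boundaries, suppose $\xi_+$ is $C^0$-close with boundary slopes both $s$. I would use a closed transversal $\gamma=\{pt\}\times I$ closed up, or rather the transverse vertical flow. The positive contact condition forces the characteristic foliations on the tori $\T^2\times\{t\}$ to rotate monotonically in one direction, once they are $C^0$-close to linear. Concretely, choose a flow $\Phi$ (say $\partial_t$ plus small) transverse to $\xi_+$ and to $\F$. After straightening (\zcref{lem:straightening2}), I would argue by the twisting/rotation lemma that the slope of $\xi_+|_{\T^2\times\{t\}}$ is nonincreasing and strictly decreases somewhere. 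Thus $\slope(\partial_1\xi)<\slope(\partial_0\xi)$ strictly, after accounting for orientations. Irrationality is used via \zcref{lem:phaselocking}: pointwise strict domination between foliations of equal irrational slope is impossible. This rules out the equality case that could otherwise occur through $\sharp/\flat$ annuli.

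The main obstacle I expect is making the monotone rotation precise for only $C^0$-close $\xi$, where the tori $\T^2\times\{t\}$ need not have linear characteristic foliation. I would handle it by isotoping $\xi$ (Gray) so that it is tangent to $\partial_t$, making it $\ker(\lambda_t)$ with $\partial_t\lambda_t\wedge\lambda_t>0$ as in the normal form lemma. Comparing the foliations $\ker\lambda_0$ and $\ker\lambda_1$ then gives pointwise strict inequality, and \zcref{lem:phaselocking} finishes.
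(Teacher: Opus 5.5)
\paragraph{Item 1: the tightness step fails.}
Your outline for item 1 matches the paper's. The boundary has a closed meridional Legendrian leaf. Because $\xi\pitchfork\partial M$, its contact framing equals the torus framing, which is the meridian-disk framing, so $\mathrm{tb}=0$ and $\xi$ is overtwisted. The gap is the step that produces the contradiction. Your argument that $\xi$ is tight near $D$ ``because it is a small perturbation of a foliated ball, tight by Gray stability plus the standard model'' does not work. Gray stability needs a path of contact structures, and nothing joins $\xi$ to a tight model. Moreover, $C^0$-proximity to a foliation does not by itself imply tightness: the remark following this lemma in the paper notes that $C^0$-foliations by disks do admit overtwisted $C^0$-approximations.

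The missing idea has two parts. First, $C^0$-closeness to the \emph{smooth} disk foliation makes $\xi$ transverse to the $S^1$-fibres. Second, contact structures on $S^1\times D^2$ transverse to the fibres are tight. The second part is a genuine theorem, which the paper quotes from Vogel.

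You can also avoid tightness entirely. Writing $\xi=\ker(d\theta-a)$ exhibits $\xi$ as an Ehresmann connection on $S^1\times D^2\to D^2$. The contact condition says exactly that its curvature is a nowhere-vanishing vertical field of fixed sign. Sweeping $D^2$ by loops then shows the lifted holonomy around $\partial D^2$ is strictly above (or below) the identity, so it has nonzero translation number and the boundary slope cannot be $0$.

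Your detour through limit cycles and trimming is unnecessary. Transversality to the fibres makes $\partial\xi$ a suspension foliation, so slope $0$ already gives a closed meridional leaf.

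\paragraph{Item 2: a different route that can be completed.}
The paper trims to rational linear boundary slopes, approximates $\F$ by a fibration by cylinders, passes to a finite cover, fills one boundary torus and twists to slope $0$, reducing to item 1. Your monotone-rotation argument needs neither item 1 nor any tightness input, and it can be completed, but its last step as stated is unjustified.

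The right change of coordinates is not Gray stability. It is the flow of some $X\in\xi$ with $dt(X)=1$, which exists because $\partial_t\in T\F$. (For the same reason, ``$\partial_t$ plus small'' is not transverse to $\F$.) In these coordinates you do get $\partial_t\lambda_t\wedge\lambda_t>0$. However, the coordinates are only $C^0$-controlled, so nothing bounds the accumulated rotation of $\ker\lambda_t$ at a point by $\pi$. Hence $\ker\lambda_1$ need not lie pointwise on one side of $\ker\lambda_0$.

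Instead, compare $\ker\lambda_t$ with $\ker\lambda_{t'}$ for $t<t'$ close, where the rotation is uniformly small. In the original coordinates, every characteristic foliation on $\T^2\times\{t\}$ is $\varepsilon$-close to the leaves of $\F$. So its slope is a real number near $s$, with no wrap-around. Applying \zcref{lem:phaselocking} at nearby levels then makes $t\mapsto\slope(\ker\lambda_t)$ monotone. Equal boundary slopes force this function to be constant and equal to the irrational $s$. That contradicts the strict case of \zcref{lem:phaselocking} at nearby levels.
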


\begin{proof}
    We first argue that there is no contact structure on $S^1 \times D^2$ transverse to the $S^1$-direction and with slope $0$ along $\partial(S^1 \times D^2)$. Indeed, such a contact structure would have a closed characteristic along $\partial(S^1 \times D^2)$ which would be a Legendrian curve bounding an embedded disk, implying that it is overtwisted. However, a horizontal contact structure on $S^1 \times D^2$ is necessarily tight, see for instance~\cite[Theorem 2.27]{V16}.
        
    More generally, if $\F$ is a $C^1$-foliation by disks on $S^1 \times D^2$, then there exists a \emph{smooth} diffeomorphism $\varphi : S^1 \times D^2 \rightarrow S^1 \times D^2$ such that $\varphi(\F)$ is transverse to the $S^1$-direction. Indeed, there exists a $C^1$ diffeomorphism sending $\F$ to the standard foliation by disks, and any smoothing of it satisfies the desired conditions.\footnote{Note that this step might fail for $C^0$-foliations!}

    To prove the second item, we may assume that $\F$ is a product $\G \times I$, where $\G$ is an irrational foliation on $\T^2 = S^1 \times S^1$. In particular, it is transverse to the the first $S^1$ factor. We now assume by contradiction that such an approximating contact structure exists. Then, ``trimming'' the boundary yields a contact structure $\xi$ with strictly better boundary slopes, which we can arrange to be rational and realized by \emph{linear} characteristic foliations. Then, $\F$ can be approximated by a smooth foliation by cylinders $\mathcal{C}$ transverse to the $S^1$ direction. After passing to a finite cover and applying a suitable diffeomorphism, we may assume that $\mathcal{C}$ coincides with the foliation $\{\mathrm{pt}\} \times S^1 \times I$ on $\T^2 \times I$, that $\xi$ is still transverse to the first $S^1$ component, and that $\xi$ has positive slope along each boundary component of $\T^2 \times I$. We may fill one boundary component with a solid torus and extend $\xi$, so that the resulting manifold is diffeomorphic to $S^1 \times D^2$, and the resulting contact structure is transverse to the $S^1$ direction and has positive slope along the boundary. We may twist it along the boundary to achieve slope $0$, which contradicts the previous item.
\end{proof}

\begin{rem}
    Perhaps surprisingly, the previous lemma fails for $C^0$-foliations. Indeed, there exist $C^0$-foliations by disks on $S^1 \times D^2$ which are approximated by contact structures; however, our proof shows that those are necessarily overtwisted. In fact, one can construct a $C^0$-foliation by spheres on $S^1 \times S^2$ which \emph{can} be approximated by positive and negative contact structures (although this is impossible for $C^1$-foliations). The idea is to start with the standard foliation by spheres, and consider a transverse torus $T$ given by the product of $S^1$ the the equator of $S^2$. Then, one may modify the foliation near this torus in order to insert a \emph{ghost Reeb component} as in~\cite{CKR19}; the resulting $C^0$-foliation $\F$ is still a foliation by sphere, but $T \F$ is tangent to $T$, while $T$ is \emph{not} a leaf of $\F$. Furthermore, $\F$ is approximated by foliations on $S^1 \times S^2$ with two Reeb components glued together along their boundaries. In particular, this foliation may be approximated by positive and negative contact structures. Puncturing the resulting contact structures along one or two $S^1$ factors yield approximating contact structures to $C^0$-foliations by disks on $S^1 \times D^2$, and to $C^0$-foliations by cylinders on $\T^2 \times I$. We may use a similar strategy to modify an irrational foliation on $\T^2 \times I$ by inserting a phantom Reeb component, so that it can be approximated by $C^0$-foliations with the same boundary foliations but with a torus leaf. Then, \zcref{thm:coarseET} applies to the latter foliations and provides contact approximations with the desired boundary slopes.

    However, note that the above foliations with phantom Reeb components may also be $C^0$-approximated by \emph{smooth} foliations (by disks on $S^1 \times D^2$, and irrational on $\T^2 \times I$), which cannot be approximated by contact structures with the required boundary slopes. In some sense, those pathological $C^0$-foliations are not ``stably approximable'' by contact structures.
\end{rem}

            \subsubsection{Making compact leaves isolated}

    We adapt the argument from Step 1 in~\cite[Section 8]{B16} to the case with boundary. We state a version that will also be useful later when we discuss the fine version.

\begin{lem} \label{lem:finiteleaves}
    Let $\F$ be a cooriented $C^0$-foliation on $M$ transverse to $\partial M$. Assume that no leaf of $\F$ is a disk. Then it can be $C^0$-approximated by $C^0$-foliations which have finitely many compact leaves, and whose boundary foliations are monotone-equivalent to $\partial \F$. 
    
    If $\F$ has finitely many compact genus $0$ leaves, we can further arrange that the new boundary foliation is obtained from $\F$ by some blowdowns followed by \emph{trivial} blowups, supported away from the boundaries of compact genus $0$ leaves and $\T^2$-type planar minimal sets.
\end{lem}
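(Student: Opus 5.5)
The plan is to follow Step 1 of \cite[Section 8]{B16}, in the same way as the proof of \zcref{lem:isolatedcompactleaves}, while tracking the boundary. Since no leaf is a disk, Haefliger/Reeb-stability-type arguments apply. The union of compact leaves of $\F$ is closed, and it is covered by finitely many foliated $I$-bundles $N_k = \Sigma_k \times [0,c_k]$. Each $N_k$ is transverse to $\partial M$ along $\partial\Sigma_k \times [0,c_k]$. The horizontal boundaries of the $N_k$ are leaves, every compact leaf lies in some $N_k$, and distinct $N_k$ meet only along horizontal boundaries. To get these, take a finite cover of the compact-leaf set by product charts adapted to a transverse vector field tangent to $\partial M$, and use compactness of the space of compact leaves, which holds by Haefliger's theorem. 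The hypothesis that there is no disk leaf is used here: it rules out the $S^1\times D^2$ fibration case, so the $N_k$ can be taken not to cover $M$. In the fibration case, a compact fiber with no disk leaf gives a fibration, which already has the desired property after collapsing to finitely many fibers.

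Next, blow down the $N_k$ one at a time, exactly as in \zcref{lem:isolatedcompactleaves}. At each stage I pick an $N_\ell$ with $c_\ell>0$ having a horizontal boundary on the frontier of $\bigcup_k N_k$, and collapse it to a single leaf by a semiconjugacy supported in a small neighborhood of $N_\ell$. After finitely many steps each $N_k$ contains a single compact leaf. On $\partial M$, each collapse restricts to a collapse of foliated annuli $\partial\Sigma_\ell\times[0,c_\ell]$, which is a semiconjugacy of $\partial\F$. Hence the boundary foliation stays monotone equivalent to $\partial\F$.

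A blowdown is not $C^0$-close to $\F$, so I then re-blow up each collapsed leaf by a Denjoy blowup of size $\varepsilon$ whose inserted $I$-bundle has holonomy with only finitely many fixed leaves. This gives finitely many compact leaves, and the result is $\varepsilon$-$C^0$-close to $\F$. To produce such holonomy I use a representation $\pi_1(\Sigma_\ell)\to\Homeo_+(I)$ whose boundary holonomy inserts $\sharp$ or $\flat$ annuli. When $\Sigma_\ell$ has positive genus, perfectness-type arguments in $\Homeo_+(I)$ let me choose the representation to have trivial boundary holonomy (commutators of interior generators), so that the boundary blowup is trivial. The resulting boundary foliation is monotone equivalent to $\partial\F$ via a blowdown followed by a blowup.

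For the refined statement, suppose $\F$ has only finitely many compact genus $0$ leaves. I only collapse bundles over non-planar $\Sigma_k$, together with product regions adjacent to them, and leave the finitely many genus-$0$ compact leaves untouched. The only nontrivial boundary blowups are needed for surfaces whose $\pi_1$ is generated by boundary curves, namely planar ones, so every blowup elsewhere can be taken trivial on $\partial$. I also choose the neighborhoods to avoid the boundaries of $\T^2$-type planar minimal sets, which contain no compact leaves. The main obstacle is this step. One must check that a nonplanar compact $\Sigma$ admits an interval action with isolated fixed leaves and trivial boundary holonomy. I would try a single generator $a$ acting with finitely many fixed points, arranged as a commutator $a=[b,c]$ of homeomorphisms supported on handles. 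One must also ensure that the finitely many genus-$0$ leaves are already isolated, or can be isolated only by blowdowns.
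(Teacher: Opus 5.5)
Your overall plan is the paper's. You cover the compact leaves by foliated $I$-bundles $N_k=\Sigma_k\times[0,c_k]$ as in Step 1 of \cite[Section 8]{B16}. On each $N_k$ you install holonomy $\rho\colon\pi_1(\Sigma_k)\to\Homeo^+(I)$ without interior fixed points. On $\partial M$ this amounts to a blowdown followed by a blowup: trivial for positive genus, one $\sharp$ and one $\flat$ annulus for genus $0$. In the refined case you leave the genus-$0$ bundles untouched. The genuine gap is the fibration case, which the lemma does not exclude: $\Sigma\times S^1$ foliated by $\Sigma\times\{\mathrm{pt}\}$, with $\Sigma$ of positive genus with boundary, or a pair-of-pants bundle, satisfies all its hypotheses. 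Your second step collapses the $N_k$ globally ``exactly as in \zcref{lem:isolatedcompactleaves}''. That lemma assumes $\F$ is not a fibration, and it needs an $N_\ell$ with a horizontal boundary on the frontier of $\bigcup_k N_k$. For a fibration the $N_k$ cover $M$, so this frontier is empty. Your fallback, that a fibration ``already has the desired property after collapsing to finitely many fibers'', is false. Every leaf of a fibration is compact, and blowing down an $I$-subbundle of a fibration produces another fibration. Re-blowing up the collapsed leaves still leaves an uncollapsed fibred region with infinitely many compact leaves. Relatedly, the no-disk hypothesis is not what makes the $N_k$ miss part of $M$. Its role is that $\pi_1(D^2)=1$ admits no holonomy at all; by Reeb stability a disk leaf forces the disk fibration of $S^1\times D^2$. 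Every other compact genus-$0$ surface with boundary has at least two boundary components, along which compensating $\sharp$ and $\flat$ holonomy can be inserted.

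The fix is to avoid global semiconjugacies, as the paper does. After subdivision the $N_k$ are arbitrarily thin, so you can directly replace $\F|_{N_k}$ by the foliated $I$-bundle of such a $\rho$. On $\partial M$ this is exactly a blowdown of $\partial\Sigma_k\times[0,c_k]$ followed by a blowup, and it works uniformly whether or not $\F$ is a fibration. Note that $C^0$-closeness comes from this thinness, not from the size of your re-blowup. An $\varepsilon$-blowup does not undo a blowdown of a thick $N_k$, and for thin $N_k$ the blowdown alone is already close. Finally, the step you flag as the main obstacle is elementary. For $\Sigma_k$ of genus $g\geq 1$, send one handle generator $a_1$ to a homeomorphism $f$ of $I$ without interior fixed points, and send all other standard generators to $\mathrm{id}$. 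Every boundary curve then maps to the identity, since the only possibly nontrivial contribution is $[f,\mathrm{id}]=\mathrm{id}$. No perfectness argument or expression $a=[b,c]$ is needed; this is the paper's ``construct it on $\T^2$, attach trivial handles, puncture'' trick.
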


\begin{proof}
    As in Step 1 of~\cite[Section 8]{B16}, there is a finite collection of (smooth) embeddings $N_k = \Sigma_k \times [0, c_k] \hookrightarrow M$, where $c_k \geq 0$ and $\Sigma_k$ is a compact surface (possibly with boundary) so that each $N_k$ is a foliated $I$-bundle, $\partial_hN_k \coloneqq \Sigma_k \times \{0, c_k\}$ are leaves of $\F$, and every compact leaf of $\F$ is contained in some $N_k$. Furthermore, any two $N_k$'s only intersect along their horizontal boundaries $\partial_h N_k$. We can assume that these $N_k$'s are arbitrarily thin, after suitable subdivisions. We shall distinguish three cases depending on the topology of $\Sigma_k$:
        \begin{enumerate}
            \item $\Sigma_k$ is closed, in which case it has positive genus by Reeb stability (recall that $\partial M \neq \varnothing$ is a union of tori),
            \item $\Sigma_k$ has nonempty boundary and positive genus,
            \item $\Sigma_k$ has nonempty boundary and genus $0$.
        \end{enumerate}
    
    The first case is handled exactly as in Step 1 of~\cite[Section 8]{B16}: replace the interior of $N_k$ with a $C^0$-foliation without closed leaves. This operation does not modify the boundary foliation.

    For the second case, we proceed similarly but the boundary foliation is modified. Using an elementary commutator trick, we can modify it in a controlled way and realize a \emph{linear} boundary foliation (i.e., a foliation by closed circles) on $N_k$. Indeed, it is easy to construct a fixed-point free action of $\pi(\Sigma_k)$ on $\mathrm{Homeo}^+\big((0,1)\big)$ which restricts to a trivial action along $\partial \Sigma_k$, by constructing such an action of $\Z^2 \cong \pi_1(\T^2)$ first (it suffices to pick a single homeomorphism of $(0,1)$ without fixed points), attaching trivial handles and puncturing it. 

    Finally, in the last case, we are forced to modify the boundary foliation in a less controlled way. However, we can achieve that the new boundary foliation in $N_k$ as one $\sharp$ annulus, one $\flat$ annulus, and all the other boundaries have $\natural$ annuli.

    In all those cases, the modification of the boundary foliation $\partial \F$ can be described as a blowdown immediately followed by a blowup. In case 2, the blowup is moreover trivial.
    \end{proof}

            \subsubsection{Inserting holonomy}

We now adapt the arguments from Step 2 of~\cite[Section 8]{B16}.

\begin{defn}[Infinitesimal holonomy]
    Let $\F$ be a cooriented $C^0$-foliation on $M$, and $\gamma : S^1 \rightarrow L$ be a smooth closed curve tangent to a leaf $L$ of $\F$, inducing a holonomy map 
    $$h : (-\varepsilon, \varepsilon) \rightarrow (-1,1)$$
    for some $\varepsilon > 0$. We say that $\F$ has \textbf{infinitesimal attracting holonomy} around $\gamma$ if for every open interval $\{0\} \subset I \subset (-\varepsilon, \varepsilon)$, there exists a smaller open interval $\{0\} \subset J \subset I$ such that 
    $$\overline{h(J)} \subset J.$$
\end{defn}

Technically speaking, the holonomy map $h$ is only defined as a germ and up to conjugation, but the notion of having infinitesimal attracting holonomy is well-defined and only depends on the homotopy class of $\gamma$ inside of $L$. This notion is equivalent to the one of \emph{sometimes attracting holonomy} from~\cite[Section 5]{B16}, but we prefer to use a different terminology.

\begin{defn} \label{def:enoughholo}
    Let $\F$ be a cooriented $C^0$-foliation on $M$ transverse to $\partial M$ and with finitely many compact leaves. We say that $\F$ has \textbf{enough (infinitesimal) holonomy} if each of its minimal sets has a curve with infinitesimal attracting holonomy.
\end{defn}

Similarly as in~\cite{B16} in the closed case, we can always modify a foliation $\F$ on $M$ by some arbitrarily small $C^0$-perturbation so that the resulting foliation admits plenty of curves with attracting holonomy:

\begin{lem} \label{lem:enoughholo}
    Let $\F$ be as in the setup of \zcref{def:enoughholo}. Moreover, assume that $\F$ is not semi-conjugate to an irrational foliation on $\T^2 \times I$. Then $\F$ can be $C^0$-approximated by $C^0$-foliations with enough holonomy, and whose boundary foliations are monotone-equivalent to $\partial \F$.
\end{lem}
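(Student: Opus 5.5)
We treat each minimal set of $\F$ separately, following Step 2 of~\cite[Section 8]{B16}; since $\F$ has finitely many compact leaves, there are finitely many minimal sets to handle. We use the classification of \zcref{prop:lamclass}. By \zcref{lem:simplyconnleaves}, the hypothesis that $\F$ is not semi-conjugate to an irrational foliation on $\T^2\times I$ excludes the case where some minimal set is a $\T^2$-type planar minimal set and simultaneously the whole foliation is of that form. The $S^1\times D^2$ disk case is already excluded by the setup inherited from \zcref{lem:finiteleaves}. For each minimal set $\Lambda$, we look for a leaf $\lambda$ and a closed curve $\gamma\subset\lambda$ such that the holonomy along $\gamma$ is nontrivial, or can be made nontrivial by a Denjoy blowup of $\lambda$. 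In that case we blow up $\lambda$ to width $\ll \varepsilon$ and insert a foliated $I$-bundle $\lambda\times[0,1]$ whose holonomy representation sends $[\gamma]$ to a homeomorphism with an attracting fixed point at an endpoint. The result is a Denjoy blowup of size $\varepsilon$, hence $C^0$-close to $\F$, and $\gamma$ has infinitesimal attracting holonomy for the new minimal set.

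The cases are as follows. (i) If $\Lambda$ is nonperipheral, i.e., some leaf has $\pi_1(\lambda)$ not normally generated by boundary-parallel curves, we find a homomorphism $\pi_1(\lambda)\to \Homeo^+(I)$ that is trivial on the boundary curves and nontrivial with an attracting endpoint on some $\gamma$. For instance, we map to $\Z$ through a surface quotient when $\lambda$ has genus, or through a free factor in general, using that $\pi_1$ of a noncompact surface is free. Because the representation is trivial near $\partial\lambda$, the blowup does not change $\partial\F$ at all. (ii) If $\Lambda$ is a compact leaf, it is isolated, and by the nontriviality of holonomy on one side (otherwise the leaf would not be isolated, or $\F$ would be a fibration near it) we already have a contracting curve on some side. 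If that curve only sometimes attracts, we blow up as in \zcref{lem:finiteleaves}. The boundary foliation changes only by blowdowns and blowups, which is a monotone equivalence. (iii) If $\Lambda$ is a spiral minimal set, some leaf has a boundary component spiraling in a $\sharp$, $\flat$, or Reeb annulus on $\partial M$. We then take $\gamma$ to be a leafwise curve near that boundary that closes up via the boundary spiraling, as in the proof of \zcref{lem:transversholo}; its holonomy is contracting on one side because the boundary spirals. (iv) If $\Lambda$ is of $\T^3$-type or $\T^2$-type but $\F$ is not globally semi-conjugate to a linear foliation, then by the proof of \zcref{lem:planarclassification} some gut of $\Lambda$ has nontrivial topology or contains a compact leaf. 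In that case we push holonomy into $\Lambda$ from an adjacent leaf by a blowup along a boundary leaf of $\Lambda$ whose $\pi_1$ is nontrivial, using the transverse link in the guts.

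In every case, the modifications of $\partial\F$ are Denjoy blowups/blowdowns or are trivial near $\partial M$. Hence the new boundary foliation is monotone equivalent to $\partial\F$, and finitely many independent modifications keep the result $C^0$-close.

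The main obstacle is case (iv), and more generally planar leaves, where $\pi_1(\lambda)$ may be generated by boundary curves, so a representation trivial on the boundary gives nothing. In that case we first try to allow the representation to be nontrivial on boundary curves and check that this only inserts $\sharp$/$\flat$ annuli into $\partial\F$, which is permitted since only monotone equivalence is required. If that fails, we use the global geometry of the guts to find a genuine non-peripheral loop.
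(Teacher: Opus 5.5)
Your overall frame (Denjoy blowups as in Step~2 of \cite{B16}, with all boundary changes being monotone equivalences) is right. However, there is a genuine gap in the basic mechanism.

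Infinitesimal attracting holonomy is a \emph{two-sided} condition. It is equivalent to Bowden's ``sometimes attracting'', so your remark in (ii) about a curve that ``only sometimes attracts'' has it backwards. When you blow up $\lambda$ and insert $\lambda\times[0,1]$ with $\rho(\gamma)$ attracting at an endpoint, the interior leaves of the inserted bundle are not in the new minimal set. So the curve you produce lives on $\lambda\times\{0\}$ or $\lambda\times\{1\}$, and you only control its holonomy from the inside. From the outside, the holonomy is the original one-sided germ of $h$, which may be trivial: for example, when $h$ is the identity on an interval around $0$, as for a leaf of a minimal foliation without holonomy. Your cases (ii) and (iii) likewise only produce contraction on one side.

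This is why the paper splits according to the germ $h$, not according to the type of $\Lambda$:
\begin{itemize}
\item If $h$ has a unique fixed point, or is germinally nontrivial on both sides, blow up $L_0$ as in Cases~1--2 of \cite{B16}, so that two parallel copies of $\gamma$ acquire infinitesimal attracting holonomy.
\item If $h$ fixes an interval, first blow up countably many leaves near $L_0$, away from the other minimal sets (or one nearby leaf if $\F$ is minimal), to make $h$ germinally nontrivial on both sides. Then blow up $L_0$.
\end{itemize}
The second step needs the parallel copies of $\gamma$ in nearby leaves to be leafwise essential. When they are not, the paper doubles along $\partial M$ and uses \cite[Proposition~2.9]{B16} to see that $L_0$ is an isolated torus leaf, which carries another curve with holonomy nontrivial on both sides. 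None of this is in your proposal, and it is the heart of the lemma.

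Your case split by the type of $\Lambda$ is also unnecessary, and case (iv) does not work. By \zcref{lem:simplyconnleaves} and the hypotheses, every minimal set has a leaf $L_0$ that is not simply connected: finitely many compact leaves excludes the disk foliation, and the $\T^2\times I$ case is excluded explicitly. In this coarse statement, any essential $\gamma\subset L_0$ works, boundary-parallel or not. A representation that is nontrivial on closed boundary curves only Denjoy-blows up $\partial\F$ (inserting $\sharp/\flat$ annuli), which is a monotone equivalence.

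So your ``first try'' in the last paragraph always suffices, and the fallback through the guts is both unneeded and impossible:
\begin{itemize}
\item If every leaf of $\Lambda$ were simply connected, every leafwise loop in $\Lambda$ would have trivial holonomy, and no blowup could change that.
\item A ``boundary leaf of $\Lambda$ with nontrivial $\pi_1$'' is itself a leaf of $\Lambda$, i.e.\ the generic case.
\end{itemize}

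Two minor points. \zcref{lem:finiteleaves} isolates compact leaves; it does not create attraction. In (i), a representation trivial on $\partial\lambda$ still changes $\partial\F$ by a trivial blowup rather than not at all, which is harmless.
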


\begin{proof}
We closely follow the strategy of Step 2 of~\cite[Section 8]{B16}. 

Let $\Lambda$ be a minimal set of $\F$. By \zcref{lem:simplyconnleaves} and our assumptions, $\Lambda$ has a leaf $L_0$ which is not simply connected. Let $\gamma \subset L_0$ be an embedded homotopically nontrivial curve with a corresponding holonomy map $h : (-\varepsilon, \varepsilon) \rightarrow (-1,1)$, where $h(0)=0$ corresponds to $\gamma$.

If $h$ has a unique fixed point (after possibly shrinking $\varepsilon$) or has germinally nontrivial holonomy on both sides of $L_0$, we proceed exactly as in \textit{Case 1} and \textit{Case 2} in~\cite{B16} and perform an appropriate blowup of $L_0$ to create infinitesimally attracting holonomy around two parallel copies of $\gamma$. Note that the number of minimal sets may increase in this procedure. If $L_0$ intersects $\partial M$, the new boundary foliation is a blowup of $\partial \F$, and it remains unchanged otherwise.

The third and remaining case, when $h$ has a nontrivial interval of fixed points containing $0$, is slightly more subtle.

Following~\cite{B16}, we consider the leaves near $L_0$ intersecting a neighborhood of $\gamma$. If the curves parallel to $\gamma$ in those leaves are all leafwise homotopically trivial, we claim that $L_0$ is either a torus or a cylinder. Indeed, we may consider the doubling of $\F$ along $\partial M$, together with (a connected component of) the doubling $\widehat{L}_0$ of $L_0$ whose nearby leaves satisfy a similar property. Using~\cite[Proposition 2.9]{B16}, we deduce that $\widehat{L}_0$ is a torus, which implies our claim. Moreover, the holonomy assumption on $h$ prevents $L_0$ from being an annulus since it is an isolated compact leaf. Therefore, $L_0$ is an isolated torus leaf and must admit another homotopically nontrivial curve with nontrivial holonomy \emph{on both sides}; this falls into the previously considered cases. 

We may now blow up countably many leaves near $L_0$ to create infinitesimal attracting holonomy around $\gamma$. If $\Lambda$ is not minimal, we choose these leaves away from the (finitely many) minimal sets of $\F$, so that no new minimal set is created. Otherwise, if $\Lambda$ is minimal, one blowup creates germinally nontrivial holonomy around $\gamma$ on both sides, and we may then blowup $L_0$ as in the previous cases. Once again, the boundary foliation is only modify by some monotone-equivalence, and these blowups can be performed so that the new foliation is arbitrarily $C^0$-close to $\F$.
\end{proof}

\begin{rem}
    We will need to be much more careful later in the proof of the \emph{fine} version of the Eliashberg--Thurston theorem with boundary, as we won't be able to perform arbitrary blowups that might alter the boundary foliation in uncontrolled ways.
\end{rem}

            \subsubsection{Proof of \texorpdfstring{\zcref{thm:coarseET}}{thm:coarseET}}

Let $\F$ be a $C^0$-foliation on $M$ as in the statement of \zcref{thm:coarseET}. After applying \zcref{lem:finiteleaves} and \zcref{lem:enoughholo}, we obtain another $C^0$-foliation $\widetilde{\F}$ which is $C^0$-close to $\F$, whose boundary $\partial \widetilde{\F}$ is monotone-equivalent to $\partial \F$, and which has finitely many compact leaves and enough holonomy. 

We may now create annular fences as in Step 2 of~\cite[Section 8]{B16}, which are embedded annuli $A_1, \dots, A_k$ inside of $M$ and disjoint from the boundary, constructed near holonomy curves. We may further assume that the heights of these annuli are very small, since the holonomy curves we consider have infinitesimal attracting holonomy. We can then find very nice neighborhoods of the $A_i$'s as in~\cite[Section 4]{B16}, and remove slightly smaller neighborhoods to create annular boundary fences as in \zcref{def:manfence}. We denote by $M_\ssq$ the resulting manifold with fences (and without windows). The induced $C^0$-foliation $\widetilde{\F}_\ssq$ on $M_\ssq$ is adapted to the boundary of $M_\ssq$ by definition, and coincides with $\widetilde{\F}$ along $\partial_* M_\ssq$. We can now apply \zcref{prop:betterfence} to obtain a new $C^0$-foliation $\widetilde{\F}^+_\ssq$ on $M_\ssq$, $C^0$-close to $\widetilde{\F}_\ssq$, whose (genuine) boundary is moreover positively transverse to a $C^0$-foliation $\G$ on $\partial M$ which is isotopic to $\partial \widetilde{\F}$. We can further apply \zcref{prop:contapprox} to obtain a positive contact structure on $M_\ssq$, which is arbitrarily $C^0$-close to $\widetilde{\F}^+_\ssq$; in particular, we can arrange that it is still positively transverse to $\G$ along $\partial_* M_\ssq$. Finally, we can fill the annular holes using~\cite[Lemma 5.6]{B16} to obtain a contact structure on $M$ with the desired properties along $\partial M$. Notice that in that last step, the distance between the contact structures and $\F$ in the filled annular holes depends on the height on the annuli, which was already chosen to be very small. \qed

        \subsection{Fine version}

In this section, we prove the second item of \zcref{thmintro:ET} from the introduction, namely, the ``fine'' version of the Eliashberg--Thurston theorem with boundary. We will actually prove a slightly more general result which allows compact planar leaves and $\T^2$-type planar minimal sets, at the expense of making the boundary foliation slightly ``worse'' near some boundary components of those sets. This controlled modification is explained in the following definition.

\begin{defn}[$\sharp$ and $\flat$ modifications]
    Let $\G$ be a cooriented $C^0$-foliation on the $2$-torus $\T^2$, and $p \in \T^2$. We choose a smooth closed arc $c$ transverse to $\G$ and containing $p$ in its interior. After some arbitrarily small $C^0$ isotopy, we may assume that $\G$ is smooth in a neighborhood of $c$, and we choose coordinates $N(c) \cong (-\varepsilon, \varepsilon)_x \times (-2,2)_y$ near $c$, in which $c$ is identified with $\{0\} \times [-1,1]$, $p=(0,0)$, and $\G$ is tangent to $\partial_x$. 

    Let $h : [-1,1] \rightarrow [-1,1]$ be a smooth diffeomorphism satisfying 
    \begin{itemize}
        \item $h \geq \mathrm{id}$ (resp.~$h \leq \mathrm{id}$),
        \item $h = \mathrm{id}$ near $\partial [-1,1]$,
        \item $h > \mathrm{id}$ (resp.~$h < \mathrm{id}$) on $[-1/2, 1/2]$.
    \end{itemize}

    We modify $\G$ in $(-\varepsilon/2, \varepsilon/2) \times (-2,2) \subset N(c)$ so that the parallel transport along the leaves of $\G$ from left to right is $h$, and moreover the new $C^0$-foliation $\widetilde{\G}$ is everywhere either tangent or positively (resp.~negatively) transverse to $\partial_x$ in $N(c)$. Choosing $h$ sufficiently $C^0$-close to $\mathrm{id}$, we can further arrange that $T\widetilde{\G}$ is $C^0$-close to $T\G$. 
    
    We call the resulting $C^0$-foliation a $\sharp$ (resp.~$\flat$) modification of $\G$ near the point $p$. This modification can be made arbitrarily $C^0$-small and close to $p$.
\end{defn}

\begin{rem} \label{rem:modif}
    If $p$ and $q$ belong lie on the same leaf of $\G$, then a sufficiently small $\sharp$ (resp.~$\flat$) modification of $\G$ near $p$ is isotopic to a $\sharp$ (resp.~$\flat$) modification of $\G$ near $q$, via a small $C^0$ isotopy. Indeed, we can first smooth $\G$ along a leaf segment containing $p$ and $q$, and work in appropriate smooth coordinates near there.
\end{rem}

\begin{defn}
    A compact planar leaf or a $\T^2$-type planar minimal set of $\F$ is \textbf{stable} if it has a closed boundary curve with infinitesimal attracting holonomy. Otherwise, it is called \textbf{unstable}.
\end{defn}

\begin{thm}[Finest version of Eliashberg--Thurston] \label{thm:fineET}
    Let $\mathcal{F}$ be a cooriented $C^0$-foliation on $M$ transverse to $\partial M$ with finitely many compact planar leaves. Let $L_1, \dots, L_m$ denote its unstable compact planar leaves, and let $\Lambda_1, \dots, \Lambda_k$ denote its unstable $\T^2$-type planar minimal sets. For each of those, choose a boundary point $p_i \in \partial L_i \subset \partial M$ and $q_j \in \partial \Lambda_j \subset \partial M$.
    
    Then $\mathcal{F}$ can be $C^0$-approximated by positive (resp.~negative) contact structures which dominate some foliation $\G$ on $\partial M$, obtained from $\partial \F$ by the following operations only:
    \begin{itemize}
        \item Isotopies,
        \item Blowdowns and blowups of type $\natural/\sharp$ (resp.~$\natural/ \flat$) away from the boundaries of the aforementioned minimal sets,
        \item $\flat$ (resp.~$\sharp$) modifications near $p_1, \dots, p_m, q_1, \dots, q_k$.
    \end{itemize}
    Moreover, all of these operations can be made arbitrarily small, so that $\G$ is tangentially $C^0$-close to $\partial \F$.
\end{thm}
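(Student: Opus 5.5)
The plan is to run the six-step scheme described at the start of this part once more, now keeping track of exactly which modifications touch $\partial\F$. I treat the positive case; the negative case is the mirror image, with $\sharp$ and $\flat$ swapped.

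\textbf{Compact leaves.} Apply the refined second statement of \zcref{lem:finiteleaves}. This makes the compact leaves finite in number by blowdowns followed by \emph{trivial} blowups. These are supported away from the boundaries of compact planar leaves and of $\T^2$-type planar minimal sets.

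\textbf{Holonomy.} Next I insert holonomy as in \zcref{lem:enoughholo}, but I choose each blowup so that its effect on $\partial\F$ has the allowed type.

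For a nonperipheral minimal set (item 1 of \zcref{prop:lamclass}), I pick the holonomy curve $\gamma$ in the interior of a leaf. I take it non-boundary-parallel, so the blowup of \cite{B16} can use a representation of $\pi_1$ of the leaf that is trivial along boundary curves. This leaves $\partial\F$ unchanged up to trivial blowups.

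For a spiral minimal set, the boundary of the leaf lies in a $\sharp$, $\flat$ or Reeb annulus. I would arrange that any blowup along it only inserts $\natural$ or $\sharp$ annuli. This is possible because the boundary curves of the blown-up region can be given ascending holonomy, as in the perfectness argument of \zcref{prop:alephfilling}.

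Stable compact planar leaves and stable $\T^2$-type planar minimal sets already carry a boundary curve with infinitesimal attracting holonomy. I use that curve, pushed slightly into the interior, as the fence location.

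$\T^3$-type planar minimal sets have no nontrivial loops in their leaves. For these I drill a transverse closed curve and declare the resulting torus a toroidal window, to be refilled at the end by the results of \zcref{sec:tori}.

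\textbf{Unstable planar pieces.} For an unstable compact planar leaf $L_i$ or unstable $\T^2$-type set $\Lambda_j$, no interior holonomy is available. Instead I place a small square window at $p_i$ (resp.~$q_j$) on $\partial M$, after a small isotopy putting $\F$ in standard horizontal form there. By \zcref{rem:modif}, the $\flat$ window modification allowed by \zcref{prop:betterfence} is exactly a $\flat$ modification near $p_i$ or $q_j$. The set is unstable, so its leaves have trivial holonomy around the boundary curves. This is what should allow the window to serve as a drain: every leaf of $L_i$ or $\Lambda_j$ reaches the window tangentially.

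\textbf{Transitivity and conclusion.} With fences at the holonomy curves and windows at the unstable points, I must check $\partial_\ssq$-transitivity. Every leaf accumulates on a minimal set, and every minimal set now meets either a fence or a window through a leafwise path. Then \zcref{prop:betterfence} gives a nearby foliation positively transverse along the facades to a foliation isotopic to $\partial^\flat\F_\ssq$. After that, \zcref{prop:contapprox} gives the contact approximation. Finally I fill the annular fences by \cite[Lemma 5.6]{B16} and the toroidal windows by \zcref{sec:tori}.

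\textbf{Main obstacle.} I expect the delicate step to be the spiral and peripheral cases of the holonomy insertion. Showing that every required blowup can be chosen to alter $\partial\F$ only by $\natural/\sharp$ annuli, and never by $\flat$ ones, needs a case analysis on the holonomy germs along boundary curves. A second worry is whether $\T^3$-type sets can always be drilled along transverse curves compatibly with the boundary.
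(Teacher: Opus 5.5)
Your skeleton is the paper's: the refined \zcref{lem:finiteleaves}, fences, square windows at $p_i,q_j$ whose $\flat$ window modification is a $\flat$ modification there, then \zcref{prop:betterfence}, \zcref{prop:contapprox} and filling. Fencing a \emph{stable} planar piece at its attracting boundary curve pushed into the leaf is also fine. ``Unstable'' does not mean trivial boundary holonomy, but you do not need that claim: $p_i\in\partial L_i$ and minimality already bring every leaf to the window.

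The first genuine gap is the $\T^3$-type planar minimal sets. Drilling a transverse curve and declaring $\partial P$ a toroidal window is not enough. \zcref{prop:contapprox} gives only $C^0$ control on toroidal windows, while \zcref{lem:maypolefilling} fills $P$ only when the contact structure has \emph{positive} slope along $\partial P$. After drilling, $\F$ meets $P$ in disks, so its slope on $\partial P$ is $0$; after approximation the slope may be $0$ or of the wrong sign. By \zcref{lem:nec_hyp}, slope $0$ admits no horizontal contact filling.

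The missing idea is \zcref{prop:maypoletwist}:
\begin{enumerate}
\item smooth the measured minimal set;
\item attach $2N$ thin ribbons to the maypole, positioned by a simultaneous Dirichlet approximation of the slopes (\zcref{thm:dirichlet});
\item reglue along the ribbons with alternating strongly ascending and descending holonomy, so the commutator trick (\zcref{lem:commutatortrick}) makes the holonomy around $\partial P$ ascending while the perturbation stays $C^0$-small;
\item use \zcref{lem:stablypositive} so this positivity survives the later approximations.
\end{enumerate}
This slope condition is the real difficulty there, not compatibility of the drilling with $\partial M$.

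The holonomy step is also wrong in two places.

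For spiral minimal sets you give no mechanism that produces an attracting curve, and you flag this yourself. Their leaf groups are normally generated by boundary-parallel loops, and a $\sharp$ germ attracts from one side only. The paper inserts no holonomy there. It opens a square window at a point of $\partial M$ on a spiraling boundary leaf inside a $\sharp$, $\flat$ or Reeb annulus. \zcref{lem:spiral} then shows the resulting $\flat$ modification is isotopic to the original by a $C^0$-small graphical isotopy. Concretely, one moves the modification between the contracting boundary circle and a nearby transversal via \zcref{rem:modif}, then linearly interpolates the leaf graphs. So that window is a drain that costs only an isotopy.

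For nonperipheral sets, your claim that only trivial boundary blowups occur fails when the holonomy around $\gamma$ is trivial on an interval. Following \cite{B16}, you must then blow up nearby leaves $L$. In those leaves the parallel copy $\gamma'$ may lie in $\pi_1^\partial(L)$ even though $\gamma$ is essential in $L_0^\bullet$. A representation trivial on boundary curves then creates no holonomy at all. \zcref{lem:enoughholo2} instead writes $\gamma'=\beta_1\cdots\beta_m$ as a coherently oriented product of boundary classes and sends every $\beta_i$ to a $\sharp$ homeomorphism. When $\pi_1(L^\bullet)\neq 1$, it absorbs the rest through a handle or puncture via a pair of pants. This is exactly where the $\sharp$ blowups allowed in the statement come from.
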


We emphasize that the foliation $\G$ \emph{depends} on the size of the contact approximations; it cannot be chosen uniformly for all contact approximations a priori. In the absence of unstable compact planar leaves and $\T^2$-type planar minimal sets, we obtain:

\begin{cor} \label{cor:fineET}
    Let $\mathcal{F}$ be a cooriented $C^0$-foliation on $M$ transverse to $\partial M$ without compact planar leaves nor $\T^2$-type planar minimal sets. Then $\F$ can be $C^0$ approximated by positive and negative contact structures which dominate some foliation $\G$ on $\partial M$ which is isotopic to $\partial \F$, via a $C^0$-small isotopy.
\end{cor}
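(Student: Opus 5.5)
The corollary is a specialization of \zcref{thm:fineET}, and the plan is to read off its conclusion when the list of exceptional minimal sets is empty. Since $\F$ has no compact planar leaves, it certainly has finitely many of them (zero), so the theorem applies. The unstable compact planar leaves $L_1,\dots,L_m$ and the unstable $\T^2$-type planar minimal sets $\Lambda_1,\dots,\Lambda_k$ are both absent, so $m=k=0$. No points $p_i,q_j$ are chosen, and no $\flat$ or $\sharp$ modifications occur. Hence, for each $\varepsilon>0$, \zcref{thm:fineET} gives positive (resp.~negative) contact approximations dominating a boundary foliation $\G$. This $\G$ is obtained from $\partial\F$ by arbitrarily small isotopies together with blowdowns and blowups of type $\natural/\sharp$ (resp.~$\natural/\flat$).

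The remaining task is to see that these blowdowns and blowups can be taken to be \emph{trivial}, so that $\G$ is genuinely isotopic to $\partial\F$ via a $C^0$-small isotopy. In the proof of \zcref{thm:fineET}, the nontrivial blowups in Step 1 (\zcref{lem:finiteleaves}) arise only in the genus~$0$ case 3, which is excluded here. Case 2 yields blowdowns followed by trivial blowups. In Step 2, the care taken for the fine version avoids nontrivial boundary blowups away from planar minimal sets. A blowdown of a product region followed by a trivial blowup of small size differs from the original foliation by a $C^0$-small isotopy: it collapses a thin foliated product $I$-bundle and reinserts a product.

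The step I expect to be the main obstacle is this bookkeeping. One has to confirm that, under the hypotheses, every boundary-modifying operation in the proof of \zcref{thm:fineET} is of this trivial type. In particular, no $\sharp$ or $\flat$ annuli should be inserted on $\partial M$. If such an insertion is unavoidable, I would instead absorb it by a further small isotopy, using the accordion trick (\zcref{lem:accordion}) and \zcref{prop:fineslope_inequality}. This works because a dominated foliation with an extra $\sharp$ annulus (for $\xi_+$) still dominates an isotopic copy of $\partial\F$ after a small isotopy. Since domination is an open condition, the final small isotopy preserves it, and the corollary follows.
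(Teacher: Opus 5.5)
Your reduction is the one the paper uses: it states the corollary without further argument, as the case $m=k=0$ of \zcref{thm:fineET}, so no sacrificial $\flat/\sharp$ modifications occur. The problem is the bookkeeping you add for the step you flag as the main obstacle, namely upgrading the remaining blowdowns and $\natural/\sharp$ (resp.~$\natural/\flat$) blowups to a $C^0$-small isotopy. Several claims there are false.

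\emph{Step~2 does insert nontrivial boundary blowups.} In cases (a) (when $c(\gamma')$ is trivial) and (b) of the proof of \zcref{lem:enoughholo2}, the blown-up leaf is noncompact and receives $\sharp$ (resp.~$\flat$) holonomy around its closed boundary curves. This puts genuine $\sharp$ (resp.~$\flat$) annuli in $\partial M$ without any compact planar leaf. Also, a ``trivial'' blowup along a noncompact boundary leaf on an irrational torus is a Denjoy blowup, not an isotopy.

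\emph{The collapsed regions are not products.} The regions $N_k=\Sigma_k\times[0,c_k]$ in \zcref{lem:finiteleaves} are foliated $I$-bundles, possibly with nontrivial holonomy. Their traces $\partial\Sigma_k\times[0,c_k]\subset\partial M$ can carry $\sharp$ and $\flat$ annuli, and blowdown plus trivial blowup replaces these by a product annulus. For $\xi_+$, erasing a $\sharp$ annulus is the harmful direction. Your fallback only treats \emph{inserted} $\sharp$ annuli, so it never addresses this.

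\emph{The fallback's tools do not give small isotopies.} \zcref{lem:accordion} and \zcref{prop:fineslope_inequality} use mean-curvature straightening and accordion compression, so their isotopies are not $C^0$-small. They also need Reebless, finite-slope boundaries. Yet the corollary is used in \zcref{prop:easyalephresolution} exactly for $\aleph$-type boundaries. Openness of domination absorbs tangentially small perturbations, not a change of topological type.

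A workable route is to fix $\xi_+$ first, so that by compactness $\partial\xi_+$ dominates $\G$ with an angle margin $\theta>0$. Then modify $\G$ only in thin strips crossing the relevant thin annuli, after smoothing it there by a $\theta/2$-small isotopy. In coordinates where $\G$ is horizontal on a strip $[x_0,x_1]\times[0,h]$, the graphs $y=(1-\chi(x))\,c+\chi(x)\,k(c)$, with $\chi$ a smooth step function, realize any holonomy $k$ across the strip with continuous tangents, as in \zcref{lem:spiral}.

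To undo an inserted $\sharp$ annulus with return map $g\geq\mathrm{id}$, take $k=g^{-1}\leq\mathrm{id}$. The new foliation is pointwise weakly \emph{below} $\G$, so it is still dominated; the operative mechanism here is monotonicity, not openness. The annulus becomes a product, i.e.\ a trivial blowup along a closed leaf, which is $C^0$-small isotopic to $\partial\F$.

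To restore holonomy $h$ erased by a blowdown, conjugate $h$ componentwise on the complement of $\mathrm{Fix}(h)$ to some $h_\varepsilon$ that is $\varepsilon$-close to the identity, and insert $k=h_\varepsilon$. The leaves tilt by less than $\theta$ once $\varepsilon$ is small compared to the strip width. The resulting annulus is conjugate to the original by a fiber-preserving homeomorphism moving points less than $c_k$. Denjoy strips on irrational tori need the analogous fact that a Denjoy blowup of a circle homeomorphism is a $C^0$-limit of its conjugates by homeomorphisms close to the identity.
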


Under the hypothesis of the corollary, if $\F$ has a well-defined boundary type, then it can approximated by (positive and negative) contact structures with the \emph{same} boundary type, as in the second item of \zcref{thmintro:ET}.

\begin{rem}
    In general, unstable compact planar leaves and $\T^2$-type planar minimal sets constitute obstructions to finding dominating contact structures for $\partial \F$, up to isotopy, since we might not be able to add attracting holonomy near them, see \zcref{lem:enoughholo2} below. However, they might not always be obstructions in practice: a foliation with such minimal sets might still be approximable by contact structures dominating its boundary foliation (or rather a different boundary foliation isotopic to it).
\end{rem}

Before diving to the proof, let us explain how to refine the previous arguments in the proof of the coarse version to this finer setting. The main difference is that we are only allowed to add holonomy near minimal sets in a very controlled way, so that the boundary foliation is modified in the desired way. We need to distinguish five different types of minimal sets as in the flowchart in \zcref{fig:flowchart}. The most delicate type of minimal set to deal with is the $\T^3$-type planar type, as the leaves have minimal topology. However, there is a precise local model of those that can be used to modify them by drilling a transverse curve, thus creating a new boundary component (the \emph{maypole}) along which the foliation has slope $0$, and modifying the foliation near the minimal set so that the new foliation has \emph{positive slope} along the maypole. We may then use this boundary component as a toroidal window for the contact approximation, before filing it at the end of the proof.

\begin{figure}
    \centering
        \begin{tikzpicture}[node distance=1.8cm, auto]
  \tikzset{
    start/.style={diamond, draw, fill=gray!40, text centered, inner sep=3pt, font=\bfseries, align=center, minimum width=1cm, minimum height=1cm, aspect=2},
    type/.style={ellipse, draw, fill=gray!20, text centered, inner sep=3pt, align=center, minimum width=0cm, minimum height=1cm},
    action/.style={rectangle, draw, fill=blue!20, text centered, rounded corners, minimum height=2.5em, inner sep=3pt, align=center, text width=4cm},
    action2/.style={rectangle, draw, fill=blue!20, text centered, rounded corners, minimum height=2.5em, inner sep=3pt, align=center, text width=5cm},
    actionr/.style={rectangle, draw, fill=red!20, text centered, rounded corners, minimum height=2.5em, inner sep=3pt, align=center, text width=4cm},
    line/.style={draw, -Latex, shorten >=2pt, shorten <=2pt},
  }
  \node [start] (start) {Minimal set $\Lambda$};
  \node [type, above right of=start, xshift=1cm, yshift=2cm] (nonperipheral) {\textbf{Nonperipheral}};
  \node [action, right of=nonperipheral, xshift=4cm] (action1) {Perform blowups to add holonomy. The boundary is modified by \emph{controlled} blowups (\zcref{lem:enoughholo2}).};
  \node [type, below right of=start, xshift=1cm, yshift=-3cm] (peripheral) {\textbf{Peripheral}};
  \node [type, right of=peripheral, yshift=4cm, xshift=1cm] (spiral) {Spiral};
  \node [action, right of=spiral, xshift=4cm] (action2) {Add $\flat/\sharp$-modification at boundary; the new boundary is isotopic to the original one (\zcref{lem:spiral}).};
  \node [type, right of=peripheral, yshift=1.5cm, xshift=2cm] (t2) {$\mathbb{T}^2$-type planar};
  \node [type, right of=peripheral, yshift=-1.5cm, xshift=2cm] (genus0) {Compact planar};
  \node [actionr, right of=peripheral, xshift=7cm, yshift=0cm] (action3) {Add $\flat/\sharp$-modification along a boundary component (closed or not). The boundary \emph{is} modified!};
  \node [type, right of=peripheral, yshift=-5cm, xshift=1cm] (t3) {$\mathbb{T}^3$-type planar};
  \node [action2, right of=t3, xshift=4cm] (action4) {Drill a \emph{maypole} and ``twist'' the foliation near $\Lambda$ (\zcref{prop:maypoletwist}). Treat the maypole as an additional boundary component with \emph{positive} or \emph{negative} slope, and allow any $C^0$-modifications along it. Fill the maypole at the end (\zcref{lem:maypolefilling}).};
  \path [line] (start) -- (nonperipheral);
  \path [line] (nonperipheral) -- (action1);
  \path [line] (start) -- (peripheral);
  \path [line] (peripheral) -- (spiral);
  \path [line] (spiral) -- (action2);
  \path [line] (peripheral) -- (t2);
  \path [line] (t2) -- (action3);
  \path [line] (peripheral) -- (genus0);
  \path [line] (genus0) -- (action3);
  \path [line] (peripheral) -- (t3);
  \path [line] (t3) -- (action4);
\end{tikzpicture}
    \caption{Operations on the different types of minimal sets. Actions in blue do not affect the boundary type in a harmful way, while the red one does.}
    \label{fig:flowchart}
\end{figure}

            \subsubsection{Inserting controlled holonomy}

Recall from \zcref{def:minimalsets} that a minimal set $\Lambda$ of $\F$ is \textbf{nonperipheral} if it has a leaf $\lambda$ such that $\pi_1(\lambda)$ is not normally generated by boundary parallel curves, as in the first item of \zcref{prop:lamclass}. Otherwise, we say that it is \textbf{peripheral}.

\begin{defn}
    We say that $\F$ has \textbf{enough nonperipheral (infinitesimal) holonomy} if each of its nonperipheral minimal sets admit a curve with infinitesimal attracting holonomy.
\end{defn}

\begin{lem} \label{lem:enoughholo2}
    Let $\F$ be as in the setup of \zcref{def:enoughholo}, and let $\Upsilon$ be the union of its peripheral minimal sets. Then  $\F$ can be $C^0$-approximated away from $\Upsilon$ by $C^0$-foliations with enough nonperipheral holonomy, and whose boundary foliations are obtained from $\partial \F$ by blowdowns followed by only $\natural$ and $\sharp$ (resp.~$\natural$ and $\flat$) blowups. In particular, the peripheral minimal sets remain unchanged.
\end{lem}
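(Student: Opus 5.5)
I will follow the proof of \zcref{lem:enoughholo}, modelled on Step 2 of~\cite[Section 8]{B16}, but keep track of which blowups are permitted. I treat the positive case (only $\natural$ and $\sharp$ blowups on the boundary); the negative case is the mirror image.

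Fix a nonperipheral minimal set $\Lambda$ and a leaf $L_0 \subset \Lambda$ whose fundamental group is not normally generated by boundary-parallel curves. I choose an embedded closed curve $\gamma \subset L_0$ representing a nontrivial element of $\pi_1(L_0^\bullet)$, where $L_0^\bullet$ is $L_0$ with its closed boundary curves capped off. The point of this choice is that the blowups will be realized by holonomy representations supported near $\gamma$ and trivial around boundary-parallel curves.

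The three cases of \zcref{lem:enoughholo} are then handled as follows.
\begin{itemize}
    \item \textbf{Unique fixed point or germinally nontrivial holonomy on both sides.} I blow up $L_0$, or finitely many leaves near it, inserting a foliated $I$-bundle $L_0 \times [0,1]$ whose holonomy is a representation $\rho : \pi_1(L_0) \to \Homeo^+(I)$. I take $\rho$ to create infinitesimal attracting holonomy around two parallel copies of $\gamma$, and to be trivial on every peripheral loop around closed boundary components.
    \item \textbf{Open boundary components.} Where $L_0$ has open boundary components on $\partial M$, the induced blowup of $\partial \F$ inserts product regions. These regions spiral compatibly with the surrounding leaves, which gives $\natural$ blowups, possibly $\sharp$ ones.
    \item \textbf{Interval of fixed points.} I argue as before. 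Either nearby parallel curves are essential, and I blow up countably many nearby leaves using representations trivial on boundary loops. Or $L_0$ is an isolated torus leaf, which is closed and does not meet $\partial M$ at all.
\end{itemize}
In every case I choose the blown-up leaves away from $\Upsilon$ and away from the other minimal sets. Then peripheral minimal sets are untouched, and the approximation is $C^0$-small because the blowups have arbitrarily small size.

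The main obstacle is constructing $\rho$ so that the holonomy around $\gamma$ is attracting while the boundary loops act trivially. This requires $\gamma$ to survive in the quotient of $\pi_1(L_0)$ by the normal closure of the boundary loops, which is exactly the nonperipheral hypothesis. For example, I could map $\gamma$ to a contraction and use the freeness of $\pi_1(L_0^\bullet)$, or a handle argument as in \zcref{lem:finiteleaves}, to extend $\rho$ while killing the boundary curves. This requires care in two places:
\begin{itemize}
    \item If $L_0^\bullet$ is a noncompact surface with the relevant relations, $\pi_1(L_0^\bullet)$ is free on a set containing a primitive multiple of $\gamma$, and the extension works directly.
    \item If $L_0^\bullet$ is closed of positive genus, I use a commutator trick in $\Homeo^+(I)$.
\end{itemize}
When trivial holonomy along the boundary cannot be arranged exactly, for instance when ascending holonomy is forced along a boundary curve, I only allow ascending holonomy there. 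In the boundary picture this inserts $\sharp$ annuli, as in the blowup of \zcref{prop:alephfilling}. This is why $\sharp$ blowups are permitted in the statement but $\flat$ blowups are not.

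The blowdowns in the statement come first, from making compact leaves isolated as in \zcref{lem:finiteleaves}. That step is done beforehand and is compatible with everything above.
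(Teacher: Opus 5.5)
Your treatment of $L_0$ itself is fine: choosing $\gamma$ nontrivial in $\pi_1(L_0^\bullet)$ and letting $\rho$ factor through $\pi_1(L_0^\bullet)$ gives only trivial blowups of $\partial\F$. The gap is in your third case, where the holonomy of $\gamma$ is trivial on an interval. There you blow up nearby leaves $L$ ``using representations trivial on boundary loops'' whenever the parallel copy $\gamma'\subset L$ is essential in $L$.

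That does not work in general. A representation $\rho\colon\pi_1(L)\to\Homeo^+(I)$ that is trivial on the closed boundary curves factors through $\pi_1(L^\bullet)$. It therefore kills $\gamma'$ whenever $\gamma'$ lies in the normal closure $\pi_1^\partial(L)$ of those curves. The nonperipheral hypothesis only provides \emph{one} leaf $L_0$ with the required property. The nearby leaves $L$ that you blow up can have different topology. In particular $\gamma'$ can be noncontractible in $L$ while cobounding a compact planar subsurface $Q\subset L$ with boundary curves lying on $\partial M$; for instance this happens when $L^\bullet\cong\R^2$ and $\pi_1(L)$ is free on its boundary curves. In that situation your construction creates no holonomy along $\gamma'$. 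This is exactly the situation that produces the nontrivial boundary blowups allowed in the statement. Your later fallback (``when ascending holonomy is forced along a boundary curve, I only allow ascending holonomy there'') does not address it: it does not identify this case, does not construct $\rho$, and does not explain why one sign suffices. That last point is the whole content of restricting to $\natural$ and $\sharp$ blowups.

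The missing argument runs as follows.
\begin{itemize}
\item Since $\gamma'$ is embedded and trivial in $\pi_1(L^\bullet)$, it cobounds $Q$ with boundary curves $\beta_1,\dots,\beta_m$ of $L$. Hence $\gamma'=\beta_1\cdots\beta_m$, where each $\beta_i$ carries the orientation induced from $L$, i.e.\ its orientation as a leaf of $\partial\F$. The signs are therefore coherent.
\item Choosing every $\rho(\beta_i)$ ascending, which is a $\sharp$ blowup along $\beta_i$, makes $\rho(\gamma')$ free of interior fixed points.
\item You must still keep $\rho$ trivial on the other boundary curves. If $\pi_1(L)$ is freely generated by its boundary curves, this is immediate.
\item If $\pi_1(L^\bullet)\neq 1$, use a puncture or handle of $L$, which necessarily lies outside $Q$. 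Take a pair of pants bounded by $\gamma'$, a curve $\beta$ cutting off that puncture or handle, and a third curve $\beta'$. Set $\rho(\beta)=\rho(\gamma')^{-1}$; this is realizable on a punctured disk, and on a one-holed torus by a commutator trick. Then $\rho(\beta')=\mathrm{id}$, and $\rho$ extends trivially beyond $\beta'$.
\end{itemize}
Only after these nearby blowups have made the holonomy of $\gamma$ germinally nontrivial on both sides should you blow up $L_0$ itself. That is where your nonperipheral argument correctly applies.
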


\begin{proof}
This is a variation on the proof of \zcref{lem:enoughholo}. We perform similar blowups, with precise control over the type of blowups we allow. For that, we need to be particularly careful about the curve we choose to add holonomy along.

If $\Sigma$ is a connected orientable surface with boundary, we denote by $\pi_1^\partial(\Sigma) \triangleleft \pi_1(\Sigma)$ the normal subgroup generated by its boundary components. We have a short exact sequence 
\begin{align} \label{eq:ses}
    1 \longrightarrow \pi_1^\partial(\Sigma) \longrightarrow \pi_1(\Sigma) \longrightarrow \pi_1(\Sigma^\bullet) \longrightarrow 1,
\end{align}
where $\Sigma^\bullet$ is obtained from $\Sigma$ by capping off its closed boundary components by disks.

Let $\Lambda$ be a nonperipheral minimal set of $\F$. By definition, it admits a leaf $L$ and an embedded curve $\gamma \subset L_0$ such that $\gamma$ is nontrivial in $\pi_1(L_0^\bullet)$. As in the proof of \zcref{lem:enoughholo}, we consider the holonomy $h$ of $\F$ around $\gamma$.

If this holonomy is nontrivial on both sides of $\gamma$, we proceed as before: we blow up $L_0$ and insert the appropriate holonomy near $\gamma$. Moreover, this can be achieved so that the boundary foliation is only modified by a \emph{trivial} blowup. Indeed, the blowup of $L_0$ is essentially determined by a representation $\rho : \pi_1(L_0) \longrightarrow \mathrm{Homeo}^+(I)$, and we can choose it so that it factors through $c$ as 
$$\begin{tikzcd}
	{\pi_1^\partial(L_0)} \\
	{\pi_1(L_0)} & {\mathrm{Homeo}^+(I)} \\
	{\pi_1(L_0^\bullet)}
	\arrow[from=1-1, to=2-1]
	\arrow["\mathrm{triv}", bend left=12, from=1-1, to=2-2]
	\arrow["\rho", from=2-1, to=2-2]
	\arrow["c"', from=2-1, to=3-1]
	\arrow["{\rho^\bullet}"', bend right=12, from=3-1, to=2-2]
\end{tikzcd}$$
where $\rho^\bullet$ sends $c(\gamma)$ to the appropriate homeomorphism of $I$. This ensures that the image of $\partial$ under $\rho$ is trivial.

We now assume that the holonomy $h$ is trivial on an interval containing $0$. As in the proof of \zcref{lem:enoughholo}, we may assume that in all the leaves near $\gamma$ where $h$ is trivial, the corresponding parallel curve is noncontractible, and that these leaves are noncompact. Let $L$ be such a leaf, with a corresponding curve $\gamma'$ with trivial holonomy. By \zcref{lem:surftype}, we consider two cases:
\begin{itemize}
    \item[(a)] $\pi_1(L^\bullet) \neq 1$.
    \item[(b)] $\pi_1(L^\bullet) = 1$ and $\pi_1^\partial(L) \cong \pi_1(L)$ is free.
\end{itemize}

Let us consider Case (a) first. If $c(\gamma')$ itself is nontrivial, we proceed as before to add holonomy around $\gamma'$, while adding only trivial holonomy at the boundary. Otherwise, $c(\gamma')$ is trivial and $\gamma' \in \pi_1^\partial(L)$. Since $\gamma'$ bounds a disk in $L^\bullet$, we can write it as a finite product $\gamma' = \beta_1 \beta_2 \dots \beta_m$ of classes represented by boundary components of $L$ \emph{with their orientations coming from the orientation of $L$}.

We claim that there is a representation $\rho : \pi_1(L) \longrightarrow \mathrm{Homeo}^+(I)$ such that all the $\rho(\beta_i)$ are $\sharp$ (resp.~$\flat$), so that $h' \coloneqq \rho(\gamma')$ has no interior fixed points, and $\rho$ applied to the other boundary components of $L$ is trivial. After cutting $L$ along $\gamma'$, we might assume without loss of generality that $\gamma'$ itself is a boundary component of $L$. Moreover, since $\pi_1(L^\bullet) \neq 1$ by assumption, $L$ must have a puncture or a handle. Let $\beta \subset L$ be an embedded curve bounding such a puncture or handle, and let $\beta'$ be an embedded curve on $L$ such that $\gamma'$, $\beta$, and $\beta'$ bound a pair of pants $P \subset L$. In particular, for appropriate choices of orientations, we have $\beta' = \beta \gamma'$. Let $\Sigma$ be the subsurface bounded by $\beta$ disjoint from $P$, which is either a once-punctured disk or a torus with one boundary component, and let $\Sigma'$ be the subsurface bounded by $\beta'$ disjoint from $P$. We may define $\rho$ as above so that $\rho(\beta) = (h')^{-1}$, and the restriction of $\rho$ on $\pi_1(\Sigma')$ is trivial. Indeed, it is trivial to define $\rho$ that way on $\pi_1(\Sigma)$ if $\Sigma$ is a punctured disk, and it follows from a standard commutator trick it is a handle; see for instance the proof of ~\cite[Lemma 2.16]{B16}. Now that $\rho(\beta') = \rho(\beta) \rho(\gamma') = \mathrm{id}$, we readily extend it by the identity on $\pi_1(\Sigma')$, and our claim is proved. 

\begin{figure}[ht]
    \centering
    \def\svgwidth{0.6\textwidth}
    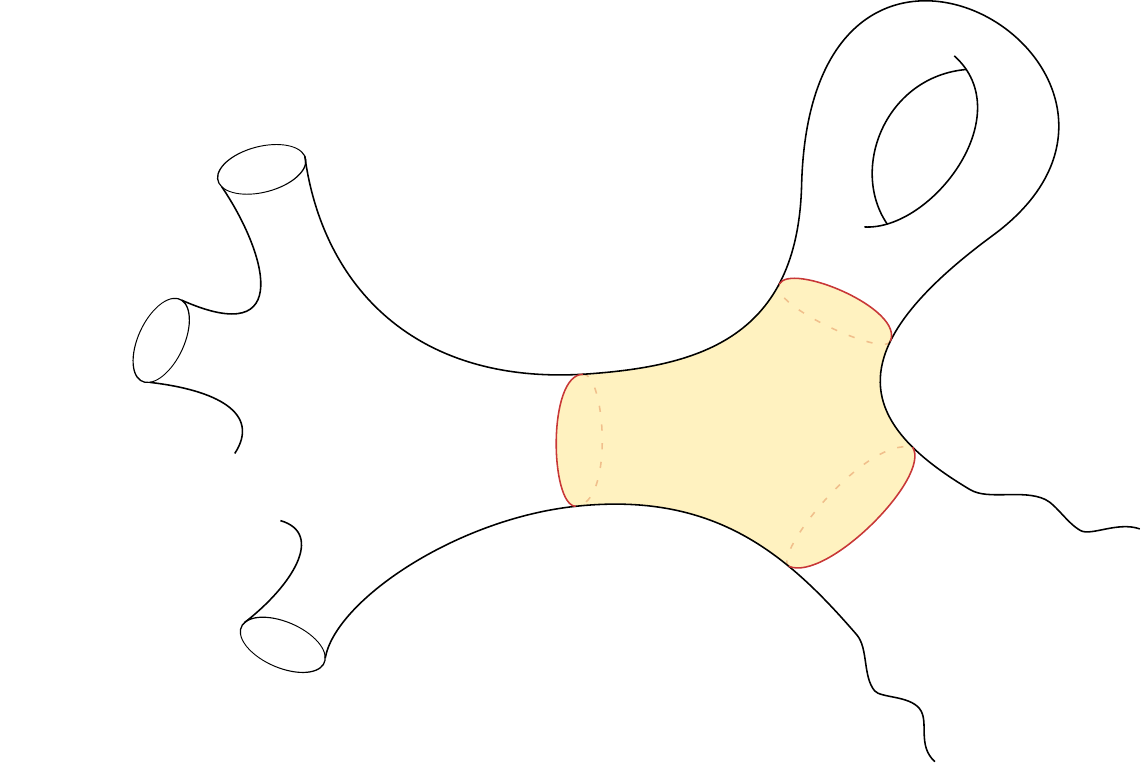
    \caption{Decomposition of the leaf $L$.}
    \label{fig:surface}
\end{figure}

In Case (b), $\pi_1(L)$ is freely generated by its boundary components, and $\gamma'$ is a product of \emph{oriented} boundary components with all the same orientations. We may simply perform a blowup of $L$ inducing only $\sharp$ (resp.~$\flat$) blowups along its boundary components, making the new holonomy around $\gamma'$ nontrivial.

As in the proof of \zcref{def:enoughholo}, if $\F$ is not minimal we perform such blowups near countably many leaves near $L_0$ which are away from the minimal sets of $\F$, creating germinally nontrivial holonomy on both sides of $\gamma$. We may then blowup $L_0$ itself to create infinitesimally attracting holonomy, while only creating trivial blowups at the boundary. If $\F$ is minimal, we only perform one such blowup along a nearby leaf, making the holonomy germinally nontrivial on both sides of $\gamma$, followed by a blowup along $L_0$ as before.
\end{proof}

        \subsubsection{\texorpdfstring{$\T^3$}{T3}-type planar minimal sets} \label{sec:t3typeminimalsets}

The goal of this section is to prove \zcref{prop:maypoletwist}, which informally says that we may modify the foliation in a small neighborhood of a $\T^3$-type minimal set, after drilling a thickened closed transversal (a \emph{maypole}), so that the new foliation has \emph{positive} or \emph{negative} slope along the new boundary component, in a suitable framing. This operation has the effect of destroying the minimal set by perforating it, and the slope condition allows us to ``add contactness'' there by treating the new boundary component as a toroidal window. These boundaries will eventually be filled by appropriate contact structures at the end of the proof.

\medskip

Let us first warm up to the general case of $\T^3$-type minimal sets by starting with the simpler example of a rational linear foliation on $\T^3$; while it is not strictly speaking a $\T^3$-type planar minimal set, et serves as a good enough approximation. In this case, one could even write down an explicit formula for a \emph{contact} perturbation, but we follow a different strategy based on a cut and paste construction that will generalize well to more complicated minimal sets.

The existence of contact structures on $\T^3$ is closely related to the fact that the commutator of two homeomorphisms of the circle can have positive rotation number. We will need a quantitative version of this fact:

\begin{defn}
    For $f\in \Homeo^+([a,b])$, we say that $f$ is \textbf{$\varepsilon$-strongly ascending} if $f(a+\varepsilon) > b-\varepsilon$.
\end{defn}

\begin{lem}[Commutator trick]\label{lem:commutatortrick}
    Let $N$ be an even positive integer. Let 
    $$\Homeo_{N\Z}^+(\R) \coloneqq \big\{f\in \Homeo_+(\R) \mid \forall x \in \R, \ f(x+N) = f(x) + N \big\}.$$
    Let $0 < \varepsilon < 1/100$, and suppose $f$, $g$, $\overline{f}$, and $\overline{g}$ are four elements of $\Homeo^+_{N\Z}(\R)$ satisfying the following properties:
    \begin{enumerate}
        \item[(a)] $f$ has $N$ fixed points $x_1,\dots x_N$ (and possibly other fixed points as well) and $g$ has $N$ fixed points $y_1,\dots y_N$,
        \item[(b)] $x_i \in (i-\varepsilon, i+\varepsilon)$,
        \item[(c)] $y_i \in (i+0.5-\varepsilon,i+0.5+\varepsilon)$,
        \item[(d)] On the interval $[x_i,x_{i+1}]$, if $i$ is odd then $f$ is $\varepsilon$-strongly increasing. For $i$ even, $\overline{f}$ is $\varepsilon$-strongly increasing,
        \item[(e)] On the interval $[y_i,y_{i+1}]$, if $i$ is odd then $g$ is $\varepsilon$-strongly increasing. For $i$ even, $\overline{g}$ is $\varepsilon$-strongly increasing.
    \end{enumerate}
    Then $\overline{g}\overline{f}gf$ has positive translation number (in fact, a translation number $\geq 2$).
\end{lem}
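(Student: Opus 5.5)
The plan is to follow a single orbit of $h \coloneqq \overline{g}\,\overline{f}\,g\,f$ (acting right to left: first $f$, then $g$, then $\overline{f}$, then $\overline{g}$). The goal is to show that it advances by roughly $2$ per application of $h$, uniformly, by playing the fixed points of $f,\overline f$ against those of $g,\overline g$.

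I read ``$\varepsilon$-strongly increasing on $[x_i,x_{i+1}]$'' as ``$\varepsilon$-strongly ascending'': $f(x_i+\varepsilon) > x_{i+1}-\varepsilon$. I first extend the indexing by periodicity, setting $x_{i+N} = x_i + N$ and $y_{i+N}=y_i+N$. These are still fixed points, because the maps commute with translation by $N$. Since $N$ is even, the parity of $i$ is unchanged by the shift, so hypotheses (b)--(e) hold for all $i\in\Z$. Two elementary facts get used repeatedly:
\begin{itemize}
\item each map sends $[x_i,x_{i+1}]$ (resp.\ $[y_i,y_{i+1}]$) to itself, since the endpoints are fixed;
\item each map is monotone, so a lower bound $z\ge x_i+\varepsilon$ on a point of $[x_i,x_{i+1}]$ propagates to a lower bound on its image.
\end{itemize}

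The core claim is: if $i$ is odd and $z\in[x_i+\varepsilon,\,x_{i+1}]$, then $h(z)\ge x_{i+2}+\varepsilon$. I check it one factor at a time, using only (b) and (c) and $\varepsilon<1/100$.
\begin{enumerate}
\item By (d), $f(z) > x_{i+1}-\varepsilon > i+1-2\varepsilon$. Also $f(z)\le x_{i+1}$, so $f(z)$ lies in $[y_i+\varepsilon,\,y_{i+1}]$, because $y_i+\varepsilon < i+0.5+2\varepsilon$.
\item Since $i$ is odd, (e) for $g$ gives $gf(z) > y_{i+1}-\varepsilon > i+1.5-2\varepsilon$. This is at least $x_{i+1}+\varepsilon$ and at most $y_{i+1} < x_{i+2}$.
\item Now $i+1$ is even, so (d) for $\overline f$ on $[x_{i+1},x_{i+2}]$ gives $\overline f g f(z) > x_{i+2}-\varepsilon > i+2-2\varepsilon$. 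This value lies in $[y_{i+1}+\varepsilon,\,y_{i+2}]$.
\item Finally, (e) for $\overline g$ on $[y_{i+1},y_{i+2}]$ gives $h(z) > y_{i+2}-\varepsilon > i+2.5-2\varepsilon$. This exceeds $x_{i+2}+\varepsilon$, and $i+2$ is odd again.
\end{enumerate}
Strictly, $h(z)$ may land beyond $x_{i+3}$. That is harmless: by monotonicity and fixed points, a point beyond $x_{i+3}$ still maps beyond $h(x_{i+2}+\varepsilon)$, so it suffices to apply the claim to the point $x_{i+2}+\varepsilon$ and use monotonicity of $h$.

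By induction, $h^k(x_1+\varepsilon) \ge x_{2k+1}+\varepsilon > 2k+1-2\varepsilon$. Dividing by $k$ and letting $k\to\infty$ gives translation number at least $2$, since the translation number is computed from any orbit and $h \in \Homeo^+_{N\Z}(\R)$.

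The only real work is the interval bookkeeping in steps 1--4. The main obstacle is checking that the $\pm\varepsilon$ and $\pm 2\varepsilon$ slop never pushes a point out of the interval where the next hypothesis applies, in particular the containments $[i+1-2\varepsilon,\,i+1+\varepsilon]\subset[y_i+\varepsilon,\,y_{i+1}]$ and their analogues. These containments are exactly where $\varepsilon<1/100$ and the half-integer offset in (c) are used, and I would verify each with the explicit inequalities above.
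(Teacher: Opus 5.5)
Your argument is correct and is essentially the paper's proof. The paper chains the same four $\varepsilon$-strongly ascending estimates for $f$, $g$, $\overline{f}$, $\overline{g}$, starting from the point $i+2\varepsilon$ with $i$ odd, to get $\overline{g}\,\overline{f}gf(i+2\varepsilon) > (i+2\varepsilon)+2$, and then iterates; since it uses only these lower bounds together with monotonicity, your upper-bound containments (and the implicit assumption that $\overline{f}$ fixes the $x_i$ and $\overline{g}$ fixes the $y_i$) are harmless but not needed.
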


\begin{proof}
    By the strongly ascending property, we have the following inequalities:
    \begin{itemize}
        \item $f(i + 2\varepsilon) > i+1-2\varepsilon$ for $i$ odd,
        \item $g(i+0.5+2\varepsilon) > i+1.5-2\varepsilon$ for $i$ odd,
        \item $\overline{f}(i + 2\varepsilon) > i+1-2\varepsilon$ for $i$ even,
        \item $\overline{g}(i+0.5+2\varepsilon) > i+1.5-2\varepsilon$ for $i$ even.
    \end{itemize}
    See \zcref{fig:commutator}. Combining these inequalities, we find that for $i$ odd, $$\overline{g}\overline{f}gf(i+2\varepsilon)> i+2.5 - 2\varepsilon > (i + 2\varepsilon) +2.$$ Therefore, $\overline{g}\overline{f}gf$ has translation number greater or equal than 2.
\end{proof}

\begin{figure}[ht]
    \centering
    \def\svgwidth{0.4\textwidth}
    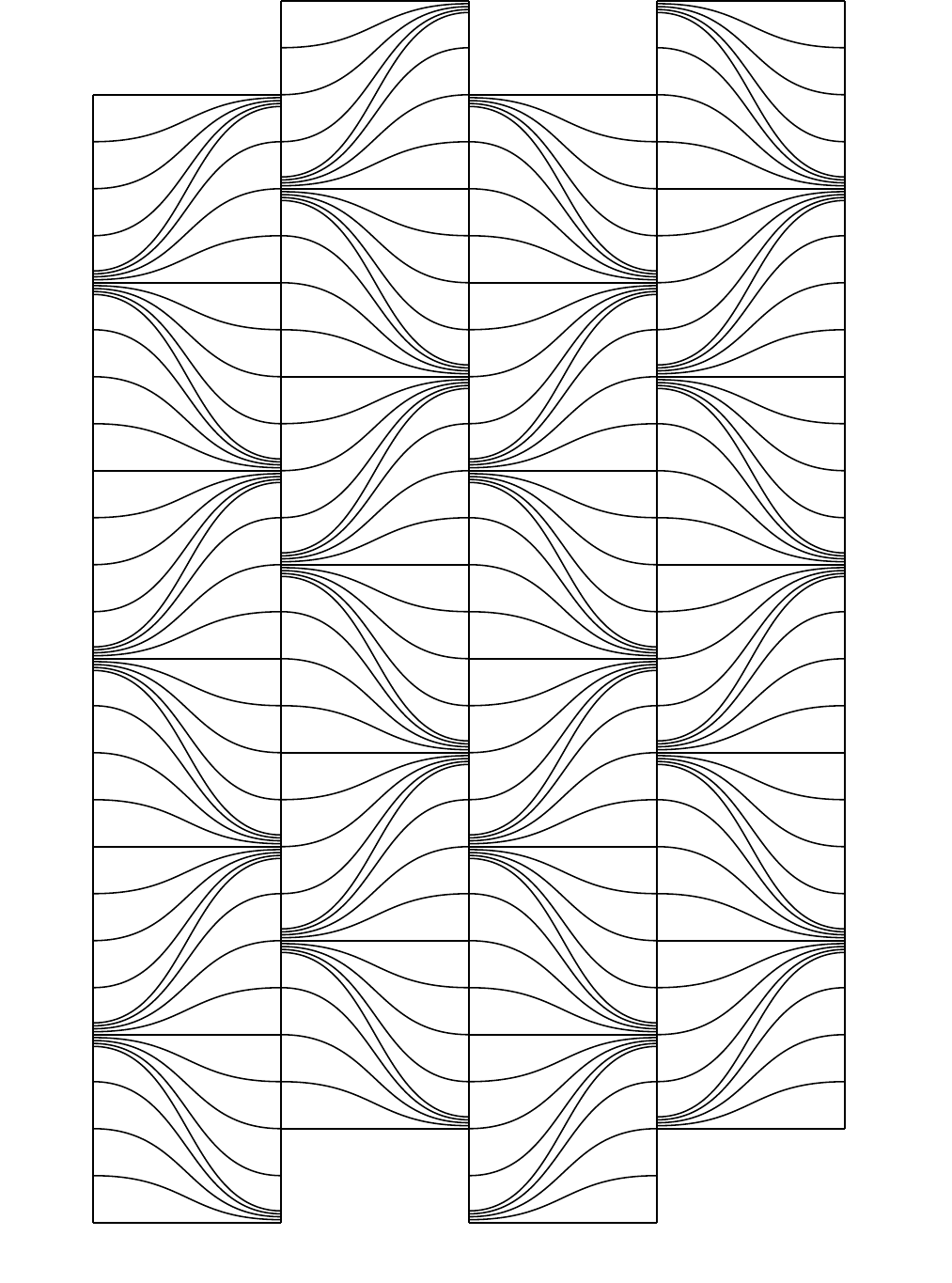
    \caption{The product $\overline{g}\overline{f}gf$ drawn as a concatenation of horizontal foliations on $\R\times[0,1]$. The strongly-ascending constraints imply the existence of the orange leaves which in turn force the translation number of the product to be positive.}
    \label{fig:commutator}
\end{figure}

The next example illustrates how to use the previous commutator trick to produce a concrete confoliation approximating a rational foliation on $\T^3$. While we will not directly use this construction in the rest of the article, it serves as an insightful warm up and highlights the strategy of the proof of \zcref{prop:maypoletwist}. Recall that a (positive) confoliation is a (smooth) plane field $\xi$ of the form $\xi = \ker \alpha$, where the $1$-form $\alpha$ satisfies $\alpha \wedge d\alpha \geq 0$.

\begin{constr}\label{constr:rationalapprox}
    Let $\F$ be the foliation of $\T^3$ tangent to $\ker(dz + p_1/q \, dx + p_2/q\,  dy)$, where $p_1, p_2$ are integers and $q$ is a nonzero integer. We will construct a $C^0$-small perturbation of $\F$ to a confoliation which is contact in a neighborhood of the $z$-axis.

    Let $P$ be an $\varepsilon$-neighborhood of the $z$-axis. We will colloquially refer to $P$ as the \textbf{maypole}, for many ribbons will be affixed to it. Choose some integer $N\gg 1/\varepsilon^2$ which is an even multiple of $q$. We now define $2N$ parallelograms $R_1,\dots,R_N, S_1,\dots,S_N$ which we call \textbf{ribbons}. For $1\leq i \leq N$, $R_i$ is a trivially foliated parallelogram with two sides on $\partial P$ and two sides on the line segments 
    $$\left\{ \left(t,0,\frac i N + \frac {q}{p_1}t \right),  0 \leq t \leq 1\right\} \quad \mathrm{and} \quad \left\{ \left(t,0,\frac {i+1} N + \frac {q}{p_1}t\right),  0 \leq t \leq 1\right\}.$$ 
    The $S_i$ are similarly defined parallelograms in the $yz$ plane with a $\frac 1 {2N}$ offset: $S_i$ is bounded above and below by the line segments
    $$\left\{ \left(t,0,\frac {i-1/2} N + \frac {q}{p_2}t \right),  0 \leq t \leq 1\right\} \quad \mathrm{and} \quad \left\{ \left(t,0,\frac {i+1/2} N + \frac {q}{p_2}t\right),  0 \leq t \leq 1\right\}.$$ 
    See \zcref{fig:fence}.

    Thicken each ribbon $R_i$ to width $\varepsilon$, so that it a trivially foliated box $R_i \times [0,\varepsilon]$ of dimensions $\approx 1\times 1/N \times \varepsilon$. Since $N \gg 1/\varepsilon^2$, these thickened ribbons are actually much thicker than they are tall. However, there is plenty of room to accommodate their thickness on the maypole since the maypole has width $\approx \varepsilon$. Now replace the foliated $R_i\times [0,\varepsilon]$ by a new foliation with agrees with the old one on all the faces of $R_i \times [0,\varepsilon]$, except the ones which touch the maypole. We choose this foliation so that for even $i$, the holonomy from $R_i\times 0$ to $R_i \times 1$ is $\varepsilon$-strongly ascending, and for odd $i$ the holonomy from $R_i\times \{0\}$ to $R_i \times \{1\}$ is $\varepsilon$-strongly descending. See \zcref{fig:slat_operation}. Since $R_i$ is much thicker than it is tall, we can choose the new foliation so that it is $C^0$ close to $\F$. Perform the same operation on the $S_i$'s. Let $\F'$ be the resulting foliation on $\T^3 \setminus P$.

    The restriction of $\F$ to $\partial P$ is modified near where the ribbons attach, in a way so that the holonomy around $\partial P$ satisfies the conditions of the commutator trick (\zcref{lem:commutatortrick}). Thus, $\F'$ has positive slope along $\partial P$. We may now fill $P$ with a contact structure by radial twisting to obtain the desired confoliation.
\end{constr}

\begin{figure}[ht]
    \centering
    \def\svgwidth{0.5\textwidth}
    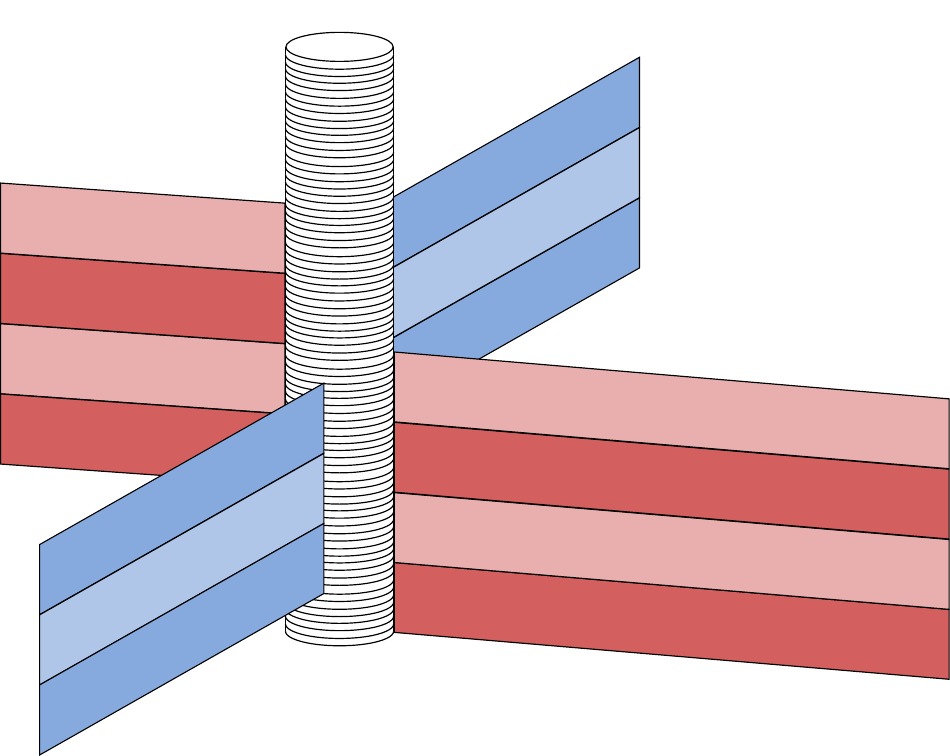
    \caption{The maypole as well as some of the ribbons. The $R_i$'s are drawn in red and the $S_i$'s are drawn in blue. Note how the red ribbons are offset from the blue ribbons by half a unit. Note also that even ribbons line up with even and odd ribbons line up with odd.}
    \label{fig:fence}
\end{figure}

\begin{figure}[ht]
    \centering
    \def\svgwidth{0.4\textwidth}
    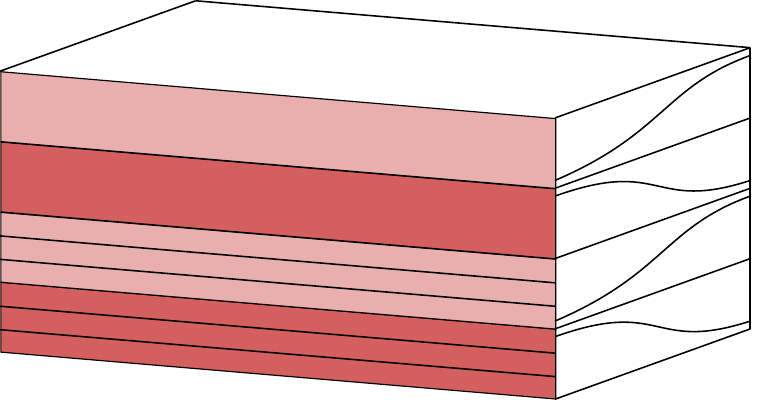
    \caption{Replacing the foliation on $R_i \times [0,\varepsilon]$.}
    \label{fig:slat_operation}
\end{figure}

Here are some adjustments that need to be made in order to generalize \zcref{constr:rationalapprox} to arbitrary $\T^3$-type planar minimal sets:
\begin{enumerate}
    \item The foliation may have irrational slope, so the ribbons might not line up with multiples of $1/q$. However, if we choose a good enough rational approximation using Dirichlet's theorem (see \zcref{thm:dirichlet} below), then the ribbons will nearly line up. The commutator trick \zcref{lem:commutatortrick} is robust enough to deal with small height mismatches in the ribbons. Another approach closer to the original argument of Kazez--Roberts~\cite{KR17} would be to try to replace the $\T^3$-type minimal set with a new set having rational slope. However, in the generality we seek, there would be additional complications to insert the foliations of the guts into the interstices of this new foliation with rational slope.
    \item The minimal set might have complicated guts. Therefore the pole might need to swerve in order to stay in an $\varepsilon$-neighborhood of the minimal set. The individual ribbons might also need to swerve to avoid the guts. See \zcref{fig:wiggledfence}.
    \item The invariant transverse measure for the $\T^3$-type minimal set might be different from the Lebesgue measure. The ribbons need to be of height $1/N$ with respect to the former measure, and of height $<\varepsilon$ with respect to the latter measure. This will be achieved by taking $N$ much larger than $1/\varepsilon$.
    \item The holonomy around the pole at the end of our small perturbation might not be smooth, so we must take more care in filling the pole with a contact structure. This is filling is achieved in \zcref{lem:maypolefilling}.
\end{enumerate}

\begin{figure}
        \centering
        \def\svgwidth{0.65\textwidth}
        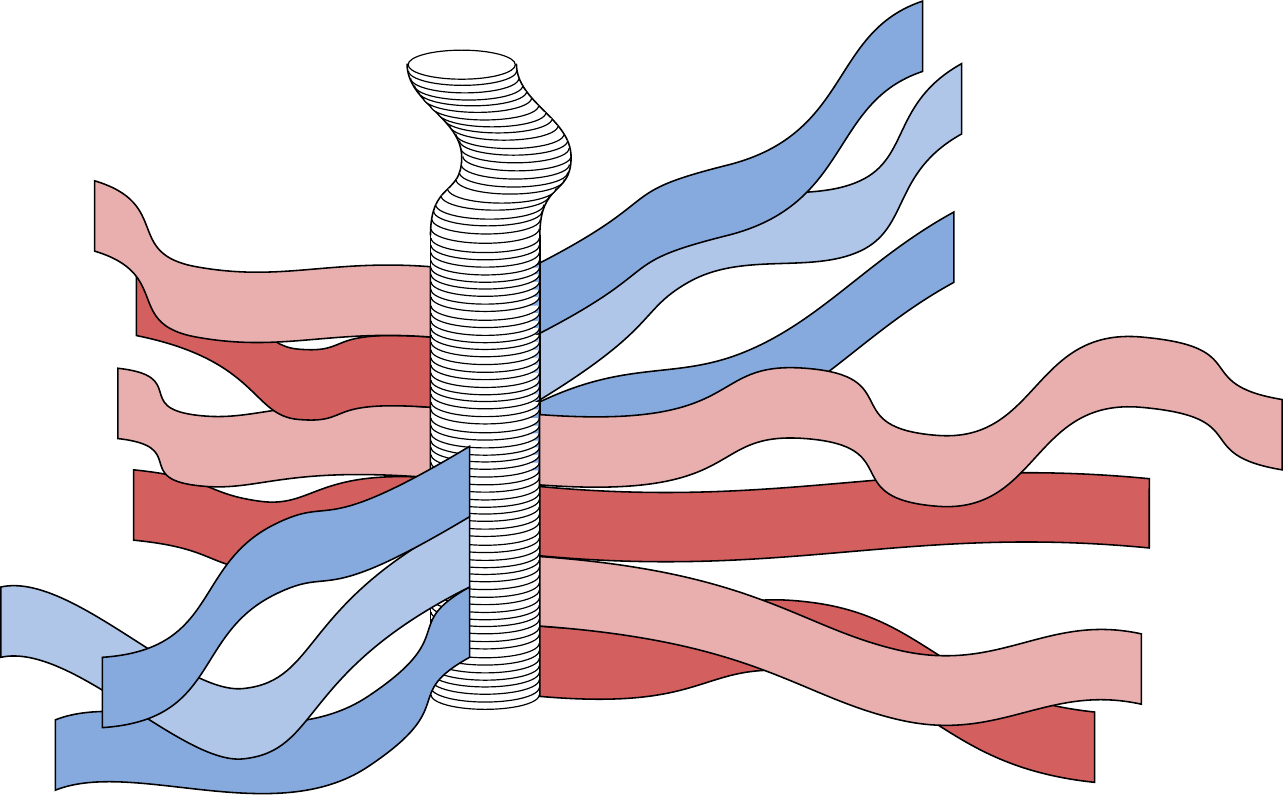
        \caption{There is freedom in the embedding of the maypole and the ribbons.}
        \label{fig:wiggledfence}
\end{figure}

\begin{thm}[Dirichlet's approximation theorem, simultaneous version]\label{thm:dirichlet} Given real numbers  ${\displaystyle \alpha _{1},\ldots ,\alpha _{d}}$ and a natural number ${\displaystyle A}$ then there are integers ${\displaystyle p_{1},\ldots ,p_{d},q\in \mathbb {Z} ,1\leq q\leq A}$ such that $${\displaystyle \left|\alpha _{i}-{\frac {p_{i}}{q}}\right|\leq {\frac {1}{qA^{1/d}}}.}$$
\end{thm}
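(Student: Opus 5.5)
The statement just above is the simultaneous Dirichlet approximation theorem. I will prove it by the classical pigeonhole argument on the unit cube $[0,1)^d$, using fractional parts. Throughout I set $B \coloneqq \lfloor A^{1/d}\rfloor$.

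The cleanest case is when $A = B^d$ is a perfect $d$-th power, and I handle it first. Consider the $A+1$ points
$$v_q \coloneqq \big(\{q\alpha_1\},\dots,\{q\alpha_d\}\big)\in[0,1)^d, \qquad q=0,1,\dots,A.$$
Partition $[0,1)^d$ into $B^d=A$ half-open subcubes of side $1/B$. By pigeonhole, two indices $0\le q'<q''\le A$ give points $v_{q'}, v_{q''}$ in the same subcube. Set $q \coloneqq q''-q'$, so $1\le q\le A$. Let $p_i$ be the difference of the corresponding integer parts, $p_i \coloneqq \lfloor q''\alpha_i\rfloor-\lfloor q'\alpha_i\rfloor$. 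Then
$$|q\alpha_i - p_i| = \big|\{q''\alpha_i\}-\{q'\alpha_i\}\big| < \frac1B = \frac{1}{A^{1/d}}.$$
Dividing by $q$ gives the stated bound $|\alpha_i - p_i/q|\le 1/(qA^{1/d})$.

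For general $A$ the box count $B^d$ may be smaller than $A$, and this mismatch is the main obstacle. The fix I intend is to stop using a lattice of boxes and run a Minkowski convex-body argument instead. Consider the symmetric convex body in $\R^{d+1}$ with coordinates $(q,p_1,\dots,p_d)$ cut out by
$$|q|\le A+\tfrac12, \qquad |q\alpha_i - p_i|\le A^{-1/d}\quad (1\le i\le d).$$
This body is the image, under a linear map of determinant $1$, of a box of side lengths $2A+1$ and $2A^{-1/d}$ (the latter $d$ times). Its volume is therefore $(2A+1)\,2^dA^{-1}>2^{d+1}$.

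By Minkowski's theorem the body contains a nonzero integer point. Replacing the point by its negative if needed, I may assume $q\ge0$. The point also satisfies $|q|\le A$, since $q$ is an integer with $|q|\le A+\tfrac12$. If $q=0$, the constraints force $|p_i|\le A^{-1/d}$. This is below $1$ unless $A=1$, so then all $p_i=0$, contradicting that the point is nonzero.

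The case $A=1$ is trivial: take $q=1$ and $p_i$ a nearest integer to $\alpha_i$, which gives $|\alpha_i-p_i|\le \tfrac12\le 1$.

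So for $A\ge2$ we get $1\le q\le A$ together with $|q\alpha_i-p_i|\le A^{-1/d}$. Dividing by $q$ gives the statement. Since the Minkowski step covers all $A$ uniformly, I would present only that argument, with the cube pigeonhole kept as intuition.
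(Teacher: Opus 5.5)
Your proof is correct. The paper gives no argument of its own for this statement; it only cites Schmidt's book, so your proof can serve as a self-contained replacement.

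The Minkowski step checks out. The map $(q,p_1,\dots,p_d)\mapsto(q,\,q\alpha_1-p_1,\dots,q\alpha_d-p_d)$ has determinant $(-1)^d$ rather than $1$, but only $|\det|=1$ enters the volume computation. The volume $(2A+1)2^dA^{-1}$ strictly exceeds $2^{d+1}$, and integrality then gives $|q|\le A$. Ruling out $q=0$ genuinely needs $A\ge 2$, so your separate treatment of $A=1$ is required.

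As for what each route buys: the cube pigeonhole you keep as intuition is the classical textbook argument, but it yields the sharp bound $A^{-1/d}$ only when $A$ is a perfect $d$-th power. That restricted version already suffices for the paper's only use of the theorem, in the proof of Proposition~\ref{prop:maypoletwist}, where one just needs $|\alpha_i-p_i/N|=o(1/N)$ along a sequence $A\to\infty$. Minkowski's theorem gives the statement verbatim for every natural number $A$.

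If you prefer to stay elementary, the obstacle you identified can also be removed by the measure-theoretic form of pigeonhole (Blichfeldt's principle). In the torus $\R^d/\Z^d$, the $A+1$ cubes $v_q+[0,A^{-1/d})^d$, $0\le q\le A$, have total volume $(A+1)/A>1$, so two of them overlap. This produces $1\le q\le A$ with $|q\alpha_i-p_i|<A^{-1/d}$ for all $i$, uniformly in $A$ and with no special case $A=1$.
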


See~\cite[Corollary 1B]{schmidt.DiophantineApproximation} for a proof.

\begin{defn}
    Let $\F$ be a $C^0$-foliation on $M$ with a $\T^3$-type planar minimal set $\Lambda$, and $\varepsilon > 0$. A \textbf{maypole green} of size $\leq \varepsilon$ for $\Lambda$ is a triple $\mathfrak{G} = (N, \gamma, P)$ where
    \begin{itemize}
        \item $N$ is an open neighborhood of $\Lambda$ of size $\leq \varepsilon$,
        \item $\gamma \subset N$ is a closed loop transverse to $\F$,
        \item $P \subset N$ is a closed tubular neighborhood of $\gamma$ equipped with a \emph{smooth} coordinate system with respect to which $\F$ is the product foliation by disks.
    \end{itemize}
    We call $P$ the \textbf{maypole} with core $\gamma$.
\end{defn}

\begin{figure}[ht]
    \centering
    \def\svgwidth{0.4\textwidth}
    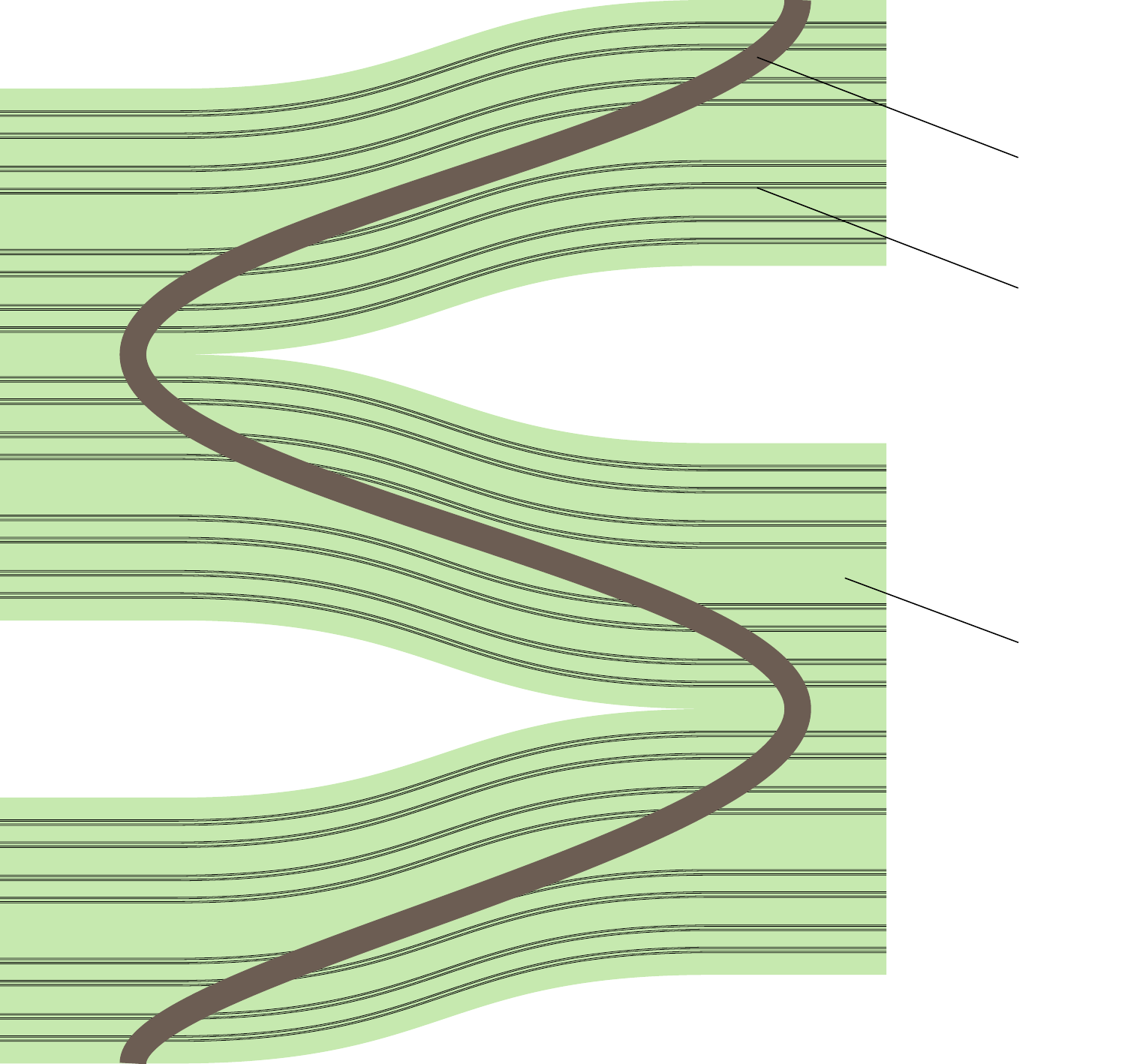
    \caption{A maypole green. $N$ is shown in green. Note that $P$ intersects leaves of $\F$ in disks.}
    \label{fig:maypoleweaving}
\end{figure}

Note that for $\varepsilon > 0$ small enough, the neighborhood $N$ is disjoint from the other minimal sets of $\F$.

\begin{prop}\label{prop:maypoletwist}
Let $\F$ be a $C^0$-foliation on $M$ with a $\T^3$-type planar exceptional minimal set $\Lambda$. For any $\varepsilon > 0$, and after an arbitrarily $C^0$-small isotopy of $\F$ whose size is independent on $\varepsilon$, there exists a maypole green $\mathfrak{G} = (N, \gamma, P)$ for $\Lambda$ of size $\leq \varepsilon$ (with a smooth framing of $P$) such that for every $\delta > 0$, there exist two $C^0$-foliations $\F^\pm_\delta$ on $M \setminus \interior P$ such that
\begin{itemize}
    \item $\F^\pm_\delta$ coincide with $\F$ away from $N$ and near $\partial M$,
    \item $T\F^\pm_\delta$ is $\varepsilon$-close to $T\F$ everywhere, and $\delta$-close to it near $\partial P$,
    \item $\F^\pm_\delta$ has \emph{positive} (resp.~\emph{negative}) slope along $\partial P$ in the chosen smooth framing.
\end{itemize}
In particular, for $\varepsilon$ small enough, the minimal sets of $\F^\pm_\delta$ disjoint from $P$ correspond to minimal sets of $\F$, and $\F^\pm_\delta$ coincide with $\F$ near those.
\end{prop}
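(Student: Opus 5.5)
We generalize \zcref{constr:rationalapprox}, following the four adjustments listed after it. First, the preparatory isotopy. By \zcref{lem:planarclassification} and \zcref{lem:smoothing_measured_laminations}, $\Lambda$ is measured and, inside a neighborhood $N$ of size $\leq\varepsilon$, sits in a local model: a Denjoy blowup of an irrational linear foliation of $\T^3$, punctured along a transverse link that may wind through the guts. We apply a $C^0$-small isotopy, independent of $\varepsilon$, so that $\F$ is smooth near a fixed closed transversal $\gamma$ passing through the minimal set. Choosing a thin tubular neighborhood $P$ of $\gamma$ on which $\F$ is the product foliation by disks gives the maypole green $\mathfrak G=(N,\gamma,P)$.

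Second, we fix an invariant transverse measure $\mu$ on $\Lambda$, normalized so that $\gamma$ has total measure $1$, and read off two periods $\alpha_1,\alpha_2$: the $\mu$-holonomy translations along two loops in leaves of $\Lambda$ generating the $\Z^2$ of leafwise "directions". By \zcref{thm:dirichlet}, we choose $p_1,p_2,q$ with $|\alpha_i-p_i/q|\leq 1/(qA^{1/2})$, and take $N_0\gg1/\varepsilon^2$ an even multiple of $q$ with $A$ so large that the mismatch is $\ll 1/N_0$ in $\mu$-measure. We subdivide $\gamma$ into $N_0$ subintervals of $\mu$-measure $1/N_0$. Next we build ribbons $R_i$, $S_i$ running from $\partial P$ back to $\partial P$ along leaves, approximating the two generating loops. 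Where necessary the ribbons swerve to avoid the guts, using that leaves are planes and the complement has product guts. Their endpoints are arranged so the $S_i$ are offset by half a step, exactly as in \zcref{fig:fence}. Since $\mu$ and Lebesgue measure differ, taking $N_0$ large keeps each ribbon of height $<\varepsilon$ in Lebesgue terms, so the thickened ribbons $R_i\times[0,\varepsilon']$ are much thicker than tall.

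Third, we replace the foliation on each thickened ribbon, keeping it fixed on faces away from $P$. The replacement is $\varepsilon'$-strongly ascending holonomy for one parity of $i$ and descending for the other, with mirrored choices for $\F^-_\delta$. Because each box is thick relative to its height, the new plane field is $\varepsilon$-close to $T\F$, and it agrees with $\F$ outside $N$ and near $\partial M$. The induced return map around $\partial P$ is a product $\overline g\,\overline f\,g f$ of the form in \zcref{lem:commutatortrick}. The robustness of that lemma, which relies on the $\varepsilon$-slack in hypotheses (b)--(c), absorbs the Dirichlet mismatch. Hence the translation number is at least $2$, which gives positive (resp.~negative) slope along $\partial P$ in the framing given by the disk coordinates. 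The $\delta$-closeness near $\partial P$ is obtained by spreading each twist over a long stretch of the ribbon before it reaches $\partial P$; the slope condition depends only on combinatorics, not on the size of the twist. The final claim about minimal sets follows because everything happens in $N$, which is disjoint from the other minimal sets.

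The main obstacle I expect is the second step: constructing embedded, pairwise disjoint ribbons in the nontrivial local model of $\Lambda$, swerving around the guts, while still controlling their $\mu$-heights precisely enough for hypotheses (b)--(c) of the commutator trick. I would handle this by working in the blown-down $\T^3$ model, where the ribbons are the explicit parallelograms of \zcref{constr:rationalapprox}, and then pulling them back through the semiconjugacy, resolving intersections with the guts by pushing along leaves.
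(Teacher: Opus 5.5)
Your architecture (a maypole, two half-step-offset families of ribbons, Dirichlet approximation, and \zcref{lem:commutatortrick}) is the paper's. However, the step that controls the heights of the ribbons fails, and that is exactly where the paper needs an additional idea.

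Since $\Lambda$ is \emph{exceptional}, your transversal $\gamma$ and every ribbon cross complementary gaps of $\Lambda$ (blown-up leaves) of positive Lebesgue thickness. Pull back a parallelogram of $\mu$-height $1/N_0$ through the semiconjugacy: the resulting ribbon has Lebesgue height at least the thickness of each gap it straddles, and this does not shrink as $N_0\to\infty$. You cannot dodge the gaps by placing them at partition points either. Because the $S_i$ are offset by half a step from the $R_i$, each gap lies strictly inside a twisting interval of one of the two families, so hypotheses (d)--(e) force some twist to push points across it. Hence ``taking $N_0$ large'' does not make the boxes thin relative to their width.

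The paper handles this in two steps:
(i) It passes, via \zcref{lem:smoothing_measured_laminations}, to a smooth branching model $\F_{branch}$ carried by an $\varepsilon$-flat branched surface and $\varepsilon/10$-tangentially close to $\F$. It chooses $\gamma$, $P$ and the ribbons in the blown-down linear model so that they detour around a handlebody containing the guts. In particular $\gamma$ depends on $\varepsilon$: a fixed closed transversal through an exceptional minimal set cannot lie in arbitrarily small neighbourhoods of $\Lambda$, so your $\varepsilon$-independent $\gamma$ is incompatible with a maypole green of size $\le\varepsilon$.
(ii) It replaces $\F$ by a topologically isotopic foliation $\F'$ which is a Denjoy blowup of $\F_{branch}$ of size $\sigma\ll 1/N$, preserving the product structure on $P$. $\F'$ is only tangentially $\varepsilon$-close to $\F$, not a small isotopy of it, which the statement allows. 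Only after this step are the ribbon heights governed by $N$ and $\sigma$.

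Your mechanism for $\delta$-closeness near $\partial P$ also does not work. The foliation of a thickened ribbon is unchanged on the two faces parallel to the ribbon. By simple connectivity of the box, the holonomy traced on each face touching $\partial P$ is therefore the full regluing map. That map must be an almost complete push across the ribbon's interval for \zcref{lem:commutatortrick} to apply, so it cannot be diluted by spreading the twist along the ribbon. Consequently the tilt of the plane field at $\partial P$ is at least of order (ribbon height)/(width of the footprint), and the width is bounded by the circumference of the fixed maypole.

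The only remedy is to make the ribbons thinner. The paper fixes the maypole green and then, for each $\delta$, takes $N$ larger and $\sigma$ smaller so that every ribbon has height less than $\delta$. This again depends on the gap-thinning step, since a gap of fixed positive thickness crossing $P$ would otherwise bound the tilt at $\partial P$ from below.

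A minor point: leaves of $\Lambda$ are planes, so there are no essential leafwise loops. The periods $\alpha_1,\alpha_2$ should be read off from leafwise arcs from $\gamma$ back to $\gamma$ that close up to the $x$- and $y$-cycles of the model $\T^3$.
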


\begin{proof}
The construction proceeds in three steps.

\begin{itemize}[leftmargin=*]
    \item \textit{Step 1: smoothing the minimal set.} Since $\Lambda$ is a $\T^3$-type planar minimal set, $\Lambda$ is a measured lamination. Using \zcref{lem:smoothing_measured_laminations}, $\F$ is semi-conjugate to a branching foliation $\F_{branch}$ with a smooth branching sublamination $\Lambda_{branch}$, and $\F_{branch}$ is $\varepsilon/10$ tangentially close to $\F$. Moreover, there exists a smooth branching lamination $\Lambda_{model}$ on $\T^3 \setminus N(L)$ for some link $L$ such that
    \begin{enumerate}
        \item After filling in $L$, $\Lambda_{model}$ blows down to a smooth irrational foliation $\Lambda_{lin}$ of $\T^3$,
        \item There is a smooth inclusion 
        $$u : \big(\T^3\setminus N(L), \Lambda_{model}\big) \hookrightarrow (M,\Lambda_{branch}).$$
    \end{enumerate}
    
    Since the guts of $\Lambda_{model}$ in $\T^3$ are balls and $L$ is a link, a neighborhood of $\guts(\Lambda_{model})\cup N(L)$ is homeomorphic to a handlebody $H \subset \T^3$ intersecting every leaf of $\Lambda_{model}$ in a collection of disks. In sum, we obtain a diagram
    $$\begin{tikzcd}[column sep=scriptsize]
    	  & &{(M, \mathcal F)} \\
    	{\big(\T^3 \setminus f^{-1}(H), \Lambda_{lin}\big)} & {(\T^3\setminus H, \Lambda_{model})} & {(M, \mathcal F_{branch})}
        \arrow["\pi", two heads, from=1-3, to=2-3]
    	\arrow["u"', hook, from=2-2, to=2-3]
        \arrow["\cong", "f"', from=2-1, to=2-2]
    \end{tikzcd}$$
    where 
    \begin{enumerate}
    \item $\pi$ is the blowdown from $\F$ to $\Lambda_{lin}$,
    \item $u$ is a smooth embedding,
    \item $f$ is a (smooth) diffeomorphism. 
    \end{enumerate}

    \item \textit{Step 2: finding the maypole and the ribbons.} We consider standard coordinates $(x,y,z)$ on $\T^3$. Without loss of generality, we may assume that $\Lambda_{lin}$ is transverse to $\partial_z$. Up to changing the diffeomorphism $f$, we may shrink the handlebody $f^{-1}(H)$ to lie in a small neighborhood of an embedded graph $\Gamma$. By an isotopy of $\Gamma$, we can arrange that it is in general position with respect to $\{0\}\times \{0\} \times S^1$ and the tori $\{x=0\}$ and $\{y=0\}$. We now define $\gamma_{lin} \subset \T^3 \setminus f^{-1}(H)$ to be $\{0\} \times \{0\} \times S^1$ and declare $P_{lin}$ to be a small closed tubular neighborhood of $\gamma_{lin}$ diffeomorphic to a solid torus transverse to $\Lambda_{lin}$. Transport $\gamma_{lin}$ and $P_{lin}$, from $(\T^3\setminus f^{-1}(H),\Lambda_{lin})$ to $(M,\F)$ by the composition $\pi^{-1} \circ u \circ f$ to obtain $\gamma$ and $P$.

    Then, smooth $\gamma$ and $\F$ in a neighborhood of $P$ by an arbitrarily $C^0$-small isotopy, so that there are smooth coordinates $P\cong D^2_{r,\theta}\times S^1_\phi$ in which the leaves of $\F$ are of the form $D^2\times \{\mathrm{pt}\}$ and $\gamma$ lies along $\{r=0\}$. Let $\F'$ be a new foliation on $M$ which is topologically isotopic to $\F$, fitting into the diagram $$\begin{tikzcd}[column sep=scriptsize]
    	  {(M, \mathcal F)} & {(M, \mathcal \F')}\\
    	    & {(M, \mathcal F_{branch})}
    	\arrow["\pi'", two heads, from=1-2, to=2-2]
        \arrow["\pi", two heads, from=1-1, to=2-2]
    \end{tikzcd}$$
    
    where $\pi'^{-1}$ is a blowup satisfying the following properties:
    \begin{enumerate}
        \item $\pi'^{-1}$ is a blowup of size $\sigma > 0$, where $\sigma$ will be chosen small enough later,
        \item $\pi'$ preserves $P$ along with its product structure; in particular, $\F$ and $\F'$ coincide in $P$,
        \item $\F'$ is $\varepsilon/10$-tangentially close to $\F_{branch}$, and therefore $\varepsilon$-tangentially close to $\F$.
    \end{enumerate}
    
    We are now ready to attach ribbons to the maypole $P_{lin}$. Let $\alpha_1$ and $\alpha_2$ be the slopes of $\Lambda_{lin}$, i.e., the numbers such that $\Lambda_{lin}$ is tangent to $\ker(dz + \alpha_1 \, dx + \alpha_2\,  dy)$. Using \zcref{thm:dirichlet}, we may choose rational numbers $p_1/N$ and $p_2/N$ so that
    \begin{align} \label{eq:rational_approx}
        \left|\alpha _{i}-{\frac {p_{i}}{N}}\right|=o(1/N),
    \end{align}
    and $N \gg 1/\varepsilon^2$.
    Now we attach ribbons $\{R^{lin}_i\} \cup \{S^{lin}_i\}$ of width $1/N$ in $(\T^3\setminus f^{-1}(H), \Lambda_{lin})$ as in \zcref{constr:rationalapprox}, except that whenever $\Gamma$ passes through the $\{x=0\}$ torus or the $\{y=0\}$ torus, the ribbons simply make a small detour around $\Gamma$ while remaining embedded and disjoint.

    Finally, we transport $\{R^{lin}_i\} \cup \{S^{lin}_i\}$ from $(\T^3\setminus f^{-1}(H),\Lambda_{lin})$ to $(M,\F)$ by the composition $\pi'^{-1} \circ u \circ f$ to obtain $\{R_i\} \cup \{S_i\}$. Since $\pi'^{-1}$ is a blowup of size $\sigma$, it modifies the ribbons by a $\sigma$-$C^0$-small isotopy.

    \item \textit{Step 3: perturb $\F'$ along the ribbons.} We construct a new foliation $\F^+$ on $M\setminus \interior P$ from $\mathcal F'$ as follows. First, we smooth $\F'$ along each of the ribbons, and perform the modification from \zcref{constr:rationalapprox} which cuts open along these ribbons and reglues with alternating strongly ascending or strongly descending holonomy. By \zcref{eq:rational_approx} and taking $\sigma \ll 1/N$ so that the blowup $\pi'^{-1}$ does not modify the ribbons too much, the ends of the ribbons line up closely enough on $\partial P$ that the holonomy of $\mathcal F^+$ around $\partial P$ satisfies the hypotheses of \zcref{lem:commutatortrick}. That lemma tells us that the holonomy of $\F^+$ around $\partial P$ is ascending, for the given choice of smooth coordinates.

    We now check that our perturbations are as small as claimed. We already know that $\F_{branch}$ is $\varepsilon$-close to $\F$. Then, the heights of the ribbons depend on the maps $u$, $f$, as well as the parameters $N$ and $\sigma$. However, we are free to choose $N$ and $\sigma$ after choosing $u$ and $f$. For any $\delta > 0$, taking $N$ sufficiently large and $\sigma$ sufficiently small ensures that the ribbons all have height less than $\delta$. Thus, we can arrange that $\F^+$ is tangentially $\delta$-close to $\F'$. In turn, $T\F'$ is $\varepsilon$-close to $T\F$ and equal to it near $\partial P$. Thus, $\F^+$ has the desired approximation property. The construction of $\F^-$ is similar. \qedhere
    \end{itemize}
\end{proof}

            \subsubsection{Opening windows on spiral minimal sets}

\begin{lem} \label{lem:spiral}
    Let $\G$ be a sharp, flat, or Reeb $C^0$-foliation on the annulus $A = S^1 \times [0,1]$. Let $p \in \mathrm{int} A$. For every $\varepsilon > 0$, there exists a $\sharp$ (resp.~$\flat$) modification of $\G$ near $p$ which is isotopic to $\G$ via an $\varepsilon$-$C^0$-small isotopy.
\end{lem}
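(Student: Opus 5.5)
The idea is to use the spiraling of leaves in $A$ to \emph{undo} the local twist that a $\sharp$ (resp.~$\flat$) modification introduces near $p$. Concretely, a $\sharp$ modification near $p$ changes the holonomy along leaves crossing a short transversal arc $c$ through $p$ by a diffeomorphism $h \geq \mathrm{id}$, supported in a tiny window. The modified foliation is isotopic to $\G$ precisely when this change of parallel transport can be absorbed by a graphical reparametrization elsewhere. Inside a $\sharp$, $\flat$ or Reeb annulus, every non-compact leaf accumulates on a boundary circle, so the required correction can be pushed along the leaf toward the boundary, where it shrinks.

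\textbf{Step 1: reduce to a one-dimensional problem.} Choose a closed transversal arc $c$ through $p$, running from one side of the leaf through $p$ to the other. Since $p$ lies in $\interior A$, its leaf is noncompact: in a $\sharp$, $\flat$ or Reeb annulus all interior leaves are lines spiraling onto $\partial A$. After an arbitrarily small $C^0$-isotopy (using \cite[Section 3]{B16}, as in \zcref{lem:transversholo}), make $\G$ smooth near $c$. Then extend to a smooth \emph{foliated strip} $S \cong [0,\infty) \times c$ consisting of forward leaf segments starting at $c$. The strip is embedded for a while, and it converges to the compact boundary leaf $\partial_1 A$ toward which these leaves spiral.

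\textbf{Step 2: telescope the twist.} Let $h$ be the holonomy perturbation of the modification, supported near $p$ and $C^0$-close to $\mathrm{id}$. Pick consecutive transversal arcs $c = c_0, c_1, c_2, \dots$ along the strip, where $c_k$ is the $k$-th return of the leaves through $c$ to a fixed transversal $\tau$ crossing $\partial_1 A$. These arcs are disjoint and nested, and their lengths shrink to $0$. The $\sharp$ modification inserts $h$ at $c_0$. To compensate, insert $h^{-1}$ (conjugated by the return map) near $c_1$. This leaves the total transport from the left of $c_0$ to the right of $c_1$ unchanged, so the combined change is a graphical isotopy supported between $c_0$ and $c_1$.

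The compensating $h^{-1}$ near $c_1$ is a $\flat$-type twist, which we do not want. Instead, push it off to infinity by repeating the construction: replace it by a twist near $c_2$, and so on. The resulting infinite composition is an isotopy whose support near $c_k$ has diameter tending to $0$, because the arcs $c_k$ shrink as they converge to $\partial_1 A$. The displacement is also controlled: it is bounded by the $C^0$ size of $h$ and by the transverse length of $c_k$, both of which go to $0$.

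\textbf{Step 3: check continuity and transversality.} The main obstacle is showing that this infinite telescope converges to a genuine $C^0$-isotopy that is continuous up to the boundary leaf $\partial_1 A$ and fixes $\partial A$. The displacements near $c_k$ tend to zero, so the limit isotopy extends continuously by the identity on $\partial_1 A$. The tangent planes stay $\varepsilon$-close to $T\G$ because each local twist can be taken as flat as desired.

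For the $\sharp$/$\flat$ choice itself, the modification near $p$ (and, by \zcref{rem:modif}, anywhere along the leaf of $p$) is placed on the strip at $c_0$. The isotopy then moves points of $c_0$ by $h$ and is compensated farther along the leaf, so only the isotopy class matters. This works uniformly in all three cases (sharp, flat, Reeb), since each case only uses that the leaf of $p$ spirals onto some boundary circle in at least one direction.
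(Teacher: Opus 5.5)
Your argument is correct in outline and rests on the same mechanism as the paper: the leaves exiting the twist spiral onto a boundary circle and thin out, so the change of transport can be absorbed at infinity. The implementation differs.

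The paper first applies \zcref{lem:transversholo} (after a small isotopy) to get a smooth closed transversal $\gamma$ in a thin collar of the boundary circle onto which the leaves spiral. It then uses \zcref{rem:modif} to slide the twist from $p$ along its leaf to a point $q$ in the region $W$ cut off by $\gamma$. In $W$, every leaf of $\G$ and of the modified foliation $\widehat{\G}$ enters through $\gamma$ and never leaves. Both are therefore given by $C^1$ graphs $g_x$, $\widehat{g}_x$ over $[x,\infty)$ in the universal cover of the collar, indexed by the same entry point $\gamma(x)$. A single linear interpolation $\tau g_x+(1-\tau)\widehat{g}_x$ is then the isotopy. The ordering of leaves, continuity of tangent planes, and continuity at the boundary circle all come for free from convex combinations of $C^1$ graphs that tend to that circle.

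You instead leave the twist at $p$ and push a compensating twist to infinity through an infinite telescope along the forward strip. This needs no closed transversal and no relocation, only that the forward ends of the leaves through $c$ converge to $\partial_1 A$. The cost is bookkeeping that the paper's chart gives for free:
\begin{itemize}
\item Each intermediate twist near $c_k$, and each interpolation on the rectangle between $c_k$ and $c_{k+2}$, must be done in graphical charts so that every intermediate object is a genuine $C^0$-foliation. The strip is only as regular as $\G$; it is not a ``smooth foliated strip''.
\item You need the forward strip from a short $c$ to be embedded for all time, not just ``for a while''. This is true, since the strip's leaves reach a trapping collar of $\partial_1 A$ after bounded length.
\item The arcs $c_k$ are disjoint and converge to $\tau\cap\partial_1 A$; they are not nested.
\end{itemize}

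Finally, your telescope collapses to a single step. Its limit is the homeomorphism acting by $h$ in strip coordinates on the whole forward strip beyond $c_0$ and by the identity elsewhere. It is continuous at $\partial_1 A$ because the strip thins out, and the straight-line path from the identity to it is essentially the paper's one-shot interpolation.
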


\begin{proof}
Without loss of generality, we may assume that $\G$ has contracting holonomy around $S^1 \times \{0\} \subset A$. We proceed in 3 steps.
\begin{itemize}[leftmargin=*]
    \item \textit{Step 1: preparation of $\G$.} 
    Using \zcref{lem:transversholo}, we may perform an arbitrarily $C^0$-small isotopy near $S^1 \times \{0\}$ such that $\G$ admits a (smooth) transversal $\gamma$ in a neighborhood $U_\delta \coloneqq S^1 \times [0, \delta)$ of that boundary curve for some small $\delta> 0$. By construction, $\gamma$ is horizontal, i.e., transverse to the vertical intervals $\{s\} \times [0,\delta)$, $s \in S^1$. We may assume that $p \notin U_\delta$, so that $\G$ is not modified near $p$. By slight abuse of notation, we still denote by $\G$ the modified foliation.

    \item \textit{Step 2: $\flat/\sharp$-modification.} Since $p$ is connected to a point $q \in U_\delta \setminus V$ below $\gamma$ along a leaf of $\G$, we may as well perform the $\flat/\sharp$ modification near $q$; see \zcref{rem:modif}. We can further arrange that the resulting foliation $\widehat{\G}$ is still very nice in $U_\delta$, by making this modification extremely small.
    
    \item \textit{Step 3: correction.} We now claim that for a sufficiently small choice of $\delta > 0$ and sufficiently small choices of $C^0$-isotopies and $\flat/\sharp$ modifications as above, $\widehat{\G}$ is isotopic to $\G$ via a $C^0$-small, graphical isotopy.

    By construction, the subset of $A$ where $\G$ and $\widehat{\G}$ differ is contained in the connected component $W$ of $A \setminus \gamma$ containing $S^1 \times \{0\}$, and both of these foliations are graphical in the sub-annulus $A_\delta \coloneqq S^1 \times [0,\delta)$. Since $\gamma \subset A_\delta$ is transverse to both $\G$ and $\widehat{\G}$, we can use it to parametrize the leaves of these foliations in $A_\delta$, in the following sense. Let $\pi : \widetilde{A_\delta} = \R \times [0,\delta) \rightarrow A_\delta$ denote the universal cover of $A_\delta$, and $\widetilde{\gamma} \subset \widetilde{A_\delta}$ the universal cover of $\gamma$. Since $\gamma$ is horizontal, we parametrize it as a graph $s \mapsto (s,f(s)) \in S^1 \times [0,\delta)$, and identify it with $S^1_s$. We lift it to a parametrization $x \mapsto \big(x, \widetilde{f}(x)\big)$ of $\widetilde{\gamma}$. Then, there exist two continuous families of $C^1$ functions $g_x : [x, \infty) \rightarrow (0,\delta)$ and $\widehat{g}_x : [x, \infty) \rightarrow (0,\delta)$, $x \in \R$, such that $g_x(x) = \widehat{g}_x(x) = \widetilde{f}(x)$, and the leaf of $\G$ (resp.~$\widehat{\G}$) passing through $\gamma(s) = (s, f(s))$, $s \in S^1$, is the image under $\pi$ of the graph of $g_x$ (resp.~$\widehat{g}_x$), where $\pi(x) = s$. By construction, $g_x$ and $\widehat{g}_x$ coincide near $x$, and satisfy the periodicity condition
    $$g_{x_0+1}(x) = g_{x_0}(x-1), \quad \widehat{g}_{x_0+1}(x) = \widehat{g}_{x_0}(x-1),$$
    for all $x_0 \in \R$ and $x \geq x_0+1$.

    We then define 
    $$g^\tau_x \coloneqq \tau g_x + (1-\tau) \widehat{g}_x$$
    for all $\tau \in [0,1]$ and $x \in \R$. The graphs of these functions define a family of $C^0$-foliations interpolating between $\G$ and $\widehat{\G}$ on $\overline{W}$, after adding the leaf $S^1 \times \{0\}$, and can be extended to a family $\G_\tau$ of $C^0$-foliations on $A$ since this family is constant and coincides with $\G$ near $\gamma$. By all the previous choices and the properties of linear interpolations, $\G_\tau$ is a $C^0$-small graphical isotopy between $\G$ and $\widehat{\G}$, as desired. \qedhere
    \end{itemize}
\end{proof}

\begin{rem}
    In the language of \zcref{sec:fencesandwindows}, the previous lemma allows us to open a square window near the point $p$ and apply a small $\sharp$ of $\flat$ modification there, while only changing the foliation by an arbitrarily small $C^0$-isotopy.
\end{rem}

        \subsubsection{Filling the maypoles} \label{sec:tori}

\begin{lem}\label{lem:maypolefilling}
    Let $P \coloneqq D^2 \times S^1$ denote the solid torus endowed with its natural Euclidean metric. We write $H \coloneqq T D^2 \times \{0\}$ for the horizontal plane field on $P$.
    
    There exists a constant $C > 0$ such that the following holds. Let $\varepsilon > 0$ and let $\xi_\partial$ be a germ of contact structure near $\partial P \subset P$ such that 
    \begin{itemize}
        \item $\xi_\partial$ is $\varepsilon$-close to $H$,
        \item The slope of $\xi_\partial$ along $\partial P$ is positive.
    \end{itemize}
Then, there exists a contact structure $\xi$ on $P$ such that 
    \begin{itemize}
        \item $\xi$ coincides with $\xi_\partial$ near $\partial P$,
        \item $\xi$ is $C\varepsilon$-close to $H$.
    \end{itemize}
\end{lem}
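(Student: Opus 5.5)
We plan to fill $P$ by the standard radial twisting model, after first putting $\xi_\partial$ into a normal form near $\partial P$ by an isotopy that stays uniformly close to $H$.

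We use coordinates $(r,\theta,\phi)$ on $P = D^2\times S^1$, with $r\in[0,1]$, and denote the slope along $\partial P$ as in the paper (so $H$ has slope $0$). The steps are as follows.

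\emph{Normal form.} We apply the normal form from \zcref{sec:contact} near $T=\partial P$. Because $\xi_\partial$ is $\varepsilon$-close to $H$, it is transverse to $\partial_r$ and to $\partial_\phi$-type directions. We therefore choose the vector field $X$ in that normal form to be tangent to $\xi_\partial$ and $O(\varepsilon)$-close to $-\partial_r$. Its flow gives a collar $\T^2\times(1-\eta,1]$ in which $\xi_\partial=\ker(\lambda_r)$, with $\lambda_r$ a family of $1$-forms on $\T^2$ that are $O(\varepsilon)$-close to $d\phi$. The collar coordinates are $O(\varepsilon)$-close to the Euclidean ones, since $X$ is.

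\emph{Interpolation to a linear model.} On a slightly smaller torus $r=1-\eta/2$, the characteristic foliation has positive coarse slope. By \zcref{lem:stablypositive}, in its smooth form, together with the straightening step used in \zcref{lem:fillingcontact}, we can isotope inward through characteristic foliations of positive slope until the characteristic foliation is linear with some small positive slope $s_0$. The isotopy is supported in the collar, so by \zcref{lem:isotopy_extension_contact} its extension changes the plane field only by $O(\varepsilon)$. In practice we would write $\lambda_r=d\phi+f(r)\,d\theta+(\text{small error})$ and damp the error terms with a cutoff in $r$, while keeping $\partial_r\lambda_r\wedge\lambda_r>0$. This is possible because positivity of slope along the whole collar is an open condition preserved by the convex combinations, exactly as in the twist half of \zcref{lem:trim_twist}.

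\emph{Radial model.} Inside the linear torus we fill with $\ker\big(d\phi + g(r)\,d\theta\big)$, where $g(0)=0$, $g'>0$, and $g(r)\sim r^2$ near $0$, and $g$ matches the linear slope at the interface. This structure is positive contact and horizontal. Its plane field deviates from $H$ by at most $\sup|g|/r$, and this quantity is $\le C\varepsilon$ as long as the boundary slope is $O(\varepsilon)$. The slope bound holds because $\xi_\partial$ is $\varepsilon$-close to $H$, so its slope is at most $C'\varepsilon$ in absolute value.

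\emph{Main obstacle.} The hard step is the middle one. The slope of $\xi_\partial$ may be positive but tiny, while the characteristic foliation is only nearly linear, and we need the interpolation to remain contact with a constant $C$ independent of $\varepsilon$ and of how small the slope is. First we will try shrinking the collar and trimming, via \zcref{lem:trim_twist}, to increase the slope slightly before straightening. We will then use monotonicity of $g$ to absorb the error terms, checking that every derivative involved is controlled by $\varepsilon$ times a universal constant.
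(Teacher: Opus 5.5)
Your interpolation step has a genuine gap, and the proposed fix cannot work. Take a collar $\T^2\times(1-\eta,1]$ whose $r$-lines are tangent to $\xi$ and whose coordinates restrict to those of $\partial P$ at $r=1$, and write $\xi=\ker\lambda_r$. The contact condition $\partial_r\lambda_r\wedge\lambda_r>0$ then says that the characteristic line fields rotate \emph{pointwise} monotonically in $r$. Since $\lambda_r$ stays close to $d\phi$, they cannot wrap around, so going inward the slope decreases at every point of $\T^2$.

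The hypothesis, however, only controls the \emph{coarse} slope. The characteristic foliation $\mathcal{G}_+$ of $\xi_\partial$ has ascending holonomy around the meridian, but its pointwise slope can be negative on a large region. For example, take $\mathcal{G}_+$ tangent to $\partial_x+\big(\delta+\tfrac{\varepsilon}{2}\sin(2\pi x)\big)\partial_t$ with $0<\delta\ll\varepsilon$. At such points the slope stays negative along the whole inward $r$-line. So no contact interpolation in your collar coordinates can end at $\ker(d\phi+s_0\,d\theta)$ with $s_0>0$.

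Damping the error with a cutoff runs into exactly this obstruction. The cutoff contributes a sign-indefinite term of size $|\chi'|\,\varepsilon$ to $\partial_r\lambda_r\wedge\lambda_r$. Convex combinations with a linear form preserve the contact condition only under pointwise domination, as in the twist half of \zcref{lem:trim_twist}, not under positivity of the coarse slope. Trimming to raise the slope is not available either: inside the collar the slope can only decrease inward, and trimming at $\partial P$ changes $\xi$ there, which the statement forbids.

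So you must change coordinates. You need a foliation of $\partial P$ by meridian circles positively transverse to $\mathcal{G}_+$, equivalently a diffeomorphism after which $\mathcal{G}_+$ has pointwise positive slope. The entire content of the lemma is that these circles can be taken $O(\varepsilon)$-close to horizontal with a constant independent of $\xi_\partial$. The lemmas you cite do not give this:
\begin{itemize}
\item \zcref{lem:straightening} and \zcref{lem:stablypositive} provide only arbitrarily $C^0$-small isotopies, with no derivative control.
\item An isotopy of $\partial\xi_\partial$ extended by \zcref{lem:isotopy_extension_contact} changes $\xi$ at $\partial P$, so it must be conjugated back, and the constants in that lemma depend on $\xi$.
\item A displacement of order $\varepsilon$ spread over a germ collar of uncontrolled width $\eta$ tilts the planes by order $\varepsilon/\eta$.
\end{itemize}

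The missing idea, which is how the paper argues, is a sandwich. Replace the generator $\partial_x+f\,\partial_t$ of $\mathcal{G}_+$ (with $|f|<\varepsilon$) by $\partial_x+(f-2\varepsilon)\,\partial_t$; this spans a foliation $\mathcal{G}_-$ with pointwise negative slope, hence descending holonomy. By continuity of the rotation number, some intermediate field $\partial_x+(f-\delta)\,\partial_t$ with $0<\delta<2\varepsilon$ has a closed orbit $\gamma$ of slope $0$. After cutting along $\gamma$, one fills $\partial P$ with meridian circles positively transverse to $\mathcal{G}_+$ and negatively transverse to $\mathcal{G}_-$. These circles lie in the cone between two foliations that are $O(\varepsilon)$-close to horizontal, so they are uniformly $O(\varepsilon)$-close to horizontal. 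One then caps them by disks $O(\varepsilon)$-close to $H$ and twists radially along those disks. Your radial model is this last step, but it has to be performed in the disks adapted to this circle foliation, not in the Euclidean ones.
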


\begin{proof}
    We sketch the main steps of the proof. We will write ``$\lesssim \varepsilon$'' to mean ``$\leq C \varepsilon$'' for some constant $C$ independent of $\varepsilon$ and $\xi_\partial$.

    Let $\varepsilon > 0$ and $\xi_\partial$ near $\partial P$ satisfying the hypotheses of the lemma. We will construct a foliation $\mathcal{D}$ on $P$ by horizontal disks such that the boundary of each disk is transverse to $\xi_\partial$, and the distance between $T \mathcal{D}$ and $H$ is $\lesssim \varepsilon$. Then, the desired extension of $\xi_\partial$ is obtained by twisting along the disks in $\mathcal{D}$ radially.
    
    Let $\mathcal G_+$ denote the characteristic foliation of $\xi_\partial$ along $\partial P$. It is horizontal and has positive slope by assumption. We construct a foliation by horizontal circles $\mathcal{D}_\partial$ on $\partial P$ which are transverse to $\mathcal{G}_+$ and stay at distance $\lesssim \varepsilon$ from the horizontal line field $H_\partial \coloneqq H \cap \partial P$ on $\partial P$.

    Let $g : S^1 \rightarrow S^1$ denote the holonomy of $\mathcal{G}$ around $\partial P$, where we identified $S^1 \cong \{1\} \times S^1 \subset \partial P$. Then by assumption, $g > \mathrm{id}$ (this inequality makes sense since $g$ is very close to $\mathrm{id}$), and $\vert g(t) - t \vert < \varepsilon$ for all $t \in S^1$.

    Let $X$ be a vector field spanning $T \mathcal G_+$ of the form
    $$X = \partial_x + f(x,t) \, \partial t,$$
    where we identify $\partial P \cong S^1_x \times S^1_t$. Note that $\vert f(x,t)\vert < \varepsilon$ by assumption. For $\delta \in [-2\varepsilon, 0]$, we consider the vector field $X_\delta \coloneqq X - \delta \, \partial_t$ which is transverse to $X$. Then $X_{-2\varepsilon}$ is tangent to a horizontal foliation on $\partial P$ with descending holonomy, denoted by $\mathcal{G}_-$, and by a standard continuity argument, there exists $\delta \in (-2\varepsilon, 0)$ such that $X_\delta$ has a closed orbit of slope $0$; denote this orbit by $\gamma$. Note that $\gamma$ is transverse to both $\mathcal G_+$ and $\mathcal G_-$ (with opposite signs), and both $\gamma$ and $\mathcal G_-$ are at a distance $\lesssim \varepsilon$ from $H_\partial$. We now construct a horizontal foliation by circles on $\partial P$ containing $\gamma$ and which is transverse to $\mathcal G_+$ and $\mathcal G_-$, with appropriate signs. The latter condition also ensures that it will stay sufficiently close to $H_\partial$.

    We now cut $\partial P$ along $\gamma$ to obtain a cylinder $C$ bounded by two copies of $\gamma$, $\gamma_-$ and $\gamma_+$. Since $\mathcal G_+$ is transverse to both $\gamma_-$ and $\gamma_+$ and has positive slope, we can find a diffeomorphism $\varphi : C \rightarrow S^1 \times [0,1]$ such that
    $$\varphi (\gamma_-) = S^1 \times \{0\}, \qquad \varphi (\gamma_+) = S^1 \times \{1\},$$
    and $\varphi_* \mathcal{G}_+$ is the vertical foliation by intervals $\{u\} \times [0,1]$ on $S_u^1 \times [0,1]_v$. Setting $\widetilde{\mathcal{G}}_- \coloneqq \varphi_* \mathcal{G}_-$, then $\widetilde{\mathcal{G}}_- $ is tangent to a vector field of the form 
    $$\widetilde{X}(u,v) = \partial_u + h(u,v) \, \partial_v,$$
    where $h(u, 0) > 0$ and $h(u, 1) > 0$ for all $u \in S^1$. Moreover, every flow line of $\widetilde{X}$ starting on $S^1 \times \{0\}$ reaches $S^1 \times \{1\}$. Therefore, there exists a ``graphical'' diffeomorphism $\psi : S^1 \times [0,1] \rightarrow S^1 \times [0,1]$ preserving the vertical foliation by intervals, and sending $\widetilde{\mathcal{G}}_-$ to a foliation which is transverse to both the vertical foliation by intervals and the horizontal foliation by circles. Pulling back the latter foliation by circles along $\psi$ and then along $\varphi$ yield a foliation by circles on $C$ extending $\gamma$, satisfying the desired transversality conditions. It is then easy to extend it to a smooth foliation by circles on $\partial P$, which is transverse to $\mathcal{G}_+$ and at a distance $\lesssim \varepsilon$ from $H_\partial$. We denote it by $\mathcal{D}_\partial$.

    Now that $\mathcal{D}_\partial$ is constructed, it is easy to extend it to a foliation by disks $\mathcal{D}$ on $P$ with the desired properties, using some cutoff function. Finally, $\xi_\partial$ can be extended by choosing a ``radial'' vector field tangent to $\mathcal{D}$ and tangent to $\xi_\partial$ near $\partial P$ and twisting towards the center of each disk. The strategy is fairly standard and the details are left to the reader.
\end{proof}

            \subsubsection{Proof of  \texorpdfstring{\zcref{thm:fineET}}{thm:fineET}}

Let $\F$ be a cooriented $C^0$-foliation on $M$ as in the statement of the theorem. We focus on constructing a positive contact approximation with the desired properties along $\partial M$. We will modify the foliation in many sequential steps; at each step, the induced modification can be made arbitrarily $C^0$-small, independently of the previous steps. We will carefully keep track of how the boundary foliation is modified in the process.

\paragraph{Making leaves isolated.}

First, we make (nonplanar) compact leaves isolated using Lemma \zcref{lem:finiteleaves}. The boundary foliation is modified by blowdowns and $\natural$ blowups supported away from the boundaries of compact planar leaves and $\T^2$-type planar minimal sets.

\paragraph{Creating holonomy.}

Then, we create holonomy near nonperipheral minimal sets using \zcref{lem:enoughholo2}. The boundary foliation is modified by only $\natural$ and $\sharp$ blowups supported away from the boundaries of compact planar leaves and $\T^2$-type planar minimal sets, but could accumulate there. We denote the resulting foliation by $\F_+$.

\paragraph{Drilling maypoles and twisting the foliation.}

Now, we drill maypoles through the $\T^3$-type planar minimal sets as in \zcref{prop:maypoletwist} to obtain a manifold with more boundary components $\check{M} \subset M$ and a foliation $\check{\F}_+$ on $\check{M}$ which is arbitrarily $C^0$-close to $\F_+$ on $\check{M}$, whose minimal sets disjoint from the maypoles coincide with minimal sets of $\F_+$, and which coincides with $\F_+$ near those. Moreover, each component $P$ of $\partial_\parallel \check{M} \coloneqq \partial \check{M} \setminus \partial M$ comes with smooth coordinates $P \cong S^1_x \times S^1_z$ in which $\partial \check{\F}_+$ is $\varepsilon$-close to $\partial_x$, for an arbitrarily small $\varepsilon > 0$, and $\check{\F}_+$ has \emph{positive} slope along $P$ in those coordinates. After some arbitrarily $C^0$-small isotopy of $\check{\F}_+$ near $P$, we may further arrange that its slope is \emph{stably} positive by \zcref{lem:stablypositive}. Near the boundary components of $\partial \check{M}$ coming from $\partial M$, $\check{\F}_+$ and $\F_+$ coincide.

By construction, every point in $\check{M}$ can be joined to either $\partial_\parallel \check{M}$ or to an arbitrarily small neighborhood of a minimal set of $\F_+$ which is not a $\T^3$-type planar minimal set. 

\paragraph{Drilling boundary fences and opening windows.}

We now create annular boundary fences and windows to apply \zcref{prop:betterfence}. For the fences, we first construct \emph{attracting annular fences} for the nonperipheral minimal sets, which admit curves with infinitesimally contracting holonomy by construction, exactly as in~\cite[Section 4]{B16}. These annular fences are embedded annuli whose core are curves with infinitesimally attracting holonomy, and whose top and bottom boundaries are positively transverse to the foliation. Note that we can make these fences arbitrarily thin in the $z$ direction. After some arbitrarily small $C^0$-isotopies, we can arrange that $\F_+$ is smooth near boundaries of the fences, and we can remove neighborhoods of the fences to produce boundary annular fences as in \zcref{def:manfence}, so that $\F_+$ is adapted to the boundary fences as in \zcref{def:adapted}. For the windows, we choose points on $\partial M \subset \partial \check M$ for each spiral minimal set, which lie on the boundary of leaves in the interior of a $\sharp$, $\flat$, or Reeb annulus, and we create windows near them by some arbitrarily small $C^0$-isotopies of $\F_+$ making it smooth near those points. We do the same near each chosen point in the boundaries of the compact planar leaves and $\T^2$-type planar minimal sets. We treat the components of $\partial_\parallel \check M$, corresponding to the maypoles, as toroidal windows. 

\paragraph{Improving boundary foliations and contact approximation.}

We have now constructed a manifold with fences and windows $M_\ssq$ from $\check M$, such that the foliation $\F_\ssq$ induced by (the small modifications of) $\F_+$ is adapted to the boundary of $M_\ssq$ and is moreover $\partial_\ssq$-transitive. Applying \zcref{prop:betterfence}, we approximate $\F_\ssq$ by a $C^0$-foliation $\widetilde{\F}_\ssq$ whose boundary foliations have stably positive slopes along the components corresponding to the maypoles, and are positively transverse to foliations which are isotopic to arbitrarily small $\flat$-modifications of $\partial \F_\ssq$ in the windows. For such a boundary component $C$, which is a boundary component of $M$, we denote by $\G_C$ the corresponding foliation on $C$ transverse to $\partial \F_\ssq$. By \zcref{lem:spiral}, we can ignore the $\flat$-modifications occurring at the boundaries of spiral minimal sets. Then, treating the boundary components containing square windows as toroidal windows, we can apply \zcref{prop:contapprox} to obtain a positive contact structure $\xi_\ssq$ on $M_\ssq$ which is arbitrarily $C^0$-close to $\widetilde{\F}_\ssq$, which has positive slopes along the components of $\partial M_\ssq$ corresponding to maypoles, and which induces characteristic foliations positively transverse to $\G_C$ along the other boundary components which are not fences. 

\paragraph{Filling the boundary fences and maypoles.}

By construction, we can assume that the boundary annular fences are arbitrarily thin, and that $\xi_\ssq$ is arbitrarily close to the standard horizontal foliation in the chosen smooth coordinates near the maypole boundary components; we can therefore apply~\cite[Lemma 5.6]{B16} and  \zcref{lem:maypolefilling}, respectively, to obtain a contact structure $\xi$ on $M$ which is $C^0$-close to $\F$, and whose boundary characteristic foliation is positively transverse to $\G_C$ along each component $C$ of $\partial M$. By construction, $\G_C$ is obtained from $\F$ by only applying the listed modifications in the statement of the theorem, and the proof is finally complete. \qed

    \section{Converse to Eliashberg--Thurston with boundary}

Let us recall some notions from~\cite{M24}, which immediately adapt to the case of manifolds with boundaries.

Let $\xi_-$ and $\xi_+$ be cooriented contact structures on $M$ which are negative and positive, respectively, and transverse to $\partial M$. We associate to the pair $(\xi_-, \xi_+)$ a smooth vector field $X$ called the \textbf{axis} of the pair as follows. First, we choose contact forms $\alpha_\pm$ for $\xi_\pm$ satisfying the balancing condition
$$\alpha_+ \wedge d\alpha_+ = - \alpha_- \wedge d\alpha_- = \mathrm{dvol} > 0,$$
for some auxiliary volume form $\mathrm{dvol}$, and we define $X$ by
$$\iota_X \mathrm{dvol} = \alpha_- \wedge \alpha_+.$$
While this a priori depends on the choice of $\mathrm{dvol}$, an immediate computation shows that any other volume form yields the same vector field $X$. Moreover, it satisfies:
\begin{itemize}
    \item $\{X = 0\} = \{ \xi_- = \xi_+\} \eqqcolon \Delta$,
    \item Away from $\Delta$, $\xi_- \pitchfork \xi_+ = \langle X \rangle$.
\end{itemize}
Recall from \zcref{sec:contact} that the contact pair $(\xi_-, \xi_+)$ is \textbf{positive} if along $\Delta$, $\xi_-$ and $\xi_+$ agree with the \emph{same} orientation (hence \emph{opposite} coorientations). Equivalently, it means that there exists a smooth flow positively transverse to both $\xi_\pm$.

\medskip

Let $\Phi$ be a nonsingular nonwandering flow on $M$ tangent to $\partial M$. The main result of this section is

\begin{thm}[Converse of Eliashberg--Thurston with boundary]\label{thm:ETconverse}
    Let $(\xi_-, \xi_+)$ be a positive contact pair on $M$ transverse to $\partial M$ and positively transverse to $\Phi$. Assume that the axis $X$ of $(\xi_-, \xi_+)$ points inward along $M$. Furthermore, let $\mathcal{G}$ be a smooth foliation on $\partial M$ which is transverse to $\Phi \vert_{\partial M}$ and which is dominated by ${\xi_-}\vert_{\partial M}$ and ${\xi_+}\vert_{ \partial M}$ along $\partial M$.

    Then, there exists a taut $C^0$-foliation $\mathcal{F}$ on $M$ transverse to $\Phi$ and to $\partial M$, such that $\partial \mathcal{F}$ is a blowup of $\mathcal{G}$.
\end{thm}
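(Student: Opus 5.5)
The plan is to reduce to the closed case of \cite{M24} by the ``elementary reflection trick'' announced at the start of this part. The main work is to prepare $(\xi_-,\xi_+)$, $\Phi$ and $\mathcal{G}$ near $\partial M$ so that everything can be doubled across $\partial M$. The first step is to put the pair in a normal form near each boundary torus $T$. In a collar $T\times(-1,0]_r$, use \zcref{lem:isotopy_extension_contact} and the trim/twist \zcref{lem:trim_twist} to arrange three things. First, $\Phi$ is close to a product flow there. Second, both $\xi_\pm$ contain $\partial_r$. Third, the characteristic foliations $\partial\xi_\pm$ on the slices $T\times\{r\}$ rotate monotonically: $\xi_+$ decreases slope and $\xi_-$ increases slope as we move toward $\partial M$. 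We do this by twisting $\xi_+$ down and $\xi_-$ up, using that $\mathcal{G}$ is dominated by both. The aim is that on a thin collar both slices sandwich $\mathcal{G}$ with $\mathcal{G}\prec\partial\xi_+$ and $\partial\xi_-\prec\mathcal{G}$, and the axis $X$ is inward pointing and transverse to $\mathcal{G}$'s leaves times $\partial_r$. Transversality to $\Phi$ should be preserved by convexity, as in the proof of \zcref{lem:trim_twist}.

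The second step is the reflection. Form the double $DM=M\cup_{\partial M}(-M)$. Because orientation reverses on $-M$, a positive contact structure on $M$ reflects to a negative one on $-M$ and vice versa. So on $DM$ I take $\widehat\xi_+=\xi_+\cup r^*\xi_-$ and $\widehat\xi_-=\xi_-\cup r^*\xi_+$, where $r$ is the reflection, with coorientations chosen so the pair stays positive. The normal form from step one, with both plane fields containing $\partial_r$ and having the correct monotone rotation, must be arranged so that the two halves glue smoothly across $\partial M$. I expect to insert a short interpolating collar $T\times[-\eta,\eta]$ in which each plane field contains $\partial_r$ and its slope rotates monotonically from $\partial\xi_+$ to the reflected slope through $\mathcal{G}$-compatible values. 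The reflected flow $\Phi\cup r_*\Phi$ is not transverse across the seam, since $\Phi$ is tangent to $\partial M$. So I replace it there by $\Phi$ plus a small component along $\partial_r$ following the axis, which crosses from $M$ into $-M$. This gives a flow $\widehat\Phi$ transverse to both structures, and it is still nonwandering since orbits can re-enter via the reflected half. Consequently the doubled pair is taut.

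The third step is to apply the closed-manifold theorem of \cite{M24} to obtain a taut $C^0$-foliation $\widehat{\F}$ on $DM$ transverse to $\widehat\Phi$. I need its construction to be local enough that in the seam collar, where the pair is an $r$-invariant sandwich of $\mathcal{G}\times\partial_r$, the foliation can be taken to contain $\partial_r$ and to meet the middle slice in a blowup of $\mathcal{G}$. The hope is to use $\mathcal{G}\times\R_r$ itself as a compatible foliation there, since the axis construction in \cite{M24} builds $\F$ from leaves squeezed between $\xi_-$ and $\xi_+$. Restricting to $M$ and reparametrizing the collar then gives $\F$ transverse to $\partial M$ and to $\Phi$, with $\partial\F$ semi-conjugate to $\mathcal{G}$. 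Tautness follows from the nonwandering lemma.

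The main obstacle is controlling $\partial\F$: \cite{M24} produces a foliation only up to blowups, via a branched-surface/limit construction, with no a priori relation to $\mathcal{G}$. I would try first to show that in any region where $\xi_\pm$ are both $\partial_r$-invariant and sandwich a smooth foliation $\mathcal{G}\times\R$, every leaf produced by the construction is trapped between the $\xi_-$- and $\xi_+$-Legendrian sheets. Such leaves are then forced to be horizontal lifts of leaves of $\mathcal{G}$ or Denjoy blowups of them. This trapping argument, and checking that the doubling does not create wandering behaviour near the seam, are where I expect the real work.
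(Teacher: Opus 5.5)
You have the same architecture as the paper: a normal form in a collar, doubling with the roles of $\xi_\pm$ exchanged on $-M$, applying \cite{M24} on the closed double, and restricting. But there is a genuine gap at the step you defer as ``the real work'', which is the whole content of the boundary control, and your proposed mechanism cannot supply it.

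Knowing that leaves are squeezed between $\xi_-$ and $\xi_+$ near the seam does not force them to be $\partial_r$-lifts of leaves of $\mathcal{G}$. The cone between the two contact planes contains many integrable plane fields, for instance $\langle\partial_r\rangle\oplus T\mathcal{G}'$ for any $\mathcal{G}'$ sandwiched between $\partial\xi_-$ and $\partial\xi_+$. The construction of \cite{M24} is global, so you cannot simply declare $\mathcal{G}\times\R$ to be the foliation there. (Also, a contact structure containing $\partial_r$ is never $r$-invariant, so the ``$r$-invariant sandwich'' you want in the seam collar does not exist.)

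The missing idea is that \cite[Theorem C]{M24} yields a branching foliation tangent to a canonical continuous plane field. This is the unstable plane field $\overline{\eta}=\lim_{t\to+\infty}(\varphi^t_{\overline{X}})_*\overline{\xi}_\pm$ of the axis, and the collar should be engineered so that $\overline{\eta}$ is explicitly computable and uniquely integrable there. The paper does this as follows.
\begin{enumerate}
\item In coordinates given by the axis flow, write $\xi_\pm=\ker(\pm\alpha+f_\pm\beta)$ with $\ker\alpha=T\mathcal{G}$, and replace $f_\pm$ by functions equal to $-\tau$ near $\partial M$.
\item After doubling, both structures coincide with $\langle\partial_\tau\rangle\oplus T\mathcal{G}$ along the seam. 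The axis equals $2\tau\partial_\tau$ there, so it vanishes exactly on the seam and repels from it.
\item Backward axis orbits in the collar converge to the seam, so $\overline{\eta}=\langle\partial_\tau\rangle\oplus T\mathcal{G}$ on the whole collar. This plane field is smooth and uniquely integrable, so the branching leaves there are products of leaves of $\mathcal{G}$ with an interval.
\item The resolution of \cite[Section 7]{BI08} uses a map $\varepsilon$-close to the identity sending leaves to $\overline{\eta}$-tangent surfaces. It therefore produces a foliation which, in the collar, is a blowup of $[-1,1]\times\mathcal{G}$.
\end{enumerate}
Your interpolation ``through $\mathcal{G}$-compatible values'' must be pinned down to exactly this: the doubled structures must cross along the seam and precisely at $T\mathcal{G}$. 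Otherwise $\overline{\eta}$ in the collar is $\langle\partial_r\rangle$ plus whatever line field they cross at, and $\partial\mathcal{F}$ has no reason to be related to $\mathcal{G}$.

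Separately, your modification of the doubled flow is unnecessary and, as described, harmful. The plain double $\overline{\Phi}$ is tangent to the seam, but that is no obstruction. Near the seam both doubled structures contain $\partial_\tau$, so transversality is decided inside the slices, where $\Phi$ is transverse to $T\mathcal{G}$ and to $\partial\xi_\pm$.

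By contrast, $\partial M$ separates the double into $M$ and $-M$. If your added $\partial_r$-component crosses the seam only from $M$ into $-M$ (necessarily so when $\partial M$ is connected), orbits that cross can never return, and points on the $M$-side of the collar are wandering. ``Following the axis'' cannot produce such a crossing anyway, since the doubled axis points away from the seam on both sides. You would then lose tautness of the doubled pair, which is exactly the hypothesis needed to invoke \cite[Theorem C]{M24}. The paper keeps $\overline{\Phi}$, which is nonwandering because both halves are invariant and each carries a copy of $\Phi$.
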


In fact, we construct a \emph{branching foliation} transverse to $\Phi$ and $\partial M$, whose restriction to $\partial M$ is tangent to $T\mathcal{G}$. We will then ``separate its leaves'', which might modify its restriction to the boundary, but the resulting (genuine) foliation will be semi-conjugate to $\mathcal{G}$ and not necessarily isotopic to it.

While the proof in~\cite{M24} for the closed setting could be adapted to this setting with boundary without too much complication, we will instead use a reflection trick to reduce to the situation without boundary.

The construction of foliations in~\cite{M24} holds for more general positive contact pairs, see~\cite[Theorem B]{M24}. A similar result would hold in our setup with boundary, but we focus on the case of positive pairs transverse to a nonwandering flow as it is closer to the main applications of the present article.

        \subsection{Reflection trick}

In this section, we extend the contact pair to the doubling of $M$ along its boundary to obtain a positive contact pair on a \emph{closed} manifold, in order to apply the main results of~\cite{M24}. We fix a nonwandering flow $\Phi$ and positive contact pair $(\xi_-, \xi_+)$ as in \zcref{thm:ETconverse}.

The flow of $X$ gives coordinates $[-1,0]_\tau \times \partial M$ near $\partial M$, where we identify $\{0\} \times \partial M$ with $\partial M$. In those coordinates, $\xi_\pm$ are of the form 
$$\xi_\pm = \langle \partial_\tau \rangle \oplus \ell^\pm_\tau$$
for family of line fields $(\ell^\pm_\tau)_\tau$ on $\partial M$ which are twisting positively/negatively as $\tau$ increases. Dually, we choose smooth $1$-forms $\alpha$ and $\beta$ on $\partial M$ such that $\ker \alpha = T \G$ and $\alpha \wedge \beta > 0$, and there exists maps $f_\pm : [-1,0] \times \partial M \rightarrow \R_{>0}$ such that
$$\alpha_\pm = \pm \alpha + f_\pm \beta$$
is a contact form for $\xi_\pm$. Note that the contact conditions are simply
$$ \partial_\tau f_\pm < 0.$$
We now modify $f_\pm$, hence $\xi_\pm$, near $\{0\} \times \partial M$, into $\widehat{f}_\pm : [-1, 0] \times \partial M \rightarrow \R_{\geq 0}$ satisfying
\begin{itemize}
    \item $\partial_\tau \widehat{f}_\pm < 0$,
    \item $\widehat{f}_\pm = f_\pm$ near $\{-1\} \times \partial M$,
    \item $\widehat{f}_\pm = -\tau$ near $\{0\} \times \partial M$.
\end{itemize}
We then set 
$$\widehat{\alpha}_\pm \coloneqq \pm \alpha + \widehat{f}_\pm \beta, \qquad \widehat{\xi}_\pm \coloneqq \ker \widehat{\alpha}_\pm.$$
In the collar region, the new axis $\widehat{X}$ is of the form $\widehat{X} = h \partial_\tau$, where $h \leq 0$, $h^{-1}(0)= \{0\} \times \partial M$, and $h= 2 \tau$ near $\{0\} \times \partial M$.

We can now extend $\big(\widehat{\xi}_-, \widehat{\xi}_+\big)$ to the doubling of $M$ along its boundary, denoted by $\overline{M}$. We have a canonical inclusions
$$M \subset \overline{M} = M \cup_{\partial M}(-M), \qquad C \coloneqq [-1,1] \times \partial M \subset \overline{M}.$$
Note that on $-M$, $\widehat{\xi}_+$  (resp.~$\widehat{\xi}_-$) becomes a \emph{negative} (resp.~\emph{positive}) contact structure. Therefore, we can glue $\widehat{\xi}_+$ (resp.~$\widehat{\xi}_-$) on $M$ to $\widehat{\xi}_-$  (resp.~$\widehat{\xi}_+$) on $-M$ and with reversed coorientation to obtain a positive (resp.~negative) contact structure $\overline{\xi}_+$ (resp.~$\overline{\xi}_-$) on $\overline{M}$. In the collar $C$, they admit contact forms
$$\overline{\alpha}_\pm = \pm \alpha + \overline{f}_\pm \beta,$$
where
$$\overline{f}_\pm(x, \tau) = \begin{cases}
    \widehat{f}_\pm(x,\tau) & \mathrm{if \ } \tau \leq 0,\\
    -\widehat{f}_\mp(x,-\tau) & \mathrm{if \ } \tau \geq 0.
\end{cases}$$
In particular, the axis $\overline{X}$ of $\big(\overline{\xi}_-, \overline{\xi}_+\big)$ satisfies $\overline{X} = 2 \tau \partial_\tau$ near $\{0\} \times \partial M \subset C$.

By construction, $\big(\overline{\xi}_-, \overline{\xi}_+\big)$ is a positive contact pair which is moreover tansverse to the doubling $\overline{\Phi}$ of $\Phi$, which is still nonwandering. Moreover, along $\{0\} \times \partial M$, we have
\begin{align} \label{eq:alongpartialM}
    \overline{\xi}_-\vert_{\{0\} \times \partial M} = \overline{\xi}_+ \vert_{\{0\} \times \partial M}= T \G.
\end{align}

We are now in the situation of constructing a $C^0$-foliation on a \emph{closed} $3$-manifold $\overline{M}$ from a positive contact pair $\big(\overline{\xi}_-, \overline{\xi}_+\big)$, which is the setup of~\cite{M24}. Recall that any such pair has an associated \emph{unstable plane field} defined as the common limit
\begin{align} \label{eq:etabar}
    \overline{\eta} \coloneqq \lim_{t \rightarrow +\infty} (\varphi^t_{\overline{X}})_* \overline{\xi}_\pm.
\end{align}
Here, $\overline{\eta}$ is a continuous plane field which is tangent to $\overline{X}$ and invariant under its flow. A direct computation using the special form of $\overline{X}$ in the collar and~\eqref{eq:alongpartialM} further shows that it is tangent to $\G$ in the collar region $C = [-1,1] \times \partial M$. In summary, we have proved:

\begin{prop} \label{prop:contpairdouble}
    There exists a positive contact pair $\big(\overline{\xi}_-, \overline{\xi}_+\big)$ on $\overline{M}$, the doubling of $M$, which is transverse to $\overline{\Phi}$, and whose unstable plane field $\overline{\eta}$ satisfies \begin{align}
        \overline{\eta} = \langle \partial_\tau \rangle \oplus T \G
    \end{align}
    in the collar region $C$.
\end{prop}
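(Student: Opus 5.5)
The construction preceding the statement already gives the contact pair, so the proof is a verification in four steps. The first two check that $(\overline{\xi}_-, \overline{\xi}_+)$ is a well-defined positive contact pair on $\overline{M}$, transverse to $\overline{\Phi}$. The last two compute its axis and unstable plane field in the collar $C$. Throughout I use the normal form $\alpha_\pm = \pm\alpha + f_\pm\beta$ on $[-1,0]_\tau\times\partial M$. In this form the contact condition is $\partial_\tau f_\pm<0$, and domination of $\G$ by $\xi_\pm$ is encoded by $f_\pm>0$ at $\tau=0$.

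\emph{Step 1 (the modification $\widehat f_\pm$).} Since $f_\pm>0$ at $\tau=0$ and $\partial_\tau f_\pm<0$, I would choose $\widehat f_\pm$ by an elementary one-variable interpolation. It should be strictly decreasing in $\tau$, equal to $f_\pm$ near $\tau=-1$, and equal to $-\tau$ on a thin collar $\tau\in(-\kappa,0]$. This is possible after shrinking the collar, since $f_\pm(\cdot,-\kappa)$ is bounded below by a positive constant. The contact condition is then automatic. Transversality to $\Phi$ is the only genuine condition to check. For fixed $(x,\tau)$, the map $f\mapsto (\pm\alpha+f\beta)(\dot\Phi)$ is affine in $f$, so the admissible values of $f$ form an interval. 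Both endpoints of the interpolation are admissible: the original $f_\pm$, and values near $0$, where the plane field is close to $\langle\partial_\tau\rangle\oplus T\G$ and $\G$ is $\Phi$-horizontal. Hence every intermediate value is admissible. I expect the main obstacle here to be bookkeeping of coorientation signs for $\alpha_-$, so that ``positively transverse'' is checked consistently at $\tau=0$. I would fix these conventions once, using the case $\tau=0$ where $\ker\widehat\alpha_\pm = \langle\partial_\tau\rangle\oplus T\G$.

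\emph{Step 2 (gluing).} Near $\tau=0$ we have $\overline f_+(\tau)=-\tau$ for $\tau\le 0$. For $\tau\ge 0$ we have $\overline f_+(\tau) = -\widehat f_-(-\tau) = -(\tau) = -\tau$, and similarly for $\overline f_-$. So $\overline\alpha_\pm$ are smooth across $\{0\}\times\partial M$. The reflection $\tau\mapsto-\tau$ reverses the orientation of $\overline{M}$, so $\alpha\wedge d\alpha$ changes sign. Reversing the coorientation of the glued structure preserves it. Hence $\widehat\xi_-$ on $-M$ becomes a positive contact structure, and it matches $\widehat\xi_+$ on $M$; the negative structure is obtained symmetrically. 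The doubled flow $\overline\Phi$ is well defined and smooth because $\Phi$ is tangent to $\partial M$, and after adjusting $\Phi$ to be $\tau$-independent near $\partial M$ it is positively transverse to both glued structures by Step 1. $\overline\Phi$ is nonwandering because each copy of $M$ is invariant and $\Phi$ is nonwandering on it. Positivity of the pair follows from the existence of this common positive transversal.

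\emph{Step 3 (axis).} In the collar, $\overline\alpha_-\wedge\overline\alpha_+ = (\overline f_- + \overline f_+)\,\beta\wedge\alpha$ up to sign. Also $\overline\alpha_\pm\wedge d\overline\alpha_\pm = \pm\,\partial_\tau\overline f_\pm\, d\tau\wedge\alpha\wedge\beta$, up to a fixed sign. Balancing and contracting shows that $\overline X$ is a multiple of $\partial_\tau$, vanishing exactly where $\overline f_-+\overline f_+=0$, namely at $\tau=0$. Near $\tau=0$, substituting $\overline f_\pm=-\tau$ gives $\overline X = 2\tau\,\partial_\tau$, matching the text.

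\emph{Step 4 (unstable plane field).} In $C$ we have $\overline\xi_\pm = \langle\partial_\tau\rangle\oplus\ker(\pm\alpha+\overline f_\pm\beta)$, where the second summand is a line field on $\partial M$. The flow of $\overline X = h(\tau)\partial_\tau$ moves only the $\tau$-coordinate, and it preserves the splitting $\langle\partial_\tau\rangle\oplus T(\partial M)$ together with the line fields in the $\partial M$ factor. So $(\varphi^t_{\overline X})_*\overline\xi_\pm$ at $(x,\tau)$ equals $\langle\partial_\tau\rangle\oplus\ker\big(\pm\alpha+\overline f_\pm(x,\tau_t)\beta\big)$, where $\tau_t$ is the backward-flowed coordinate. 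Since $h(\tau)$ has the sign of $\tau$, zero $\{0\}\times\partial M$ is repelling, so $\tau_t\to 0$ as $t\to+\infty$. At $\tau=0$ both line fields equal $\ker\alpha = T\G$. The limit~\eqref{eq:etabar}, which exists by~\cite{M24}, is therefore $\langle\partial_\tau\rangle\oplus T\G$ throughout $C$, which is the claim.
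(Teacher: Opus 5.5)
Your proposal is correct and follows the paper's own argument. It makes the same modification $\widehat f_\pm=-\tau$ near $\partial M$ and the same reflection with reversed coorientation, which gives $\overline X=2\tau\,\partial_\tau$, and it makes explicit the paper's ``direct computation'' of $\overline\eta$ by observing that backward $\overline X$-orbits in $C$ accumulate on $\{0\}\times\partial M$. Only three small points need tidying. First, in the normal form the $\Phi$-positive coorientation of $\xi_-$ is given by $-\alpha_-=\alpha-f_-\beta$; your convexity argument in $f$ still works, and domination is what guarantees that the lines $\ker(\pm\alpha+f\beta)$, $0\le f\le f_\pm$, never contain $\dot\Phi$. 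Second, there is no need to alter $\Phi$ in Step~2: transversality is pointwise, the merely Lipschitz reflection of the original $\Phi$ suffices, and your nonwandering argument applies to that unmodified double, not to a modified flow. Third, in Step~4 the function $h$ may depend on $x$ away from the thin collar, which is harmless because the flow still moves along $\tau$-lines and maps planes containing $\partial_\tau$ to planes containing $\partial_\tau$.
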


\begin{rem}
    While the construction of $\big(\overline{\xi}_-, \overline{\xi}_+\big)$ involved some choices of modifications of $(\xi_-, \xi_+)$, the resulting unstable plane field $\overline{\eta}$ does not. Indeed, it is prescribed in the collar region, hence in the union of all the flow lines of $X$ intersecting the collar region, and away from it, it is uniquely determined by~\eqref{eq:etabar} with $\overline{X} = X$.
\end{rem}

We obtain another useful byproduct of the reflection trick:
\begin{cor} \label{cor:universallytight}
    Both $\xi_\pm$ are universally tight.
\end{cor}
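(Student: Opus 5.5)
We would deduce \zcref{cor:universallytight} from \zcref{prop:contpairdouble} combined with \zcref{lem:universallytight}. The doubled pair $\big(\overline{\xi}_-, \overline{\xi}_+\big)$ lives on the closed manifold $\overline{M}$ and is positively transverse to the doubled flow $\overline{\Phi}$. That flow is nonwandering: an open set of $\overline{M}$ meets $M$ or $-M$ in an open set, and $\Phi$ is nonwandering on each copy. For the collar $C$, note that $\overline{\Phi}$ is tangent to $\{0\}\times\partial M$, so open sets there also meet the interiors of the two copies. By the lemma showing that contact pairs transverse to nonwandering flows are taut, the pair $\big(\overline{\xi}_-, \overline{\xi}_+\big)$ is taut. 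Then \zcref{lem:universallytight} gives that $\overline{\xi}_+$ and $\overline{\xi}_-$ are universally tight on $\overline{M}$.

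Next we restrict back to $M$. On $M \setminus C$ we have $\overline{\xi}_\pm = \widehat{\xi}_\pm = \xi_\pm$. On the half-collar $[-1,0]\times\partial M$, $\widehat{\xi}_\pm$ differs from $\xi_\pm$ only by changing $f_\pm$ to $\widehat{f}_\pm$, which keeps $\partial_\tau \widehat f_\pm<0$. Following the flow of the axis, $(M,\xi_\pm)$ contact-embeds into $(M,\widehat{\xi}_\pm)$. Concretely, $\xi_\pm$ is contactomorphic to the restriction of $\widehat\xi_\pm$ to a slightly smaller domain, obtained by reparametrizing $\tau$ so that the graphs of $f_\pm$ and $\widehat f_\pm$ match over a subcollar. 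Both structures have the form $\pm\alpha + g\beta$ with $g$ decreasing in $\tau$, so a $\tau$-reparametrization sending $f_\pm$ onto part of the range of $\widehat{f}_\pm$ gives the contactomorphism. This works because $\widehat f_\pm$ goes down to $0$, below the positive values of $f_\pm$ at $\tau=0$, so its range covers that of $f_\pm$.

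The main care point is this range argument: it must be done with the same reparametrization for each $\pm$ separately, and it should be checked near $\tau=-1$, where the two functions already coincide. Once it holds, $(M,\xi_\pm)$ embeds in $(\overline{M},\overline{\xi}_\pm)$ as a codimension-zero contact submanifold. This embedding is $\pi_1$-injective, because $M$ is a retract of its double through the folding map. Hence the universal cover of $M$ embeds into the corresponding cover of $\overline M$, which is a covering of $\overline M$ and therefore tight. It follows that every cover of $(M,\xi_\pm)$ embeds in a tight contact manifold, so it is tight, and $\xi_\pm$ is universally tight.
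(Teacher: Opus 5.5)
Your proposal is correct and follows the paper's route: you get universal tightness of the doubled pair on the closed manifold $\overline{M}$ from \zcref{lem:universallytight}, then realize $\xi_\pm$ as a restriction of $\widehat{\xi}_\pm = \overline{\xi}_\pm|_M$. Your explicit $\tau$-reparametrization is the paper's ``collar extension with added radial twisting'' read in reverse. Your use of the folding retraction to get $\pi_1$-injectivity of $M \subset \overline{M}$ fills in a step the paper leaves implicit when it says restrictions of universally tight structures remain universally tight.
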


\begin{proof}
    The contact structures $\overline{\xi}_\pm$ on the \emph{closed} manifold $\overline{M}$ are universally tight by \zcref{lem:universallytight}. Moreover, $\widehat{\xi}_\pm$ are isotopic rel.\ boundary to \emph{extensions} of $\xi_\pm$ obtained by gluing a collar neighborhood along $\partial M$ and adding radial twisting. Therefore, $\xi_\pm$ become \emph{restrictions} of $\widehat{\xi}_\pm$ and are universally tight as well.
\end{proof}

        \subsection{Proof of \texorpdfstring{\zcref{thm:ETconverse}}{thm:ETconverse}}

Let $(\xi_-, \xi_+)$ and $\G$ be as in \zcref{thm:ETconverse}, and let $\big(\overline{\xi}_-, \overline{\xi}_+\big)$ be the contact pair on $\overline{M}$ constructed in \zcref{prop:contpairdouble}, with associated unstable plane field $\overline{\eta}$. Since the latter contact pair is transverse to a nonwandering flow, namely, the doubling $\overline{\Phi}$ of $\Phi$, it is taut. We can then apply~\cite[Theorem C]{M24} (and Remark~\cite[Remark 0.7]{M24}) to deduce that $\overline{\eta}$ is tangent to a \emph{branching foliation}; this is a collection of complete, immersed surfaces in $\overline{M}$ tangent to $\overline{\eta}$ which cover $\overline{M}$, and which are allowed to intersect but not to \emph{topologically cross} (see~\cite[Definition 4.1]{BI08}). This branching foliation can then be resolved into in a genuine foliation, up to a small $C^0$-modification of $\overline{\eta}$. More precisely, it is shown in~\cite[Section 7]{BI08} that for every $\varepsilon > 0$, there exists a genuine $C^0$-foliation $\overline{\F}_\varepsilon$ on $\overline{M}$ and a continuous surjective map $\overline{h}_\varepsilon : \overline{M} \rightarrow \overline{M}$ satisfying:
\begin{itemize}
    \item The $C^0$-distance between $\overline{\eta}$ and $T \overline{\F}_\varepsilon$ is less than $\varepsilon$,
    \item The $C^0$-distance between $\overline{h}_\varepsilon$ and $\mathrm{id}$ is less than $\varepsilon$,
    \item $\overline{h}_\varepsilon$ sends the leaves of $\overline{\F}_\varepsilon$ to surfaces tangent to $\overline{\eta}$.
\end{itemize}
Recall that in the collar $C = [-1,1] \times \partial M \subset \overline{M}$, $\overline{\eta}$ is smooth and tangent to the product of $\G$ with $[-1,1]$; in particular, it is \emph{uniquely} integrable. Choosing $\varepsilon$ small enough, the third item above implies that the restriction of $\overline{\F}_\varepsilon$ to $C$ is a blowup of $[-1,1] \times \G$. The restriction of $\overline{\F}_\varepsilon$ to $M$ is the desired foliation. \qed

\clearpage
\part{Architecture of ziggurats}\label{sec:mainresults}

Throughout this part, and unless stated otherwise, we adhere to \zcref{assum:1}: $M$ is a compact, oriented 3-manifold with non-empty boundary consisting of tori, different than $S^1 \times D^2$ and $\T^2 \times I$. Moreover, $\Phi$ is a smooth, nonwandering flow on $M$ whose restriction to each boundary component of $\partial M$ is a Reebless foliation with rational slope. The exceptional cases $S^1\times D^2$ and $\T^2\times I$ are treated in \zcref{sec:zigannulusdisk}.

The main object of interest in this part is the ziggurat $\mathcal{Z}(\Phi)\subset \R^n$ associated with $\Phi$, defined as the set of boundary multislopes realized by foliations transverse to $\Phi$. We combine the various results from \zcref{sec:foundations} and \zcref{sec:ETandback} to prove many structural properties of this set.

    \section{Basic properties of contact ziggurats}

Recall that $\mathcal{Z}^\pm(\Phi) \subset \R^n$ denotes the set of boundary multislopes realized by \emph{positive/negative contact structures} transverse to $\Phi$.

\begin{defn}[Downward closed]  A subset $A \subset \R^n$ is \textbf{downward closed} if for all $\bm{x} \in A$ we have $(\bm{-\infty}, \bm{x}] \subset A$. We say that $A\subset \R^n$ is \textbf{upward closed} if for all $\bm{x} \in A$ we have $[\bm{x},\bm{+\infty})\subset A$.
\end{defn}

\begin{prop} \label{prop:contactboxconv}
    $\Zg^+(\Phi)$ is downward closed and $\Zg^-(\Phi)$ is upward closed. In particular, they are box-convex.
\end{prop}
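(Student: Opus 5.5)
The plan is to prove downward closedness of $\Zg^+(\Phi)$ directly from the twisting half of \zcref{lem:trim_twist}; upward closedness of $\Zg^-(\Phi)$ is the mirror argument, and box-convexity follows formally.

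Let $\bm{s}\in\Zg^+(\Phi)$ be realized by a $\Phi$-horizontal positive contact structure $\xi$, and let $\bm{t}\leq\bm{s}$. We must produce a $\Phi$-horizontal positive contact structure of multislope $\bm{t}$. Work one boundary component at a time, so it suffices to treat $t_i\leq s_i$ on a single $\partial_iM$; if $t_i=s_i$ nothing needs to be done there.

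Assume $t_i<s_i$. Choose a smooth linear foliation $\G$ on $\partial_iM$ of slope $t_i$. Since $t_i$ is finite and the degeneracy slope is $+\infty$, after a small smooth isotopy straightening $\Phi|_{\partial_iM}$, the foliation $\G$ is $\Phi$-horizontal. Next, apply \zcref{lem:straightening2} to $\partial_i\xi$ (a smooth Reebless foliation of slope $s_i$; Reeblessness holds by transversality to the Reebless boundary flow) and to $\G$. This puts them in transverse position by horizontal isotopies, so that $\G\prec\partial_i\xi$ pointwise, since $t_i<s_i$. The isotopy of $\partial_i\xi$ is smooth because $\xi$ is smooth. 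We then extend it to a $\Phi$-horizontal isotopy of plane fields near $\partial_iM$ via \zcref{lem:isotopy_extension_contact}. The extended plane field stays contact, since it is the pushforward of $\xi$ by a diffeotopy, and it stays transverse to $\Phi$, as that lemma guarantees. Now the twisting half of \zcref{lem:trim_twist} yields a $\Phi$-horizontal positive contact structure $\xi'$ with $\partial_i\xi'=\G$, agreeing with $\xi$ away from a small collar of $\partial_iM$. Repeating this over all components gives $\bm{t}\in\Zg^+(\Phi)$.

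For $\Zg^-(\Phi)$, the argument is identical with orientations reversed: negative twisting increases the boundary slope, and the same lemma applies to negative structures. Finally, if $\bm{x}\leq\bm{y}$ lie in $\Zg^+(\Phi)$, then $[\bm{x},\bm{y}]\subset(-\bm\infty,\bm{y}]\subset\Zg^+(\Phi)$, and the upward case is symmetric, which gives box-convexity.

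The main obstacle is the preparatory isotopy: making the target foliation $\G$ pointwise dominated by $\partial\xi$ while keeping everything $\Phi$-horizontal and contact. I expect \zcref{lem:straightening2} combined with \zcref{lem:isotopy_extension_contact} to handle this, since all objects involved are smooth.
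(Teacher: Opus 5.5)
Your argument is correct and follows the paper's proof, which simply invokes the twisting half of \zcref{lem:trim_twist}; you additionally spell out the preparatory horizontal isotopy (via \zcref{lem:straightening2} and \zcref{lem:isotopy_extension_contact}) that produces the pointwise domination $\G\prec\partial_i\xi$ required by the twist lemma, much as the paper does in \zcref{lem:fillingcontact}. One small correction: Reeblessness of $\partial_i\xi$ does not follow from transversality to $\Phi$, since $\Phi$-horizontal boundary foliations of $\aleph$ type have Reeb annuli; it holds here because $\xi$ is assumed to realize the real multislope $\bm{s}$.
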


\begin{proof}
By \zcref{lem:trim_twist}, we can add twisting to any $\Phi$-horizontal positive contact structure to decrease its slope. In other words, if $\bm{s}\in \Zg^+(\Phi)$ and $\bm{s'} \leq \bm{s}$, then $\bm{s}'\in \Zg^+(\Phi)$. Similarly, we can add twisting to any $\Phi$-horizontal negative contact structure to increase its slope.
\end{proof}

    \section{Ziggurat intersection theorem}

In this section, we prove \zcref{thmintro:contactzigg} from the introduction.

\begin{proof}
    The inclusion $\Zg(\Phi) \subset \Zg^+(\Phi)\cap \Zg^-(\Phi)$ follows from the coarse version of the Eliashberg--Thurston theorem (\zcref{thmintro:ET}). For the reverse direction, assume there are contact structures $\xi_+$ and $\xi_-$ with $$\bm\slope(\xi_+)=\bm\slope(\xi_-)=\bm{s}.$$ By trimming (\zcref{lem:trim_twist}) combined with \zcref{lem:phaselocking}, we can modify $\xi_+$ so that for each $i$, either $\slope_i(\xi_+)> s_i$ or $\fineslope(\xi_+)\in \{s^\sharp, s^\dagger\}$. Similarly, we can modify $\xi_-$ so that for each $i=1,\dots,n$, either $\slope_i(\xi_+)< s_i$ or $\fineslope_i(\xi_+)\in \{s^\flat, s^\dagger\}$. Therefore $\fineslope_i(\xi_-) \prec \fineslope(\xi_+)$. It follows that there is a foliation $\G$ of slope $\bm{s}$ such that $$\partial \xi_- \prec \G \prec\partial\xi_+.$$ Applying the converse to Eliashberg--Thurston (\zcref{thmintro:ETconverse}), there exists a $\Phi$-horizontal foliation $\F$ with $\partial \F$ monotone equivalent to $\G$. In particular, $\bm{\slope}(\partial \F) = \bm{s}$, so $\bm{s}\in \Zg(\Phi)$.
\end{proof}

\begin{rem} \label{rem:intersection}
    In the exceptional cases where $\Phi$ is a nonwandering disk or annulus suspension flow, we still have $$\mathcal{Z}^-(\Phi) \cap \mathcal{Z}^+(\Phi) \subset \mathcal{Z}(\Phi).$$
    However, the reverse inclusion does not hold: is $\Phi$ is a nonwandering disk or annulus suspension flow, then 
    $\Zg^-(\Phi) \cap \Zg^+(\Phi) = \varnothing$ (see \zcref{rem:contactsuspension}), while $\Zg(\Phi) \neq \varnothing$ using \zcref{propintro:smoothing}.
\end{rem}

    \section{Box-convexity}

Recall from \zcref{def:boxconv}: A subset $A \subset \R^n$ is box-convex if for all $\bm{x}, \bm{y} \in A$ satisfying $\bm{x} \leq \bm{y}$, we have $[\bm{x}, \bm{y}] \subset A$.

\begin{proof}[Proof of \zcref{thmintro:box-convexity}]
    Since $\Zg^\pm(\Phi)$ are box-convex by \zcref{prop:contactboxconv}, their intersection $\Zg(\Phi)$ is also box-convex, and so is $\Zg(\Phi)$ by \zcref{thmintro:contactzigg}.
\end{proof}

In view of \zcref{rem:intersection}, the proof actually shows a related statement that will be useful later:

\begin{prop} \label{prop:boxconvex}
    If $\bm{s}^- \in \Zg^-(\Phi)$ and $\bm{s}^+ \in \Zg^+(\Phi)$ satisfy $\bm{s}^- \leq \bm{s}^+$, then $[\bm{s}^-, \bm{s}^+] \subset \Zg(\Phi)$.
\end{prop}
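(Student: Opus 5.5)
The plan is to reduce the statement to the ziggurat intersection theorem, using the monotonicity of the contact ziggurats from \zcref{prop:contactboxconv}. Let $\bm{s} \in [\bm{s}^-, \bm{s}^+]$, so that $\bm{s}^- \leq \bm{s} \leq \bm{s}^+$ coordinatewise. Since $\Zg^+(\Phi)$ is downward closed and $\bm{s}^+ \in \Zg^+(\Phi)$, we get $\bm{s} \in (\bm{-\infty}, \bm{s}^+] \subset \Zg^+(\Phi)$. Since $\Zg^-(\Phi)$ is upward closed and $\bm{s}^- \in \Zg^-(\Phi)$, we get $\bm{s} \in [\bm{s}^-, \bm{+\infty}) \subset \Zg^-(\Phi)$. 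Concretely, each of these memberships is obtained by adding twisting near $\partial M$ as in item 2 of \zcref{lem:trim_twist}, which moves the slope of $\xi_+$ down to $\bm{s}$ and the slope of $\xi_-$ up to $\bm{s}$.

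We now have $\bm{s} \in \Zg^-(\Phi) \cap \Zg^+(\Phi)$. It remains to invoke only the inclusion $\Zg^-(\Phi) \cap \Zg^+(\Phi) \subset \Zg(\Phi)$. This inclusion uses only the reverse-direction argument of the proof of \zcref{thmintro:contactzigg}, and not the coarse Eliashberg--Thurston direction. That argument runs as follows.
\begin{enumerate}
\item Trim the two contact structures (\zcref{lem:trim_twist}), using \zcref{lem:phaselocking}, so that in each coordinate their fine slopes satisfy $\fineslope_i(\xi_-) \prec \fineslope_i(\xi_+)$.
\item Apply \zcref{prop:fineslope_inequality} and the isotopy extension \zcref{lem:isotopy_extension_contact} to obtain a smooth $\Phi$-horizontal foliation $\G$ of slope $\bm{s}$ with $\partial\xi_- \prec \G \prec \partial\xi_+$.
\item Apply the converse \zcref{thmintro:ETconverse}. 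This gives a $\Phi$-horizontal foliation whose boundary is semi-conjugate to $\G$, so its boundary multislope is $\bm{s}$.
\end{enumerate}
Because the first step is skipped, this reverse inclusion holds even in the exceptional suspension cases, which is the content of \zcref{rem:intersection}.

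The only point needing care is making the two contact structures into an honest positive contact pair adapted to $\Phi$ with the required domination at the boundary. Both are positively transverse to $\Phi$, so together they form a positive pair. After the trimming step, the domination of $\G$ is exactly what the intersection-theorem proof already arranges. I therefore expect no genuine new obstacle: the proposition follows by composing \zcref{prop:contactboxconv} with the reverse inclusion of \zcref{thmintro:contactzigg}.
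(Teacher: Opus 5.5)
Your proposal is correct and follows the paper's route. The paper likewise obtains the statement by combining the downward closedness of $\Zg^+(\Phi)$ and upward closedness of $\Zg^-(\Phi)$ (\zcref{prop:contactboxconv}) with only the reverse inclusion $\Zg^-(\Phi)\cap\Zg^+(\Phi)\subset\Zg(\Phi)$ from the proof of \zcref{thmintro:contactzigg}, citing \zcref{rem:intersection} for the fact that this inclusion does not need the coarse Eliashberg--Thurston direction. (Your phrase ``the first step is skipped'' should refer to that forward inclusion, not to the trimming step in your numbered list.)
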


\begin{rem}
    If $\Phi$ is an annulus or disk suspension, then box-convexity trivially holds by the explicit descriptions in \zcref{prop:special_ziggurats} and \zcref{rem:homlong}.
\end{rem}

    \section{Irrational instability}

        \begin{proof}[Proof of \zcref{thmintro:irrationality}]
        Let $\F$ be a $\Phi$-horizontal foliation. Apply coarse Eliashberg--Thurston (\zcref{thm:coarseET}) to find a $\Phi$-horizontal foliation $\F'$ and $\Phi$-horizontal contact structures $\xi_+$ and $\xi_-$ such that $\bm{\slope}(\partial\F') = \bm{\slope}(\partial\F)$ and $$\partial \xi_+ \prec \partial \F' \prec \partial \xi_-.$$ 
        It follows that for all $1\leq i \leq n$, $$\slope_i(\xi_+) \leq \slope_i(\F') \leq \slope_i(\xi_-).$$ Moreover, whenever $s_i(\F')$ is irrational, the instability of irrational slopes (\zcref{lem:phaselocking}) implies that both inequalities are strict, and \zcref{prop:boxconvex} finishes the proof.
        \end{proof}

    \section{Filling rational multislopes}

In this section, we show that the ziggurat $\Zg(\Phi)$ controls the existence of those taut foliations on Dehn fillings of $M$ which are transverse to blowdowns of $\Phi$.

\begin{defn}[Slices]
    Let $A \subset \R^n$ and $\bm{t} \in \R^{n-k}$ for $0 \leq k \leq n-1$. 
    \begin{itemize}
        \item The \textbf{(right) $\bm{t}$-slice of $A$} is defined as
    $$Sl_{\vert \bm{t}}A \coloneqq \left(\R^{k} \times \{\bm{t}\}\right) \cap A.$$
    It can be naturally identified with a subset of $\R^k$.
        \item The set of \textbf{transversally interior points} in the (right) $\bm{t}$-slice $Sl_{\vert \bm{t}}A$, denoted by $Sl^\circ_{\vert\bm{t}} A$, is the set of $\bm s \in Sl_{\vert \bm{t}}A$ such that there exists $\varepsilon > 0$ satisfying 
        $$\{\bm{s}\} \times (\bm{t}-\bm{\varepsilon}, \bm{t} +\bm{\varepsilon}) \subset A.$$
    \end{itemize}
\end{defn}

\begin{figure}[ht]
        \centering
        \def\svgwidth{0.4\textwidth}
        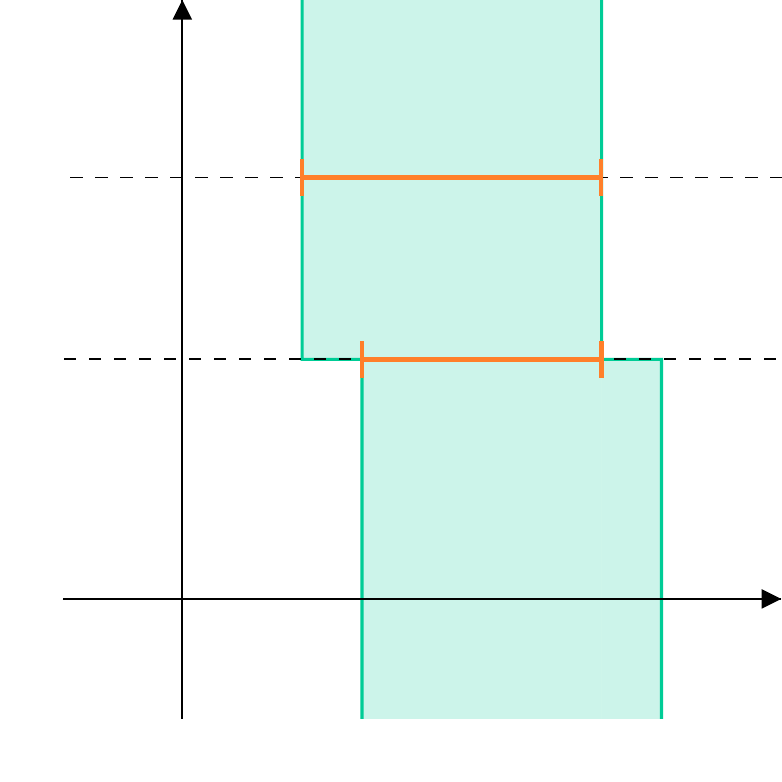
        \caption{A box-convex set $A$ in green and $Sl^\circ_{\vert t} A$ shown in orange for two values of $t\in \R$.}
        \label{fig:slice}
\end{figure}

Let $\Phi\vert_{\bm{t}}$ be a blowdown of $\Phi$ along multislope $\bm{t}$. We always have the inclusion
$$ \Zg(\Phi \vert_{\bm{t}}) \subset Sl_{\vert \bm{t}}\Zg(\Phi) \subset \R^k.$$
The converse inclusion does not hold in general, but the subset $\Zg(\Phi \vert_{\bm{t}})$ can be entirely recovered from $\Zg(\Phi)$, besides the cylindrical or toroidal multislope if $\Phi$ is a planar suspension flow.

\begin{thm}[Ziggurats of Dehn fillings] \label{thm:partialfilling}
    Let $\bm{t} \in \Q^{n-k}$. Let $\Phi\vert_{\bm{t}}$ be a blowdown of $\Phi$ along the multislope $\bm{t}$. If $\Phi\vert_{\bm{t}}$ is not an annulus, disk, or sphere suspension flow, then
    \begin{align} \label{eq:partialslice}
        Sl^\circ_{ \vert \bm{t}} \Zg(\Phi) =  \Zg(\Phi\vert_{\bm{t}}).
    \end{align}
\end{thm}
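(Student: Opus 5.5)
I will prove the two inclusions separately, passing through the contact ziggurats via the ziggurat intersection theorem (\zcref{thmintro:contactzigg}) applied to both $\Phi$ and $\Phi\vert_{\bm t}$. The filled manifold $M(\bm t)$ still has $k\geq 1$ toroidal boundary components. Since $\Phi\vert_{\bm t}$ is not an annulus, disk or sphere suspension, $(M(\bm t),\Phi\vert_{\bm t})$ falls within the scope of the coarse Eliashberg--Thurston theorem. Hence $\Zg(\Phi\vert_{\bm t})=\Zg^+(\Phi\vert_{\bm t})\cap\Zg^-(\Phi\vert_{\bm t})$. This is the only place the exclusions are used; I would double-check that the disk/annulus exclusions in \zcref{thm:coarseET} translate exactly into the suspension-flow exclusions.

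\emph{Inclusion $\supseteq$.} Let $\bm s\in\Zg(\Phi\vert_{\bm t})$. Then there are a positive contact structure and a negative contact structure on $M(\bm t)$, both transverse to $\Phi\vert_{\bm t}$, with boundary multislope $\bm s$. I apply the converse part of \zcref{lem:fillingcontact} to each of them, working only on the filled components; the proof of that lemma is local near the cores. Blowing up the core orbits yields $\Phi$-horizontal contact structures on $M$ with fine slope $\bm t^\natural$ on the last $n-k$ components. Trimming as in \zcref{lem:trim_twist}, combined with the instability of $\natural$ from \zcref{lem:phaselocking}, gives some $\varepsilon>0$ with
\[
(\bm s,\bm t+\bm\varepsilon)\in\Zg^+(\Phi), \qquad (\bm s,\bm t-\bm\varepsilon)\in\Zg^-(\Phi).
\]
The trimming is done only on the filled components, so the first $k$ slopes stay at $\bm s$. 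By \zcref{prop:boxconvex}, the box $\{\bm s\}\times[\bm t-\bm\varepsilon,\bm t+\bm\varepsilon]$ lies in $\Zg(\Phi)$. Hence $\bm s\in Sl^\circ_{\vert\bm t}\Zg(\Phi)$.

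\emph{Inclusion $\subseteq$.} Let $\bm s\in Sl^\circ_{\vert \bm t}\Zg(\Phi)$. Then $(\bm s,\bm t\pm\bm\varepsilon/2)\in\Zg(\Phi)\subset\Zg^+(\Phi)\cap\Zg^-(\Phi)$. I take a positive contact structure with multislope $(\bm s,\bm t+\bm\varepsilon/2)$. The forward part of \zcref{lem:fillingcontact}, applied only on the last $n-k$ components where $\bm t<\bm t+\bm\varepsilon/2$, fills it to a positive contact structure on $M(\bm t)$ transverse to $\Phi\vert_{\bm t}$ with slopes $\bm s$. Likewise I fill a negative contact structure with multislope $(\bm s,\bm t-\bm\varepsilon/2)$. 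Hence $\bm s\in\Zg^+(\Phi\vert_{\bm t})\cap\Zg^-(\Phi\vert_{\bm t})=\Zg(\Phi\vert_{\bm t})$.

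\emph{Expected obstacle.} The main technical point is that \zcref{lem:fillingcontact} is stated for filling all components simultaneously, with strict inequalities in all coordinates. I need a version that fills only some components while leaving the slopes on the others exactly fixed. Its proof is local near each filled boundary torus, so I expect the adaptation to be routine. A secondary point is that the blowdown used must be independent of the smooth model chosen; the remark after \zcref{def:flowblowdown} asserts that the results hold for any choice.
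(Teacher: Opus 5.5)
This is essentially the paper's proof (coarse Eliashberg--Thurston on $M(\bm t)$, blowing up via \zcref{lem:fillingcontact}, trimming near the filled tori and \zcref{prop:boxconvex} for $\supseteq$; \zcref{thmintro:contactzigg} on $M$, partial filling, and the converse on $M(\bm t)$ for $\subseteq$), and your bookkeeping is the correct reading of that argument: slopes stay exactly at $\bm s$ on the unfilled tori, and the strict inequalities $\bm t\pm\bm\varepsilon$ appear only on the filled ones. One correction: the statement allows $k=0$ (the case used for \zcref{thmintro:rationalfilling}), where $M(\bm t)$ is closed, so there you need the closed-manifold Eliashberg--Thurston theorem \cite{B16,KR17} and the converse of \cite{M24} instead of \zcref{thmintro:contactzigg}, and this is exactly where the sphere-suspension exclusion enters.
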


\begin{proof}
    First, we prove the inclusion $\Zg(\Phi\vert_{\bm{t}}) \subset Sl^\circ_{ \vert \bm{t}} \Zg(\Phi)$. Let $\F$ be any $\Phi|_{\bm{t}}$-horizontal foliation with boundary multislope $\bm{s'}$. Apply the coarse Eliashberg--Thurston theorem \zcref{thm:coarseET} to $\F$ to obtain a contact structure $\xi_+'$ whose coarse boundary multislope agrees with that of $\F$. Applying \zcref{lem:fillingcontact}, we may blow up $\xi_+'$ to obtain a contact structure $\xi_+$ with 
    \begin{align*}
    \slope_i(\xi_+)&> s_i' &&\textrm{for }i=1,\dots,k\\
    \slope_{k+i}(\xi_+) &= t_i &&\textrm{for }i= 1,\dots,n-k
    \end{align*}
    Similarly there is a $\Phi$-horizontal contact structure $\xi_-$ satisfying
    \begin{align*}
    \slope_i(\xi_-)&< s_i' &&\textrm{for }i=1,\dots,k\\
    \slope_{k+i}(\xi_+) &= t_i &&\textrm{for }i= 1,\dots,n-k
    \end{align*}
    By \zcref{prop:boxconvex}, $[\bm{\slope}(\xi_-), \bm{\slope}(\xi_+)]\subset \Zg(\Phi)$, so $\bm{s'}$ is a transversally interior point in $Sl_{\vert \bm{t}}(\Zg(\Phi))$.

    To prove the reverse inclusion $Sl^\circ_{ \vert \bm{t}} \Zg(\Phi) \subset \Zg(\Phi\vert_{\bm{t}})$, we follow the previous argument backwards. Contact structures $\xi_-$, $\xi_+$ satisfying the slope inequalities above exist by \zcref{thmintro:contactzigg}. Then, filling them with \zcref{lem:fillingcontact} and applying \zcref{prop:boxconvex} guarantees that $\bm{s}'\in \Zg(\Phi\vert_{\bm{t}})$.
\end{proof}

\begin{proof}[Proof of \zcref{thmintro:rationalfilling}]
Apply \zcref{thm:partialfilling} to the case $k=0$.
\end{proof}

The results of this section control fillings of foliations with suspension-type (i.e., Reebless) boundaries. Dehn filling of foliations with $\aleph$-type boundaries is treated in \zcref{prop:alephfilling}.

    \section{Limits of spherical and \texorpdfstring{$\mathbb{T}^2$}{T2}-type multislopes}\label{sec:sphericallimits}

The two obstructions to the application of the fine version of the Eliashberg--Thurston with boundary are spherical multislopes and $\T^2$-type planar multislopes. The goal of this section is to show:

\begin{prop}\label{prop:spherical_limits}
    The closure of the spherical multislopes is precisely the union of the spherical multislopes and the $\T^2$-type planar multislopes.
\end{prop}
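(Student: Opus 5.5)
We must prove two inclusions. The spherical multislopes form set $S$, the $\T^2$-type planar multislopes form set $T$; we want $\overline{S} = S \cup T$.

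\emph{Approximation ($T \subset \overline S$).} Let $\bm s$ be a $\T^2$-type planar multislope, realized by a $\T^2$-type planar lamination $\Lambda$. By \zcref{def:pems}, a neighborhood of $\Lambda$ is modeled on an irrational linear foliation of $\T^2\times I$ (possibly split open), with a neighborhood of a transverse link removed. The link is what produces the components of $\partial M$. On the model, perturb the irrational linear foliation by a small change of slope to a rational linear foliation. After this perturbation the leaves become compact annuli, and removing the link punctures them into compact planar surfaces. We need these surfaces to embed in $M$. To arrange this, first shrink the guts (which are product pieces in the model) and the link neighborhood into a thin neighborhood of a graph, as in Step~2 of \zcref{prop:maypoletwist}, so that the rational leaves avoid them. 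The boundary slopes on the irrational components converge to the given irrational slopes, and the rational components keep their slopes. So $\bm s$ is a limit of spherical multislopes.

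\emph{Closedness ($\overline S \subset S\cup T$).} Take spherical multislopes $\bm s_k \to \bm s$, realized by compact planar surfaces $\Sigma_k$. The first step is to reduce to surfaces of bounded topology. After compressing and discarding inessential pieces, each $\Sigma_k$ can be taken to be incompressible and boundary-incompressible. Such surfaces are carried by finitely many branched surfaces in $M$ (Floyd--Oertel). Pass to a subsequence carried by one branched surface $B$, with weight vectors $w_k$.

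Normalize the weights to $w_k/|w_k|$ and take a limit $\mu$. This $\mu$ is a measured lamination $\Lambda$ carried by $B$, and its boundary slopes equal $\bm s$ by continuity of slope in the weights. If the weights stay bounded, the sequence is eventually constant, so $\bm s = \bm s_k$ and is spherical. Otherwise $\Lambda$ is a genuine measured lamination. Its leaves are limits of planar surfaces, hence have trivial $\pi_1$ after capping closed boundaries, as in \zcref{lem:surftype}. A minimal component of $\Lambda$ is then either a compact planar leaf, so $\bm s$ is spherical, or has simply connected leaves after capping. In the second case \zcref{lem:planarclassification} (applied after Dehn filling the closed-boundary components, as in the proof of \zcref{prop:lamclass}) shows that it is a $\T^2$-type or $\T^3$-type planar lamination. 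The $\T^3$ case has no boundary and cannot carry the boundary slopes unless it is disjoint from $\partial M$. In that case another component of $\Lambda$ meets $\partial M$, and we apply the same argument to that component.

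\emph{Main obstacle.} The hardest step is showing that the leaves of the limit lamination remain planar. Genus cannot appear in a limit of planar surfaces carried by the branched surface, since a nonplanar leaf would yield a nonseparating loop appearing in the $\Sigma_k$. The remaining danger is that boundary curves could produce annular leaves of irrational slope; this is exactly the $\T^2$-type case. A further check is that the components of $\Lambda$ meeting $\partial M$ record all coordinates of $\bm s$ on which the surfaces actually have boundary. Components with no boundary in the limit pose no problem, because the definition allows empty intersection with a given $\partial_i M$.
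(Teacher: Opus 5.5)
\paragraph{Main gap: the closure direction.} The gap is at exactly the step you flag as the main obstacle. Leaves of the projective limit $\Lambda$ need not be planar, and the reason you give fails. A closed loop in a leaf of $\Lambda$ is only shadowed by long arcs in the $\Sigma_k$, and the sheets of $\Sigma_k$ may come back shifted by one instead of closing up.

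Here is an example. Take $M=F\times S^1$ with $F$ a four-holed sphere, $\gamma\subset F$ an essential separating curve, $a$ an arc crossing $\gamma$ once between boundary components on opposite sides, and $t_\gamma$ the Dehn twist along $\gamma$. The essential annuli $\Sigma_k=t_\gamma^k(a)\times S^1$ satisfy:
\begin{itemize}
\item they all have the same boundary, one fiber on each of two boundary tori;
\item they are all carried by $\theta\times S^1$, where $\theta$ is the train track obtained by smoothing $a\cup\gamma$;
\item they converge projectively to the torus $\gamma\times S^1$.
\end{itemize}
So a limit of incompressible, boundary-incompressible planar surfaces can have a genus-one leaf.

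This limit is also disjoint from $\partial M$. So your fallback (``another component of $\Lambda$ meets $\partial M$'') is unjustified too: normalizing by total weight can discard all boundary information. The slopes are constant in this example, so it does not contradict the proposition, only your argument.

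\paragraph{What the paper does instead.} The paper does not argue leafwise. It controls the limit through the filled Euler measure $\chi^\bullet=\chi+\#_\partial$.
\begin{itemize}
\item $\chi^\bullet$ equals $2$ on every connected compact planar surface, so the normalized $\Sigma_k$ have $\chi^\bullet=2/|w_k|>0$.
\item $\chi$ is linear in the weights, while $\#_\partial$ is upper semicontinuous (it jumps up at rational slopes, like the popcorn function). Hence $\chi^\bullet(\Lambda)\ge 0$.
\item This is combined with a classification of minimal measured laminations with coherently oriented boundary and $\chi^\bullet\ge 0$: compact genus $0$ if $\chi^\bullet>0$; compact genus $1$, $\T^2$-type or $\T^3$-type if $\chi^\bullet=0$.
\item It is also combined with the definite drop of $\#_\partial$ away from a rational slope. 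This forces the approximants to eventually have exactly the limiting rational slopes, so filling them yields spheres and excludes the $\T^3$-type case.
\end{itemize}
Your proof needs a global constraint of this kind on $\Lambda$ before \zcref{lem:planarclassification} can be invoked. It also needs a separate argument for the case where the normalized limit misses $\partial M$.

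\paragraph{Approximation direction.} This uses the same idea as the paper (\zcref{lem:spherical_limits2}), but your embedding justification is off. The rational annuli cannot ``avoid'' the link, since the punctures it makes are exactly the closed boundary curves. What you need is that the approximating surfaces lie in the neighborhood of $\Lambda$ on which the model identification is defined. The paper gets this by taking them carried by a branched surface that carries $\Lambda$: $I$-invariant before drilling, with the link transverse to it, and using rational weights.
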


We emphasize that this is a statement about slopes realized by surfaces, not a statement about ziggurats themselves. Only those compact planar surfaces transverse to $\Phi$ are obstructions to the application of the fine Eliashberg--Thurston theorem to $\Phi$-horizontal foliations, but the theorems of this section concern all the compact planar surfaces properly embedded in $M$.

We define the \textbf{filled Euler measure} of a compact surface $\Sigma$ to be $$\chi^\bullet(\Sigma) \coloneqq \chi(\Sigma) + \#_\partial(\Sigma).$$
This is the Euler characteristic of the surface after filling all boundary components with disks. The definition extends to measured laminations as follows. If a measured lamination $\Lambda$ has irrational boundary slope on $\partial_i M$ then the contribution to $\#_\partial(\Lambda)$ is 0, and if it has rational boundary slope, then the contribution to $\#_\partial(\Lambda)$ is the transverse measure of a curve in $\partial_i M$ which intersect each leaf of $\partial_i \Lambda$ exactly once.

We say that a measured lamination $\Lambda$ has \textbf{coherently oriented boundary} if for each $i$, all the leaves of $\partial_i \Lambda$ have the same (oriented) slope. In this section, the topology on the space of measured foliations is as follows: two measured laminations $\Lambda_1$ and $\Lambda_2$ are $\varepsilon$-close if they are carried by a common branched surface $B$ with $1/\varepsilon$ sectors each of diameter $\leq 1$, and the weights induced by $\Lambda_1$ and $\Lambda_2$ on any given sector of $B$ differ by at most $\varepsilon^2$.
\begin{lem}
    $\chi^\bullet$ is upper semi-continuous on the space of coherently oriented laminations, and is continuous at laminations with irrational boundary slope.
\end{lem}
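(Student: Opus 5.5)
The plan is to write $\chi^\bullet(\Lambda)=\chi(\Lambda)+\#_\partial(\Lambda)$ and treat the two terms separately in the branched-surface topology just defined.

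\emph{The term $\chi$.} For a measured lamination carried by a branched surface $B$ with weights $w$, the Euler characteristic is given by a linear formula in $w$. Concretely, assign to each sector $\sigma$ of $B$ an index $e(\sigma)$, computed from its topology and its corners along the branch locus and along $\partial M$; then $\chi(\Lambda)=\sum_\sigma e(\sigma)\,w(\sigma)$, as in the standard index-sum description. Two laminations that are $\varepsilon$-close are carried by a common $B$, with weights differing by at most $\varepsilon^2$ on each of the at most $1/\varepsilon$ sectors. Since the indices are uniformly bounded for sectors of diameter $\leq 1$, the two values of $\chi$ differ by $O(\varepsilon)$. So $\chi$ is continuous, and all the semicontinuity must come from $\#_\partial$.

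\emph{The term $\#_\partial$.} We work on one torus $\partial_i M$ at a time. Here $\partial_i\Lambda$ is a coherently oriented measured lamination, i.e.\ a point of the cone of measured laminations on $\T^2$. Equivalently it is a homology class $h_i\in H_1(\partial_i M;\R)$, whose direction is the slope and which varies continuously in $\Lambda$.
\begin{itemize}
\item If $h_i$ is rational, write $h_i=m\,c$ with $c$ primitive and $m\ge 0$. The contribution to $\#_\partial$ is exactly $m$, the transverse measure of a curve meeting each leaf once.
\item If $h_i$ is irrational, the contribution is $0$.
\end{itemize}
So we must study the function $\nu(h)=m$ for $h=mc$ rational and $\nu(h)=0$ otherwise, on the cone in $\R^2$. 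This function is upper semicontinuous and tends to $0$ at irrational points. Indeed, if $h_k\to h$, then either the $h_k$ eventually lie on the same rational ray as $h$, and $\nu(h_k)\to\nu(h)$ by linearity along the ray. Or their primitive directions $c_k$ have $\|c_k\|\to\infty$, so that $\nu(h_k)=\|h_k\|/\|c_k\|\to 0\le \nu(h)$. Summing over $i$ gives upper semicontinuity of $\#_\partial$, and hence of $\chi^\bullet$. If every $h_i$ is irrational, the second alternative always occurs, so $\#_\partial(\Lambda_k)\to 0=\#_\partial(\Lambda)$, and $\chi^\bullet$ is continuous there.

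\emph{The main obstacle.} The delicate step is the first one: making precise that $\chi$ is linear in the weights, including sectors meeting $\partial M$ and leaves with noncompact boundary. I would handle this by doubling along $\partial M$, or equivalently by counting boundary corners with half-weights, so that the formula is the closed-surface index sum. Coherent orientation is used only to identify $\partial_i\Lambda$ with a single class $h_i$ in the cone; without it, cancellation could occur and $\#_\partial$ would not be given by $\nu(h_i)$.
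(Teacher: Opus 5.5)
Your proposal is correct and follows the paper's proof. You get continuity of $\chi$ from linearity in the weights on a common carrying branched surface (the $O(1/\varepsilon\cdot\varepsilon^2)$ estimate), and you write $\#_\partial=\sum_i \#\circ\partial_i$, where your function $\nu$ is exactly the paper's ``popcorn'' function $\#$ on $H_1(\partial_i M;\R)$. Your dichotomy, after passing to subsequences, is either eventually the same rational ray or primitive directions $c_k$ with $\|c_k\|\to\infty$; it spells out the upper semicontinuity and the continuity at irrational slopes that the paper asserts in one line.
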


\begin{proof}
First, note that $\chi$ is a continuous function on the space of measured laminations. Indeed, the Euler measure of a measured lamination can be computed as a linear function of the weights induced on a carrying branched surface. If two measured foliations are carried by a common branched surface $B$ having $<1/\varepsilon$ branch sectors and with weights differing by at most $\varepsilon^2$ on each sector, then their Euler measures differ by at most $O(1/\varepsilon\cdot \varepsilon^2)=O(\varepsilon)$.

 For measured laminations with coherently oriented boundary, the function $\#_\partial$ can be expressed as $$\#_\partial=\sum_i \# \circ \partial_i$$ where $\partial_i$ is the boundary map $\Lambda \mapsto \partial_i \Lambda \in H_1(\partial_i M,\R)$ and $\#:H_1(M,\R)\to \R$ is a cousin of the ``popcorn function'':
\[
\#(x,y) = 
\begin{cases}
\frac{x}{\denominator(y/x)} &\text{if } x\neq 0 \text{ and } y/x \in \Q, \\
0 & \text{if } x\neq 0 \text{ and } y/x \in \R \setminus \Q, \\
y & \text{if } x=0.
\end{cases}
\]
The boundary map is easily checked to be continuous. Also, $\#$ is upper semi-continuous, and continuous only at laminations with all boundary slopes irrational. Therefore, the same is true of $\chi^\bullet$.
\end{proof}

\begin{lem}
    Let $\Lambda$ be a minimal measured lamination on $M$ with coherently oriented boundary. If $\chi^\bullet(\Lambda)>0$, then $\Lambda$ is a compact genus 0 surface. If $\chi^\bullet(\Lambda)=0$, then $\Lambda$ is either a compact genus 1 surface or a $\T^2$-type planar minimal set or a $\T^3$-type planar minimal set. 
\end{lem}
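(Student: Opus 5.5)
The plan is to split according to \zcref{prop:lamclass} and compute $\chi^\bullet$ in each case. Since $\Lambda$ is a minimal measured lamination, the measure has full support on $\Lambda$. Since its boundary is coherently oriented, no leaf can have an open boundary component spiraling in a Reeb, $\sharp$ or $\flat$ annulus of $\partial M$, so case 3 is excluded. Indeed, a measured lamination cannot accumulate on a closed boundary curve, because the transverse measure would be infinite near a spiraling end. Hence each $\partial_i\Lambda$ is either a measured lamination by parallel closed curves, or has irrational slope. The cases are then: compact leaf; $\T^2$- or $\T^3$-type planar; or some leaf has $\pi_1$ not normally generated by boundary curves. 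The first two (items 2, 4, 5 of the proposition, together with the compact non-planar case) already have the forms allowed in the statement. So the main work is to show $\chi^\bullet(\Lambda)<0$ in the nonperipheral, noncompact case, and to compute $\chi^\bullet$ in the compact case.

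\textbf{Compact case.} If $\Lambda$ is a compact leaf $\Sigma$ with weight $w$, all its boundary curves on $\partial_iM$ are parallel with the same orientation. Then $\#_\partial$ as defined via the popcorn function counts $w$ times the number of boundary curves, so $\chi^\bullet(\Lambda)=w\,\chi(\Sigma^\bullet)=w(2-2g)$. Thus $\chi^\bullet>0$ forces $g=0$, and $\chi^\bullet=0$ forces $g=1$. One must check that the capped surface is closed, i.e.\ that $\Sigma^\bullet$ is obtained by capping every boundary component, which holds because $\Sigma$ is compact.

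\textbf{Noncompact case.} Here I would use the Euler measure computed on a carrying branched surface $B$ with weights, as in the previous lemma. I would pass to the capped object: Dehn fill (abstractly, or via the filled branched surface) each boundary torus where $\partial_i\Lambda$ consists of closed curves, capping those curves by meridian disks. This gives a measured lamination $\Lambda^\bullet$ in the filled manifold $M^\bullet$, with boundary only on irrational tori, and with Euler measure exactly $\chi^\bullet(\Lambda)$. The weight of each meridian disk equals the measure of the transversal counted by $\#$.

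For a measured lamination without compact leaves, the Euler measure is computed by an average of Gauss--Bonnet over leaves (Connes/Plante): if leaves have exponential growth or are hyperbolic, $\chi<0$. The analogue of Plante's theorem says $\chi(\Lambda^\bullet)\le 0$, with equality only when the leaves are planes or annuli (or irrational-boundary half-planes/strips) of subexponential growth. If some leaf has $\pi_1(\lambda^\bullet)\ne1$, the leaves are non-simply-connected after capping. I would expect to show that the averaged curvature is then strictly negative unless the leaves are annuli/tori. Minimal measured laminations with cylinder leaves force, by the H\"older-type argument in \zcref{lem:planarclassification}, a torus fibration structure, which is again among the allowed $\chi^\bullet=0$ cases (genus-one type, which I would absorb into the compact genus $1$ or planar alternatives).

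When all capped leaves are simply connected, \zcref{lem:planarclassification} applied to $\Lambda^\bullet$ gives the $\T^2$- or $\T^3$-type planar conclusion, with $\chi^\bullet=0$. Positivity is impossible in the noncompact case, since $\chi^\bullet>0$ would require sphere or disk leaves, which are compact.

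\textbf{Main obstacle.} The delicate step is the Plante-type inequality $\chi(\Lambda^\bullet)\le 0$, with rigidity in the equality case, for laminations with boundary transverse to $\partial M$. My first attempt would be to double $M^\bullet$ along the remaining irrational-slope tori, obtaining a measured lamination on a closed manifold with doubled Euler measure. I would then invoke the closed-case theorem that a measured lamination with a nonpositive-curvature (or non-simply-connected, non-toral) leaf has negative Euler characteristic.
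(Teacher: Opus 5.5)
Your outline follows the paper's brief proof step for step. The compact case is handled by $\chi^\bullet=w\,\chi(\Sigma^\bullet)=w(2-2g)$. Then you cap the rational boundary components without changing $\chi^\bullet$, argue that non-negative Euler measure forces simply connected capped leaves, and apply \zcref{lem:simplyconnleaves}. Excluding spiralling boundary leaves via the full-support invariant measure, and doubling to get the inequality, are sensible ways to fill in what the paper leaves implicit. If you double, remember that strips double to cylinders, so the equality case must be analysed on $\Lambda^\bullet$ itself, not on the double.

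The genuine gap is the annular case, which you spot and then dismiss. Capped leaves that are annuli cannot be ``absorbed'' into the compact genus~$1$ or planar alternatives. The H\"older step of \zcref{lem:planarclassification} relies on the leaves being planes. Even a nearby torus fibration would not make $\Lambda$ itself a compact surface.

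In fact the statement fails in this case. Take $\alpha\in\R\setminus\Q$ and the foliation $\ker(dz-\alpha\,dx)$ of $\T^3$. It is minimal, measured by $|dz-\alpha\,dx|$, and its leaves $\{z=\alpha x+c\}$ are cylinders $\R\times S^1_y$. Remove a thin tubular neighbourhood of the positively transverse circle $\gamma=\{x_0\}\times\{y_0\}\times S^1_z$. On $M=\T^3\setminus N(\gamma)$ this is a minimal measured lamination transverse to $\partial M$, whose boundary consists of coherently oriented meridians. Here $\#_\partial=\mu(\gamma)=1$ and $\chi=0-1$, so $\chi^\bullet=0$. Yet it is not a compact surface, and its capped leaves are annuli rather than planes.

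Calling this ``$\T^3$-type'' would contradict the planarity used in \zcref{lem:planarclassification}. It would also not cover the analogous drilled foliations of the Heisenberg nilmanifold, which likewise have cylindrical leaves.

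So your method, together with the Plante-type rigidity you invoke, can only prove the lemma with one more alternative: minimal measured laminations whose capped leaves are annuli. The paper's proof has the same lacuna, in the passage from ``non-negative Euler measure'' to ``$\Lambda$ has planar leaves''. This cannot be repaired without changing the statement.
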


\begin{proof}
   $\Lambda$ is a compact surface, then it must have genus 0. If $\Lambda$ has any rational boundary components, then we can fill them without changing the filled Euler measure. Once $\Lambda$ has no rational boundary components and non-negative filled Euler measure, it must also has non-negative Euler measure. Thus, $\Lambda$ has planar leaves, and by \zcref{lem:simplyconnleaves} $\Lambda$ is either a $\T^2$ minimal set or a $\T^3$-type minimal set.
\end{proof}

\begin{lem} \label{lem:spherical_limits1}
$\Lambda$ can be can be approximated by coherently oriented measured laminations of positive filled Euler measure if and only if it is a $\T^2$-type minimal set or a compact genus 0 surface.
\end{lem}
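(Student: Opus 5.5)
The argument splits into the two directions, using the two preceding lemmas: upper semicontinuity of $\chi^\bullet$, and the classification of minimal measured laminations with $\chi^\bullet \geq 0$. Throughout, I read $\Lambda$ as a minimal measured lamination with coherently oriented boundary, as in the preceding lemma.

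\emph{Only if.} Suppose $\Lambda_k \to \Lambda$ with $\chi^\bullet(\Lambda_k) > 0$. Since $\chi^\bullet$ is upper semicontinuous, $\chi^\bullet(\Lambda) \geq \limsup_k \chi^\bullet(\Lambda_k) \geq 0$. By the preceding lemma, $\Lambda$ is then one of four things: a compact genus $0$ surface, a compact genus $1$ surface, a $\T^2$-type planar minimal set, or a $\T^3$-type planar minimal set. The two middle-ground cases must be excluded.

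For the $\T^3$-type case, $\Lambda$ has no boundary, so every rational boundary term vanishes and $\chi$ is continuous. The obstacle is that the approximants $\Lambda_k$ may acquire boundary. I would argue that any $\Lambda_k$ carried by a branched surface $B$ carrying $\Lambda$ with small weights near $\partial M$ has $\#_\partial(\Lambda_k)$ bounded by a constant times the weights on the boundary sectors. Those weights tend to $0$, so $\chi^\bullet(\Lambda_k) \to \chi(\Lambda) = 0$. This alone does not exclude positive values along the sequence, so I would need a sharper bound. The approach I would try first: in a branched surface fully carrying a $\T^3$-type lamination away from $\partial M$, the Euler class contributions are nonpositive on sectors, so $\chi(\Lambda_k) \leq 0$ whenever $\Lambda_k$ is carried away from $\partial M$. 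Then $\chi^\bullet(\Lambda_k) \leq \#_\partial(\Lambda_k)$, and the remaining work is to show this boundary term cannot dominate.

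For the compact genus $1$ surface, I would use that a torus or punctured torus with coherent boundary is isolated in the sense that nearby carried laminations are multiples or unions with nonpositive filled Euler characteristic. This case is the step I expect to be the main obstacle, and I might need to argue via the filled surface $\Lambda^\bullet$, which has genus $\geq 1$.

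\emph{If.} If $\Lambda$ is a compact genus $0$ surface, take the constant sequence; then $\chi^\bullet(\Lambda) = 2 > 0$.

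If $\Lambda$ is a $\T^2$-type planar minimal set, it is semi-conjugate to an irrational linear foliation of a $\T^2 \times I$ summand, possibly punctured by a transverse link. The plan is to approximate its slope by rationals via Dirichlet (\zcref{thm:dirichlet}). Each rational approximation gives a foliation of the model by annuli, punctured by the link into planar surfaces. The resulting compact planar surfaces $\Sigma_k$ are carried by the branched surface carrying $\Lambda$, with weights converging after rescaling the measure by $1/q_k$. Each component has filled Euler characteristic $2 > 0$: an annulus with both boundaries on $\partial M$, capped off, is a sphere, and link punctures are capped as well. Hence $\chi^\bullet(\Sigma_k / q_k) > 0$, and $\Sigma_k / q_k \to \Lambda$ in the topology on measured laminations. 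The point to verify is that boundary orientations remain coherent, which holds because all the annuli are parallel.
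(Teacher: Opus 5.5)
Your \emph{if} direction is fine and is essentially the paper's \zcref{lem:spherical_limits2}. There, rational approximations of the transverse measure on an $I$-invariant branched surface give unions of annuli, which become coherently oriented planar surfaces after drilling the link.

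The gap is in the \emph{only if} direction. You leave both the genus-$1$ case and the $\T^3$-type case open, and the route you sketch cannot be completed.

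\begin{itemize}
\item The premise of your $\T^3$-type argument is wrong. By \zcref{def:pems}, the model may be drilled along a transverse link, so in general $\Lambda$ meets $\partial M$ in closed meridian curves of positive measure. Then $\#_\partial(\Lambda)>0$ and $\chi(\Lambda)<0$, so your estimate $\chi^\bullet(\Lambda_k)\to\chi(\Lambda)=0$ is unavailable. Nothing forces the approximants' boundary weights to be small.
\item More fundamentally, if the approximants are arbitrary laminations, as in your setup, the \emph{only if} claim is false, so no isolation argument can exist. Let $\Lambda$ be a compact torus, or a $\T^3$-type set whose complement contains a ball, and let $S$ be a sphere bounding a small ball disjoint from $\Lambda$. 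Then $\Lambda_k=\Lambda\sqcup\frac1k S$ is coherently oriented and converges to $\Lambda$ in the given topology. It has $\chi^\bullet(\Lambda_k)=\chi^\bullet(\Lambda)+\frac2k=\frac2k>0$.
\end{itemize}

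The paper's proof, and its use in \zcref{prop:spherical_limits}, takes the approximants to be single compact genus-$0$ surfaces. The missing idea is how to exploit this.

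\begin{enumerate}
\item In both excluded cases $\chi^\bullet(\Lambda)=0$, and every boundary curve of $\Lambda$ is closed, i.e.\ has rational slope.
\item At a rational slope, $\#$ drops by a definite amount under any nearby change of slope, because nearby rationals have large denominators. So there is $c>0$ such that every lamination near $\Lambda$ either has $\chi^\bullet<\chi^\bullet(\Lambda)-c=-c$ or has exactly the boundary slopes of $\Lambda$.
\item Hence the approximants, which have $\chi^\bullet>0$, eventually share $\Lambda$'s slopes.
\item Dehn filling along these slopes caps each planar approximant into a sphere. This is where it matters that each approximant is a single planar surface.
\item One is thus reduced to a boundaryless lamination $\Lambda^\bullet$ approximated by spheres: a closed torus, or a $\T^3$-type lamination in the filled manifold.
\item The paper concludes from this that $\Lambda^\bullet$ has positive Euler measure and is a combination of spheres, which excludes both cases at once.
\end{enumerate}
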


\begin{proof}
   Suppose we have a sequence $(\Lambda_j)_{j\geq 1}$ of compact genus zero surfaces with coherently oriented boundary approximating $\Lambda$ as measured laminations. 
   
   Now suppose $\Lambda$ has rational boundary slopes. By the upper semicontinuity property of $\chi^\bullet$, there is a neighborhood of $\Lambda$ in the space of measured laminations and $c>0$ such that every lamination $\Lambda'$ in this neighborhood either has $\chi^\bullet < \chi^\bullet(\Lambda)-c$ or $\Lambda'$ has the same rational slope as $\Lambda$. Therefore, the slopes of the $\Lambda_j$ are eventually equal to the slopes of $\Lambda$. Then we may fill these rational boundary components, obtaining a sequence of spheres $(\Lambda^\bullet_i)$ approximating a measured lamination $\Lambda^\bullet$. But then we must have $\chi(\Lambda^\bullet) > 0$. Therefore, $\Lambda^\bullet$ itself must have positive Euler measure, and therefore is a linear combination of essential spheres. It follows that $\Lambda$ could not have been a $\T^3$-type minimal set.
\end{proof}

\begin{lem}[Nearby compact genus $0$ surfaces]\label{lem:spherical_limits2}
    Assume that $\Lambda$ is a $\T^2$-type planar minimal set $\Lambda$. Then there exists a sequence of compact planar surfaces converging to $\Lambda$ as measured laminations.
\end{lem}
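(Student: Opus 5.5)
Let $\Lambda$ be a $\T^2$-type planar minimal set. We will construct compact planar surfaces converging to $\Lambda$ by working in the model $\T^2\times I$ and transporting the surfaces back into $M$. By \zcref{def:pems}, $\Lambda$ has a neighborhood homeomorphic, by a homeomorphism preserving laminations and boundary, to a neighborhood of a model $\T^2$-type planar lamination. Such a model is obtained from an irrational linear foliation $\mathcal{L}$ on $\T^2\times I$ by splitting open some leaves and then removing a tubular neighborhood $N(L)$ of a transverse link $L$. As in the proof of \zcref{lem:planarclassification}, we can collapse the product guts so that they stay away from $\partial M$. The lamination then becomes a sublamination of a measured foliation on $(\T^2\times I)\setminus N(L)$, whose leaves are annuli $\R\times I$ punctured along $L$ and which is semi-conjugate to $\mathcal{L}$. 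Since \zcref{lem:smoothing_measured_laminations} makes $\Lambda$ smoothly carried, we may fix a branched surface $B$ carrying $\Lambda$. It is enough to produce compact planar surfaces carried by $B$ whose normalized weights converge to the weights of $\Lambda$. This is exactly the notion of convergence used in this section.

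To build the surfaces, write $\mathcal{L}=\ker(\alpha_1\,dx+\alpha_2\,dy)\times I$ on $\T^2_{x,y}\times I$, with slope $\alpha=\alpha_2/\alpha_1$ irrational. We pick rationals $p_j/q_j\to\alpha$ and the linear foliations $\mathcal{L}_j$ of slope $p_j/q_j$. These are fibrations by annuli $S^1\times I$, and each annulus meets the two boundary tori in curves of slope $p_j/q_j$. We perturb $\mathcal{L}_j$ by a $C^0$-small isotopy, of size tending to $0$, so that it remains transverse to $L$ and is carried by the same branched surface. After removing $N(L)$ and reinserting the split leaves, a single fiber $A_j$ becomes a compact planar surface: an annulus minus the finitely many disks where it meets $L$. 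On $M$ its boundary consists of parallel coherently oriented curves. Normalizing $A_j$ by the transverse measure $1/q_j$ (one fiber has mass approximately $1/q_j$ with respect to the invariant measure of $\mathcal{L}$), the weights it induces on the sectors of $B$ converge to those of $\Lambda$. This uses the standard fact that the normalized closed leaves of rational linear foliations converge to the irrational measured foliation. The subtle point is that the fibration $\mathcal{L}_j$ must be carried by $B$ with weights close to those of $\Lambda$. We will arrange this by choosing $B$ to be the image of a branched surface carrying the whole model foliation, transverse to $\partial_z$-type fibers. The weights on $B$ are then linear functions of the slope, and they are continuous.

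We expect the main obstacle to be the interaction with the link $L$ and the guts. The fiber $A_j$ must pass through the same gut regions as $\Lambda$ does, and it must not enter the guts in a way that creates sectors outside $B$. The plan is to put $L$ and the guts inside an $I$-fibered neighborhood where $\mathcal{L}_j$ and $\mathcal{L}$ agree up to small graphical isotopy. We will carry out the perturbation of slope only in a product region $\T^2\times I$ minus a small ball containing the guts, absorbing the shear along a thin collar, as in the ribbon twisting modifications. If this turns out to be too rigid, we will fall back on taking $A_j$ as a leaf of a ribbon twisting modification (\zcref{constr:ribbontwist}) of the model along a ribbon disjoint from $L$. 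Such a leaf closes up after approximately $q_j$ turns, which gives the same planar surface up to isotopy.
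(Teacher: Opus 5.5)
Your overall plan matches the paper's. You reduce to the model in $\T^2\times I$, use a product branched surface $B^\bullet=\tau\times I$ carrying the Denjoy blowup $\Lambda^\bullet$ of the irrational foliation, produce annuli from rational data, and drill out $L$. However, the step you yourself flag as the main obstacle is left unresolved, and as written it does not work. You assert that $\mathcal L_j$ can be perturbed ``so that it remains transverse to $L$'', but nothing supports this. $\mathcal L_j$ is a linear foliation on the blown-down $\T^2\times I$, whereas $L$ is only transverse to the leaves of $\Lambda^\bullet$ and may run through, and be knotted inside, its complementary regions. So nothing forces a fiber $A_j$ to meet $L$ transversally, and you need this for $A_j\setminus N(L)$ to be an annulus minus disks with coherently oriented meridian boundaries. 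Your proposed remedies do not fix this. The guts of a Denjoy blowup are noncompact product regions, not contained in a small ball, and no localized shear or ribbon twisting is needed.

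The missing idea is to choose $B^\bullet=\tau\times I$ fine enough that $L$ is \emph{transverse to $B^\bullet$}. This means $L\cap N(B^\bullet)$ consists of ties, and all the knotting of $L$ lies in the complementary regions (bigon $\times I$). Then every surface carried by $B^\bullet$ lies in $N(B^\bullet)$ transverse to the ties, so it meets $L$ transversally and positively. Moreover, $B=B^\bullet\setminus N(L)$ carries $\Lambda$ with the same weights.

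It is also cleaner to approximate the weight vector of $\Lambda$ directly by rational points of the rational polyhedral weight cone of $B^\bullet$, rather than pushing rational linear foliations into $N(B^\bullet)$. The weights of $\Lambda$ are positive, so nearby rational solutions of the switch equations remain admissible. Once $\tau$ is fine, it has several bigon complementary regions, so the weights of a carried curve are \emph{not} determined by its slope, contrary to your claim that they are linear functions of the slope. By $I$-invariance, integral weights on $B^\bullet$ are multicurves $\times I$, i.e.\ unions of annuli. After drilling $L$, these become the desired compact planar surfaces converging to $\Lambda$.
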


\begin{proof}
    It suffices to check this for $\Lambda$ a model $\T^2$-type planar lamination. Recall the definition of such from \zcref{def:pems}: start with an irrational linear foliation on $\T^2\times I$, apply a Denjoy blowup to get a lamination $\Lambda^\bullet$, and then drill out the tubular neighborhood of a transverse link $L$ to get $\Lambda$. Choose an $I$-invariant branched surface $B^\bullet$ carrying $\Lambda^\bullet$ which is fine enough that $L$ is transverse to $B^\bullet$. Then after drilling out $L$, we get a branched surface $B$ carrying $\Lambda$. This drilling operation does not change the measures on $B^\bullet$. The measure on $\Lambda$ induces a measure on $B$ and equivalently on $B^\bullet$. We may approximate this irrational measure with a rational one. By $I$-invariance of $B$, all measures on $B^\bullet$ with rational weights correspond to unions of annuli. Therefore, these rational weights on $B$ correspond to unions of compact planar surfaces approximating $\Lambda$, as desired.
\end{proof}

\begin{proof}[Proof of \zcref{prop:spherical_limits}]
    Suppose $\bm{s}$ is an accumulation point of spherical multislopes $\{\bm s^j\}_{j\geq 1}$. For each $j$, let $\Sigma_j$ be a compact planar surface with coherently oriented boundary certifying that $\bm{s}^j$ is a spherical multislope.
    
    Choose a triangulation of $M$. All Thurston norm minimizing surfaces in $M$, and in particular all the $\Sigma_j$'s, can be put in normal position with respect to this triangulation. The normal weights of some subsequence of the $\Sigma_j$'s converges; call the corresponding limiting measured lamination $\Lambda$. By \zcref{lem:spherical_limits1}, $\Lambda$ has a component which is a compact genus 0 surface or a $\T^2$-type planar minimal set. Furthermore, $\Lambda$ has coherently oriented boundary because all the $\Sigma_j$'s do. Thus, $\bm{s}$ is a spherical or $\T^2$-type planar multislope.

    Conversely, if $\bm{s}$ is a $\T^2$-type planar multislope, then  \zcref{lem:spherical_limits2} shows that $\bm{s}$ is a limit of spherical multislopes.
\end{proof}

    \section{Rigidity of spherical and \texorpdfstring{$\mathbb{T}^2$}{T2}-type multislopes}

    \subsection{Spherical multislopes and contact structures} \label{sec:S2contact}
        
\begin{lem}[Horizontal $S^2$] \label{lem:transverseS2}
Suppose $\Phi$ is nonwandering flow on $M$ admitting a transverse sphere. Then $M$ does not admit a tight (positive or negative) $\Phi$-horizontal contact structure.
\end{lem}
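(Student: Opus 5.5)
The strategy is to use the transverse sphere $S$ to exhibit an overtwisted disk, in the same spirit as item 1 of \zcref{lem:nec_hyp}. Let $S \subset \interior M$ be an embedded sphere transverse to $\Phi$ and suppose $\xi$ is a $\Phi$-horizontal contact structure, positive for definiteness. I would work in a neighborhood of $S$ adapted to $\Phi$. Since $S$ is transverse to $\Phi$ and $\Phi$ is nonsingular, flowing $S$ for a short time gives an embedding $S^2 \times [-\delta,\delta] \hookrightarrow M$ in which $\Phi$ is the $\partial_t$ direction.

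The first step exploits tightness: the restriction of $\xi$ to this product is a contact structure transverse to $\partial_t$. By the nonwandering hypothesis, some flow line leaves $S^2\times\{\delta\}$ and later returns to $S^2\times\{-\delta\}$. Fix a single point $p \in S$ and take such a returning orbit through a small neighborhood of $p$. Gluing the product region along this return, after a horizontal isotopy matching the two ends, produces a closed-up region in which $\xi$ is horizontal on something that looks like $S^1 \times (S^2 \text{ minus small disks})$ locally near $S$. I expect this to be unnecessary, however, because a cleaner route is available.

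The cleaner route uses characteristic foliations. The characteristic foliation $S_\xi$ of $S$ is a singular foliation on $S^2$. Because $\xi$ is transverse to $\Phi$ and $S$ is transverse to $\Phi$, $\xi$ and $TS$ are both graphs over the plane field $\Phi^\perp$. Their coincidence points, the singularities of $S_\xi$, are therefore points where $\xi = TS$ with coorientations agreeing, since both are positively transverse to $\Phi$. Every singularity is then positive, so the count of elliptic minus hyperbolic singularities has a fixed sign. Poincaré--Hopf gives $e_+ - h_+ = \chi(S^2) = 2$ with $e_- = h_- = 0$. For a tight structure, Eliashberg's Bennequin-type inequality for closed surfaces requires $e_+ - h_+ \le 0$ whenever $S$ is not a sphere; for spheres it is satisfied. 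Since that is not a contradiction for spheres, I instead compare with a second sphere obtained by the flow of $\Phi$.

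The main obstacle, and the point to exploit, is the nonwandering recurrence combined with the fact that $S$ bounds nothing coherent. I would use Giroux's criterion. If $S$ is convex for $\xi$ (after a $C^\infty$-small perturbation, which keeps transversality to $\Phi$), the dividing set $\Gamma_S$ is empty, because $\xi$ is never tangent to the transverse direction $\Phi$, which can serve as the contact vector field direction only after adjustment. A sphere with empty dividing set violates Giroux's criterion: a tight convex sphere must have a connected dividing set. This gives an overtwisted disk near $S$. Overtwistedness then follows without using nonwandering, unless needed for the perturbation.

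Expected obstacle: producing a contact vector field transverse to $S$ whose dividing set is genuinely empty. I would take $v = $ the generator of $\Phi$ rescaled, note that $\xi$ transverse to $v$ means $\alpha(v) > 0$ everywhere on $S$, hence $\Gamma_S = \{\alpha(v)=0\} = \varnothing$ once $v$ is arranged to be contact near $S$. This arrangement is possible by the standard local flexibility around a surface transverse to a flow, as in Giroux's genericity theorem for surfaces whose characteristic foliation is Morse--Smale. I would handle this step by perturbing $S$ in $C^\infty$ while staying transverse to $\Phi$.
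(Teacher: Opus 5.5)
Your final step fails. You claim that, once $S$ is made convex, the dividing set is empty: the generator $v$ of $\Phi$ satisfies $\alpha(v)>0$ along $S$ and can be ``arranged to be contact near $S$''. No such arrangement exists, because a closed convex surface always has $\Gamma_S\neq\varnothing$. To see this, suppose a contact vector field $v$ transverse to $S$ had $\alpha(v)>0$ along $S$. In a $v$-invariant neighborhood $S\times\R_t$ you could then write $\xi=\ker(dt+\beta)$ with $\beta$ a $1$-form on $S$. The contact condition $(dt+\beta)\wedge d\beta\neq 0$ would make $d\beta$ an area form on the closed surface $S$, contradicting Stokes. So the generator of $\Phi$, being everywhere transverse to $\xi$, can never be made contact near $S$. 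Giroux's genericity gives you some other contact vector field, tangent to $\xi$ along a nonempty $\Gamma_S$. Your appeal to Giroux's criterion therefore rests on an impossible premise. The recurrence/gluing paragraph is also never turned into an argument, as you acknowledge.

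What positivity of all singularities really gives is the following; any one of these closes the gap.

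(i) The paper's route: after a generic perturbation the singularities are nondegenerate and positive, so elliptic points are sources. By Poincar\'e--Hopf, $e_+=h_++2\geq 2$. The circle of orbits leaving a source cannot all limit onto singular points: sources attract nothing, and saddles receive only finitely many separatrices. So by Poincar\'e--Bendixson there is a closed characteristic. It bounds a disk in $S$ with $\mathrm{tb}=0$, which is an overtwisted disk.

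(ii) Your Giroux route, corrected: $R_-$ contains no singularities, and $S_\xi$ enters $R_-$ transversely along $\Gamma_S$. So each component of $R_-$ has Euler characteristic $0$ and is an annulus. Hence $\Gamma_S$ is disconnected, and Giroux's criterion for spheres applies.

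(iii) Most directly, your own count already suffices. From $e_-=h_-=0$ you get $\langle e(\xi),[S]\rangle=(e_+-h_+)-(e_--h_-)=2$. But Eliashberg showed that the Euler class of a tight contact structure vanishes on every embedded sphere, equivalently $e_+-h_+=e_--h_-=1$ for a generic sphere. Your remark that the inequality ``is satisfied'' for spheres is the mistake: the sphere case is exactly where it yields a contradiction.

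None of these arguments uses the nonwandering hypothesis, consistent with the paper's proof.
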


\begin{proof}
Let $\xi$ by a $\Phi$-horizontal contact structure on $M$. After a $C^\infty$-small perturbation, we can arrange that the characteristic foliation of $\xi$ along the transverse sphere $S$ has nondegenerate singularities, which must all be \emph{positive} by assumption. The Poicar\'{e}--Bendixson theorem and the Poincar\'{e}--Hopf theorem then imply that $\xi$ must have a closed characteristic along $S$, which guarantees the existence of an overtwisted disk.
\end{proof}

We now prove a more refined statement in the case of nonwandering suspension flows on $S^1 \times S^2$:

\begin{prop} \label{prop:contactS2suspension}
Let $\Phi$ be a nonwandering suspension flow on $S^1\times S^2$. Then $\Phi$ admits no horizontal contact structure, tight or not.
\end{prop}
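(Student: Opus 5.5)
The plan is to reduce to the solid-torus obstruction already used in \zcref{lem:nec_hyp}. The idea is to cut $S^1\times S^2$ along the boundary of a tube around a closed orbit. This splits it into two disk suspensions, and the two boundary slopes of $\xi$ seen from the two sides should be incompatible. Suppose for contradiction that $\xi$ is a (positive, say) contact structure positively transverse to $\Phi$. The negative case is the mirror image.

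\emph{Step 1: a closed orbit.} Write $\Phi$ as the suspension of an orientation-preserving homeomorphism $f$ of $S^2$. By the Lefschetz fixed point theorem ($L(f)=2\neq 0$), $f$ has a fixed point, so $\Phi$ has a closed orbit $\gamma$ meeting each sphere fiber once. Near $\gamma$, after a $\Phi$-horizontal isotopy, put $\xi$ in the standard form $\ker(d\phi+r^2\,d\theta)$ in coordinates $(r,\theta,\phi)$ around $\gamma$. Then, for a small $r_0$, the torus $T=\{r=r_0\}$ carries a linear characteristic foliation with well-defined slope. Since transversality to $\xi$ is an open condition, I will replace $\Phi$ near $\gamma$ by a flow still transverse to $\xi$, tangent to $T$, with linear vertical orbits on $T$. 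This is harmless: the only property of the flow used below is that each piece is a product $S^1\times D^2$ in which $\xi$ is transverse to the $S^1$-direction.

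\emph{Step 2: two disk suspensions.} The torus $T$ splits $S^1\times S^2$ into two solid tori $V_1=\{r\le r_0\}$ and $V_2$, its complement. Each is a product $S^1\times D^2$, because the sphere fibers meet $T$ in meridians. Frame $T$ so that the common meridian has slope $0$ and the flow direction has slope $+\infty$, as seen from each side. With the paper's convention that $\partial M$ is viewed from outside, reversing the side reverses the orientation of $T$. With the $\infty$ slope fixed, this change of side sends slope $s$ to $-s$, so the slopes of $\xi|_T$ measured from $V_1$ and from $V_2$ are $s$ and $-s$ for some $s\in\R$. Checking this sign bookkeeping carefully, using \zcref{fig:contactplanes}, is the first thing I would do, and I expect it to be the main obstacle.

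\emph{Step 3: the obstruction on each side.} Next I show that a disk suspension $S^1\times D^2$ carries no horizontal positive contact structure of slope $\geq 0$. By \zcref{prop:contactboxconv}, or directly by the twisting in \zcref{lem:trim_twist}, such a structure could be twisted down to slope exactly $0$ with a linear boundary foliation. The characteristic foliation on the boundary would then contain a closed Legendrian meridian bounding a disk, so the structure would be overtwisted. But a contact structure transverse to the $S^1$-factor of $S^1\times D^2$ is tight by \cite[Theorem 2.27]{V16}. This is exactly the argument in the first item of \zcref{lem:nec_hyp}. Applied to $V_1$ and $V_2$, it forces $s<0$ and $-s<0$, which is a contradiction.

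Two remarks. First, nonwandering is not essentially used beyond guaranteeing the setup; the argument is really about horizontality. Second, if Step 2's orientation conventions instead give equal slopes on the two sides, I would fall back on \zcref{lem:fillingcontact}. Its converse produces, from $\xi$, a positive contact structure on the drilled manifold $V_2$ of slope strictly below the filling slope $0$. Combining this with the explicit slope of the standard model $\ker(d\phi+r^2\,d\theta)$ on $\partial V_1$, which lies on the opposite side of $0$, should again give the contradiction.
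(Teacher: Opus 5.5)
The gap is in Step 3. The tightness result you invoke (the one behind \zcref{lem:nec_hyp}) concerns contact structures transverse to the circle fibers $\{p\}\times S^1$ of $S^1\times D^2$, i.e.\ to the periodic product flow. On $V_2$, $\xi$ is only transverse to $\Phi|_{V_2}$. That is a disk suspension whose orbits are in general not closed, and being a mapping torus of a disk does not make $\xi$ transverse to any circle fibration. Your sentence ``the only property of the flow used below is that each piece is a product $S^1\times D^2$ in which $\xi$ is transverse to the $S^1$-direction'' is true for $V_1$ but not for $V_2$.

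In fact the statement you would be proving is false. Your remark that nonwandering is never used is the warning sign, since without it the conclusion fails. Take an $S^1$-invariant contact structure $\xi=\ker(u\,dt+\beta)$ on $S^1\times S^2$, which exists by Giroux's realization of foliations divided by a multicurve, with the following fiber spheres:
\begin{itemize}
\item the dividing set is two latitude circles;
\item there is a positive elliptic point at each pole;
\item the annulus $R_-=\{u<0\}$ contains an attracting Legendrian equator.
\end{itemize}
All singularities are positive, so $\beta\neq 0$ on $\{u\le 0\}$. Cutting off the metric dual of $\beta$ gives a vector field $W$ on $S^2$ that vanishes near the poles, with $\beta(W)\ge 0$ everywhere and $\beta(W)>0$ on $\{u\le 0\}$. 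For $K\gg 0$ we get $u+K\beta(W)>0$, so $\xi$ is positively transverse to the suspension flow of $\partial_t+KW$. This flow is wandering: it crosses the torus $S^1\times\{\mathrm{equator}\}$ in one direction. Now run your argument with $\gamma$ the orbit through the north pole. Then $V_2$ contains the southern hemisphere of a fiber, which is an overtwisted disk. So $\xi|_{V_2}$ is an overtwisted contact structure transverse to a disk suspension, and the tightness input of Step 3 fails. No argument that ignores nonwandering can give the contradiction you want.

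Your fallback via \zcref{lem:fillingcontact} does not repair this, because it still needs some obstruction for horizontal contact structures on $V_2$. The paper in fact argues in the opposite direction. In \zcref{rem:contactsuspension}, the slope restrictions for nonwandering disk suspensions are \emph{deduced} from this proposition by blowing down, and excluding slope $0$ is singled out as the main difficulty. So Step 3 is not an available input even under the nonwandering hypothesis.

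The missing idea is a way to exploit nonwandering. The paper does this with convex surface theory on the fibers $S_t$:
\begin{itemize}
\item After perturbation, all but finitely many fibers are convex (\zcref{lem:convexbifurcation}).
\item Since there are no negative singularities, $R_-$ consists of annuli, each containing an attracting closed characteristic. These are organized into trees whose births and deaths are constrained: sink leaves cannot be born and source leaves cannot die.
\item Follow the attracting characteristic of a twig through the bifurcations and around the $S^1$-direction, inserting annuli in the bifurcation fibers.
\end{itemize}
This produces a torus, a Lutz tube, that is positively transverse to $\Phi$ and homologically trivial. It therefore separates $S^1\times S^2$, which is incompatible with $\Phi$ being nonwandering.
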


We will in fact prove a slightly stronger result: if $\Phi$ is \emph{any} suspension flow on $S^1 \times S^2$, and $\xi$ is a contact structure transverse to $\Phi$, then there exists an embedded torus $T$ transverse to $\Phi$ whose complement is disconnected. In particular, $\Phi$ cannot be nonwandering. 

Before giving the proof, we introduce some tools and explain the strategy. By \zcref{lem:transverseS2}, we already know that any $\Phi$-horizontal contact structure must be overtwisted. However, the nonwandering hypothesis implies some extra rigidity. 

We now fix a flow $\Phi$ as in the proposition, as well as a $\Phi$-horizontal contact structure $\xi$. For $t \in S^1$, we set $S_t \coloneqq \{t\} \times S^2$. In particular, $\xi$ has \emph{no negative tangencies} along $S_t$. We fix a distinguished point $0 \in S^1 \cong \R \slash \Z$, and identify $S^1 \setminus \{0\}$ with the interval $(0,1)$.

\begin{lem} \label{lem:convexbifurcation}
    After some arbitrarily $C^\infty$-small perturbation of $\xi$, there exists finitely many times $0 < t_1 < \dots < t_N < 1$ such that 
    \begin{itemize}
        \item If $t \neq t_i$, then $S_t$ is $\xi$-convex ,
        \item At $t=t_i$, a pair of parallel closed characteristics of $\xi|_{S_t}$ is born or dies.
    \end{itemize}
    Moreover, when a pair of characteristics is born (in forwards time), a $\flat$ annulus appears between them, and when a pair of characteristics dies, a $\sharp$ annulus disappears between them (see \zcref{fig:birthdeath}).
\end{lem}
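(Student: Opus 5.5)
The plan is to apply Giroux's theory of convex surfaces in one-parameter families to the spheres $S_t=\{t\}\times S^2$. Since $\xi$ is $\Phi$-horizontal and $\Phi$ is a suspension flow, each $S_t$ is transverse to $\Phi$, so the characteristic foliation $\xi|_{S_t}$ has only positive singularities. I would first make the family $(S_t)$, or equivalently $\xi$ by a $C^\infty$-small perturbation, generic in the sense of Giroux's bifurcation theory for families of characteristic foliations. After this step, for all but finitely many $t$ the foliation $\xi|_{S_t}$ is Morse--Smale. At the finitely many exceptional times, exactly one codimension-one bifurcation occurs.

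The listed codimension-one bifurcations are: saddle-node births and deaths of singularities, saddle connections (retrogradient or not), and births or deaths of a degenerate (semi-stable) closed orbit. The next step is to rule out the singular bifurcations. Every singularity is positive, since positive transversality to $\Phi$ forces the coorientations of $\xi$ and $TS_t$ to agree at tangencies. A saddle-node bifurcation does not change convexity. A saddle connection between two positive singularities is not retrogradient; indeed, retrogradient connections go from a negative to a positive singularity. By Giroux's criterion, a Morse--Smale foliation, or one with only non-retrogradient connections, is still convex. So $S_t$ is convex away from the times at which the configuration of closed orbits changes. Convexity is open, so the set of non-convex times is closed. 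Genericity then makes this set finite, giving the times $t_1<\dots<t_N$, at each of which a semi-stable closed orbit appears and splits into, or arises from merging, a pair of parallel hyperbolic closed characteristics. This proves the first two items.

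For the last item, I would work in a local model near a semi-stable orbit $\gamma\subset S_{t_i}$. I would take an annular neighborhood $A\times(t_i-\delta,t_i+\delta)$ with coordinates in which $\partial_t$ is positively transverse to $\xi$, and write the characteristic foliation on $A\times\{t\}$ as the graph of a line field twisting with $t$. The contact condition $\alpha\wedge d\alpha>0$ pins down the sign of $\partial_t$ of the slope of $\xi|_{A\times\{t\}}$. The computation is the same as in \zcref{lem:trim_twist}: moving in the direction of the coorientation of $\Phi$ rotates the characteristic line field in a fixed direction. Consequently the first-return map $h_t$ on a transverse arc across $\gamma$ is monotone in $t$ (strictly decreasing, say, in the conventions of \zcref{fig:contactplanes}). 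The consequences are then read off from this monotonicity. At $t_i$ the map $h_{t_i}$ is tangent to the identity at a single fixed point and lies on one side of the diagonal. As $t$ increases, the graph moves strictly in one direction, so a pair of fixed points is created only if $h_{t_i}\ge \mathrm{id}$, and destroyed only in the opposite situation. Between the two created fixed points we get $h_t<\mathrm{id}$, and matching signs with the definitions of $\sharp$ and $\flat$ spiraling identifies the created region as a $\flat$ annulus. The time-reversed statement gives that annihilation kills a $\sharp$ annulus.

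I expect the main obstacle to be making the genericity step rigorous in this particular setting. The perturbation must stay $\Phi$-horizontal and contact, and this is fine because both conditions are open in $C^\infty$. One must also make sure that no codimension-one degeneracy other than a closed-orbit saddle-node causes non-convexity; this includes ruling out homoclinic-type loops through singularities, again using positivity of all singularities. The sign bookkeeping in the final step is routine but convention-sensitive, and should be checked against \zcref{fig:contactplanes}.
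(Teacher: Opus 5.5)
Your treatment of the first two items is essentially the paper's argument. It combines Giroux's bifurcation analysis for a generic family with the observation that $\Phi$-transversality makes every singularity of $\xi|_{S_t}$ positive. Hence no retrograde saddle connection can occur, and the only non-convex times are the finitely many births and deaths of degenerate closed orbits. The finiteness is where $S_t\cong S^2$ enters, via Poincar\'e--Bendixson; the paper flags that the sphere topology is essential here.

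The local model for the last item, however, fails as you set it up. You identify nearby slices along a direction $\partial_t$ that is \emph{positively transverse to} $\xi$, speaking of moving along $\Phi$. You then claim the contact condition fixes the sign of the rotation of the characteristic line field. That sign is only controlled when the identifying vector field $Y$ is \emph{Legendrian}. Let $\phi_s$ be the flow of $Y$ and let $v$ span $\xi\cap TS_t$. Then $\frac{d}{ds}(\phi_s^*\alpha)(v)=d\alpha(Y,v)+v\big(\alpha(Y)\big)$. Only when $\alpha(Y)\equiv 0$ does the second term vanish and the first get a fixed sign, because then $Y,v$ span $\xi$ and $d\alpha|_\xi$ is nondegenerate.

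For a concrete failure, take $\xi=\ker(dz-y\,dx)$ with slices $\{y=t\}$ and $Y=\partial_y+g(x)\partial_z$, $g>0$. This $Y$ is positively transverse to both $\xi$ and the slices. The pulled-back characteristic foliations have slope $t\,(1-g'(x))$, so they rotate in opposite directions where $g'<1$ and where $g'>1$. The computation in the trim/twist lemma you cite in fact uses a normal form with $\partial_r$ tangent to $\xi$, which is the opposite of what you assume.

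The fix is what the paper does. Since $\gamma_0$ contains no singular point, $\xi$ is transverse to $S_{t_i}$ along $\gamma_0$. So near $\gamma_0$ there is a vector field tangent to $\xi$ and positively transverse to the spheres, which can be rescaled so that its flow carries each $S_t$ to $S_{t'}$. In these coordinates the characteristic line field rotates strictly in one direction (clockwise, in the paper's conventions) as $t$ increases. Your monotone return-map argument then goes through unchanged.
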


    \begin{figure}[ht]
        \centering
        \def\svgwidth{\textwidth}
        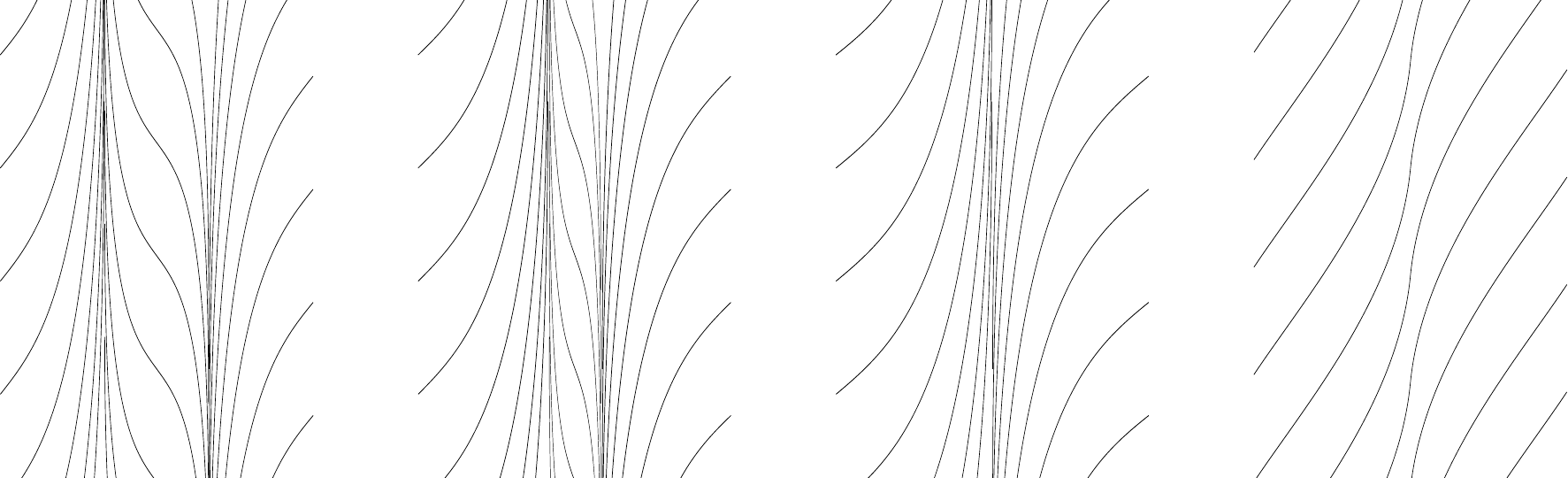
        \caption{Permissible death of a pair of closed characteristics. Note how the tangent line marked in red rotates clockwise.}
        \label{fig:birthdeath}
    \end{figure}

\begin{proof}
    The first part follows from the analysis in~\cite{ giroux.StructuresContactDimension,massot.TopologicalMethods3dimensional}, and the fact that there are no ``retrograde connection'' from a negative to a positive saddle point of $\xi|_{S_t}$, since this characteristic foliation has no negative singularities at all. Note that this argument crucially uses that the fibers are spheres.

    We now justify the second part on the type of annuli that can appear or disappear. At $t = t_i$, the characteristic foliation of $\xi$ along $S_{t_i}$ has a single degenerate closed characteristic $\gamma_0$, which is repelling on one side and attracting on the other side. Let us assume that for $t>t_i$ slightly larger, $\gamma_0$ gives rise to two parallel (nondegenerate) closed characteristics, one attracting and one repelling. Denote them by $\gamma_-$ and $\gamma_+$, respectively. We want to show that $\gamma_+$ is on the left of $\gamma_-$, so that they bound a $\flat$ annulus. Equivalently, we want to show that $\gamma_0$ is repelling on the left, and attracting on the right. Since $\xi$ is transverse to $\gamma_0$, we may find a smooth vector field $X$ defined in a neighborhood of $\gamma_0$ in $S^1 \times S^2$, which is tangent to $\xi$ and positively transverse to the sphere $S_t$ for $t$ close to $t_i$. We may rescale this vector field so that its flow preserves the foliation by spheres (at least in a neighborhood of $\gamma_0$). We may then use this flow to define coordinates $(x,y,z)$ near $\gamma_0$, such that $X$ becomes $\partial_z$ and the spheres $S_t$ are tangent to the $(x,y)$-planes. In these coordinates, $\xi$ is tangent to $\partial_z$, and the contact condition implies that $\xi$ twists in a clockwise direction in the $(x,y)$-planes when $z$ increases. Therefore, $\gamma_0$ must be repelling on the left and attracting on the right since otherwise it would disappear when $t$ increases. The case of a death of two canceling nondegenerate orbits is similar. 
\end{proof}

After applying the lemma, we consider a finite sequence of directed graphs $G_0, \dots, G_N$ on $S^2$, constructed as follows. For each $i$, choose a $\tau_i \in (t_i, t_{i+1})$. 
\begin{itemize}
    \item The vertices of $G_i$ are the connected components of the set $R_+$ of the characteristic foliation of $\xi$ on $S_{\tau_i}$,
    \item The edges of $G_i$ are in correspondence with the connected components of $R_-$. Since $\xi$ has no negative tangencies with $S_{\tau_i}$, each component of $R_-$ is an annulus containing an isolated closed characteristic with attracting holonomy. The corresponding edge connects the neighboring components of $R^+$.    
    \item The edges are oriented according to the orientation of the characteristics of the components of $R_-$ they enclose as in \zcref{fig:edge_expansion}.
\end{itemize}

Note that $G_i$ does not depend on the specific choice of $\tau_i$, since the dividing sets and $R_\pm$ regions of convex surfaces are only modified by isotopies when varying them among a family of convex surfaces. Moreover, the orientations of the edges are also preserved since there are no negative singularities, and the attracting closed characteristics in $R^-$ are also modified by isotopies only.

\begin{figure}[ht]
    \centering
    \begin{subfigure}{0.9\textwidth}
        \centering
        \def\svgwidth{\textwidth}
        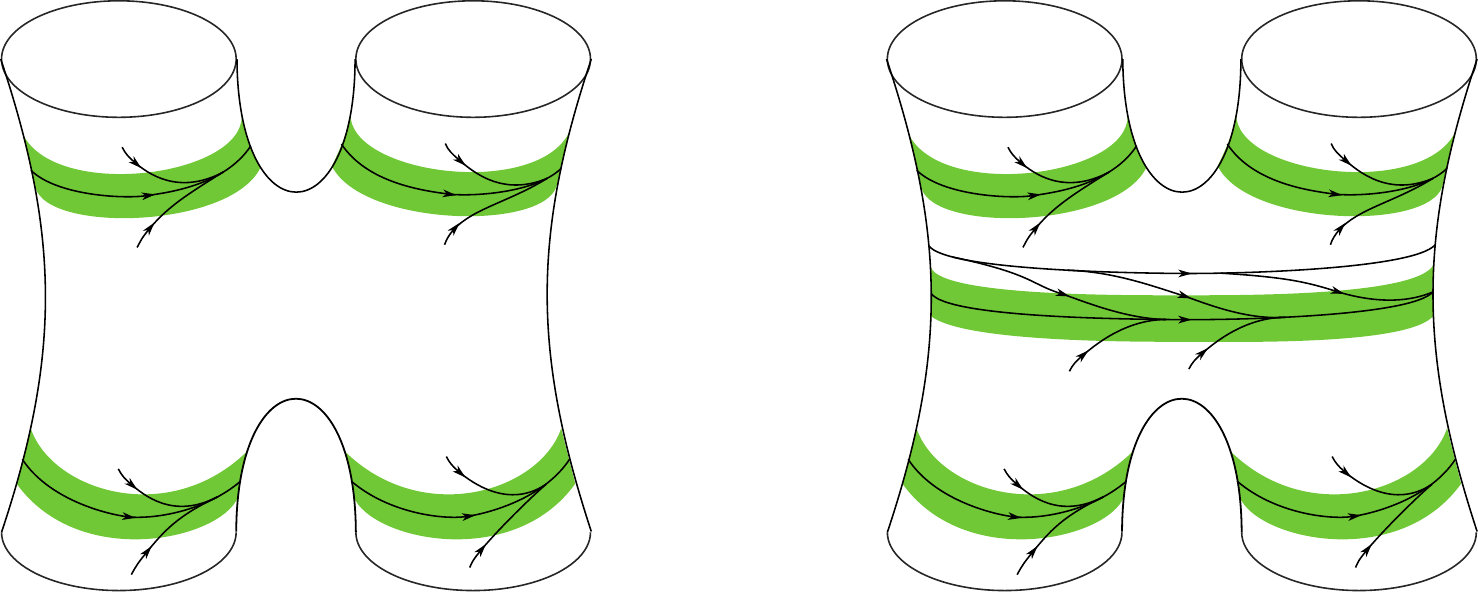
        \caption{Birth of an attracting orbit on the surface.}
        \label{fig:edge_expansion1}
    \end{subfigure}
    
    \vspace{1em}
    \begin{subfigure}{0.5\linewidth}
        \centering
        \def\svgwidth{\textwidth}
        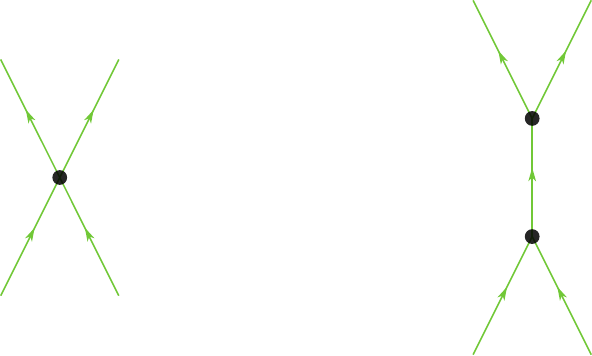
        \caption{Corresponding edge expansion.}
        \label{fig:edge_expansion2}
    \end{subfigure}
    \caption{A birth of a pair of closed characteristics and the corresponding edge expansion. $R^-$ is shown in green.}
    \label{fig:edge_expansion}
\end{figure}

Since the $S_{\tau_i}$'s are convex spheres, the graphs $G_i$ are \emph{trees} with at least one edge each. Moreover, there are elementary operations (edge expansions or edge contractions)\footnote{While an edge contraction is a graph morphism, an edge expansion is not. An edge expansion can be thought of as the dual of an edge contraction.}
\begin{equation} \label{eq:bifurcation}
\begin{tikzcd}
	{G_0} & {G_1} & \dots & {G_{N-1}} & {G_N \cong G_0}
	\arrow["{\varphi_0}", squiggly, from=1-1, to=1-2]
	\arrow["{\varphi_1}", squiggly, from=1-2, to=1-3]
	\arrow["{\varphi_{N-2}}", squiggly, from=1-3, to=1-4]
	\arrow["{\varphi_{N-1}}", squiggly, from=1-4, to=1-5]
\end{tikzcd}
\end{equation}
which keep track of the birth of death of attracting closed characteristics. Note that there is a canonical identification between $G_0$ and $G_N$.

Let us describe these operations in more detail:
\begin{itemize}
    \item If the convex surface structure on $S_{\tau_{i+1}}$ differs from the one on $S_{\tau_i}$ by the birth of a pair of parallel characteristics, one attracting and one repelling, then $G_{i+1}$ differs from $G_i$ by the insertion of an edge, corresponding to the new attracting closed characteristic. The operation $\varphi_i$ is the corresponding edge expansion, with the appropriate orientation. See \zcref{fig:edge_expansion}.
    \item If the convex surface structure on $S_{\tau_{i+1}}$ differs from the one on $S_{\tau_i}$ by the death of a pair of parallel characteristics, one attracting and one repelling, then $G_{i+1}$ differ from $G_i$ by the deletion of an edge, corresponding to the vanishing attracting closed characteristic. The operation $\varphi_i$ is the corresponding edge contraction.
\end{itemize}

Let $0 \leq i \leq N$. A leaf (degree-$1$ vertex) $v$ of $G_i$ is a \textbf{source leaf} (resp.~\textbf{sink leaf}) if its adjacent edge is oriented away from (resp.~towards) it. We denote by $\mathcal{L}^\mathrm{source}_i$ and $\mathcal{L}^\mathrm{sink}_i$ the sets of source and sink leaves of $G_i$, respectively. We set $\mathcal L_i=\mathcal{L}^\mathrm{source}_i \cup \mathcal{L}^\mathrm{sink}_i$. Concretely, a source (resp.~sink) leaf corresponds to a disk component of $R_+$ whose adjacent component of $R_-$ contains an attracting closed curve oriented clockwise (resp.~counterclockwise) with respect to $R_+$. See the leftmost panels in \zcref{fig:dividing_set_S2} for an example of a sink leaf.

\begin{lem} \label{lem:disallowed_expansions}\leavevmode
    \begin{itemize}[beginpenalty=10000]
        \item Sink leaves cannot be born; that is, an edge in $G_{i+1}$ pointing into a sink leaf cannot be created by an edge expansion $\varphi_i$.
        \item Source leaves cannot die; that is, an edge adjacent to a source leaf cannot be contracted by $\varphi_i$.
    \end{itemize}
\end{lem}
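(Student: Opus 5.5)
The plan is to reduce both bullets to the local bifurcation constraint of \zcref{lem:convexbifurcation}: a birth in forward time creates a $\flat$ annulus between the new pair of closed characteristics, and a death destroys a $\sharp$ annulus. We then translate ``sink leaf'' and ``source leaf'' into the position of the attracting closed characteristic relative to the repelling one it is born or dies with.

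First we make the local picture precise. Let $v$ be a leaf of $G_{i+1}$ whose adjacent edge $e$ was created by the expansion $\varphi_i$. Then $e$ corresponds to a new attracting closed characteristic $\gamma_-$ born at $t_{i+1}$ together with a repelling partner $\gamma_+$. The repelling orbit $\gamma_+$ lies in $R_+$: it is the ``seam'' separating the two components of $R_+$ joined by $e$. Near the birth, the annulus between $\gamma_+$ and $\gamma_-$ is thin. For $v$ to be a leaf, the disk $D_v$ of $R_+$ corresponding to $v$ must lie on the side of $\gamma_-$ away from the rest. In the degenerate picture at $t_{i+1}$, the disk is bounded near $\gamma_0$, so $\gamma_+$ sits inside (on the $D_v$ side) and $\gamma_-$ outside.

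Next we orient. Orient $\gamma_\pm$ by the characteristic foliation, which is a coherent orientation since the pair bounds an annulus of parallel leaves. The statement of \zcref{lem:convexbifurcation} says $\gamma_+$ lies to the left of $\gamma_-$, so the annulus between them is $\flat$. Hence $D_v$ lies to the left of $\gamma_-$. Equivalently, $\gamma_-$ is oriented counterclockwise as seen from $D_v$ (using the convention that boundary orientation with the region on the left is counterclockwise). By the definition of sink leaves this is the sink case, which is the reverse of what we want to show can occur. So at this point I must carefully check the paper's conventions: the definition says a sink leaf corresponds to an attracting curve oriented counterclockwise with respect to $R_+$, and the edge points into the sink.

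The main obstacle is exactly this sign bookkeeping. The left/right convention for $\flat$ versus $\sharp$ in the paper, the orientation of $S^2$ seen from $S_t$ with the flow pointing forward, and the counterclockwise convention for source and sink leaves must be matched consistently. I would settle it by working in the local coordinates from the proof of \zcref{lem:convexbifurcation}, where $X=\partial_z$ and $\xi$ rotates clockwise in the $(x,y)$-plane as $z$ increases. In those coordinates I would compute explicitly which side of $\gamma_0$ is repelling. The outcome must be that the newborn attracting orbit is oriented clockwise around the disk on the repelling side, so a newly created leaf is a source, never a sink.

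The death case follows by time reversal, equivalently by the $\sharp$ statement. A dying pair bounds a $\sharp$ annulus, so the $R_+$ disk on the repelling side sees the attracting orbit with the opposite orientation to the birth case. Therefore a contracted edge adjacent to a leaf comes from a sink leaf, and source leaves cannot die. One additional point to check is that when the contracted edge's leaf end is the $R_+$ component not adjacent to $\gamma_+$, contraction does not yield a leaf-killing. I will argue this via the thin-annulus picture: the component on the $\gamma_-$ side of an isolated pair always remains adjacent to another edge, unless $G_i$ has a single edge, in which case the death would leave no edges, contradicting that each $G_j$ has at least one edge.
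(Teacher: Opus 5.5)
There is a genuine gap, and it is not a sign-convention problem. The step that fails is your claim that the newborn repelling orbit $\gamma_+$ sits on the $D_v$ side of $\gamma_-$. You give no reason for it (``the disk is bounded near $\gamma_0$'' does not decide anything), and it is exactly the configuration that cannot occur.

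Your deduction from that premise is correct with the paper's conventions. The $\flat$ condition of \zcref{lem:convexbifurcation} puts $\gamma_+$ to the left of $\gamma_-$. So if $\gamma_+\subset D_v$, then $D_v$ is to the left of $\gamma_-$, $\gamma_-$ is counterclockwise with respect to $D_v$, and $v$ is a sink. That is precisely the hypothesis the paper assumes in order to reach a contradiction.

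Redoing the local computation in the coordinates $X=\partial_z$ will only reproduce \zcref{lem:convexbifurcation}. The outcome you say it ``must'' give (the attracting orbit is clockwise around the disk on the repelling side) actually contradicts that lemma. No local analysis near $\gamma_0$ can tell you on which side of $\gamma_-$ the new $R_+$ component is a disk; that is global information. Also, $\gamma_+$ is not the seam between the two new $R_+$ components: the seam is the $R_-$ annulus around $\gamma_-$, and $\gamma_+$ lies inside one of the two components.

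The missing idea is a Poincar\'e--Bendixson argument that uses that $D_v$ is a disk: a disk component of $R_+$ contains no repelling closed characteristic. Such an orbit would bound a subdisk of the component. The characteristics leaving it into that subdisk would have to limit onto one of two things:
\begin{itemize}
\item an attracting closed characteristic, which cannot lie in $R_+$;
\item a sink, i.e.\ a negative singularity, which is excluded because $\xi$ has no negative tangencies with the fibers $S_t$.
\end{itemize}
Hence $\gamma_+\not\subset D_v$, so $D_v$ lies to the right of $\gamma_-$ and $v$ is a source. This is exactly where the disk hypothesis enters (cf.\ the remark after the lemma), and a purely local orientation argument could never detect it.

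The same principle settles deaths. Before the death, the component containing $\gamma_+$ is not a disk, so a leaf whose twig is contracted lies on the far side of $\gamma_-$ from $\gamma_+$. Since $\gamma_+$ is then to the right of $\gamma_-$ (the $\sharp$ condition), the leaf is to the left and is a sink. Your final paragraph tries to rule out precisely this configuration, but it does occur. It is what happens in \zcref{fig:dividing_set_S2}, where the twig of a sink disk is contracted, so that step is false as well.
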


\begin{proof}
  Let us prove the first item. Assume by contradiction that a sink leaf is created by $\varphi_i$. We then obtain a disk component $D$ in $R_+(S_{\tau_i})$ whose characteristics near the boundary converge to an attracting curve $\gamma_-$ oriented counterclockwise with respect to the boundary of $D$. Moreover, another parallel repelling curve $\gamma_+$ is created, which must lie to the left of $\gamma_-$ and must belong to $D$, by \zcref{lem:convexbifurcation}. See \zcref{fig:forbidden_birth}. However, the characteristics emanating to the left of $\gamma_+$ remain in $D$ and cannot limit to a closed characteristic (it would be an attracting curve, which is impossible).  Therefore, at least one must converge to a sink, which is again impossible. 

    \begin{figure}[ht]
        \centering
        \def\svgwidth{0.5\textwidth}
        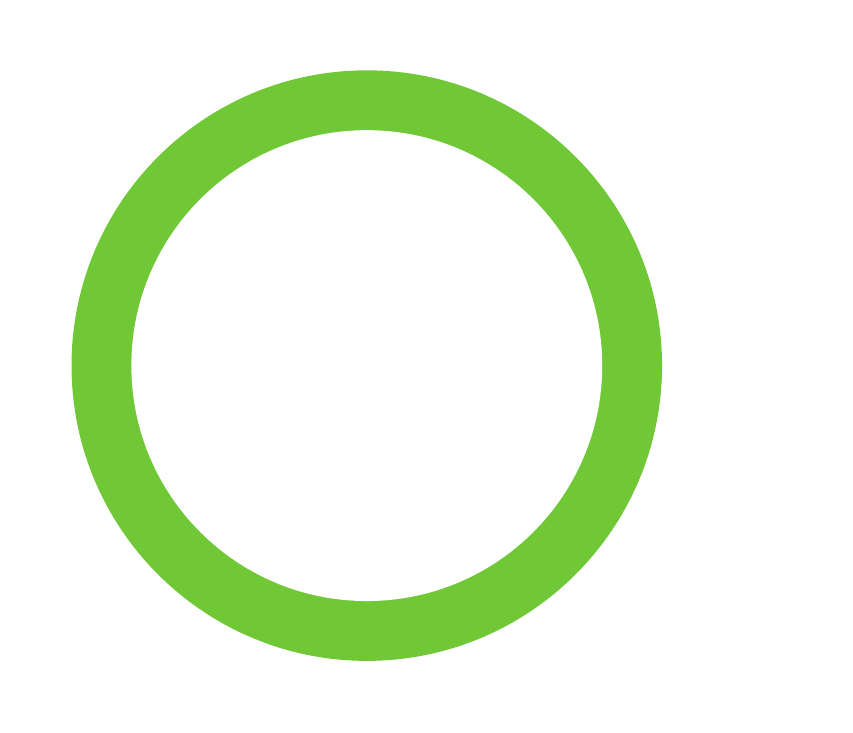
        \caption{A forbidden birth. There is no way to complete the characteristic foliation on the disk in a way compatible with the convex surface structure.}
        \label{fig:forbidden_birth}
    \end{figure}

    The proof of the second item follows from applying time reversal $t\to -t$ composed with reversing the orientation of the $S^2$ fibers; this symmetry preserves the orientation of $M$ and therefore maps $\xi$ to another positive contact structure, but changes births for deaths, sources for sinks, and edge contractions for edge expansions.
\end{proof}

\begin{rem}
    In the proof, we crucially use that the sink and source leaves correspond to attracting curves bounding a \emph{disk}. This guarantees that the characteristic foliation in the corresponding component of $R_+$ cannot have a repelling orbit; otherwise it must have a sink, contradicting that $\xi$ only has positive tangencies with the spheres $S_t$. However, this argument fails if that component of $R_+$ has more than one boundary component.
\end{rem}

\begin{cor}
    There are natural injections 
    $$\sigma_i^\mathrm{source}\colon \mathcal{L}^\mathrm{source}_i \hookrightarrow \mathcal{L}^\mathrm{source}_{i+1}, \qquad \sigma_i^\mathrm{sink}\colon \mathcal{L}^\mathrm{sink}_{i+1} \hookrightarrow \mathcal{L}^\mathrm{sink}_{i}.$$
    Moreover, $$\big\vert\mathcal{L}^\mathrm{source}_0\big\vert=\big\vert\mathcal{L}^\mathrm{source}_N\big\vert, \qquad \big\vert\mathcal{L}^\mathrm{sink}_0\big\vert=\big\vert\mathcal{L}^\mathrm{sink}_N\big\vert,$$
    hence both $\sigma_i^\mathrm{source}$ and $\sigma_i^\mathrm{sink}$ are bijections.
\end{cor}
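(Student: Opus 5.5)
The injections come directly from \zcref{lem:disallowed_expansions}. We go through the elementary operations $\varphi_i$ one at a time and record what each does to the set of leaves.

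Suppose first that $\varphi_i$ is an edge expansion. Then $G_i$ is the contraction of $G_{i+1}$ along one new edge $e$. A leaf $v$ of $G_i$ corresponds to a disk component of $R_+$ on $S_{\tau_i}$. After the expansion, two things can happen. Either $v$ still corresponds to a leaf of $G_{i+1}$ with an adjacent edge of the same orientation, namely the edge coming from the same attracting characteristic, which is preserved up to isotopy. Or $v$ has been split by $e$ into two vertices, one of which is a new leaf adjacent to $e$, while the other keeps the old edge.
\begin{itemize}
\item For source leaves: $v$ stays a source leaf, or the vertex still carrying its old edge is a degree-one vertex with the same orientation. This gives $\sigma_i^{\mathrm{source}}(v)$.
\item For sink leaves: any sink leaf of $G_{i+1}$ must already come from a sink leaf of $G_i$, because by the first item of \zcref{lem:disallowed_expansions} no new edge points into a sink leaf. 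This gives $\sigma_i^{\mathrm{sink}}$.
\end{itemize}

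Suppose next that $\varphi_i$ is an edge contraction. Here the second item of \zcref{lem:disallowed_expansions} guarantees that no source leaf loses its edge, so every source leaf of $G_i$ survives in $G_{i+1}$ with the same oriented edge. Dually, every sink leaf of $G_{i+1}$ is the image of a sink leaf of $G_i$: it is either untouched, or it becomes a leaf after its neighbor's other edges collapse. In that second case the preimage vertex carrying the same attracting characteristic was a sink vertex with that edge, so it is a sink leaf of $G_i$ or one can trace it back there.

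The delicate point, and the one I expect to be the main obstacle, is exactly this: making precise that a vertex carrying ``the same'' edge, identified through the isotopy of the attracting closed characteristic, is a leaf before and after the move. This needs the fact that a leaf is determined by a disk component of $R_+$ bounded by a single attracting curve. I would argue it geometrically on the sphere, using that the move is local near the annulus born or dying in \zcref{lem:convexbifurcation}. Injectivity is then clear, since distinct leaves are distinct disk components, and these persist through isotopy.

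For the cardinality statement, use the canonical identification $G_N\cong G_0$. This gives $|\mathcal L^{\mathrm{source}}_0|=|\mathcal L^{\mathrm{source}}_N|$, and likewise for sinks. Now compose the injections. The composite
$$\mathcal L^{\mathrm{source}}_0\hookrightarrow\mathcal L^{\mathrm{source}}_1\hookrightarrow\cdots\hookrightarrow\mathcal L^{\mathrm{source}}_N$$
is an injection between finite sets of equal size, hence a bijection, and therefore each factor is a bijection. The sink chain, which runs backwards, is handled identically.
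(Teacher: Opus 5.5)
Your overall structure matches the paper's: the injections are read off from \zcref{lem:disallowed_expansions}, and closing the chain with the canonical identification $G_N\cong G_0$ forces every injection between these finite sets to be a bijection. That counting step is fine. The gap is in how you define the maps. You track a leaf by the vertex carrying \emph{the same} twig, and this fails in precisely the two nontrivial moves.
(i) Suppose the expansion $\varphi_i$ occurs at a source leaf $v$ itself. Then $v$ splits into $v'$, carrying the old twig \emph{and} the new edge $e$, and $v''$, carrying only $e$. So your claim that ``the vertex still carrying its old edge is a degree-one vertex'' is false: $v'$ has degree two.
(ii) Suppose $\varphi_i$ contracts the twig $f$ of a leaf $a$ whose neighbour $b$ has degree two, and the other edge $g$ of $b$ points into $b$. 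Then the merged vertex is a sink leaf of $G_{i+1}$ with twig $g$. But the vertex of $G_i$ carrying $g$ is $b$, which is not a leaf, so ``the preimage vertex carrying the same attracting characteristic'' is not a sink leaf.
In both cases the disk component of $R_+$ is not preserved up to isotopy: in (i) it is cut into a subdisk and an annulus, and in (ii) it is absorbed into a larger disk. So your injectivity argument (``distinct disk components persist'') also breaks there. These twig switches are genuine, not artefacts: they are exactly what the proof of \zcref{prop:contactS2suspension} uses when $\gamma_{i+1}$ is taken to be the nearest attracting characteristic parallel to $\gamma_i$. In particular, the ``delicate point'' you propose to establish geometrically is false as stated.

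The repair is a purely combinatorial case check on the trees, using only the lemma.
\emph{Expansion at a vertex $x$.} Only $x$ is replaced, by $x'$ and $x''$. Any leaf of $G_{i+1}$ that is not an unchanged leaf of $G_i$ therefore has twig $e$, and by the first item of the lemma it is a source leaf. Hence every sink leaf of $G_{i+1}$ is an unchanged sink leaf of $G_i$. Every source leaf of $G_i$ survives with its twig, except when $x=v$; in that case send $v$ to the new leaf $v''$.
\emph{Contraction of $f=(a,b)$.} By the second item, neither endpoint is a source leaf, so all source leaves survive with their twigs. The only possible new leaf is the merged vertex. It arises exactly when one endpoint, say $a$, is a leaf and $b$ has degree two; then $a$ is necessarily a sink leaf, since its twig $f$ dies. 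When the merged vertex is a sink leaf, send it back to $a$.
Injectivity is then immediate. In each case the one special leaf is sent to a vertex ($v''$, respectively $a$) that is not the image of any unchanged leaf.
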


Now we define bijections $\sigma_i: \mathcal L_i \to \mathcal L_{i+1}$ by $\sigma_i (v)= \sigma_i^{\textrm{source}}$ if $v$ is a source and $\sigma_i(v) = (\sigma_i^{\textrm{sink}})^{-1}$ if $v$ is a sink. We call these the \textbf{leaf tracking maps}. Composing these maps together, we get a bijection
\begin{align*}
    \Sigma& : \mathcal{L}_0 \rightarrow \mathcal{L}_N \cong \mathcal{L}_0\\
    \Sigma& \coloneq \sigma_{N-1} \circ \dots \circ \sigma_0 
\end{align*}

\begin{defn}
We call an edge connected to a leaf a \textbf{twig}.  It is \textbf{incoming} or \textbf{outgoing} depending on whether it is connected to a source or sink leaf. If $v$ is a leaf, we denote the adjacent twig by $\twig(v)$.
\end{defn}

We are now ready to prove \zcref{prop:contactS2suspension}.

\begin{proof}[Proof of \zcref{prop:contactS2suspension}]
    Let $\Phi$ be a suspension flow on $S^1 \times S^2$ and $\xi$ be a positive contact structure transverse to $\Phi$. We apply \zcref{lem:convexbifurcation} and use our bifurcation analysis captured by the trees $G_i$'s and the sequence of elementary operations~\eqref{eq:bifurcation} to construct an embedded torus $L$ in $S^1 \times S^2$, which is transverse to $\Phi$ and homologically trivial in $H_2(S^1 \times S^2; \Z)$. In particular, $S^1 \times S^2 \setminus T$ is disconnected, and $\Phi$ flows from one connected component to another, preventing it from being nonwandering.

    Let us consider a leaf $v$ of $G_0$ and the twig $e$ attached to it. We first illustrate our strategy in a simple case. Let us assume that $v$ is a sink and is not affected by the elementary operations $\varphi_i$. This means that $e$ corresponds to an attracting closed characteristic of $\xi\vert_{S_0}$ which bounds a disk and is not affected by the bifurcations of the characteristic foliations as $t$ increases. We also assume that $\Sigma(v) = v$, i.e., this closed characteristic comes back to itself after going around the $S^1$-direction of $S^1 \times S^2$. We obtain a family of closed curves $\gamma_t \subset S_t$, $t \in S^1$, which trace out an embedded torus $L$ which is homologically trivial. In this context, $L$ is obtained as a union of Legendrian curves bounding overtwisted disks, namely, a \emph{Lutz tube}.
    
    By construction, $L$ is both transverse to the spheres $S_t$'s and to the contact structure $\xi$, and we now argue that $\Phi$ must be transverse to $L$ by analyzing the positive cone bounded by $T S_t$ and $\xi$ along $\gamma_t$. We coorient $L$ so that the direction positively transverse to $\gamma_t$ in $S_t$ is positively transverse to $L$. This convention matches our convention for the orientation of the edges of the trees $G_i$'s.

    Let us consider a fixed characteristic $\gamma_{t_0} \subset S_{t_0}$ for some $t_0 \in S^1$. After a leafwise isotopy near $\gamma_{t_0}$, we may arrange that $\gamma_t$ is constant for $t$ close to $t_0$. The characteristic foliation on the sphere $S_t$ for $t$ close to $t_0$ defines a flow $\Psi$ near $\gamma_{t_0}$ which is tangent to $L$ and to the leaves $S_t$, and which has attracting holonomy around $L$. The linearization of the first return map of $\Psi$ at a point $p \in L$ has two invariant directions: $T_p S_{t_0}$ along which it is contracting, and $T_pL$, along which it is the identity. The contact condition and the effect of this first return map implies that $\xi$ lies in the positive quadrant bounded by $T_pS_{t_0}$ and $T_pL$. Moreover, the coorientation of $\xi(p)$ is negatively transverse to $T_pS_{t_0}$, by definition.\footnote{Recall the orientation convention for the characteristic foliation of a positive oriented contact structure $\xi$ along an oriented surface $\Sigma$: at a point $p \in \Sigma$ where $\xi$ and $\Sigma$ are transverse, a nonzero vector $v \in \xi(p) \cap T_p \Sigma$ is a positive generator of the characteristic foliation if for some/any vector $n \in T_p \Sigma$ positively transverse to $\xi(p)$, the pair $(v,n)$ is a positive basis of $T_p \Sigma$.} Therefore, the positive cone bounded by $T_p S_{t_0}$ and $\xi(p)$ is positively transverse to $T_pL$, and $\Phi$ is positively transverse to $L$ as claimed. See \zcref{fig:barrier_3d}.

    \begin{figure}[ht]
    \centering
    \includegraphics[width=0.6\linewidth]{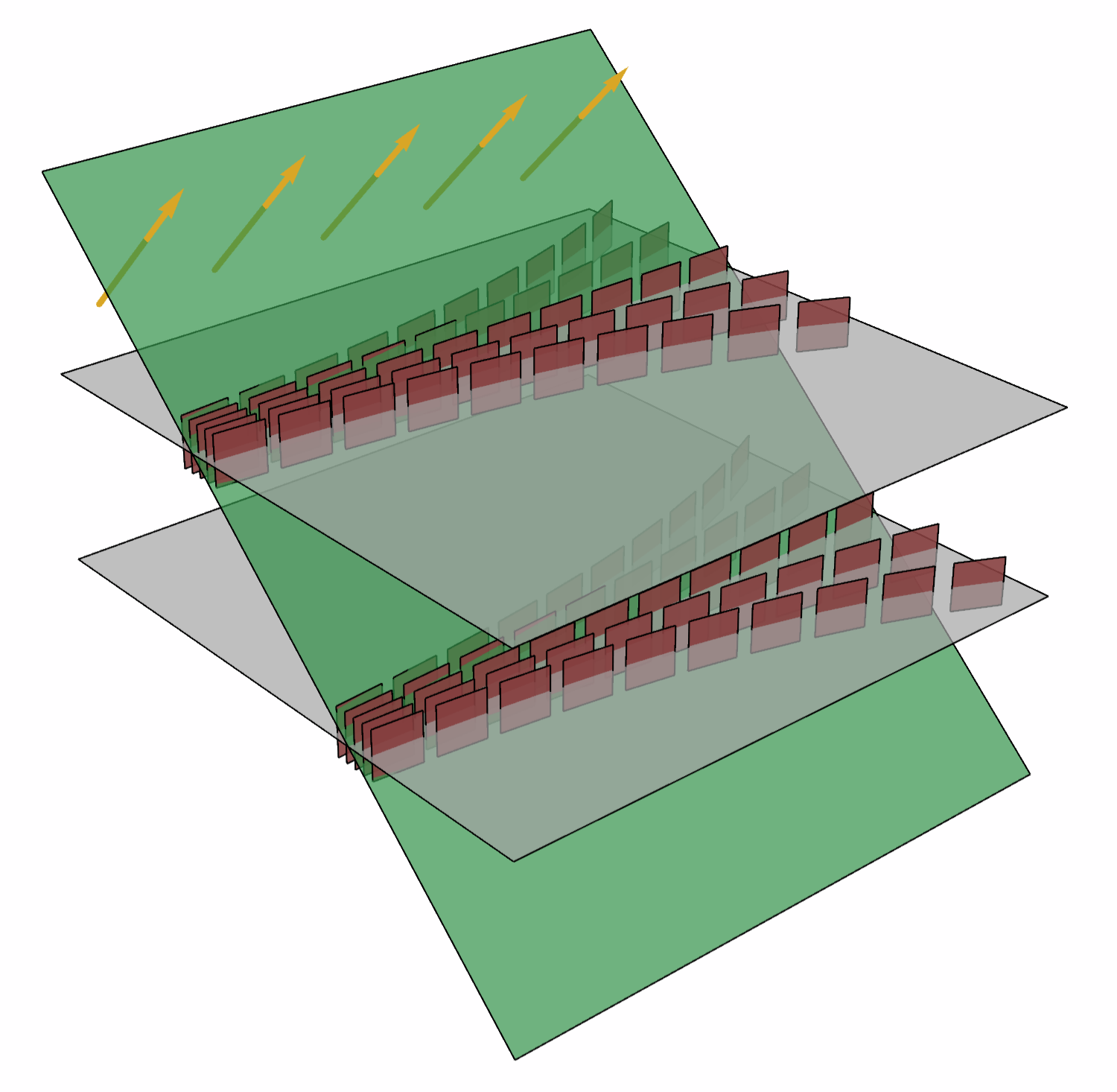}
    \caption{Local chart near an attracting characteristic where the lines parallel to the $t$ axis (vertical) are Legendrian. Two nearby fibers $S_t$ and $S_{t+\varepsilon}$ are shown in gray. The closed characteristic moves from the front right of the picture to the back left as $t$ increases. The coorientation of the contact structure (red) is towards us. The flow (yellow) is constrained to lie in the cone between $\xi$ and $TS_t$. The barrier $L$ is shown in green.}
    \label{fig:barrier_3d}
    \end{figure}

    We now consider the general case. Two complications may occur:
    \begin{enumerate}
        \item The original closed characteristic might undergo bifurcations,
        \item After going around the $S^1$-direction and following the leaf tracking maps, the resulting leaf might differ from the original one.
    \end{enumerate}
    For the first item, we will modify the construction by considering annuli traced by attracting characteristics and transverse to the spheres $S_t$'s, and connecting them by annuli tangent to the spheres $S_t$ for $t = t_i$ when a bifurcation occurs. To deal with the second issue, we might perform the construction several times by going around the $S^1$ direction finitely many times. This amounts to considering the iterates of the map $\Sigma$.

\begin{figure}[ht]
    \centering
    \begin{subfigure}{\textwidth}
        \centering
        \def\svgwidth{\textwidth}
        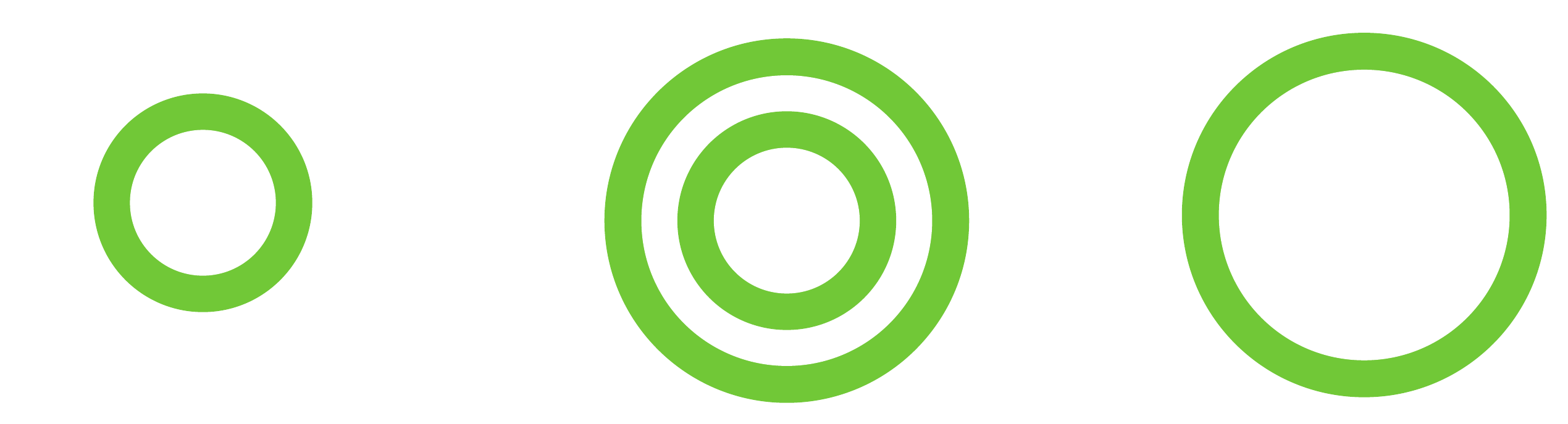
        \caption{A birth followed by a death near a disk component of $R^+$. Note that the birth must happen outside the disk.}
        \label{subfig:dividing_set_S2}
    \end{subfigure}
    
    \vspace{1em}
    \begin{subfigure}{0.6\linewidth}
        \centering
        \def\svgwidth{\textwidth}
        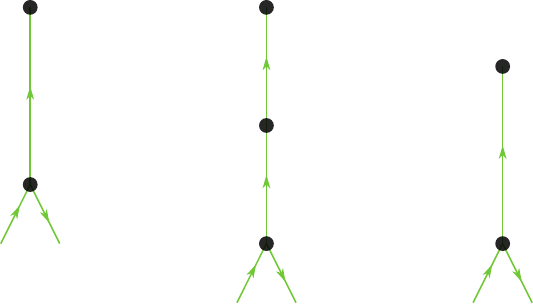
        \caption{The corresponding edge expansion and contraction.}
        \label{subfig:dividing_set_S2_graph}
    \end{subfigure}
    \caption{Birth, death, and edge operations.}
    \label{fig:dividing_set_S2}
\end{figure}

    Let $v=v_0$ be a sink vertex and $e=e_0$ the adjacent twig. The case of a source vertex follows from the symmetry of time reversal + reversal of the orientation of the $S^2$ factor. We consider the sequence of vertices $v_i \in G_i$ adjacent to the vertex $v_{i+1} = \sigma_i(v_i)$, where $v_N = \Sigma(v)$ is identified with an vertex of $G_0$. Let $e_i$ be the twig incident to $v_i$. The edges $e_0,\dots,e_N$ correspond to a sequence of attracting closed characteristics $\gamma_i \in S_{\tau_i}$ lying in a component of $R_-$ bounding a disk in $R_+$, and they are oriented counterclockwise around that disk. By \zcref{lem:disallowed_expansions}, $\gamma_i$ can either be identified with $\gamma_{i+1}$, or it undergoes a bifurcation and is canceled with a repelling characteristic to its right, and $\gamma_{i+1}$ is the nearest attracting characteristic parallel to $\gamma_i$. On each slice $[0, t_1] \times S^2, [t_1, t_2] \times S^2, \dots, [t_N, 1] \times S^2$, we consider the annulus $A_i$ obtained as the trace of the attracting characteristics corresponding to $e_i$, which is cooriented according to the orientation of $e_i$. By our previous analysis, we know that $\Phi$ is positively transverse to $A_i$ away from its boundary. If a boundary of $A_i$ corresponds to a birth or death of closed characteristics, we may further arrange that it is \emph{positively} tangent to the corresponding sphere ($S_{t_i}$ or $S_{t_{i+1}}$), after an arbitrarily $C^\infty$-small generic perturbation of $\xi$ near $\partial A_i$. That way, $\Phi$ is positively transverse to $A_i$ along its boundary as well. If $\gamma_i$ undergoes a bifurcation at $t = t_{i+1}$, we denote by  $C_i \subset S_{t_{i+1}}$ the annulus bounded by the degenerate closed characteristic coming from $\gamma_i$, and $\gamma_{i+1}$. The latter corresponds to the twig $e_{i+1}$ in $G_{i+1}$. Therefore, we may glue $A_i$, $C_i$, and $A_{i+1}$ along $\partial C_i$ and obtain a surface positively transverse to $\Phi$ after smoothing, see \zcref{fig:barrier}. We may further arrange that the resulting surface is transverse to the fibers $S_t$. Here, we crucially used that the bifurcation moves ``inside-to-out''. If there is no bifurcation, we simply glue $A_i$ to $A_{i+1}$ along their common boundary in $S_{t_{i+1}}$.

    After gluing all the $A_i$'s and $C_i$'s together, we obtain a cylinder $L(v) \subset S^1 \times S^2$ positively transverse to $\Phi$ and to the spheres $S_t$'s, and whose boundary components both lie in $S_0$. The boundary components correspond to the twigs $e=\twig(v)$ and $e_N=\twig(\Sigma(v))$. If those differ, then $L(v)$ and $L\big(\Sigma^\mathrm{in}(v)\big)$ are disjoint and we may glue them together and iterate the construction, until we get an embedded torus $L$. By construction, $L$ is transverse to the $S_t$'s, and is positively transverse to $\Phi$. Since $L$ contains a curve which pairs positively with all these spheres, it is homologically trivial, and divides $S^1 \times S^2$ into two connected components.
\end{proof}

\begin{figure}[ht]
    \centering
    \def\svgwidth{0.9\textwidth}
    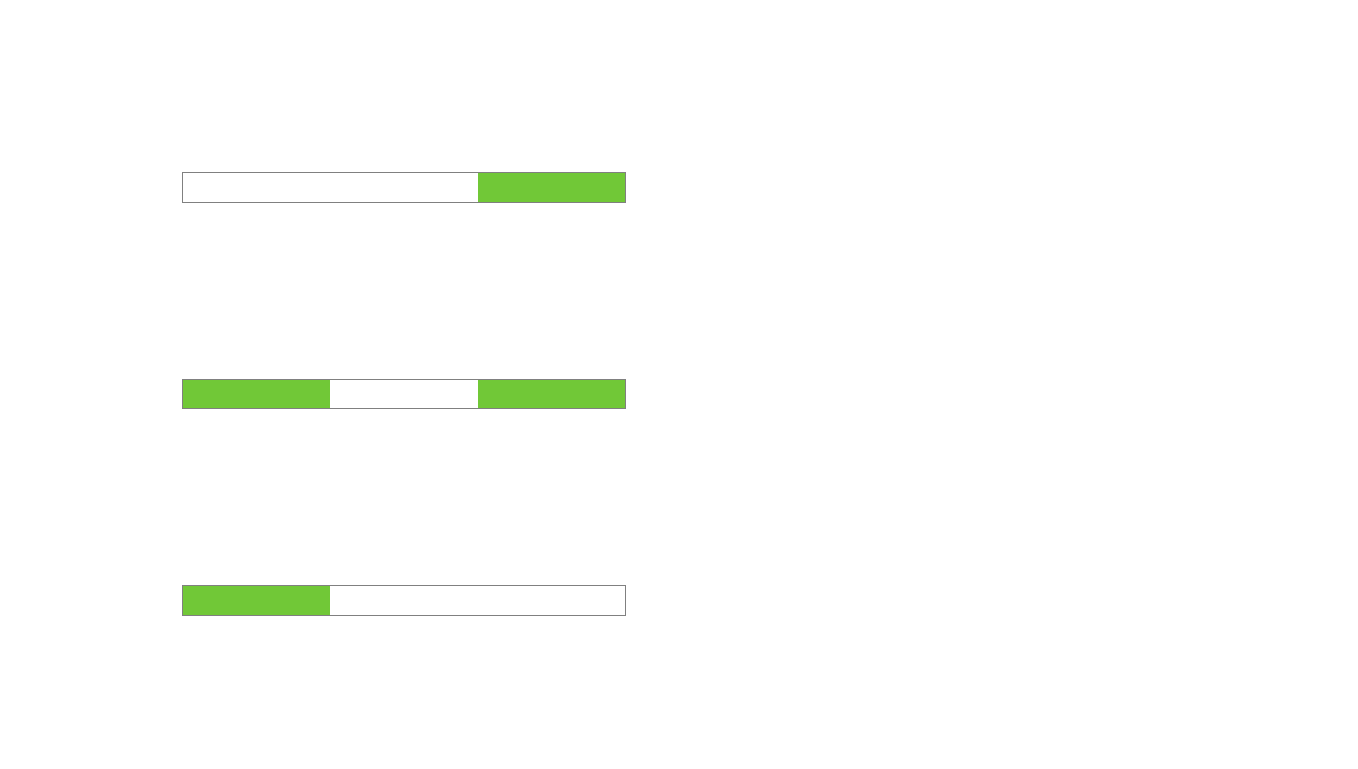
    \caption{A cross-section of the birth and death in \zcref{fig:dividing_set_S2}. The horizontal coordinate in these pictures is the radial coordinate in \zcref{fig:dividing_set_S2}. The cubic curve $S$ is the union of the closed characteristics in \zcref{fig:dividing_set_S2}.  On the left, the parts of $S$ corresponding to attracting characteristics of $\xi|_{S_t}$ are drawn in green. The dotted line denotes $C_{i+1}$, which cuts off the local minimum of $S$.
    On the right, we show the barrier we choose. It is the union of the vertical annuli $A_i, A_{i+1}$ and the annulus $C_{i+1}$. The contact structure $\xi$ (red) is everywhere transverse to $S$ by the contact condition. Our flow $\Phi$ (yellow) must be positively transverse to $\xi$ and positively transverse to the $S^2$ fibers, hence it must lie in the cone drawn in yellow. In particular, this means that flow may only cross the green parts of $S$ from right to left.
    }
    \label{fig:barrier}
\end{figure}

As an immediate corollary of \zcref{prop:contactS2suspension}, we recover that the standard foliation by spheres on $S^1 \times S^2$ cannot be approximated by contact structures (see~\cite{ET}).

\begin{rem}
    Our proof of \zcref{thmintro:rigidityspherical} passes through contact geometry, but it would be interesting to give a purely foliation theoretic argument. A natural approach would be to put the planar fibers in Roussarie--Thurston general position instead of convex position as in the proof above. We prefer our argument because the relevant version of Roussarie--Thurston theorem does not appear in the literature; we need to put the planar fibers in general position by an isotopy along a nonwandering flow $\Phi$.
\end{rem}

    \subsection{Consequences for ziggurats}

A horizontal compact genus $0$ leaf prevents the \emph{simultaneous} existence of strictly better slopes for positive and negative contact structures:

\begin{lem}\label{lem:rigidity_spherical}
Assume $\bm{s}$ is a spherical multislope. Then there do not exist multislopes $\bm{r}, \bm{t}\in \Zg(\Phi)$ with $\bm{r} < \bm{s} < \bm{t}$.
\end{lem}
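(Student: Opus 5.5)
Assume for contradiction that $\bm r,\bm t\in\Zg(\Phi)$ with $\bm r<\bm s<\bm t$, and let $\Sigma$ be a compact planar surface certifying that $\bm s$ is spherical. The plan is to turn $\bm r$ and $\bm t$ into a contact pair on a Dehn filling that contains a transverse sphere, and then contradict \zcref{lem:transverseS2} or \zcref{prop:contactS2suspension}.

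First, by \zcref{thmintro:contactzigg} we have $\bm t\in\Zg^+(\Phi)$ and $\bm r\in\Zg^-(\Phi)$. Let $\bm s'$ be the rational subtuple of $\bm s$: the coordinates met by $\partial\Sigma$ have rational slopes $s_i$, and we fill exactly those boundary components. The target is the filling $\Phi(\bm s')$ on $M(\bm s')$.
\begin{itemize}
\item Since $s_i<t_i$, \zcref{lem:fillingcontact} gives a $\Phi(\bm s')$-horizontal positive contact structure on $M(\bm s')$ whose remaining boundary slopes stay at those of $\bm t$.
\item Since $r_i<s_i$, the negative version of the same lemma gives a $\Phi(\bm s')$-horizontal negative contact structure.
\item On the remaining boundary components the slopes still satisfy $\bm r<\bm t$, so by \zcref{prop:boxconvex} applied to the filled flow these form a horizontal contact pair. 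The filled flow is still nonwandering, up to the caveat in the remark after \zcref{prop:alephfilling}.
\end{itemize}

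Second, we must make $\Sigma$ into a transverse sphere in $M(\bm s')$. Capping $\Sigma$ with meridian disks of the filling solid tori gives a sphere $S$. The issue is that $S$ must be transverse to $\Phi(\bm s')$.
\begin{itemize}
\item The statement of this lemma only gives an embedded planar surface, not a $\Phi$-horizontal one.
\item I would try to settle this by arguing that the relevant $\Sigma$ can be taken $\Phi$-horizontal, as in the setting of \zcref{thmintro:rigidityspherical}, where $\bm s\in\Zg(\Phi)$.
\end{itemize}
Given a transverse $S$, the filled contact structures are Reeb-type near the cores, so $S$ is horizontal for them as well.

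The final step splits on whether $S$ is separating. If $S$ does not bound and $M(\bm s')$ is not $S^1\times S^2$, then \zcref{lem:transverseS2} forces both contact structures to be overtwisted. But the pair is taut, being transverse to a nonwandering flow, and \zcref{cor:universallytight} makes both contact structures universally tight, which is a contradiction. If the filling is a suspension on $S^1\times S^2$, \zcref{prop:contactS2suspension} rules out any horizontal contact structure directly.

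The main obstacle I expect is the transversality of $S$ to the filled flow. After that, I also expect difficulty in checking that tautness, and hence \zcref{cor:universallytight}, survives the filling, since the filled flow may fail to be nonwandering near the cores.
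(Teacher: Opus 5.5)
Your outline is the paper's own argument. \zcref{thmintro:contactzigg} gives $\bm t\in\Zg^+(\Phi)$ and $\bm r\in\Zg^-(\Phi)$, and \zcref{lem:fillingcontact} fills both contact structures. Transversality to the nonwandering blown-down flow makes the pair taut, hence universally tight, and a transverse sphere then contradicts \zcref{lem:transverseS2}.

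The step you flag is a genuine gap, and your proposed fix does not close it. Box-convexity does give $\bm s\in\Zg(\Phi)$, but that only supplies a $\Phi$-horizontal \emph{foliation} of multislope $\bm s$. It does not supply a $\Phi$-transverse planar surface, and nothing lets you isotope an arbitrary certificate $\Sigma$ to be transverse to $\Phi$. The paper does not derive this either: its proof simply reads the hypothesis as providing a horizontal $S^2$ in the filling. In other words, it treats a spherical multislope as witnessed by a $\Phi$-transverse compact planar surface, which is the sense in which such surfaces are obstructions. Under that reading your argument closes.

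With the literal definition, the missing idea is to get the horizontal sphere from a foliation instead of from $\Sigma$:
\begin{itemize}
\item Fill \emph{every} component: at $q_i=s_i$ where $\partial\Sigma$ meets $\partial_i M$, and at a rational $q_j\in(r_j,t_j)$ elsewhere.
\item Then $M(\bm q)$ is closed and carries a taut contact pair transverse to $\Phi(\bm q)$. By \cite{M24}, as in the proof of \zcref{thm:ETconverse}, it carries a taut foliation $\F$ transverse to $\Phi(\bm q)$.
\item Capping $\Sigma$ gives a sphere with nonzero algebraic intersection with a filling core, because the boundary curves are coherently oriented. So $M(\bm q)$ is reducible.
\item Rosenberg's theorem (a reducible closed $3$-manifold carries a Reebless foliation only as the sphere foliation of $S^1\times S^2$) forces $M(\bm q)\cong S^1\times S^2$ with $\F$ the foliation by spheres.
\item The leaves of $\F$ are then $\Phi(\bm q)$-transverse spheres, and your argument finishes. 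Alternatively, $\Phi(\bm q)$ is a nonwandering sphere suspension and \zcref{prop:contactS2suspension} applies directly.
\end{itemize}

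Several secondary steps also need correcting.
\begin{itemize}
\item \zcref{prop:boxconvex} plays no role in forming the pair. Two contact structures of opposite signs that are positively transverse to the same flow automatically form a positive contact pair.
\item The caveat after \zcref{prop:alephfilling} concerns $\aleph$-fillings, not the blowdowns of \zcref{lem:fillingcontact}. Those blowdowns are orbit equivalent to $\Phi$ away from the new cores, and the cores are closed orbits, so tautness survives.
\item If you leave boundary components unfilled, \zcref{cor:universallytight} applies only after you verify the hypotheses of \zcref{thm:ETconverse}: an inward-pointing axis and a dominated boundary foliation. Filling every component as above lets you use \zcref{lem:universallytight} on a closed manifold, as the paper implicitly does.
\item Your closing case split is unnecessary and, as written, incomplete; it does not cover a non-suspension flow on $S^1\times S^2$. \zcref{lem:transverseS2} needs neither that $S$ be nonseparating nor that the filling differ from $S^1\times S^2$. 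In any case, a sphere transverse to a nonwandering flow cannot separate.
\item Only $S\pitchfork\Phi(\bm q)$ enters \zcref{lem:transverseS2}, so the sentence about $S$ being horizontal for the contact structures can be dropped.
\end{itemize}
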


\begin{proof}
    Suppose to the contrary that such multislopes exist. Then by \zcref{thmintro:contactzigg} and \zcref{lem:fillingcontact}, the Dehn filling $M(\bm{s})$ has $\Phi$-horizontal positive and negative contact structures $\xi_+$ and $\xi_-$. Since $\Phi$ is nonwandering, $(\xi_-, \xi_+)$ is taut. But $M(\bm{s})$ also has a horizontal $S^2$ since $\bm{s}$ is a spherical multislope. This contradicts \zcref{lem:transverseS2}.
\end{proof}

\begin{lem}\label{lem:rigidity_T2}
Assume $\bm{s}$ is a $\T^2$-type planar multislope. Then there do not exist a pair of multislopes $\bm{r}, \bm{t}\in \Zg(\Phi)$ with $\bm{r} < \bm{s} < \bm{t}$.
\end{lem}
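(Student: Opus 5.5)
I plan to mirror the proof of \zcref{lem:rigidity_spherical}, replacing the horizontal sphere by a horizontal $\T^2$-type planar lamination and the contradiction of \zcref{lem:transverseS2} by the nonexistence of transverse contact structures in the $\T^2\times I$ model (\zcref{lem:nec_hyp}, item 2). Suppose $\bm r<\bm s<\bm t$ with $\bm r,\bm t\in\Zg(\Phi)$. By \zcref{thmintro:contactzigg}, $\bm t\in\Zg^+(\Phi)$ and $\bm r\in\Zg^-(\Phi)$. Since $\Zg^+(\Phi)$ is downward closed and $\Zg^-(\Phi)$ is upward closed (\zcref{prop:contactboxconv}), and $\bm r<\bm s<\bm t$ is strict, we get $\Phi$-horizontal contact structures $\xi_+$ and $\xi_-$ whose boundary slopes lie strictly on the correct sides of $\bm s$. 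After trimming or twisting (\zcref{lem:trim_twist}) and straightening (\zcref{prop:fineslope_inequality}), $\partial\xi_+$ has pointwise higher slope and $\partial\xi_-$ pointwise lower slope than any fixed horizontal linear boundary foliation of slope $\bm s$.

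Let $\Lambda$ be the $\T^2$-type planar lamination certifying $\bm s$. On the rational components of $\bm s$ I Dehn fill via \zcref{lem:fillingcontact}; this is legitimate because the inequalities are strict. The filled contact structures remain $\Phi$-horizontal, and $\Lambda$ caps off to a lamination whose remaining boundary lies only on the irrational components. By \zcref{lem:planarclassification}, a neighborhood of the capped lamination is (a piece of) a $\T^2\times I$ summand carrying a lamination semi-conjugate to an irrational linear foliation, with boundary on the irrational-slope tori. The main obstacle is that $\Lambda$ is only assumed embedded, not $\Phi$-horizontal, so it is unclear whether the contact structures can be made horizontal relative to it.

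I would try two approaches to this obstacle. The cleaner one is an approximation argument. By \zcref{lem:spherical_limits2}, $\Lambda$ is a limit of compact planar surfaces $\Sigma_j$ with multislopes $\bm s^j\to\bm s$, and these are carried by the same branched surface as $\Lambda$. Since $\bm r<\bm s<\bm t$ is an open condition, $\bm r<\bm s^j<\bm t$ for $j$ large, so each $\bm s^j$ is spherical and lies strictly between $\bm r$ and $\bm t$. \zcref{lem:rigidity_spherical} then gives a contradiction directly.

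The delicate point is that \zcref{lem:rigidity_spherical} needs the sphere after filling to be horizontal, which is how \zcref{lem:transverseS2} enters. I would therefore check that horizontality of $\Lambda$ passes to the approximating surfaces. A branched surface carrying $\Lambda$ can be chosen $\Phi$-transverse, being a thin $I$-fibered neighborhood of a horizontal lamination, and surfaces carried by it with nearby rational weights are then horizontal. If this fails, the fallback is to argue directly on the $\T^2\times I$ summand. Restricting $\xi_\pm$ to a thin neighborhood of the capped lamination, which is a $\Phi$-horizontal region with slopes strictly on either side of an irrational slope, should contradict \zcref{lem:nec_hyp}: after passing to a finite cover, this produces a horizontal contact structure on $S^1\times D^2$ with slope $0$.
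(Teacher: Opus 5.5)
Your third paragraph is exactly the paper's proof: \zcref{lem:spherical_limits2} gives spherical multislopes $\bm s^j\to\bm s$, the strict inequalities $\bm r<\bm s^j<\bm t$ persist for large $j$, and \zcref{lem:rigidity_spherical} then gives the contradiction. The contact and filling setup, the horizontality check, and the $\T^2\times I$ fallback are unnecessary, because \zcref{lem:rigidity_spherical} is stated for arbitrary spherical multislopes and that definition does not require $\Phi$-transversality. Your proposed horizontality justification would also fail on its own terms, since it assumes $\Lambda$ is $\Phi$-horizontal, which is not given.
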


\begin{proof}
    By \zcref{lem:spherical_limits2}, $\bm{s}$ is approximated by spherical multislopes. The result then follows from \zcref{lem:rigidity_spherical} applied to these approximating spherical multislopes.
\end{proof}

\begin{lem}\label{lem:rigidity_suspension}
Assume $\bm{s}$ is a spherical suspension multislope or a $\T^2 \times I$ suspension multislope. Then there do not exist multislopes $\bm{r}, \bm{t}\in \Zg(\Phi)$ with either $\bm{r} < \bm{s}$ or $ \bm{s} < \bm{t}$.
\end{lem}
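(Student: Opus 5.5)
The plan is to argue by contradiction, separately for each side, and mirror the proof of \zcref{lem:rigidity_spherical}. Here we use the stronger input \zcref{prop:contactS2suspension} in place of \zcref{lem:transverseS2}. Suppose first that $\bm{s}$ is a spherical suspension multislope. This means that the Dehn filling $\Phi\vert_{\bm{s}}$ along the rational coordinates of $\bm{s}$ is a suspension flow on $S^1\times S^2$; after filling, no coordinates remain, since a compact planar fiber must meet every boundary component. Assume there is $\bm{t}\in \Zg(\Phi)$ with $\bm{s}<\bm{t}$. By the coarse Eliashberg--Thurston theorem (\zcref{thm:coarseET}, equivalently the inclusion $\Zg(\Phi)\subset \Zg^+(\Phi)$ from \zcref{thmintro:contactzigg}), we have $\bm{t}\in\Zg^+(\Phi)$. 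So there is a $\Phi$-horizontal positive contact structure of multislope $\bm{t}>\bm{s}$. By \zcref{lem:fillingcontact}, this contact structure descends to a positive contact structure transverse to the blowdown $\Phi(\bm{s})$ on $M(\bm{s})\cong S^1\times S^2$. Since $\Phi$ is nonwandering, so is $\Phi(\bm{s})$, up to the caveat about the neighborhood of the filling cores. This contradicts \zcref{prop:contactS2suspension}. The case $\bm{r}<\bm{s}$ is symmetric: use $\Zg^-(\Phi)$ and the negative half of \zcref{lem:fillingcontact}, then apply \zcref{prop:contactS2suspension} to the orientation reversal, which turns a negative contact structure into a positive one transverse to a suspension flow on $S^1\times S^2$.

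There is one technical point. \zcref{prop:contactS2suspension} was in fact shown in the stronger form: any contact structure transverse to a suspension flow on $S^1\times S^2$ yields a homologically trivial torus transverse to the flow. So I only need that the filled flow admits no such separating transverse torus. I would argue this by pulling the torus back to $M$, staying away from the small filling neighborhoods, or pushing it off them. The torus then separates $M$ (minus solid tori), and $\Phi$ crosses it in only one direction, which contradicts nonwandering of $\Phi$ on $M$. I expect this step to be the main obstacle, namely making sure the barrier torus can be kept disjoint from the cores, or that crossing the filling region does not spoil the argument. The first thing I would try is to choose the blowdown so that the filled neighborhoods lie inside a product region where the flow is controlled.

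For the $\T^2\times I$ suspension case, Dehn filling the rational coordinates yields a suspension flow on $\T^2\times I$ with an irrational planar structure. Two routes seem possible. The first is to approximate $\bm{s}$ by spherical suspension multislopes, via \zcref{lem:spherical_limits2} together with the fact that rational approximations of the annulus fibration are again suspensions, and then use the open condition $\bm{s}<\bm{t}$, which persists for nearby $\bm{s}'$, to reduce to the first case. The second is to argue directly as in \zcref{lem:nec_hyp}(2): fill to a solid torus and derive the contradiction from tightness of horizontal contact structures. I would try the approximation route first.
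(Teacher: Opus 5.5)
Your proposal is correct and is essentially the paper's proof. Both quadrants are excluded by passing from $\Zg(\Phi)$ to $\Zg^\pm(\Phi)$ via \zcref{thmintro:contactzigg}, filling with \zcref{lem:fillingcontact}, and contradicting \zcref{prop:contactS2suspension}; the $\T^2\times I$ case is reduced to the spherical one by approximating $\bm{s}$ with spherical suspension multislopes as in \zcref{lem:spherical_limits2}.

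The step you flag as the main obstacle is not one, and the paper applies \zcref{prop:contactS2suspension} directly. The blowdown of \zcref{def:flowblowdown} is an orbit equivalence between interiors, so recurrence of $\Phi$ to flow boxes away from the cores pushes forward, and the cores themselves are periodic orbits; hence $\Phi(\bm{s})$ is nonwandering (the caveat you recall concerns only the interpolated extension in \zcref{prop:alephfilling}). Your fallback would also work: a null-homologous torus positively transverse to the flow has zero algebraic, hence zero geometric, intersection with every closed orbit, so it automatically misses the cores.
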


\begin{proof}
    If $\bm{s}$ is a spherical suspension multislope then $(\bm{s},\bm{\infty})$ is disjoint from $\Zg^+(\Phi)$; indeed, any contact structure with boundary slope in that quadrant would fill (by \zcref{lem:fillingcontact}) to a contact structure transverse to a blowdown of $\Phi$ along $\bm{s}$, contradicting \zcref{prop:contactS2suspension}. Similarly, $(\bm{-\infty},\bm{s})$ is disjoint from $\Zg^-(\Phi)$. By \zcref{thmintro:contactzigg}, both quadrants are also disjoint from $\Zg(\Phi)$.

    If $\bm{s}$ is a $\T^2 \times I$ suspension multislope, then just as in the proof of \zcref{lem:rigidity_T2}, the desired result follows from approximating $\bm{s}$ by spherical suspension multislopes using \zcref{lem:spherical_limits2}.
\end{proof}

\begin{proof}[Proof of \zcref{thmintro:rigidityspherical}]
Combine \zcref{lem:rigidity_spherical, lem:rigidity_T2, lem:rigidity_suspension}.
\end{proof}

    \section{Resolving \texorpdfstring{$\aleph$}{aleph} boundary components}

As before, let $\F$ be a $C^0$-foliation on $M$ which is $\Phi$-horizontal. If some boundary $\partial_i \F$ has $\aleph$ type, then under favorable conditions we can construct new foliations $\F_\pm$ which are $\Phi$-transverse and such that $\slope(\partial_i \F_\pm) = \pm \infty$; in other words, we can \textbf{resolve} the $\aleph$ boundary components. Combined with box-convexity, this will justify the treatment of $\aleph$ as a wildcard that can take on any real value.

Throughout this section, we will assume that $\Phi$ is not a disk nor annulus suspension flow, and that it is $\aleph$-unobstructed (\zcref{def:alephunobstructed}). Recall that fully punctured pseudo-Anosov flows without perfect fits (\zcref{lem:fptpafwpf_unobstructed}) and periodic flows are both examples of $\aleph$-unobstructed flows.

    \subsection{The fully unobstructed case}

We begin with a simplified situation without degeneracy compact planar leaves nor degeneracy $\T^2$-type planar minimal sets; in that case, the $\aleph$ boundary components can be resolved without modifying the slopes on the other boundary components.

\begin{prop}\label{prop:easyalephresolution}
    Let $\F$ be a $\Phi$-horizontal $C^0$-foliation with generalized boundary multislope $(s_1, \dots, s_k, \aleph, \dots, \aleph)$, where $s_i \in \R$. Moreover, assume that $\F$ has no degeneracy compact planar leaves nor degeneracy $\T^2$-type planar minimal sets. Then
    $$\{(s_1, \dots, s_k)\} \times \R^{n-k} \subset \mathcal{Z}(\Phi).$$
\end{prop}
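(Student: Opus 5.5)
The strategy is to reduce everything to the ziggurat intersection theorem via \zcref{prop:boxconvex}. We will find a positive contact structure $\xi_+$ and a negative contact structure $\xi_-$, both $\Phi$-horizontal, with the following boundary multislopes: on the first $k$ components they have slopes $r_i^+ > s_i$ and $r_i^- < s_i$ (or equal to $s_i$ with appropriate fine type), and on the last $n-k$ components they have slopes $-\infty$ and $+\infty$ respectively. Box-convexity of the contact ziggurats then places the box $[\bm r^-,\bm r^+]$, which contains $\{(s_1,\dots,s_k)\}\times\R^{n-k}$, inside $\Zg(\Phi)$.

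\emph{Getting a fine contact approximation.} The hypotheses exclude degeneracy compact planar leaves and degeneracy $\T^2$-type planar minimal sets. So we apply \zcref{thm:fineET}, where the only $\flat$/$\sharp$ modifications are made near unstable compact planar leaves or $\T^2$-type planar minimal sets. Any such set must have all its boundary on nondegeneracy boundary components, and hence only on the first $k$ components. Indeed, an $\aleph$ boundary component meets a leaf boundary only via Reeb regions or spiraling curves of slope $\pm\infty$, which are degenerate.

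The result is a $\Phi$-horizontal positive contact structure $\xi_+$ $C^0$-close to $T\F$. Its boundary foliation dominates some $\G$ with the following properties:
\begin{itemize}
  \item on each $\aleph$ component, $\G$ is obtained from $\partial_i\F$ by isotopy and $\natural/\sharp$ blowups/blowdowns, so $\G$ is still of $\aleph$ type;
  \item on the first $k$ components, $\G$ has slope $s_i$, up to small $\flat$ modifications which perturb the slope only by $C^0$-small amounts.
\end{itemize}
Here we use that the contact structure can be taken horizontal, since the tangential approximation is within the cone of transversality to $\Phi$.

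\emph{Resolving $\aleph$.} On an $\aleph$ component, $\partial_i\xi_+$ has pointwise lower slope than $\G$ in the positive convention. An $\aleph$ foliation has no $-\infty^\flat$ spiraling, so \zcref{lem:resolution_of_aleph} shows that $\partial_i\xi_+$ is Reebless of slope $-\infty$. Twisting by \zcref{lem:trim_twist} then lowers the slope to any finite value. Symmetrically, $\partial_i\xi_-$ is Reebless of slope $+\infty$ and can be twisted to any large finite slope.

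On the first $k$ components, \zcref{lem:phaselocking} gives $\slope_i(\xi_+)\ge \slope(\G_i)$ and $\slope_i(\xi_-)\le\slope(\G_i)$. Combining this with \zcref{prop:contactboxconv}, for every $\bm t\in\R^{n-k}$ we get $(\bm r^+,\bm t)\in\Zg^+(\Phi)$ and $(\bm r^-,\bm t)\in\Zg^-(\Phi)$.

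\emph{The main obstacle.} The difficulty is keeping $s_i$ itself inside, rather than a nearby value shifted by the $\flat$ modifications. We would first trim so that each slope is either strictly past $s_i$ or has the fine type needed for \zcref{prop:fineslope_inequality} with $\partial\xi_-\prec\G\prec\partial\xi_+$, exactly as in the proof of \zcref{thmintro:contactzigg}. We would then apply \zcref{thmintro:ETconverse} directly with a boundary foliation $\G'$ that equals $\G_i$ on the first $k$ components and is linear of an arbitrary slope $t_j$ on the rest. The resulting foliation is semi-conjugate to $\G'$ and so realizes $(s_1,\dots,s_k,\bm t)$.

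The delicate point is ensuring that the $\flat$ modifications near planar leaves with boundary on nondegenerate components do not destroy the sandwich at slope exactly $s_i$. If they do, we would fall back on the weaker conclusion with intervals $I_i$, as in \zcref{thmintro:alephremove}.
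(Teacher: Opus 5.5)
Your architecture (fine Eliashberg--Thurston, then \zcref{lem:resolution_of_aleph} on the $\aleph$ components, twisting, then \zcref{prop:boxconvex}) is the paper's. The proof is not complete, however. The step you call the main obstacle is left open, and your fallback (intervals $I_i$ with $s_i\in\partial I_i$) is strictly weaker than the statement, which asks for the slice at exactly $(s_1,\dots,s_k)$.

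\textbf{The unresolved obstacle.} Your concern is real. Applying \zcref{thm:fineET} directly forces a sacrificial $\flat$ modification near each unstable compact planar leaf or $\T^2$-type planar minimal set, hence on one of the first $k$ components. Its effect on the coarse slope is not governed by its $C^0$-size. For instance, suppose the modified closed curve is the only closed leaf of $\partial_i\F$ and its complement is a $\flat$ annulus. Then the modified first-return map has no fixed point, so the dominated foliation has slope $<s_i$ and you only get $\slope_i(\xi_+)\ge s_i-\delta$ (compare the $\flat$-latching case in \zcref{ex:compactplanarlatching}). No sandwich at $s_i$ then exists, and neither trimming nor \zcref{thmintro:ETconverse} can restore it.

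\textbf{The missing idea.} The paper eliminates sacrificial modifications beforehand. As you correctly observe, these planar sets have all their boundary curves nondegenerate, hence on the first $k$ components. So one can Denjoy blow them up to insert infinitesimally attracting holonomy along a closed boundary curve, making each of them \emph{stable}. This only inserts $\sharp$/$\flat$ annuli on the first $k$ components, so the coarse slopes $s_i$ and the $\aleph$ components are untouched. Then \zcref{thm:fineET} applies with no $\flat$/$\sharp$ modifications at all (as in \zcref{cor:fineET}). This gives $\partial\xi_-\prec\G\prec\partial\xi_+$ with $\G$ of slope exactly $s_i$ on $\partial_iM$ for $i\le k$, hence $\slope_i(\xi_-)\le s_i\le\slope_i(\xi_+)$ by \zcref{lem:phaselocking}. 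Twisting then puts $(s_1,\dots,s_k,N,\dots,N)$ in $\Zg^+(\Phi)$ and $(s_1,\dots,s_k,-N,\dots,-N)$ in $\Zg^-(\Phi)$ for every $N$, and \zcref{prop:boxconvex} finishes. The detour through \zcref{thmintro:ETconverse} is unnecessary.

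\textbf{Signs on the $\aleph$ components.} You also have the signs reversed there. Domination by $\xi_+$ means pointwise \emph{higher} slope on every component of $\partial M$, as you use yourself on the first $k$ components. So \zcref{lem:resolution_of_aleph} makes $\partial_j\xi_+$ Reebless of slope $+\infty$ and $\partial_j\xi_-$ Reebless of slope $-\infty$, not the reverse. With your assignment the argument fails. Twisting via \zcref{lem:trim_twist} only lowers the slope of $\xi_+$ and raises that of $\xi_-$, so it can never bring $\xi_+$ up from $-\infty$ or $\xi_-$ down from $+\infty$ to a finite value. Moreover, $\bm r^-\le\bm r^+$ fails in the last $n-k$ coordinates, so the box you invoke is empty there.
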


\begin{proof}
    After performing suitable Denjoy blowups of the compact planar and $\T^2$-type planar minimal sets of $\F$, we may assume that those minimal sets are stable, in the sense that they admit boundaries with attracting holonomy; since those minimal sets have no degeneracy boundaries, the generalized boundary multislope remains unchanged. Then, the fine version of Eliashberg--Thurston (\zcref{thmintro:ET}, see also \zcref{cor:fineET}) produces a contact pair $(\xi_-,\xi_+)$ with
    $$\partial \xi_- \prec \partial \F \prec \partial \xi_+.$$
    By \zcref{lem:resolution_of_aleph} (the instability of $+\infty^\flat$ and $-\infty^\sharp$ curves), we know
    \begin{align*}
        \bm{s}(\partial \xi_+) &\geq (s_1,\dots,s_k, +\infty,\dots, +\infty),\\
        \bm{s}(\partial \xi_-) &\leq (s_1,\dots,s_k,-\infty,\dots,-\infty).
    \end{align*}
    Adding twisting to $\xi_\pm$ (\zcref{lem:trim_twist}), we can replace $\pm\infty$ with $\pm N$ for arbitrarily large $N\in \R$. Now the result follows by box-convexity (\zcref{prop:boxconvex}) applied to this pair of contact structures.
\end{proof}

    \subsection{Obstructions: compact planar leaves}

In the presence of degeneracy compact planar leaves, we will now need to use the finest version of Eliashberg--Thurston (\zcref{thm:fineET}) to have precise control over the the boundaries of the contact approximations. Throughout, we will assume that $\F$ has isolated compact planar leaves, and no degeneracy $\T^2$-type planar minimal sets.

The following ``mitosis'' trick allows us to maintain control over the slope of some boundary components while performing the sacrificial $\flat/\sharp$ modifications necessary in \zcref{thm:fineET}.

\begin{constr}[Mitosis\protect\footnote{\textit{Cell division by mitosis is an equational division which gives rise to genetically identical cells in which the total number of chromosomes is maintained} (Wikipedia; \href{https://en.wikipedia.org/wiki/Mitosis}{https://en.wikipedia.org/wiki/Mitosis}).}] \label{constr:mitosis}
    Suppose $\F$ has a compact planar leaf $L$ with boundary components $\gamma_1, \dots, \gamma_\ell$, $\ell \geq 3$, with $\gamma_1 \subset \partial_i M$ and $\gamma_2 \subset \partial_j M$. A \textbf{mitosis} of $L$ is a $C^0$-small modification of $\F$ obtained as follows: we blow up $L$ and add canceling holonomies around $\gamma_1$ and $\gamma_2$, which creates a product region $L \times [0, \varepsilon]$ with parallel boundaries $L \times \{0\}$ and $\widetilde{L} = L \times \{\varepsilon\}$. We can further arrange that all the leaves between $L$ and $\widetilde{L}$ are noncompact and accumulate on both $L$ and $\widetilde{L}$. In particular, $\widetilde{L}$ is the only new minimal set produced by the blowup. Notice that we only create trivial Denjoy blowups along $\gamma_3, \dots, \gamma_\ell$, and that we have some freedom in choosing the holonomies around $\gamma_1$ and $\gamma_2$. For instance, we can make one $\sharp$ and the other $\flat$, or vice versa.

    This operation is useful for preparing for the application of \zcref{thm:fineET} while maintaining control over the coarse slopes of $\gamma_1$ and $\gamma_2$ and on the fine slopes of $\gamma_3,\dots,\gamma_\ell$. Indeed, the new leaf $\widetilde{L}$ has two special boundary components $\widetilde{\gamma}_1$ and $\widetilde{\gamma}_2$, parallel to $\gamma_1$ and $\gamma_2$, respectively. We declare $\gamma_1$ to be a distinguished boundary component of $L$, and $\widetilde{\gamma}_2$ to be a distinguished boundary component of $\widetilde{L}$. Then, performing $\flat/\sharp$ modifications on $\gamma_1$ and $\widetilde{\gamma}_2$ (independently of each other) do not change the slopes on $\partial_i M$ and $\partial_j M$, since each of these curves have a parallel, unmodified curve of the same slope, namely, $\widetilde{\gamma}_1$ and $\gamma_2$. See \zcref{fig:mitosis}.
    Of course, this might modify the boundary type of $\F$ along these boundary components, but its boundary type on the other boundary components of $M$ are not affected.
\end{constr}

\begin{figure}[ht]
    \centering
    \def\svgwidth{0.8\textwidth}
    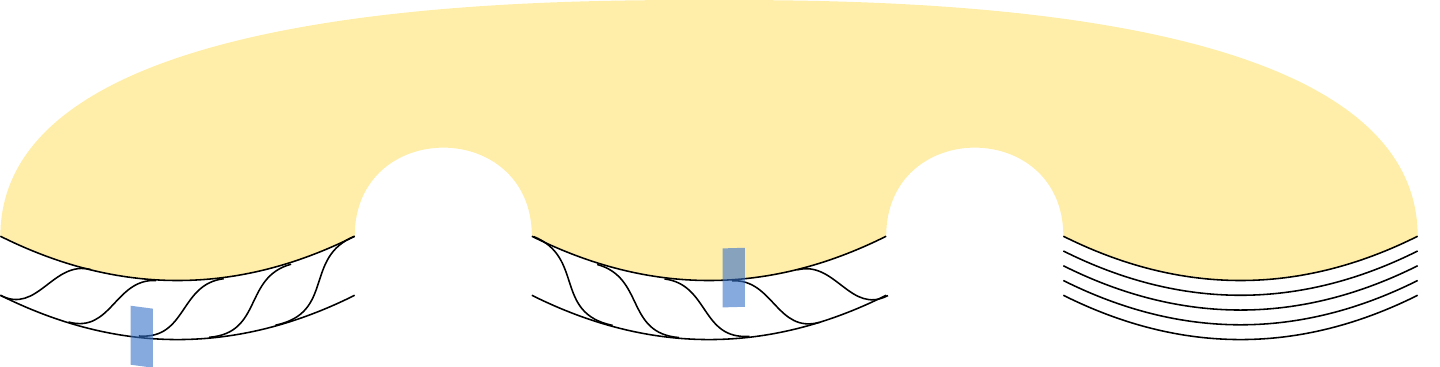
    \caption{Mitosis. The $\flat/\sharp$ twisting modifications are performed near the blue regions on $\partial M$.}
    \label{fig:mitosis}
\end{figure}

\paragraph{An example: single compact planar leaf.}

As a warm up to the general setting, we first explain our strategy in a simple scenario.

\begin{ex} \label{ex:compactplanarlatching}
Suppose $M$ has two boundary components and $\F$ is a $\Phi$-horizontal foliation with fine boundary multislope $(0^\heartsuit, \aleph)$, where $\heartsuit \in \{ \flat, \natural, \sharp, \dagger\}$ is unspecified, and that $\F$ has exactly \emph{one} compact planar leaf $L$. Since $M$ has only two boundary components and $\Phi$ is not a suspension, $\F$ does not have any $\T^2$-type planar minimal sets. Our goal is to resolve the $\aleph$ component to obtain foliations with well-defined boundary slopes, and get $\{0\} \times \R \subset \Zg(\Phi)$ as in the unobstructed case. However, in specific obstructed cases, we will get inclusions of the form $(-\varepsilon , 0) \times \R \subset \Zg(\Phi)$ or $(0, \varepsilon) \times \R \subset \Zg(\Phi)$ for some $\varepsilon > 0$: resolving the $\aleph$ component is possible at a(n infinitesimal) cost of modifying the slope on the first boundary component.

Concretely, we want to apply the fine version of Eliashberg--Thurston with boundary to obtain a positive contact structure $\xi$ approximating $\F$ with $\partial\F \prec \partial\xi$ (or rather $\G \prec \partial\xi$ for some $\G$ isotopic to $\partial \F)$. However, $L$ is an obstruction to applying \zcref{thmintro:ET}, and we need to perform some $\flat$ modifications along some boundary component of $L$ as in \zcref{thm:fineET}.

We distinguish several cases depending on the boundary signature of $L$; in each of those cases, we explain how to choose a distinguished boundary component of $L$ where we will apply a sacrificial $\flat$ modification. \emph{We will not perform a sacrificial $\flat$ modification along a $-\infty$ degeneracy component for the purpose of getting a positive contact approximation}, since we want to ``erase'' those; a $\flat$ modification would create a stable curve with slope $-\infty$. We also want to keep the same boundary slope on the nondegeneracy boundary, or modify it as little as possible. Here is how we proceed:

    \begin{itemize}
        \item If $L$ has boundary signature $(a,*,b)$ with $a+b\geq 2$, we apply mitosis (\zcref{constr:mitosis}) with $\gamma_1$ and $\gamma_2$ being nondegeneracy or $+\infty$ components of $\partial L$, and we perform the $\flat$ modifications on $\gamma_1$ and $\widetilde{\gamma}_2$.\footnote{We technically do not need to perform mitosis here since both $\gamma_1$ and $\gamma_2$ lie on $\partial_1M$, so we can perform the $\flat$ modification on one while the other one fixes the slope. However, mitosis will be necessary when $M$ has more than two boundary components.}
        \item If $L$ has degeneracy signature $(1,*,0)$, then we are forced to apply the $\flat$-modification along the non-degeneracy boundary component.
        \item The remaining possible degeneracy signatures $(0,*,0)$ and $(0,*,1)$ are ruled out. The first is ruled out because $L$ would have nondecreasing holonomy along all of its boundary components, implying that it has trivial holonomy along all its boundary components, contradicting the assumption that $L$ was isolated. The second is ruled out because we assumed that $\Phi$ is $\aleph$-unobstructed. 
    \end{itemize}

Applying the fine version of Eliashberg--Thurston for those choices of distinguished boundary components of $L$ (or a doubling thereof), we have four main cases for the positive contact approximation $\xi_+$; in all of these cases, $s_2(\xi_+) = +\infty$ by design.

\begin{mdframed}
\begin{enumerate}[label=Case \arabic*$^+$:, leftmargin=*]
    \item \textit{$L$ has degeneracy signature $(1,*,0)$.} Then a sacrificial $\flat$ modification is performed on $\partial_1 M$.
    \begin{enumerate}
        \item \textit{$\bm\fineslope(\F)=(0^{\dagger}, \aleph)$.} Then $\slope_1(\xi_+)\geq 0$, and 
        $$(-\infty,0]\times \R\subset \Zg^+(\Phi).$$
        
        \item \textit{$\bm\fineslope(\F)=(0^{\flat}, \aleph)$.} Then $\slope_1(\xi_+) < 0$ can be made arbitrarily close to $0$, and 
        $$(-\infty,0)\times \R\subset \Zg^+(\Phi).$$
    \end{enumerate}    
    \item \textit{$L$ does not have degeneracy signature $(1,*,0)$.} Then no sacrificial $\flat$ modification is performed on $\partial_1 M$.
    \begin{enumerate}
        \item \textit{$\bm\fineslope(\F)=(0^{\natural/\sharp}, \aleph)$.} Then $\slope_1(\xi_+) = \delta > 0$ for some $\delta > 0$, and 
        $$(-\infty,\delta)\times \R\subset \Zg^+(\Phi).$$
        
        \item \textit{$\bm\fineslope(\F)=(0^{\flat/\dagger}, \aleph)$.} Then $\slope_1(\xi_+)\geq 0$, and 
        $$(-\infty,0]\times \R\subset \Zg^+(\Phi).$$
    \end{enumerate}
\end{enumerate}
\end{mdframed}

\medskip

Similarly, we have four main cases for a negative contact approximation $\xi_-$; in all of these cases, $s_2(\xi_-) = -\infty$ by design.

\begin{mdframed}
\begin{enumerate}[label=Case \arabic*$^-$:, leftmargin=*]
    \item \textit{$L$ has degeneracy signature $(1,0,*)$.} Then a sacrificial $\sharp$ modification is performed on $\partial_1 M$.
    \begin{enumerate}
        \item \textit{$\bm\fineslope(\F)=(0^{\dagger}, \aleph)$.} Then $\slope_1(\xi_-)\leq 0$, and 
        $$[0,+\infty) \times \R\subset \Zg^-(\Phi).$$
        
        \item \textit{$\bm\fineslope(\F)=(0^{\sharp}, \aleph)$.} Then $\slope_1(\xi_-) > 0$ can be made arbitrarily close to $0$, and 
        $$(0, +\infty)\times \R\subset \Zg^-(\Phi).$$
    \end{enumerate}
    Note that those are the only possible boundary types in that case, because of the type of  holonomy of $L$ along its degeneracy boundary components.
    
    \item \textit{$L$ does not have degeneracy signature $(1,0,*)$.} Then no sacrificial $\flat$ modification is performed on $\partial_1 M$.
    \begin{enumerate}
        \item \textit{$\bm\fineslope(\F)=(0^{\flat/\natural}, \aleph)$.} Then $\slope_1(\xi_-) = -\delta < 0$ for some $\delta > 0$, and 
        $$(-\delta, \infty)\times \R\subset \Zg^-(\Phi).$$
        
        \item \textit{$\bm\fineslope(\F)=(0^{\dagger/\sharp}, \aleph)$.} Then $\slope_1(\xi_-)\leq 0$, and 
        $$[0, +\infty)\times \R\subset \Zg^-(\Phi).$$
    \end{enumerate}
\end{enumerate}
\end{mdframed}

\medskip

Notice that Case $1^+$(b) is a subset of Case $2^-$(a), and Case $1^-$(b) is a subset of Case $2^+$(a). Those cases deserve a particular attention.

Case $1^+$(b) is special since we are forced to perform the sacrificial $\flat$ modification along the nondegeneracy boundary component, which prevents $\{0\} \times \R \subset \Zg^+(\Phi)$
(this is referred to as the \emph{$\flat$-latching} case in the terminology below). However, we can produce a negative contact structure with \emph{negative} boundary slope along $\partial_1 M$ by choosing the $\sharp$-modification along a $+\infty$ degeneracy component (and turning it into a $\dagger$ type curve) as in Case $2^-$(a). Therefore, applying \zcref{thmintro:contactzigg}, we find
$$(-\delta,0)\times \R \subset \Zg(\Phi).$$
Why emphasize that while $-\infty$ degeneracy curves are problematic for $\xi_+$, they are beneficial for $\xi_-$ since we can always increase the slope of $\xi_-$ arbitrarily by twisting (and these $-\infty$ curves can be ``fixed'' by mitosis).

Similarly, in the special case $1^-$(b) (the \emph{$\sharp$-latching} case), \zcref{thmintro:contactzigg} implies
$$(0,\delta)\times \R \subset \Zg(\Phi).$$

Apart from these two special cases, invoking \zcref{thmintro:contactzigg} gives 
$$\{0\}\times \R \subset \Zg(\Phi).$$

In all three cases, $\{0\} \times \R \subset \overline{\Zg(\Phi)}$.
\end{ex}

\paragraph{General case: multiple compact planar leaves.} We now move to the more general case where $\F$ has finitely many compact planar leaves, but no degeneracy $\T^2$-type planar minimal sets.

Let $\F$ be a horizontal foliation of boundary type $(s_1, \dots, s_k, \aleph, \dots, \aleph)$. A boundary component $\partial_i M$ with finite \emph{rational} slope $s_i$, $1 \leq i \leq k$, is \textbf{$\sharp$-latching (resp.~$\flat$-latching)} if it is of $\sharp$ (resp.~$\flat$) type and all of its closed curves are the the nondegeneracy boundary of a compact genus $0$ leaf with degeneracy signature of the form $(1,0,*)$ (resp.~$(1,*,0)$). Note that a boundary component cannot be both $\sharp$-latching and $\flat$-latching. A boundary component that is neither $\sharp$-nor $\flat$-latching is \textbf{nonlatching}.\footnote{By ``latching'', we mean that it will not be possible to apply the fine version of Eliashberg--Thurston with boundary to remove the $\aleph$ components \emph{while keeping the slope on the latching boundary component fixed}. The latching compact planar leaves ``lock'' their boundary curves like a spring mechanism under tension in a latch.}

We now generalize the strategy of \zcref{ex:compactplanarlatching} to this scenario. 

\begin{prop} \label{prop:compactplanarlatching}
    Assume that $\F$ is a $\Phi$-horizontal foliation on $M$, where $\Phi$ is nonwandering and $\aleph$-obstructed, and that $\F$ has finitely many compact planar leaves and no degeneracy $\T^2$-type planar minimal sets. Assume that the fine boundary multislope of $\F$ is of the form
    $$\big(\underset{k_\flat}{\underbrace{s_1^\flat, \dots, s_{k_\flat}^\flat}}, 
    \underset{k_\sharp}{\underbrace{t_1^\sharp, \dots, t^\sharp_{k_\sharp}}}, 
    \underset{k_\circ}{\underbrace{w^?_1, \dots, w^?_{k_\circ}}},
    \underset{k_\aleph}{\underbrace{\aleph, \dots, \aleph}}\big),$$
where
\begin{itemize}
    \item The first $k_\flat$ boundary components of $M$ are $\flat$-latching,
    \item The next $k_\sharp$ boundary components of $M$ are $\sharp$-latching,
    \item The next $k_\circ$ boundary components of $M$ are nonlatching and have unconstrained types,
    \item The last $k_\aleph$ boundary components of $M$ are of $\aleph$ type.
\end{itemize}
Then there exists $\varepsilon > 0$ such that
$$\prod_{i=1}^{k_\flat} \big(s_i - \varepsilon, s_i\big) \times \prod_{i=1}^{k_\sharp} (t_i, t_i+\varepsilon) \times \{(w_1, \dots, w_{k_\circ})\} \times \R^{k_\aleph} \subset \Zg(\Phi).$$
\end{prop}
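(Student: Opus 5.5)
The plan is to run the argument of \zcref{ex:compactplanarlatching} once for each compact planar leaf, and independently for each sign. We construct one positive contact structure $\xi_+$ and one negative contact structure $\xi_-$, both $\Phi$-horizontal. We want
\begin{itemize}
\item on the $\aleph$ components, $\slope(\xi_+)=+\infty$ and $\slope(\xi_-)=-\infty$;
\item on the $\flat$-latching components, $s_i-\varepsilon<\slope_i(\xi_+)<s_i$ with $\slope_i(\xi_+)$ as close to $s_i$ as we like, and $\slope_i(\xi_-)<s_i-\varepsilon$;
\item on the $\sharp$-latching components, $\slope_i(\xi_+)>t_i+\varepsilon$ and $t_i<\slope_i(\xi_-)<t_i+\varepsilon$;
\item on the nonlatching components, $\slope_i(\xi_-)\le w_i\le \slope_i(\xi_+)$.
\end{itemize}
We then twist $\xi_\pm$ near the $\aleph$ components (\zcref{lem:trim_twist}) to replace $\pm\infty$ by $\pm N$ for arbitrarily large $N$. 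Finally \zcref{prop:boxconvex} puts every box $[\bm\slope(\xi_-),\bm\slope(\xi_+)]$ inside $\Zg(\Phi)$. Letting $\slope_i(\xi_+)\to s_i$ on the $\flat$-latching factors and $\slope_i(\xi_-)\to t_i$ on the $\sharp$-latching factors, the union of these boxes covers the claimed product.

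To build $\xi_+$, we first blow up the $\T^2$-type planar minimal sets as in the proof of \zcref{prop:easyalephresolution} so that they become stable. This is allowed because none of them is degenerate, so the generalized multislope does not change. Each unstable compact planar leaf $L$ then needs one distinguished boundary point for \zcref{thm:fineET}, and we choose it by the rule from the example:
\begin{itemize}
\item If $L$ has at least two boundary components that are nondegenerate or of slope $+\infty$, we apply mitosis (\zcref{constr:mitosis}) to such a pair $\gamma_1,\gamma_2$ and put the $\flat$ modifications on $\gamma_1$ and $\widetilde\gamma_2$. Each modified curve then keeps a parallel unmodified copy, so the coarse slopes on those components are unchanged.
\item If $L$ has signature $(1,*,0)$, we are forced to sacrifice its nondegenerate component.
\item Signatures $(0,*,0)$ and $(0,*,1)$ cannot occur: the first would contradict isolation, and the second is excluded by $\aleph$-unobstructedness, which is what the hypothesis should say.
\end{itemize}
No $\flat$ modification is ever placed on a $-\infty$ curve. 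Consequently, on the $\aleph$ components every leaf $\G$ obtained from \zcref{thm:fineET} is of $\aleph$ type or is degenerate at $+\infty$ only, and \zcref{lem:resolution_of_aleph} gives $\slope(\xi_+)=+\infty$ there.

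On a finite component, if no sacrifice is made there, the blowups allowed by \zcref{thm:fineET} are only of type $\natural/\sharp$, so $\G$ and $\partial\F$ have the same coarse slope. Then $\slope_i(\xi_+)\ge \slope_i(\F)$, strictly so for $\natural/\sharp$ types by \zcref{lem:phaselocking}. A sacrifice happens on a component only if it carries a nondegenerate curve of a $(1,*,0)$ leaf. If that component is not $\flat$-latching, it also contains either a curve not of this kind, which is left unmodified, or a $\sharp$ or $\dagger$ annulus. In either case \zcref{lem:phaselocking} still gives $\slope_i(\xi_+)\ge w_i$. If the component is $\flat$-latching, every closed curve is sacrificed and its type is $\flat$. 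The result is a $\flat$-modified foliation whose slope is less than $s_i$ but arbitrarily close to it, because the modifications are arbitrarily small, and this gives the required $\xi_+$ slope on $\flat$-latching components.

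On the same $\flat$-latching component, $\xi_-$ is built by the mirror rule, which sacrifices a $+\infty$ curve whenever one is available. The $(1,*,0)$ leaves have holonomy of the right sign, so the $\flat$ type survives and we obtain $\slope_i(\xi_-)<s_i-\delta$ for some $\delta>0$, as in Case $2^-$(a) of the example. We take $\varepsilon$ to be the minimum of these $\delta$'s over all latching components. The $\sharp$-latching components are handled symmetrically.

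The main obstacle is making the choices of distinguished boundary points globally consistent when several compact planar leaves meet the same boundary tori. A single sacrificial choice must not spoil the slope of another component. For example, a $\flat$ modification on a nonlatching component whose only closed curves are sacrificed would push its slope down. Mitosis is the tool for this: every sacrificed curve gets a parallel unmodified partner wherever the leaf has at least two eligible boundary components. The forced sacrifices then occur exactly on the latching components, which are defined by the property that all of their closed curves are forced.
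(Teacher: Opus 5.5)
Your overall scheme matches the paper's: sacrificial choices as in \zcref{ex:compactplanarlatching}, mitosis, \zcref{thm:fineET}, twisting the $\aleph$ components to $\pm N$, then \zcref{prop:boxconvex}. The gap is that the uniform constant $\varepsilon$ is never actually produced when $k_\flat\ge 1$ and $k_\sharp\ge 1$.

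Your $\xi_+$ is really a family $\xi_+^\eta$. To get $\slope_i(\xi_+^\eta)\ge s_i-\eta$ on the $\flat$-latching factors, you must shrink the forced $\flat$ modifications and refine the approximation. On the $\sharp$-latching factors, your only information is the strict inequality $\slope_j(\xi_+^\eta)>t_j$ from \zcref{lem:phaselocking}. That strictness is purely qualitative: nothing in \zcref{lem:phaselocking} or \zcref{thm:fineET} keeps $g(\eta)\coloneqq\slope_j(\xi_+^\eta)-t_j$ away from $0$ as $\eta\to 0$. Symmetrically, $\xi_-^{\eta}$ must approach $t_j$ on the $\sharp$-latching factors, and its gap $h(\eta)\coloneqq s_i-\slope_i(\xi_-^{\eta})$ on the $\flat$-latching factors is equally uncontrolled. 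Your ``minimum of these $\delta$'s'' is therefore an infimum over a family, and it may be $0$.

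This is not cosmetic. Covering a point with coordinates $s_i-a$ and $t_j+b$ by a box $[\bm{\slope}(\xi_-^{\eta'}),\bm{\slope}(\xi_+^{\eta})]$ requires, in the worst case, $\eta\le a\le h(\eta')$ and $\eta'\le b\le g(\eta)$. This forces $a\le h(b)$ and $b\le g(a)$. If $g(x)=h(x)=x^2$, no point with $0<a,b<1$ satisfies both, so the union of your boxes need not contain any product of the claimed form.

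The missing idea is to create the gap on the foliation itself, before approximating, so that it does not depend on the approximation parameter. This is Step~2 of the paper's proof. For $\xi_+$, after mitosis, perform a twisting modification (\zcref{constr:ribbontwist}) on each $\sharp$-latching leaf, along a ribbon running from its nondegenerate boundary curve to a $+\infty$ degeneracy curve. This destroys the latching closed curves and yields a fixed $\Phi$-horizontal foliation $\F'$. Its slopes on the $\sharp$-latching components are $\ge t_j+\varepsilon$ for a definite $\varepsilon>0$. The $\flat$-latching leaves, the nonlatching slopes, and the $\aleph$ type of the $\aleph$ components all survive, since mitosis leaves a $+\infty$ curve there.

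Only then apply \zcref{thm:fineET} to $\F'$. Use arbitrarily small $\flat$ modifications on the $\flat$-latching nondegenerate curves and on any new planar minimal sets touching the former $\sharp$-latching components. This gives $\xi_+^\delta$ with slopes $\ge s_i-\delta$, $\ge t_j+\varepsilon-\delta$, $\ge w_k$, and $=+\infty$ on the four groups, uniformly in $\delta$. The negative side is symmetric, twisting the $\flat$-latching leaves instead.

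When only one kind of latching component is present, your argument does work. In that case the contact structure of the other sign can be fixed once and for all, as in \zcref{ex:compactplanarlatching}.
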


\begin{proof}
    For notational simplicity, we may assume that the $s_i$'s and $t_i$'s are all $0$. We write $\bm w \coloneqq (w_1, \dots, w_{k_\circ})$.
    
    We show that there exists $\varepsilon>0$ such that 
    \begin{align}
        (-\varepsilon, +\infty)^{k_\flat} \times (0, +\infty)^{k_\sharp} \times \{ \bm w\} \times \R^{k_\aleph} &\subset \Zg^-(\Phi),\\
        (-\infty, 0)^{k_{\flat}} \times (-\infty, \varepsilon)^{k_{\sharp}} \times \{ \bm w\} \times \R^{k_\aleph} &\subset \Zg^+(\Phi). \label{eq:alephremoveZ+}
    \end{align}
    and use \zcref{thmintro:contactzigg} to conclude. 
    
    Let us focus on the inclusion~\eqref{eq:alephremoveZ+}, the other one being similar. We proceed in three steps. The first two steps modify the foliation, allowing us to apply the fine version of the Eliashberg--Thurston theorem with boundary in the third step.
    \begin{itemize}[leftmargin=*]
        \item \textit{Step 1: mitosis.} We apply the mitosis construction to all the compact planar leaves with boundary signature $(a, \ast, b)$ with $a+b \geq 2$, and choose boundary components of the resulting leaves to perform the $\flat$ modifications as explained in \zcref{ex:compactplanarlatching}. We also apply the mitosis construction to all the $\sharp$-latching compact planar leaves, so that the types of the $\sharp$-latching and $\aleph$ boundaries remain unchanged.

        \item \textit{Step 2: twisting.} For every $\sharp$-latching compact planar leaf, we choose an arc from the nondegeneracy boundary component to a $+\infty$ degeneracy boundary component, and we apply a twisting modification (see \zcref{constr:ribbontwist}) along a small ribbon along the arc, which is disjoint from all the other minimal sets of $\F$. We choose the twisting so that the nondegeneracy boundary component of the leaf is destroyed, and the slopes on all $\sharp$-latching boundary components are \emph{positive}. By the previous mitosis step, we can choose the arcs so that $+\infty$ boundary components survive on each $\aleph$ type boundaries of $M$ touched by such $\sharp$-latching leaves. The resulting foliation is still of $\aleph$ type on those boundaries. 

        After this step, we obtain a new $\Phi$-horizontal foliation $\F'$ satisfying the following properties:
        \begin{itemize}
            \item The first $k_\flat$ boundaries are $\flat$-latched, by the same compact planar leaves as before,
            \item The next $k_\sharp$ boundary components have positive slopes, bounded from below by some $\varepsilon > 0$,
            \item The next $k_\circ$ boundary components are still nonlatching and their slopes remain unchanged,
            \item The last $k_\aleph$ boundaries still have $\aleph$ type.
        \end{itemize}
        This procedure may produce new compact planar leaves and other minimal sets, all touching the previously $\sharp$-latching boundary components. We may arrange that those are in finite number after some blowdowns as in \zcref{lem:isolatedcompactleaves}. The $\flat$-latching and nonlatching compact planar leaves remain unaffected.
        
        \item \textit{Step 3: fine contact approximation.} We now apply the fine Eliashberg--Thurston theorem. For all the $\flat$-latching compact planar leaves, we perform the $\flat$ modification on their nondegeneracy boundary component. On all the (planar) minimal sets touching the formerly $\sharp$-latching boundary components, we apply the $\flat$-modification along those boundaries. On the compact planar leaves touching the nonlacthing boundary components, we proceed as before and perform the $\flat$ modification on the appropriate $-\infty$ boundaries (the mitosis trick has already been applied). For every $\delta>0$, we obtain a positive contact approximation $\xi^\delta_+$ such that 
        \begin{itemize}
            \item For $1 \leq i \leq k_\flat$, $\slope_i(\xi^\delta_+) \geq -\delta$,
            \item For $1 \leq i \leq k_\sharp$, $\slope_i(\xi^\delta_+) \geq \varepsilon-\delta$,
            \item For $1 \leq i \leq k_\circ$, $\slope_i(\xi^\delta_+) \geq w_i$,
            \item For $1 \leq i \leq k_\aleph$, $\slope_i(\xi^\delta_+)=+\infty$.
        \end{itemize}

        We deduce that there exists $\varepsilon > 0$ such that for every $\delta >0$,
        $$(-\infty, -\delta)^{k_\flat} \times (-\infty, \varepsilon - \delta)^{k_\sharp} \times \{\bm w \} \times \R^{k_\aleph} \subset \Zg^+(\Phi),$$
        and \eqref{eq:alephremoveZ+} follows by letting $\delta \rightarrow 0$. \qedhere
    \end{itemize} 
\end{proof}

        \subsection{Obstructions: \texorpdfstring{$\T^2$}{T2}-type planar minimal sets}

In the presence of degeneracy $\T^2$-type planar minimal sets, we will also need to use the finest version of Eliashberg--Thurston (\zcref{thm:fineET}) to have precise control over the the boundaries of the contact approximations. We will now assume that $\F$ has degeneracy $\T^2$-type planar minimal sets, and no degeneracy compact planar leaves.

\paragraph{An example: single $\T^2$-type planar minimal set.}

We start with the simplest situation of a single degeneracy $\T^2$-type planar minimal on a manifold with three boundary components.

\begin{ex} \label{ex:T2latching}
Suppose that $M$ has three boundary components, and that $\F$ is a $\Phi$-horizontal foliation on $M$ with boundary type $(s, \overline{s}, \aleph)$ with $s, \overline{s} \in \R \setminus \Q$, and with a (necessarily unique) $\T^2$-type planar minimal set $\Lambda$. We also assume that  $\Phi$ has no degeneracy compact planar leaves, hence no compact planar leaves at all.

Recall that because of the structure of $\T^2$-type planar minimal set, there is a distinguished branch of hyperbola $\mathcal{H} \subset \R^2$ corresponding to a locus of homological longitudes on the first two boundary components of $M$, and $(s, \overline{s}) \in \mathcal{H}$.

As before, we want to apply the fine version of the Eliashberg--Thurston theorem with boundary to resolve the $\aleph$ component and obtain $\{(s, \overline{s})\}\times \R \subset \Zg(\Phi)$. In particular, the instability of irrational slopes \zcref{lem:phaselocking} would then imply that there exists $\varepsilon > 0$ such that 
\begin{align} \label{eq:T2zig}
(s-\varepsilon, s+ \varepsilon) \times (\overline{s} - \varepsilon, \overline{s} + \varepsilon) \times \R \subset \Zg(\Phi).
\end{align}

However, as in \zcref{ex:compactplanarlatching}, we will need to apply a sacrificial $\flat/\sharp$ modification along a boundary component of $\Lambda$ to obtain adequate positive and negative contact approximations. Let us focus on the case of positive contact approximations. Once again, we distinguish several cases depending on the ``boundary signature'' of $\Lambda$:
\begin{itemize}
    \item If $\Lambda$ has some $+\infty$ degeneracy boundaries, then we perform a $\flat$ modification along such a boundary component to obtain an approximating positive contact structure. These boundary components locally form a Cantor set along $\partial_3M$, because of the structure of exceptional minimal sets, so we do not need a ``mitosis trick'' here.
    
    \item If $\Lambda$ only has $-\infty$ degeneracy boundary components, then we are forced to apply a $\flat$ modification along one of the two nondegeneracy boundaries.
\end{itemize}

Let us analyze these cases in more detail for the positive contact approximation $\xi_+$:

\begin{mdframed}
\begin{enumerate}[label=Case \arabic*$^+$:, leftmargin=*]
    \item \textit{$\Lambda$ has some $+\infty$ boundary components.} Then a sacrificial $\flat$ modification is performed on one of those, and
    $$s_1(\xi_+) > s, \qquad s_2(\xi_+) > \overline{s}, \qquad s_3(\xi_+) = +\infty.$$
    Therefore, 
    $$(-\infty, s+\varepsilon) \times (-\infty, \overline{s}+\varepsilon) \times \R \subset \Zg^+(\Phi).$$

    \item \textit{All the degeneracy boundaries of $\Lambda$ have slope $-\infty$.} Then no sacrificial $\flat$ modification is performed on $\partial_3 M$, and we can choose to perform it on $\partial_1 M$ or $\partial_2 M$. Performing it on $\partial_1 M$, we obtain $s_2(\xi_+) > \overline{s}$, and $s_1(\xi_+) < s$ can be made arbitrarily close to $s$. In particular
    $$(-\infty, s) \times (-\infty, \overline{s})  \times \R \subset \Zg^+(\Phi).$$
\end{enumerate}
\end{mdframed}

\medskip

As before, there are similar cases for $\xi_-$:

\begin{mdframed}
\begin{enumerate}[label=Case \arabic*$^-$:, leftmargin=*]
    \item \textit{$\Lambda$ has some $-\infty$ boundary components.} Then a sacrificial $\sharp$ modification is performed on one of those, and
    $$s_1(\xi_-) < s, \qquad s_2(\xi_-) < \overline{s}, \qquad s_3(\xi_+) = -\infty.$$
    Therefore, 
    $$(s-\varepsilon, +\infty) \times (\overline{s}-\varepsilon, +\infty) \times \R \subset \Zg^-(\Phi).$$

    \item \textit{All the degeneracy boundaries of $\Lambda$ have slope $+\infty$.} Then no sacrificial $\sharp$ modification is performed on $\partial_3 M$, and we can choose to perform it on $\partial_1 M$ or $\partial_2 M$. Performing it on $\partial_1 M$, we obtain $s_2(\xi_-) < \overline{s}$, and $s_1(\xi_-) > s$ can be made arbitrarily close to $s$. In particular
    $$(s,+\infty) \times (\overline{s}, +\infty)  \times \R \subset \Zg^-(\Phi).$$
\end{enumerate}
\end{mdframed}

Notice that Case $2^+$ is a subset of case $1^-$, and Case $2^-$ is a subset of Case $1^+$.

It follows from our analysis and \zcref{thmintro:contactzigg} that if $\Lambda$ has both $+\infty$ and $-\infty$ degeneracy boundaries, then the inclusion~\eqref{eq:T2zig} holds for some $\varepsilon > 0$.

If $\Lambda$ is as in Case $2^+$ (which we will refer to as the \emph{$\T^2_-$-latching} case below), then we instead get
$$(s-\varepsilon, s) \times (\overline{s}-\varepsilon, \overline{s}) \times \R \subset \Zg(\Phi).$$
However, we can improve this a bit. Indeed, we may first modify $\F$ by a ribbon twisting operation (see \zcref{constr:ribbontwist}) along a ribbon from $\partial_1M$ to $\partial_2M$. This ribbon can be constructed by choosing a $C^1$ curve tangent to $\Lambda$ from $\partial_1M$ to $\partial_2M$, smoothing $\F$ near it, and thickening it. By twisting along this ribbon, we obtain new foliations $\F_r$ on $M$ with multislope $(r, \overline{r}, \aleph)$, for any $(r, \overline{r}) \in \mathcal{H}$ sufficiently close to $(s, \overline{s})$. This crucially uses the structure of $\T^2$-type planar minimal sets; see \zcref{def:pems}. When $r \in \R \setminus \Q$, $\F_r$ has a new $\T^2$-type planar minimal set $\Lambda_r$ with no $-\infty$ degeneracy boundary components. Therefore, the previous analysis implies 
$$\big(\mathcal{H}^- \cap [(-\infty, s+\varepsilon) \times (-\infty, \overline{s}+\varepsilon)] \big) \times \R \subset \Zg^+(\Phi),$$
where $\mathcal{H}^-$ denotes the connected component of $\R^2 \setminus \mathcal{H}$ below $\mathcal{H}$. This implies that there exists $\varepsilon > 0$ such that 
$$\big(\mathcal{H}^- \cap [(s-\varepsilon, s+\varepsilon) \times (\overline{s}-\varepsilon, \overline{s}+\varepsilon)] \big) \times \R \subset \Zg(\Phi).$$

Similarly, in the case where $\Lambda$ has no $-\infty$ degeneracy boundary components (the \emph{$\T^2_+$-latching} case), we obtain 
$$\big(\mathcal{H}^+ \cap [(s-\varepsilon, s+\varepsilon) \times (\overline{s}-\varepsilon, \overline{s}+\varepsilon)] \big) \times \R \subset \Zg(\Phi)$$
for some $\varepsilon > 0$ See \zcref{fig:h_pm} for a sketch of the set corresponding to the first two coordinates.
\end{ex}

\begin{figure}[ht]
    \centering
    \def\svgwidth{0.4\textwidth}
    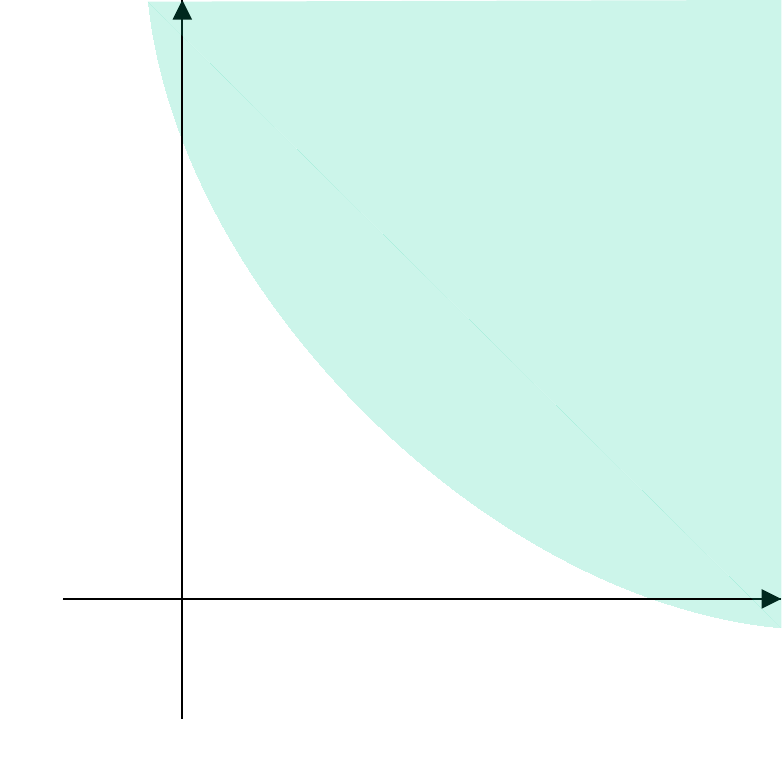
    \caption{Multislopes obtained by resolving a $\T^2_+$-latching planar minimal set (in gray).}
    \label{fig:h_pm}
\end{figure}

\paragraph{General case: multiple $\T^2$-type planar minimal sets.} We now consider the more general case where $\F$ has finitely many $\T^2$-type planar minimal sets, but no compact planar leaves.

A boundary component $\partial_i M$ with \emph{irrational} slope is \textbf{$\T^2_+$-latching (resp.~$\T^2_-$-latching)} if it intersects a $\T^2$-type minimal set all of whose closed boundary components have $+\infty$ (resp. $-\infty$) degeneracy slope. In that case, there is a companion irrational boundary component, which is the other irrational boundary component of that minimal set. Moreover, each irrational boundary component intersects at most one such a minimal set, so $\T^2_+$-latching (resp.~$\T^2_-$-latching) come in pairs, and cannot be both $\T^2_+$-~and $\T^2_-$-latching. As before, a boundary component that is neither $\T^2_+$- nor $\T^2_-$-latching is called nonlatching.\footnote{As a mnemonic for the reader: $\T^2_+$-latching boundary components are problematic for $\xi_-$, and force its boundary multislope to be \emph{bigger} than desired, while $\T^2_-$-latching boundary components are problematic for $\xi_+$, and force its boundary multislope to be \emph{smaller} than desired.}

We recall some useful notation. If $\partial_i M$ and $\partial_j M$ are the two irrational boundary components of a $\T^2$-type planar minimal set, then there is a natural identification of slopes on $\partial_i M$ and $\partial_j M$. If $u$ is a slope on $\partial_i M$, there is a corresponding slope $\overline{u}$ on $\partial_j M$.\footnote{Because the choices of framings coming from $\Phi$ for $\partial_1 M$ and $\partial_2 M$ might not agree, we do not necessarily have $\overline{r}=-r$.} The locus of points of the form $(r,\overline{r})\in \R^2$ is a branch of a hyperbola or a line which we denote by $\mathcal{H} \subset \R^2$. It divides the plane into two open regions $\mathcal{H}^\pm$, where $\mathcal{H}^-$ is unbounded in the $(-\infty, -\infty)$ direction and $\mathcal{H}^+$ is unbounded in the $(+\infty, +\infty)$ direction.

We now generalize the strategy of \zcref{ex:T2latching} to this scenario. Let us describe the setup carefully. As before, $\Phi$ is an $\aleph$-obstructed and nonwandering flow on $M$. We consider a $\Phi$-horizontal foliation $\F$ which has no degeneracy compact planar leaves. We assume that the boundary multislope of $\F$ is of the form
$$\big(\underset{2k_-}{\underbrace{u_1,\overline{u}_1, \dots, u_{k_-}, \overline{u}_{k_-}}}, 
\underset{2k_+}{\underbrace{v_1,\overline{v}_1, \dots, v_{k_+}, \overline{v}_{k_+}}},
\underset{k_\circ}{\underbrace{w_1, \dots, w_{k_\circ}}},
\underset{k_\aleph}{\underbrace{\aleph, \dots, \aleph}}\big),$$
where
\begin{itemize}
    \item The first $2 k_-$ boundary components of $M$ are $\T^2_-$-latching, and $u_i$ and $\overline{u}_i$ are two irrational slopes for boundary components intersecting the same $\T^2_-$-latching minimal set $\Lambda^-_i$,
    \item The next $2 k_+$ boundary components of $M$ are $\T^2_+$-latching, and $v_i$ and $\overline{v}_i$ are two irrational slopes for boundary components intersecting the same $\T^2_+$-latching minimal set $\Lambda^+_i$,
    \item The next $k_\circ$ boundary components of $M$ are non-latching and have unconstrained types,
    \item The last $k_\aleph$ boundary components of $M$ are of $\aleph$ type.
\end{itemize}
For $\varepsilon > 0$, we set
\begin{align}
    U^\varepsilon_- &\coloneqq \prod_{i=1}^{k_-} \left(\mathcal{H}^-_{c_-(i)} \cap \left[(u_i-\varepsilon, u_i +\varepsilon) \times (\overline{u}_i-\varepsilon, \overline{u}_i +\varepsilon)\right] \right) \subset \R^{2 k_-}, \label{eq:U-} \\
    U^\varepsilon_+ &\coloneqq 
    \prod_{i=1}^{k_+} \left(\mathcal{H}^+_{c_+(i)} \cap \left[(v_i-\varepsilon, v_i +\varepsilon) \times (\overline{v}_i-\varepsilon, \overline{v}_i+\varepsilon)\right] \right) \subset \R^{2 k_+}, \label{eq:U+}
\end{align}
where $\mathcal{H}_{c_\pm(i)}$ denotes the locus of homological longitudes for the $i$-th pair of $\T^2_\pm$-latching boundaries of $M$.

\begin{prop} \label{prop:T2latching}
With the previous notation, there exists $\varepsilon > 0$ such that
\begin{align} \label{eq:removealephT2}
    U^\varepsilon_- \times U^\varepsilon_+ \times \{(w_1, \dots, w_{k_\circ})\} \times \R^{k_\aleph} \subset \Zg(\Phi).
\end{align}
\end{prop}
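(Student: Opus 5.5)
The plan is to mirror the proof of \zcref{prop:compactplanarlatching}: split the claim into a statement about each contact ziggurat and then conclude with \zcref{thmintro:contactzigg}. Concretely, we aim to show that for some $\varepsilon>0$,
\begin{align*}
U^\varepsilon_- \times \big[(-\infty,+\infty)^{2k_+}\cap \widehat{U}^{\varepsilon,+}_+\big] \times \{\bm w\}\times \R^{k_\aleph} &\subset \Zg^-(\Phi) \text{-type region},\\
U^\varepsilon_- \times U^\varepsilon_+ \times \{\bm w\}\times \R^{k_\aleph} &\subset \Zg^+(\Phi)\cap\Zg^-(\Phi).
\end{align*}
More precisely, we produce positive and negative contact structures whose slopes bracket each point of the target set. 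Box-convexity in the form of \zcref{prop:boxconvex} then puts that point in $\Zg(\Phi)$. By symmetry, it suffices to treat $\xi_+$ carefully; $\xi_-$ follows by exchanging $\pm$, $\sharp/\flat$ and $\T^2_\pm$.

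First we set up the positive approximation. We fix a target point $(\bm r,\bm r')\in U^\varepsilon_-\times U^\varepsilon_+$, with $\bm r=(r_i,\overline r_i)$ and $\bm r'=(r'_i,\overline r'_i)$. For each $\T^2_+$-latching set $\Lambda^+_i$, there is a $+\infty$ degeneracy boundary, and we place the sacrificial $\flat$ modification of \zcref{thm:fineET} there. This is Case $1^+$ of \zcref{ex:T2latching}. The nearby $+\infty$ curves form a Cantor set, so the $\aleph$ boundary keeps $+\infty$ spiraling. Instability of irrational slopes (\zcref{lem:phaselocking}) then gives $\slope(\xi_+)>v_i,\overline v_i$ by some uniform margin.

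For each $\T^2_-$-latching set $\Lambda^-_i$ we use the ribbon-twisting trick of \zcref{ex:T2latching}. We choose a ribbon tangent to $\Lambda^-_i$ joining its two irrational boundary components. Twisting along it (\zcref{constr:ribbontwist}) replaces $(u_i,\overline u_i)$ by an irrational point $(\rho_i,\overline\rho_i)\in\mathcal H_{c_-(i)}$ slightly above-right of $(r_i,\overline r_i)$. This is possible because $(r_i,\overline r_i)\in\mathcal H^-$ lies near the hyperbola, so such points on $\mathcal H$ are available. The twisted minimal set no longer has $-\infty$ boundaries in the relevant sense: its closed boundaries lie on $\aleph$ components, as in the example. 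We then apply the Case $1^+$ analysis again, using the ribbons' disjointness from the other minimal sets to keep all other slopes fixed.

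The non-latching components need separate care. They may touch $\T^2$-type sets whose closed boundaries include $+\infty$ curves, and we perform the $\flat$ modification there. Together with the $\aleph$ components, this gives $\slope(\xi_+)\ge w_j$ and $=+\infty$ respectively. Twisting (\zcref{lem:trim_twist}) then lowers these slopes to any prescribed values. The upshot is that $\xi_+$ has slopes dominating the target point on every coordinate, with the non-latching coordinates equal to $w_j$ after twisting. The symmetric construction gives $\xi_-$. Here we twist $\Lambda^+_i$ along ribbons to points of $\mathcal H$ below-left of $(r'_i,\overline r'_i)$, and use $-\infty$ boundaries of $\Lambda^-_i$ for the $\sharp$ modification. \zcref{prop:boxconvex} then finishes the argument.

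The main obstacle will be the non-latching coordinates. We need $\slope(\xi_\pm)$ to be exactly $w_j$ from both sides, or at least to straddle it. This requires a $\T^2$-type set touching a non-latching boundary to have both $+\infty$ and $-\infty$ degeneracy boundaries, and we must verify that the sacrificial modifications can always be placed away from the non-latching and $\aleph$ components. A second delicate point is making $\varepsilon$ uniform over the target point. We handle this by choosing the ribbons once and noting that the twisting parameter varies continuously along $\mathcal H$, so that a single $\varepsilon$ works for all points in the product.
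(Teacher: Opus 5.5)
Your architecture is close to the paper's: you bracket each target point by a $\Phi$-horizontal $\xi_+$ and $\xi_-$, twist the opposite-type latching sets along $\mathcal H$ depending on the target, and close with \zcref{prop:boxconvex}. As written, though, there is a genuine gap: the uniformity of $\varepsilon$.

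Your $\xi_+$ depends on the target through the $\mathcal H$-twists of the $\Lambda^-_i$. On the $\T^2_+$-latching coordinates you control it only via Case $1^+$, i.e.\ via \zcref{lem:phaselocking}. That lemma gives $\slope(\xi_+)>\bm v_i$, but with an unquantified excess that depends on the whole output of \zcref{thm:fineET}. The $i$-th factor of $U^\varepsilon_+$ contains $\bm v_i+\boldsymbol{\varepsilon}/2$, so you need this excess to be at least about $\varepsilon/2$ over all the infinitely many $\Lambda^-$-twists that occur. Nothing prevents its infimum from being $0$. The mirror problem occurs for $\xi_-$ on the $\T^2_-$-coordinates. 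Continuity of the twisting parameter along $\mathcal H$ does not help, because the contact approximations do not depend continuously on the foliation.

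The paper closes this with a second, target-independent twist. When building $\xi_+$, it also twists each $\Lambda^+_i$ along two arcs from a $+\infty$ degeneracy curve to its two irrational boundaries. This destroys some but not all $+\infty$ curves, so the $\aleph$ components stay $\aleph$, and it raises both irrational slopes by a definite $\varepsilon$. Latching sets touch disjoint boundary components, so this twist is independent of the $\mathcal H$-twists of the $\Lambda^-_i$. After the sacrificial $\flat$ modifications are placed on nondegeneracy boundaries, domination gives the quantitative bound $\slope(\xi_+)\geq \bm v_i+\boldsymbol{\varepsilon}-\delta$ with $\delta$ arbitrary. This yields $\prod_i(\bm{-\infty},\bm u'_i)\times\prod_i(\bm{-\infty},\bm v_i+\boldsymbol{\varepsilon})\times\{\bm w\}\times\R^{k_\aleph}\subset\Zg^+(\Phi)$ for all $\bm u'_i\in\mathcal H_{c_-(i)}$ within $\varepsilon$ of $\bm u_i$. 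The mirror statement holds for $\Zg^-(\Phi)$, and the proposition follows from \zcref{thmintro:contactzigg}.

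Two smaller points.

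First, Case $1^+$ is not justified for the twisted $\Lambda^-_i$. Case $1^+$ needs a $+\infty$ degeneracy curve of the latching set to host the $\flat$ modification. A ribbon twist between the two irrational boundaries leaves the $\aleph$ components untouched, and that is where the $-\infty$ curves of $\Lambda^-_i$ live, so nothing supplies such a curve. Put the sacrificial $\flat$ modification on one of the irrational boundaries instead (Case $2^+$), as the paper's Step~2 does. This still works for you: $(\rho_i,\overline\rho_i)$ is strictly above-right of $(r_i,\overline r_i)$, and the slope lost to the modification can be made arbitrarily small.

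Second, your stated need to keep sacrificial modifications away from the $\aleph$ components contradicts your own construction and is unnecessary. For $\xi_+$ they may sit on $+\infty$ curves of $\aleph$ components; this is exactly Case $1^+$. What must be avoided is a $\flat$ modification on a $-\infty$ curve, which would create a stable $-\infty$ curve and prevent slope $+\infty$ there.
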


\begin{proof}
We write $\bm w = (w_1, \dots, w_{k_\circ})$, $\bm u_i = (u_i, \overline{u}_i)$, $\bm v_i = (v_i, \overline{v}_i)$. We show that there exists $\varepsilon > 0$ such that
\begin{align}
    \prod_{i=1}^{k_-} \big(\bm{u}_i- \boldsymbol{\varepsilon}, \bm{+\infty}\big) \times \prod_{i=1}^{k_+} \big(\bm{v}'_i, \bm{+\infty}\big) \times \{ \bm w\} \times \R^{k_\aleph} &\subset \Zg^-(\Phi), \label{eq:removealephT2Z-}\\
    \prod_{i=1}^{k_-} \big(\bm{-\!\infty}, \bm{u}'_i \big) \times \prod_{i=1}^{k_+}\big(\bm{-\!\infty}, \bm{v}_i + \boldsymbol{\varepsilon}\big) \times \{ \bm w\} \times \R^{k_\aleph} &\subset \Zg^+(\Phi), \label{eq:removealephT2Z+}
\end{align}
for all $\bm u'_i \in \mathcal{H}_{c_-(i)}$ with $\vert \bm u'_i - \bm u_i \vert < \varepsilon$ and $\bm v'_i \in \mathcal{H}_{c_+(i)}$ with $\vert \bm v'_i - \bm v_i \vert < \varepsilon$. By \zcref{thmintro:contactzigg}, this will readily imply
$$\prod_{i=1}^{k_-}(\bm{u}'_i- \boldsymbol{\varepsilon}, \bm{u}'_i) \times \prod_{i=1}^{k_+}(\bm{v}'_i, \bm{v}'_i + \boldsymbol{\varepsilon}) \times \{ \bm w\} \times \R^{k_\aleph} \subset \Zg(\Phi)$$
for all such $\bm u'_i, \bm v'_i$, which yields~\eqref{eq:removealephT2} (for a possibly smaller $\varepsilon$).

We now explain how to establish~\eqref{eq:removealephT2Z+}, \eqref{eq:removealephT2Z-} being similar. We proceed in two steps: we first modify the foliation along the latching minimal sets, and then apply the fine version of Eliashberg--Thurston with suitable choices of $\flat$ modifications.
\begin{itemize}[leftmargin=*]
    \item \textit{Step 1: twisting.} For each $\T^2_-$-latching minimal set, we choose an arc from one irrational boundary of $M$ to the other, and apply a twisting modification (see \zcref{constr:ribbontwist}) along a thickening of the arc disjoint from the other minimal sets. This way, we can modify the slopes on the irrational boundaries along the corresponding locus $\mathcal{H}$ as in \zcref{ex:T2latching}. We also choose, for each $\T^2_+$-latching minimal set, two arcs from a degeneracy boundary component to each of the irrational boundaries, and apply the twisting construction. The effect is to destroy some $+\infty$ degeneracy curves, but not all of them, so that the $\aleph$ boundaries still have $\aleph$ type. In exchange, this increases the slopes on both irrational boundary components of at least some $\varepsilon > 0$. Importantly, the twisting operations along the $\T^2_-$- and $\T^2_+$-latching minimal sets are \emph{independent of each other}.

    In summary, we obtain a new $\Phi$-horizontal foliation $\F'$ with boundary multislope
    $$\big(\bm u'_1, \dots, \bm u '_{k_-}, 
    \bm v_1 + \bm\varepsilon, \dots, \bm v_{k_+} + \bm \varepsilon, \bm w, \bm{\aleph}\big),$$
    where $\bm u'_i \in \mathcal{H}_{c_-(i)}$ is an arbitrary (partial) multislope satisfying $\vert \bm u'_i - \bm u_i\vert < \varepsilon$, independent on $\varepsilon$. Note that the former nonlatching boundary components are still nonlatching and no other slopes are modified. The twisting operations might create new minimal sets, but they all intersect the formerly latching boundary components. We might apply some blowdowns (see \zcref{lem:isolatedcompactleaves}) to ensure that the compact ones (if any) are isolated.
    
    \item \textit{Step 2: fine contact approximation.} We now apply \zcref{thm:fineET} to the foliation $\F'$. For the minimal sets touching the previously latching boundary components, we may choose to perform the $\flat$ modifications along those nondegeneracy boundaries. For the nonlatching $\T^2$-type minimal sets, we perform the $\flat$ modifications along $+\infty$ degeneracy curves as in \zcref{ex:T2latching}. By choosing smaller and smaller $\flat$ modifications and applying \zcref{thm:fineET}, we obtain~\eqref{eq:removealephT2Z+}. \qedhere
\end{itemize}
\end{proof}

        \subsection{The general case}

We now treat the general case, allowing both degeneracy compact planar leaves and $\T^2$-type planar minimal sets. We explain how to combine the strategies in the proofs of \zcref{prop:compactplanarlatching} and \zcref{prop:T2latching} to treat both obstructions simultaneously.

Let $\F$ be a foliation transverse to $\Phi$, with finitely many compact planar leaves,\footnote{Recall that this can always be achieved after some blowdowns by \zcref{lem:isolatedcompactleaves}.} and with generalized fine boundary multislope
$$\big(\underset{k_\flat}{\underbrace{s_1^\flat, \dots, s_{k_\flat}^\flat}}, 
\underset{k_\sharp}{\underbrace{t_1^\sharp, \dots, t^\sharp_{k_\sharp}}}, 
\underset{2k_-}{\underbrace{u_1,\overline{u}_1, \dots, u_{k_-}, \overline{u}_{k_-}}}, 
\underset{2k_+}{\underbrace{v_1,\overline{v}_1, \dots, v_{k_+}, \overline{v}_{k_+}}},
\underset{k_\circ}{\underbrace{w_1, \dots, w_{k_\circ}}},
\underset{k_\aleph}{\underbrace{\aleph, \dots, \aleph}}\big)$$
where
\begin{itemize}
    \item The first $k_\flat$ boundary components of $M$ are $\flat$-latching,
    \item The next $k_\sharp$ boundary components of $M$ are $\sharp$-latching,
    \item The next $2 k_-$ boundary components of $M$ are $\T^2_-$-latching, and $u_i$ and $\overline{u}_i$ are two irrational boundary components intersecting the same $\T^2_-$-latching minimal set $\Lambda^-_i$
    \item The next $2 k_+$ boundary components of $M$ are $\T^2_+$-latching, and $v_i$ and $\overline{v}_i$ are two irrational boundary components intersecting the same $\T^2_+$-latching minimal set $\Lambda^+_i$ 
    \item The next $k_\circ$ boundary components of $M$ are non-latching and have unconstrained types
    \item The last $k_\aleph$ boundary components of $M$ are of $\aleph$ type.
\end{itemize}

For $\varepsilon > 0$, we define open set
\begin{align*}
    U^\varepsilon_\flat \coloneqq \prod_{i=1}^{k_\flat} \big(s_i - \varepsilon, s_i\big) \subset \R^{k_\flat}, \qquad U^\varepsilon_\sharp \coloneqq \prod_{i=1}^{k_\sharp} (t_i, t_i +\varepsilon) \subset \R^{k_\sharp},
\end{align*}

and we define $U^\varepsilon_\pm$ as in~\eqref{eq:U-} and~\eqref{eq:U+}.

\begin{thm}[Resolving $\aleph$-boundaries] \label{thm:resolvealeph}
Suppose $\Phi$ is $\aleph$-unobstructed. With the previous notation, there exists $\varepsilon > 0$ such that
\begin{align} \label{eq:resolvealeph}
    U^\varepsilon_\flat \times U^\varepsilon_\sharp \times U^\varepsilon_- \times U^\varepsilon_+ \times \{(w_1, \dots, w_{k_\circ})\} \times \R^{k_\aleph} \subset \Zg(\Phi).
\end{align}
\end{thm}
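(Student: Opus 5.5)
The proof combines the two preceding arguments: the one for \zcref{prop:compactplanarlatching} and the one for \zcref{prop:T2latching}. The plan is to establish two one-sided inclusions, one for each contact ziggurat, and then intersect them using \zcref{thmintro:contactzigg}. For \(\Zg^+(\Phi)\) we aim to show that there is \(\varepsilon>0\) such that
\[
(-\infty,\bm s)\times(-\infty,\bm t+\bm\varepsilon)\times\prod_i(\bm{-\infty},\bm u_i')\times\prod_i(\bm{-\infty},\bm v_i+\bm\varepsilon)\times\{\bm w\}\times\R^{k_\aleph}\subset\Zg^+(\Phi)
\]
for every \(\bm u_i'\in\mathcal H_{c_-(i)}\) that is \(\varepsilon\)-close to \(\bm u_i\). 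The mirror statement for \(\Zg^-(\Phi)\) is obtained by the usual orientation and time-reversal symmetry. Intersecting the two inclusions and letting \(\bm u_i'\) and \(\bm v_i'\) range over \(\mathcal H\) near \(\bm u_i\) and \(\bm v_i\) gives \eqref{eq:resolvealeph}, possibly after shrinking \(\varepsilon\).

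We treat the positive case in three preparatory stages on \(\F\), then apply \zcref{thm:fineET}.

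\emph{First}, mitosis. We apply \zcref{constr:mitosis} to every compact planar leaf with signature \((a,*,b)\), \(a+b\ge2\), and to every \(\sharp\)-latching leaf. This keeps the coarse slopes on nondegenerate components and the \(\aleph\) type on degenerate ones.

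\emph{Second}, ribbon twisting (\zcref{constr:ribbontwist}), in three independent families.
\begin{itemize}
\item For each \(\sharp\)-latching leaf, twist along an arc to a \(+\infty\) component. This destroys the latching curve and pushes the slope up by at least \(\varepsilon\).
\item For each \(\T^2_-\)-latching set, twist along an arc joining its two irrational boundaries. This moves \((u_i,\overline u_i)\) to an arbitrary nearby \(\bm u_i'\in\mathcal H\).
\item For each \(\T^2_+\)-latching set, twist along arcs from a \(+\infty\) degeneracy curve to both irrational boundaries. This raises both slopes by \(\varepsilon\) while leaving some \(+\infty\) curves, so the \(\aleph\) boundaries keep \(\aleph\) type.
\end{itemize}
All ribbons are chosen in disjoint neighbourhoods of disjoint minimal sets, so the modifications commute and do not affect nonlatching or \(\flat\)-latching components. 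Afterwards we blow down, via \zcref{lem:isolatedcompactleaves}, so that compact leaves are again isolated.

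\emph{Third}, we apply \zcref{thm:fineET}, choosing the sacrificial \(\flat\) modifications as follows:
\begin{itemize}
\item on the nondegeneracy curve of each \(\flat\)-latching leaf, which costs at most \(\delta\) in slope;
\item on the formerly latching (\(\sharp\) or \(\T^2\)) boundaries for the new minimal sets touching them, where we have an \(\varepsilon\) margin to spend;
\item on \(+\infty\) degeneracy curves, or on the mitosis partner curves, for nonlatching compact planar leaves and nonlatching \(\T^2\)-type sets;
\item never on a \(-\infty\) degeneracy curve.
\end{itemize}
This is the case analysis of \zcref{ex:compactplanarlatching} and \zcref{ex:T2latching}. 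The \(\aleph\)-unobstructed hypothesis rules out the signatures \((1,1,0)\), \((1,0,1)\), \((0,1,m)\) and \((0,m,1)\), which would leave no admissible choice. Once dominance holds, \zcref{lem:resolution_of_aleph} gives slope \(+\infty\) on every \(\aleph\) boundary. \zcref{lem:trim_twist} then lowers these to any real value, and downward closedness (\zcref{prop:contactboxconv}) yields the inclusion. Finally we let \(\delta\to0\).

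The main obstacle is checking that, once both kinds of obstruction are present simultaneously, every minimal set still has an admissible sacrificial boundary. The delicate points are:
\begin{itemize}
\item a \(\T^2\)-type set whose only \(+\infty\) curves were consumed by twisting near a compact planar leaf on the same \(\aleph\) boundary;
\item new minimal sets created by twisting that touch only \(\aleph\) or nonlatching boundaries.
\end{itemize}
To handle these, we would first choose all twisting ribbons inside an arbitrarily small neighbourhood of the respective latching sets. We would then argue that each new minimal set meets a formerly latching boundary, as in Step 2 of the two preceding propositions. This allows its sacrifice to be placed there and confines the accounting to the \(\varepsilon\)-margin on those coordinates.
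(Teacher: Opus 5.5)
Your proposal follows the paper's proof. The paper likewise runs the mitosis and ribbon-twisting steps of \zcref{prop:compactplanarlatching} and \zcref{prop:T2latching} simultaneously, using that the finitely many minimal sets have pairwise disjoint neighbourhoods; it then applies \zcref{thm:fineET} with the sacrificial modifications chosen as in the two examples, and intersects the resulting one-sided inclusions for $\Zg^\pm(\Phi)$ via \zcref{thmintro:contactzigg}, leaving to the reader exactly the bookkeeping you flag. One small correction: the ``$\varepsilon$ margin'' exists only on the $\sharp$- and $\T^2_+$-latching coordinates, whereas on the $\T^2_-$-latching pairs the sacrifice just costs $\delta$ — which is consistent with your target there being the open quadrant $(\bm{-\infty},\bm u_i')$.
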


\begin{proof}
The result follows by combining the operations in the proofs of \zcref{prop:compactplanarlatching} and \zcref{prop:T2latching}.

The key observation is that the minimal sets of $\F$ are in finite number and admit pairwise disjoint neighborhoods. Moreover, the notions of $\flat/\sharp$- and $\T^2_\pm$-latching and nonlatching immediately extend to this more general setup. Importantly, the compact planar and $\T^2$-type planar minimal intersect disjoint sets of boundary components of $M$. Therefore, following the strategies of the aforementioned propositions, we may first prove inclusions for $\Zg^\pm(\Phi)$ and then apply \zcref{thmintro:contactzigg}. The inclusions~\eqref{eq:alephremoveZ+} and~\eqref{eq:removealephT2Z+} may be combined together by applying the first two steps in the proof of \zcref{prop:compactplanarlatching} and the first step in the one of \zcref{prop:T2latching} simultaneously. Then, one may apply the fine Eliashberg--Thurston theorem (\zcref{thm:fineET}) with the appropriate choices of $\flat$ modifications, and make the modifications arbitrarily small to get the desired contact approximations. Details should be clear and are left to the reader.
\end{proof}

In particular, we obtain:
\begin{cor}
    If $\Phi$ is an $\aleph$-unobstructed nonwandering flow on $M$, then
    $$\mathcal{Z}_\aleph(\Phi) \subset \overline{\mathcal{Z}(\Phi)}.$$
\end{cor}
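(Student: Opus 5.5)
The plan is to show that each multislope $\bm{s}=(s_1,\dots,s_n)\in\Zg_\aleph(\Phi)$ is a limit of points of $\Zg(\Phi)$, using \zcref{thm:resolvealeph}. By \zcref{def:alephcomp} there is a $\Phi$-horizontal foliation $\F$ with admissible boundary and generalized multislope $\widetilde{\bm s}$, where $\widetilde s_i=s_i$ or $\widetilde s_i=\aleph$. After reordering the boundary components, $\widetilde{\bm s}$ has the form $(s_1,\dots,s_k,\aleph,\dots,\aleph)$. The aim is to place $\F$ in the setting of \zcref{thm:resolvealeph} and then read off that $\bm s$ lies in the closure of the set appearing in~\eqref{eq:resolvealeph}.

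First I prepare $\F$. Using \zcref{lem:isolatedcompactleaves}, I replace $\F$ by blowdowns so that it has only finitely many compact leaves, and in particular finitely many compact planar leaves. Blowdowns preserve the coarse slopes and the $\aleph$ type on each boundary component, since $\aleph$ type only constrains the slopes of spiraling leaves. If $\F$ is a fibration, its fibers are compact; if they are planar, $\Phi$ is a planar suspension, which the standing assumptions exclude, and otherwise there are no compact planar leaves to deal with. Next I sort the non-$\aleph$ boundary components of $\F$ into $\flat$-latching, $\sharp$-latching, $\T^2_\pm$-latching and nonlatching, exactly as in the setup preceding \zcref{thm:resolvealeph}. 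This only requires that the relevant minimal sets are finite in number and meet disjoint sets of boundary components, which was observed in the proof of that theorem.

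Applying \zcref{thm:resolvealeph} produces some $\varepsilon>0$ for which the product set in~\eqref{eq:resolvealeph} lies in $\Zg(\Phi)$. The main point left is that $\bm s$ lies in the closure of this product, factor by factor. On the $\aleph$ coordinates the factor is all of $\R$, so $s_i$ for $i>k$ is attained exactly. On nonlatching coordinates the factor is $\{w_i\}=\{s_i\}$. On $\flat$- and $\sharp$-latching coordinates the factors are $(s_i-\varepsilon,s_i)$ and $(t_i,t_i+\varepsilon)$, whose closures contain $s_i$ and $t_i$. For the $\T^2_\pm$-latching pairs, the factor is $\mathcal H^\mp\cap[\text{box}]$, and the point $(u_i,\overline u_i)$ lies on the curve $\mathcal H$. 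Since $\mathcal H$ is the graph of a continuous monotone function (a hyperbola branch or a line), it lies in the closure of each of the open regions $\mathcal H^\pm$ it bounds. Points of $\mathcal H^\mp$ therefore approach $(u_i,\overline u_i)$ inside the box.

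Because the closure of a product is the product of the closures, $\bm s\in\overline{\Zg(\Phi)}$. The step I expect to need the most care is the preparation: checking that the blowdowns keep the generalized fine multislope admissible with the same coarse data, and that the degenerate case of a planar fibration is really excluded. Everything after that is bookkeeping on the closure of the set in~\eqref{eq:resolvealeph}.
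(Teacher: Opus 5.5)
Your argument is the paper's own: blow down with \zcref{lem:isolatedcompactleaves} to get finitely many compact leaves, apply \zcref{thm:resolvealeph}, and read off that $\bm s$ lies in the closure of the product set. Your factor-by-factor closure check, which the paper leaves implicit, is fine, and it still works even though your $\mathcal H^\pm$ labels are swapped relative to the paper's.

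The one slip is in the fibration case. Only disk and annulus suspensions are excluded, not all planar suspensions; a pair-of-pants suspension, for example, is allowed. This does not matter, because if $\F$ is a fibration then every boundary foliation consists of closed curves. So there is no $\aleph$ entry, and $\bm s\in\Zg(\Phi)$ directly, which is how the paper handles that case.
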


\begin{proof}
    Let $\F$ be a $\Phi$-horizontal foliation realizing a generalized boundary multislope $\bm s \in \Zg_\aleph(\Phi)$. If $\F$ is a suspension foliation, no leaf has a degeneracy boundary and $\bm s \in \Zg(\Phi)$. Otherwise, we apply \zcref{lem:isolatedcompactleaves} to obtain a $\Phi$-horizontal foliation with the same boundary multislope $\bm s$ and finitely many (planar) compact leaves. Then, \zcref{thm:resolvealeph} applies and yields $\bm s \in \overline{\Zg(\Phi)}$.
\end{proof}

    \section{\texorpdfstring{$\aleph$}{aleph}-completion} \label{sec:alephclosure}

In this section, we explain that one may take limits of foliations carried by the same branched surface. The boundary slopes behave well under limits unless Reeb annuli appear. These are what we call $\aleph$ type boundary foliations.

Let $B$ be a branched surface in $M$ whose complementary regions are products. Any foliation $\F$ fully carried by $B$ specifies a \textbf{cofinal splitting sequence} of $B$, by which we mean a sequence of splittings of $B$ such that every branch locus is eventually split open. Conversely, any cofinal splitting sequence for $B$ specifies a lamination whose complementary regions can be filled with products to obtain a foliation. Note that the procedure 
$$\F \longrightarrow  \textrm{Cofinal splitting sequence} \longrightarrow \F'$$
is not necessarily the identity. However, for each $1\leq k \leq n$, $\partial_k \F'$ is monotone equivalent to $\partial_k \F$ and can only have $\sharp$ or $\flat$ annuli if $\F$ does as well. Let $\mathcal O_{B}$ be the space of cofinal splitting sequences for $B$. We give $\mathcal O_B$ the topology of convergence on finite subsequences. Note that $\mathcal O_{B}$ is compact with respect to this topology.

    In the next lemma, we think of slopes as elements of $(H_1(\T^2)\setminus \{0\})/\R_{>0}$ since we are not restricting to foliations with slope in $\R\cup \{\pm \infty\}$ at the moment. Also recall \zcref{def:monotonecurve}: a curve is positively mostly to a foliation $\G$ if it decomposes into finitely many subarcs, each of which is either contained in a leaf of $\G$ or transverse to $\G$.
    \begin{lem}\label{lem:openproperties}
    Let $s$ be a rational slope. The following condition is an open condition on a cofinal splitting sequence $c\in \mathcal O_B$:
         $c$ codes a foliation $\F$ such that $\partial_k \F$ admits a positively monotone closed curve of slope $s$.
    \end{lem}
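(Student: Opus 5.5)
The plan is to reduce the statement to a finite piece of combinatorial data that depends only on finitely many stages of the splitting sequence. We then argue that any nearby $c' \in \mathcal O_B$, which agrees with $c$ on a long enough initial segment, inherits that data.

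Fix $c \in \mathcal O_B$ coding $\F$, and let $\gamma \subset \partial_k M$ be a positively monotone closed curve of slope $s$ for $\partial_k \F$. Decompose $\gamma$ as in \zcref{def:monotonecurve} into finitely many arcs $\gamma^\parallel_1, \dots, \gamma^\parallel_m$ contained in leaves and arcs $\gamma^\perp_1,\dots,\gamma^\perp_m$ positively transverse to $\partial_k\F$. Following the proof of \zcref{lem:monotone_open}, we extend each transverse arc slightly, so that each leafwise arc $\gamma^\parallel_i$ runs from the interior of $\gamma^\perp_i$ to the interior of $\gamma^\perp_{i+1}$. The boundary train track $\tau_j = B_j \cap \partial_k M$ of the $j$-th split $B_j$ carries $\partial_k\F$, and its $I$-fibered neighborhood shrinks as $j\to\infty$. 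Hence for $j$ large, each $\gamma^\parallel_i$ is carried by a finite train path $p_i$ in $\tau_j$. We arrange that $p_i$ starts and ends on ties of $N(\tau_j)$ met by $\gamma^\perp_i$ and $\gamma^\perp_{i+1}$, and that these ties are crossed by the transverse arcs with a definite margin. Concretely, the transverse arcs can be taken to cross entire fibered rectangles of $N(\tau_j)$ from bottom to top. The key claim is then the following: the existence of train paths $p_i$ in $\tau_j$, concatenated with tie-crossing arcs in the positive transverse direction, whose composite is a closed curve of slope $s$, is a property of $B_j$ alone.

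Now let $c'$ agree with $c$ through stage $j$, and let it code $\F'$. Then $\partial_k \F'$ is fully carried by $\tau_j$, so every train path $p_i$ is followed, up to a homotopy inside $N(\tau_j)$, by a leaf segment of $\partial_k\F'$. That segment starts on the fibered rectangle containing the end of $\gamma^\perp_i$ and ends on the rectangle containing the start of $\gamma^\perp_{i+1}$. Concatenating these leaf segments with arcs inside the fibered rectangles that cross the ties upward gives a closed curve for $\partial_k\F'$. This curve alternates between leafwise and positively transverse pieces, since the $I$-fibers of $N(\tau_j)$ are transverse to every carried foliation. Its homotopy class is that of $\gamma$, so its slope is $s$. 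After resolving self-intersections as in \zcref{lem:monotone_existence}, we obtain an embedded positively monotone curve of slope $s$. The set of such $c'$ is a basic open set in the topology of convergence on finite subsequences, which proves the lemma.

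The main obstacle is the fiber-crossing step. A transverse piece of $\gamma$ might fail to cross a whole rectangle, and then a leaf of $\F'$ could exit through the vertical boundary of $N(\tau_j)$ or through a cusp before reaching the next transverse piece. Also, the orientation given by the positive side must be preserved, including when a leaf segment enters the region of a complementary product. To handle this, we first push the leafwise arcs of $\gamma$ slightly into the interior of $\partial_k \Lambda$, where $\Lambda$ is the carried lamination. We then choose $j$ large enough that the ties met are genuinely crossed, using the fact that the ties can be taken inside the transverse arcs. If a leafwise arc lies in a filled product region rather than in the lamination, we replace it by a parallel arc in a boundary leaf of that region. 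This works because such regions are products, and the result is again carried by $\tau_j$.
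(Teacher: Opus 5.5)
There is a genuine gap, in the concatenation step. Full carrying by $\tau_j$ only says that every branch is traversed by leaves of $\partial_k\F'$. It does not say that a given multi-switch train path $p_i$ is followed by a leaf of $\partial_k\F'$: at a diverging switch, the leaves of $\F'$ may go into the other small branch.

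The more serious problem is ordering. Even when each $p_i$ is realized, nothing controls where inside a tie the $\F'$-segment following $p_{i-1}$ ends, relative to where the $\F'$-segment following $p_i$ starts. The connecting arc is positively transverse only if the first point lies below the second. This relative position of leaves within ties is exactly what a finite prefix of the splitting sequence does not determine. Agreement on a prefix gives no $C^0_{\mathrm{Fol}}$-closeness, so the argument of \zcref{lem:monotone_open} does not transfer.

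Your argument never uses the largeness of $j$ at this step, and the conclusion already fails at stage $0$. Take a track on $\T^2$ formed by a $(1,0)$-loop and a $(1,1)$-loop merging along a common segment; it fully carries linear foliations of every slope in $(0,1)$. Let $s=1/2$.
\begin{itemize}
\item For slope $t$, a leaf following the closed train path of class $(2,1)$ returns to a tie of the $(1,0)$-branch displaced by $2t-1$ across it.
\item For $t=0.4$ this leaf segment, closed by an arc slightly longer than that tie, is a positively monotone curve of slope $1/2$. Your conditions (carried train paths, transverse arcs crossing full ties) all hold.
\item The linear slope-$0.6$ foliation is also fully carried, but has no positively monotone curve of slope $1/2$; its closing arc would have to be negatively transverse.
\end{itemize}
So the data you extract is not ``a property of $B_j$ alone'' in the sense you need.

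The missing idea is a finite-stage certificate that constrains \emph{every} foliation carried by a sufficiently deep split. The paper gets this by first characterizing when the condition holds. Either $\partial_k\F$ has a $\flat$-spiraling leaf limiting onto a closed curve of slope $s$, or $\partial_k\F$ is Reebless and $\slope(\partial_k\F)$ pairs positively with $s$.
\begin{itemize}
\item In the first case, the spiraling leaf eventually follows a periodic sequence of tracks. One takes a monotone curve whose only transverse arc lies in a single branch sector. The two endpoints of that arc are eventually split apart, since the curve does not live in a product region, and from then on the configuration is forced.
\item In the second case, a deep enough split carries only foliations with slopes in a small interval around $\slope(\partial_k\F)$. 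All of these pair positively with $s$, and \zcref{lem:monotone_existence} supplies the curve.
\end{itemize}
It is this use of cofinality to pin down the slope or the spiraling, rather than a local deformation of a fixed monotone curve, that makes the condition open.
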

    \begin{proof}
        We check that the condition can be certified with a finite prefix of $c$, from which it will follow that it is an open condition. The condition holds if and only if one of the following holds:
        \begin{enumerate}
        \item $\partial_k \F$ has a $\flat$-spiraling leaf $\lambda$ which limits onto a closed curve of slope $s$.
        \item $\partial_k \F$ is Reebless and $\slope(\partial_k \F)$ pairs positively with $s$.
        \end{enumerate}
        Let us first handle case 1. Since $\lambda$ limits onto a closed curve of slope $s$, it eventually recurs onto the same periodic sequence of tracks of $B$. Then as in \zcref{fig:transversalizing}, we may choose a monotone curve $\gamma=\gamma_\perp \cup \gamma_\parallel$ of slope $s$ whose tangent subarc $\gamma_\parallel$ is contained in $\lambda$ and whose transverse subarc $\gamma_\perp$ is contained in a single branch sector of $\partial_k B$. The two endpoints of $\gamma_\perp$ are eventually split apart in the cofinal sequence $c$ because $\gamma$ does not live in a product region. Therefore, the existence of $\gamma$ is certified in a finite prefix of $c$.

        Now let us handle case 2. Since $\partial_k \F$ is Reebless, there exists a splitting of $\partial_k B$ in the sequence $c$ which carries only foliations with slope in an $\varepsilon$-interval around $\slope(\partial_k \F)$. For $\varepsilon$ small enough, every foliation carried by this splitting pairs positively with $s$. Therefore, property 2 is witnessed by a finite prefix of $c$.
    \end{proof}
    
    \begin{cor}\label{cor:slopecontinuity}
        Coarse slope is a continuous function on the subspace of $\mathcal O_B$ consisting of Reebless foliations.
    \end{cor}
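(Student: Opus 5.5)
The plan is to deduce \zcref{cor:slopecontinuity} directly from \zcref{lem:openproperties}, together with the characterization of slope by monotone curves from \zcref{lem:monotone_existence}. We fix a boundary component $\partial_k M$ and a cofinal splitting sequence $c \in \mathcal O_B$ coding a foliation $\F$ with $\partial_k \F$ Reebless of slope $\sigma$. Slopes live in the circle $(H_1(\T^2)\setminus\{0\})/\R_{>0}$. A basic neighborhood of $\sigma$ is an open arc with rational endpoints $r_1, r_2$ lying on either side of $\sigma$.

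First, we use \zcref{lem:monotone_existence}, applied in the circle-valued sense after a change of framing. Since $\partial_k\F$ is a suspension foliation, it admits a positively monotone closed curve of every rational slope on one side of $\sigma$ and a negatively monotone closed curve of every rational slope on the other side. We therefore choose $r_1$ slightly above $\sigma$ and $r_2$ slightly below $\sigma$, within the desired arc. Then $\partial_k\F$ admits a positively monotone curve of slope $r_1$ and a negatively monotone curve of slope $r_2$. The negative case is the mirror of \zcref{lem:openproperties}, obtained by reversing orientations, so it is also an open condition.

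By \zcref{lem:openproperties}, both conditions persist on a neighborhood $U$ of $c$ in $\mathcal O_B$. Now let $c' \in U$ code a foliation $\F'$ with $\partial_k\F'$ Reebless of slope $\sigma'$. The existence of a positively monotone curve of slope $r_1$ forces $\sigma' < r_1$ in the appropriate local orientation. To see this, we perturb the curve to a genuine transversal using \zcref{lem:transversalizing}. A transversal of slope $r_1$ meets every lift of a leaf in a consistent direction, so it pairs positively with $\sigma'$. This is the same reasoning as in case 2 of the lemma's proof, and it is where the inequality in \zcref{lem:phaselocking} enters. Similarly, $\sigma' > r_2$. Hence $\sigma'$ lies in the chosen arc, which proves continuity.

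The main subtlety is the first inequality: a monotone curve of slope exactly equal to a Reebless slope can exist when there is $\flat$ spiraling. We avoid this issue by taking $r_1, r_2$ to be rationals distinct from $\sigma$, so only strict pairings are needed. We also need to justify that the pairing of a monotone (rather than transverse) curve with $\sigma'$ has a definite sign. This follows from \zcref{lem:transversalizing}, because coarse slope is invariant under small $C^0$-isotopy.
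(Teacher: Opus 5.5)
Your argument is the intended one. The paper gives no separate proof and treats the statement as an immediate consequence of \zcref{lem:openproperties} together with its mirror for negatively monotone curves, sandwiching the slope between two nearby rational slopes, which is the same mechanism used in \zcref{lem:slconv}.

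One small correction: a positively monotone curve of slope $r_1$ only forces $\sigma' \leq r_1$, not $\sigma' < r_1$. Indeed, a Reebless $\partial_k\F'$ of slope exactly $r_1$ with $\flat$ spiraling admits such a curve by \zcref{lem:transversholo}, and the pairing is then zero. This is harmless because you take $r_1, r_2$ strictly inside the target arc, so the closed arc $[r_2, r_1]$ still lies in it.
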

    
    \begin{rem}
        In the language of \zcref{sec:beyondaleph}, \zcref{cor:slopecontinuity} can be upgraded to the statement that the map $\mathcal O_B \to {\widehat{\mathscr{S}}}^n$ mapping a splitting sequence to its generalized fine boundary multislope is continuous.
    \end{rem}

\begin{lem}{(Limits of foliations)}\label{lem:limits}
    Let $B$ be a $\Phi$-horizontal branched surface and $(\mathcal F_i)_{i \geq 0}$ be a sequence of foliations carried by $B$. Let $\bm{s}^i$ be the generalized boundary multislope of $\mathcal F_i$. Suppose $\lim_{i\to \infty} \bm{s}^i = \bm{s}^{\lim}\in \R^n$. Then $B$ carries a limit foliation $\F_{\lim}$ with generalized boundary multislope $\bm{s}^{\lim}$, possibly with some entries in $\R$ replaced by $\aleph$.

    Moreover, we have the following control over the boundary type of $\F_{\lim}$:
    \begin{enumerate}
        \item If $s^i_k \to s^{\lim}_k$ strictly from above, then $\partial_k \F_{\lim}$ has $\sharp$, $\natural$, or $\aleph$ type.
        \item Similarly, if $s^i_k \to s_k^{\lim}$ strictly from below, then $\partial_k \F_{\lim}$ has $\flat$, $\natural$, or $\aleph$ type.
        \item If $\partial_k \F_i$ has generalized fine slope $(s^{\lim}_k)^\natural$ for all $i$, then $\partial_k \F_{\lim}$ has $\natural$ or $\aleph$ type.
        \item If $\partial_k \F_i$ has $\aleph$ type for all $i$, then $\partial_k \F_{\lim}$ also has $\aleph$ type.
    \end{enumerate}
\end{lem}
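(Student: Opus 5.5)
The plan is a compactness argument in the space $\mathcal O_B$ of cofinal splitting sequences, followed by an analysis of each boundary component separately.

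\emph{Existence of the limit.} For each $i$, let $c_i \in \mathcal O_B$ be a cofinal splitting sequence coding $\F_i$. Replacing $\F_i$ by the foliation recovered from $c_i$ does not change the generalized fine boundary multislope. Indeed, $\partial_k \F_i$ is only changed up to monotone equivalence, and no new $\sharp$ or $\flat$ annuli are created. Since $\mathcal O_B$ is compact, we pass to a subsequence with $c_i \to c$, and we let $\F_{\lim}$ be the foliation coded by $c$ (complementary regions filled with products). It is carried by $B$, and hence it is $\Phi$-horizontal.

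\emph{The coarse slope on each component.} Fix $k$, and take a rational slope $r < s^{\lim}_k$ (including $r=-\infty$). For $i$ large, $s^i_k > r$. If $\partial_k\F_i$ is Reebless, \zcref{lem:phaselocking} shows that it admits no negatively monotone curve of slope $r$; if $\partial_k \F_i$ is of $\aleph$ type, the same holds by the structure of $\aleph$ foliations. By \zcref{lem:openproperties}, applied with the orientation reversed, admitting a negatively monotone curve of slope $r$ is an open condition on $\mathcal O_B$. So its failure is closed, and it passes to $c$. Symmetrically, for rational $r > s^{\lim}_k$ there is no positively monotone curve of slope $r$. We then argue as in \zcref{lem:slconv}, with \zcref{lem:monotone_existence} supplying monotone curves for suspension foliations:
\begin{itemize}
\item if $\partial_k\F_{\lim}$ is Reebless, its slope is pinned to $s^{\lim}_k$;
\item if $\partial_k\F_{\lim}$ has a Reeb annulus, the absence of a positively monotone curve of slope $-\infty$ and of a negatively monotone curve of slope $+\infty$ forces $\aleph$ type.
\end{itemize}
This gives the first claim. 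Alternatively, \zcref{cor:slopecontinuity} handles the Reebless case directly.

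\emph{The type refinements.}
\begin{enumerate}
\item Suppose $s^i_k \downarrow s^{\lim}_k$ strictly and the limit is Reebless of slope $s:=s^{\lim}_k$. A $\flat$ annulus in $\partial_k\F_{\lim}$ would give, by \zcref{lem:transversholo}, a positively monotone curve of slope $s$. By \zcref{lem:openproperties} this persists to $\partial_k \F_i$ for $i$ large, which forces $s^i_k \le s$ (or equality with a $\flat$ annulus). This contradicts $s^i_k > s$. Hence the type is $\sharp$, $\natural$, or $\aleph$.
\item This is symmetric to item 1.
\item If every $\partial_k\F_i$ is $s^\natural$, then it admits neither a positively nor a negatively monotone curve of slope $s$. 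Both conditions are closed, so neither admits a $\sharp$ nor a $\flat$ annulus in the limit.
\item This is the case I expect to be the main obstacle. If each $\partial_k\F_i$ has $\aleph$ type, we must rule out a Reebless limit. I would first try to show that having a Reeb annulus is certified by a finite prefix of the splitting sequence, i.e.\ that Reeb annuli have an open complement, but that seems false in general. The fallback is to use that each $\partial_k \F_i$ contains both a $+\infty^\sharp$-type and a $-\infty^\flat$-type closed leaf. These are of degeneracy slope, hence carried by the finitely many $\Phi$-horizontal closed curves in the train track $\partial_k B$. Their positions can be fixed along a subsequence, and they persist in the limit as closed leaves of slopes $+\infty$ and $-\infty$ with the same spiraling. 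A Reebless foliation cannot have closed leaves of two different slopes, so the limit has a Reeb annulus, and by the previous step it is of $\aleph$ type.
\end{enumerate}
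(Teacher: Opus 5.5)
Your architecture is the paper's: compactness of $\mathcal O_B$, then the Reeb/Reebless dichotomy for $\partial_k\F_{\lim}$ from closedness of ``no positively monotone curve of slope $-\infty$, no negatively monotone curve of slope $+\infty$'', then items 1--3 by pushing the absence of a monotone curve of slope $s^{\lim}_k$ through \zcref{lem:openproperties}. Two small points:
\begin{itemize}
\item You should first pass to a subsequence and a branched subsurface so that $B$ \emph{fully} carries every $\F_i$, since splitting sequences are only attached to fully carried foliations.
\item The recovered foliation is only guaranteed to have no \emph{new} $\sharp/\flat$ annuli, not literally the same fine slope. That weaker fact is all you actually use.
\end{itemize}

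There is a sign error in your coarse-slope paragraph. With the paper's conventions (\zcref{lem:monotone_existence}), a Reebless foliation of slope $t$ has positively monotone curves of every rational slope $r>t$, and negatively monotone ones of every rational slope $r<t$. So for $r<s^{\lim}_k<s^i_k$, it is the \emph{positively} monotone curves of slope $r$ that fail to exist, as in the proof of \zcref{lem:slconv}. Your statement as written is false. With $r=-\infty$ it also fails to give the absence of a positively monotone $-\infty$ curve, which your Reeb bullet needs, and it does not pin the slope. Your bullet and your item~1 already use the correct convention, so swapping ``positively'' and ``negatively'' in that paragraph repairs it.

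For item~4 you take a different route, and your reason for abandoning the direct one is mistaken. What is needed is that being Reebless is open in $\mathcal O_B$, and this does hold; it is exactly the paper's two-line argument:
\begin{itemize}
\item If $\partial_k\F_{\lim}$ were Reebless of slope $s^{\lim}_k\in\R$, it would admit a positively monotone curve of some rational slope $r>s^{\lim}_k$.
\item By \zcref{lem:openproperties}, so would $\partial_k\F_i$ for large $i$.
\item But a $\Phi$-horizontal foliation with a Reeb annulus has closed leaves of slopes $+\infty$ and $-\infty$ with opposite coorientations. A closed curve of finite slope has negative algebraic intersection with one of them, so it cannot be positively monotone.
\end{itemize}

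Your fallback is heavier and, as written, incomplete. Its key claim is that a degeneracy-class closed train path carried at every finite stage of the limiting splitting sequence yields a closed leaf of $\partial_k\F_{\lim}$. That needs a K\"onig-type choice of compatible lifts, plus an argument that the nested carried region contains a closed leaf; you supply neither.

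Finally, your spiraling decorations are reversed: $\aleph$ type has no $+\infty^\sharp$ and no $-\infty^\flat$ spiraling (\zcref{lem:resolution_of_aleph}). This is harmless, since only the slopes of the closed leaves enter. But the assertion that they ``persist with the same spiraling'' is both unjustified and unnecessary.
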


\begin{proof}
    After passing to a subsequence and a branched subsurface of $B$, we can assume that $B$ fully carries each $\F_i$. By compactness of $\mathcal{O}_B$, the cofinal splitting sequences for the $\F_i$ have a convergent subsequence yielding a limiting foliation $\F_{\lim}$.
    
    Split into two cases. First, suppose $\partial_k \F_{\lim}$ has a Reeb annulus. Since $\F_i$ does not admit a negatively monotone curve of slope $+\infty$ or positively monotone curve of slope $+\infty$, and this property is closed, the same is true for $\F_{\lim}$. Therefore, $\fineslope(\partial_k \F_{\lim})=\aleph$. 
    
    Second, suppose $\partial_k \F_{\lim}$ does not have a Reeb annulus. By \zcref{cor:slopecontinuity}, $\slope(\partial_k \F_{\lim})=s_k^{\lim}$. If $s^i_k \to s^{\lim}_k$ strictly from above as in (1), then none of the $\F_k$ admit a positively monotone curve of slope $s_k^{\lim}$. By \zcref{lem:openproperties}, this property must also hold for $\partial_k \F^{\lim}$, therefore $\partial_k \F_{\lim}$ does not have a $\flat$ annulus and must have type $\sharp$ or $\natural$. Items (2) and (3) follow similarly. Finally, since $\partial_k \F_{\lim}$ does not have a Reeb annulus, there exists $s\in \Q$ such that $\F_{\lim}$ admits a positively monotone curve of slope $s$ for any $s\in \Q$. \zcref{lem:openproperties} implies that the same holds for $\F_i$, $i$ sufficiently large. This implies that $\F_i$ is not $\aleph$ type for all $i$, and (4) follows.
\end{proof}

\begin{proof}[Proof of \zcref{thmintro:alephclosure}]
 Let $\bm{s}\in \overline{\Zg_\aleph(\Phi)}$. Suppose $\Phi$ is LBSF (\zcref{def:condLBSF}). Recall that this means that there is a neighborhood $U$ of $\bm{s}$ and a finite collection of branched surfaces $B_1,\dots,B_N$ such that every $\Phi$-horizontal foliation with generalized boundary multislope in $U$ is carried, up to the action of a diffeomorphism of $(M,\Phi)$ rel boundary, by one of the $B_i$'s. 

 Choose a sequence of foliations $\F_i$ with generalized boundary multislope $\bm{s}^i \to \bm{s}$. Passing to a subsequence and applying diffeomorphisms of $(M,\Phi)$ rel boundary as necessary, we may assume that all the $\F_i$ are carried by a single branched surface $B$. By \zcref{lem:limits} applied to this sequence of foliations, $\bm{s}\in \Zg_\aleph(\Phi)$.
\end{proof}

    \section{Slippery points}

        \subsection{Generalities}

We first give a more precise description of the slippery points of closed subsets of $\R^n$.

\begin{defn}
For $\bm{x}\in \R^n$ and $1 \leq k \leq n$, we define the standard positive and negative quadrants at $\bm{x}$ in the $k$-first coordinates as
\begin{align*}
    C_k^+(\bm{x}) &\coloneqq [x_1,+\infty)\times\dots\times [x_k,+\infty) \times \{(x_{k+1}, \dots ,x_n)\} , \\
    C_k^-(\bm{x}) &\coloneqq (-\infty, x_1] \times \dots \times (-\infty,  x_k] \times \{(x_{k+1}, \dots ,x_n)\}.
\end{align*}

Let $A\subset \R^n$ be a closed set. A point $\bm{x}\in A$ is an \textbf{upper extremal point} in the $k$ first coordinates if $$C^+_k(\bm{x})\cap A = \{\bm{x}\},$$
and it is a \textbf{lower extremal point} in these coordinates if 
$$C^-_k(\bm{x})\cap A = \{\bm{x}\}.$$
\end{defn}

For ziggurats of nonwandering flows, \zcref{thmintro:irrationality} immediately implies

\begin{cor}[Rationality at extremal points] \label{cor:irrationality}
    Let $\bm{s}= (s_1, \dots, s_n) \in \R^n$. If $\bm{s}$ is an upper or lower extremal point of $\mathcal{Z}(\Phi)$ in the $k$ first coordinates, then $s_1,\dots,s_k \in \Q$.
\end{cor}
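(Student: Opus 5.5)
We argue by contrapositive, using the instability of irrational slopes (\zcref{thmintro:irrationality}). Suppose $\bm{s} \in \Zg(\Phi)$ is an upper extremal point in the first $k$ coordinates, and suppose some $s_j$ with $1 \leq j \leq k$ is irrational. Since the closed box-shaped neighborhood in \zcref{thmintro:irrationality} is taken in the irrational coordinates, we first reorder the coordinates. This is harmless: box-convexity, the quadrants $C^\pm_k$, and the conclusion are all symmetric under permutations of coordinates. After reordering, we may take the irrational coordinates among $s_1,\dots,s_k$ to be $s_1,\dots,s_m$, with $m \geq 1$. The remaining coordinates $s_{m+1},\dots,s_k$ among the first $k$ are rational.

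We then apply \zcref{thmintro:irrationality}, but only to the subset of coordinates $\{1,\dots,m\}$. Its hypothesis asks only that the listed coordinates be irrational and places no condition on the others, so it applies here. It gives $\varepsilon > 0$ with $[\bm{s}-\varepsilon\bm{1}_m, \bm{s}+\varepsilon\bm{1}_m] \subset \Zg(\Phi)$. In particular, $\bm{s}' \coloneqq \bm{s}+\varepsilon\bm{1}_m$ lies in $\Zg(\Phi)$. We check that $\bm{s}'$ lies in $C^+_k(\bm{s})$: its first $m$ coordinates are strictly larger than those of $\bm{s}$, its coordinates $m+1,\dots,k$ are equal to those of $\bm{s}$, and its last $n-k$ coordinates are unchanged. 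Since $\bm{s}' \neq \bm{s}$, this contradicts $C^+_k(\bm{s}) \cap \Zg(\Phi) = \{\bm{s}\}$. The lower extremal case is identical, using $\bm{s}-\varepsilon\bm{1}_m \in C^-_k(\bm{s})$.

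The only subtle point is the implicit requirement in the extremal-point definition that $\bm{s}$ itself lie in $\Zg(\Phi)$. This is needed to invoke \zcref{thmintro:irrationality}, and it holds because extremality is defined for points $\bm{x} \in A$. If the statement were instead applied to $\overline{\Zg(\Phi)}$, one would first need a point of $\Zg(\Phi)$ to start from, which is where closedness results would enter. As stated, for points of $\Zg(\Phi)$, no obstacle remains.
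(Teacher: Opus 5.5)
Your argument is correct and is essentially the paper's. The paper states that this corollary follows immediately from the instability of irrational slopes (\zcref{thmintro:irrationality}); you spell out the implicit steps: apply that theorem to the irrational coordinates among the first $k$ (after a harmless reordering within those $k$), then observe that $\bm{s}\pm\varepsilon\bm{1}_m$ is a point of $\Zg(\Phi)$ other than $\bm{s}$ lying in $C^\pm_k(\bm{s})$.
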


We now give an alternative definition for slipperiness which will be useful in practice, and captures the essence of those points:

\begin{defn}\label{defn:alternativeslippery}
    Let $A\subset \R^n$ be a closed set. A point $\bm{x}\in A$ is \textbf{upwards (resp.~downwards) slippery} in the $k\geq 1$ first coordinates if there is a sequence of points $\bm{x}^j \in A$, $j \geq 0$, converging to $\bm{x}$, such that each $\bm{x}^j$ is an upper (resp.~lower) extremal point in the first $k$ coordinates, the first $k$ coordinates are strictly increasing (resp.~decreasing) with $j$, and the remaining coordinates are nonincreasing (resp.~nondecreasing) with $j$.
\end{defn}

This definition extends to any set of $k$ coordinates, by permuting the coordinates. Intuitively, this definition means that there is a ``good exchange rate'' between the first $k$ and last $n-k$ coordinates near $\bm{x}$: one can increase the slope in some of these coordinates at the expense of potentially decreasing it in others.

\begin{figure}
\centering
    \begin{subfigure}{0.45\linewidth}
    \centering
    \includegraphics[width=\linewidth]{Figures/slippery.pdf}
    \caption{The origin is upwards slippery in the $x$-direction.}
    \label{fig:slippery}
    \end{subfigure}
    \hfill
    \begin{subfigure}{0.45\linewidth}
    \centering
    \includegraphics[width=\linewidth]{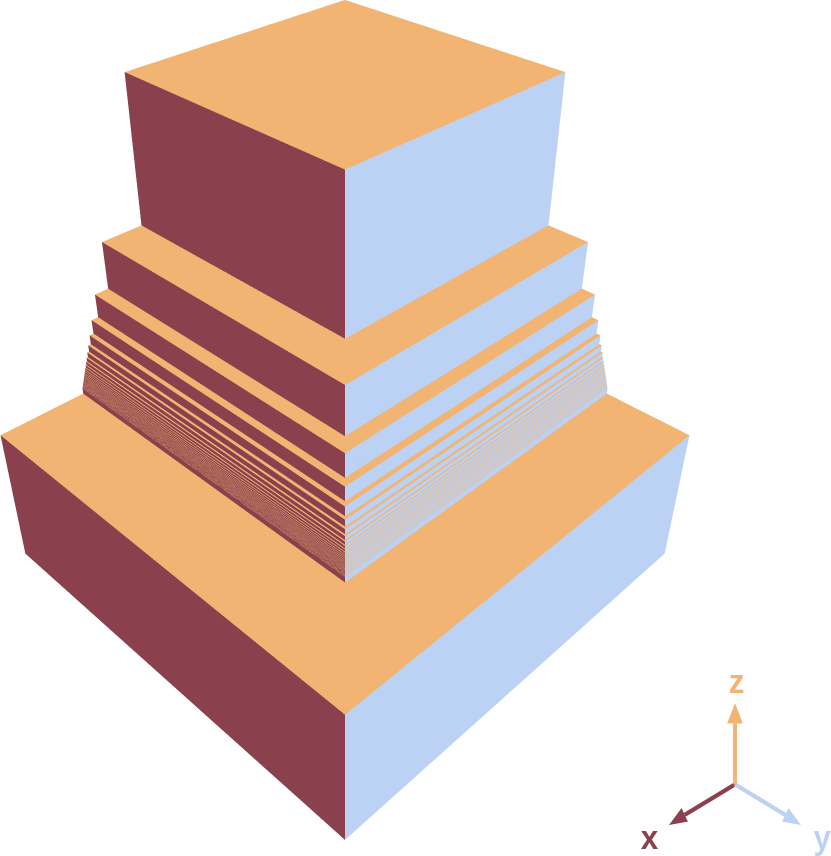}
    \caption{An $xy$-slippery point, a ray of $x$ upwards slippery points, and a ray of $y$ upwards slippery points.}
    \label{fig:slippery3d}
    \end{subfigure}
    \par\vspace{2em}
    \begin{subfigure}{0.45\linewidth}
    \centering
    \includegraphics[width=\linewidth]{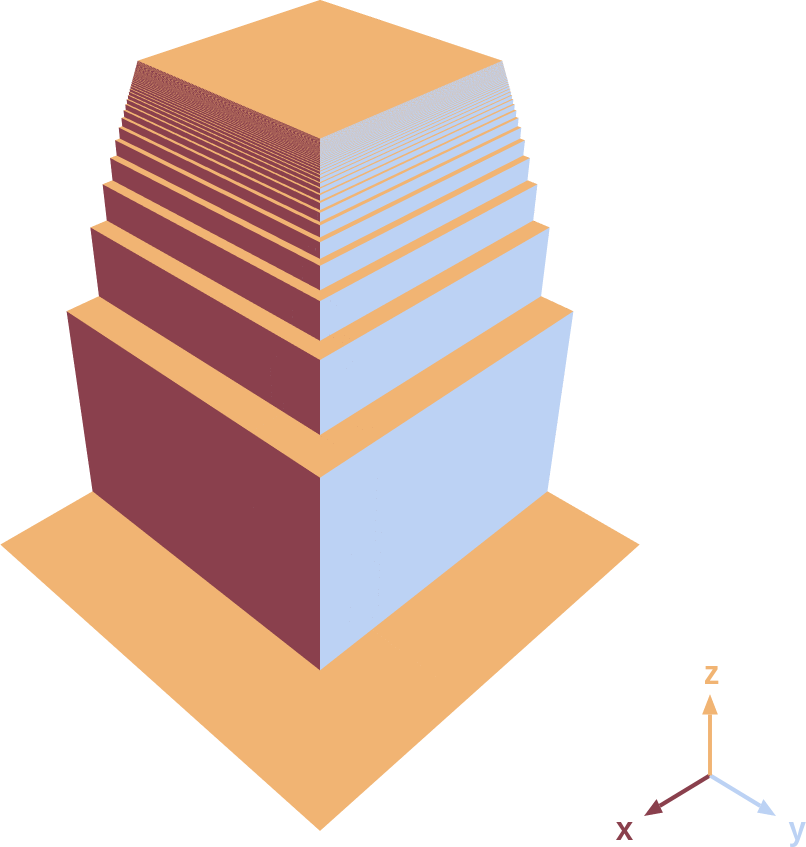}
    \caption{Two segments of $z$ upwards slippery points}
    \label{fig:slippery3d2}
    \end{subfigure}
    \hfill
    \begin{subfigure}{0.5\linewidth}
    \centering
    \includegraphics[width=\linewidth]{Figures/slippery3dline.png}
    \caption{A 1-parameter family of $xy$-slippery points}
    \label{fig:slippery3dline}
    \end{subfigure}
    \caption{Closed subsets of $\R^2$ and $\R^3$ with different kinds of slippery points.}\label{fig:slipperyexamples}
\end{figure}

\begin{longrem} \label{rem:slippery}
    Notice that if $\bm{x} \in A$ is upwards slippery in the $k$ first coordinates, then up to permuting the coordinates, there is an $\ell\geq 1$ and a sequence of points $\bm{x}^j$ in $A$ converging to $\bm{x}$ of the form
    $$\bm{x}^j = \big(\bm{x}^j_-, \bm{x}^j_+, \bm{x}^j_0\big) \in \R^k \times \R^\ell \times \R^{n-k-\ell},$$
    where $\bm{x}^j_-$ is (strictly) increasing in all coordinates, $\bm{x}^j_+$ is (strictly) decreasing in all coordinates, $\bm{x}^j_0$ is constant in all coordinates, and $\bm{x}^j$ is upper extremal in the $k$ first coordinates.

    Note that some points may be both upwards and downwards slippery, even in different sets of coordinates.
\end{longrem}

The next proposition justifies that \zcref{defn:alternativeslippery} agrees with the definition of slippery given in the introduction (\zcref{def:terraced}).
\begin{prop}\label{prop:slippery_corners}
    Let $A \subset \R^n$ be a closed box-convex set. A point in $A$ is slippery if and only if it is upwards or downwards slippery in some subset of coordinates.
\end{prop}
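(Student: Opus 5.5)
We prove the two implications separately, working locally at a point $\bm{x}\in A$ after translating so that $\bm{x}=\bm{0}$.

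\emph{Slippery in coordinates implies slippery.} Suppose $\bm{0}$ is upwards slippery in the first $k$ coordinates, witnessed by upper extremal points $\bm{x}^j\to\bm{0}$ with first $k$ coordinates strictly increasing. Assume for contradiction that $A\cap U=Q\cap U$ for a standard multiquadrant $Q$ and a neighborhood $U$. For $j$ large, $\bm{x}^j\in U$, so $\bm{x}^j$ lies in some standard quadrant $C\subset Q$; passing to a subsequence, we may take $C$ fixed. Since $x^j_i\to 0$ strictly from below for $i\le k$, each of the first $k$ factors of $C$ must be $(-\infty,0]$. Then the segment from $\bm{x}^j$ increasing its first $k$ coordinates up to their value $0$, keeping the other coordinates fixed, stays in $C$ and in $U$ for $j$ large. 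This contradicts $C_k^+(\bm{x}^j)\cap A=\{\bm{x}^j\}$, since that segment lies in $C_k^+(\bm{x}^j)$. The downwards case is symmetric.

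\emph{Slippery implies slippery in coordinates.} We argue by contraposition: if $\bm{0}$ is neither upwards nor downwards slippery in any coordinate set, we show it is terraced.

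\begin{itemize}
\item For each sign pattern, i.e.\ each standard quadrant $C$ at $\bm{0}$, decide whether $C\cap B_r\subset A$ for small $r$. Let $Q$ be the union of those quadrants for which this holds. By box-convexity of $A$, $Q$ is a box-convex multiquadrant.
\item It remains to show $A\cap B_r\subset Q$ for small $r$. Suppose not. Then there are points $\bm{y}^j\to\bm{0}$ in $A\setminus Q$. Passing to a subsequence, they all lie in the interior-ish part of a single quadrant $C\not\subset Q$, with each coordinate having a constant sign ($+$, $-$ or $0$).
\item Because $C\not\subset A$ near $\bm{0}$ while $A$ is closed and box-convex, we can push each $\bm{y}^j$ monotonically, inside $A\cap C$, to an extremal point. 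Using the non-containment, upward pushes in the coordinates where $C$ is $(-\infty,0]$, or downward pushes in those where it is $[0,\infty)$, must stop strictly before reaching $0$. Closedness of $A$ together with a compactness/Zorn-type argument on the box $[\bm{y}^j,\bm{0}]$ then produces an upper (or lower) extremal point $\bm{z}^j$ in the relevant coordinate subset.
\item Extract a further subsequence along which the relevant coordinates are strictly monotone and the remaining ones are monotone. This matches \zcref{defn:alternativeslippery} (cf.\ \zcref{rem:slippery}), so $\bm{0}$ would be upwards or downwards slippery after all, a contradiction.
\end{itemize}

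\emph{Main obstacle.} The third step is the delicate one. We must guarantee that the extremal points obtained lie in $A$, converge to $\bm{0}$, and have the other coordinates varying monotonically in the correct direction. Pure coordinate-wise monotone pushing may overshoot into other quadrants.

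We would first try this: take $\bm{z}^j$ to maximize $\sum_{i\le k}x_i$ over the compact set $A\cap[\bm{y}^j,\bm{0}']$, where $\bm{0}'$ truncates the box at a small negative offset $-\delta_j$ in the first $k$ coordinates. Such a maximizer is automatically upper extremal in those coordinates. We would then choose $\delta_j\to0$ so that monotonicity can be arranged by diagonal subsequence extraction.
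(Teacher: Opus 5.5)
Your first implication is correct, and more explicit than the paper, which just calls it clear. A strictly increasing sequence tending to $0$ has negative terms, so the quadrant $C$ has $(-\infty,0]$ in those factors. The segment up to the face then lies in $A\cap C_k^+(\bm x^j)$, contradicting extremality.

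\textbf{The gap.} The converse has a genuine gap in your third step, which is where all the content lies.

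Permute coordinates so that $C=C_-\times C_+\times\{\bm 0\}$ and $\bm y^j=(\bm y^j_-,\bm y^j_+,\bm 0)$ with $\bm y^j_-<\bm 0<\bm y^j_+$.
\begin{itemize}
\item Your claim that non-containment forces the pushes to ``stop strictly before reaching $\bm 0$'' is not justified. Read coordinatewise, it is false. The upward push in the $C_-$-block can end on a face of $C$, with some or all of those coordinates equal to $0$. Such a point is not upper extremal in the whole block, and it cannot sit in a sequence whose block coordinates strictly increase to $0$.
\item What non-containment actually gives is weaker: the two pushes cannot \emph{both} reach $\bm 0$ in all their coordinates. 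Otherwise $(\bm y^j_-,\bm 0,\bm 0)\le(\bm 0,\bm y^j_+,\bm 0)$ both lie in $A$. Box-convexity then puts $[\bm y^j_-,\bm 0]\times[\bm 0,\bm y^j_+]\times\{\bm 0\}\supset C\cap B_r$ inside $A$, where $r=\min_i|y^j_i|$.
\item Your truncation fix also fails. Suppose $A\supset[\bm y^j_-,\bm 0]\times\{\bm y^j_+\}\times\{\bm 0\}$, which is allowed. Then the maximizer over the truncated box sits on the face $\{x_i=-\delta_j\}$. The segment from it up to $(\bm 0,\bm y^j_+,\bm 0)$ shows it is not upper extremal.
\end{itemize}

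\textbf{The missing idea and the repair.} You need to keep track of the sub-block where the push stops short.
\begin{itemize}
\item Let $\bm w^*$ maximize $\sum_i w_i$ over the compact set $\{\bm w\in[\bm y^j_-,\bm 0]:(\bm w,\bm y^j_+,\bm 0)\in A\}$. Set $K=\{i:w^*_i<0\}$ and $\bm z=(\bm w^*,\bm y^j_+,\bm 0)$.
\item Then $\bm z$ is upper extremal in exactly the coordinates $K$. Take $\bm z'=(\bm w',\bm y^j_+,\bm 0)\in A$ agreeing with $\bm z$ off $K$ and with $\bm z'\ge\bm z$. Box-convexity gives $(\min(\bm w',\bm 0),\bm y^j_+,\bm 0)\in[\bm z,\bm z']\subset A$. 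Maximality forces $\min(\bm w',\bm 0)=\bm w^*$, and $\bm z'=\bm z$ follows because $w^*_i<0$ on $K$.
\item Symmetrically, the downward push in the $C_+$-block gives a point lower extremal in $L=\{i:v^*_i>0\}$. The box argument above shows $K$ and $L$ are not both empty.
\item Fix a nonempty $K$ (or $L$) along a subsequence. The bound $|z_i|\le|y^j_i|$ gives $\bm z^j\to\bm 0$. Further extraction makes the negative $K$-coordinates strictly increasing and the positive $C_+$-block nonincreasing; all remaining coordinates are identically $0$. This contradicts non-slipperiness.
\end{itemize}
This completes your direct route.

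\textbf{Comparison with the paper.} The paper instead inducts on the dimension of quadrants. Non-slipperiness forces both pushes to land on proper faces, which are empty or full by the inductive hypothesis. Then $A\cap C$ near $\bm 0$ is the box-closure of points on those faces. Your repaired version avoids the induction, at the cost of the lemma that the pushed point is extremal on the sub-block where it stopped short.
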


\begin{proof}
    Up to translation, we may consider the point $\bm{0} \in A$. For $\varepsilon > 0$, we write $B_\varepsilon \coloneqq (-\varepsilon, \varepsilon)^n$.
    
    By definition, $\bm{0}$ is a terraced (i.e., nonslippery) point of $A$ if and only if for every standard (closed) quadrant $C$ at $\bm{0}$, there is an $\varepsilon > 0$ such that either $A \cap \interior C \cap B_\varepsilon = \varnothing$, or $\interior C \cap B_\varepsilon \subset A$. In the first case, we say that $A$ is (locally) empty in this quadrant, and in the second case, we say that $A$ is (locally) full in this quadrant. From this, and using \zcref{rem:slippery}, it is clear that terraced points are not upwards nor downwards slippery in any coordinates.

    Let us now assume that $\bm{0}$ is neither upwards nor downwards slippery, in any subset of the coordinates. We will show by induction on the dimension of standard quadrants at $\bm{0}$ that $A$ is either empty or full in each of those quadrants. Note that this is obvious for quadrants of dimension $0$ and $1$. Let us assume that it is satisfied for all standard quadrants of dimension $\leq m-1$. Up to permuting coordinates, any $m$-dimensional quadrant $C$ may be written in the form
    $$C= C_- \times C_+ \times \{0\}^{n-m},$$
    where $$C_-=(-\infty,0]^{k_-}, \qquad C_+=[0,\infty)^{k_+},$$ and $m = k_- + k_+$. We may assume that $k_- \geq 1$, $k_+ \geq 1$; otherwise, box-convexity would imply that the quadrant is full. Since $\bm{0}$ is not upwards slippery in the first $k_-$ coordinates and not downward slippery in the next $k_+$ coordinates, there exists $\varepsilon>0$ such that
    $$(\interior (C_-) \times C_+ \times \{0\}^{m-k}) \cap B_\varepsilon$$
    contains no point in $A$ which is upward extremal in the first $k_-$ coordinates, and
    $$( C_- \times \interior (C_+) \times \{0\}^{m-k}) \cap B_\varepsilon$$
    contains no point of $A$ which is downward extremal in the next $k_+$ coordinates.

    Let $\bm{x} = (\bm{x}_-, \bm{x}_+, \bm{0})$ be any point in $A \cap C \cap B_\varepsilon$. We claim that there exist $\bm{y^-} \in \partial C_-$, $\bm{y^+}\in \partial C_+$ such that 
    $$(\bm{x}_-, \bm{y}_+, \bm{0}), (\bm{y}_-, \bm{x}_+, \bm{0}) \in A \cap C \cap B_{\varepsilon},$$ and
    $$(\bm{x}_-, \bm{y}_+, \bm{0})\leq \bm{x}\leq (\bm{y}_-, \bm{x}_+, \bm{0}).$$ 
    
    We simply choose $\bm{y}_-$ to be a maximal element of $C_-$ satisfying $(\bm{y}_-, \bm{x}_+, \bm{0}) \in A \cap C \cap B_\varepsilon$ and $\bm{x}_- \leq \bm{y}_- \leq \bm{0}$. Such an element exists because $A\cap C \cap B_\varepsilon$ is closed. Furthermore, $\bm{y}_-\in \partial C_-$ since $A \cap (\interior (C_-) \times C_+ \times \{0\}^{m-k}) \cap B_\varepsilon$ contains no upwards extremal point of $A$ in the first $k_-$ coordinates. We define $\bm{y}_+$ in a similar way.

    We have shown that $A \cap C \cap B_\varepsilon$ is the box-closure of points in \emph{proper} subquadrants of $C$ near $\bm{0}$, which are all empty or full for $A$ by induction, so the same holds for $C$ as well. 
    \end{proof}

        \subsection{Proof of \texorpdfstring{\zcref{thmintro:slippery}}{thm:slippery}}

Suppose $\bm{s}$ is an upward slippery point of $\Zg(\Phi)$ with respect to the first $k$ coordinates, and assume for the sake of contradiction that $\bm{s}$ is not a spherical or $\T^2$-type planar multislope. As in \zcref{rem:slippery}, $\Zg(\Phi)$ contains a sequence of points $\bm{s}^j \to \bm{s}$, all upward extremal in the first $k$ coordinates, and such that sequence is strictly increasing in the first $k\geq 1$ coordinates, strictly decreasing in the next $\ell\geq 1$ and constant in the remaining $n-k-\ell$ coordinates.
         
By \zcref{lem:limits}, there is a foliation $\F^\infty$ with boundary slope $\bm{s}$, possibly with some entries replaced by $\aleph$. Moreover, $\F^\infty$ has $\flat/\natural/\aleph$ type on the first $k$ boundary components and $\sharp/\natural/\aleph$ type on the next $\ell$ boundary components. First, let us warm up by assuming that $\F^\infty$ has no $\aleph$ type boundaries. Since we assumed that $\bm{s}$ is not a spherical multislope or a $\T^2$-type planar multislope, the fine version of Eliashberg--Thurston applies to $\mathcal F^\infty$. This gives a positive contact structure $\xi_+$ such that $\partial \xi_+ \succ \partial \F^\infty$. Therefore, 
$$\slope_i(\xi_+) \geq s_i \qquad \text{for } 1\leq i\leq n.$$ 
Moreover, since $\partial_k \F^\infty$ is $\sharp/\natural$ type for $i \in \{k+1, \dots, k+\ell\}$, \zcref{lem:phaselocking} implies that we have the strict inequality 
$$\slope_i(\xi_+) > s_i \qquad \text{for } i\in \{k+1,\dots,k+\ell\}.$$ 
Second, by \zcref{thmintro:contactzigg} there exists a contact structure $\xi_-$ with 
$$\bm{\slope}(\xi_-)=\bm{\slope}(\F^\infty)=\bm{\slope}.$$
Apply \zcref{prop:boxconvex} to the pair $(\xi_-,\xi_+)$. This results in a foliation with boundary multislope $$\bm{t}:=(s_1,\dots,s_k,s_{k+1}+\varepsilon,\dots,s_{k+\ell}+\varepsilon, s_{k+\ell+1},\dots,s_n).$$ For $j$ sufficiently large, $\bm{t} \geq \bm{s}^j$ with strict inequality in the first $k+\ell$ coordinates, contradicting our assumption that $\bm{s}^j$ was upward extremal in $\Zg(\Phi)$ in the first $k$ coordinates.
         
Let us now examine the more general situation where $\F$ has some boundaries of type $\aleph$. Up to permuting the order of the boundary components of $M$, we have
$$\fineslope(\F^\infty) = \big(\underset{k_1}{\underbrace{s^{\flat/\natural}_1, \dots, s^{\flat/\natural}_{k_1}}}, 
\underset{k_2}{\underbrace{\aleph,\dots,\aleph}}, \ 
\underset{\ell_1}
{\underbrace{s^{\sharp/\natural}_{k+1}, \dots, s^{\sharp/\natural}_{k+\ell_1}}}, 
\underset{\ell_2}{\underbrace{\aleph,\dots,\aleph}}, \ 
\underset{m_1}
{\underbrace{s^{?}_{k+\ell+1}, \dots, s^{?}_{k+\ell+m_1}}}, 
\underset{m_2}{\underbrace{\aleph,\dots,\aleph}} 
\big)
$$
for some $k_1,k_2,\ell_1,\ell_2,m_1,m_2$ satisfying $k_1+k_2=k$, $\ell_1+\ell_2=\ell$, and $m_1+m_2=n-k-\ell$. Since $\bm{s}$ is strongly $\aleph$-unobstructed, $\F^\infty$ has no compact planar or $\T^2$-type planar minimal sets, so we may apply the fine Eliashberg--Thurston theorem to $\F^\infty$. By \zcref{lem:phaselocking} and \zcref{lem:resolution_of_aleph}, we obtain a contact structure $\xi_+$ with boundary slope $\slope(\xi_+)$ of the form

\begin{align*}
\begin{multlined}
\big(
\underset{k_1}{\underbrace{s_1,\dots,s_{k_1}}},
\underset{k_2}{\underbrace{+\infty,\dots,+\infty}},
\underset{\ell_1}{\underbrace{
  s_{k+1}+\varepsilon,\dots,s_{k+\ell_1}+\varepsilon}},
\underset{\ell_2}{\underbrace{+\infty,\dots,+\infty}}, \underset{m_1}{\underbrace{
  s_{k+\ell+1},\dots,s_{k+\ell+m_1}}},
\\
\underset{m_2}{\underbrace{+\infty,\dots,+\infty}}
\big).
\end{multlined}
\end{align*}
The argument now proceeds as in the previously discussed case. \qed

        \section{Connectedness}\label{sec:connectedness}

In this section, we study the case where $\Phi$ is a \emph{fully punctured pseudo-Anosov flow without perfect fits on $M$}, and we prove \zcref{thmintro:boundedconnected} from the Introduction. For flows of this type, every $\Phi$-horizontal foliation may be specified by an assignment of an element of $\Homeo(I)$ to each edge of the associated \emph{veering triangulation}; see~\cite{Z24}. One might try to use the contractibility of $\Homeo(I)$ to prove that $\Zg(\Phi)$ is contractible. However, some choices of such homeomorphisms yield foliations with Reeb annuli, which are possibly not even foliations of $\aleph$ type; such foliations are not cataloged by $\Zg(\Phi)$. We will partially circumvent this problem using \zcref{thmintro:alephremove}.

We begin with a lemma encapsulating the results of the veering triangulation machinery.

\begin{lem} \label{lem:continuousfol}
    Suppose $\Phi$ is a fully punctured pseudo-Anosov flow without perfect fits. Let $\F_0$ and $\F_1$ be two $\Phi$-horizontal foliations. Then there exists a path of $\Phi$-horizontal $C^0$-foliations $\F_t$, $0 \leq t \leq 1$, which varies continuously in the $C^0_\mathrm{Fol}$ topology.
\end{lem}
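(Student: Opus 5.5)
The plan is to use the combinatorial description of $\Phi$-horizontal foliations coming from the veering triangulation, following \cite{Z24}. Since $\Phi$ is a fully punctured pseudo-Anosov flow without perfect fits, \zcref{thm:pABSF} gives a single $\Phi$-horizontal branched surface $B$, dual to the veering triangulation, which carries every $\Phi$-horizontal foliation. Up to $\Phi$-horizontal isotopy and Denjoy blowups of size as small as we like, a foliation fully carried by $B$ is encoded by its transverse structure over the branch locus. Concretely, \cite{Z24} shows that one may assign to each edge (or branch sector) of the triangulation an element of $\Homeo^+(I)$, namely the holonomy across the corresponding sector of the $I$-fibered neighborhood $N(B)$. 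Every tuple of such homeomorphisms defines a $\Phi$-horizontal foliation of $N(B)$. Because the complementary regions are products (cusped solid tori around the boundary), this foliation extends uniquely up to isotopy to a $\Phi$-horizontal foliation of $M$.

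\textbf{Step 1: encode the two foliations.} Put $\F_0$ and $\F_1$ in carried position in $N(B)$, and read off their tuples $(h^0_e)_e$ and $(h^1_e)_e$ in $\Homeo^+(I)^{E}$. First I would smooth each $\F_i$ leafwise near $N(B)$ via \zcref{lem:smoothingfol}. This makes the holonomy maps well defined and the resulting foliations genuinely transverse to $\Phi$.

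\textbf{Step 2: interpolate in a contractible parameter space.} $\Homeo^+(I)$ is contractible, for instance by the straight-line homotopy $h_t=(1-t)h^0+t h^1$, which remains an increasing homeomorphism fixing the endpoints. Apply this edge by edge to get a path of tuples $(h^t_e)$. Then build $\F_t$ on $N(B)$ by gluing the standard product foliations of the sectors via $h^t_e$, and fill the complementary product regions with product foliations. Transversality to $\Phi$ holds for each $t$ because $\Phi$ is tangent to the $I$-fibers of $N(B)$, and each gluing is monotone along those fibers.

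\textbf{Step 3: continuity in $C^0_{\mathrm{Fol}}$.} Leaves of $\F_t$ are built from finitely many sector pieces, glued along the branch locus by $h^t_e$. Local leaf plaques of $\F_t$ over a compact region therefore depend continuously on $t$ as $C^0$ graphs. To obtain $C^1$ closeness of plaques, which is what the $C^0$ topology asks for, I would choose smooth product structures on each sector, with leaves given as graphs of functions $z\mapsto f(x,y,z)$, and interpolate with a cutoff near the branch locus as in the proof of \zcref{lem:smoothingfol}. The leaf slopes are then controlled continuous functions of the gluing data. A leaf segment of bounded length crosses only finitely many sectors, with a bound depending on its length, so continuity passes to $1/\varepsilon$-balls.

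The main obstacle is Step 3, specifically making the plane field $T\F_t$ vary continuously rather than merely the leaves as sets. At points where the holonomy is not differentiable, the tangent planes are not controlled by $h^t_e$ alone. I would address this by arranging, from the outset, that near the branch locus the foliations are graphical over a fixed smooth model with leaf slopes bounded and tending to the model slopes. This makes the tangent planes of $\F_t$ continuous in $t$ uniformly, independently of the regularity of $h^t_e$. A secondary point is that the extension across complementary regions must be chosen continuously in $t$. Since these regions are products with fixed boundary data up to the gluing, I would handle this with the isotopy extension \zcref{lem:isotopy_extension}.
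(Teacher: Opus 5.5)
Your proposal is correct and follows the paper's proof. It uses the single $\Phi$-horizontal branched surface $B$ of \zcref{thm:pABSF}, describes carried foliations by one element $h_e\in\Homeo(I)$ per edge, and takes a path in the connected (indeed convex) space of such tuples; your Step 3 fills in the $C^0_\mathrm{Fol}$-continuity that the paper only asserts. One correction: the paper's $B$ is the $2$-skeleton of the veering triangulation, whose branch loci are the edges (so $h_e$ is attached to a branch locus, not to a sector), not a branched surface dual to the triangulation --- the dual stable/unstable branched surfaces contain flow lines and are not $\Phi$-horizontal.
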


\begin{proof}
Let $B$ be the 2-skeleton of the corresponding veering triangulation. By \zcref{thm:pABSF}, $B$ carries every $\Phi$-horizontal foliation. Recall that $B$ is naturally a branched surface without triple points whose branch loci correspond to the edges of $B$. Therefore, every foliation carried by $B$ is specified by a choice of an element $h_e\in \Homeo(I)$ for each edge $e$ of the veering triangulation. The resulting foliations vary continuously as a function of $\{h_e\}_{e\in B^{(2)}}$. By connectedness of $\Homeo(I)$, any two foliations carried by $B$ may be connected by a path of foliations, which is continuous in the $C^0_\mathrm{Fol}$ topology.
\end{proof}

\begin{proof}[Proof of \zcref{thmintro:boundedconnected}]
    Let $C_0$ be a compact connected component of $\Zg(\Phi)$, and assume by contradiction that $C_1$ is a different connected component. Let $\F_0$ and $\F_1$ be $\Phi$-horizontal foliations with boundary multislopes $\bm{s}_0 \in C_0$ and $\bm{s}_1 \in C_1$. By \zcref{lem:continuousfol}, there exists a family of $\Phi$-horizontal $C^0$-foliations $\F_t$, $0 \leq t \leq 1$, which is continuous in the $C^0_\mathrm{Fol}$ topology, and which interpolates between $\F_0$ and $\F_1$. However, for $t \in (0,1)$, $\partial \F_t$ might not have a well-defined boundary multislope, nor be admissible.

    Let us assume that for all $t \in [0,1]$, all the components of $\partial \F_t$ are Reebless and have finite slopes. Then continuity of the slope with respect to the $C^0_\mathrm{Fol}$ along $\partial M$ (see \zcref{lem:slconv}), the path $t \mapsto \bm{s}(\partial \F_t)$ is continuous and contained in $\Zg(\Phi)$, which contradicts that $C_0$ and $C_1$ are distinct connected components.

    We now assume by contradiction that there is a $\tau \in (0,1]$ such that $\F_\tau$ has at least one boundary component without finite slope. Note that the set of $t \in [0,1]$ such that $\partial \F_t$ has finite multislope is open, as one can realize such boundary foliations as suitable suspensions of circle homeomorphisms. Therefore, there is a well-defined first time $\tau \in (0,1)$ for which $\F_\tau$ does not have finite boundary multislope. Then, the path $t \in [0, \tau) \mapsto \bm{s}(\partial \F_t)$ is well-defined, continuous, and contained in $C_0$; in particular, it is bounded. Therefore, by \zcref{lem:slconv}, no component of $\partial \F_\tau$ has slope $\pm \infty$, and the components without finite slopes are necessarily of $\aleph$ type. By \zcref{thmintro:alephremove}, there exists an unbounded connected set $R \subset \Zg(\Phi)$, obtained by resolving the $\aleph$ components of $\F_\tau$, such that $C_0 \cap \overline{R} \neq \varnothing$.\footnote{Here, we crucially use that $C_0$ is \emph{closed}! See \zcref{ex:nonconnected} below.} This readily implies $R \subset C_0$,\footnote{Indeed, if $p \in C_0 \cap \overline{R}$, then $R \cup \{p\}$ is connected as well and intersects $C_0$, so $R \cup \{p\} \subset C_0$ by the maximality of connected components.} contradicting that $C_0$ is bounded. 
\end{proof}

When $n\leq2$ (and perhaps $n=3$), it is enough to assume the existence of a \emph{bounded} (and not necessarily \emph{compact}) connected component of $\Zg(\Phi)$ to deduce that the ziggurat itself is connected, since box-convexity is restrictive enough. However, it might not be the case for $n \geq 4$, as the following example suggests.

\begin{ex} \label{ex:nonconnected}
    In $\R^4= \R^3 \times \R$, we consider a subset $C \subset \R^3 \times \{0\}$ of the form of the ``spiky'' ziggurat in \zcref{fig:weird_ziggurat}. As a subset of $\R^3$, it can be described as a connected, box-convex neighborhood of the interval
    $$\big\{(t, t, -t) \mid t \in (0,1)\big\}$$
    contained in the open octant $\{x >0, \, y> 0, \, z<0\}.$
    
    Consider $R \coloneqq (0,1) \times \{0\} \times \{0\} \times \R$. Then the following scenario could occur: $C$ is a connected component of a ziggurat for a nonwandering flow $\Phi$ with $n=4$, and a sequence of $\Phi$-horizontal foliations whose multislopes converge to $\bm 0 \in \R^4$ develops an $\aleph$ boundary foliation on the fourth components, while the component corresponding to $x$ is $\sharp$-latching and the components corresponding to $y$ and $z$ are nonlatching. This configuration could be resolved as $R \subset \Zg(\Phi)$. However, $R \not\subset C$.

    Nevertheless, we are not aware of a flow displaying this behavior.
\end{ex}

\begin{figure}[ht]
    \centering
    \includegraphics[width=0.5\linewidth]{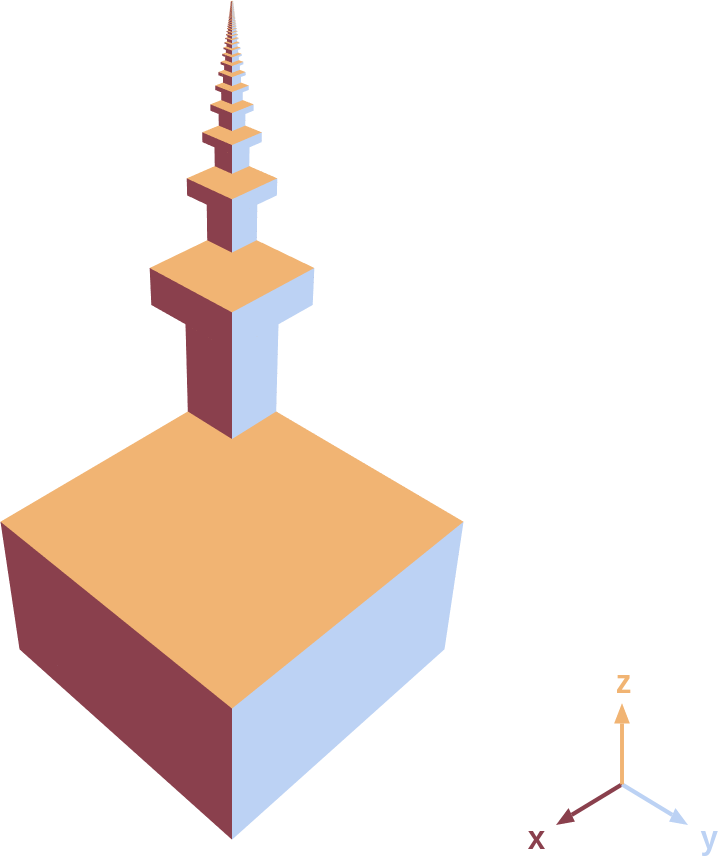}
    \caption{A spiky ziggurat.}
    \label{fig:weird_ziggurat}
\end{figure}

We remark that in the very special case $n=2$, box-convexity imposes strong restrictions on the number of \emph{unbounded} path-connected components:

\begin{lem}
    Suppose $A\subset \R^2$ is a box-convex set. Then $A$ has at most two unbounded connected components.
\end{lem}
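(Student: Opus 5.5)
The plan is to reduce to a statement about how box-convexity constrains unbounded components, and then use the order structure of $\R^2$. First I classify how a connected component $K$ of $A$ can be unbounded: its projection to the $x$-axis or to the $y$-axis is unbounded above or below. Box-convexity says that whenever $\bm a,\bm b\in A$ with $\bm a\le\bm b$, the whole rectangle $[\bm a,\bm b]$ lies in $A$. Since a rectangle is connected, this forces $\bm a$ and $\bm b$ into the same component. So points in \emph{different} components are pairwise incomparable for the product order: if $\bm a\in K_1$ and $\bm b\in K_2$ with $K_1\ne K_2$, then $\bm a$ and $\bm b$ lie in the open second/fourth "antidiagonal" quadrants relative to each other.

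Next I exploit incomparability. Let $K$ be an unbounded component, and pick points $\bm p_j\in K$ with $|\bm p_j|\to\infty$. Passing to a subsequence, $\bm p_j$ escapes toward one of the four directions: $x\to+\infty$, $x\to-\infty$, $y\to+\infty$ or $y\to-\infty$. Take any point $\bm q$ in a different component $K'$. If $\bm p_j$ escapes with $x\to+\infty$, incomparability with $\bm q$ forces $p_{j,y}<q_y$ eventually, and in fact $p_{j,y}\le q_y$ whenever $p_{j,x}\ge q_x$.

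The key step is to assign to each unbounded component an "end type". I would show that an unbounded component escaping in the southeast direction (large $x$ or very negative $y$) excludes any other component from escaping southeast. Suppose $K_1$ and $K_2$ both contain sequences going to $x\to+\infty$. Fix $\bm q\in K_2$. Then points of $K_1$ far to the right lie strictly below $q_y$. Symmetrically, fixing a point of $K_1$, points of $K_2$ far to the right lie below it. Now take $\bm p\in K_1$ and $\bm r\in K_2$, both far right. They must be incomparable, so one is northwest of the other, say $\bm p$ is northwest of $\bm r$.

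To derive a contradiction, I use connectedness of $K_1$. The set $K_1$ contains a point far right and also the fixed point $\bm q_1$. Because $K_2$ is connected and contains both $\bm r$ and a point further right, I expect a separation argument to show the two components interleave and produce a comparable pair. Concretely, consider the region of points incomparable with all of $K_2$. It is the complement of the union of the up-sets and down-sets of $K_2$. When $K_2$ is connected and unbounded to the right, this region splits into a part above and a part below $K_2$, and $K_1$ is connected, so it lies in one part.

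One subtlety is that a unit escaping right could lie above $K_2$ and escape via $y\to+\infty$ instead. This suggests grouping the directions so that an unbounded component is either of "northwest type" (escaping via $x\to-\infty$ or $y\to+\infty$) or of "southeast type". Incomparability forbids escaping northeast or southwest unboundedly in both coordinates at once across components, which yields at most one component of each type, hence at most two.

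The main obstacle is making this separation argument rigorous for a general, not necessarily closed or path-connected, $K_2$. I would first try to replace components by their "staircase hulls" (down-closure intersected with up-closure), which remain disjoint by incomparability. I would then argue with the monotone boundary functions $f_\pm$, as in the earlier remark on ziggurats in $\R^2$, showing that the hulls are linearly ordered from northwest to southeast. Only the first and last hulls in this order can then be unbounded.
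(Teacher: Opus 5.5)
Your opening observation is right and is the heart of the matter: by box-convexity, points in distinct components of $A$ are incomparable, because $[\bm a,\bm b]$ is a connected subset of $A$. However, the step that actually proves the lemma is never established. That step is that the components are linearly ordered from northwest to southeast, so that two components cannot both escape in the same way. You only say you ``expect'' the separation argument to work, and the tools you propose do not deliver it. The staircase hull of a component $K$ is its box-closure $[K]$. It lies in $A$ by box-convexity and is connected, since it is $K$ together with boxes meeting $K$. Hence $[K]=K$, and passing to hulls changes nothing. The functions $f_\pm$ from the earlier remark presuppose that the set is closed and projects onto the whole $x$-axis, and neither holds for a general box-convex $A$ with several components. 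So the sequence and end-type discussion, as written, never reaches a contradiction.

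The missing idea is to project onto the axes. For a connected component $K$, the projections $\pi_x(K)$ and $\pi_y(K)$ are intervals, being continuous images of a connected set. For distinct components $K\neq K'$ these intervals are disjoint, because two points with the same $x$-coordinate (or the same $y$-coordinate) are comparable. Hence the components are totally ordered by their $x$-projections. Moreover, if $\pi_x(K)$ lies to the left of $\pi_x(K')$, then $\pi_y(K)$ must lie above $\pi_y(K')$; otherwise every point of $K$ would be $\le$ every point of $K'$. Given three components sorted this way, the middle one has both projections trapped between nonempty intervals, so it is bounded. This is exactly the paper's proof. It needs no closedness, no path-connectedness, and no sequences, and it also yields your end-type claim directly. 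A component whose $x$-projection is unbounded above lies to the right of, hence below, every other component. So no other component can have $x$ unbounded above or $y$ unbounded below.
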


\begin{proof}
    We argue by contradiction. Let $A_1, A_2, A_3$ be three distinct connected components of $A$. Box-convexity implies that the projections of the $A_i$'s onto the $x$-axis and the $y$-axis must be disjoint. Thus, we may sort the $A_i$'s in ascending order by their $x$-coordinates. The $A_i$'s must be sorted in descending order by their $y$-coordinates. Indeed, if there exists a pair $\bm{x}\in A_i$, $\bm{y}\in A_j$ such that $\bm{x} \leq \bm{y}$, then box convexity implies that $\bm{x}$ and $\bm{y}$ belong to the same component of $A$. Now the condition that the $A_i$'s are sorted in $x$ and $y$ implies that $A_2$ must be bounded, a contradiction.
\end{proof}

Unfortunately, we are not able to bound the number of unbounded components for $n \geq 3$; \zcref{lem:continuousfol} does not seem to constrain unbounded connected components, and box-convexity is not restrictive enough, as the following example shows.

\begin{ex}
    The set 
    $$\bigcup_{k \in \Z} \{(k, -k)\} \times \R \subset \R^3$$
    is box-convex and has infinitely many unbounded connected components.
\end{ex}

\clearpage
\appendix


    \section{Mean curvature flow for foliations on \texorpdfstring{$\T^2$}{T2}} \label{sec:mcf}

\textbf{Mean curvature flow} or \textbf{curve shortening flow} is a geometric flow for curves on Riemannian surfaces. Given an initial curve $\gamma_0: [0,1] \to S$, the curve evolves by $$\frac{d\gamma_t(x)}{dt} = \kappa \vec{n},$$ where $\kappa$ is the curvature and $\vec{n}$ is the unit normal pointing toward the convex side of $\gamma$. This partial differential equation can be considered as gradient flow for the length functional on $\gamma$; hence it tends to simplify curves. Indeed a series of work by Gage, Hamilton, Grayson, and others culminating in \cite{grayson.ShorteningEmbeddedCurves} proves that mean curvature flow on a closed curve in a Riemannian surface either shrinks to a point or converges to a closed geodesic \cite{gage.CurveShorteningSurfaces,gage.hamilton.HeatEquationShrinking, grayson.HeatEquationShrinks}.

A key property of mean curvature flow is the avoidance principle, which states that mean curvature flow applied to a pair of curves does not create new intersections. In particular, an embedded curve remains embedded during mean curvature flow.

We consider \textbf{leafwise mean curvature flow (LMCF)}: a geometric flow on foliations under which each leaf evolves by standard mean curvature flow. Thanks to the avoidance principle, this flow (as long as it is defined) preserves the property of being a foliation. The goal of this section is to prove the following convergence result for leafwise mean curvature flow:

\begin{prop}\label{prop:mean_curvature_flow}
    Let $\G$ be a Reebless $C^{\infty,0}$ foliation of $\T^2$. Equip $\T^2$ with a flat Euclidean metric. Then after a preparatory arbitrarily $C^0$-small isotopy to $\G$, leafwise mean curvature flow on $\G$ converges tangentially to a linear foliation of slope $\slope(\G)$. Moreover, if $\G$ is smooth, then no preparatory isotopy is needed, and the LMCF of $\G$ remains smooth.
\end{prop}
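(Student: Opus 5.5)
The plan is to reduce leafwise mean curvature flow on $\G$ to a scalar parabolic equation by working in graphical coordinates. After a preparatory step, the whole foliation will be described by one function, and convergence will follow from a maximum principle for a gradient.

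\textbf{Preparatory step.} Since $\G$ is Reebless, it admits a closed transversal (as used for \zcref{prop:poincare}). After a linear change of coordinates on $\T^2$, which does not affect flatness of the metric up to a fixed affine map, we may aim to make $\G$ graphical over a direction transverse to the asymptotic slope. Concretely, choose a rational direction $d$ distinct from $\slope(\G)$. After an arbitrarily $C^0$-small isotopy, supported near finitely many transverse arcs (for instance using the straightening of monotone curves, \zcref{lem:transversalizing}, together with the smoothing techniques of~\cite{B16}), we arrange two things:
\begin{itemize}
\item every leaf is a graph $y=u(x)$ over the $x$-axis in the universal cover, with uniformly bounded slope $|u'|\le K$;
\item the leaves are smooth with bounded curvature.
\end{itemize}
When $\G$ is smooth, graphicality with respect to some direction already holds after a linear change of coordinates, so no isotopy is needed. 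We then parametrize the lifted leaves by their height $c\in\R$ at $x=0$, giving a function $u(x,c)$. It is monotone in $c$, equivariant under the deck group, and has $u(x,c)-\slope(\G)\,x$ bounded.

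\textbf{Evolution.} For graphs, curve shortening flow (up to tangential reparametrization) reads $u_t = u_{xx}/(1+u_x^2)$, applied leaf by leaf. By the avoidance principle, distinct leaves never cross, so the family remains a foliation; monotonicity in $c$ is preserved by the comparison principle. The slope $p=u_x$ satisfies a quasilinear equation $p_t=\big(p_x/(1+p^2)\big)_x$ with no zeroth-order term. The maximum principle therefore preserves $\sup|p|\le K$, so the flow stays graphical and exists for all time. Interior parabolic estimates (Ecker--Huisken type, or classical one-dimensional estimates) then give uniform bounds on all higher derivatives for $t\ge 1$, which yields smoothness when $\G$ is smooth.

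\textbf{Convergence (main obstacle).} We must show that $\operatorname{osc} u_x\to0$ uniformly over all leaves. The key difficulty is that a single leaf is noncompact, and a naive $L^2$-energy argument needs compactness. The plan is to exploit the fact that the asymptotic slope of each leaf is $s=\slope(\G)$ and is preserved by the flow, because the bounded function $u-sx$ stays bounded. Work with $w=u-sx-\text{const}$ and consider $\max p-\min p$ over the whole foliation, which is a compact family indexed by $\T^2$. A strong maximum principle argument shows that this oscillation is strictly decreasing unless $p$ is constant. A compactness argument over time-translates then gives convergence: by the derivative bounds and Arzelà--Ascoli, limits along subsequences exist and are eternal solutions of constant oscillation, hence have constant slope, which must equal $s$. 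I expect this rigidity step to be the delicate one, in particular ruling out an oscillation that persists along leaves escaping to infinity. The plan is to handle it by using deck-group equivariance: this reduces the supremum to a fundamental domain and makes the limit again a foliation of $\T^2$. Tangential convergence to the linear foliation of slope $s$ then follows.
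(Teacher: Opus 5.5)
\textbf{Gap in the preparatory step.} Everything after it depends on this step. You assume that after an arbitrarily $C^0$-small isotopy every lifted leaf is a graph $y=u(x)$ with $|u_x|\le K$. For smooth $\G$ you assume this holds after only a linear change of coordinates. But being transverse to the asymptotic direction $\slope(\G)$ is not the same as every leaf being transverse to one fixed direction, and the latter fails in general.

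Here is a counterexample. Push the horizontal linear foliation of $\T^2$ by a compactly supported diffeomorphism, isotopic to the identity, that twists a small annulus by a large angle (say $3\pi$). The result is a smooth Reebless foliation of slope $0$ in which the oriented tangent along some leaf arc turns through more than a full turn. Its tangent line field therefore takes every direction. It still does so after any pointwise small perturbation of $T\G$, and after any linear change of coordinates.

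So no linear foliation is transverse to it. This remains true after any isotopy that is small in the sense of \zcref{def:isotopy}, which keeps $T\G_t$ close to $T\G$. The statement allows no isotopy at all when $\G$ is smooth, so this example is squarely within its scope.

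Consequently, the following pieces only treat foliations that are already graphical:
\begin{itemize}
\item the scalar equation for $u$;
\item the maximum principle for $\sup|u_x|$;
\item the long-time existence you derive from that bound;
\item the oscillation argument.
\end{itemize}
Showing that the leaves of a general foliation eventually become graphical is the substantive content of the proposition. Your proposal has no mechanism for it.

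\textbf{What the paper does instead.} It supplies that mechanism by comparison rather than by a PDE for the slope.
\begin{itemize}
\item Long-time existence comes from Huisken's distance comparison principle: the leaves of $\widetilde{\G}$ are uniform quasigeodesics, and the rescaling argument rules out shrinking circles and grim reapers.
\item Convergence is first proved for foliations by closed curves of rational slope, where Grayson's theorem makes each closed leaf converge to a geodesic.
\item For a general Reebless $\G$, the preparatory isotopy is used only to make a transversal $\gamma$ smooth, with $\G$ smooth near it. Modifying the first-return map on $\gamma$ via \zcref{lem:approxrotation2} produces foliations $\G_r,\G_s$ by closed curves, with $r<\slope(\G)<s$, each crossing $\G$ with a single sign.
\item The avoidance principle preserves these crossing signs. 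Once $\G_r$ and $\G_s$ are nearly linear, $T\G$ is trapped between slopes $r-\varepsilon$ and $s+\varepsilon$, and shrinking $(r,s)$ gives the result.
\end{itemize}

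\textbf{What does work.} Your convergence step is essentially sound once graphicality is available. It uses monotonicity of $\max u_x$ and $\min u_x$ over the compact, deck-invariant family, limits of time-translates, the strong maximum principle, and the uniform bound $|u(x)-u(0)-\slope(\G)\,x|\le C$, which forces a straight limiting leaf to have slope $\slope(\G)$. For example, it would apply after the sandwich has pushed $T\G$ into a cone, but by then the sandwich already gives the conclusion. One small correction: the oscillation need not be strictly decreasing until $u_x$ is constant. The rigidity should be stated for the limiting eternal solution, as you eventually do.
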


We prove \zcref{prop:mean_curvature_flow} by ``sandwiching'' $\G$ between two foliations by closed curves, whose convergence can be established by appealing to results in the literature. Before proving \zcref{prop:mean_curvature_flow}, we first explain that this sandwiching is possible at the level of homomorphisms of $S^1$. Recall that $\widetilde{\Homeo^+}(S^1)$ denotes the universal cover of $\Homeo^+(S^1)$, or equivalently the set of $\Z$-periodic homeomorphisms of $\R$. Let $R_s:\R \to \R$ be the translation by $+s \in \R$. The following lemma is standard; see for example~\cite[Proposition 1.1]{Militon2018}.

\begin{lem}\label{lem:approxrotation}
Let $h\in \widetilde{\Homeo^+}(S^1)$. Then for every $\varepsilon > 0$, $h$ is conjugate to another $h'\in \widetilde{\Homeo^+}(S^1)$ with $\big\vert h'-R_{\widetilde{\rot}(h)} \big\vert<\varepsilon$.
\end{lem}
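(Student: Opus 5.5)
We split according to whether the rotation number $\alpha \coloneqq \widetilde{\rot}(h)$ is rational or irrational. In both cases the point is to find $h' = g h g^{-1}$ whose displacement $h'(x) - x$ stays uniformly within $\varepsilon$ of the constant $\alpha$. Since the displacement of $h'$ is $\Z$-periodic and continuous, it suffices to control it on a fundamental domain.

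\emph{Irrational case.} We use the semiconjugacy to the rotation: there is a monotone, $\Z$-equivariant, degree-one map $\phi:\R\to\R$ with $\phi\circ h = R_\alpha\circ\phi$, obtained by the Poincaré--Denjoy theory. The map $\phi$ collapses wandering intervals. Approximate $\phi$ by a $\Z$-equivariant homeomorphism $g$ that is uniformly within $\delta$ of $\phi$. For instance, take $g = (1-\eta)\phi + \eta\,\mathrm{id}$ with $\eta$ small; this is strictly increasing because $\phi$ is nondecreasing, and it commutes with $R_1$. Then set $h' = g h g^{-1}$ and estimate
\[
|h'(y) - y - \alpha| \le |g(hx) - \phi(hx)| + |\phi(hx) - \phi(x) - \alpha| + |\phi(x) - g(x)|,
\]
where $y = g(x)$. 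The middle term vanishes, and the outer terms are bounded by $\sup|g-\phi|\le \eta\sup|\mathrm{id}-\phi|$, which is small since $\mathrm{id}-\phi$ is bounded and periodic. So this case is essentially a direct computation.

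\emph{Rational case, $\alpha = p/q$.} Here $h^q - p$ has fixed points, and one cannot semiconjugate to a rotation that stays close everywhere in a naive way. The plan is to conjugate so that $h'$ moves points by nearly $p/q$. First choose a fixed point $x_0$ of $h^q R_{-p}$. Its orbit $x_i = h^i(x_0)$ projects to a periodic orbit on $S^1$ with the same cyclic order as the orbit of $R_{p/q}$; this is the standard combinatorics of rational rotation number. Pick an equivariant homeomorphism $g$ sending $x_i \mapsto x_0 + i p/q$ (mod the $\Z$-action), which is possible because the cyclic orders agree. Then $h'$ maps each interval $[i/q,(i+1)/q]$, after reindexing, onto the interval shifted by $p/q$, so that $|h'(y) - y - p/q| \le 1/q$.

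To go below $1/q$, refine the construction. Inside one fundamental interval $J = [x_0, x_{j}]$ between consecutive orbit points, subdivide $J$ into $N$ subintervals with points $a_0,\dots,a_N$. Transport this subdivision by $h, h^2, \dots, h^{q-1}$ to the other intervals. Define $g$ to map $h^i(a_k)$ to $i p/q + k/(qN)$, extending the same pattern to all intervals. The conjugate $h'$ then sends each grid point exactly to the grid point shifted by $p/q$ for the first $q-1$ steps. Between grid points the error is at most the mesh $1/(qN)$.

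The main obstacle is the last step, where the return map $h^q R_{-p}$ on $J$ does not preserve the subdivision. There the displacement could still be of order $1/q$. I would handle this by choosing the subdivision on $J$ so that on $J$ the map $h^q R_{-p}$ is itself $\varepsilon$-close to the identity after conjugation. This is exactly the interval case of the lemma: an element of $\Homeo^+(J)$ fixing the endpoints can be conjugated, by squeezing its non-fixed regions, to be uniformly $\varepsilon$-close to the identity. One then defines $g$ on the other intervals through $h$, so that the errors do not accumulate. The total displacement error of $h'$ is then at most the error concentrated on the single last step, which is below $\varepsilon$.
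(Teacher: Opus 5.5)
Your argument is correct. It is not a variant of the paper's proof, because the paper gives none: the lemma is quoted as standard with a reference to Militon's Proposition~1.1, so your proof is a self-contained replacement. The citation keeps the paper short; your route is longer but elementary, using only the Poincar\'e semiconjugacy and the existence of periodic orbits for rational rotation number.

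The irrational case is clean: $g=(1-\eta)\phi+\eta\,\mathrm{id}$ is a $\Z$-equivariant homeomorphism, and $|ghg^{-1}-R_\alpha|\le 2\eta\sup|\mathrm{id}-\phi|$.

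In the rational case the grid is unnecessary, and the write-up can be tightened as follows.
\begin{itemize}
\item Let $F=R_{-p}h^q$ and $F(x_0)=x_0$. List the points of $\{h^i(x_0)\}+\Z$ in increasing order as $y_k$. An order-preserving bijection of this $\Z$-ordered set is a shift, and $h^q(x_0)=x_0+p$ forces $h(y_k)=y_{k+p}$.
\item Hence $F$ fixes both endpoints of $J=[y_0,y_1]$, and the intervals $h^i(J)+m$ ($0\le i\le q-1$, $m\in\Z$) tile $\R$.
\item Choose a homeomorphism $g_J\colon J\to[0,1/q]$ with $g_JFg_J^{-1}$ uniformly $\varepsilon$-close to the identity. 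Set $g(h^i(x)+m)=g_J(x)+ip/q+m$. Then $h'=ghg^{-1}$ equals $R_{p/q}$ exactly away from $g(h^{q-1}(J))+\Z$.
\item For $y=g(h^{q-1}(x)+m)$ one gets $h'(y)-y-p/q=g_J(F(x))-g_J(x)$, which is less than $\varepsilon$ in absolute value.
\end{itemize}

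One step needs a better justification than ``squeezing the non-fixed regions.'' If $F$ has no fixed point in $\interior J$, its only non-fixed region is all of $\interior J$, and that cannot be squeezed. What you actually use is that on each component of $J\setminus\mathrm{Fix}(F)$, $F$ is conjugate rel endpoints to any fixed-point-free homeomorphism moving points in the same direction, for instance one moving every point by less than $\varepsilon$. These conjugacies then glue with the identity on $\mathrm{Fix}(F)$ into a homeomorphism.
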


\begin{cor}\label{lem:approxrotation2}
    Let $h \in \widetilde{\Homeo^+}(S^1)$ with translation number $\widetilde{\rot}(h) \in \R$. For every $s>\widetilde{\rot}(h)$ (resp.~$s < \widetilde{\rot}(h)$), there exists $f \in \widetilde{\Homeo^+}(S^1)$ such that $f > \mathrm{id}$ (resp.~$ f< \mathrm{id}$), and $fh$ is conjugate to a translation by $s$. 
\end{cor}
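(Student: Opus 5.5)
The plan is to reduce the statement to \zcref{lem:approxrotation} by a comparison argument for translation numbers. I treat the case $s > \widetilde{\rot}(h)$; the case $s<\widetilde{\rot}(h)$ is symmetric, obtained by reversing all inequalities or conjugating by $x\mapsto -x$.

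First, set $\delta \coloneqq s - \widetilde{\rot}(h) > 0$ and apply \zcref{lem:approxrotation} with $\varepsilon < \delta$. This gives $g \in \widetilde{\Homeo^+}(S^1)$ such that $h' \coloneqq g h g^{-1}$ satisfies $\big\vert h' - R_{\widetilde{\rot}(h)}\big\vert < \varepsilon$. Next, define $f' \coloneqq R_s \circ (h')^{-1}$. Then $f'h' = R_s$ exactly, and $f'$ is $\Z$-periodic. For the sign, write $y = h'(x)$, so that $f'(y) - y = x + s - h'(x)$. Since $h'(x) < x + \widetilde{\rot}(h) + \varepsilon$, we get $f'(y) - y > s - \widetilde{\rot}(h) - \varepsilon = \delta - \varepsilon > 0$. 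Hence $f' > \mathrm{id}$ everywhere.

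Finally, conjugate back by setting $f \coloneqq g^{-1} f' g$. Then
$$fh = g^{-1} f' g h = g^{-1} f' h' g = g^{-1} R_s g,$$
which is conjugate to a translation by $s$. Moreover, $f > \mathrm{id}$ because conjugation by the increasing homeomorphism $g$ preserves the strict inequality $f' > \mathrm{id}$: indeed $f'(g(x)) > g(x)$ implies $g^{-1}f'g(x) > x$.

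There is no serious obstacle. The only point requiring care is choosing $\varepsilon$ strictly smaller than the gap $s - \widetilde{\rot}(h)$, which is exactly what makes the inequality $f' > \mathrm{id}$ strict.
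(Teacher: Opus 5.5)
Your proposal is correct and is essentially the paper's proof: unwinding your definitions gives $f = g^{-1} f' g = g^{-1} R_s g h^{-1}$, which is exactly the element the paper uses, where the paper takes $\varepsilon = (s-\widetilde{\rot}(h))/2$ and checks $ghg^{-1} < R_s$ directly. Your explicit check that $f' > \mathrm{id}$, and that conjugating by the increasing map $g$ preserves this strict inequality, just spells out the step the paper leaves implicit.
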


\begin{proof}
    Assume that $s>\widetilde{\rot}(h)$, and let $\varepsilon \coloneqq (s-\widetilde{\rot}(h))/2 > 0$. By \zcref{lem:approxrotation}, $h$ is conjugate by some homeomorphism $g$ to an $\varepsilon$-neighborhood of $R_{\widetilde{\rot}(h)}$. Then 
    \begin{align*}
        g h g^{-1} &< R_s,
    \end{align*}
    so $f \coloneqq (g^{-1} R_s g) h^{-1}$ satisfied $f > \mathrm{id}$, and $fh = g^{-1} R_s$ is the desired homeomorphism.
\end{proof}

\begin{proof}[Proof of \zcref{prop:mean_curvature_flow}]
    The avoidance principle combined with continuous dependence of curve shortening flow on the initial conditions implies that $\G$ remains a foliation as long as leafwise mean curvature flow is defined.
    \begin{enumerate}[leftmargin=*]
        \item \textit{Step 1: Long time existence.} We mimic the proof in the closed case. First use Huisken's distance comparison principle~\cite[Theorem 2.1]{huisken.DistanceComparisonPrinciple}. For $p,q$ on the same leaf of $\widetilde{\G}$, define the extrinsic and intrinsic distances $d(p,q)$ and $l(p,q)$. Huisken's distance comparison principle says that $$\inf_{p,q} d(p,q)/l(p,q)$$ is non-increasing. Since $\T^2$ is compact and $\G$ is Reebless, the leaves of $\widetilde \G$ are quasigeodesics with uniform quasigeodesic constant. Therefore, $\inf_{p,q} d(p,q)/l(p,q)$ is uniformly bounded above during curvature flow.

        If mean curvature flow develops a singularity in finite time, then the usual rescaling trick says that the rescaled flow is a contracting soliton (ruled out because $\G$ has no contractible curves) or a grim reaper (ruled out by Huisken's distance comparison principle). 
        
        \item \textit{Step 2: Convergence to a linear foliation.} First, we handle the result for any foliation $\G_s$ topologically isotopic to a linear foliation of slope $s\in \Q$. Grayson proves that essential closed curves on Riemannian surfaces converge to geodesics \cite{grayson.ShorteningEmbeddedCurves}. Therefore, every leaf of $\G_s$ converges to a geodesic in $\T^2$. By compactness of $\T^2$, $\G_s$ converges to a linear foliation in the tangential topology.

        Now we handle the result for an arbitrary Reebless foliation $\G$. Let $r,s$ be two rational slopes such that $(r,s)$ contains $\slope(\G)$. Our strategy is to sandwich $\G$ between two foliations $\G_r$ and $\G_s$ which are topologically isotopic to linear foliations of slope $r$ and $s$ respectively. 

        Since $\G$ is Reebless, $\G$ admits a positively monotone curve $\gamma$ which intersects every leaf by \zcref{lem:monotone_existence}. After $C^0$-small preparatory isotopies, we may make $\gamma$ smooth and transverse to $\G$, using \zcref{lem:transversalizing}. From now on, we work in coordinates where $\gamma$ has slope $+\infty$. We further arrange that $\G$ is smooth near $\gamma$. The holonomy of $\G$ defines a first return map $h:\gamma \to \gamma$. By \zcref{lem:approxrotation2}, we may sandwich $h$ (or more precisely, $\widetilde h: \widetilde \gamma \to \widetilde \gamma$) between two maps $h_r, h_s:  \gamma \to  \gamma$ which are conjugate to rotations by $+r$ and $+s$ respectively.
        
        Now we can modify $\G$ in a neighborhood of $\gamma$ to obtain a new foliation $\G_s$ which is positively transverse to $\G$ near $\gamma$, whose leaves are equal to those of $\G$ away from $\gamma$, and which has first return map $h_s$. Similarly define $\G_r$ to be a new foliation which is negatively mostly-transverse to $\G$ and has first return map $h_r$. By construction, $\G_r$ has no positive topological crossings with $\G$ and $\G_s$ has no negative topological crossings with $\G$. Mean curvature flow preserves this property. Since we have already shown that $\G_r$ and $\G_s$ converge to linear foliations, we deduce that $\G$ eventually lies in the cone between the linear foliations of slope $r-\varepsilon$, $s+\varepsilon$. Taking the interval $(r,s)$ arbitrarily small gives the desired convergence. \qedhere
    \end{enumerate}
\end{proof}

    \clearpage
    \section{Ziggurats of annulus and disk suspensions} \label{sec:zigannulusdisk}

Our main theorems often exclude two special cases: $M\cong S^1\times D^2$ and $M\cong \T^2\times I$. In this section, we analyze $\Zg(\Phi)$ in these special cases.

\begin{defn}
    A flow $\Phi$ is a \textbf{planar suspension} if it is orbit equivalent to the suspension of a homeomorphism of a planar surface. We say that it is a disk (resp.~annulus) suspension if it is orbit equivalent to the suspension of a homeomorphism of the $2$-disk and $M \cong S^1 \times D^2$ (resp.~the annulus $S^1 \times I$ and $M \cong \T^2 \times I$).
\end{defn}

\begin{defn}
    We say that $\bm{s} \in \R^n$ is a \textbf{homological longitude} if there is a nontrivial element of $H_2(M,\partial M)$ represented by a surface $\Sigma$ meeting each $\partial_i M$ in a (possibly empty) collection of parallel curves of (oriented) slope $s_i$. The \textbf{locus of homological longitudes} is the set of all homological longitudes.
\end{defn}

\begin{rem} \label{rem:homlong}
 The locus of homological longitudes is a union of quadric hypersurfaces. Two homological longitudes $\bm{r},\bm{s}$ cannot satisfy $\bm{r} < \bm{s}$, since the intersection pairing on the image of $\partial: H_2(M,\partial M) \to H_1(\partial M)$ vanishes.
\end{rem}

\begin{lem}\label{lem:special_classification} Taut foliations on $S^1\times D^2$ and $\T^2\times I$ transverse to the boundary are classified as follows:
    \begin{enumerate}
        \item If $\F$ is a taut $C^0$-foliation on $S^1 \times D^2$ transverse to the boundary, then $\F$ is isotopic to the standard foliation by disks.
    
        \item If $\F$ is a taut $C^0$-foliation on $\T^2 \times I$ transverse to the boundary, then $\F$ is isotopic to a product of a foliation on $\T^2$ with $I$.
    \end{enumerate}
\end{lem}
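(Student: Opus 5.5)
The plan is to show first that a taut foliation on either manifold has a compact leaf of the expected type, namely a meridian disk in $S^1\times D^2$ and an annulus in $\T^2\times I$. We then cut along that leaf and use Reeb stability to conclude that the whole foliation is a product.

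For item 1, consider $\F$ on $S^1\times D^2$, transverse to $\partial(S^1\times D^2)$, with $\partial\F$ a foliation of the boundary torus. By tautness, every leaf meets a closed transversal. Since $\pi_1(S^1\times D^2)=\Z$ and leaves of a taut foliation are $\pi_1$-injective (Novikov, applied to the doubled foliation on $\overline{M}$), every leaf has cyclic fundamental group. A leaf is therefore a disk, an annulus, a half-open annulus, or a plane-like surface.

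We first rule out Reeb annuli in $\partial\F$. A Reeb annulus would yield, after doubling, a Reeb component or a vanishing cycle, which is impossible for a taut foliation by Novikov. Next, $\partial\F$ must contain a compact leaf bounding a disk. Otherwise the boundary leaves carry the homology class of a curve that is not null-homologous in the solid torus, and a boundary leaf $c$ that is essential in $\pi_1(\partial)$ would have to be nontrivial in the cyclic group $\pi_1(L)$. That forces $L$ to be an annulus or a Möbius-type leaf, and such a leaf would carry a meridian-disjoint class, contradicting injectivity. The honest version of this step is that $\partial$-leaves must be null-homotopic in $M$, so their slope is the meridian, so some leaf is a disk.

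Once a disk leaf $D$ exists, Reeb stability (in its version for leaves with boundary transverse to $\partial M$) shows that the foliation near $D$ is a product. Cutting along $D$ gives $D\times I$ foliated transversely to the vertical boundary with two disk leaves. The union of disk leaves is open by Reeb stability and closed by compactness of the holonomy, so all leaves are disks. An isotopy along a transverse flow then straightens $\F$ to the standard foliation.

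For item 2, the same argument applies on $\T^2\times I$. Leaves are $\pi_1$-injective into $\Z^2$ by doubling plus Novikov. If some leaf is a compact annulus, Reeb stability again propagates the annulus leaves, and the foliation is $\G\times I$ up to isotopy. If no leaf is compact, all leaves are planes or half-planes. By \zcref{lem:simplyconnleaves} in the boundary case, $\F$ is then semi-conjugate to an irrational foliation on $\T^2\times I$. We would then have to upgrade this semi-conjugacy to an isotopy to a product, by blowing up leaves compatibly with the $I$-direction.

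The main obstacle is the first step, forcing a compact leaf, or equivalently showing that the boundary slopes are meridional (resp. that the two boundary foliations have matching slopes). For this I would first try the doubling trick: apply Novikov's theorem and the classification of taut foliations on $S^1\times S^2$ and $\T^3$ to the double $\overline{M}$, then restrict back to $M$.
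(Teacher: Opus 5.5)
\paragraph{Item 1.} Your endgame for item 1 (a disk leaf, then Reeb stability) matches the paper's. The decisive step, however, is never proved: that $\partial\F$ is the foliation by meridian circles.

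\begin{itemize}
\item The annulus/M\"obius discussion gives no contradiction. An annulus leaf whose core is a non-meridional boundary curve injects perfectly well into $\pi_1(S^1\times D^2)\cong\Z$. The claim that ``$\partial$-leaves must be null-homotopic'' is only asserted.
\item Your exclusion of Reeb annuli is invalid as stated. A Reeb annulus in $\partial\F$ creates no Reeb component or vanishing cycle in the double, since the double is again taut. Indeed, taut foliations on other manifolds do have Reeb annuli in their boundary: these are the $\aleph$-type boundaries of this paper. Any exclusion must therefore use the topology of the solid torus.
\end{itemize}

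The missing idea is the paper's: a taut foliation has no null-homotopic closed transversal. Suppose $\partial\F$ is Reebless but not a foliation by meridians. Then \zcref{lem:monotone_existence} or \zcref{lem:transversholo}, followed by \zcref{lem:transversalizing}, give after a small isotopy a closed transversal of slope $0$, which bounds a meridian disk. Hence some meridian $\gamma$ is a boundary leaf, and $\pi_1$-injectivity forces the leaf containing $\gamma$ to be a disk.

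Your doubling fallback would also work if carried out, and it disposes of Reeb annuli at once. The double is a taut foliation of $S^1\times S^2$, hence the product foliation by spheres (Novikov--Rosenberg). So every leaf of $\partial\F$ is a circle null-homotopic in $S^1\times S^2$, i.e.\ a meridian. As written, though, this is only a suggestion.

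\paragraph{Item 2.} Item 2 has a more serious problem, because the strategy itself fails.

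\begin{itemize}
\item Taut foliations of $\T^2\times I$ need not have any compact leaf; take $\G\times I$ with $\G$ irrational.
\item When an annulus leaf does exist, Reeb stability does not apply to it, because its holonomy can be nontrivial. Let $\G$ be the suspension of a circle diffeomorphism with a single fixed point. Then $\G\times I$ is taut, its only compact leaf is an annulus whose holonomy is attracting on one side, and all nearby leaves are noncompact strips. So annulus leaves do not propagate, and your argument yields no product structure in such mixed cases.
\item In the case without compact leaves, simple connectivity of all leaves is unjustified. The upgrade from the semi-conjugacy of \zcref{lem:simplyconnleaves} to an isotopy is also left undone.
\end{itemize}

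The missing idea is the paper's leaf-space argument, which needs no case analysis. Let $L$ be the leaf space of $\widetilde{\F}$ and $L_i$ that of the lift of $\partial_i\F$, with induced immersions $f_i\colon L_i\to L$.

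\begin{itemize}
\item Each $f_i$ is surjective. Since $\partial_iM\hookrightarrow M$ is $\pi_1$-surjective, $f_i(L_i)$ is $\pi_1(M)$-invariant. Tautness joins every leaf to $f_i(L_i)$ by both positive and negative transversals in the simply connected $1$-manifold $L$.
\item Each $f_i$ is injective. Non-separated pairs in $L_i$ come only from Reeb annuli. If both boundary circles of a Reeb annulus lay on one leaf, $\pi_1$-injectivity would make that leaf an annulus separating $M$, contradicting tautness.
\end{itemize}

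Hence every leaf of $\F$ contains exactly one leaf of each $\partial_i\F$, with isomorphic fundamental groups. A uniform bound on leafwise distance to $\partial M$ then shows each leaf is $x_1\times I$ for a leaf $x_1$ of $\partial_1\F$, so $\F$ is a product. This handles compact and noncompact leaves and all boundary types uniformly.
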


\begin{proof} We prove each item in turn.
    \begin{enumerate}
        \item Note that $\partial \F$ must be a linear foliation with slope zero. Otherwise, $\partial \F$ would (up to isotopy) admit a transversal of slope 0, but taut foliations do not admit contractible transversals. Thus, $\partial \F$ contains a curve $\gamma$ of slope 0. Since leaves of taut foliations are $\pi_1$-injective, $\gamma$ bounds a disk in a leaf of $\F$. Now the Reeb stability theorem implies that $\F$ is a foliation by disks and hence is standard.
        \item Let $\widetilde \F$ be the lift of $\F$ to the universal cover and let $L$ be the leaf space of $\widetilde \F$. Let $L_i$, $1\leq i \leq 2$, be the leaf space of $\widetilde {\partial_i\F}$. Inclusions of leaves of $\partial \F$ into leaves of $\F$ induce immersions $f_i\colon L_i \to L$. 
        \begin{enumerate}
            \item \textit{Each $f_i$ is surjective:} Since the boundary components of $M$ are $\pi_1$-surjective, $\pi_1(M)$ preserves $f(L_1)$ and $f(L_2)$. Since $\F$ is taut, every point in $L$ is connected to $f(L_1)$ by a positive and a negative transversal. Since $L$ is a simply-connected 1-manifold, there can be no point outside $f(L_1)$ with this property; therefore $f_1$ and $f_2$ are surjective.
            \item \textit{Each $f_i$ is injective:} By the classification of foliations of $\T^2$, $L_i$ is a possibly non-Hausdorff 1-manifold all of whose non-separated points arise from Reeb annuli in $\partial_i \F$. Since $f_i$ is an immersion into a simply connected 1-manifold, $f_i$ is injective in a neighborhood of any separated point. If $x,y$ is a pair of non-separated leaves in $L_i$ corresponding with the two boundary components of a Reeb annulus, then we claim that $f_i$ is also a homeomorphism onto its image in a neighborhood of $\{x,y\}$. It suffices to check that $f_i(x)\neq f_i(y)$. Otherwise, there is a leaf $\lambda$ of $\F$ containing both $x$ and $y$. Since leaves of taut foliations are $\pi_1$-injective and both $x$ and $y$ are curves represent the same element of $\pi_1(\T^2\times I)$, $\lambda$ must be an annulus cobounding $x$ and $y$. But then $\lambda$ separates $M$ which contradicts tautness of $\F$. So in fact $f_i(x) \neq f_i(y)$ and $f_i$ is a homeomorphism in a neighborhood of $\{x,y\}$ as desired. Since the target $L$ is a simply connected 1-manifold, $f_i$ must be injective.
        \end{enumerate}

        Thus, $L_1=L_2=L$. In particular, every leaf $\lambda$ of $\F$ contains exactly one leaf in $\partial_1 \F$ and exactly one leaf of $\partial_2 \F$, and $$\pi_1(\partial_1 \lambda) \cong \pi_1(\partial_2 \lambda) \cong \pi_1(\lambda) \cong Stab_{\pi_1(M)}\widetilde \lambda.$$ Choose a continuous leafwise metric on $\F$. By compactness of $M$, there exists $R$ large enough that for every point in $M$, there are leafwise paths of length $R$ to either boundary component of $M$. Then for every leaf $\lambda$ of $\F$ with boundary components $\partial_1 \lambda$ and $\partial_2 \lambda$, $\lambda$ is contained in a leafwise $R$-neighborhood of $\partial_1 \lambda$. Again using that the inclusion of $\partial_i \lambda$ into $\lambda$ induces an isomorphism of fundamental groups, we conclude that $\lambda$ is topologically the product of $x_1$ with a compact interval $I$. It follows that $\F$ is a product foliation.\qedhere
        \end{enumerate}
\end{proof}

\begin{prop}[Ziggurats of disk and annulus suspension flows]\label{prop:special_ziggurats}
    Assume that $\Zg(\Phi) \neq \varnothing$.
    \begin{enumerate}
        \item If $M \cong S^1 \times D^2$, then $\Phi$ is orbit equivalent to the suspension of a homeomorphism of $D^2$, and $\Zg(\Phi)$ is a single point, the homological longitude.
        \item If $M \cong \T^2 \times I$, then $\Phi$ is orbit equivalent to the suspension of a homeomorphism of an annulus, and $\Zg(\Phi)$ is a nontrivial interval contained in the locus of homological longitudes (for a framing coming from the product structure).
    \end{enumerate}
\end{prop}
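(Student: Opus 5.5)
The plan is to combine \zcref{lem:special_classification} with elementary facts about planar surfaces, treating the two items separately. In each case we pick a $\Phi$-horizontal foliation $\F$, which exists since $\Zg(\Phi)\neq\varnothing$. Because $\Phi$ is nonwandering, $\F$ is taut. This lets us apply \zcref{lem:special_classification} to describe $\F$ up to isotopy.

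\emph{Disk case.} By \zcref{lem:special_classification}, $\F$ is isotopic to the standard foliation by meridian disks $\{\theta\}\times D^2$. Each disk is a compact leaf transverse to $\Phi$, and every orbit of $\Phi$ meets it. Tautness, via a closed transversal through every point, supplies the first-return map, so $\Phi$ is the suspension of the first-return homeomorphism of a leaf $D^2$. This gives the orbit equivalence with a disk suspension. The proof of \zcref{lem:special_classification} already shows that $\partial\F$ has slope $0$ in the meridian framing; otherwise $\partial\F$ would admit a contractible transversal. Hence every point of $\Zg(\Phi)$ equals the slope of $\partial D^2$, which is the unique homological longitude.

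\emph{Annulus case.} By \zcref{lem:special_classification}, $\F$ is isotopic to $\G\times I$ for some foliation $\G$ of $\T^2$. First we show that $\G$ has a closed leaf, equivalently rational slope, or at least that leaves of $\F$ are annuli. The argument runs as follows. Since $\Phi$ is tangent to $\partial M$ and transverse to $\F$, the restriction $\Phi|_{\T^2\times\{t\}}$ is a flow transverse to $\G$ in each slice, with degeneracy slope $+\infty$ on both ends. We then use the product structure to produce an $I$-invariant compact annulus transverse to $\Phi$ and meeting every orbit. Concretely, we take a closed transversal $c$ to the $\Phi$-direction inside $\T^2$ (this can be a curve of a leaf if $\G$ is rational), then take $c\times I$ and push it along $\F$ to make it transverse to $\Phi$. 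This step is the main obstacle: when $\G$ is irrational we must instead choose any essential simple closed curve $c$ in $\T^2$ transverse to the boundary flow direction. We then realize $c\times I$ as a horizontal annulus by following leaves of $\F$ and using that $\Phi$ is positively transverse. Nonwandering forces every orbit to return, which makes the annulus a global section, so $\Phi$ is an annulus suspension.

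For the ziggurat, take the framing coming from the product structure. The two boundary slopes of $\G\times I$ are the slopes of $\G\times\{0\}$ and $\G\times\{1\}$, which coincide under the product identification. A pair $(s,\bar s)$ of corresponding slopes is exactly a homological longitude, as witnessed by the annulus $c\times I$ when $s$ is rational, and by the limit of such annuli (the hyperbola/line locus) in general. This shows $\Zg(\Phi)\subset$ locus of homological longitudes.

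For nontriviality and the interval structure, we vary $\G$. Given the suspension structure with monodromy $h$ on the annulus, horizontal foliations correspond to foliations $\G$ of $\T^2$ transverse to the suspended flow direction. Using \zcref{lem:approxrotation2}, we can perturb the first-return map and realize every slope in an open interval of rotation numbers around that of $\G$. Connectedness of this set of slopes, namely the set of $\widetilde{\rot}$ values realized by maps $f h$ with $f$ varying in a convex family, gives an interval. By \zcref{rem:homlong}, this interval lies on a monotone (decreasing) branch of the locus, which matches the paper's ``nontrivial interval''.
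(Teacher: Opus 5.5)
\textbf{There is a genuine gap in the interval claim of item~2.} Your disk case, and your inclusion of $\Zg(\Phi)$ in the locus of homological longitudes, agree with the paper, which reads both off \zcref{lem:special_classification}. (In the disk case the first-return map comes from $\F$ being a disk fibration over $S^1$ transverse to $\Phi$, not from tautness.) The interval statement is the only part of the proposition the paper actually has to work for, and your last paragraph does not establish it.

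\emph{No fixed dictionary.} There is no fixed correspondence between $\Phi$-horizontal foliations and ``foliations $\G$ of $\T^2$ transverse to the suspended flow direction''. The product structure in which $\F=\G\times I$ is only obtained after an isotopy that depends on $\F$. Moreover, in the interior $\Phi$ is the suspension of an annulus homeomorphism, so $\Phi$-transversality is not a condition on $\G$ relative to a single line field on $\T^2$.

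\emph{The lemma does not preserve transversality.} \zcref{lem:approxrotation2} is a statement about circle homeomorphisms; as written, your $h$ is the annulus monodromy, to which it does not apply. It changes the rotation number of the holonomy of $\G$, but nothing guarantees that the modified foliation, or any foliation of that slope, is still transverse to $\Phi$. This is not a technicality. The lemma gives $f>\mathrm{id}$ for \emph{every} $s>\widetilde{\rot}(h)$, so your mechanism would show that every slope on one side of a realized slope is realized. That is false. Take the suspension of the twist $(\theta,r)\mapsto(\theta+r,r)$ on $S^1\times[0,1]$. The class of any horizontal measured lamination must pair nonnegatively with the asymptotic cycles of all the invariant tori $\{r=\mathrm{const}\}$. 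This cuts $\Zg(\Phi)$ down to a proper half-line of the locus.

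\emph{Connectedness is not shown.} Even granting local openness, connectedness of $\{\widetilde{\rot}(fh)\}$ over a convex family of $f$'s says nothing about $\Zg(\Phi)$. You would need every $\Phi$-horizontal foliation to arise from that family and every member of the family to be $\Phi$-horizontal, and you show neither.

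\textbf{What is missing} is a way to produce, from two horizontal foliations $\F_0,\F_1$, horizontal foliations realizing every intermediate slope. The paper does this directly.
\begin{enumerate}
\item Since $\Phi$ is nonwandering, the $\F_i$ are taut. Hence the $\G_i$ are Reebless and the $\F_i$ contain measured sublaminations $\Lambda_i$.
\item The convex combinations $\Lambda_t=(1-t)\Lambda_0+t\Lambda_1$ are horizontal measured laminations with interpolated boundary slopes.
\item Each gut of $\Lambda_t$ is a product for $\Phi$. An orbit trapped in a gut recurs, giving loops that pair to zero with $\Lambda_t$ and nonnegatively with $\Lambda_0$ and $\Lambda_1$, hence to zero with both. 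That forces equal slopes or contractible loops, and the latter contradicts tautness.
\item Therefore $\Lambda_t$ completes to a $\Phi$-horizontal foliation.
\end{enumerate}

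\textbf{A secondary gap in the orbit-equivalence claim.} Your opening claim that $\G$ has a closed leaf is false. For the product flow $\partial_t$ on $S^1_\theta\times I\times S^1_t$, the foliation $\ker(dt-\alpha\,d\theta)$ with $\alpha\notin\Q$ is horizontal. In that irrational case, ``following leaves of $\F$'' does not close $c\times I$ up into a horizontal annulus. So the orbit-equivalence claim is not proved there either.
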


\begin{proof} We prove each item in turn.
    \begin{enumerate}
        \item This follows from the classification result in \zcref{lem:special_classification}.
        \item That $\Zg(\Phi)$ is contained in the locus of homological longitudes follows from the classification in \zcref{lem:special_classification}. We need to show that $\Zg(\Phi)$ is convex. Given any two $\Phi$-horizontal foliations $\F_1$ and $\F_2$, \zcref{lem:special_classification} says that $\F_0$ and $\F_1$ are products of foliations $\G_1$ and $\G_1$ on $\T^2$ with an interval. Since $\Phi$ is nonwandering, $\F_0$ and $\F_1$ are taut and $\G_0, \G_1$ cannot have Reeb annuli. Therefore, $\F_0$ and $\F_1$ have measured sublaminations $\Lambda_0$ and $\Lambda_1$ realizing the boundary slopes of $\F_0$ and $\F_1$ respectively. Convex combinations $\Lambda_t:=(1-t)\Lambda_0 + t\Lambda_1$ for $0<t<1$ are horizontal measured laminations realizing boundary slopes 
        $$(1-t)\slope(\partial\F_0) + t\slope(\partial\F_1).$$
        We need to show that all of these complete to $\Phi$-horizontal foliations. Focus on one of the guts $G$ of $\Lambda_t$, which is naturally a sutured manifold with horizontal boundary $R^+\sqcup R^-$. We can choose the vertical boundary of $G$ to be tangent to $\Phi$, so $\Phi$ enters along $R^-$ and exits along $R^+$. Suppose for the sake of contradiction that there exists an orbit of $\Phi$ which do not exit $G$. Compactness yields a small ball to which $\Phi$ recurs infinitely many times. Note that these recurring loops homologically pair to zero with $\Lambda_t$ (since they are disjoint from $\Lambda_t$) and pair non-negatively with $\Lambda_0$ and $\Lambda_1$ (since $\Phi$ is transverse to $\F_0$ and $\F_1$). It follows that they pair to zero with both $\Lambda_0$ and $\Lambda_1$. Then either $\slope(\partial\F_0)=\slope(\partial(\F_1))$ and we are done, or these recurring loops are contractible which contradicts tautness of $\F_0$ and $\F_1$. Thus, we can assume that every orbit of $\Phi$ passes from $R^-$ to $R^+$. This implies that $G$ is a product so that $\Lambda_t$ can be completed to a $\Phi$-horizontal foliation. This concludes our proof of convexity of $\Zg(\Phi)$. \qedhere
    \end{enumerate}
\end{proof}

\medskip

Finally, we briefly discuss the \emph{contact} ziggurats in the disk and annulus suspension case. We leave the details to the interested reader.

\begin{longrem} \label{rem:contactsuspension}
    Adapting methods from \zcref{sec:S2contact}, we expect the following to hold:
    \begin{itemize}
        \item If $\Phi$ is a nonwandering disk suspension on $S^1 \times D^2$, then 
        $$\Zg^- (\Phi) = (0, +\infty), \qquad \Zg^+(\Phi) = (-\infty, 0).$$
        \item If $\Phi$ is a nonwandering annulus suspension on $\T^2 \times I$ with locus of homological longitude $\mathcal{H} \subset \R^2$, then
        $$\Zg^- (\Phi) \subset \mathcal{H}^+, \qquad \Zg^+(\Phi) \subset \mathcal{H}^-,$$
        where $\mathcal{H}^\pm$ are the two connected components of $\R^2 \setminus \mathcal{H}$, and $\mathcal{H}^-$ lies below $\mathcal{H}^+$.
    \end{itemize}
    In particular, in both cases,
    $$\Zg^-(\Phi) \cap \Zg^+(\Phi) = \varnothing.$$
    
    In the first case, the inclusions $(0, +\infty) \subset \Zg^-(\Phi)$ and $(-\infty, 0) \subset \Zg^+(\Phi)$ are clear: start with a smooth foliation by disks transverse to $\Phi$ and twist radially. Moreover, $\Zg^-(\Phi) \cap (0, +\infty) =\Zg^+(\Phi) \cap (-\infty, 0) = \varnothing$, since otherwise we could blow down $\Phi$ along $\partial M$ and construct a positive/negative contact structure transverse to a nonwandering suspension flow on $S^1 \times S^2$, contradicting \zcref{prop:contactS2suspension}. Therefore, the main difficulty is to show $0 \notin \Zg^\pm(\Phi)$. Note that in view of \zcref{cor:universallytight}, we can deduce that $\Zg^-(\Phi) \cap \Zg^+(\Phi) = \varnothing$, since a positive/negative contact structure with boundary slope $0$ would have a Legendrian curve along $\partial M$ bounding an embedded disk in $M$, implying overtwistedness, while such a contact structure would necessarily be tight.

    In the second case, the inclusions $\Zg^\pm(\Phi) \subset \overline{\mathcal{H}^\mp}$, or equivalently $\Zg^\pm(\Phi) \cap \mathcal{H}^\pm = \varnothing$, follow from blowing down $\Phi$ and applying \zcref{prop:contactS2suspension}. Moreover, by instability of irrational slopes, we can also deduce that $\Zg^\pm(\Phi) \cap \mathcal{H} \subset \Q^2$. However, proving $\Zg^\pm(\Phi) \cap \mathcal{H} = \varnothing$ is more delicate; notice that the elements in $\Zg^\pm(\Phi) \cap \mathcal{H}$ are spherical multislopes. Showing $\Zg^-(\Phi) \cap \Zg^+(\Phi) = \varnothing$ using \zcref{cor:universallytight} seems delicate as well.
\end{longrem}

    \clearpage
    \section{To \texorpdfstring{$\aleph$}{aleph} and beyond} \label{sec:beyondaleph}

It is natural to consider foliations with boundary slope $\pm\infty$, or non-$\aleph$ type foliations with Reeb annuli. For simplicity, we avoided addressing these foliations in the body of the article. In this section, we discuss some formalism for extending our results, and we suggest some conjectures about the ziggurat of fine boundary multislopes. 

        \subsection{Generalized multislopes}

First, we revise and extend our terminology to accommodate these new kinds of boundary foliations.

\begin{defn}
    A \textbf{generalized coarse slope} is an element of $\R \cup \{\pm \infty,\aleph\}$. A \textbf{generalized fine slope} is a coarse slope, plus a type in $\{\natural, \dagger, \sharp, \flat\}$. The meaning of $s^\heartsuit$ for $\heartsuit \in \{\natural, \dagger, \sharp, \flat\}$ remains the same as in \zcref{sec:T2foliations}, and we define the generalized fine slope of a $\Phi$-horizontal foliation $\G$ with a Reeb annulus as follows:

    \begin{enumerate}
        \item $\aleph^\natural$ if $\G$ has neither $+\infty^\sharp$ nor $-\infty^\flat$ annuli. This class was simply called $\aleph$ in the body of the article.
        \item $\aleph^\sharp$ if $\G$ has a $+\infty^\sharp$ annulus, but no $-\infty ^\flat$ annuli.
        
        \item $\aleph^\flat$ if $\G$ has a $-\infty^\flat$ annulus, but no $+\infty^\sharp$ annuli.
    
        \item $\aleph^\dagger$ if $\G$ has both $+\infty^\sharp$ and $-\infty^\flat$ annuli.
    \end{enumerate}
    We write $\mathscr{S}$ for the space of real fine slopes $\R \times \{\natural, \dagger, \sharp, \flat\}$, and $\widehat{\mathscr{S}}$ for the space of generalized fine slopes $(\R \cup \{-\infty, +\infty, \aleph\}) \times \{\natural, \dagger, \sharp, \flat\}$.\footnote{If preferred, $+\infty^{\sharp/\natural}$ and $-\infty^{\flat/\natural}$ may be excluded from $\widehat{\mathscr{S}}$ as they do not occur as fine slopes of $\Phi$-horizontal foliations.}
\end{defn}

\begin{defn}[Topology on fine slopes]\label{defn:genfineslopetop}
    For $s\in \R$, we declare the following set to be open in $\widehat{\mathscr{S}}$:
    \begin{align}
        &\big\{t^\heartsuit \mid t\in (s, +\infty]\big\} \cup \big\{s^\sharp, s^\dagger\big\},\\
        &\big\{t^\heartsuit \mid t\in [-\infty, s)\big\} \cup \big\{s^\flat, s^\dagger\big\}
    \end{align}
    The following two sets are also open:
    \begin{align}
        &\big\{+\!\infty^\sharp, \aleph^\sharp, \aleph^\dagger\big\},\\
        &\big\{-\!\infty^\flat, \aleph^\flat, \aleph^\dagger\big\}
    \end{align}
    We define the topology on $\mathscr{S}$ to be the topology generated by these open sets.
\end{defn}
Note that $\widehat{\mathscr{S}}$ is non-Hausdorff; moreover, the sets of the form $\big\{s^\dagger\big\}$ for $s \in \R \cup \{-\infty, +\infty, \aleph\}$ are open.

\begin{lem}
    The map $$\fineslope : \{\textrm{$\Phi$-horizontal foliations}\} \longrightarrow \widehat{\mathscr{S}}$$ is continuous in the $C^0_\mathrm{Fol}$ topology.
\end{lem}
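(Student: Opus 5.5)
To prove continuity of $\fineslope$, it suffices to check that the preimage of each generating open set of \zcref{defn:genfineslopetop} is open in the $C^0_\mathrm{Fol}$ topology. The plan is to rewrite each generating set as the set of foliations admitting a certain monotone curve, and then invoke \zcref{lem:monotone_open}, which says that admitting a positively or negatively monotone curve of a fixed rational (or infinite) slope is an open condition. The dictionary comes from \zcref{lem:monotone_existence}, \zcref{lem:transversholo} and \zcref{lem:phaselocking}, which I would first record as a characterization.

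\begin{itemize}
    \item[(a)] For $s\in\R$: $\fineslope(\G)\in\{t^\heartsuit \mid t\in(s,+\infty]\}\cup\{s^\sharp,s^\dagger\}$ if and only if $\G$ is Reebless and admits a negatively monotone curve of some rational slope $q\geq s$ (with $q=s$ allowed only if $s\in\Q$).
    \item[(b)] Symmetrically, for the downward sets, using positively monotone curves.
    \item[(c)] $\fineslope(\G)\in\{+\infty^\sharp,\aleph^\sharp,\aleph^\dagger\}$ if and only if $\G$ has a closed curve of slope $+\infty$ with $\sharp$ spiraling. By \zcref{lem:transversholo}, this is equivalent to admitting a negatively monotone curve of slope $+\infty$.
    \item[(d)] Symmetrically for the set $\{-\infty^\flat,\aleph^\flat,\aleph^\dagger\}$.
\end{itemize}

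For (a), the forward direction goes as follows. If $\slope(\G)=t>s$, pick a rational $q\in(s,t)$; then \zcref{lem:monotone_existence} (mirrored) gives a negatively monotone curve of slope $q$. If $\fineslope(\G)\in\{s^\sharp,s^\dagger\}$, then $\G$ has a $\sharp$-spiraling closed leaf of slope $s$, and \zcref{lem:transversholo} produces the curve of slope $q=s$. If the slope is $+\infty$, a $\Phi$-horizontal foliation must have type $\flat$ or $\dagger$ there, and the same argument applies.

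The converse of (a) is the point needing care. A negatively monotone curve of slope $q$ should force the slope to be at least $q$, and if equality holds it should force a $\sharp$ annulus. I would argue this as in the proof of \zcref{lem:phaselocking}: after \zcref{lem:transversalizing}, the curve becomes a genuine transversal, which bounds the slope of $\G$ from one side. In the equality case, the transversal lies in a complementary annulus of the limit set, and that annulus must be $\sharp$, since a $\flat$ or $\natural$ annulus admits no closed negative transversal. Reeb annuli must also be excluded in (a). A foliation with a Reeb annulus has both $+\infty$ and $-\infty$ spiraling curves of varying types, so this requires checking that the monotone-curve condition is genuinely a statement about Reebless foliations. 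If that fails, I would instead intersect with the open condition ``there exist rational $q<q'$ with a positively monotone curve of slope $q'$ and a negatively monotone curve of slope $q$'', which rules out Reeb annuli by the argument in \zcref{lem:slconv}.

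With these characterizations in hand, each preimage is a union over rationals $q$ of open sets given by \zcref{lem:monotone_open}, and is therefore open. The main obstacle I expect is the bookkeeping at the infinite and $\aleph$ slopes. In particular, I need to verify that the sets in (c) and (d) are not accidentally picking up $\aleph^\flat$ or $\aleph^\natural$ foliations through spurious monotone curves of slope $\pm\infty$. This check would use the alternating annulus decomposition from the proof of \zcref{lem:resolution_of_aleph}.
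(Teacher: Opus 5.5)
Your plan is the one the paper sketches: identify preimages of the generating sets of \zcref{defn:genfineslopetop} with monotone-curve conditions, then invoke \zcref{lem:monotone_open}. Parts (a) and (b) are fine.

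The Reeb worry in (a) needs no fallback. A Reeb annulus of a $\Phi$-horizontal foliation contains a closed orbit of $\Phi$, so it is bounded by oppositely oriented closed leaves of slopes $+\infty$ and $-\infty$. A closed curve of finite slope meets these with nonzero algebraic intersections of opposite signs. But every crossing of a monotone curve with a leaf has the same sign; this holds also along tangent subarcs, since the transverse subarcs before and after lie on opposite sides of the leaf. So admitting a monotone curve of finite slope already forces Reeblessness. Your fallback would in fact be wrong: Reebless foliations of slope $+\infty$ lie in the set of (a) but admit no positively monotone curve of finite slope.

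The genuine gap is in (c) and (d). At slope $\pm\infty$ the two signs are not independent:
\begin{itemize}
\item reversing the orientation of a positively monotone curve of slope $-\infty$ gives a negatively monotone curve of slope $+\infty$;
\item positively monotone curves of slope $+\infty$ always exist, namely the closed orbits of $\Phi$.
\end{itemize}
Hence \zcref{lem:transversholo}, applied to a $\flat$-spiraling closed leaf of slope $-\infty$, also produces a negatively monotone curve of slope $+\infty$. So your set in (c) contains $\aleph^\flat$ foliations; the spurious contribution you feared does occur. It also contains Reebless foliations of type $+\infty^\dagger$ and $-\infty^\dagger$, and (c) and (d) end up with literally the same condition. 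Monotone curves of slope $\pm\infty$ only detect the union $\{+\infty^\dagger,-\infty^\dagger,\aleph^\sharp,\aleph^\flat,\aleph^\dagger\}$, so they cannot produce the two generating sets separately.

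This cannot be fixed by bookkeeping, because the preimage of $\{+\infty^\sharp,\aleph^\sharp,\aleph^\dagger\}$ is not open. Consider an annulus between consecutive closed orbits of $\Phi$ that can only carry closed leaves of slope $-\infty$. Take Reebless $+\infty^\dagger$ foliations whose leaves make longer and longer downward excursions in that annulus, as in \zcref{fig:approaching_aleph}. They converge (smoothly, hence in $C^0_\mathrm{Fol}$) to a foliation with the following properties:
\begin{itemize}
\item it has a new closed leaf of slope $-\infty$ flanked by Reeb annuli;
\item it keeps its $+\infty^\sharp$ annuli;
\item it has no $-\infty^\flat$ annulus.
\end{itemize}
The limit is therefore of type $\aleph^\sharp$. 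The only generating set containing $\aleph^\sharp$ omits $+\infty^\dagger$, so continuity fails. Incidentally, with the sets as printed, $\{+\infty^\dagger\}$ is not open either, contrary to the note after \zcref{defn:genfineslopetop}.

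The paper's two-line sketch makes the same identification you do. A correct version must change the topology at $\pm\infty$ and $\aleph$ in one of two ways: coarsen it to what monotone curves detect, or supply an open condition, finer than the existence of a monotone curve, that separates $+\infty^\sharp$ from $-\infty^\flat$ spiraling.
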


\begin{proof}[Proof (sketch)]
    For each $s$, the property of admitting a positive/negative mostly transverse curve is open in the $C^0_\mathrm{Fol}$ topology. These open sets correspond exactly to the preimages of the open sets specified in \zcref{defn:genfineslopetop}.
\end{proof}

\begin{defn}[Order on fine slopes]
    We define a partial relation $\prec$ on generalized fine slopes generated by the following inequalities. First, for $s,t\in \R \cup \{\pm\infty\}$ we impose
    \begin{equation}
        s^\heartsuit \prec t^\diamondsuit
    \end{equation}
   when $s < t$ and $\heartsuit, \diamondsuit \in \{\natural, \flat, \sharp, \dagger\}$, and
    \begin{equation}
        s^\flat \prec s^\dagger \prec s^\sharp, \quad \text{and} \quad s^\dagger \prec s^\dagger.
    \end{equation}
    Second, we have 
    \begin{equation}
        -\infty^\dagger \prec \aleph^\natural, \aleph^\sharp \quad \mathrm{and} \quad
        \aleph^\natural, \aleph^\flat \prec +\infty^\dagger.
    \end{equation}
\end{defn}

\begin{longrem}
    We can also define the topology on $\mathscr{S}$ in terms of closed sets. Let $[s^\heartsuit, t^\diamondsuit]$ be a closed interval for the fine slope topology. Depending on the values of the endpoints, we define a subset $A \subset \{\flat, \natural, \flat\}$\footnote{$A$ stands for \emph{accidental} here.} as in the following table:

\begin{table}[ht]
\centering
\begin{tblr}{
  hlines, vlines,
  columns = {4.8em, c, m},   
  rows = {2.7em},            
  row{1} = {bg=gray!15},
  column{1} = {bg=gray!15},
}
  \diagcell{$s^{\heartsuit}$}{$t^{\diamondsuit}$}
    & $t = -\infty$
    & {$t \in \mathbb{R},$ \\ $+\infty^{\flat}$}
    & $+\infty^{\dagger}$ \\
  $-\infty^{\dagger}$ & $\{\flat\}$ & $\{\flat,\, \natural\}$ & $\{\flat,\, \natural,\, \sharp\}$ \\
  {$-\infty^{\sharp},$ \\ $s \in \mathbb{R}$} & $\varnothing$ & $\{\natural\}$ & $\{\natural,\, \sharp\}$ \\
  $s = +\infty$ & $\varnothing$ & $\varnothing$ & $\{\sharp\}$ \\
\end{tblr}
\end{table}

We then declare the set $[s^\heartsuit, t^\diamondsuit] \cup \aleph^A$ to be closed in the generalized fine slope topology, and the latter topology is generated by those closed sets. We denote by $\widehat{\mathscr{S}}$ the topological space of generalized fine slopes. Note that a map $f : X \rightarrow \widehat{\mathscr{S}}$ defined on a topological space $X$ is continuous if and only if the preimages of those above closed sets, which form a subbase of the topology on $\widehat{\mathscr{S}}$, are closed in $X$.
\end{longrem}

\begin{defn}
    $\Zg_{\mathrm{fine}}(\Phi) \subseteq \mathscr{S}^n$ is the set of fine boundary multislopes achieved by $\Phi$-horizontal foliations. $\Zg_{\aleph, \mathrm{fine}} \subseteq \mathscr{S}^n$ where the following wildcard substitutions are permitted to eliminate $\aleph$ components:
    \begin{align*}
    \aleph^\natural & \longmapsto [-\infty^\sharp,+\infty^\flat],\\
    \aleph^\flat & \longmapsto \{-\infty^\dagger\},\\
    \aleph^\sharp &\longmapsto \{+\infty^\dagger\},\\
    \aleph^\dagger &\longmapsto \varnothing.
    \end{align*}
    Similarly define $\Zg^\pm_{\mathrm{fine}}(\Phi)$ as the sets of fine boundary multislopes achieved by $\Phi$-horizontal contact structures.
\end{defn}

        \subsection{Speculations and conjectures}

We now propose some conjectures for ziggurats of generalized fine multislopes.

\begin{conj}
    The subsets $\Zg^\pm_{\mathrm{fine}}(\Phi) \subset {\widehat{\mathscr{S}}}^n$ are open.
\end{conj}

\begin{conj}
    The posets $\Zg^\pm_{\mathrm{fine}}(\Phi)$ are downwards and upwards closed, respectively. 
\end{conj}

These two conjectures would follow from the twisting and trimming trick, as well as from definition of the order and the topology on $\mathscr{S}$.

\medskip

We also conjecture that the following generalization of \zcref{thmintro:contactzigg} holds, at least away from certain obstructed multislopes:

\begin{conj}
    If $\Phi$ admits no horizontal compact planar surfaces nor $\T^2$-type planar laminations, then
    $$\Zg_{\mathrm{fine}}(\Phi)=\Zg^-_{\mathrm{fine}}(\Phi) \cap \Zg_{\mathrm{fine}}^+(\Phi).$$
    More generally, the equality holds away from obstructed multislopes.
\end{conj}

\medskip

\begin{conj}[Generalized closedness]
    If $\Phi$ is BSF, then $\Zg_{\mathrm{fine}}(\Phi)$ is closed, except possibly at strongly $\aleph$-obstructed multislopes. In particular, if $\Phi$ admits no horizontal compact planar surfaces nor $\T^2$-type planar laminations, then $\Zg_{\mathrm{fine}}(\Phi)$ is closed.
\end{conj}

One could show that along the faces of $\Zg(\Phi)$, the multislopes are realized by foliations with $\flat/\sharp$ boundary types along the boundaries corresponding to extremal coordinates (up to $\aleph$-degeneration). Therefore, those correspond to special subsets of $\Zg_\mathrm{fine}(\Phi)$. It would be interesting to describe those more precisely.

\medskip

It remains to understand how to characterize slippery points in the context of generalized fine multislopes. As a first step, we propose a purely set-theoretic problem:

\begin{conj}
    If $A\subset \mathscr{S}^n$ is open and closed, then its projection to $\R^n$ is terraced.
\end{conj}

\begin{question}
    What is the correct extension of the notion of slipperiness to $\Zg_\mathrm{fine}(\Phi)$, in terms of extremality with respect to $\prec$? In particular, how to characterize the slippery points ``at infinity''?
\end{question}


\clearpage
\printbibliography[heading=bibintoc, title={References}]
\end{document}

%% file: Figures/smoothing_monotone.pdf_tex
\begingroup%
  \makeatletter%
  \providecommand\color[2][]{%
    \errmessage{(Inkscape) Color is used for the text in Inkscape, but the package 'color.sty' is not loaded}%
    \renewcommand\color[2][]{}%
  }%
  \providecommand\transparent[1]{%
    \errmessage{(Inkscape) Transparency is used (non-zero) for the text in Inkscape, but the package 'transparent.sty' is not loaded}%
    \renewcommand\transparent[1]{}%
  }%
  \providecommand\rotatebox[2]{#2}%
  \newcommand*\fsize{\dimexpr\f@size pt\relax}%
  \newcommand*\lineheight[1]{\fontsize{\fsize}{#1\fsize}\selectfont}%
  \ifx\svgwidth\undefined%
    \setlength{\unitlength}{739.41984883bp}%
    \ifx\svgscale\undefined%
      \relax%
    \else%
      \setlength{\unitlength}{\unitlength * \real{\svgscale}}%
    \fi%
  \else%
    \setlength{\unitlength}{\svgwidth}%
  \fi%
  \global\let\svgwidth\undefined%
  \global\let\svgscale\undefined%
  \makeatother%
  \begin{picture}(1,0.33836932)%
    \lineheight{1}%
    \setlength\tabcolsep{0pt}%
    \put(0,0){\includegraphics[width=\unitlength,page=1]{smoothing_monotone.pdf}}%
    \put(0.04964447,0.13511497){\makebox(0,0)[rt]{\lineheight{1.25}\smash{\begin{tabular}[t]{r}$L_0$\end{tabular}}}}%
    \put(0.40608643,0.31588408){\makebox(0,0)[t]{\lineheight{1.25}\smash{\begin{tabular}[t]{c}$\gamma$\end{tabular}}}}%
    \put(0.92256039,0.3152483){\makebox(0,0)[t]{\lineheight{1.25}\smash{\begin{tabular}[t]{c}$\gamma'$\end{tabular}}}}%
  \end{picture}%
\endgroup%

%% file: Figures/transversalizing2.pdf_tex
\begingroup%
  \makeatletter%
  \providecommand\color[2][]{%
    \errmessage{(Inkscape) Color is used for the text in Inkscape, but the package 'color.sty' is not loaded}%
    \renewcommand\color[2][]{}%
  }%
  \providecommand\transparent[1]{%
    \errmessage{(Inkscape) Transparency is used (non-zero) for the text in Inkscape, but the package 'transparent.sty' is not loaded}%
    \renewcommand\transparent[1]{}%
  }%
  \providecommand\rotatebox[2]{#2}%
  \newcommand*\fsize{\dimexpr\f@size pt\relax}%
  \newcommand*\lineheight[1]{\fontsize{\fsize}{#1\fsize}\selectfont}%
  \ifx\svgwidth\undefined%
    \setlength{\unitlength}{342.87445549bp}%
    \ifx\svgscale\undefined%
      \relax%
    \else%
      \setlength{\unitlength}{\unitlength * \real{\svgscale}}%
    \fi%
  \else%
    \setlength{\unitlength}{\svgwidth}%
  \fi%
  \global\let\svgwidth\undefined%
  \global\let\svgscale\undefined%
  \makeatother%
  \begin{picture}(1,0.66357476)%
    \lineheight{1}%
    \setlength\tabcolsep{0pt}%
    \put(0,0){\includegraphics[width=\unitlength,page=1]{transversalizing2.pdf}}%
    \put(-0.0268706,0.09366271){\makebox(0,0)[rt]{\lineheight{1.25}\smash{\begin{tabular}[t]{r}$V$\end{tabular}}}}%
    \put(0,0){\includegraphics[width=\unitlength,page=2]{transversalizing2.pdf}}%
    \put(-0.02302188,0.46835299){\makebox(0,0)[rt]{\lineheight{1.25}\smash{\begin{tabular}[t]{r}$W$\end{tabular}}}}%
  \end{picture}%
\endgroup%

%% file: Figures/instability.pdf_tex
\begingroup%
  \makeatletter%
  \providecommand\color[2][]{%
    \errmessage{(Inkscape) Color is used for the text in Inkscape, but the package 'color.sty' is not loaded}%
    \renewcommand\color[2][]{}%
  }%
  \providecommand\transparent[1]{%
    \errmessage{(Inkscape) Transparency is used (non-zero) for the text in Inkscape, but the package 'transparent.sty' is not loaded}%
    \renewcommand\transparent[1]{}%
  }%
  \providecommand\rotatebox[2]{#2}%
  \newcommand*\fsize{\dimexpr\f@size pt\relax}%
  \newcommand*\lineheight[1]{\fontsize{\fsize}{#1\fsize}\selectfont}%
  \ifx\svgwidth\undefined%
    \setlength{\unitlength}{317.06101785bp}%
    \ifx\svgscale\undefined%
      \relax%
    \else%
      \setlength{\unitlength}{\unitlength * \real{\svgscale}}%
    \fi%
  \else%
    \setlength{\unitlength}{\svgwidth}%
  \fi%
  \global\let\svgwidth\undefined%
  \global\let\svgscale\undefined%
  \makeatother%
  \begin{picture}(1,0.71759576)%
    \lineheight{1}%
    \setlength\tabcolsep{0pt}%
    \put(0,0){\includegraphics[width=\unitlength,page=1]{instability.pdf}}%
  \end{picture}%
\endgroup%

%% file: Figures/aleph_resolution.pdf_tex
\begingroup%
  \makeatletter%
  \providecommand\color[2][]{%
    \errmessage{(Inkscape) Color is used for the text in Inkscape, but the package 'color.sty' is not loaded}%
    \renewcommand\color[2][]{}%
  }%
  \providecommand\transparent[1]{%
    \errmessage{(Inkscape) Transparency is used (non-zero) for the text in Inkscape, but the package 'transparent.sty' is not loaded}%
    \renewcommand\transparent[1]{}%
  }%
  \providecommand\rotatebox[2]{#2}%
  \newcommand*\fsize{\dimexpr\f@size pt\relax}%
  \newcommand*\lineheight[1]{\fontsize{\fsize}{#1\fsize}\selectfont}%
  \ifx\svgwidth\undefined%
    \setlength{\unitlength}{340.15748031bp}%
    \ifx\svgscale\undefined%
      \relax%
    \else%
      \setlength{\unitlength}{\unitlength * \real{\svgscale}}%
    \fi%
  \else%
    \setlength{\unitlength}{\svgwidth}%
  \fi%
  \global\let\svgwidth\undefined%
  \global\let\svgscale\undefined%
  \makeatother%
  \begin{picture}(1,0.4166667)%
    \lineheight{1}%
    \setlength\tabcolsep{0pt}%
    \put(0,0){\includegraphics[width=\unitlength,page=1]{aleph_resolution.pdf}}%
  \end{picture}%
\endgroup%

%% file: Figures/perturbing_monotone.pdf_tex
\begingroup%
  \makeatletter%
  \providecommand\color[2][]{%
    \errmessage{(Inkscape) Color is used for the text in Inkscape, but the package 'color.sty' is not loaded}%
    \renewcommand\color[2][]{}%
  }%
  \providecommand\transparent[1]{%
    \errmessage{(Inkscape) Transparency is used (non-zero) for the text in Inkscape, but the package 'transparent.sty' is not loaded}%
    \renewcommand\transparent[1]{}%
  }%
  \providecommand\rotatebox[2]{#2}%
  \newcommand*\fsize{\dimexpr\f@size pt\relax}%
  \newcommand*\lineheight[1]{\fontsize{\fsize}{#1\fsize}\selectfont}%
  \ifx\svgwidth\undefined%
    \setlength{\unitlength}{376.08156219bp}%
    \ifx\svgscale\undefined%
      \relax%
    \else%
      \setlength{\unitlength}{\unitlength * \real{\svgscale}}%
    \fi%
  \else%
    \setlength{\unitlength}{\svgwidth}%
  \fi%
  \global\let\svgwidth\undefined%
  \global\let\svgscale\undefined%
  \makeatother%
  \begin{picture}(1,0.77950101)%
    \lineheight{1}%
    \setlength\tabcolsep{0pt}%
    \put(0,0){\includegraphics[width=\unitlength,page=1]{perturbing_monotone.pdf}}%
    \put(0.79957497,0.75124643){\makebox(0,0)[t]{\lineheight{1.25}\smash{\begin{tabular}[t]{c}$\gamma^\perp_{i+1}$\end{tabular}}}}%
    \put(0.34608559,0.00893427){\makebox(0,0)[t]{\lineheight{1.25}\smash{\begin{tabular}[t]{c}$\gamma^\perp_{i}$\end{tabular}}}}%
    \put(0.20237939,0.74914643){\makebox(0,0)[t]{\lineheight{1.25}\smash{\begin{tabular}[t]{c}$\gamma^\parallel_{i}$\end{tabular}}}}%
    \put(0,0){\includegraphics[width=\unitlength,page=2]{perturbing_monotone.pdf}}%
  \end{picture}%
\endgroup%

%% file: Figures/mostlytransversecurve.pdf_tex
\begingroup%
  \makeatletter%
  \providecommand\color[2][]{%
    \errmessage{(Inkscape) Color is used for the text in Inkscape, but the package 'color.sty' is not loaded}%
    \renewcommand\color[2][]{}%
  }%
  \providecommand\transparent[1]{%
    \errmessage{(Inkscape) Transparency is used (non-zero) for the text in Inkscape, but the package 'transparent.sty' is not loaded}%
    \renewcommand\transparent[1]{}%
  }%
  \providecommand\rotatebox[2]{#2}%
  \newcommand*\fsize{\dimexpr\f@size pt\relax}%
  \newcommand*\lineheight[1]{\fontsize{\fsize}{#1\fsize}\selectfont}%
  \ifx\svgwidth\undefined%
    \setlength{\unitlength}{341.55371719bp}%
    \ifx\svgscale\undefined%
      \relax%
    \else%
      \setlength{\unitlength}{\unitlength * \real{\svgscale}}%
    \fi%
  \else%
    \setlength{\unitlength}{\svgwidth}%
  \fi%
  \global\let\svgwidth\undefined%
  \global\let\svgscale\undefined%
  \makeatother%
  \begin{picture}(1,0.1766785)%
    \lineheight{1}%
    \setlength\tabcolsep{0pt}%
    \put(0,0){\includegraphics[width=\unitlength,page=1]{mostlytransversecurve.pdf}}%
    \put(0.9436039,0.00167726){\makebox(0,0)[lt]{\lineheight{1.25}\smash{\begin{tabular}[t]{l}$\gamma$\end{tabular}}}}%
    \put(0.94383976,0.08846622){\color[rgb]{0.78431373,0.21568627,0.21568627}\makebox(0,0)[lt]{\lineheight{1.25}\smash{\begin{tabular}[t]{l}$\gamma'$\end{tabular}}}}%
    \put(0,0){\includegraphics[width=\unitlength,page=2]{mostlytransversecurve.pdf}}%
  \end{picture}%
\endgroup%

%% file: Figures/transversalizing.pdf_tex
\begingroup%
  \makeatletter%
  \providecommand\color[2][]{%
    \errmessage{(Inkscape) Color is used for the text in Inkscape, but the package 'color.sty' is not loaded}%
    \renewcommand\color[2][]{}%
  }%
  \providecommand\transparent[1]{%
    \errmessage{(Inkscape) Transparency is used (non-zero) for the text in Inkscape, but the package 'transparent.sty' is not loaded}%
    \renewcommand\transparent[1]{}%
  }%
  \providecommand\rotatebox[2]{#2}%
  \newcommand*\fsize{\dimexpr\f@size pt\relax}%
  \newcommand*\lineheight[1]{\fontsize{\fsize}{#1\fsize}\selectfont}%
  \ifx\svgwidth\undefined%
    \setlength{\unitlength}{341.55371719bp}%
    \ifx\svgscale\undefined%
      \relax%
    \else%
      \setlength{\unitlength}{\unitlength * \real{\svgscale}}%
    \fi%
  \else%
    \setlength{\unitlength}{\svgwidth}%
  \fi%
  \global\let\svgwidth\undefined%
  \global\let\svgscale\undefined%
  \makeatother%
  \begin{picture}(1,0.1766785)%
    \lineheight{1}%
    \setlength\tabcolsep{0pt}%
    \put(0,0){\includegraphics[width=\unitlength,page=1]{transversalizing.pdf}}%
    \put(0.94975433,0.08427916){\color[rgb]{0.78431373,0.21568627,0.21568627}\makebox(0,0)[lt]{\lineheight{1.25}\smash{\begin{tabular}[t]{l}$\gamma'$\end{tabular}}}}%
    \put(0.95051098,0.00041535){\makebox(0,0)[lt]{\lineheight{1.25}\smash{\begin{tabular}[t]{l}$\gamma$\end{tabular}}}}%
  \end{picture}%
\endgroup%

%% file: Figures/aleph_filling.pdf_tex
\begingroup%
  \makeatletter%
  \providecommand\color[2][]{%
    \errmessage{(Inkscape) Color is used for the text in Inkscape, but the package 'color.sty' is not loaded}%
    \renewcommand\color[2][]{}%
  }%
  \providecommand\transparent[1]{%
    \errmessage{(Inkscape) Transparency is used (non-zero) for the text in Inkscape, but the package 'transparent.sty' is not loaded}%
    \renewcommand\transparent[1]{}%
  }%
  \providecommand\rotatebox[2]{#2}%
  \newcommand*\fsize{\dimexpr\f@size pt\relax}%
  \newcommand*\lineheight[1]{\fontsize{\fsize}{#1\fsize}\selectfont}%
  \ifx\svgwidth\undefined%
    \setlength{\unitlength}{551.69935812bp}%
    \ifx\svgscale\undefined%
      \relax%
    \else%
      \setlength{\unitlength}{\unitlength * \real{\svgscale}}%
    \fi%
  \else%
    \setlength{\unitlength}{\svgwidth}%
  \fi%
  \global\let\svgwidth\undefined%
  \global\let\svgscale\undefined%
  \makeatother%
  \begin{picture}(1,0.61052271)%
    \lineheight{1}%
    \setlength\tabcolsep{0pt}%
    \put(0,0){\includegraphics[width=\unitlength,page=1]{aleph_filling.pdf}}%
    \put(0.19191006,0.00400945){\makebox(0,0)[t]{\lineheight{1.25}\smash{\begin{tabular}[t]{c}$A_{4i-1}$\end{tabular}}}}%
    \put(0.34592095,0.00400945){\makebox(0,0)[t]{\lineheight{1.25}\smash{\begin{tabular}[t]{c}$A_{4i}$\end{tabular}}}}%
    \put(0.50062268,0.00400945){\makebox(0,0)[t]{\lineheight{1.25}\smash{\begin{tabular}[t]{c}$A_{4i+1}$\end{tabular}}}}%
    \put(0.91067583,0.3532453){\makebox(0,0)[lt]{\lineheight{1.25}\smash{\begin{tabular}[t]{l}$\dots$\end{tabular}}}}%
    \put(0.08917196,0.35278745){\makebox(0,0)[rt]{\lineheight{1.25}\smash{\begin{tabular}[t]{r}$\dots$\end{tabular}}}}%
    \put(0.65419836,0.00400945){\makebox(0,0)[t]{\lineheight{1.25}\smash{\begin{tabular}[t]{c}$A_{4i+2}$\end{tabular}}}}%
    \put(0.80842985,0.00400945){\makebox(0,0)[t]{\lineheight{1.25}\smash{\begin{tabular}[t]{c}$A_{4i+3}$\end{tabular}}}}%
  \end{picture}%
\endgroup%

%% file: Figures/aleph_filling_origami.pdf_tex
\begingroup%
  \makeatletter%
  \providecommand\color[2][]{%
    \errmessage{(Inkscape) Color is used for the text in Inkscape, but the package 'color.sty' is not loaded}%
    \renewcommand\color[2][]{}%
  }%
  \providecommand\transparent[1]{%
    \errmessage{(Inkscape) Transparency is used (non-zero) for the text in Inkscape, but the package 'transparent.sty' is not loaded}%
    \renewcommand\transparent[1]{}%
  }%
  \providecommand\rotatebox[2]{#2}%
  \newcommand*\fsize{\dimexpr\f@size pt\relax}%
  \newcommand*\lineheight[1]{\fontsize{\fsize}{#1\fsize}\selectfont}%
  \ifx\svgwidth\undefined%
    \setlength{\unitlength}{551.69935812bp}%
    \ifx\svgscale\undefined%
      \relax%
    \else%
      \setlength{\unitlength}{\unitlength * \real{\svgscale}}%
    \fi%
  \else%
    \setlength{\unitlength}{\svgwidth}%
  \fi%
  \global\let\svgwidth\undefined%
  \global\let\svgscale\undefined%
  \makeatother%
  \begin{picture}(1,0.61052271)%
    \lineheight{1}%
    \setlength\tabcolsep{0pt}%
    \put(0,0){\includegraphics[width=\unitlength,page=1]{aleph_filling_origami.pdf}}%
    \put(0.50006173,0.00400945){\makebox(0,0)[t]{\lineheight{1.25}\smash{\begin{tabular}[t]{c}$B_{2i}$\end{tabular}}}}%
    \put(0,0){\includegraphics[width=\unitlength,page=2]{aleph_filling_origami.pdf}}%
    \put(0.80834329,0.00400945){\makebox(0,0)[t]{\lineheight{1.25}\smash{\begin{tabular}[t]{c}$B_{2i+1}$\end{tabular}}}}%
    \put(0.91067583,0.3532453){\makebox(0,0)[lt]{\lineheight{1.25}\smash{\begin{tabular}[t]{l}$\dots$\end{tabular}}}}%
    \put(0.08917196,0.35278745){\makebox(0,0)[rt]{\lineheight{1.25}\smash{\begin{tabular}[t]{r}$\dots$\end{tabular}}}}%
    \put(0,0){\includegraphics[width=\unitlength,page=3]{aleph_filling_origami.pdf}}%
    \put(0.19178017,0.00400945){\makebox(0,0)[t]{\lineheight{1.25}\smash{\begin{tabular}[t]{c}$B_{2i-1}$\end{tabular}}}}%
    \put(0,0){\includegraphics[width=\unitlength,page=4]{aleph_filling_origami.pdf}}%
  \end{picture}%
\endgroup%

%% file: Figures/fortune_teller.pdf_tex
\begingroup%
  \makeatletter%
  \providecommand\color[2][]{%
    \errmessage{(Inkscape) Color is used for the text in Inkscape, but the package 'color.sty' is not loaded}%
    \renewcommand\color[2][]{}%
  }%
  \providecommand\transparent[1]{%
    \errmessage{(Inkscape) Transparency is used (non-zero) for the text in Inkscape, but the package 'transparent.sty' is not loaded}%
    \renewcommand\transparent[1]{}%
  }%
  \providecommand\rotatebox[2]{#2}%
  \newcommand*\fsize{\dimexpr\f@size pt\relax}%
  \newcommand*\lineheight[1]{\fontsize{\fsize}{#1\fsize}\selectfont}%
  \ifx\svgwidth\undefined%
    \setlength{\unitlength}{224.76540756bp}%
    \ifx\svgscale\undefined%
      \relax%
    \else%
      \setlength{\unitlength}{\unitlength * \real{\svgscale}}%
    \fi%
  \else%
    \setlength{\unitlength}{\svgwidth}%
  \fi%
  \global\let\svgwidth\undefined%
  \global\let\svgscale\undefined%
  \makeatother%
  \begin{picture}(1,1.29343858)%
    \lineheight{1}%
    \setlength\tabcolsep{0pt}%
    \put(0,0){\includegraphics[width=\unitlength,page=1]{fortune_teller.pdf}}%
  \end{picture}%
\endgroup%

%% file: Figures/branching_foliation.pdf_tex
\begingroup%
  \makeatletter%
  \providecommand\color[2][]{%
    \errmessage{(Inkscape) Color is used for the text in Inkscape, but the package 'color.sty' is not loaded}%
    \renewcommand\color[2][]{}%
  }%
  \providecommand\transparent[1]{%
    \errmessage{(Inkscape) Transparency is used (non-zero) for the text in Inkscape, but the package 'transparent.sty' is not loaded}%
    \renewcommand\transparent[1]{}%
  }%
  \providecommand\rotatebox[2]{#2}%
  \newcommand*\fsize{\dimexpr\f@size pt\relax}%
  \newcommand*\lineheight[1]{\fontsize{\fsize}{#1\fsize}\selectfont}%
  \ifx\svgwidth\undefined%
    \setlength{\unitlength}{168.00001009bp}%
    \ifx\svgscale\undefined%
      \relax%
    \else%
      \setlength{\unitlength}{\unitlength * \real{\svgscale}}%
    \fi%
  \else%
    \setlength{\unitlength}{\svgwidth}%
  \fi%
  \global\let\svgwidth\undefined%
  \global\let\svgscale\undefined%
  \makeatother%
  \begin{picture}(1,0.86160708)%
    \lineheight{1}%
    \setlength\tabcolsep{0pt}%
    \put(0,0){\includegraphics[width=\unitlength,page=1]{branching_foliation.pdf}}%
  \end{picture}%
\endgroup%

%% file: Figures/branching_foliation_shaved.pdf_tex
\begingroup%
  \makeatletter%
  \providecommand\color[2][]{%
    \errmessage{(Inkscape) Color is used for the text in Inkscape, but the package 'color.sty' is not loaded}%
    \renewcommand\color[2][]{}%
  }%
  \providecommand\transparent[1]{%
    \errmessage{(Inkscape) Transparency is used (non-zero) for the text in Inkscape, but the package 'transparent.sty' is not loaded}%
    \renewcommand\transparent[1]{}%
  }%
  \providecommand\rotatebox[2]{#2}%
  \newcommand*\fsize{\dimexpr\f@size pt\relax}%
  \newcommand*\lineheight[1]{\fontsize{\fsize}{#1\fsize}\selectfont}%
  \ifx\svgwidth\undefined%
    \setlength{\unitlength}{372.00001153bp}%
    \ifx\svgscale\undefined%
      \relax%
    \else%
      \setlength{\unitlength}{\unitlength * \real{\svgscale}}%
    \fi%
  \else%
    \setlength{\unitlength}{\svgwidth}%
  \fi%
  \global\let\svgwidth\undefined%
  \global\let\svgscale\undefined%
  \makeatother%
  \begin{picture}(1,0.38911289)%
    \lineheight{1}%
    \setlength\tabcolsep{0pt}%
    \put(0,0){\includegraphics[width=\unitlength,page=1]{branching_foliation_shaved.pdf}}%
  \end{picture}%
\endgroup%

%% file: Figures/t2planarminimalset.pdf_tex
\begingroup%
  \makeatletter%
  \providecommand\color[2][]{%
    \errmessage{(Inkscape) Color is used for the text in Inkscape, but the package 'color.sty' is not loaded}%
    \renewcommand\color[2][]{}%
  }%
  \providecommand\transparent[1]{%
    \errmessage{(Inkscape) Transparency is used (non-zero) for the text in Inkscape, but the package 'transparent.sty' is not loaded}%
    \renewcommand\transparent[1]{}%
  }%
  \providecommand\rotatebox[2]{#2}%
  \newcommand*\fsize{\dimexpr\f@size pt\relax}%
  \newcommand*\lineheight[1]{\fontsize{\fsize}{#1\fsize}\selectfont}%
  \ifx\svgwidth\undefined%
    \setlength{\unitlength}{1133.85826772bp}%
    \ifx\svgscale\undefined%
      \relax%
    \else%
      \setlength{\unitlength}{\unitlength * \real{\svgscale}}%
    \fi%
  \else%
    \setlength{\unitlength}{\svgwidth}%
  \fi%
  \global\let\svgwidth\undefined%
  \global\let\svgscale\undefined%
  \makeatother%
  \begin{picture}(1,1.10625)%
    \lineheight{1}%
    \setlength\tabcolsep{0pt}%
    \put(0,0){\includegraphics[width=\unitlength,page=1]{t2planarminimalset.pdf}}%
  \end{picture}%
\endgroup%

%% file: Figures/fences_and_windows.pdf_tex
\begingroup%
  \makeatletter%
  \providecommand\color[2][]{%
    \errmessage{(Inkscape) Color is used for the text in Inkscape, but the package 'color.sty' is not loaded}%
    \renewcommand\color[2][]{}%
  }%
  \providecommand\transparent[1]{%
    \errmessage{(Inkscape) Transparency is used (non-zero) for the text in Inkscape, but the package 'transparent.sty' is not loaded}%
    \renewcommand\transparent[1]{}%
  }%
  \providecommand\rotatebox[2]{#2}%
  \newcommand*\fsize{\dimexpr\f@size pt\relax}%
  \newcommand*\lineheight[1]{\fontsize{\fsize}{#1\fsize}\selectfont}%
  \ifx\svgwidth\undefined%
    \setlength{\unitlength}{995.30492587bp}%
    \ifx\svgscale\undefined%
      \relax%
    \else%
      \setlength{\unitlength}{\unitlength * \real{\svgscale}}%
    \fi%
  \else%
    \setlength{\unitlength}{\svgwidth}%
  \fi%
  \global\let\svgwidth\undefined%
  \global\let\svgscale\undefined%
  \makeatother%
  \begin{picture}(1,0.65600974)%
    \lineheight{1}%
    \setlength\tabcolsep{0pt}%
    \put(0,0){\includegraphics[width=\unitlength,page=1]{fences_and_windows.pdf}}%
    \put(0.15192474,0.17800754){\makebox(0,0)[t]{\lineheight{1.25}\smash{\begin{tabular}[t]{c}$\mathcal A$\end{tabular}}}}%
    \put(0.80772004,0.48312462){\color[rgb]{0.21568627,0.44313725,0.78431373}\makebox(0,0)[lt]{\lineheight{1.25}\smash{\begin{tabular}[t]{l}$\mathcal W_1$\end{tabular}}}}%
    \put(0.89095571,0.17266655){\color[rgb]{0.21568627,0.44313725,0.78431373}\makebox(0,0)[lt]{\lineheight{1.25}\smash{\begin{tabular}[t]{l}$\mathcal W_3$\end{tabular}}}}%
    \put(0.89534838,0.06580854){\color[rgb]{0.21568627,0.44313725,0.78431373}\makebox(0,0)[lt]{\lineheight{1.25}\smash{\begin{tabular}[t]{l}$\mathcal W_4$\end{tabular}}}}%
    \put(0.57845035,0.33199077){\color[rgb]{0.21568627,0.44313725,0.78431373}\makebox(0,0)[lt]{\lineheight{1.25}\smash{\begin{tabular}[t]{l}$\mathcal W_2$\end{tabular}}}}%
    \put(0.32926013,0.38413818){\makebox(0,0)[t]{\lineheight{1.25}\smash{\begin{tabular}[t]{c}$R_1$\end{tabular}}}}%
    \put(0.61490519,0.43338529){\makebox(0,0)[t]{\lineheight{1.25}\smash{\begin{tabular}[t]{c}$R_2$\end{tabular}}}}%
    \put(0.69274299,0.27168165){\makebox(0,0)[t]{\lineheight{1.25}\smash{\begin{tabular}[t]{c}$R_3$\end{tabular}}}}%
    \put(0.65317699,0.15680121){\makebox(0,0)[t]{\lineheight{1.25}\smash{\begin{tabular}[t]{c}$R_4$\end{tabular}}}}%
    \put(0.23891769,0.05357162){\makebox(0,0)[t]{\lineheight{1.25}\smash{\begin{tabular}[t]{c}$R_5$\end{tabular}}}}%
    \put(0.69609941,0.01022479){\makebox(0,0)[t]{\lineheight{1.25}\smash{\begin{tabular}[t]{c}$R_6$\end{tabular}}}}%
    \put(0,0){\includegraphics[width=\unitlength,page=2]{fences_and_windows.pdf}}%
  \end{picture}%
\endgroup%

%% file: Figures/twisting_ribbon.pdf_tex
\begingroup%
  \makeatletter%
  \providecommand\color[2][]{%
    \errmessage{(Inkscape) Color is used for the text in Inkscape, but the package 'color.sty' is not loaded}%
    \renewcommand\color[2][]{}%
  }%
  \providecommand\transparent[1]{%
    \errmessage{(Inkscape) Transparency is used (non-zero) for the text in Inkscape, but the package 'transparent.sty' is not loaded}%
    \renewcommand\transparent[1]{}%
  }%
  \providecommand\rotatebox[2]{#2}%
  \newcommand*\fsize{\dimexpr\f@size pt\relax}%
  \newcommand*\lineheight[1]{\fontsize{\fsize}{#1\fsize}\selectfont}%
  \ifx\svgwidth\undefined%
    \setlength{\unitlength}{964.15513887bp}%
    \ifx\svgscale\undefined%
      \relax%
    \else%
      \setlength{\unitlength}{\unitlength * \real{\svgscale}}%
    \fi%
  \else%
    \setlength{\unitlength}{\svgwidth}%
  \fi%
  \global\let\svgwidth\undefined%
  \global\let\svgscale\undefined%
  \makeatother%
  \begin{picture}(1,0.62505502)%
    \lineheight{1}%
    \setlength\tabcolsep{0pt}%
    \put(0,0){\includegraphics[width=\unitlength,page=1]{twisting_ribbon.pdf}}%
    \put(0.44380103,0.27593045){\makebox(0,0)[t]{\lineheight{1.25}\smash{\begin{tabular}[t]{c}$N(R_m)$\end{tabular}}}}%
    \put(0,0){\includegraphics[width=\unitlength,page=2]{twisting_ribbon.pdf}}%
    \put(0.11574912,0.12000856){\color[rgb]{0.21568627,0.44313725,0.78431373}\makebox(0,0)[t]{\lineheight{1.25}\smash{\begin{tabular}[t]{c}$\mathcal W_i$\end{tabular}}}}%
    \put(0.83030542,0.00348494){\color[rgb]{0,0,0}\makebox(0,0)[t]{\lineheight{1.25}\smash{\begin{tabular}[t]{c}$\partial_*M$\end{tabular}}}}%
    \put(0,0){\includegraphics[width=\unitlength,page=3]{twisting_ribbon.pdf}}%
    \put(0.30084242,0.14354192){\makebox(0,0)[lt]{\lineheight{1.25}\smash{\begin{tabular}[t]{l}$\{1\}\times \mathsf{R}_\delta$\end{tabular}}}}%
    \put(0.58185876,0.07067993){\makebox(0,0)[rt]{\lineheight{1.25}\smash{\begin{tabular}[t]{r}$\{0\}\times \mathsf{R}_\delta$\end{tabular}}}}%
    \put(0,0){\includegraphics[width=\unitlength,page=4]{twisting_ribbon.pdf}}%
  \end{picture}%
\endgroup%

%% file: Figures/surface_parts.pdf_tex
\begingroup%
  \makeatletter%
  \providecommand\color[2][]{%
    \errmessage{(Inkscape) Color is used for the text in Inkscape, but the package 'color.sty' is not loaded}%
    \renewcommand\color[2][]{}%
  }%
  \providecommand\transparent[1]{%
    \errmessage{(Inkscape) Transparency is used (non-zero) for the text in Inkscape, but the package 'transparent.sty' is not loaded}%
    \renewcommand\transparent[1]{}%
  }%
  \providecommand\rotatebox[2]{#2}%
  \newcommand*\fsize{\dimexpr\f@size pt\relax}%
  \newcommand*\lineheight[1]{\fontsize{\fsize}{#1\fsize}\selectfont}%
  \ifx\svgwidth\undefined%
    \setlength{\unitlength}{547.21948819bp}%
    \ifx\svgscale\undefined%
      \relax%
    \else%
      \setlength{\unitlength}{\unitlength * \real{\svgscale}}%
    \fi%
  \else%
    \setlength{\unitlength}{\svgwidth}%
  \fi%
  \global\let\svgwidth\undefined%
  \global\let\svgscale\undefined%
  \makeatother%
  \begin{picture}(1,0.66862835)%
    \lineheight{1}%
    \setlength\tabcolsep{0pt}%
    \put(0,0){\includegraphics[width=\unitlength,page=1]{surface_parts.pdf}}%
    \put(0.49951715,0.36827365){\color[rgb]{0.78431373,0.21568627,0.21568627}\makebox(0,0)[t]{\lineheight{1.25}\smash{\begin{tabular}[t]{c}$\gamma'$\end{tabular}}}}%
    \put(0.66415069,0.28995049){\makebox(0,0)[t]{\lineheight{1.25}\smash{\begin{tabular}[t]{c}$P$\end{tabular}}}}%
    \put(0,0){\includegraphics[width=\unitlength,page=2]{surface_parts.pdf}}%
    \put(0.89760821,0.10429616){\makebox(0,0)[t]{\lineheight{1.25}\smash{\begin{tabular}[t]{c}$\Sigma'$\end{tabular}}}}%
    \put(0.67786363,0.42108746){\color[rgb]{0.78431373,0.21568627,0.21568627}\makebox(0,0)[rt]{\lineheight{1.25}\smash{\begin{tabular}[t]{r}$\beta$\end{tabular}}}}%
    \put(0.81913559,0.2722806){\color[rgb]{0.78431373,0.21568627,0.21568627}\makebox(0,0)[lt]{\lineheight{1.25}\smash{\begin{tabular}[t]{l}$\beta'$\end{tabular}}}}%
    \put(0.79521348,0.61373467){\makebox(0,0)[t]{\lineheight{1.25}\smash{\begin{tabular}[t]{c}$\Sigma$\end{tabular}}}}%
    \put(0.14053587,0.53269911){\makebox(0,0)[t]{\lineheight{1.25}\smash{\begin{tabular}[t]{c}$\beta_1$\end{tabular}}}}%
    \put(0.05767901,0.3714621){\makebox(0,0)[t]{\lineheight{1.25}\smash{\begin{tabular}[t]{c}$\beta_2$\end{tabular}}}}%
    \put(0.14893505,0.07492878){\makebox(0,0)[t]{\lineheight{1.25}\smash{\begin{tabular}[t]{c}$\beta_m$\end{tabular}}}}%
  \end{picture}%
\endgroup%

%% file: Figures/commutator.pdf_tex
\begingroup%
  \makeatletter%
  \providecommand\color[2][]{%
    \errmessage{(Inkscape) Color is used for the text in Inkscape, but the package 'color.sty' is not loaded}%
    \renewcommand\color[2][]{}%
  }%
  \providecommand\transparent[1]{%
    \errmessage{(Inkscape) Transparency is used (non-zero) for the text in Inkscape, but the package 'transparent.sty' is not loaded}%
    \renewcommand\transparent[1]{}%
  }%
  \providecommand\rotatebox[2]{#2}%
  \newcommand*\fsize{\dimexpr\f@size pt\relax}%
  \newcommand*\lineheight[1]{\fontsize{\fsize}{#1\fsize}\selectfont}%
  \ifx\svgwidth\undefined%
    \setlength{\unitlength}{485.78467662bp}%
    \ifx\svgscale\undefined%
      \relax%
    \else%
      \setlength{\unitlength}{\unitlength * \real{\svgscale}}%
    \fi%
  \else%
    \setlength{\unitlength}{\svgwidth}%
  \fi%
  \global\let\svgwidth\undefined%
  \global\let\svgscale\undefined%
  \makeatother%
  \begin{picture}(1,1.34800322)%
    \lineheight{1}%
    \setlength\tabcolsep{0pt}%
    \put(0,0){\includegraphics[width=\unitlength,page=1]{commutator.pdf}}%
    \put(0.16958092,0.00423565){\makebox(0,0)[lt]{\lineheight{1.25}\smash{\begin{tabular}[t]{l}$f$\end{tabular}}}}%
    \put(0.36852953,0.10135822){\makebox(0,0)[lt]{\lineheight{1.25}\smash{\begin{tabular}[t]{l}$g$\end{tabular}}}}%
    \put(0.59029385,0.0042563){\makebox(0,0)[t]{\lineheight{1.25}\smash{\begin{tabular}[t]{c}$\overline{f}$\end{tabular}}}}%
    \put(0.78555535,0.1031656){\makebox(0,0)[t]{\lineheight{1.25}\smash{\begin{tabular}[t]{c}$\overline{g}$\end{tabular}}}}%
    \put(-0.00226418,0.24942197){\makebox(0,0)[lt]{\lineheight{1.25}\smash{\begin{tabular}[t]{l}$x_1$\end{tabular}}}}%
    \put(-0.00163615,0.45032883){\makebox(0,0)[lt]{\lineheight{1.25}\smash{\begin{tabular}[t]{l}$x_2$\end{tabular}}}}%
    \put(-0.00239224,0.64609292){\makebox(0,0)[lt]{\lineheight{1.25}\smash{\begin{tabular}[t]{l}$x_3$\end{tabular}}}}%
    \put(0.00408199,0.84705998){\makebox(0,0)[lt]{\lineheight{1.25}\smash{\begin{tabular}[t]{l}$x_4$\end{tabular}}}}%
    \put(-0.00034202,1.04569228){\makebox(0,0)[lt]{\lineheight{1.25}\smash{\begin{tabular}[t]{l}$x_5$\end{tabular}}}}%
    \put(0.90878523,0.35263117){\makebox(0,0)[lt]{\lineheight{1.25}\smash{\begin{tabular}[t]{l}$y_1$\end{tabular}}}}%
    \put(0.90840227,0.54746966){\makebox(0,0)[lt]{\lineheight{1.25}\smash{\begin{tabular}[t]{l}$y_2$\end{tabular}}}}%
    \put(0.91258447,0.74768853){\makebox(0,0)[lt]{\lineheight{1.25}\smash{\begin{tabular}[t]{l}$y_3$\end{tabular}}}}%
    \put(0.91295122,0.94554848){\makebox(0,0)[lt]{\lineheight{1.25}\smash{\begin{tabular}[t]{l}$y_4$\end{tabular}}}}%
    \put(0.91591992,1.13969736){\makebox(0,0)[lt]{\lineheight{1.25}\smash{\begin{tabular}[t]{l}$y_5$\end{tabular}}}}%
    \put(0,0){\includegraphics[width=\unitlength,page=2]{commutator.pdf}}%
  \end{picture}%
\endgroup%

%% file: Figures/fence.pdf_tex
\begingroup%
  \makeatletter%
  \providecommand\color[2][]{%
    \errmessage{(Inkscape) Color is used for the text in Inkscape, but the package 'color.sty' is not loaded}%
    \renewcommand\color[2][]{}%
  }%
  \providecommand\transparent[1]{%
    \errmessage{(Inkscape) Transparency is used (non-zero) for the text in Inkscape, but the package 'transparent.sty' is not loaded}%
    \renewcommand\transparent[1]{}%
  }%
  \providecommand\rotatebox[2]{#2}%
  \newcommand*\fsize{\dimexpr\f@size pt\relax}%
  \newcommand*\lineheight[1]{\fontsize{\fsize}{#1\fsize}\selectfont}%
  \ifx\svgwidth\undefined%
    \setlength{\unitlength}{455.84965743bp}%
    \ifx\svgscale\undefined%
      \relax%
    \else%
      \setlength{\unitlength}{\unitlength * \real{\svgscale}}%
    \fi%
  \else%
    \setlength{\unitlength}{\svgwidth}%
  \fi%
  \global\let\svgwidth\undefined%
  \global\let\svgscale\undefined%
  \makeatother%
  \begin{picture}(1,0.79597711)%
    \lineheight{1}%
    \setlength\tabcolsep{0pt}%
    \put(0,0){\includegraphics[width=\unitlength,page=1]{fence.pdf}}%
    \put(0.87353825,0.11220289){\makebox(0,0)[lt]{\lineheight{1.25}\smash{\begin{tabular}[t]{l}$R_1$\end{tabular}}}}%
    \put(0.00949263,0.33006309){\makebox(0,0)[lt]{\lineheight{1.25}\smash{\begin{tabular}[t]{l}$R_1$\end{tabular}}}}%
    \put(0.05062461,0.04542986){\makebox(0,0)[lt]{\lineheight{1.25}\smash{\begin{tabular}[t]{l}$S_1$\end{tabular}}}}%
    \put(0.59685737,0.50939854){\makebox(0,0)[lt]{\lineheight{1.25}\smash{\begin{tabular}[t]{l}$S_1$\end{tabular}}}}%
    \put(0.33046979,0.77593378){\makebox(0,0)[lt]{\lineheight{1.25}\smash{\begin{tabular}[t]{l}$P$\end{tabular}}}}%
  \end{picture}%
\endgroup%

%% file: Figures/slat_operation.pdf_tex
\begingroup%
  \makeatletter%
  \providecommand\color[2][]{%
    \errmessage{(Inkscape) Color is used for the text in Inkscape, but the package 'color.sty' is not loaded}%
    \renewcommand\color[2][]{}%
  }%
  \providecommand\transparent[1]{%
    \errmessage{(Inkscape) Transparency is used (non-zero) for the text in Inkscape, but the package 'transparent.sty' is not loaded}%
    \renewcommand\transparent[1]{}%
  }%
  \providecommand\rotatebox[2]{#2}%
  \newcommand*\fsize{\dimexpr\f@size pt\relax}%
  \newcommand*\lineheight[1]{\fontsize{\fsize}{#1\fsize}\selectfont}%
  \ifx\svgwidth\undefined%
    \setlength{\unitlength}{363.33664301bp}%
    \ifx\svgscale\undefined%
      \relax%
    \else%
      \setlength{\unitlength}{\unitlength * \real{\svgscale}}%
    \fi%
  \else%
    \setlength{\unitlength}{\svgwidth}%
  \fi%
  \global\let\svgwidth\undefined%
  \global\let\svgscale\undefined%
  \makeatother%
  \begin{picture}(1,0.52837088)%
    \lineheight{1}%
    \setlength\tabcolsep{0pt}%
    \put(0,0){\includegraphics[width=\unitlength,page=1]{slat_operation.pdf}}%
    \put(1.01480216,0.21940008){\makebox(0,0)[lt]{\lineheight{1.25}\smash{\begin{tabular}[t]{l}$R_2\times[0,\varepsilon]$\end{tabular}}}}%
    \put(1.01480216,0.31968865){\makebox(0,0)[lt]{\lineheight{1.25}\smash{\begin{tabular}[t]{l}$R_3\times[0,\varepsilon]$\end{tabular}}}}%
    \put(1.01480216,0.4051782){\makebox(0,0)[lt]{\lineheight{1.25}\smash{\begin{tabular}[t]{l}$R_4\times[0,\varepsilon]$\end{tabular}}}}%
    \put(1.01480216,0.12651102){\makebox(0,0)[lt]{\lineheight{1.25}\smash{\begin{tabular}[t]{l}$R_1\times[0,\varepsilon]$\end{tabular}}}}%
    \put(0,0){\includegraphics[width=\unitlength,page=2]{slat_operation.pdf}}%
  \end{picture}%
\endgroup%

%% file: Figures/wiggled_fence.pdf_tex
\begingroup%
  \makeatletter%
  \providecommand\color[2][]{%
    \errmessage{(Inkscape) Color is used for the text in Inkscape, but the package 'color.sty' is not loaded}%
    \renewcommand\color[2][]{}%
  }%
  \providecommand\transparent[1]{%
    \errmessage{(Inkscape) Transparency is used (non-zero) for the text in Inkscape, but the package 'transparent.sty' is not loaded}%
    \renewcommand\transparent[1]{}%
  }%
  \providecommand\rotatebox[2]{#2}%
  \newcommand*\fsize{\dimexpr\f@size pt\relax}%
  \newcommand*\lineheight[1]{\fontsize{\fsize}{#1\fsize}\selectfont}%
  \ifx\svgwidth\undefined%
    \setlength{\unitlength}{615.74999712bp}%
    \ifx\svgscale\undefined%
      \relax%
    \else%
      \setlength{\unitlength}{\unitlength * \real{\svgscale}}%
    \fi%
  \else%
    \setlength{\unitlength}{\svgwidth}%
  \fi%
  \global\let\svgwidth\undefined%
  \global\let\svgscale\undefined%
  \makeatother%
  \begin{picture}(1,0.61926692)%
    \lineheight{1}%
    \setlength\tabcolsep{0pt}%
    \put(0,0){\includegraphics[width=\unitlength,page=1]{wiggled_fence.pdf}}%
  \end{picture}%
\endgroup%

%% file: Figures/maypoleweaving.pdf_tex
\begingroup%
  \makeatletter%
  \providecommand\color[2][]{%
    \errmessage{(Inkscape) Color is used for the text in Inkscape, but the package 'color.sty' is not loaded}%
    \renewcommand\color[2][]{}%
  }%
  \providecommand\transparent[1]{%
    \errmessage{(Inkscape) Transparency is used (non-zero) for the text in Inkscape, but the package 'transparent.sty' is not loaded}%
    \renewcommand\transparent[1]{}%
  }%
  \providecommand\rotatebox[2]{#2}%
  \newcommand*\fsize{\dimexpr\f@size pt\relax}%
  \newcommand*\lineheight[1]{\fontsize{\fsize}{#1\fsize}\selectfont}%
  \ifx\svgwidth\undefined%
    \setlength{\unitlength}{732.53228616bp}%
    \ifx\svgscale\undefined%
      \relax%
    \else%
      \setlength{\unitlength}{\unitlength * \real{\svgscale}}%
    \fi%
  \else%
    \setlength{\unitlength}{\svgwidth}%
  \fi%
  \global\let\svgwidth\undefined%
  \global\let\svgscale\undefined%
  \makeatother%
  \begin{picture}(1,0.92871669)%
    \lineheight{1}%
    \setlength\tabcolsep{0pt}%
    \put(0,0){\includegraphics[width=\unitlength,page=1]{maypoleweaving.pdf}}%
    \put(0.89926149,0.76295179){\makebox(0,0)[lt]{\lineheight{1.25}\smash{\begin{tabular}[t]{l}$P$\end{tabular}}}}%
    \put(0.89926149,0.64952834){\makebox(0,0)[lt]{\lineheight{1.25}\smash{\begin{tabular}[t]{l}$\Lambda$\end{tabular}}}}%
    \put(0.89931085,0.33996184){\makebox(0,0)[lt]{\lineheight{1.25}\smash{\begin{tabular}[t]{l}$N$\end{tabular}}}}%
  \end{picture}%
\endgroup%

%% file: Figures/slice.pdf_tex
\begingroup%
  \makeatletter%
  \providecommand\color[2][]{%
    \errmessage{(Inkscape) Color is used for the text in Inkscape, but the package 'color.sty' is not loaded}%
    \renewcommand\color[2][]{}%
  }%
  \providecommand\transparent[1]{%
    \errmessage{(Inkscape) Transparency is used (non-zero) for the text in Inkscape, but the package 'transparent.sty' is not loaded}%
    \renewcommand\transparent[1]{}%
  }%
  \providecommand\rotatebox[2]{#2}%
  \newcommand*\fsize{\dimexpr\f@size pt\relax}%
  \newcommand*\lineheight[1]{\fontsize{\fsize}{#1\fsize}\selectfont}%
  \ifx\svgwidth\undefined%
    \setlength{\unitlength}{375bp}%
    \ifx\svgscale\undefined%
      \relax%
    \else%
      \setlength{\unitlength}{\unitlength * \real{\svgscale}}%
    \fi%
  \else%
    \setlength{\unitlength}{\svgwidth}%
  \fi%
  \global\let\svgwidth\undefined%
  \global\let\svgscale\undefined%
  \makeatother%
  \begin{picture}(1,1)%
    \lineheight{1}%
    \setlength\tabcolsep{0pt}%
    \put(0,0){\includegraphics[width=\unitlength,page=1]{slice.pdf}}%
    \put(0.05035553,0.76387051){\makebox(0,0)[rt]{\lineheight{1.25}\smash{\begin{tabular}[t]{r}$y=t_2$\end{tabular}}}}%
    \put(0.05035553,0.5304826){\makebox(0,0)[rt]{\lineheight{1.25}\smash{\begin{tabular}[t]{r}$y=t_1$\end{tabular}}}}%
  \end{picture}%
\endgroup%

%% file: Figures/birthdeath.pdf_tex
\begingroup%
  \makeatletter%
  \providecommand\color[2][]{%
    \errmessage{(Inkscape) Color is used for the text in Inkscape, but the package 'color.sty' is not loaded}%
    \renewcommand\color[2][]{}%
  }%
  \providecommand\transparent[1]{%
    \errmessage{(Inkscape) Transparency is used (non-zero) for the text in Inkscape, but the package 'transparent.sty' is not loaded}%
    \renewcommand\transparent[1]{}%
  }%
  \providecommand\rotatebox[2]{#2}%
  \newcommand*\fsize{\dimexpr\f@size pt\relax}%
  \newcommand*\lineheight[1]{\fontsize{\fsize}{#1\fsize}\selectfont}%
  \ifx\svgwidth\undefined%
    \setlength{\unitlength}{850.83938238bp}%
    \ifx\svgscale\undefined%
      \relax%
    \else%
      \setlength{\unitlength}{\unitlength * \real{\svgscale}}%
    \fi%
  \else%
    \setlength{\unitlength}{\svgwidth}%
  \fi%
  \global\let\svgwidth\undefined%
  \global\let\svgscale\undefined%
  \makeatother%
  \begin{picture}(1,0.30435943)%
    \lineheight{1}%
    \setlength\tabcolsep{0pt}%
    \put(0,0){\includegraphics[width=\unitlength,page=1]{birthdeath.pdf}}%
    \put(0.23264505,0.14880748){\makebox(0,0)[t]{\lineheight{1.25}\smash{\begin{tabular}[t]{c}$\to$\end{tabular}}}}%
    \put(0.49913063,0.14940533){\makebox(0,0)[t]{\lineheight{1.25}\smash{\begin{tabular}[t]{c}$\to$\end{tabular}}}}%
    \put(0.7641397,0.14832099){\makebox(0,0)[t]{\lineheight{1.25}\smash{\begin{tabular}[t]{c}$\to$\end{tabular}}}}%
    \put(0,0){\includegraphics[width=\unitlength,page=2]{birthdeath.pdf}}%
  \end{picture}%
\endgroup%

%% file: Figures/edge_expansion.pdf_tex
\begingroup%
  \makeatletter%
  \providecommand\color[2][]{%
    \errmessage{(Inkscape) Color is used for the text in Inkscape, but the package 'color.sty' is not loaded}%
    \renewcommand\color[2][]{}%
  }%
  \providecommand\transparent[1]{%
    \errmessage{(Inkscape) Transparency is used (non-zero) for the text in Inkscape, but the package 'transparent.sty' is not loaded}%
    \renewcommand\transparent[1]{}%
  }%
  \providecommand\rotatebox[2]{#2}%
  \newcommand*\fsize{\dimexpr\f@size pt\relax}%
  \newcommand*\lineheight[1]{\fontsize{\fsize}{#1\fsize}\selectfont}%
  \ifx\svgwidth\undefined%
    \setlength{\unitlength}{709.373539bp}%
    \ifx\svgscale\undefined%
      \relax%
    \else%
      \setlength{\unitlength}{\unitlength * \real{\svgscale}}%
    \fi%
  \else%
    \setlength{\unitlength}{\svgwidth}%
  \fi%
  \global\let\svgwidth\undefined%
  \global\let\svgscale\undefined%
  \makeatother%
  \begin{picture}(1,0.40014087)%
    \lineheight{1}%
    \setlength\tabcolsep{0pt}%
    \put(0.49867382,0.19498591){\makebox(0,0)[t]{\lineheight{1.25}\smash{\begin{tabular}[t]{c}$\xsquigright{\textrm{birth}}$\end{tabular}}}}%
    \put(0,0){\includegraphics[width=\unitlength,page=1]{edge_expansion.pdf}}%
  \end{picture}%
\endgroup%

%% file: Figures/edge_expansion_graph.pdf_tex
\begingroup%
  \makeatletter%
  \providecommand\color[2][]{%
    \errmessage{(Inkscape) Color is used for the text in Inkscape, but the package 'color.sty' is not loaded}%
    \renewcommand\color[2][]{}%
  }%
  \providecommand\transparent[1]{%
    \errmessage{(Inkscape) Transparency is used (non-zero) for the text in Inkscape, but the package 'transparent.sty' is not loaded}%
    \renewcommand\transparent[1]{}%
  }%
  \providecommand\rotatebox[2]{#2}%
  \newcommand*\fsize{\dimexpr\f@size pt\relax}%
  \newcommand*\lineheight[1]{\fontsize{\fsize}{#1\fsize}\selectfont}%
  \ifx\svgwidth\undefined%
    \setlength{\unitlength}{284.13538174bp}%
    \ifx\svgscale\undefined%
      \relax%
    \else%
      \setlength{\unitlength}{\unitlength * \real{\svgscale}}%
    \fi%
  \else%
    \setlength{\unitlength}{\svgwidth}%
  \fi%
  \global\let\svgwidth\undefined%
  \global\let\svgscale\undefined%
  \makeatother%
  \begin{picture}(1,0.59976391)%
    \lineheight{1}%
    \setlength\tabcolsep{0pt}%
    \put(0.4974283,0.29071439){\makebox(0,0)[t]{\lineheight{1.25}\smash{\begin{tabular}[t]{c}$\xsquigright{\textrm{edge expansion}}$\end{tabular}}}}%
    \put(0,0){\includegraphics[width=\unitlength,page=1]{edge_expansion_graph.pdf}}%
  \end{picture}%
\endgroup%

%% file: Figures/forbidden_birth.pdf_tex
\begingroup%
  \makeatletter%
  \providecommand\color[2][]{%
    \errmessage{(Inkscape) Color is used for the text in Inkscape, but the package 'color.sty' is not loaded}%
    \renewcommand\color[2][]{}%
  }%
  \providecommand\transparent[1]{%
    \errmessage{(Inkscape) Transparency is used (non-zero) for the text in Inkscape, but the package 'transparent.sty' is not loaded}%
    \renewcommand\transparent[1]{}%
  }%
  \providecommand\rotatebox[2]{#2}%
  \newcommand*\fsize{\dimexpr\f@size pt\relax}%
  \newcommand*\lineheight[1]{\fontsize{\fsize}{#1\fsize}\selectfont}%
  \ifx\svgwidth\undefined%
    \setlength{\unitlength}{409.10183644bp}%
    \ifx\svgscale\undefined%
      \relax%
    \else%
      \setlength{\unitlength}{\unitlength * \real{\svgscale}}%
    \fi%
  \else%
    \setlength{\unitlength}{\svgwidth}%
  \fi%
  \global\let\svgwidth\undefined%
  \global\let\svgscale\undefined%
  \makeatother%
  \begin{picture}(1,0.85932128)%
    \lineheight{1}%
    \setlength\tabcolsep{0pt}%
    \put(0,0){\includegraphics[width=\unitlength,page=1]{forbidden_birth.pdf}}%
    \put(0.43172518,0.41016787){\makebox(0,0)[t]{\lineheight{1.25}\smash{\begin{tabular}[t]{c}$\textrm{\Huge ?}$\end{tabular}}}}%
    \put(0,0){\includegraphics[width=\unitlength,page=2]{forbidden_birth.pdf}}%
    \put(0.92735118,0.67584765){\makebox(0,0)[lt]{\lineheight{1.25}\smash{\begin{tabular}[t]{l}$\gamma_-$\end{tabular}}}}%
    \put(0.92526581,0.55666763){\makebox(0,0)[lt]{\lineheight{1.25}\smash{\begin{tabular}[t]{l}$\gamma_+$\end{tabular}}}}%
    \put(0,0){\includegraphics[width=\unitlength,page=3]{forbidden_birth.pdf}}%
  \end{picture}%
\endgroup%

%% file: Figures/dividing_set_S2.pdf_tex
\begingroup%
  \makeatletter%
  \providecommand\color[2][]{%
    \errmessage{(Inkscape) Color is used for the text in Inkscape, but the package 'color.sty' is not loaded}%
    \renewcommand\color[2][]{}%
  }%
  \providecommand\transparent[1]{%
    \errmessage{(Inkscape) Transparency is used (non-zero) for the text in Inkscape, but the package 'transparent.sty' is not loaded}%
    \renewcommand\transparent[1]{}%
  }%
  \providecommand\rotatebox[2]{#2}%
  \newcommand*\fsize{\dimexpr\f@size pt\relax}%
  \newcommand*\lineheight[1]{\fontsize{\fsize}{#1\fsize}\selectfont}%
  \ifx\svgwidth\undefined%
    \setlength{\unitlength}{1218.29338975bp}%
    \ifx\svgscale\undefined%
      \relax%
    \else%
      \setlength{\unitlength}{\unitlength * \real{\svgscale}}%
    \fi%
  \else%
    \setlength{\unitlength}{\svgwidth}%
  \fi%
  \global\let\svgwidth\undefined%
  \global\let\svgscale\undefined%
  \makeatother%
  \begin{picture}(1,0.27051236)%
    \lineheight{1}%
    \setlength\tabcolsep{0pt}%
    \put(0,0){\includegraphics[width=\unitlength,page=1]{dividing_set_S2.pdf}}%
    \put(0.30394444,0.12626908){\makebox(0,0)[t]{\lineheight{1.25}\smash{\begin{tabular}[t]{c}$\xrightarrow{\textrm{birth}}$\end{tabular}}}}%
    \put(0.68785569,0.12626908){\makebox(0,0)[t]{\lineheight{1.25}\smash{\begin{tabular}[t]{c}$\xrightarrow{\textrm{death}}$\end{tabular}}}}%
    \put(0,0){\includegraphics[width=\unitlength,page=2]{dividing_set_S2.pdf}}%
  \end{picture}%
\endgroup%

%% file: Figures/dividing_set_S2_graph.pdf_tex
\begingroup%
  \makeatletter%
  \providecommand\color[2][]{%
    \errmessage{(Inkscape) Color is used for the text in Inkscape, but the package 'color.sty' is not loaded}%
    \renewcommand\color[2][]{}%
  }%
  \providecommand\transparent[1]{%
    \errmessage{(Inkscape) Transparency is used (non-zero) for the text in Inkscape, but the package 'transparent.sty' is not loaded}%
    \renewcommand\transparent[1]{}%
  }%
  \providecommand\rotatebox[2]{#2}%
  \newcommand*\fsize{\dimexpr\f@size pt\relax}%
  \newcommand*\lineheight[1]{\fontsize{\fsize}{#1\fsize}\selectfont}%
  \ifx\svgwidth\undefined%
    \setlength{\unitlength}{255.78890342bp}%
    \ifx\svgscale\undefined%
      \relax%
    \else%
      \setlength{\unitlength}{\unitlength * \real{\svgscale}}%
    \fi%
  \else%
    \setlength{\unitlength}{\svgwidth}%
  \fi%
  \global\let\svgwidth\undefined%
  \global\let\svgscale\undefined%
  \makeatother%
  \begin{picture}(1,0.56860674)%
    \lineheight{1}%
    \setlength\tabcolsep{0pt}%
    \put(0.27723761,0.2080646){\makebox(0,0)[t]{\lineheight{1.25}\smash{\begin{tabular}[t]{c}$\xsquigright{\textrm{edge expansion}}$\end{tabular}}}}%
    \put(0.7215953,0.42970402){\makebox(0,0)[t]{\lineheight{1.25}\smash{\begin{tabular}[t]{c}$\xsquigright{\textrm{edge contraction}}$\end{tabular}}}}%
    \put(0,0){\includegraphics[width=\unitlength,page=1]{dividing_set_S2_graph.pdf}}%
  \end{picture}%
\endgroup%

%% file: Figures/barrier.pdf_tex
\begingroup%
  \makeatletter%
  \providecommand\color[2][]{%
    \errmessage{(Inkscape) Color is used for the text in Inkscape, but the package 'color.sty' is not loaded}%
    \renewcommand\color[2][]{}%
  }%
  \providecommand\transparent[1]{%
    \errmessage{(Inkscape) Transparency is used (non-zero) for the text in Inkscape, but the package 'transparent.sty' is not loaded}%
    \renewcommand\transparent[1]{}%
  }%
  \providecommand\rotatebox[2]{#2}%
  \newcommand*\fsize{\dimexpr\f@size pt\relax}%
  \newcommand*\lineheight[1]{\fontsize{\fsize}{#1\fsize}\selectfont}%
  \ifx\svgwidth\undefined%
    \setlength{\unitlength}{651.49557279bp}%
    \ifx\svgscale\undefined%
      \relax%
    \else%
      \setlength{\unitlength}{\unitlength * \real{\svgscale}}%
    \fi%
  \else%
    \setlength{\unitlength}{\svgwidth}%
  \fi%
  \global\let\svgwidth\undefined%
  \global\let\svgscale\undefined%
  \makeatother%
  \begin{picture}(1,0.56593342)%
    \lineheight{1}%
    \setlength\tabcolsep{0pt}%
    \put(0,0){\includegraphics[width=\unitlength,page=1]{barrier.pdf}}%
    \put(0.00257238,0.11796997){\makebox(0,0)[lt]{\lineheight{1.25}\smash{\begin{tabular}[t]{l}$t=\tau_{i-1}$\end{tabular}}}}%
    \put(0.00257238,0.42253873){\makebox(0,0)[lt]{\lineheight{1.25}\smash{\begin{tabular}[t]{l}$t=\tau_{i+1}$\end{tabular}}}}%
    \put(0.00257238,0.27025435){\makebox(0,0)[lt]{\lineheight{1.25}\smash{\begin{tabular}[t]{l}$t=\tau_i$\end{tabular}}}}%
    \put(0.00295985,0.34448156){\makebox(0,0)[lt]{\lineheight{1.25}\smash{\begin{tabular}[t]{l}$t=t_{i+1}$\end{tabular}}}}%
    \put(0.00273284,0.19280519){\makebox(0,0)[lt]{\lineheight{1.25}\smash{\begin{tabular}[t]{l}$t=t_i$\end{tabular}}}}%
    \put(0,0){\includegraphics[width=\unitlength,page=2]{barrier.pdf}}%
    \put(0.53473256,0.46057222){\makebox(0,0)[lt]{\lineheight{1.25}\smash{\begin{tabular}[t]{l}$A_{i+1}$\end{tabular}}}}%
    \put(0.53483381,0.31199456){\makebox(0,0)[lt]{\lineheight{1.25}\smash{\begin{tabular}[t]{l}$C_{i+1}$\end{tabular}}}}%
    \put(0.53335741,0.1289657){\makebox(0,0)[lt]{\lineheight{1.25}\smash{\begin{tabular}[t]{l}$A_i$\end{tabular}}}}%
  \end{picture}%
\endgroup%

%% file: Figures/mitosis.pdf_tex
\begingroup%
  \makeatletter%
  \providecommand\color[2][]{%
    \errmessage{(Inkscape) Color is used for the text in Inkscape, but the package 'color.sty' is not loaded}%
    \renewcommand\color[2][]{}%
  }%
  \providecommand\transparent[1]{%
    \errmessage{(Inkscape) Transparency is used (non-zero) for the text in Inkscape, but the package 'transparent.sty' is not loaded}%
    \renewcommand\transparent[1]{}%
  }%
  \providecommand\rotatebox[2]{#2}%
  \newcommand*\fsize{\dimexpr\f@size pt\relax}%
  \newcommand*\lineheight[1]{\fontsize{\fsize}{#1\fsize}\selectfont}%
  \ifx\svgwidth\undefined%
    \setlength{\unitlength}{691.16352712bp}%
    \ifx\svgscale\undefined%
      \relax%
    \else%
      \setlength{\unitlength}{\unitlength * \real{\svgscale}}%
    \fi%
  \else%
    \setlength{\unitlength}{\svgwidth}%
  \fi%
  \global\let\svgwidth\undefined%
  \global\let\svgscale\undefined%
  \makeatother%
  \begin{picture}(1,0.254316)%
    \lineheight{1}%
    \setlength\tabcolsep{0pt}%
    \put(0,0){\includegraphics[width=\unitlength,page=1]{mitosis.pdf}}%
    \put(0.49286415,0.17201869){\makebox(0,0)[t]{\lineheight{1.25}\smash{\begin{tabular}[t]{c}$\widetilde{L}$\end{tabular}}}}%
    \put(0,0){\includegraphics[width=\unitlength,page=2]{mitosis.pdf}}%
    \put(1.00827266,0.07228392){\makebox(0,0)[lt]{\lineheight{1.25}\smash{\begin{tabular}[t]{l}$\varepsilon$\end{tabular}}}}%
    \put(-0.0092036,0.04180664){\makebox(0,0)[rt]{\lineheight{1.25}\smash{\begin{tabular}[t]{r}$\gamma_1$\end{tabular}}}}%
    \put(-0.00896072,0.08373749){\makebox(0,0)[rt]{\lineheight{1.25}\smash{\begin{tabular}[t]{r}$\widetilde{\gamma}_1$\end{tabular}}}}%
    \put(0.35991037,0.04180664){\color[rgb]{0,0,0}\makebox(0,0)[rt]{\lineheight{1.25}\smash{\begin{tabular}[t]{r}$\gamma_2$\end{tabular}}}}%
    \put(0.36015325,0.08373749){\color[rgb]{0,0,0}\makebox(0,0)[rt]{\lineheight{1.25}\smash{\begin{tabular}[t]{r}$\widetilde{\gamma}_2$\end{tabular}}}}%
    \put(0.72902433,0.04180664){\color[rgb]{0,0,0}\makebox(0,0)[rt]{\lineheight{1.25}\smash{\begin{tabular}[t]{r}$\gamma_3$\end{tabular}}}}%
    \put(0.72926721,0.08373749){\color[rgb]{0,0,0}\makebox(0,0)[rt]{\lineheight{1.25}\smash{\begin{tabular}[t]{r}$\widetilde{\gamma}_3$\end{tabular}}}}%
  \end{picture}%
\endgroup%

%% file: Figures/h_pm.pdf_tex
\begingroup%
  \makeatletter%
  \providecommand\color[2][]{%
    \errmessage{(Inkscape) Color is used for the text in Inkscape, but the package 'color.sty' is not loaded}%
    \renewcommand\color[2][]{}%
  }%
  \providecommand\transparent[1]{%
    \errmessage{(Inkscape) Transparency is used (non-zero) for the text in Inkscape, but the package 'transparent.sty' is not loaded}%
    \renewcommand\transparent[1]{}%
  }%
  \providecommand\rotatebox[2]{#2}%
  \newcommand*\fsize{\dimexpr\f@size pt\relax}%
  \newcommand*\lineheight[1]{\fontsize{\fsize}{#1\fsize}\selectfont}%
  \ifx\svgwidth\undefined%
    \setlength{\unitlength}{375bp}%
    \ifx\svgscale\undefined%
      \relax%
    \else%
      \setlength{\unitlength}{\unitlength * \real{\svgscale}}%
    \fi%
  \else%
    \setlength{\unitlength}{\svgwidth}%
  \fi%
  \global\let\svgwidth\undefined%
  \global\let\svgscale\undefined%
  \makeatother%
  \begin{picture}(1,1)%
    \lineheight{1}%
    \setlength\tabcolsep{0pt}%
    \put(0,0){\includegraphics[width=\unitlength,page=1]{h_pm.pdf}}%
    \put(0.78133063,0.59598965){\makebox(0,0)[lt]{\lineheight{1.25}\smash{\begin{tabular}[t]{l}$(s,\overline{s})$\end{tabular}}}}%
    \put(0,0){\includegraphics[width=\unitlength,page=2]{h_pm.pdf}}%
    \put(0.77363696,0.74418517){\makebox(0,0)[t]{\lineheight{1.25}\smash{\begin{tabular}[t]{c}$\mathcal H^+$\end{tabular}}}}%
    \put(0,0){\includegraphics[width=\unitlength,page=3]{h_pm.pdf}}%
  \end{picture}%
\endgroup%